\def\mf#1{\mathfrak{#1}}
\def\mc#1{\mathcal{#1}}
\def\mb#1{\mathbb{#1}}
\def\tx#1{\textrm{#1}}
\def\tb#1{\textbf{#1}}
\def\ts#1{\textsf{#1}}
\def\ms#1{\mathsf{#1}}
\def\R{\mathbb{R}}
\def\C{\mathbb{C}}
\def\Q{\mathbb{Q}}
\def\Z{\mathbb{Z}}
\def\lmod{\setminus}
\def\ol#1{\overline{#1}}
\def\ul#1{\underline{#1}}
\def\hat{\widehat}
\def\rw{\rightarrow}
\def\lrw{\longrightarrow}
\def\llw{\longleftarrow}
\def\sm{\smallsetminus}
\def\<{\langle}
\def\>{\rangle}
\def\mr#1{\mathring{#1}}
\newenvironment{mytitle}
{\begin{center}\large\sc}
{\end{center}}
\newtheorem{thm}{Theorem}[subsection]
\newtheorem{lem}[thm]{Lemma}
\newtheorem{pro}[thm]{Proposition}
\newtheorem{cor}[thm]{Corollary}
\newtheorem{dfn}[thm]{Definition}
\newtheorem{fct}[thm]{Fact}
\numberwithin{equation}{section}
\newlength{\sumcorr}
\def\ssum#1{\setlength{\sumcorr}{(\widthof{$\displaystyle\sum_{#1}$}-\widthof{$\displaystyle\sum$})/2} \hspace{-\sumcorr}\sum_{#1}\hspace{-\sumcorr} }
\def\sprod#1{\setlength{\sumcorr}{(\widthof{$\displaystyle\prod_{#1}$}-\widthof{$\displaystyle\prod$})/2} \hspace{-\sumcorr}\prod_{#1}\hspace{-\sumcorr} }
\begin{document}

\begin{mytitle} Regular supercuspidal representations \end{mytitle}
\begin{center} Tasho Kaletha \end{center}

\begin{abstract}
We show that, in good residual characteristic, most supercuspidal representations of a tamely ramified reductive $p$-adic group $G$ arise from pairs $(S,\theta)$, where $S$ is a tame elliptic maximal torus of $G$, and $\theta$ is a character of $S$ satisfying a simple root-theoretic property. We then give a new expression for the roots of unity that appear in the Adler-DeBacker-Spice character formula for these supercuspidal representations and use it to show that this formula bears a striking resemblance to the character formula for discrete series representations of real reductive groups. Led by this, we explicitly construct the local Langlands correspondence for these supercuspidal representations and prove stability and endoscopic transfer in the case of toral representations. In large residual characteristic this gives a construction of the local Langlands correspondence for almost all supercuspidal representations of reductive $p$-adic groups.
\end{abstract}
{\let\thefootnote\relax\footnotetext{AMS 2010 Mathematics subject classification: 22E50; 11S37; 11F70.}}
{\let\thefootnote\relax\footnotetext{This research is supported in part by NSF grant DMS-1161489 and a Sloan Fellowship.}}

\tableofcontents

\section{Introduction}

This paper pursues multiple interconnected goals, all of which are related to Yu's construction of supercuspidal representations of reductive $p$-adic groups \cite{Yu01}, which generalizes Adler's earlier construction \cite{Ad98}. Recall briefly that if $G$ is a connected reductive group over a $p$-adic field $F$ that splits over a tamely ramified extension of $F$, a supercuspidal representation of $G(F)$ can be constructed by giving the following data: a tower $G^0 \subset \dots \subset G^d = G$ of connected reductive subgroups that become Levi subgroups of $G$ over some tame Galois extension of $F$, a sequence of characters $\phi_i : G^i(F) \to \C^\times$ for all $i \geq 0$ satisfying a certain genericity condition, and a depth-zero supercuspidal representation $\pi_{-1}$ of $G^0(F)$, which we may call the socle of the Yu-datum. Representations obtained from this construction are customarily called tame, even though they can have arbitrary depth (in the case of $G=\tx{GL}_N$, these representations are called essentially tame in the work of Bushnell and Henniart; when $p \nmid N$ all supercuspidal representations are essentially tame). Hakim and Murnaghan \cite{HM08} have shown that different Yu-data can lead to the same representation and have made a precise study of when this happens. This leads to the natural question of whether one can use simpler data to parameterize the supercuspidal representations resulting from Yu's construction. Ideally, such data would consist simply of a maximal torus $S \subset G$ and a character $\theta : S(F) \to \C^\times$, in analogy with the classification of discrete series representations of real reductive groups, as well as that of supercuspidal representations of $\tx{GL}_N$ when $p\nmid N$. There is an immediate obstruction to this: Most reductive groups over finite fields (but not $\tx{GL}_N$) have cuspidal representations that are not immediately parameterizable by such pairs (for example cuspidal unipotent representations), and this obstruction propagates to depth-zero supercuspidal representations of reductive groups over $F$. We therefore restrict our attention to Yu-data that satisfy a slight regularity condition, which is automatically satisfied for $G=\tx{GL}_N$, and whose main part is that the socle $\pi_{-1}$ (when it is non-trivial) corresponds to a Deligne-Lusztig representation (of the reductive quotient of a parahoric subgroup of $G$) that is associated to a character in general position. Let us call supercuspidal representations arising from such Yu-data regular. The first main goal of this paper is to give an explicit parameterization of regular supercuspidal representations in terms of $G(F)$-conjugacy classes of pairs $(S,\theta)$. Partial results towards this were obtained earlier by Murnaghan in \cite{Mur11}, where a further technical restriction is imposed on $\pi_{-1}$ and an injective map is constructed from the set of equivalence classes of regular supercuspidal representations satisfying this additional technical restriction to the set of $G(F)$-conjugacy classes of pairs $(S,\theta)$ consisting of an elliptic maximal torus and a character of it. No effective description of the image of this map was known. For many purposes it is important to have a map in the opposite direction -- from pairs $(S,\theta)$ to representations. In the current paper we introduce the notion of a tame regular elliptic pair $(S,\theta)$. This notion is defined in simple and explicit root-theoretic terms. We show that in the case of $\tx{GL}_N$ it specializes to the classical notion of an admissible character. We give an explicit algorithm that, starting from a tame regular elliptic pair, produces a Yu-datum for a regular supercuspidal representation. This algorithm can be seen as a generalization to arbitrary reductive groups of the Howe factorization lemma (\cite[Lemma 11 and Corollary]{Howe77}) that plays an important role in the construction of supercuspidal representations of $\tx{GL}_N$. Just as in the case of $\tx{GL}_N$, the factorization we obtain is not unique, but we show that two possible factorizations are related to each other by a process already introduced by Hakim and Murnaghan, called refactorization. Their work implies that the resulting supercuspidal representation is unaffected by this ambiguity, and may thus be called $\pi_{(S,\theta)}$. We then show that two such representations are isomorphic if and only if the pairs giving rise to them are $G(F)$-conjugate. It is then straightforward to check that the map $(S,\theta) \mapsto \pi_{(S,\theta)}$ is the inverse to Murnaghan's injection (after removing the additional technical restriction on $\pi_{-1}$ imposed in \cite{Mur11}). This implies that the image of Murnaghan's map is precisely the set of $G(F)$-conjugcy classes of tame regular elliptic pairs. In this way, we obtain explicit mutually inverse bijections between the set of $G(F)$-conjugacy classes of tame regular elliptic pairs and the set of isomorphism classes of regular supercuspidal representations. This classification result includes as a special basic case the regular supercuspidal representations of depth zero. In fact, this special case is needed as the basis of our argument. When $G$ splits over an unramified extension, regular depth-zero supercuspidal representations were studied by DeBacker and Reeder \cite{DR09}. As a preparation for the study of regular supercuspidal representations of general depth, we extend their classification results to the case of tamely ramified groups $G$.

When the residual characteristic of $F$ is not too small for $G$ the work of Kim \cite{Kim07} shows that all supercuspidal representations of $G(F)$ arise from Yu's construction. Most of these are regular and thus of the form $\pi_{(S,\theta)}$. For $G=\tx{GL}_N$ with $p \nmid N$ all supercuspidal representations are regular, but for other groups non-regular supercuspidal representations do exist, as the example of the four exceptional supercuspidal representations of $\tx{SL}_2$ shows. We believe that our work can be used to reduce the description of general supercuspidal representations in terms of elliptic (but not necessarily regular) pairs $(S,\theta)$ to the description of cuspidal representations of finite groups of Lie type in terms of Deligne-Lusztig virtual characters. It would be interesting to pursue this question.

In the second part of the paper we study the Harish-Chandra character of supercuspidal representations, and in particular of the representations $\pi_{(S,\theta)}$. A formula for the character of a supercuspidal representation $\pi$ arising from Yu's construction has been given by
Adler and Spice \cite{AS09} and subsequently refined by DeBacker and Spice \cite{DS}. At the moment this formula is only valid under the assumption that $G^{d-1}(F)/Z(G)(F)$ is compact, but in private communication the authors have assured me that this assumption will soon be removed. In the mean time we have proved in this paper a technical result which removes this condition in a certain special case that still allows us to draw conclusions from it. The character formula \cite[Theorem 4.6.2]{DS} has the following form. Recall first that Yu's construction produces not just a supercuspidal representation $\pi$ of $G(F)$, but in fact a supercuspidal representation $\pi_i$ of $G^i(F)$ for each $i$. Let $r$ be the depth of $\pi_{d-1}$. Given a regular semi-simple element $\gamma \in G(F)$ admitting a decomposition (or approximation) $\gamma = \gamma_{<r}\cdot \gamma_{\geq r}$ in the sense of \cite{AS08}, the value at $\gamma$ of the normalized Harish-Chandra character function of $\pi=\pi_d$ is
\begin{equation} \label{eq:ichar} \Phi_{\pi_d}(\gamma) = \sum_g \epsilon_{s,r}(\gamma_{<r}^g)\epsilon^r(\gamma_{<r}^g)\tilde e(\gamma_{<r}^g)\Phi_{\pi_{d-1}}(\gamma_{<r}^g)\hat\mu_{^gX^*}(\log(\gamma_{\geq r})), \end{equation}
where the sum is over certain elements $g \in G(F)$. The term $\hat\mu$ is the Fourier-transform of an orbital integral (on the Lie-algebra of the connected centralizer of $\gamma_{<r}$), and $\epsilon_{s,r}$, $\epsilon^r$, and $\tilde e$, are roots of unity. This formula mirrors the inductiveness of Yu's construction by expressing the character of $\pi=\pi_d$ in terms of that of $\pi_{d-1}$. The function $\epsilon^r$ is a character of $S(F)$ and this allows it to be handled easily. In fact, as is pointed out in \cite{DS}, this function might be an artifact resulting from certain choices inherent in Yu's construction of supercuspidal representations, and a modification of this construction suppresses it. On the other hand, the two functions $\epsilon_{s,r}$ and $\tilde e$ are not characters of $S(F)$. Their definition is quite subtle and involves the fine structure of the $p$-adic group $G(F)$, in particular its Bruhat-Tits building and associated Moy-Prasad filtrations. This makes the analysis of these two functions with respect to stable conjugacy and related questions difficult. The second main result of this paper gives a new expression for the product $\epsilon_{s,r}(\gamma_{<r}) \cdot \tilde e(\gamma_{<r})$. This expression is a quotient of two terms of the form
\begin{equation} \label{eq:i1} e(G)e(J)\epsilon_L(X^*(T_G)_\C-X^*(T_J)_\C,\Lambda)\Delta_{II}^\tx{abs}[a,\chi](\gamma_{<r}). \end{equation}
Here $J$ is the connected centralizer of $\gamma_{<r}$ and the terms $e(G)$ and $e(J)$ are the Kottwitz signs \cite{Kot83} of the connected reductive groups $G$ and $J$. The tori $T_G$ and $T_J$ are the minimal Levi subgroups in the quasi-split inner forms of $G$ and $J$, and $\epsilon_L$ is the $\epsilon$-factor at $s=1/2$ of the given virtual Galois representation. Finally, the term $\Delta_{II}^\tx{abs}$ is an absolute version of the corresponding term of the Langlands-Shelstad endoscopic transfer factor \cite[\S3.3]{LS87}. What we mean here by the word ``absolute'' is that while the term $\Delta_{II}$ of Langlands-Shelstad is associated to a group $G$ and an endoscopic group $G^\mf{e}$, the term $\Delta_{II}^\tx{abs}$ is only associated to the group $G$, and moreover one obtains the Langlands-Shelstad term $\Delta_{II}$ as a quotient of the terms $\Delta_{II}^\tx{abs}$, with the one for $G$ in the numerator and the one for $G^\mf{e}$ in the denominator.

Before we discuss the main implication of \eqref{eq:i1}, let us consider some if its features. First, none of the terms in \eqref{eq:i1} involve Bruhat-Tits theory in their construction. Rather, they come from Lie-theory and basic $p$-adic arithmetic and are thus more elementary (the reader might argue that $\epsilon$-factors of non-abelian local Galois representations are not elementary, but a result of Kottwitz computes the particular $\epsilon$-factor we are dealing with in elementary terms, see \cite[\S3.5]{KalEpi}). Note also that the first three terms in \eqref{eq:i1} depend only on the stable conjugacy class of $\gamma_{<r}$, and it is just the term $\Delta_{II}^\tx{abs}$ that depends on the full triple $(S,\theta,\gamma_{<r})$.

Another interesting feature of the character formula \eqref{eq:i2} is that it provides an interpretation of most of the Langlands-Shelstad endoscopic transfer factor in terms of the characters of supercuspidal representations. Recall that the Langlands-Shelstad transfer factor is given as a product
\[ \epsilon_L \cdot \Delta_I \cdot \Delta_{II} \cdot \Delta_{III_1} \cdot \Delta_{III_2} \cdot \Delta_{IV}. \]
In view of \eqref{eq:i2}, each of the factors $\epsilon_L$, $\Delta_{II}$, $\Delta_{III_2}$, and $\Delta_{IV}$ has an interpretation as the quotient of a piece of the character formula for $G$ by the corresponding piece for $G^\mf{e}$. The factors $\epsilon_L$, $\Delta_{II}$, and $\Delta_{IV}$, are directly visible in \eqref{eq:i2}, and so is also the factor $\Delta_{III_2}$, being the quotient of the character $\theta$ by the corresponding character on the side of $G^\mf{e}$. The factor $\Delta_I$ also appears, albeit in a more subtle way: It measures which representation in the $L$-packet is generic, as we will discuss in Subsection \ref{sub:chargen}. We believe that in this way the character formula \eqref{eq:i2} sheds a different light on the Langlands-Shelstad transfer factor and shows that almost all parts of it are actually not of strictly endoscopic nature (except for the term $\Delta_{III_1}$, which is indeed purely endoscopic and not a relative term in the sense discussed here). We hope that this point of view will be fruitful in the study of more general functoriality beyond the endoscopic case, and might in particular help with the study of the transfer factors occurring there \cite{Lan13}.

The main implication of \eqref{eq:i1} stems from the following observation. If we apply the formula \eqref{eq:ichar} to a regular supercuspidal representation $\pi_{(S,\theta)}$ and a regular semi-simple element $\gamma
\in S(F)$ that is very far from the identity (this is the special case in which we have been able to remove the compactness hypothesis), we obtain as a consequence of \eqref{eq:i1} the following formula for the un-normalized Harish-Chandra character $\Theta_{\pi_{(S,\theta)}}(\gamma)$
\begin{equation} \label{eq:i2}
\frac{e(G)\epsilon_L(X^*(T_G)_\C-X^*(S)_\C)}{|D(\gamma)|^{\frac{1}{2}}} \sum_{w \in N(S,G)(F)/S(F)}
\Delta_{II}^\tx{abs}[a,\chi](\gamma^w) \theta'(\gamma^w).
\end{equation}
Here we have set $\theta'=\epsilon^r \cdot \theta$ using the fact that $\epsilon^r$ is a character of $S(F)$. What is striking about this formula is that each term has an interpretation for groups $G$ defined over an arbitrary local field, not just a $p$-adic field, and when $F=\R$ \eqref{eq:i2} becomes the formula for the character of the discrete series representation of the real group $G(\R)$ associated to the elliptic maximal torus $S$ and the character $\theta' : S(\R) \to \C^\times$, i.e. the well-known formula
\[ (-1)^{q(G)}\sum_{w \in N(S,G)(\R)/S(\R)} \frac{\theta'(w^{-1}\gamma)}{\prod\limits_{\alpha >0} (1-\alpha(w^{-1}\gamma)^{-1})}. \]
This can be seen as an instance of Harish-Chandra's Lefschetz principle, which suggests a mysterious analogy between the behaviors of real and $p$-adic reductive groups. In fact, if we consider the full character formula \eqref{eq:ichar}, we see that it combines two extreme behaviors -- the behavior at elements near the identity ($\gamma=\gamma_{\geq r}$), which is controlled by $\hat\mu$, and the behaviour at elements far from the identity ($\gamma=\gamma_{<r}$), to which all the roots of unity contribute. The Fourier-transform of the orbital integral $\hat\mu$ appears to belong to the world of finite groups of Lie type. For example, when $\pi$ has depth zero, the term $\hat\mu$ is a lift \cite[Lemma 12.4.3]{DR09} of a Green function, expressing the character of a cuspidal representation of a finite group of Lie type at a unipotent element. The roots of unity on the other hand seem to belong to the world of real reductive groups. This suggests that the behavior of $p$-adic groups is an interpolation between the behavior of finite groups of Lie type and the behavior of real reductive groups.

The close parallel between the characters of regular supercuspidal representations at shallow elements and the characters of real discrete series, besides being alluring in its own right, also has practical value, which brings us to the third main goal of this paper -- the construction and study of $L$-packets of regular supercuspidal representations and their matching with Langlands parameters. The original approach \cite{Lan89} of Langlands to the construction of $L$-packets of discrete series representations for real reductive groups was to first extract from the Langlands parameter a character of the elliptic maximal torus and to then use this character to write down the Harish-Chandra characters of the constituents of the $L$-packet. The recent explicit constructions of $L$-packets for $p$-adic groups \cite{DR09}, \cite{Ree08}, \cite{KalEpi} have followed this procedure to the extent that they extract form the Langlands parameter a character $\chi$ of an elliptic maximal torus, but then they determine the constituents of the $L$-packet not via their Harish-Chandra characters, but by plugging in some modification of the character $\chi$ into Adler's construction in the case of $r>0$, or into the construction of \cite[\S4.4]{DR09} in the case of $r=0$. The works of Adler-Spice \cite{AS09}, DeBacker-Reeder \cite{DR09}, and DeBacker-Spice \cite{DS} on the character formula for supercuspidal representations and our reinterpretation of it from the first part of this paper allow us to implement a much closer analog of Langlands' construction and use it to construct the $L$-packets that consist of regular supercuspidal representations and associate them to Langlands parameters.

The class of parameters we consider in this paper  contains as special cases those considered in the above mentioned papers, but is much larger. More precisely, it consists of those discrete Langlands parameters $\varphi : W_F \to {^LG}$ for which $\varphi(P_F)$ is contained in a maximal torus of $\hat G$ and $\tx{Cent}(\varphi(I_F),\hat G)$ is abelian (as well as a small amount of slightly more complicated parameters that we need in order to obtain a balanced theory). Guided by the character formula \eqref{eq:i2} we assemble the $L$-packet corresponding to a given parameter in the same way as Langlands constructs the packets of real discrete series representations -- by writing down (a piece) of the Harish-Chandra character of each constituent of the $L$-packet. Our construction is nonetheless completely explicit: Given a parameter, we explicitly give the inducing data for each constituent of the $L$-packet. Conversely, one can also explicitly recover the $L$-parameter from this inducing data. Important for this is the fact that the notion of a tame regular elliptic pair $(S,\theta)$ has a direct interpretation in terms of ${^LG}$.

Let us now describe the construction of $L$-packets in more detail. Initially it follows the framework laid out in \cite{KalEpi}. A parameter $\varphi$ satisfying the above conditions determines an algebraic torus $S$. We use $\chi$-data to produce an embedding of ${^Lj_\chi} : {^LS} \rw {^LG}$ whose image contains the image of $\varphi$, and hence leads to a factorization $\varphi = {^Lj_\chi} \circ \varphi_{S,\chi}$, after which the parameter $\varphi_{S,\chi} : W_F \rw {^LS}$ leads to a character $\theta_\chi : S(F) \rw \C^\times$ via the Langlands correspondence for tori. The torus $S$ comes equipped with a stable class of embeddings into $G$ (an in fact into any inner form of $G$). For any embedding $j : S \rw G$ belonging to this stable class, we obtain an elliptic maximal torus $jS \subset G$ with a character $j\theta_\chi$ of it. It is at this point that the construction of the current paper diverges from the previous constructions. We write down the formula
\[ e(G)\epsilon_L(X^*(T_G)_\C-X^*(S)_\C,\Lambda) \sum_{w \in N(G,jS)(F)/jS(F)} \Delta_{II}^\tx{abs}[a,\chi](\gamma^w)j\theta_\chi(\gamma^w), \]
and demand that $\pi_j$ be the regular supercuspidal representation whose normalized character at shallow elements $\gamma\in jS(F)$ is given by this formula. In practice we ensure that this demand is met by explicitly providing the pair $(S,\theta)$ that parameterizes the regular supercuspidal representation, but we feel that the difference in point of view is essential. At this point, a remark is in order about the choice of $\chi$-data involved. In \cite[\S4.2]{KalEpi} we spent a lot of effort to choose the correct $\chi$-data so that the character $\theta_\chi$ of $S$ we obtain would be the right one for Adler's construction. From the current point of view, the choice of $\chi$-data is irrelevant. This is because both $\theta_\chi$ and $\Delta_{II}^\tx{abs}[a,\chi]$ depend on this choice in a parallel way and the dependence cancels in the product. However, $\Delta_{II}^\tx{abs}[a,\chi]$ also depends on $a$-data, and there is no other object in the character formula with this dependence. This means that the burden is now on choosing the $a$-data correctly. It turns out that this choice is given by a simple formula \eqref{eq:adatahowe} that is uniform for real and $p$-adic groups. The only difference in the $p$-adic case is that one needs to pay attention to the first upper numbering filtration subgroup of inertia whose image under $\varphi$ is detected by a given root of $\hat G$. This is reminiscent of the study of the jumps of an admissible character in the work of Bushnell and Henniart \cite{BH05a}, \cite{BH05b}. In fact, our work here might be seen as a generalization to arbitrary tamely ramified $p$-adic groups of the work of Bushnell-Henniart, insofar as both have the goal of giving an explicit realization of the local Langlands correspondence.

Once the representations $\pi_j$ are determined, the $L$-packet is defined to be the set $\{\pi_j\}$ where $j$ runs over all rational classes of embeddings of $S$ into $G$. The internal parameterization of this $L$-packet is again done as in \cite[\S4.3]{KalEpi}, the only difference being that now we are using the cohomology functor $H^1(u \rw W,-)$ introduced in \cite{KalRI} instead of the set $B(G)_\tx{bas}$ used in \cite{KalEpi}. This allows us to uniformly treat all connected reductive groups, without conditions on the center. A reader interested in having a parameterization in terms of $B(G)_\tx{bas}$, say for the purpose of studying Rapoport-Zink spaces, can either replace in the construction all occurrences of $H^1(u \rw W,-)$ with $B(-)_\tx{bas}$, or appeal to the general results of \cite{KalRIBG}.

We now give a brief overview of the contents of this paper. Section \ref{sec:regsc} contains the study of regular supercuspidal representations. In Subsection \ref{sub:tori} we collect some basic facts about $p$-adic tori, and in particular extend Yu's theorem \cite[Theorem 7.10]{Yu09} that the local Langlands correspondence for tamely ramified tori preserves depth from the case of positive depth to the case of characters vanishing on the Iwahori subgroup and on the maximal bounded subgroup of a torus. In Subsection \ref{sub:rdz} we classify the regular depth-zero supercuspidal representations of tamely ramified groups. This is based on the notion of a maximally unramified maximal torus of a tamely ramified group, that generalizes the notion of an unramified maximal torus of a group that splits over an unramified extension. This notion, suggested by Dick Gross, already appears in \cite{Roe11}, where the regular depth-zero supercuspidal representations of ramified unitary groups are studied. We then extend results of DeBacker \cite{Deb06} on the parameterization of unramified tori to the maximally unramified setting, focusing on the case of elliptic tori that will be needed later. Using these results, we classify the regular depth-zero supercuspidal representations of tamely ramified groups, extending results of DeBacker-Reeder \cite{DR09}. The main hurdle in the construction of regular depth-zero supercuspidal representations is that if $S$ is a maximally unramified maximal torus of the connected reductive group $G$, then the equality $S(F)=S(F)_0 \cdot Z(G)(F)$ is not always true, as was pointed out to us by Cheng-Chiang Tsai. This equality holds in the unramified case, as well as in the case of ramified unitary groups, and makes the passage from a cuspidal representation of a parahoric subgroup of $G(F)$ to a supercuspidal representation of $G(F)$ straightforward. This equality is also equivalent to the technical condition imposed on $\pi_{-1}$ in \cite{Mur11}. We deal with the additional difficulty in the general ramified case in Subsubsection \ref{subsub:ext} by exploiting the fact that the Deligne-Lusztig variety $\tilde X(\dot w)$ associated to $\dot w \in N(T^*)$ (notation as in \cite[\S1.8]{DL76}) admits an action of $\mathbf{T}_\tx{ad}(w)^F$ by conjugation.

The rest of Section \ref{sec:regsc} is devoted to the study of the positive-depth case. In Subsection \ref{sub:rs} we define the notion of a tame regular elliptic pair and show that it specializes in the case of $\tx{GL}_N$ to the classical notion of an admissible character. We also show that a regular Yu-datum gives rise to a tame regular elliptic pair. In Subsection \ref{sub:howe} we introduce the process of Howe factorization, which produces conversely from a tame regular elliptic pair a regular Yu-datum. In Subsection \ref{sub:regclass} we show that these two processes are indeed inverse to each other and that the choices involved in producing a Howe factorization do not influence the resulting supercuspidal representations. The result of this subsection is a bijection from the set of $G(F)$-conjugacy classes of tame regular elliptic pairs to the set of isomorphism classes of regular supercuspidal representations. The two Subsections \ref{sub:howe} and \ref{sub:regclass} have the assumption that $p$ does not divide the order of the fundamental group of the the derived subgroup of $G$. This assumption is removed in Subsection \ref{sub:nsc}.

Before moving on to Section 4 we mention here a recent draft \cite{Hak16} that was sent to us after this paper was written, in which Hakim reinterprets Yu's construction and gives a parameterization of the resulting representations in terms of a different kind of data. His interpretation has the advantage that the refactorization process studied in \cite{HM08} becomes unnecessary. The goal and results of this draft are quite disjoint from ours. It would be interesting to see if the two approaches can be combined.

Section \ref{sec:char} is devoted to our reinterpretation of the Adler-DeBacker-Spice character formula. The technical heart of this section is Subsection \ref{sub:ordx}, in which we give a formula for a certain subset $\tx{ord}_x(\alpha) \subset \R$ associated by \cite[Definition 3.1.3]{DS} to a tame maximal torus $T$ of $G$, a root $\alpha$ of $T$, and a point $x$ in the Bruhat-Tits building of $T$ seen as embedded into the building of $G$. This set plays a fundamental role in the character formula, because all roots of unity occurring in the formula are defined based on it. According to \cite[Corollary 3.1.9]{DS} there are only two possibilities for this set and Proposition \ref{pro:ordx} shows that these possibilities are distinguished by the toral invariant introduced in \cite[\S3]{KalEpi}. After giving the definition of the term $\Delta_{II}^\tx{abs}$ in Subsection \ref{sub:delta2} we are in a position to rewrite the character formula. We need however to pay attention to the technical assumption that $G^{d-1}(F)/Z(G)(F)$ is compact, under which the character formula of \cite{AS09} and \cite{DS} is valid. For toral supercuspidal representations this assumption is automatically satisfied and we can write the full character formula in this case, which is done in Subsection \ref{sub:toral}. For general regular supercuspidal representations $\pi_{(S,\theta)}$ we are able to show in Subsection \ref{sub:charshallow} that this assumption can be dropped provided we consider sufficiently shallow elements belonging to the torus $S$. We use this fact, together with a computation in the depth-zero case done in Subsection \ref{sub:depthzero}, to prove \eqref{eq:i2} in Subsection \ref{sub:shallow}. We conclude with Subsection \ref{sub:realchar}, where we compare \eqref{eq:i2} with  the character formula for real discrete series representations.

Section \ref{sec:pack} contains the construction of regular supercuspidal $L$-packets. We also give a description of the internal structure of each $L$-packet $\Pi_\varphi$ by showing that it has a simply transitive action of the abelian group $\pi_0(S_\varphi^+)^D$. In order to convert this into a bijection, we need to know that the choice of a Whittaker datum for the quasi-split group $G$ determines a base point in the compound $L$-packet $\Pi_\varphi$, in accordance with the strong form of Shahidi's tempered $L$-packet conjecture \cite[\S9]{Sha90}. Due to the technical compactness assumption under which at the moment the Adler-DeBacker-Spice character formula is valid for elements close to the identity, we are not in a position to do so for general regular supercuspidal $L$-packets. For the same reason, we can only prove stability or endoscopic transfer for these packets for shallow elements, but not for general regular semi-simple elements. Both of these points will be addressed in forthcoming joint work with DeBacker and Spice, based on onging work of Spice on removing the compactness assumption from \cite{DS}. 

We are however able to prove these statements for toral $L$-packets, which are the topic of Section \ref{sec:toral}, where we specialize the construction of $L$-packets to the case of toral supercuspidal representations. These are the representations obtained from a Yu-datum for which the twisted Levi sequence is of the form $S=G^0 \subset G^1=G$, where $S$ is an elliptic maximal torus of $G$. For these representations the compactness assumption is satisfied and thus the Adler-DeBacker-Spice character formula is valid for general elements, rather than just for shallow elements. Using it, we are able to prove the existence and uniqueness of generic constituent in each compound $L$-packet as well as the stability and endoscopic transfer of these $L$-packets (the stability of toral $L$-packets under the additional assumption that $S$ is unramified was already shown in \cite{DS}). We expect the same arguments to apply to the case of the general regular supercuspidal $L$-packets of Section \ref{sec:pack}, once the compactness assumption on the Adler-DeBacker-Spice character formula has been removed.

\tb{Acknowledgements:} The initial spark for this paper came from a remark of Robert Kottwitz that the pieces $\epsilon_L$ and $\Delta_{II}$ of the Langlands-Shelstad transfer factor can be directly observed in the character formulas for supercuspidal representations of $\tx{SL}_2$ due to Sally and Shalika. We are grateful to Kottwitz for drawing our attention to this. We were also influenced by Moshe Adrian's thesis \cite{Adrian10} and subsequent paper \cite{Adrian13}, which carries out in the case of $\tx{GL}_n$ (for prime $n$) the main idea we employ here -- the description of the local Langlands correspondence in terms of Harish-Chandra characters. We further thank Cheng-Chiang Tsai for multiple helpful discussions about Bruhat-Tits theory and in particular for a useful counterexample that he provided, and Jeffrey Hakim for his careful reading.

\section{Notation and assumptions}

\subsection{Assumptions on the ground field}
Throughout most of the paper, $F$ denotes a non-archimedean local field of zero or positive characteristic. Exceptions to this are \S\ref{sub:reviewtrans}, where $F$ can by any field and \S\ref{sub:realchar}, where $F=\R$.

For convenience, we collect here the assumptions on $F$ placed in different parts of the paper. In \S\ref{sub:tori} and \S\ref{sub:rdz} there are no further assumptions on $F$. Starting with \S\ref{sub:reviewhm} we assume that the residual characteristic of $F$ is odd and this assumption is kept throughout. In \S\ref{sub:rs}, \S\ref{sub:howe} and \S\ref{sub:regclass}, we assume furtherthat the residual characteristic is not a bad prime for $G$ and does not divide the order of $\pi_1(G_\tx{der})$. The last of these assumptions is only for technical convenience and is removed in \S\ref{sub:nsc}.

We recall from \cite[I,\S4]{SS70} the bad primes: For type $A_n$ there are no bad primes, for types $B_n$, $C_n$, or $D_n$ the only bad prime is $2$, for types $E_6$, $E_7$, $F_4$, or $G_2$ the bad primes are $2$ and $3$, and for type $E_8$ the bad primes are $2$, $3$, and $5$.

In \S\ref{sub:ordx}, \S\ref{sub:delta2} and \S\ref{sub:signs} the only assumption on the local field $F$ is that its residual characteristic is odd. For the rest of \S\ref{sec:char} we assume further that the residual characteristic is not a bad prime for $G$.

In \S\ref{sec:pack} and \S\ref{sec:toral} we assume that the residual characteristic is odd, not a bad prime for $G$, and does not divide $|\pi_0(Z(G))|$. The latter condition is implied by the former unless $G$ has components of type $A_n$. If $G$ has a component of type $A_n$, a sufficient condition would be $p \nmid (n+1)$. We are moreover forced to assume that the characteristic of $F$ is zero due to the usage of \cite{KalRI}, where this assumption is made. We believe that the results of \cite{KalRI} are valid without this assumption, but have not checked this carefully. Finally, in \S\ref{sub:toraltrans} we must assume that $F$ has characteristic zero and large residual characteristic. Among other things, this allows us to use the exponential map for all topologically nilpotent elements of the Lie algebra of $G$.

In some parts of the paper we appeal to papers such as \cite{LS87} or \cite{KS99}, where a blanket assumption is made that the ground field is of characteristic zero. It is however easy to check that for the results we use this assumption is not needed.

\subsection{Further notation}

We denote the ring of integers of $F$ by $O_F$, its maximal ideal by $\mf{p}_F$, and its residue field by $k_F$, of cardinality $q$. We fix a separable closure $F^s$ of $F$ and let $\Gamma=\Gamma_F$ be the Galois group of $F^s/F$, $W=W_F$ the Weil-group, and $I=I_F$ the inertia group. If $E/F$ is a finite separable extension, which we will assume to be contained in $F^s$, we will use the subscript $E$ to denote the analogous objects relative to $E$ instead of $F$. Moreover, we will denote the relative Galois group of $E/F$ by $\Gamma_{E/F}$ and the relative Weil group by $W_{E/F}$. We will write $F^u$ for the maximal unramified extension of $F$ within $F^s$ and by $\tx{Fr}$ the element of $\Gamma_{F^u/F}$ that induces the automorphism $x \mapsto x^q$ on the residue field $\ol{k_F}$.

Given a connected reductive group $G$ defined over $F$, we denote by $G_\tx{der}$ its derived subgroup, by $G_\tx{sc}$ and $G_\tx{ad}$ the simply connected cover and adjoint quotient of $G_\tx{der}$, and by $\mf{g}$ the Lie-algebra of $G$. For an element $g \in G$ we will write $\tx{Ad}(g)$ for the conjugation action of $g$ on $G$ as well as for the adjoint action of $g$ on $\mf{g}$. We will write $\mf{g}^*$ for the dual space of $\mf{g}$ and $\tx{Ad}^*(g)$ for the coadjoint action of $g$.

Given a maximal torus $S \subset G$, we will always assume that it is defined over $F$, unless explicitly stated otherwise. We will write $N(S,G)$ for the normalizer of $S$ in $G$ and $\Omega(S,G)=N(S,G)/S$ for the absolute Weyl group, a finite algebraic group defined over $F$. We write $R(S,G)$ for the corresponding set of roots. This set has an action of $\Gamma$ and for any $\alpha \in R(S,G)$ we will write $\Gamma_\alpha$ and $\Gamma_{\pm\alpha}$ for the stabilizers of the subsets $\{\alpha\}$ and $\{\alpha,-\alpha\}$ respectively, and $F_\alpha$ and $F_{\pm\alpha}$ for the corresponding fixed subfield of $F^s$. Then $F_\alpha/F_{\pm\alpha}$ is an extension of degree at most 2. Following \cite{LS87} we will call $\alpha$ symmetric if the degree of this extension is $2$, and asymmetric if the degree is $1$. Moreover, following \cite{AS09} we will call $\alpha$ ramified or unramified if the extension $F_\alpha/F_{\pm\alpha}$ is such. Note that $\alpha$ is symmetric and ramified if and only if it is inertially symmetric in the sense if \cite{KalEpi}. For each $\alpha \in R(S,G)$ we have the 1-dimensional root subspace $\mf{g}_\alpha \subset \mf{g}$, which is defined over $F_\alpha$.

We will write $\mc{B}(G,F)$ for the reduced Bruhat-Tits building of $G$ and $\mc{A}(T,F)$ for the apartment associated to any maximal torus of $G$ which is maximally split (this notation is slightly different than the one used by other authors, who prefer to write $\mc{A}(A_T,F)$, where $A_T$ is the maximal split subtorus of $T$). For any $x \in \mc{B}(G,F)$ the corresponding parahoric subgroup of $G(F)$ will be denoted by $G(F)_{x,0}$, and more generally the Moy-Prasad filtration subgroups \cite{MP94,MP96} will be denoted by $G(F)_{x,r}$ for any $r \in \R_{\geq 0}$. On the Lie-algebra we have the analogous filtration lattices $\mf{g}(F)_{x,r}$ for any $r \in \R$. It is sometimes convenient to use the shorthand notation $G(F)_{x,r:s}=G(F)_{x,r}/G(F)_{x,s}$ for $r<s$, as well as $G(F)_{x,r+}=\bigcup_{s>r}G(F)_{x,s}$.

In the special case $G=\tx{Res}_{E/F}\mb{G}_m$, the Moy-Prasad filtration can be described simply as $E^\times_0=O_E^\times$ and $E^\times_r = 1+\mf{p}_E^{\lceil er \rceil}$ for $r>0$, where $e$ is the ramification index of $E/F$. We shall also use the filtration of the additive group $E$ given by $E_0=O_E$ and $E_r=\mf{p}_E^{\lceil er \rceil}$ for $r>0$.

\section{Regular supercuspidal representations} \label{sec:regsc}

\subsection{Basics on $p$-adic tori} \label{sub:tori}

Let $S$ be a torus defined over $F$. The topological group $S(F)$ has a unique maximal bounded subgroup $S(F)_b$ (which is also the unique maximal compact subgroup, as $F$ is locally compact) and this group is equipped with a decreasing filtration $S(F)_r$ indexed by the non-negative real numbers, namely the Moy-Prasad filtration. When the splitting field of $S$ is wildly ramified over $F$ it is known that this filtration exhibits some pathologies, which are not present when for some tamely ramified extension $E/F$ the torus $S \times E$ becomes induced, see \cite[\S4]{Yu03}. In particular, the pathologies are not present when the splitting field of $S$ is tamely ramified over $F$. We will call such $S$ \emph{tame} for short.

We recall the definition of $S(F)_r$. For $r=0$ there are two ways to define the subgroup $S(F)_0$. The torus $S$ possesses an lft-Neron model $\mf{S}^\tx{lft}$ by \cite[\S10]{BLR90}. This is a smooth group scheme over $O_F$ satisfying a certain universal property. It is locally of finite type and the maximal subgroup-scheme of finite type is called the ft-Neron model $\mf{S}^\tx{ft}$. Both models share the same neutral connected component, called the connected Neron model $\mf{S}^\circ$. Then $S(F)_0=\mf{S}^\circ(O_F)$. One also has $S(F)_b=\mf{S}^\tx{ft}(O_F)$.

A second way to define $S(F)_0$ is via the Kottwitz homomorphism \cite[\S7]{Kot97}. This is a functorial surjective homomorphism $S(F) \to X_*(S)_I^\tx{Fr}$. The kernel of this homomorphism is $S(F)_0$, and the preimage of the torsion subgroup $[X_*(S)_I^\tx{Fr}]_\tx{tor}$ is $S(F)_b$. See the first note at the end of \cite{Rap05}.

From this description it is obvious that when $S$ is unramified the equality $S(F)_b=S(F)_0$ holds. In fact, it holds slightly more generally. We shall call $S$ \emph{inertially induced} if $S \times F^u$ is an induced torus. This is equivalent to demanding that $X_*(S)$, or equivalently $X^*(S)$, has a basis invariant under the action of $I_F$. In that case $X_*(S)_I$ is torsion-free and thus $S(F)_b=S(F)_0$.

For $r>0$, the definition of $S(F)_r$ is
\[ S(F)_r = \{s \in S(F)_0| \forall \chi \in X^*(S),\ \tx{ord}(\chi(s)-1) \geq r \}, \]
see \cite[\S3.2]{MP96} and \cite[\S4.2]{Yu03}. Denoting by $S(F)_{r+}$ the union of $S(F)_s$ over $s>r$ we see that $S(F)_{0+}$ is precisely the pro-$p$-Sylow subgroup of $S(F)_0$.

We shall call $S$ \emph{wildly induced} if $S \times F^\tx{tr}$ is an induced torus, where $F^\tx{tr}/F$ is the maximal tamely ramified extension. This condition was called ``Condition (T)'' in \cite[\S4.7.1]{Yu03}. It is implied by the stronger condition of $S$ being tame. It is also equivalent to demanding that $X_*(S)$, or equivalently $X^*(S)$, has a basis invariant under the action of $P_F$. In that case $X_*(S)_I=[X_*(S)_P]_{I/P}$ has no $p$-torsion. This implies that for $r>0$ we have the simpler description
\begin{equation} \label{eq:mpr} S(F)_r = \{s \in S(F)| \forall \chi \in X^*(S),\ \tx{ord}(\chi(s)-1) \geq r \}. \end{equation}
Indeed, the right hand side lies in $S(F)_b$ and is pro-$p$, therefore must lie in $S(F)_0$. For an argument that does not involve the Kottwitz homomorphism, see \cite[4.7.2]{Yu03}.

\begin{lem} \label{lem:mpex1} If $1 \rw A \rw B \rw C \rw 1$ is an exact sequence of tame tori and $r>0$, then
\[ 1 \rw A(F)_r \rw B(F)_r \rw C(F)_r \rw 1 \]
is also exact. If $A \to B$ is an isogeny of tame tori whose kernel has order prime to $p$ and $r>0$, then $A(F)_r \to B(F)_r$ is a bijection.
\end{lem}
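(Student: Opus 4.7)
The strategy for both claims is to reduce to a statement on Lie algebras, using the canonical isomorphism $S(F)_r/S(F)_{r+} \cong \mf{s}(F)_r/\mf{s}(F)_{r+}$ available for tame tori (cf.\ \cite[\S4]{Yu03}), combined with successive approximation to pass from exactness on graded pieces to exactness of the filtered groups themselves --- this works because $S(F)_r$ is complete in the Moy--Prasad topology for $r>0$.

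For the first claim, injectivity is immediate from $A \hookrightarrow B$ being a closed embedding, and the identity $A(F) \cap B(F)_r = A(F)_r$ follows from the characterization \eqref{eq:mpr} together with the surjectivity of the restriction $X^*(B) \twoheadrightarrow X^*(A)$ (since $X_*(A)$ is saturated in $X_*(B)$, the cokernel $X_*(C)$ being $\Z$-free). The main work is the surjectivity $B(F)_r \twoheadrightarrow C(F)_r$, which I will deduce from the Lie-algebra analogue: $0 \to \mf{a}(F)_r \to \mf{b}(F)_r \to \mf{c}(F)_r \to 0$ is exact for $r>0$. Fix a finite tame Galois splitting field $E$ of $B$, set $\Gamma=\Gal(E/F)$ and $n = \lceil e_{E/F}\, r \rceil$. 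Over $E$ the sequence is obtained by tensoring the short exact sequence of cocharacter lattices with $\mf{p}_E^n$ and is therefore exact. Taking $\Gamma$-invariants, the desired surjectivity over $F$ reduces to the vanishing $H^1(\Gamma, X_*(A) \otimes_\Z \mf{p}_E^n) = 0$.

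The main obstacle is this cohomological vanishing, the subtlety being that $\Gamma$ need not have order prime to $p$ (it can have non-trivial Frobenius part). To handle it, I would use the intermediate unramified subextension $F \subset E^u \subset E$. Since $E/E^u$ is totally tamely ramified, $\Gal(E/E^u)$ has order prime to $p$, and $\mf{p}_E^n$ is pro-$p$ as an abelian group, so $\hat H^*(\Gal(E/E^u),\mf{p}_E^n) = 0$. The invariants are $(\mf{p}_E^n)^{\Gal(E/E^u)} = \mf{p}_{E^u}^{\lceil n/e \rceil}$, and $\Gal(E^u/F)$ has vanishing Tate cohomology on $\mf{p}_{E^u}^m = \pi_F^m O_{E^u}$ by the normal integral basis theorem for unramified extensions ($O_{E^u}$ is $O_F[\Gal(E^u/F)]$-free). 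Combining via Hochschild--Serre, $\mf{p}_E^n$ is cohomologically trivial over $\Gamma$, i.e.\ has projective dimension $\leq 1$ as a $\Z[\Gamma]$-module. Tensoring over $\Z$ with the $\Z$-free lattice $X_*(A)$ preserves this property (because $P \otimes_\Z N$ is $\Z[\Gamma]$-projective whenever $P$ is $\Z[\Gamma]$-projective and $N$ is $\Z$-free), yielding the needed vanishing.

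For the second claim, the isogeny $\phi:A \to B$ with $K := \ker\phi$ of order prime to $p$ induces a $\Gamma$-equivariant isomorphism $d\phi: \mf{a} \to \mf{b}$ (the kernel is etale, so its Lie algebra is zero), and the pullback $X^*(B) \hookrightarrow X^*(A)$ has finite cokernel of order dividing $|K|$, hence consisting of units. Comparing the ord-conditions in \eqref{eq:mpr} for characters of $A$ and of $B$ then shows $d\phi(\mf{a}(F)_r) = \mf{b}(F)_r$ for $r>0$. Transferring to groups via graded pieces and successive approximation gives $A(F)_r \cong B(F)_r$. Injectivity is even simpler: the kernel of $A(F)_r \to B(F)_r$ is contained in $K(F) \cap A(F)_r$, a finite prime-to-$p$ subgroup of a pro-$p$ group, hence trivial.
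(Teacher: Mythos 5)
Your proof is correct, but it takes a genuinely different route from the paper's. The paper stays entirely multiplicative and avoids both the Lie algebra and Hochschild--Serre: it records (via \eqref{eq:mpr}) that over the tame splitting field $E$ one has $S(E)_r = X_*(S) \otimes_\Z E^\times_r$ for $r>0$ and $S(F)_r = S(E)_r^{\Gamma_{E/F}}$, applies $X_*$ to the short exact sequence of tori to get an exact sequence of finite-rank free $\Z$-modules with $\Gamma_{E/F}$-action (hence $\Z$-split), tensors with $E^\times_r$, and then takes $\Gamma_{E/F}$-invariants; the needed vanishing of $H^1(\Gamma_{E/F}, A(E)_r)$ is simply cited from \cite[Proposition 2.2]{Yu01} rather than re-derived. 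The isogeny case is then a one-liner: $X_*(-)\otimes_\Z E^\times_r$ carries the isogeny to a bijection because $E^\times_r$ is a pro-$p$ group uniquely divisible by the prime-to-$p$ integer $|\ker|$, and invariants preserve bijectivity. What your approach buys is self-containment: you re-prove the $H^1$-vanishing from scratch by decomposing $E/F$ into its totally (tamely) ramified prime-to-$p$ part and its unramified part, establishing cohomological triviality of $\mf{p}_E^n$ via the normal integral basis theorem, and tensoring with a free lattice. What it costs is the extra scaffolding — the Moy--Prasad isomorphism $S(F)_r/S(F)_{r+} \cong \mf{s}(F)_r/\mf{s}(F)_{r+}$, functoriality thereof, and a successive-approximation/completeness argument to lift exactness from graded pieces to the filtered groups — none of which the paper's purely multiplicative framing needs, since taking $\Gamma_{E/F}$-invariants of an exact sequence of complete groups with vanishing $H^1$ directly yields exactness over $F$. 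Your separate observation that $A(F)\cap B(F)_r=A(F)_r$ is also rendered unnecessary in the paper's version, where left-exactness of $(-)^{\Gamma_{E/F}}$ handles it automatically.
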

\begin{proof} Let $E/F$ be a tame finite Galois extension splitting the tori. We have $A(E)_r = X_*(A) \otimes E^\times_r$ and $A(F)_r=A(E)_r^{\Gamma_{E/F}}$, according to \eqref{eq:mpr} . Applying the functor $X_*$ to the exact sequence of tori produces an exact sequence of finite rank free $\Z$-modules with $\Gamma_{E/F}$ action, which remains exact after $\otimes_\Z E^\times_r$, leading to
\[ 1 \rw A(E)_r \rw B(E)_r \rw C(E)_r \rw 1.\]
Taking $\Gamma_{E/F}$-invariants and applying \cite[Proposition 2.2]{Yu01} finishes the proof.

For the second point, the functor $X_*(-)\otimes_\Z E^\times_r$ turns the isogeny $A \to B$ into a bijection, which remains bijective after taking $\Gamma_{E/F}$-fixed points.
\end{proof}

\begin{lem} \label{lem:mpex2} If $1 \to A \to B \to C \to 1$ is an exact sequence of tori and $A$ is inertially induced, then for $r=0$ and $r=0+$
\[ 1 \rw A(F)_r \rw B(F)_r \rw C(F)_r \rw 1 \]
is also exact.
\end{lem}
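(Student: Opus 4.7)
The case $r = 0+$ will follow immediately from Lemma \ref{lem:mpex1}: since $S(F)_{0+} = \bigcup_{s > 0} S(F)_s$ and filtered colimits of abelian groups preserve exactness, the $r = 0+$ statement is obtained by passing to the direct limit over $s \searrow 0$ in the exact sequences given by Lemma \ref{lem:mpex1}.

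For $r = 0$ my plan is to pass first to $K := \hat{F^u}$, the completion of the maximal unramified extension, and then descend to $F$ via Frobenius invariants. The inertial-induction hypothesis behaves particularly well over $K$: since $A \times_F K$ is an induced $K$-torus and $K$ is Henselian with algebraically closed residue field $\bar{k_F}$, the Brauer group of any finite separable extension of $K$ vanishes, whence $H^1(K, A) = 0$ by Shapiro. So $1 \to A(K) \to B(K) \to C(K) \to 1$ is exact. Kottwitz over $K$ takes the simpler form $T(K) \twoheadrightarrow X_*(T)_{I_F}$ with kernel $T(K)_0$ (no Frobenius quotient, since the residue field is algebraically closed). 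I would then verify exactness of $0 \to X_*(A)_{I_F} \to X_*(B)_{I_F} \to X_*(C)_{I_F} \to 0$: right-exactness is formal, and the kernel of the leftmost map sits inside the image of $H_1(I_F, X_*(C))$, which is a torsion group since the $I_F$-action factors through a finite quotient; but $X_*(A)_{I_F}$ is torsion-free for inertially induced $A$ (as noted immediately before the lemma), so that kernel vanishes. A snake-lemma chase on the Kottwitz diagram then produces the exact sequence $1 \to A(K)_0 \to B(K)_0 \to C(K)_0 \to 1$.

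By functoriality of Kottwitz one has $A(F)_0 = A(K)_0^{\mathrm{Gal}(K/F)}$, and similarly for $B$ and $C$. Taking $\hat\Z \cong \mathrm{Gal}(K/F)$-invariants of the exact sequence over $K$ yields
\[ 1 \to A(F)_0 \to B(F)_0 \to C(F)_0 \to H^1(\hat\Z, A(K)_0) \to \cdots, \]
so the claim reduces to the vanishing of $H^1(\hat\Z, A(K)_0)$. This should follow from Lang's theorem applied to the connected Neron model $\mathfrak{A}^\circ$: this is a smooth affine $O_F$-group scheme with geometrically connected special fiber, and $A(K)_0 = \mathfrak{A}^\circ(O_K)$, so $H^1(\hat\Z, \mathfrak{A}^\circ(O_K)) = 0$ by the standard Lang-Steinberg vanishing for smooth connected group schemes over a strictly Henselian base.

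The main technical hurdle will be the snake-lemma chase in the $K$-step, together with the verification of injectivity of $X_*(A)_{I_F} \to X_*(B)_{I_F}$ via the torsion-versus-torsion-free dichotomy; once those are in hand, the remaining descent via Lang's theorem is formal.
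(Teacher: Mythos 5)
Your treatment of the $r=0+$ case has a gap. You propose to obtain it by passing to the direct limit over $s \searrow 0$ of the exact sequences furnished by Lemma~\ref{lem:mpex1}, but that lemma is stated for exact sequences of \emph{tame} tori, whereas the present lemma imposes no tameness hypothesis on $B$ or $C$ — only that $A$ be inertially induced. So you cannot invoke Lemma~\ref{lem:mpex1} here as stated. The paper instead derives $r=0+$ \emph{from} the case $r=0$: as noted just before the lemma, $S(F)_{0+}$ is the pro-$p$-Sylow subgroup of $S(F)_0$ for any torus $S$, and taking pro-$p$-Sylow subgroups of an exact sequence of profinite abelian groups preserves exactness (one uses the canonical decomposition of a profinite abelian group into its pro-$\ell$-components). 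This route requires no hypotheses beyond those already in the lemma.

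For $r = 0$ your argument is essentially the paper's, dressed slightly differently. You work over $K = \widehat{F^u}$ whereas the paper works over $F^u$; you get the exactness of the top row from Shapiro plus triviality of Galois cohomology in degree one (you say ``Brauer group'' but mean Hilbert~90 — the relevant group is $H^1$, not $H^2$), whereas the paper quotes Steinberg's theorem $H^1(I_F, A(F^u))=0$; in both cases the bottom row $0 \to X_*(A)_I \to X_*(B)_I \to X_*(C)_I \to 0$ is shown exact by the same torsion-versus-torsion-free observation, the kernel sequence is extracted by the kernel--cokernel lemma, and the descent to $F$ goes through the vanishing of $H^1(\langle\mathrm{Fr}\rangle, A(\cdot)_0)$ via the smoothness of the connected N\'eron model and Lang's theorem on the special fiber (the paper cites \cite[Lemma~2.3.1]{DR09} for exactly this). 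None of these variations affects the substance; the only real defect is the $r=0+$ reduction discussed above.
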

\begin{proof}
The case of $r=0+$ follows from the case of $r=0$ because $(-)_{0+}$ is the pro-$p$-Sylow subgroup of $(-)_0$ as remarked above. For the case $r=0$ we apply again $X_*$ to the exact sequence of tori to obtain an exact sequence of finite-rank free $\Z$-modules with $\Gamma$-action. We claim that after taking inertial co-invariants the sequence remains exact. The only issue would be the injectivity of $X_*(A)_I \to X_*(B)_I$. We may of course replace $I$ by a suitable finite quotient through which it acts. The kernel of this map is the image of the connecting homomorphism $H_1(I,X_*(C)) \to X_*(A)_I$. But $H_1(I,X_*(C))$ is finite, while by assumption $X_*(A)_I$ is torsion-free, so this connecting homomorphism is zero. We thus obtain the commutative diagram with exact rows
\[ \xymatrix{
1\ar[r]&A(F^u)\ar[r]\ar[d]&B(F^u)\ar[r]\ar[d]&C(F^u)\ar[r]\ar[d]&1\\
0\ar[r]&X_*(A)_I\ar[r]&X_*(B)_I\ar[r]&X_*(C)_I\ar[r]&0
}\]
The exactness of the top row on the right follows from $H^1(I,A(F^u))=0$ due to Steinberg's theorem \cite[Theorem 1.9]{Ste65reg}. The vertical maps are surjective. The kernel-cokernel lemma implies that the sequence
\[ 1\to A(F^u)_0 \to B(F^u)_0 \to C(F^u)_0 \to 1 \]
is exact. It is well known that $H^1(\tx{Fr},A(F^u)_0)=0$, see e.g. \cite[Lemma 2.3.1]{DR09}. Taking Frobenius-invariants finishes the proof.
\end{proof}

Every torus $S$ defined over $F$ has a maximal unramified subtorus $S' \to S$, characterized by $X_*(S')=X_*(S)^{I_F}$, as well as a maximal unramified quotient $S \to S''$, characterized by $X^*(S'')=X^*(S)^{I_F}$. One has $X^*(S')=X^*(S)_{I_F,\tx{free}}$ and $X_*(S'')=X_*(S)_{I_F,\tx{free}}$, i.e. the torsion-free quotient of the inertial coinvariants of $X^*(S)$ or $X_*(S)$ respectively.

\begin{lem} \label{lem:sred} Let $S$ be a tame torus defined over $F$ and let $S' \subset S$ be the maximal unramified subtorus. The natural map
\[ S'(F)_0/S'(F)_{0+} \to S(F)_0/S(F)_{0+} \]
is an isomorphism.
\end{lem}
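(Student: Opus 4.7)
My plan is to apply Lemma \ref{lem:mpex2} to the short exact sequence $1 \to S' \to S \to T \to 1$ with $T = S/S'$. First I check the hypotheses. The quotient $T$ is genuinely a torus because $X_*(T) = X_*(S)/X_*(S)^{I_F}$ is torsion-free: any $x \in X_*(S)$ with $nx \in X_*(S)^{I_F}$ for some $n > 0$ is already $I_F$-fixed, by torsion-freeness of $X_*(S)$. Moreover $S'$ is trivially inertially induced, since $I_F$ acts trivially on $X_*(S') = X_*(S)^{I_F}$. Applying Lemma \ref{lem:mpex2} at $r = 0$ and $r = 0+$ and splicing the two resulting exact sequences by the snake lemma (all vertical maps are inclusions, so kernels vanish) yields
\[ 1 \to S'(F)_0/S'(F)_{0+} \to S(F)_0/S(F)_{0+} \to T(F)_0/T(F)_{0+} \to 1. \]
The injectivity asserted by the lemma is therefore immediate, and the remaining task is to show $T(F)_0 = T(F)_{0+}$.

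The key observation is that $X^*(T)^{I_F} = 0$. Dually to $X_*(S') = X_*(S)^{I_F} \hookrightarrow X_*(S)$ one has an exact sequence $0 \to X^*(T) \to X^*(S) \to X^*(S') \to 0$. Tensoring with $\mathbb{Q}$ and applying Maschke's theorem (valid because tameness of $S$ makes the $I_F$-action factor through a finite quotient, and we work over a $\mathbb{Q}$-vector space), the restriction map identifies $(X^*(S) \otimes \mathbb{Q})^{I_F}$ with $X^*(S') \otimes \mathbb{Q}$, as both canonically parameterize $\mathbb{Q}$-linear forms on $(X_*(S) \otimes \mathbb{Q})^{I_F} = X_*(S') \otimes \mathbb{Q}$. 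Hence $(X^*(T) \otimes \mathbb{Q})^{I_F} = 0$, and since $X^*(T)$ is torsion-free (a submodule of $X^*(S)$) this upgrades to $X^*(T)^{I_F} = 0$.

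To translate this vanishing into $T(F)_0 = T(F)_{0+}$, I would invoke the structure of the connected Neron model of a tame torus (see \cite[\S4]{Yu03}): the finite quotient $T(F)_0/T(F)_{0+}$ is canonically the group of $k_F$-points of the maximal toric quotient $\ol{T}^\text{tor}$ of the special fiber of the connected Neron model of $T$, and the character module of $\ol{T}^\text{tor}$ (as a Frobenius module) is canonically $X^*(T)^{I_F}$. Vanishing of the latter forces $\ol{T}^\text{tor}$ to be zero-dimensional, yielding the desired equality.

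The main obstacle is this last step, which rests on the structure theory of Neron models of tame tori: one must carefully pin down both the identification $T(F)_0/T(F)_{0+} \cong \ol{T}^\text{tor}(k_F)$ and that of $X^*(\ol{T}^\text{tor})$ with $X^*(T)^{I_F}$. A hands-on alternative is to descend through a tame Galois splitting field $E/F$, write $T(E)_0 = X_*(T) \otimes O_E^\times$ and $T(E)_{0+} = X_*(T) \otimes (1+\mathfrak{p}_E)$, and exploit the vanishing of $X_*(T)^{I_F}$ together with Lemma \ref{lem:mpex1}-style arguments to deduce $T(F)_0 = T(F)_{0+}$ directly — but this route essentially reproves the Neron-model identification by hand.
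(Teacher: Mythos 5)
Your reduction is exactly the one the paper performs: the short exact sequence $1 \to S' \to S \to T \to 1$ with $T=S/S'$, Lemma \ref{lem:mpex2} at $r=0$ and $r=0+$, and the resulting exact sequence $1 \to S'(F)_0/S'(F)_{0+} \to S(F)_0/S(F)_{0+} \to T(F)_0/T(F)_{0+} \to 1$, reducing everything to $T(F)_0 = T(F)_{0+}$. (The paper attributes injectivity to Lemma \ref{lem:mpex1} and uses Lemmas \ref{lem:mpex1} and \ref{lem:mpex2} for surjectivity, but the content is the same; your snake-lemma phrasing is clean.)

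Where the two diverge is the final step, and this is where your argument is genuinely incomplete, as you yourself note. The paper does \emph{not} prove $T(F)_0 = T(F)_{0+}$; it cites \cite[Theorem 1.3]{NX91} in characteristic zero and \cite{Popov03} in general. You instead deduce the needed vanishing $X^*(T)^{I_F}=0$ (correct, via the Maschke/perfect-pairing argument) and then sketch a Neron-model argument that $\ol{T}^{\mathrm{tor}}$ is zero-dimensional. The rank computation $\dim \ol{T}^{\mathrm{tor}} = \mathrm{rank}\, X^*(T)^{I_F}$ is what you actually need, and that part of your sketch is plausible; but the two identifications you flag as ``the main obstacle'' — $T(F)_0/T(F)_{0+} \cong \ol{T}^{\mathrm{tor}}(k_F)$ and the dimension formula for $\ol{T}^{\mathrm{tor}}$ via the $I_F$-invariants — are precisely the substance of the references the paper invokes, and you do not supply proofs of them. (One small imprecision: the truly canonical identification is for the cocharacter module of the maximal torus of the special fiber with $X_*(T)^{I_F}$; it and $X^*(T)^{I_F}$ have the same rank but need not be $\mathbb{Z}$-dual. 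This doesn't affect the rank statement, which is all you use.) So either cite the references the paper does, or carry through the ``hands-on alternative'' you mention via the tame splitting field; as written, the argument for $T(F)_0 = T(F)_{0+}$ is a sketch, not a proof.
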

\begin{proof}
The injectivity of this map follows from Lemma \ref{lem:mpex1}. Surjectivity follows from Lemmas \ref{lem:mpex1} and \ref{lem:mpex2} and the fact that $[S/S'](F)_0=[S/S'](F)_{0+}$. The latter is proved in \cite[Theorem 1.3]{NX91} when $F$ has characteristic zero, or in Propositions 6 and 8 and Corollary to Theorem 4 of \cite{Popov03} in general.
\end{proof}

\begin{lem} \label{lem:h2van} Let $S$ be a tame torus. Then we have
\[ H^2(\Gamma_F/I_F, S(F^u)_b) = H^2(\Gamma_F/I_F,S(F^u)_0) = 0. \]
\end{lem}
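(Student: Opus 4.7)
The group $\Gamma_F/I_F = \mathrm{Gal}(F^u/F)$ is topologically procyclic, generated by Frobenius, so it has cohomological dimension one on torsion modules and strict cohomological dimension at most two in general. The quotient $S(F^u)_b/S(F^u)_0 \cong [X_*(S)_{I_F}]_{\mathrm{tor}}$ is a finite torsion module, whose $H^2$ therefore vanishes; the associated long exact sequence reduces the claim for $S(F^u)_b$ to the claim for $S(F^u)_0$.

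For the latter, I would use the Moy-Prasad exact sequence
\[ 1 \to S(F^u)_{0+} \to S(F^u)_0 \to \overline{S}(\overline{k}_F) \to 1, \]
where $\overline{S}$ is the reductive quotient of the special fiber of the connected N\'eron model of $S$ --- a torus over $k_F$, by tameness. For the right-hand quotient, Lang's theorem applied to $\mathrm{Res}_{\mathbb{F}_{q^n}/k_F}\overline{S}$ gives surjectivity of each norm $\overline{S}(\mathbb{F}_{q^n})\to\overline{S}(k_F)$, so
\[ H^2(\Gamma_F/I_F,\overline{S}(\overline{k}_F)) \;=\; \varinjlim_n \overline{S}(k_F)/\mathrm{Nm}_n\overline{S}(\mathbb{F}_{q^n}) \;=\; 0. \]
For the pro-$p$ kernel, further filter by the Moy-Prasad subgroups $S(F^u)_r$ with $r>0$; by tameness of $S$ the graded pieces identify with finite-dimensional $\overline{k}_F$-vector spaces $\mathfrak{s}(F^u)_r/\mathfrak{s}(F^u)_{r+}$ carrying a Frobenius-semilinear action (see \cite[\S4]{Yu03}), for which the additive analogue of Lang's theorem (surjectivity of the trace $\mathbb{F}_{q^n}\to\mathbb{F}_q$) yields the vanishing of $H^2$. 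An inverse-limit argument then propagates this to $H^2(\Gamma_F/I_F, S(F^u)_{0+}) = 0$, and a final long exact sequence of the Moy-Prasad filtration gives the desired vanishing.

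The main obstacle will be the inverse-limit step for the pro-$p$ part $S(F^u)_{0+}$: to commute cohomology with the projective limit over $\{S(F^u)_0/S(F^u)_r\}_r$, one needs the vanishing of the Milnor $\varprojlim^1$-term on the tower. This reduces to a Mittag-Leffler condition on the $H^1$ of the graded pieces, each a finite-dimensional $k_F$-vector space by additive Lang, so the issue is essentially routine but must be checked carefully.
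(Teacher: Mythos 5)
Your proposal follows the same overall architecture as the paper's proof: reduce from $S(F^u)_b$ to $S(F^u)_0$ via the Kottwitz exact sequence (which has torsion quotient $[X_*(S)_{I}]_{\tx{tor}}$), split $S(F^u)_0$ into the reductive quotient $\mf{S}^\circ(\ol{k_F})$ and the pro-$p$ part $S(F^u)_{0+}$, handle the reductive quotient by finite-field considerations, and handle the pro-$p$ part through the Moy--Prasad filtration with unipotent graded pieces. The two places you diverge, and where the paper is smoother: (i) For the torsion pieces (the Kottwitz quotient and $\mf{S}^\circ(\ol{k_F})$), the paper simply invokes the general fact that $H^2(\Gamma_F/I_F, A)=0$ for any \emph{torsion} discrete $\Gamma_F/I_F$-module $A$ (Serre, \emph{Local Fields}, Ch.~XIII \S1, Prop.~2), whereas you argue via norm-surjectivity from Lang's theorem --- this works, but strictly speaking Lang gives $H^1=0$, and you need a supplementary Herbrand-quotient or Brauer-group step to deduce $H^2=0$; the torsion-cd-one fact handles both at once. (ii) More importantly, for the pro-$p$ part you correctly flag the commutation of $H^2$ with $\varprojlim_r$ as the delicate step and propose a Mittag--Leffler check; the paper avoids this entirely by first writing $H^2(\Gamma_F/I_F,S(F^u)_{0+})=\varinjlim_{F'} H^2(\Gamma_{F'/F},S(F')_{0+})$ over finite unramified $F'/F$, which reduces to \emph{finite}-group cohomology, and then cites Serre, \emph{Local Fields}, Ch.~XII \S3, Lemma 3 (cohomology of a finite group acting on a complete, separated filtered group vanishes whenever it vanishes on each graded piece). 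That lemma bypasses the $\varprojlim^1$ analysis you were worried about, so you may wish to replace your inverse-limit step with this colimit-then-filtration argument.
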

\begin{proof}
We shall use \cite[Ch. XIII, \S1,Prop. 2]{SerLF}, according to which for any torsion $\Gamma_F/I_F$-module $A$ we have $H^2(\Gamma_F/I_F,A)=0$ . This applies in particular when $A$ is the set of $\ol{k_F}$-rational points of a commutative linear algebraic group defined over $k_F$. Kottwitz's homomorphism leads to the exact sequence of $\Gamma_F/I_F$-modules
\[ 1 \to S(F^u)_0 \to S(F^u)_b \to [X_*(S)_I]_\tx{tor} \to 0. \]
From $H^2(\Gamma_F/I_F,[X_*(S)_I]_\tx{tor})=0$ we see that $H^2(\Gamma_F/I_F,S(F^u)_b)=0$ would follow from $H^2(\Gamma_F/I_F,S(F^u)_0)=0$. To see the latter, we use the fact that $S(F^u)_0/S(F^u)_{0+}=\mf{S}^\circ(\ol{k_F})$ and hence $H^2(\Gamma_F/I_F,S(F^u)_0/S(F^u)_{0+})=0$. On the other hand, we have $H^2(\Gamma_F/I_F,S(F^u)_{0+})=\varinjlim H^2(\Gamma_{F'/F},S(F')_{0+})$, where the colimit runs over the finite unramified extensions of $F$. The group $S(F')_{0+}$ is equal to $\varprojlim_r S(F')_{0+}/S(F')_r$, where $S(F')_r$ is the $r$-th Moy-Prasad filtration subgroup. The steps of the filtration are discrete and the quotients are the $k_{F'}$-points of an abelian unipotent algebraic group $\ms{U}_r$ defined over $k_F$. From the inflation restriction sequence
\[ H^1(\Gamma_{k_{F'}},\ms{U}(\ol{k_F})) \to  H^2(\Gamma_{k_{F'}/k_F},\ms{U}(k_{F'})) \to H^2(\Gamma_{k_F},\ms{U}(\ol{k_F})) \]
and the vanishing of the two outer terms we see that the middle term vanishes. From \cite[Chap. XII, \S3, Lem 3]{SerLF} we see that $H^2(\Gamma_{F'/F},S(F')_{0+})$ vanishes, and hence that $H^2(\Gamma_F/I_F,S(F^u)_{0+})$ vanishes.
\end{proof}

Consider now a tame torus $S$ defined over $F$ and its complex dual torus $\hat S$. The local Langlands correspondence provides an isomorphism of abelian groups $H^1_\tx{cts}(W_F,\hat S) \to \tx{Hom}_\tx{cts}(S(F),\C^\times)$. This bijection is functorial in $S$ and is characterized uniquely by a short list of properties \cite{Yu09}. According to \cite[Theorem 7.10]{Yu09}, if $\varphi \in H^1(W_F,\hat S)$ corresponds to $\theta : S(F) \to \C^\times$, then for any $r>0$ the restriction $\theta|_{S(F)_r}$ is zero if and only if the restriction $\varphi|_{I^r}$ is zero. The following two lemmas extend this result to the restrictions of $\theta$ to $S(F)_0$ and $S(F)_b$.

\begin{lem} \label{lem:llcres1} The restriction $\theta|_{S(F)_0}$ is trivial if and only if $\varphi$ lies in the kernel of the restriction map $H^1(W_F,\hat S) \to H^1(I_F,\hat S)$, or, equivalently, belongs to the image of the inflation map $H^1(W_F/I_F,\hat S^{I_F}) \to H^1(W_F,\hat S)$.
\end{lem}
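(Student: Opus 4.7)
The equivalence of the two conditions on $\varphi$ is a direct consequence of the inflation-restriction exact sequence
\[ 0 \to H^1(W_F/I_F, \hat S^{I_F}) \to H^1(W_F, \hat S) \to H^1(I_F, \hat S), \]
so it suffices to show that $\theta|_{S(F)_0}$ is trivial if and only if $\varphi|_{I_F}$ is trivial.

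The plan is to decompose $\theta|_{S(F)_0}$ into its restriction to the pro-$p$ subgroup $S(F)_{0+}$ and its image on the residual quotient $S(F)_0/S(F)_{0+}$. For the pro-$p$ part, Yu's theorem applied at all sufficiently small $r>0$ immediately gives $\theta|_{S(F)_{0+}}$ trivial if and only if $\varphi|_{P_F}=0$. For the residual quotient, Lemma \ref{lem:sred} identifies $S(F)_0/S(F)_{0+}$ with the corresponding quotient $S'(F)_0/S'(F)_{0+}$ of the maximal unramified subtorus $j:S'\hookrightarrow S$, reducing the residual condition to $\theta|_{S'(F)_0}$ being trivial. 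By functoriality of the LLC under the dual surjection $\hat j:\hat S\twoheadrightarrow\hat{S'}$, this is equivalent to the pushforward $\varphi':=\hat j\circ\varphi\in H^1(W_F,\hat{S'})$ being trivial on $I_F$, via the unramified case of the lemma: both $H^1(W_F/I_F,\hat{S'})$ and $\mathrm{Hom}(S'(F)/S'(F)_0,\C^\times)$ are canonically isomorphic to $\hat{S'}/(\mathrm{Fr}-1)\hat{S'}$, the first by group cohomology of $W_F/I_F\cong\Z$ and the second by the Kottwitz identification $S'(F)/S'(F)_0\cong X_*(S')^{\mathrm{Fr}}$ together with Pontryagin duality, and the LLC for unramified tori matches these presentations by construction.

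The main obstacle is the recombination step. One direction is immediate: $\varphi|_{I_F}=0$ implies both $\varphi|_{P_F}=0$ and $\varphi'|_{I_F}=0$. For the converse, I would use the exact sequence of dual tori $1\to \hat K\to\hat S\to\hat{S'}\to 1$, with $\hat K=\ker\hat j$, together with the algebraic observation that $X^*(\hat K)^{I_F}=\ker(X^*(S)\twoheadrightarrow X^*(S'))^{I_F}$ vanishes: an $I_F$-invariant character of $S$ killing the maximal unramified subtorus $S'$ must vanish, because the $\Q$-rational pairing between $X^*(S)^{I_F}_{\Q}$ and $X_*(S)^{I_F}_{\Q}=X_*(S')_{\Q}$ is non-degenerate. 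Hence $\hat K^{I_F}$ is finite, and a diagram chase in the long exact cohomology sequence for $1\to\hat K\to\hat S\to\hat{S'}\to 1$ restricted to $I_F/P_F$, combined with the already-established vanishing of $\varphi'|_{I_F}$ and of $\varphi|_{P_F}$, then forces the class of $\varphi|_{I_F}$ to vanish in $H^1(I_F,\hat S)$, completing the proof.
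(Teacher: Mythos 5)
Your overall strategy closely parallels the paper's: reduce via the inflation-restriction sequence to showing $\theta|_{S(F)_0}$ trivial iff $\varphi|_{I_F}$ trivial, split $S(F)_0$ into the pro-$p$ part $S(F)_{0+}$ (handled by Yu's depth-preservation theorem, giving $\varphi|_{P_F}=0$) and the residual quotient (handled via Lemma \ref{lem:sred} and the maximal unramified subtorus $S'$, giving $\varphi'|_{I_F}=0$ where $\varphi'$ is the pushforward to $\hat{S'}\cong\hat S_{I_F}$). Up to this point your argument essentially matches the paper, the only genuine difference being the treatment of the unramified case: the paper reduces it to the split case via the norm map from the Weil restriction, while you argue that both $H^1(W_F/I_F,\hat{S'})$ and $\mathrm{Hom}(S'(F)/S'(F)_0,\C^\times)$ are canonically $\hat{S'}/(\mathrm{Fr}-1)\hat{S'}$ and appeal to the construction of the unramified LLC. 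That is a reasonable alternative, though ``by construction'' is doing real work there.

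The gap is in the recombination step. From $\varphi|_{P_F}=0$ you correctly deduce that $\varphi|_{I_F}$ lives in $H^1(I_F/P_F,\hat S)$, and from $\varphi'|_{I_F}=0$ that it lies in the image of $H^1(I_F/P_F,\hat K)\to H^1(I_F/P_F,\hat S)$ with $\hat K=\ker(\hat S\to\hat{S'})=(1-\bar x)\hat S$. To conclude $\varphi|_{I_F}=0$ you then need that this image is trivial, i.e.\ that $H^1(I_F/P_F,\hat K)=0$ (or at least that the transition map kills it). Your ``diagram chase'' does not establish this: the finiteness of $\hat K^{I_F}$, which is what you actually prove, is only one of two inputs needed. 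Concretely, $H^1(I_F/P_F,\hat K)$ is computed as $\ker(N_{\bar x}|_{\hat K})/(1-\bar x)\hat K$ (evaluating cocycles at a pro-generator $x$), and the paper's proof rests on two facts: first, $N_{\bar x}$ vanishes identically on $\hat K=(1-\bar x)\hat S$, so the numerator is all of $\hat K$; and second, $(1-\bar x):\hat K\to\hat K$ is an isogeny (its kernel is $\hat K\cap\hat S^{\bar x}=\hat K^{I_F}$, which is exactly your finiteness observation), hence surjective, so the denominator is also all of $\hat K$. Your proposal captures the second fact but not the first, and without it there is no reason for $H^1(I_F/P_F,\hat K)$ to vanish. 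In particular, the finiteness of $\hat K^{I_F}$ alone does not force $\varphi|_{I_F}=0$; you need to explicitly show that $\hat K$ consists entirely of norm-zero elements, which is the identity $N_{\bar x}\circ(1-\bar x)=0$ on $\hat S$.
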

\begin{proof}
Assume first that $S$ is split. The claim reduces immediately to the case $S=\mb{G}_m$, where it follows from the fact that the Artin reciprocity map $W_F \to F^\times$ carries $I_F$ surjectively onto $O_F^\times$. Assume next that $S$ is unramified. Let $E/F$ be the splitting field of $S$ and $R=\tx{Res}_{E/F}(S \times E)$. The kernel of the norm map $R \to S$ is an unramified torus. Applying Lemma \ref{lem:mpex2} to the resulting sequence of unramified tori we obtain a surjection $S(E)_0 = R(F)_0 \to S(F)_0$. Thus $\theta|_{S(F)_0}$ is trivial if and only if $[\theta\circ N]|_{S(E)_0}$ is trivial. But $\theta \circ N$ is a character of the split torus $S(E)$ whose parameter is equal to $\varphi|_{W_E}$. According to the split case, $\theta \circ N$ is trivial on $S(E)_0$ if and only if $\varphi|_{W_E}$ has trivial restriction to $I_E$. But $I_E=I_F$ and the unramified case is complete.

Assume now that $S$ is tamely ramified. Let $S' \subset S$ be the maximal unramified subtorus. According to Lemma \ref{lem:sred}, $\theta|_{S(F)_0}$ is trivial if and only if $\theta|_{S(F)_{0+}}$ and $\theta|_{S'(F)_0}$ are trivial. The parameter of $\theta|_{S'(F)}$ is the image of $\varphi$ in $H^1(W_F,\hat S_{I_F})$. If $\varphi$ has trivial image in $H^1(I_F,\hat S)$, then it has trivial images in $H^1(I_F^{0+},\hat S)$ and $H^1(I_F,\hat S_{I_F})$, so we conclude that $\theta|_{S(F)_{0+}}$ and $\theta|_{S'(F)_0}$ are trivial. Conversely, if $\theta|_{S(F)_{0+}}$ is trivial, then the image of $\varphi$ in $H^1(I_F^{0+},\hat S)$ is trivial, so $\varphi$ is inflated from $H^1(W_F/I_F^{0+},\hat S)$ and its restriction to $I_F$ is inflated from $H^1(I_F/I_F^{0+},\hat S)$. The group $I_F/I_F^{0+}$ is pro-cyclic, let $x$ be a pro-generator and let $\bar x$ be the finite-order automorphism of $\hat S$ through which $x$ acts. We have $\hat S_{I_F}=\hat S/(1-\bar x)\hat S$. If $\theta|_{S'(F)_0}$ is also trivial, then the image of $\varphi|_{I_F}$ in $H^1(I_F/I_F^{0+},\hat S_{I_F})$ is zero and hence $\varphi|_{I_F}$ comes from an element of $H^1(I_F/I_F^{0+},(1-\bar x)\hat S)$. But we claim that this cohomology group is zero. Indeed, let $N_{\bar x} : \hat S \to \hat S$ be the norm map for the action of $\bar x$. Evaluating 1-cocycles at the pro-generator $x$ provides an isomorphism from $H^1(I_F/I_F^{0+},(1-\bar x)\hat S)$ to the quotient of $\tx{ker}(N_{\bar x}|_{(1-\bar x)\hat S})$ by $(1-\bar x)(1-\bar x)\hat S$. But $N_{\bar x}$ is zero on $(1-\bar x)\hat S$, so the numerator of this quotient is equal to $(1-\bar x)\hat S$. We claim that the denominator is also equal to that. This follows from the fact that the map $(1-\bar x) : (1-\bar x)\hat S \to (1-\bar x)\hat S$ is an isogeny. Indeed, its kernel consists of those elements of $(1-\bar x)\hat S$ that are fixed by $\bar x$ and is thus equal to the intersection $(1-\bar x)\hat S \cap \hat S^{\bar x}$. This intersection is contained in the kernel of the restriction of $N_{\bar x}$ to $\hat S^{\bar x}$. But that restriction is just the $\tx{ord}(\bar x)$-power map and its kernel is finite.
\end{proof}

Note that the abelian group $\hat S^{I_F}$ might be disconnected. In fact, its group of connected components satisfies
\[ X^*(\hat S^{I_F}/\hat S^{I_F,\circ}) = X_*(S)_{I_F,\tx{tor}} \]
which means that the disconnectedness of $\hat S^{I_F}$ mirrors exactly the disconnectedness of the ft-Neron model of $S$. This motivates the following.

\begin{lem} \label{lem:llcres2}
The restriction $\theta|_{S(F)_b}$ is trivial if and only if $\varphi$ belongs to the image of the inflation map $H^1(W_F/I_F,\hat S^{I_F,\circ}) \to H^1(W_F,\hat S)$.
\end{lem}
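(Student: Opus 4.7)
Both conditions force $\theta|_{S(F)_0}$ to be trivial (for the left-hand side because $S(F)_0 \subset S(F)_b$, for the right-hand side because $\hat S^{I_F,\circ} \subset \hat S^{I_F}$), so by Lemma~\ref{lem:llcres1} we may assume $\varphi$ is inflated from a (unique) class in $H^1(W_F/I_F, \hat S^{I_F})$. Let $p : S \rw S''$ be the maximal unramified quotient of $S$, characterized by $X_*(S'') = X_*(S)_{I, \tx{free}}$. The dual map identifies $\widehat{S''}$ with the subtorus $\hat S^{I_F, \circ} \subset \hat S$ as $W_F$-modules (both carry trivial $I_F$-action). Applying Lemma~\ref{lem:llcres1} to the unramified torus $S''$, the group $H^1(W_F/I_F, \widehat{S''})$ corresponds via LLC to those characters of $S''(F)$ that are trivial on $S''(F)_0$. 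By functoriality of the Langlands correspondence for tori, the inflation $H^1(W_F/I_F, \hat S^{I_F, \circ}) \rw H^1(W_F, \hat S)$ therefore corresponds to pullback $\theta'' \mapsto \theta'' \circ p$ of such characters.

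For the forward direction, suppose $\varphi$ comes from a class in $H^1(W_F/I_F, \hat S^{I_F, \circ}) = H^1(W_F/I_F, \widehat{S''})$; then $\theta = \theta'' \circ p$ for some character $\theta''$ of $S''(F)$ trivial on $S''(F)_0$. Since $S''$ is unramified we have $S''(F)_b = S''(F)_0$, so $p(S(F)_b) \subset S''(F)_0 \subset \ker \theta''$, giving $\theta|_{S(F)_b} = 1$.

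Conversely, suppose $\theta|_{S(F)_b} = 1$, so that $\theta$ descends to a character of $S(F)/S(F)_b$. The crucial intermediate observation is that the natural map
\[ S(F)/S(F)_b \rw S''(F)/S''(F)_0 \]
is \emph{injective}. To see this, Kottwitz's homomorphism identifies $S(F)/S(F)_0 = X_*(S)_I^{\tx{Fr}}$ and $S''(F)/S''(F)_0 = X_*(S)_{I, \tx{free}}^{\tx{Fr}}$, while $S(F)_b/S(F)_0 = X_*(S)_{I, \tx{tor}}^{\tx{Fr}}$ (using the vanishing $H^1(\tx{Fr}, S(F^u)_0) = 0$ already invoked in the proof of Lemma~\ref{lem:h2van}); the above map is then the one induced by the quotient $X_*(S)_I^{\tx{Fr}} \rw X_*(S)_{I, \tx{free}}^{\tx{Fr}}$, whose kernel is precisely $X_*(S)_{I, \tx{tor}}^{\tx{Fr}}$. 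Divisibility of $\C^\times$ now allows $\theta$, viewed on the subgroup $S(F)/S(F)_b \subset S''(F)/S''(F)_0$, to be extended to a character $\theta''$ of $S''(F)$ trivial on $S''(F)_0$, so that $\theta = \theta'' \circ p$. By the description in the previous paragraph this places $\varphi$ in the image of the inflation map $H^1(W_F/I_F, \hat S^{I_F, \circ}) \rw H^1(W_F, \hat S)$, as required. The principal technical point is thus the injectivity of $S(F)/S(F)_b \into S''(F)/S''(F)_0$; once this is in hand, the remainder is formal from LLC functoriality for tori and divisibility of $\C^\times$.
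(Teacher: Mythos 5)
Your proof is correct, and it reaches the same reduction as the paper (injectivity of $S(F)/S(F)_b \to S''(F)/S''(F)_0$, then pull back characters along $p$ and invoke Lemma~\ref{lem:llcres1} for the unramified torus $S''$), but you establish the key injectivity by a different mechanism. The paper's argument is topological: it writes the preimage $X$ of $S''(F)_0$ as an extension of $S''(F)_0$ by $S_1(F)$ (where $S_1 = \ker(S \to S'')$ is inertially anisotropic, hence compact), applies the open mapping theorem to conclude $X$ is compact, and deduces $X = S(F)_b$. Your argument instead reads the injectivity directly off the Kottwitz filtration: you identify $S(F)/S(F)_0$, $S''(F)/S''(F)_0$, and $S(F)_b/S(F)_0$ with $X_*(S)_I^{\tx{Fr}}$, $X_*(S)_{I,\tx{free}}^{\tx{Fr}}$, and $X_*(S)_{I,\tx{tor}}^{\tx{Fr}}$ respectively, and observe that $X_*(S)_{I,\tx{tor}}^{\tx{Fr}}$ is exactly the kernel of the induced map between $\tx{Fr}$-fixed points (left-exactness of $(-)^{\tx{Fr}}$ applied to the torsion/free sequence). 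One small point worth spelling out in your version: you need $[X_*(S)_I^{\tx{Fr}}]_\tx{tor} = X_*(S)_{I,\tx{tor}}^{\tx{Fr}}$ (the paper phrases $S(F)_b/S(F)_0$ as the former); this holds because an element of $X_*(S)_I^{\tx{Fr}}$ is torsion in the fixed-point group if and only if it is torsion in $X_*(S)_I$. Both approaches are valid; yours is more algebraic and perhaps more transparent about where the identification of the kernel comes from, while the paper's compactness argument is self-contained within the topological structure of $S(F)$ and does not need to unwind the Kottwitz isomorphisms explicitly. Your reduction at the start (noting both sides force $\theta|_{S(F)_0}=1$) is harmless but not actually used downstream; you could drop it without affecting the argument.
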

\begin{proof}
Let $S \to S''$	be the maximal unramified quotient of $S$ and let $S_1 \subset S$ be the kernel of this quotient. Thus $X_*(S_1)$ is the kernel of the norm map $X_*(S) \to X_*(S)$ for the action of inertia. Note that $X_*(S_1)^{I_F}=\{0\}$, which means that $S_1$ is inertially anisotropic and in particular $S_1(F)$ is compact.

We claim that $S(F)_b$ is the preimage of $S''(F)_0$. Indeed, the image of $S(F)_b$ in $S''(F)$ is compact and hence belongs to $S''(F)_b=S''(F)_0$. Thus the preimage $X \subset S(F)$ of $S''(F)_0$ contains $S(F)_b$ so it is enough to show that it is compact, which follows from the fact that
\[ 1 \to S_1(F) \to X \to S''(F)_0 \to 1 \]
is an exact sequence of Hausdorff topological groups the outer terms of which are compact. We have used here the open mapping theorem to conclude that $X \to S(F)_0$ is open and hence a quotient map.

We conclude that the natural map $S(F)/S(F)_b \to S''(F)/S''(F)_0$ is injective. Since its cokernel is finite, the characters of $S(F)$ that are trivial on $S(F)_b$ are precisely those obtained from characters of $S''(F)$ that are trivial on $S''(F)_0$ by composing them with the natural map $S(F) \to S''(F)$. But the dual of the natural map $S(F) \to S''(F)$ is the map $\hat S^{I_F,\circ} \to \hat S$ and the statement follows from Lemma \ref{lem:llcres1}.
\end{proof}

\subsection{Review of stable conjugacy of tori} \label{sub:reviewtrans}

In this subsection we review the standard notions and results concerning stable conjugacy and transfer of tori between inner forms, mainly in order to have a convient reference that does not impose conditions on the ground field. We work over an arbitrary ground field $F$ with a fixed separable closure $F^s$ and let $\Gamma=\tx{Gal}(F^s/F)$.

Let $G$ and $G'$ be connected reductive groups defined over $F$. Recall that an inner twist $\psi : G \to G'$ is an isomorphism $\psi : G \times F^s \to G' \times F^s$ such that $\psi^{-1}\sigma(\psi)$ is an inner automorphism of $G$ for all $\sigma \in \Gamma$. Let $T \subset G$ be a maximal torus. Recall that $T$ is said to transfer to $G'$ if there exists $g \in G(F^s)$ such that $\psi\circ\tx{Ad}(g)|_T : T \to G'$ is defined over $F$, i.e. invariant under $\Gamma$. The image $T' \subset G$ of $\psi\circ\tx{Ad}(g)|_T$ is a maximal torus of $G'$ and one says that $T$ and $T'$ are stably conjugate. In the special case where $G=G'$ and $\xi=\tx{id}$ this recovers the usual notion of stable conjugacy of maximal tori of $G$. Note that since every torus splits over $F^s$, for any two tori $T,T' \subset G$ defined over $F$ there exists $g \in G(F^s)$ such that $\tx{Ad}(g)T=T'$. However, usually the homomorphism $\tx{Ad}(g) : T \to T'$ will not be defined over $F$.

Fix a maximal torus $T \subset G$. Given any other maximal torus $T' \subset G$ choose $g \in G(F^s)$ such that $\tx{Ad}(g)T=T'$. Then $\sigma \mapsto g^{-1}\sigma(g)$ is an element of $Z^1(\Gamma_F,N(T,G))$ whose cohomology class $\tx{cls}(T')$ is independent of the choice of $g$. Two tori $T'$ and $T''$ are conjugate in $G(F)$ if and only if $\tx{cls}(T')=\tx{cls}(T'')$, and are stably conjugate if and only if $\tx{cls}(T')$ and $\tx{cls}(T'')$ have the same image in $H^1(\Gamma_F,\Omega(T,G))$.

This criterion can be extended across inner forms, at least when the groups in question are adjoint, which we now assume. Let $\psi_i : G \to G_i$ for $i=1,2$ be inner twists and let $T_i \subset G_i$ be maximal tori. Replace each $\psi_i$ by $\psi_i \circ g_i$ for $g_i \in G(F^s)$ to achieve that $\psi_i(T) = T_i$. Then the class $\tx{cls}(T_i)$ of $\psi_i^{-1}\sigma(\psi_i) \in Z^1(\Gamma,N(T,G))$ is independent of the choice of $g_i$. Note that the image of $\tx{cls}(T_i)$ in $H^1(\Gamma,G)$ is the class of $\psi_i$. The tori $T_1$ and $T_2$ are rationally conjugate if and only if $\tx{cls}(T_1)=\tx{cls}(T_2)$. This implies in particular that the classes of $\psi_1$ and $\psi_2$ in $H^1(F,G)$ are equal. Furthermore, $T_1$ and $T_2$ are stably conjugate if and only if the images of $\tx{cls}(T_i)$ in $H^1(\Gamma,\Omega(T,G))$ are equal.

Note that for the purpose of checking stable conjugacy of tori we can always replace $G$ by its adjoint group. This is not true for the purposes of checking rational conjugacy. The above discussion can be extended to rational conjugacy of not necessarily adjoint groups by replacing $H^1(\Gamma,-)$ with the cohomology sets $H^1(u \to W,Z \to G)$ or $H^1(P \to \mc{E},Z \to G)$ of \cite{KalRI} or \cite{KalGRI}, in the case of a local and global fields of characteristic zero.

The following result is well-known, e.g. \cite[\S10]{Kot86}, but we have not been able to find a reference that allows positive characteristic.

\begin{lem} \label{lem:elltrans} Assume that $F$ is local. If $T$ is elliptic then it transfers to $G'$.
\end{lem}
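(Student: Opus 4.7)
The approach is to reformulate ``$T$ transfers to $G'$'' as a statement about the image of a Galois cohomology map, reduce to the case where $G$ is adjoint, and then invoke local Tate--Nakayama duality together with Kottwitz's computation of $H^1(F,-)$.

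First I would unwind the definition. Setting $u_\sigma = \psi^{-1}\sigma(\psi)$, we obtain a $1$-cocycle of $\Gamma$ in $G_\tx{ad}(F^s)$ representing the class $[\psi]$. For any $g \in G(F^s)$, and any lift $\tilde u_\sigma \in G(F^s)$ of $u_\sigma$ (which exists because $G(F^s) \rw G_\tx{ad}(F^s)$ is surjective, since $F^s$ is separably closed and $Z(G)$ is diagonalizable), the map $\psi \circ \Ad(g)|_T : T \rw G'$ is defined over $F$ if and only if $\tilde u_\sigma \cdot \sigma(g) g^{-1}$ centralizes $T$ for every $\sigma$ and hence, since $T$ is its own centralizer in $G$, lies in $T(F^s)$. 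Consequently, $T$ transfers to $G'$ precisely when the class $[\psi] \in H^1(F,G_\tx{ad})$ lies in the image of the natural map $H^1(F,T) \rw H^1(F,G_\tx{ad})$.

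Next I would reduce to the case where $G$ is adjoint. The image $T_\tx{ad}$ of $T$ in $G_\tx{ad}$ is again a maximal torus, still elliptic. If $\bar g \in G_\tx{ad}(F^s)$ witnesses a transfer of $T_\tx{ad}$ to $G'_\tx{ad}$ via the induced inner twist $\psi_\tx{ad}$, then any lift $g \in G(F^s)$ of $\bar g$ witnesses a transfer of $T$ itself: the image of $\tilde u_\sigma \sigma(g) g^{-1}$ in $G_\tx{ad}(F^s)$ lies in $T_\tx{ad}(F^s)$, and the preimage of $T_\tx{ad}$ under $G \rw G_\tx{ad}$ is $T\cdot Z(G) = T$, so $\tilde u_\sigma \sigma(g) g^{-1} \in T(F^s)$. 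Therefore it suffices to prove that, for $G$ adjoint and $T$ an elliptic (equivalently anisotropic) maximal torus, the natural map $H^1(F,T) \rw H^1(F,G)$ is surjective.

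For this last step I would invoke local Tate--Nakayama duality for tori and Kottwitz's description of $H^1(F,G)$. Since $G$ is adjoint, $\hat G$ is simply connected and $Z(\hat G)$ is a finite diagonalizable group; these theorems identify our map with the Pontryagin dual of the map $Z(\hat G)^\Gamma \rw \pi_0(\hat T^\Gamma)$ induced by the inclusion $Z(\hat G) \hrw \hat T$. The assumption that $T$ is anisotropic is equivalent to $X^*(T)^\Gamma = X_*(\hat T)^\Gamma = 0$, which forces the identity component $(\hat T^\Gamma)^\circ$ to be trivial, so $\hat T^\Gamma = \pi_0(\hat T^\Gamma)$ is itself finite. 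The map $Z(\hat G)^\Gamma \rw \hat T^\Gamma$ is then just the functor of $\Gamma$-fixed points applied to an injection of finite diagonalizable groups, hence is injective; dually, $H^1(F,T) \rw H^1(F,G)$ is surjective. The main obstacle is not the shape of this proof, which is classical in characteristic zero (\cite[\S10]{Kot86}), but rather the need to cite or assemble versions of Tate--Nakayama duality for tori and of Kottwitz's formula for $H^1(F,-)$ that are known to remain valid when $F$ has positive characteristic; this is exactly why the author has isolated the lemma.
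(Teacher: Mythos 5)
Your proof is the classical Kottwitz argument from characteristic zero, and you have correctly identified both its shape and its Achilles' heel: it rests on Kottwitz's duality theorem computing $H^1(F,G_\tx{ad})$ as $\pi_0(Z(\hat G)^\Gamma)^D$, and you do not supply or cite a version of that theorem valid when $F$ has positive characteristic. Since the entire reason the paper proves this lemma from scratch (instead of just citing \cite[\S10]{Kot86}) is the absence of a positive-characteristic reference, this is a genuine gap, not merely a technicality to be tidied up. Everything up through the reduction to ``show $H^1(F,T) \rw H^1(F,G)$ is surjective for $G$ adjoint and $T$ anisotropic'' is fine and agrees with the paper, but the dualization step that follows is precisely the piece that cannot be taken off the shelf.

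The paper instead avoids $H^1(F,G)$-duality altogether. Keeping $G$ adjoint, it lets $G_\tx{sc}$ be the simply connected cover with center $Z$, and $T_\tx{sc} \subset G_\tx{sc}$ the preimage of $T$. The two short exact sequences $1 \to Z \to G_\tx{sc} \to G \to 1$ and $1 \to Z \to T_\tx{sc} \to T \to 1$ are taken as sequences of fpqc sheaves (this is needed because $Z$ may be non-smooth in positive characteristic), and for the smooth groups fpqc cohomology agrees with \'etale/Galois cohomology. The two nonabelian long exact sequences then give a square
\[
\xymatrix{
H^1(F,T)\ar[d]\ar[r]&H^1(F,G)\ar[d]\\
H^2_\tx{fpqc}(F,Z)\ar@{=}[r]&H^2_\tx{fpqc}(F,Z).
}
\]
The left vertical map is surjective because $H^2(F,T_\tx{sc})=0$: this is Tate--Nakayama duality for the \emph{anisotropic} torus $T_\tx{sc}$ only, which is known over any local field. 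The right vertical map is injective because every inner twist of $G_\tx{sc}$ has trivial $H^1$ by Kneser's theorem (via Bruhat--Tits \cite[\S4.7]{BT3}, again valid over any local field). Surjectivity of the top map follows by a diagram chase. The trade-off is that one needs to handle the fpqc/\'etale comparison for the possibly non-smooth $Z$, but in exchange one only uses two classical vanishing theorems whose positive-characteristic validity is not in doubt, rather than Kottwitz's full computation of $H^1(F,G)$.
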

\begin{proof}
As above we may assume without loss of generality that $G$ is adjoint. One checks that $T$ transfers to $G'$ if and only if the class of $\psi$ in $H^1(F,G)$ lies in the image of $H^1(F,T)$. Let $G_\tx{sc}$ be the simply connected cover of $G$ and $Z \subset G_\tx{sc}$ its center. Let $T_\tx{sc}$ be the preimage of $T$ in $G_\tx{sc}$
The exact sequences of algebraic groups
\[ 1 \to Z \to G_\tx{sc} \to G \to 1 \qquad 1 \to Z \to T_\tx{sc} \to T \to 1 \]
give exact sequences of sheaves on $\tx{Spec}(F)$ for the fpqc topology. All groups above are smooth except possibly $Z$. For them, the first fpqc-cohomology group coincides with the first etale cohomology group by \cite[exp XXIV, Proposition 8.1]{SGA3}. Since $T_\tx{sc}$ is anisotropic Tate-Nakayama duality implies that $H^2(F,T_\tx{sc})=0$. On the other hand, Kneser's theorem \cite[\S4.7]{BT3} implies that all inner twists of $G_\tx{sc}$ have vanishing first cohomology. This leads to the commutative diagram of pointed sets
\[ \xymatrix{
H^1(F,T)\ar@{->>}[d]\ar[r]&H^1(F,G)\ar@{_(->}[d]\\
H^2_\tx{fpqc}(F,Z)\ar@{=}[r]&H^2_\tx{fpqc}(F,Z)
}\]
from which we conclude that the top map must be surjective.
\end{proof}

\begin{lem} \label{lem:qstrans} Assume that $F$ is local. If $G$ is quasi-split then any maximal torus $T' \subset G'$ transfers to $G$.
\end{lem}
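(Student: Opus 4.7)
The plan is to reduce to Lemma~\ref{lem:elltrans} by working inside the Levi subgroup $M' = Z_{G'}(A') \subset G'$, where $A' \subset T'$ is the maximal $F$-split subtorus. By construction the maximal $F$-split central torus of $M'$ equals $A'$, so $T'$ is an elliptic maximal torus of $M'$. Once $M'$ is realized as an inner form of an $F$-rational Levi subgroup $M \subset G$ compatibly with $\psi$, an application of Lemma~\ref{lem:elltrans} to the resulting inner twist transfers $T'$ to a maximal $F$-torus of $M$, hence of $G$.

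The only new ingredient is the construction of $M$, and this amounts to transferring the $F$-split subtorus $A'$ from $G'$ to $G$. After reducing to the adjoint case (as in Lemma~\ref{lem:elltrans}), the torus $A'$ is determined up to $G'(F^s)$-conjugacy by its lattice of cocharacters, which under the canonical identification of root data provided by $\psi$ corresponds to a $\Gamma$-invariant conjugacy class in $X_*(T_0)/\Omega$ for a fixed maximal $F$-torus $T_0 \subset G$. Because $G$ is quasi-split it admits an $F$-rational Borel pair, which is equivalent to the fact that every $\Gamma$-invariant conjugacy class of cocharacters of $G$ over $F^s$ is represented by an $F$-rational cocharacter. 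This yields an $F$-split subtorus $A \subset G$ matching $A'$ over $F^s$, together with $g \in G(F^s)$ for which $\psi \circ \tx{Ad}(g) : A \to A'$ is $F$-rational. Since the $F$-rationality of this composition forces $g^{-1} u_\sigma \sigma(g) \in C_G(A)(F^s) = M(F^s)$, where $u_\sigma$ is the cocycle of $\psi$ and $M := Z_G(A)$, the induced $F^s$-isomorphism $\psi \circ \tx{Ad}(g): M \to M'$ is an inner twist of $F$-rational connected reductive groups.

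Applying Lemma~\ref{lem:elltrans} to the inner twist $M' \to M$ (the inverse of the one just constructed) and the elliptic torus $T' \subset M'$ then produces an $F$-rational torus $T \subset M \subset G$ to which $T'$ transfers, completing the argument. The main obstacle lies in the transfer of the split torus $A'$: this is the only step where quasi-splitness of $G$ enters essentially, via the existence of an $F$-rational Borel pair in $G$, which guarantees that $\Gamma$-invariant cocharacter classes of $G$ are realized by $F$-rational cocharacters. Everything else is a clean application of the elliptic transfer lemma to a Levi subgroup.
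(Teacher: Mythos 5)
Your proof is correct and takes a genuinely different route from the paper's. The paper argues directly by lifting the class $\tx{cls}(T') \in H^1(F, N(T,G))$ to $H^1(F, N(T_\tx{sc}, G_\tx{sc}))$ via the Langlands--Shelstad construction $x(\sigma_T)n(\omega_T(\sigma))$ built from $a$-data transported through $\psi$ (noting $a_\alpha=1$ works in characteristic $2$), and then invokes Kneser's vanishing $H^1(F,G_\tx{sc})=1$ to realize that lift as a coboundary $g^{-1}\sigma(g)$; the desired torus is $\tx{Ad}(g)T$. Your argument instead reduces to the already-proved elliptic case (Lemma~\ref{lem:elltrans}) by descending to the Levi $M'=Z_{G'}(A')$: transfer the split part $A'$ to $G$ using the quasi-split structure, thereby cutting out an inner twist $M\to M'$ of Levi subgroups, and then transfer the now-elliptic $T'\subset M'$ to $M\subset G$. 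Both proofs ultimately rest on Kneser's theorem, but the paper uses it directly while you use it through the elliptic lemma, and the two exploit quasi-splitness through different mechanisms: the paper through the existence of an $F$-pinning feeding the LS cocycle construction, yours through the fact that $\Gamma$-invariant geometric conjugacy classes of cocharacters of a quasi-split group have rational representatives (take the $\Gamma$-dominant one with respect to a rational Borel pair). Your route is more structural and shifts the cocycle gymnastics from the proof of this lemma to the proof of Lemma~\ref{lem:elltrans}; the paper's route is more self-contained and sits naturally next to the LS machinery used elsewhere.

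Two small presentational points worth tightening. First, you phrase the transfer of $A'$ in terms of matching the $\Omega$-orbit of the whole sublattice $X_*(A')\subset X_*(T_0)$, whereas the cleanest version goes through a single generic cocharacter $\lambda'$ of $A'$ with $Z_{G'}(\lambda')=M'$: transport $\lambda'$ via $\psi^{-1}$, take the $\Gamma$-dominant rational representative $\lambda$, set $M=Z_G(\lambda)$, and then take $A$ to be the maximal split central torus of $M$; that $\psi\circ\tx{Ad}(g)$ carries $A$ onto $A'$ then follows because the restriction to $Z(M)^\circ$ of the inner twist $M\to M'$ is $F$-rational and max-split-central-tori are preserved by $F$-isomorphisms. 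Second, the parenthetical ``which is equivalent to'' overstates matters: quasi-splitness implies the rational-representative property, but the converse does not hold (an anisotropic inner form of a split group satisfies it vacuously only for the trivial class while not being quasi-split); you only need the forward implication, so this is harmless.
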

\begin{proof}
When $F$ is perfect this follows easily from Steinberg's work \cite{Ste65reg} on rational elements in conjugacy classes. The following alternative argument, based on work of Langlands and Shelstad and Kneser's vanishing theorem, works for arbitrary local fields.

Fix an $F$-pinning $(T,B,\{X_\alpha\})$ of $G$. As above we conjugate $\psi$ so that $\psi(T)=T'$ and obtain $\tx{cls}(T') \in H^1(F,N(T,G))$, after assuming that $G$ is adjoint. Our job is to construct a maximal torus $T'' \subset G$ for which the image of $\tx{cls}(T'')$ in $H^1(F,\Omega(T,G))$ coincides with that of $T'$.

Recall \cite[\S2.2]{LS87} that $a$-data for $T'$ is a collection $a_\alpha \in F^{s,\times}$ for each $\alpha \in R(T',G')$ such that $a_{\sigma\alpha}=\sigma(a_\alpha)$ for all $\sigma \in \Gamma$ and $a_{-\alpha}=-a_\alpha$. Such $a$-data always exists: When $F$ has characteristic $2$ one can take $a_\alpha=1$; otherwise any quadratic extension of fields of odd characteristic is generated by an element whose trace is zero. Transporting this $a$-data to $T$ via $\psi$ the construction of \cite[\S2.2]{LS87} gives a lift $x(\sigma_T)n(\omega_T(\sigma))$ to $H^1(\Gamma,N(T_\tx{sc},G_\tx{sc}))$ of $\tx{cls}(T')$. Knesers't theorem $H^1(\Gamma,G_\tx{sc})=1$ implies that a 1-cocycle representing this lift is of the form $\sigma \mapsto g^{-1}\sigma(g)$ for some $g \in G_\tx{sc}(F^s)$. Then $T''=\tx{Ad}(g)T$ is the desired maximal torus of $G$.
\end{proof}

\subsection{Short remarks about parahoric subgroups}

Recall that Borovoi has defined in \cite{Brv98} the algebraic fundamental group $\pi_1(G)$ of $G$. The assignment $G \mapsto \pi_1(G)$ is a functor from the category of connected reductive groups defined over $F$ to the category of finitely generated abelian groups with $\Gamma$-action. Let $L$ denote the completion of the maximal unramified extension of $F$. In \cite[\S7]{Kot97} Kottwitz has constructed a surjective homomorphism $\kappa_G : G(L) \to \pi_1(G)_I$. It is a natural transformation from the identity functor to the functor $\pi_1(-)_I$. Note that in loc. cit. Kottwitz uses $X^*(Z(\hat G)^I)$ instead of $\pi_1(G)_I$. These two abelian groups are equal, and just as in \cite{RR96} we prefer to use $\pi_1(G)_I$ because it is obviously a functor. In \cite[Appendix]{PR08}, Haines and Rapoport prove that for any $x \in \mc{B}(G,F)$ one has
\[ G(F)_{x,0} = G(F)_x \cap \tx{ker}(\kappa_G). \]

\begin{cor} \label{cor:par} Let $f : H \to G$ be a homomorphism of connected reductive groups defined over $F$, $x \in \mc{B}(H,F)$ and $y \in \mc{B}(G,F)$. Then
\[ f(H(F)_{x,0}) \cap G(F)_x \subset G(F)_{x,0}. \]
In particular, if $T \subset G$ is a maximal torus, then
\[ T(F)_0 \cap G(F)_x \subset G(F)_{x,0}.\]
\end{cor}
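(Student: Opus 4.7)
The plan is to combine the Haines--Rapoport description $G'(F)_{z,0} = G'(F)_z \cap \ker(\kappa_{G'})$ recalled immediately above the corollary with the functoriality of the Kottwitz homomorphism $\kappa$. The argument is then purely formal once these two facts are in hand, so no serious obstacle is expected.

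Concretely, let $h \in H(F)_{x,0}$. Applying the Haines--Rapoport equality to $H$ at the point $x \in \mc{B}(H,F)$, one obtains $\kappa_H(h) = 0$ in $\pi_1(H)_I$. Since $\kappa$ is a natural transformation and $f : H \to G$ is a homomorphism defined over $F$, one has
\[ \kappa_G(f(h)) \;=\; \pi_1(f)_I\bigl(\kappa_H(h)\bigr) \;=\; 0, \]
so $f(h) \in \ker(\kappa_G)$. If additionally $f(h) \in G(F)_x$, then a second application of Haines--Rapoport, this time to $G$, yields
\[ f(h) \;\in\; G(F)_x \cap \ker(\kappa_G) \;=\; G(F)_{x,0}, \]
which is the desired inclusion.

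The special case of a maximal torus $T \subset G$ is obtained by taking $H = T$ and $f$ the inclusion: for a torus the parahoric $T(F)_0$ coincides with $\ker(\kappa_T)$, so naturality at once gives $\kappa_G(t) = 0$ for every $t \in T(F)_0$, and the preceding argument finishes the job. The only mild points to note are that the Haines--Rapoport equality as recalled uses the restriction of $\kappa$ from $G(L)$ to $G(F)$, which is what is relevant here, and that the auxiliary point $y \in \mc{B}(G,F)$ in the hypothesis plays no actual role in the argument, which only uses the stabilizer $G(F)_x$ appearing in the statement.
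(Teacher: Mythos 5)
Your proof is correct and is precisely the argument the paper intends: the corollary is stated with no proof immediately after recalling Haines--Rapoport and the naturality of $\kappa$, so the implicit proof is exactly the two applications of $G'(F)_{z,0}=G'(F)_z\cap\ker(\kappa_{G'})$ connected by functoriality of $\kappa$, as you wrote. You are also right that the conclusion should read $G(F)_y$ and $G(F)_{y,0}$ (the statement as printed has a typo, since $x$ lives in $\mc{B}(H,F)$), and that for a torus $T(F)_0=\ker(\kappa_T)$ so the special case is immediate.
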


\begin{lem} \label{lem:par} Let $S \subset G$ be an inertially induced central torus and let $\bar G=G/S$. For any $x \in \mc{B}(G,F)$ and $r=0$ or $r=0+$ we have the exact sequence
\[ 1 \to S(F)_r \to G(F)_{x,r} \to \bar G(F)_{x,r} \to 1. \]
\end{lem}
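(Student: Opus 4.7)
The plan is to follow the strategy of Lemma \ref{lem:mpex2}: establish the sequence first over $F^u$ and then descend to $F$ by taking Frobenius-invariants. The main obstacle will be the surjectivity at $r=0$, for which I use Steinberg's vanishing together with a Kottwitz-based chase.

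First I would handle the inclusion $S(F)_r \subset G(F)_{x,r}$ and the identification of the kernel. Since $S$ is central, it acts trivially on the reduced building, so $S(F) \subset G(F)_x$. By Corollary \ref{cor:par} (or directly by naturality of the Kottwitz homomorphism combined with the Haines-Rapoport identity $G(F)_{x,0} = G(F)_x \cap \ker\kappa_G$), we get $S(F)_0 = \ker\kappa_S \subset G(F)_{x,0}$. The kernel of $G(F)_{x,0} \to \bar G(F)_{\bar x,0}$ equals $S(F) \cap \ker\kappa_G$, which reduces to $\ker\kappa_S = S(F)_0$ provided the induced map $f_*\colon \pi_1(S)_I \to \pi_1(G)_I$ is injective. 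Integrally, $\pi_1(S) = X_*(S) \hookrightarrow X_*(T)/Q^\vee(G) = \pi_1(G)$ because $X_*(S) \subset X_*(Z(G)^\circ)$ spans a rational subspace of $X_*(T)\otimes\mb{Q}$ complementary to $Q^\vee(G)\otimes\mb{Q}$. Taking $I$-coinvariants of $0 \to \pi_1(S) \to \pi_1(G) \to \pi_1(\bar G) \to 0$, the kernel of $X_*(S)_I \to \pi_1(G)_I$ is the image of the torsion group $H_1(I,\pi_1(\bar G))$; but $X_*(S)_I$ is torsion-free by the inertially induced hypothesis, so this connecting map vanishes and $f_*$ is injective.

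The main step is the surjectivity at $r=0$. Since $S$ is central, the reduced buildings satisfy $\mc{B}(G,F^u) = \mc{B}(\bar G,F^u)$ with $x = \bar x$, and Steinberg's theorem $H^1(F^u,S) = 0$ yields a short exact sequence of $F^u$-point stabilizers
\[
1 \to S(F^u) \to G(F^u)_x \to \bar G(F^u)_{\bar x} \to 1.
\]
Given $\bar h \in \bar G(F^u)_{\bar x,0}$, lift it to $h_0 \in G(F^u)_x$. Then $\kappa_G(h_0)$ vanishes in $\pi_1(\bar G)_I$, so by exactness of the $I$-coinvariant sequence $\kappa_G(h_0) = f_*(\alpha)$ for some $\alpha \in X_*(S)_I$; by the surjectivity of $\kappa_S$, $\alpha = \kappa_S(s)$ for some $s \in S(F^u)$. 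Then $h := h_0 s^{-1} \in G(F^u)_x$ satisfies $\kappa_G(h) = 0$ and still projects to $\bar h$, hence $h \in G(F^u)_{x,0}$. Combined with the kernel identification over $F^u$, this yields
\[
1 \to S(F^u)_0 \to G(F^u)_{x,0} \to \bar G(F^u)_{\bar x,0} \to 1.
\]
Taking Frobenius-invariants and using $H^1(\tx{Fr},S(F^u)_0) = 0$ from \cite[Lemma 2.3.1]{DR09} descends this to the desired sequence over $F$. The inertially induced hypothesis appears crucially here both in the injectivity of $f_*$ and in powering the descent.

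For the $r=0+$ case, $S(F)_{0+}$ sits inside $G(F)_{x,0+}$ because it is a normal pro-$p$ subgroup of $G(F)_{x,0}$ (normal by centrality of $S$) and $G(F)_{x,0+}$ is the maximal such. The kernel claim reduces to the $r=0$ case together with the fact that $G(F)_{x,0+}$ is pro-$p$. For surjectivity, given $\bar g \in \bar G(F)_{\bar x,0+}$, use the $r=0$ surjectivity to lift it to $g \in G(F)_{x,0}$; its image in the reductive quotient $\ms{G}_x(k_F)$ lies in the kernel of the map to $\bar{\ms{G}}_{\bar x}(k_F)$, which by Lang's theorem applied to the connected torus kernel $\ms{S}$ equals $\ms{S}(k_F) = S(F)_0/S(F)_{0+}$; modifying $g$ by an appropriate element of $S(F)_0$ preserves its image in $\bar G$ while pushing it into $G(F)_{x,0+}$.
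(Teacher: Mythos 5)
Your $r=0$ argument follows the same route as the paper's, which proceeds exactly as for Lemma \ref{lem:mpex2} but with $\pi_1(-)_I$ in place of $X_*(-)_I$: establish injectivity of the map on inertial coinvariants via torsion-freeness (this is precisely where the inertially induced hypothesis enters), build the commutative diagram of Kottwitz-type homomorphisms over $F^u$ using Steinberg's vanishing to get surjectivity of the top row, run the snake lemma (or your equivalent element-chase), then descend by taking Frobenius-invariants with $H^1(\text{Fr},S(F^u)_0)=0$. Your exposition of this part is a correct unwinding of the paper's terse reference.

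The $r=0+$ case is where you diverge, and this is also where there is a real gap. The paper handles $r=0+$ by applying Lemma \ref{lem:mpex2} to the sequence of tori $1 \to S \to T \to \bar T \to 1$ for a maximally split maximal torus $T$ whose apartment contains $x$, and then invoking Yu's Theorem 8.3, which gives an Iwahori-type factorization of $G(F)_{x,0+}$ into root-group filtration pieces and $T(F)_{0+}$; since $S$ is central, the root groups of $G$ and $\bar G$ are identified and surjectivity propagates factor by factor. Your route is instead to lift $\bar g$ to $G(F)_{x,0}$ via the $r=0$ case and then correct by an element of $S(F)_0$, identifying the obstruction as living in $\ker\bigl(\ms{G}_x^\circ(k_F)\to\bar{\ms{G}}_{\bar x}^\circ(k_F)\bigr)$ and invoking Lang's theorem to identify this kernel with $\ms{S}(k_F)=S(F)_0/S(F)_{0+}$. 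The gap is that you assert, without justification, that the algebraic kernel of $\ms{G}_x^\circ\to\bar{\ms{G}}_{\bar x}^\circ$ is a connected torus $\ms{S}$ equal to the reductive quotient of the special fiber of the N\'eron model of $S$. This is \emph{not} a formality: one must know that the induced map of reductive quotients is surjective, that its kernel is central (e.g., because it is a normal subgroup meeting no root subgroup, forcing it into a maximal torus), and then compute via cocharacter lattices over $\ol{k_F}$ (using a maximal split torus $T_0 \supset S$ over $F^u$ and the fact that $X_*(T_0)/X_*(S)$ is torsion-free) that the kernel is precisely the torus with cocharacter lattice $X_*(S)$. None of this is in your write-up; you hand it off to the words ``by Lang's theorem applied to the connected torus kernel $\ms{S}$,'' which presupposes the conclusion. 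The paper's appeal to Yu's decomposition avoids having to analyze the map of reductive quotients at all, which is why it is the cleaner route. Your approach can be completed, but as written the connectedness and identification of the kernel is a genuine missing step.
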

\begin{proof}
The argument for $r=0$ is essentially the same as for Lemma \ref{lem:mpex2}, where now one uses $\pi_1(-)_I$ instead of $X_*(-)_I$. For $r=0+$ it follows from Lemma \ref{lem:mpex2} applied to the sequence $1 \to S \to T \to \bar T \to 1$, where $T \subset G$ is a maximally split maximal torus whose apartment contains $x$, and \cite[Theorem 8.3]{Yu03}.
\end{proof}

\subsection{Regular supercuspidal representations of depth zero} \label{sub:rdz}

Let $G$ be a connected reductive group defined over $F$ and splitting over a tamely ramified extension of $F$. In this subsection we will define and classify regular depth-zero supercuspidal representations of $G$. As a preparation we will first need to study maximally unramified elliptic maximal tori of $G$ and extend to this setting some results of DeBacker.

\subsubsection{Maximally unramified elliptic maximal tori} \label{subsub:relur}

\begin{fct} \label{fct:relur} Let $S \subset G$ be a maximal torus and $S' \subset S$ be the maximal unramified subtorus. The following statements are equivalent.
\begin{enumerate}
	\item $S'$ is of maximal dimension among the unramified subtori of $G$.
	\item $S'$ is not properly contained in an unramified subtorus of $G$.
	\item $S$ is the centralizer of $S'$.
	\item $S \times F^u$ is a minimal Levi subgroup of $G \times F^u$.
	\item The action of $I_F$ on $R(S,G)$ preserves a set of positive roots.
\end{enumerate}
\end{fct}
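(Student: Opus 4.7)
The plan is to use the fact that $G \times F^u$ is quasi-split (a consequence of Lang's theorem, since the residue field of $F^u$ is algebraically closed) in order to translate among these five formulations.

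I would first establish $(3) \Leftrightarrow (4) \Leftrightarrow (5)$ as a block. For $(4) \Leftrightarrow (5)$, a system of positive roots in $R(S,G)$ corresponds to a Borel subgroup of $G \times F^s$ containing $S \times F^s$, and this Borel is defined over $F^u$ precisely when it is $I_F$-stable; thus $(5)$ says that $S \times F^u$ sits in a Borel of the quasi-split group $G \times F^u$, and in the quasi-split setting these are exactly the fundamental maximal tori, which are the minimal Levi subgroups. For $(3) \Leftrightarrow (4)$, the centralizer $Z_{G \times F^u}(S' \times F^u)$ is a Levi subgroup of $G \times F^u$, minimal precisely when $S' \times F^u$ is a maximal split torus. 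Assuming $(3)$, any split subtorus of $G \times F^u$ containing $S' \times F^u$ centralizes it, so it lies in $S \times F^u$, hence in its maximal split subtorus $S' \times F^u$, giving maximality. Conversely, a maximal split torus $A_u$ with $Z_{G \times F^u}(A_u) = S \times F^u$ lies in $Z(S \times F^u) = S \times F^u$ and is split, so $A_u \subseteq S' \times F^u$; by maximality $A_u = S' \times F^u$, and $S \times F^u = Z_{G \times F^u}(S' \times F^u)$ descends to $(3)$.

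Next come the easy implications. $(3) \Rightarrow (2)$: an unramified torus $T$ in $G$ containing $S'$ is abelian, so $T \subseteq Z_G(S') = S$, and then $T \subseteq S'$ since $S'$ is the maximal unramified subtorus of $S$. The implication $(1) \Rightarrow (2)$ is trivial.

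The crux of the proof is $(2) \Rightarrow (3)$; granted this, $(2) \Rightarrow (1)$ then follows because $(3)$ forces $S' \times F^u$ to be a maximal split torus of $G \times F^u$, whence $\dim S'$ equals the $F^u$-rank of $G$, which upper-bounds the dimension of every unramified subtorus. To prove $(2) \Rightarrow (3)$, set $L = Z_G(S')$ and observe that the maximal split subtorus of the connected center $Z(L \times F^u)^\circ$ is canonically defined, hence Galois-stable, hence descends to an unramified subtorus of $G$ containing $S'$; by $(2)$ it coincides with $S'$. If $L \neq S$, then $L^\tx{der}$ is a non-trivial semisimple $F$-group; over $F^u$, a local field with algebraically closed residue field, every non-trivial semisimple group has positive split rank, so $L^\tx{der} \times F^u$ is isotropic. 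Invoking the standard fact that every connected reductive $F$-group admits a maximal unramified subtorus whose dimension equals its $F^u$-rank (obtainable by a Lang--Steinberg descent of a maximal split torus over $F^u$), we produce a maximal unramified subtorus $T \subseteq L$ with $\dim T > \dim S'$. The torus $T$ must contain $S'$, for otherwise $T \cdot S'$ would be a strictly larger commuting unramified subtorus of $L$, contradicting the maximality of $T$; but then $T$ contradicts $(2)$. Hence $L = S$, and the main obstacle is precisely this construction of the larger unramified torus, which draws on both the isotropy of non-trivial semisimple groups over $F^u$ and the dimension formula for maximal unramified subtori.
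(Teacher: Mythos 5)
Your proof is correct and is a full elaboration of the paper's one-line argument, which simply says "This follows from the fact that $G \times F^u$ is quasi-split." All the ingredients you invoke — Levi subgroups of quasi-split groups are quasi-split, a nontrivial quasi-split semisimple group is isotropic, the existence of a maximal $F$-torus of a reductive group that is maximally split over $F^u$ (cf. \cite[Cor.~5.1.12]{BT2}), and the descent of centralizers of split tori — are standard consequences of quasi-splitness over $F^u$, so your argument fills in exactly what the paper leaves to the reader.
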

\begin{proof}
This follows from the fact that $G \times F^u$ is quasi-split.
\end{proof}

A maximal torus $S \subset G$ will be called \emph{maximally unramified} if it satisfies the above equivalent conditions. Note that when $G$ splits over $F^u$, i.e. when it is an inner form of an unramified group, then $S$ is unramified. Therefore, this notion generalizes the notion of an unramified maximal torus to the case of ramified groups (in \cite[Definition 3.1.1]{Roe11}, such tori were called ``unramified'', but we prefer the term maximally unramified because it emphasizes that the splitting field of $S$ need not be an unramified extension of $F$).

In \cite{Deb06}, DeBacker classifies the unramified maximal tori of a given reductive $p$-adic group (that necessarily splits over $F^u$). We will extend some of his results to the setting of maximally unramified maximal tori. We will only deal with those maximally unramified tori that are also elliptic, although we believe that our arguments apply to non-elliptic tori as well.

Let $S \subset G$ be a maximally unramified elliptic maximal torus. Then $S$ splits over a tamely ramified extension, because we are assuming that $G$ does. Let $x \in \mc{B}^\tx{red}(G,F)$ be the point associated to $S$ by \cite{Pr01}. That is, $x$ is the unique Frobenius-fixed point in the apartment $\mc{A}^\tx{red}(S,F^u) \subset \mc{B}^\tx{red}(G,F^u)$. Recall here our convention that we write $\mc{A}^\tx{red}(S,F^u)$ for the apartment in $\mc{B}^\tx{red}(G,F^u)$ associated to the maximal split subtorus of $S \times F^u$. Since $S$ is maximally unramified, this maximally split subtorus is $S' \times F^u$, where $S' \subset S$ is the maximal unramified subtorus.

\begin{lem} \label{lem:vertex}
The point $x$ is a vertex of $\mc{B}^\tx{red}(G,F)$.
\end{lem}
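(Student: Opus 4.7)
The plan is to prove that $x$ is a vertex by showing that the facet of $\mc{B}^\tx{red}(G,F)$ containing $x$ is zero-dimensional, i.e., equal to $\{x\}$.

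First, I would analyze the affine structure on the apartment $\mc{A}^\tx{red}(S,F^u)$. This apartment is an affine space modeled on the real vector space
\[ V = \big(X_*(S')\otimes_\Z\R\big)\big/\big(X_*(A^u_G)\otimes_\Z\R\big), \]
where $A^u_G$ denotes the maximal unramified central subtorus of $G$. The Frobenius $\tx{Fr}$ acts on the apartment as an affine automorphism whose linear part is the natural action on $V$. Since $x$ is by construction $\tx{Fr}$-fixed, the fixed-point set of $\tx{Fr}$ on the apartment is the affine subspace $x + V^{\tx{Fr}}$.

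Next, I would compute $V^{\tx{Fr}}$. The $\Gamma_{F^u/F}$-invariants functor is exact on $\R$-vector spaces, so $V^{\tx{Fr}} = (X_*(S')^{\tx{Fr}}\otimes\R) / (X_*(A^u_G)^{\tx{Fr}}\otimes\R)$. The numerator is $X_*(A_S)\otimes\R$, where $A_S$ is the maximal $F$-split subtorus of $S$: any $F$-split subtorus of $S$ is a fortiori unramified and hence contained in $S'$, so $(S')^{\tx{Fr}}=A_S$. Analogously, the denominator equals $X_*(A_G)\otimes\R$ with $A_G$ the maximal $F$-split central subtorus of $G$. The ellipticity of $S$ is precisely the equality $A_S=A_G$, whence $V^{\tx{Fr}}=0$, so the $\tx{Fr}$-fixed set of $\mc{A}^\tx{red}(S,F^u)$ is exactly $\{x\}$.

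Finally, let $\mf{F}^u$ denote the facet of $\mc{B}^\tx{red}(G,F^u)$ containing $x$. Because $x$ lies in $\mc{A}^\tx{red}(S,F^u)$, so does $\mf{F}^u$, and because $\mf{F}^u$ is the unique facet through $x$ while $x$ is $\tx{Fr}$-fixed, $\mf{F}^u$ is $\tx{Fr}$-stable. Combining these, $(\mf{F}^u)^{\tx{Fr}} \subseteq \mc{A}^\tx{red}(S,F^u)^{\tx{Fr}} = \{x\}$. By the Bruhat-Tits theory for tamely ramified groups, as developed by Rousseau and used in \cite{Pr01}, the facets of $\mc{B}^\tx{red}(G,F)$ are exactly the $\tx{Fr}$-fixed loci of the $\tx{Fr}$-stable facets of $\mc{B}^\tx{red}(G,F^u)$; thus the facet of $\mc{B}^\tx{red}(G,F)$ through $x$ equals $(\mf{F}^u)^{\tx{Fr}}=\{x\}$, as desired. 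The only subtle step is this final appeal, which depends crucially on the tame ramification hypothesis on $G$.
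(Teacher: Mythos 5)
Your proof is correct and follows essentially the same line of reasoning as the paper: identify the facet of $\mc{B}^\tx{red}(G,F)$ through $x$ with the $\tx{Fr}$-fixed locus of the facet in $\mc{B}^\tx{red}(G,F^u)$ through $x$, and observe that the latter must be $\{x\}$ because $x$ is the unique $\tx{Fr}$-fixed point of the apartment. The paper takes that uniqueness as given (it is built into the definition of $x$ via the reference to Prasad), whereas you reprove it from ellipticity via the vanishing of $V^{\tx{Fr}}$; that is a harmless and somewhat clarifying addition. One small correction: your concluding remark that the descent of facets from $\mc{B}^\tx{red}(G,F^u)$ to $\mc{B}^\tx{red}(G,F)$ ``depends crucially on the tame ramification hypothesis on $G$'' misattributes the ingredient. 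The needed fact — that $\mc{B}^\tx{red}(G,F)=\mc{B}^\tx{red}(G,F^u)^\tx{Fr}$ with the facets of $\mc{B}^\tx{red}(G,F)$ given by nonempty intersections with facets upstairs — is classical unramified (étale) descent in Bruhat--Tits theory, \cite[5.1.28]{BT2}, and holds with no tameness hypothesis whatsoever; Rousseau's and Prasad's tame descent theory enters the paper elsewhere (for embeddings $\mc{B}(G,F)\hookrightarrow\mc{B}(G,E)$ with $E/F$ tamely ramified), not in this lemma.
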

\begin{proof}
We have $\mc{B}^\tx{red}(G,F)=\mc{B}^\tx{red}(G,F^u)^\tx{Fr}$ and the simplicial structure on this building is defined so that the facets of $\mc{B}^\tx{red}(G,F)$ are the non-empty intersections with $\mc{B}^\tx{red}(G,F)$ of the facets of $\mc{B}^\tx{red}(G,F^u)$, see \cite[5.1.28]{BT2}.

Let $X \subset \mc{B}^\tx{red}(G,F^u)$ be the unique facet of smallest dimension containing $x$. It belongs to $\mc{A}^\tx{red}(S,F^u)$ and is Frobenius-invariant. The set of its Frobenius-fixed points is then a facet of $\mc{B}^\tx{red}(G,F)$ that belongs to $\mc{A}^\tx{red}(S,F^u)$. This means that this set contains a single point, namely $x$, and it follows that $x$ is a facet of $\mc{B}^\tx{red}(G,F)$, i.e. a vertex.
\end{proof}

As shown in \cite[\S5]{BT2}, the vertex $x$ specifies  a smooth connected $O_F$-group scheme $\mf{G}_x^\circ$ with $\mf{G}_x^\circ(F)=G(F)$, $\mf{G}_x^\circ(O_F)=G(F)_{x,0}$. We shall write $\ms{G}^\circ_x$ for the reductive quotient of the special fiber of $\mf{G}_x^\circ$. Then $\ms{G}_x^\circ(k_F)=G(F)_{x,0:0+}$. We further have the $O_F$-group scheme $\mf{\hat G}_x$ with $\mf{\hat G}_x(F)=G(F)$ and $\mf{\hat G}_x(O_F)=\tx{Stab}(x,G(F)^1)$. We shall write $\ms{G}_x$ for the quotient of the special fiber of this group scheme by its maximal connected normal unipotent subgroup. Then $\ms{G}_x^\circ$ coincides with the neutral connected component of $\ms{G}_x$. In particular, $\ms{G}_x$ is a usually disconnected algebraic group over $k_F$ with reductive neutral connected component.

\begin{lem} \label{lem:rut1}
\begin{enumerate}
\item The special fiber of the (automatically connected) ft-Neron model of $S'$ embeds canonically as an elliptic maximal torus $\ms{S}'$ of the reductive group $\ms{G}_x^\circ$. Explicitly, $\ms{S}'(k_F) \subset \ms{G}_x^\circ(k_F)$ is the image in $G(F)_{x,0:0+}$ of $S(F) \cap G(F)_{x,0}$, or equivalently of $S'(F) \cap G(F)_{x,0}$.
\item Every elliptic maximal torus of $\ms{G}_x^\circ$ arises in this way.
\end{enumerate}
\end{lem}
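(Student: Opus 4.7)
\emph{Part (1), the embedding.} Since $S'$ is unramified, its ft-N\'eron model $\mf{S}'$ is a smooth $O_F$-torus, in particular automatically connected, and its special fiber $\ms{S}'$ is a torus over $k_F$. To construct the closed immersion $\ms{S}' \hookrightarrow \ms{G}_x^\circ$, first work over $O_{F^u}$. By the definition of $x$ as the Frobenius-fixed point of $\mc{A}(S' \times F^u, F^u)$, standard Bruhat--Tits theory provides a canonical closed immersion of $\mf{S}' \otimes_{O_F} O_{F^u}$ into $\mf{G}^\circ_{x,O_{F^u}}$, whose passage to special fibers (and reductive quotient) identifies $\ms{S}' \otimes \bar{k_F}$ with the torus of $\ms{G}_{x,\bar{k_F}}^\circ$ coming from the apartment. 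Over an algebraically closed base this is a maximal torus. Frobenius-equivariance, stemming from the $F$-structure on $S'$ and the fact that $x$ is Frobenius-fixed, allows this immersion to be descended to $O_F$, giving a closed immersion $\ms{S}' \hookrightarrow \ms{G}_x^\circ$ whose base change to $\bar{k_F}$ is a maximal torus; hence $\ms{S}'$ is a maximal torus of $\ms{G}_x^\circ$ over $k_F$.

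\emph{Part (1), explicit description and ellipticity.} The identification $\mf{S}'(O_F) = S'(F)_0$, together with Corollary \ref{cor:par} giving $S'(F)_0 = S'(F) \cap G(F)_{x,0}$, shows that $\ms{S}'(k_F)$ is the image of $S'(F) \cap G(F)_{x,0}$ under the reduction map $G(F)_{x,0} \twoheadrightarrow \ms{G}_x^\circ(k_F)$. Lemma \ref{lem:sred} provides an isomorphism $S'(F)_0/S'(F)_{0+} \xrightarrow{\cong} S(F)_0/S(F)_{0+}$, so this image coincides with that of $S(F) \cap G(F)_{x,0}$. For ellipticity I compare maximal $k_F$-split subtori. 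Functoriality of the ft-N\'eron model construction identifies the maximal $k_F$-split subtorus of $\ms{S}'$ with the reduction of the maximal $F$-split subtorus of $S'$, which equals that of $S$ (any $F$-split subtorus is unramified, hence contained in $S'$). Applying the same reasoning to $Z(G)^\circ$, whose ft-N\'eron model sits in $\mf{G}_x^\circ$ by functoriality, identifies the maximal $k_F$-split subtorus of $Z(\ms{G}_x^\circ)^\circ$ with the reduction of the maximal $F$-split subtorus of $Z(G)^\circ$. Ellipticity of $S$ in $G$, which is the equality of the maximal $F$-split subtori of $S$ and of $Z(G)^\circ$ inside $G$, then translates exactly to ellipticity of $\ms{S}'$ in $\ms{G}_x^\circ$.

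\emph{Part (2) and main obstacle.} Given an elliptic maximal torus $\ms{T} \subset \ms{G}_x^\circ$, I first lift it to a torus $\mf{T} \subset \mf{G}_x^\circ$ defined over $O_F$: choose any lift of $\ms{T}$ through the unipotent radical of the special fiber, then lift to characteristic zero using smoothness of the scheme of maximal tori of $\mf{G}_x^\circ$ combined with Hensel's lemma. Its generic fiber $T$ is an unramified subtorus of $G$, of dimension equal to the rank of $\ms{G}_x^\circ$, so maximal among unramified subtori of $G$ by part (1). Its centralizer $S := \mathrm{Cent}_G(T)$ is therefore a maximally unramified maximal torus of $G$ whose maximal unramified subtorus is $T$, and whose associated $\ms{S}'$ coincides with $\ms{T}$ by the uniqueness of N\'eron models. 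Ellipticity of $S$ follows by reversing the split-rank comparison from part (1). The main subtlety throughout is the fact that the reductive quotient $\ms{G}_x^\circ$ has absolute rank equal to the $F^u$-rank of $G$ rather than the absolute rank of $G$; this is a feature of Prasad's canonical vertex and is what makes $\ms{S}'$ maximal in $\ms{G}_x^\circ$ even when $G$ is ramified over $F$.
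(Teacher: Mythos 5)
Your proof follows essentially the same route as the paper: over $O_{F^u}$ one gets the closed immersion from Bruhat--Tits theory (the paper cites [BT2, Prop.~4.6.4(i)]), descends by Frobenius-equivariance, and then in part (2) lifts through the unipotent radical (the paper quotes [Bor91, 16.7]) and then to $O_F$ by smoothness before taking centralizers. The one place you genuinely diverge is the ellipticity argument, and there your write-up has a circularity that you should untangle: you assert, as if it were a freestanding fact, that ``the maximal $k_F$-split subtorus of $Z(\ms{G}_x^\circ)^\circ$ equals the reduction of the maximal $F$-split subtorus of $Z(G)^\circ$.'' That is exactly what needs to be proved, and it is \emph{not} automatic from functoriality alone (functoriality only gives one containment; the reductive quotient can a priori have a larger central torus than the N\'eron reduction of $Z(G)^\circ$, since $x$ need not be a vertex of $\mc{B}^\tx{red}(G,F^u)$). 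The correct way to close your argument is to use that $Z(\ms{G}_x^\circ)^\circ \subset \ms{S}'$ because $\ms{S}'$ is a maximal torus, so the split part of $Z(\ms{G}_x^\circ)^\circ$ is sandwiched between the reduction of $A_G$ (contained by functoriality) and the split part of $\ms{S}'$, and \emph{then} invoke ellipticity $A_S = A_G$ together with your identification of the split part of $\ms{S}'$ with the reduction of $A_S$ to collapse the sandwich; the claimed identification of $Z(\ms{G}_x^\circ)^\circ_{\tx{split}}$ falls out as a consequence, not an input. The paper sidesteps this entirely by passing to $G_\tx{ad}$ and showing $X_*(\ms{S}')^\tx{Fr} \subset X_*(S)^\Gamma = 0$, i.e.\ outright anisotropy of $\ms{S}'$, which trivially implies ellipticity since every subtorus of an anisotropic torus is anisotropic; this is a bit cleaner, but both routes work once yours is phrased in the right order.
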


\begin{proof}
For the first point, apply \cite[Proposition 4.6.4(i)]{BT2} to $K=F^u$ to see that the special fiber of $S'$ is a maximal torus of $\ms{G}_x^\circ$. From the explicit description of $\ms{S}'(\ol{k_F})$ as the projection of $S'(F^u) \cap G(F^u)_{x,0}$ onto $\ms{G}_x^\circ(\ol{k_F})$ it is clear that this maximal torus is invariant under Frobenius. To discuss its ellipticity we may assume that $G$ is adjoint. We have $X_*(\ms{S}')=X_*(S')=X_*(S)_{I,\tx{free}}$ and the latter is identified via the norm map for the action of inertia with a sublattice (of finite index) of $X_*(S)^I$. This identification is Frobenius-invariant and we see that $X_*(\ms{S}'')^\tx{Fr} \subset X_*(S)^\Gamma=\{0\}$.

For the second point, let $\ms{S}_1' \subset \ms{G}_x^\circ$ be an elliptic maximal torus. As a first step, we shall lift the canonical embedding $\ms{S}_1' \to \ms{G}_x^\circ$ to an embedding $\ms{S}_1' \to \mf{G}_x^\circ \times k_F$. For this, we consider the extension
\[ 1 \to R \to \mf{G}_x^\circ \times k_F \to \ms{G}_x^\circ \to 1 \]
of affine algebraic groups over $k_F$, where $R$ is the unipotent radical of $\mf{G}_x^\circ$ and apply \cite[16.7 Proposition]{Bor91} to obtain a maximal torus of $\mf{G}_x^\circ$ defined over $k_F$ and mapping to $\ms{S}_1'$. Since $R$ is unipotent, this map of tori is an isomorphism and we obtain the desired embedding $\ms{S}_1' \to \mf{G}_x^\circ \times k_F$. Next, we let $\mf{S}_1'$ be the torus over $O_F$ whose cocharacter module is the Frobenius module $X_*(\ms{S}_1')$. The functor $\ul{\tx{Hom}}(\mf{S}_1',\mf{G}_x^\circ)$ that assigns to each $O_F$-scheme $X$ the set of morphisms $\tx{Hom}_{X-\tx{grp}}(\mf{S}_1' \times X,\mf{G}_x^\circ \times X)$ of algebraic groups over $X$ is represented by a smooth $O_F$-scheme \cite[exp XI, Theorem 4.1]{SGA3}. The embedding $\ms{S}_1' \to \mf{G}_x^\circ$ is a $k_F$-point of $\ul{\tx{Hom}}(\mf{S}_1',\mf{G}_x^\circ)$, which by smoothness \cite[\S2.3, Proposition 5]{BLR90} lifts to an $O_F$-point, i.e. to a morphism $\mf{S}_1' \to \mf{G}_x^\circ$ of $O_F$-group schemes. According to \cite[exp IX, Corollaries 2.5 and 6.6]{SGA3}, this morphism is a closed embedding, and so is its generic fiber $S_1' \to G$, where $S_1' = \mf{S}_1' \times F$. The torus $S_1'$ splits over $F^u$ and its rank is equal to that of $\ms{S}_1'$, which in turn equals to that of $\ms{S}'$, which in turn is equal to that of $S'$. Thus, the image of $S_1'$ is a maximal split torus in $G \times F^u$. Let $S_1$ be the centralizer of $S_1'$ in $G$. It is defined over $F$, because $S_1'$ is, and is a minimal Levi of $G \times F^u$. Thus, $S_1$ is a maximally unramified maximal torus. The argument for the first part of the proof shows that $S_1$ is elliptic.
\end{proof}

\begin{lem} \label{lem:rut2}
Let $S_1,S_2 \subset G$ be two maximally unramified elliptic maximal tori. Assume that their points in $\mc{B}^\tx{red}(G,F)$ coincide, call them $x$. Assume furthermore that $S_1(F^u) \cap G(F^u)_{x,0}$ and $S_2(F^u) \cap G(F^u)_{x,0}$ have the same projection to $\ms{G}_x^\circ(\ol{k_F})$. Then $S_1$ and $S_2$ are $G(F)_{x,0+}$-conjugate.
\end{lem}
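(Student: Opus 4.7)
The strategy is to produce the desired conjugator via smoothness of an $O_F$-transporter scheme and Hensel's lemma. Since $S_i$ is the centralizer of its maximal unramified subtorus $S_i'$ by the definition of maximally unramified, it suffices to find $g \in G(F)_{x,0+}$ with $\tx{Ad}(g)S_1' = S_2'$. Let $\mf{T}_i$ denote the connected Neron model of $S_i'$, which by Lemma \ref{lem:rut1}(1) embeds as a closed $O_F$-subgroup scheme of $\mf{G}_x^\circ$ whose special fiber $\ms{S}_i'$ is a maximal torus of the reductive quotient $\ms{G}_x^\circ$. The hypothesis together with the explicit description in Lemma \ref{lem:rut1}(1) forces $\ms{S}_1'$ and $\ms{S}_2'$ to have the same $\ol{k_F}$-points and hence to coincide as a single subtorus $\ms{S}' \subset \ms{G}_x^\circ$.

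I consider the transporter scheme $T := \tx{Transp}_{\mf{G}_x^\circ}(\mf{T}_1,\mf{T}_2)$, a closed subscheme of $\mf{G}_x^\circ$ over $O_F$, and aim to produce a point $g \in T(O_F)$ reducing to the identity in $\ms{G}_x^\circ(k_F)$. The normalizer $N := N_{\mf{G}_x^\circ}(\mf{T}_1)$ of a subgroup of multiplicative type in a smooth $O_F$-group scheme is itself a smooth closed $O_F$-subgroup scheme (\cite[exp XI]{SGA3}), and $T$ is either empty or a right $N$-torsor. It is non-empty after base change to $O_{F^u}$: both $\mf{T}_i \times O_{F^u}$ are split tori, $S_1'\times F^u$ and $S_2'\times F^u$ are maximal $F^u$-split tori of $G \times F^u$ whose apartments in $\mc{B}(G,F^u)$ both contain $x$, and by the transitivity of $G(F^u)_{x,0}$ on apartments through $x$ they are conjugate by an element of $\mf{G}_x^\circ(O_{F^u})$. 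Therefore $T$ is an $N$-torsor over $O_F$ and is smooth.

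It remains to produce $\bar g \in T(k_F)$ mapping to the identity in $\ms{G}_x^\circ(k_F)$; by Hensel's lemma such a $\bar g$ lifts to $g \in T(O_F) \cap G(F)_{x,0+}$, and taking generic fibers and centralizers then gives $\tx{Ad}(g)S_1 = S_2$. Let $U_x$ be the unipotent radical of the special fiber $\mf{G}_x^\circ \times k_F$, so that $U_x(\ol{k_F})$ is the kernel of the reduction $\mf{G}_x^\circ(\ol{k_F}) \to \ms{G}_x^\circ(\ol{k_F})$. Since $\mf{T}_1 \times k_F$ and $\mf{T}_2 \times k_F$ are two lifts of the common torus $\ms{S}'$ through the quotient $\mf{G}_x^\circ \times k_F \to \ms{G}_x^\circ$, they are conjugate over $\ol{k_F}$ by an element of $U_x$. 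The set of such conjugators is a torsor under $U_x \cap Z_{\mf{G}_x^\circ \times k_F}(\mf{T}_1 \times k_F)$, the fixed-point subgroup of a torus acting on the smooth connected unipotent group $U_x$; this fixed-point subgroup is smooth and connected, and Lang's theorem supplies the required $k_F$-point $\bar g$ of the torsor.

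I expect the main technical obstacle to be the careful verification of smoothness of the normalizer $N$ (and hence of the torsor $T$) inside $\mf{G}_x^\circ$, which is a smooth but in general non-reductive $O_F$-group scheme, together with the connectedness assertion used to apply Lang's theorem to the unipotent fixed-point subgroup; once these structural facts are in hand, the Hensel lifting and the final passage from $S_i'$ back to $S_i = Z_G(S_i')$ are routine.
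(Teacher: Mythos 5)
Your proof is correct and takes a genuinely different route from the paper's. The paper works entirely on the level of rational points: transitivity of $G(F^u)_{x,0}$ on apartments of $\mc{B}^\tx{red}(G,F^u)$ through $x$ gives a first conjugator $g\in G(F^u)_{x,0}$; this $g$ is then pushed into $G(F^u)_{x,0+}$ in two steps (using \cite[Cor.\ 4.6.13]{BT2} to correct for a possible Weyl-group component, and smoothness of the connected Neron model of $S_1$ to remove a torus component); and finally the cocycle $g^{-1}\sigma(g)\in S_1(F^u)_{0+}$ is trivialized via $H^1(F^u/F,S_1(F^u)_{0+})=0$ to descend $g$ to $G(F)_{x,0+}$. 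You instead set the problem up scheme-theoretically over $O_F$: the transporter $T$ is exhibited as a smooth torsor under $N_{\mf{G}_x^\circ}(\mf{T}_1)$, so that by Hensel's lemma the whole problem reduces to producing a $k_F$-point of $T$ that reduces to $1\in\ms{G}_x^\circ(k_F)$, and Lang's theorem for the connected unipotent group $Z_{U_x}(\mf{T}_1\times k_F)$ supplies it. Your route trades the Weyl-group correction and the Galois-cohomological descent of the paper for a single invocation of Lang's theorem, at the cost of somewhat heavier structure theory of smooth $O_F$-group schemes --- representability and smoothness of normalizers and transporters of multiplicative-type subgroups inside the non-reductive $\mf{G}_x^\circ$ (essentially \cite[exp XI]{SGA3}), plus smooth-connectedness of torus-fixed points in a connected unipotent group --- which you rightly flag as the technical burden. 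One small point you should spell out: the set of $U_x$-conjugators of $\mf{T}_1\times k_F$ to $\mf{T}_2\times k_F$ is a priori a torsor under the normalizer $N_{U_x}(\mf{T}_1\times k_F)$ rather than the centralizer; the equality $N_{U_x}(\mf{T}_1\times k_F)=Z_{U_x}(\mf{T}_1\times k_F)$ that your argument tacitly uses does hold, by the normalizer-equals-centralizer theorem for maximal tori in connected solvable groups applied to the preimage of $\ms{S}'$ in $\mf{G}_x^\circ\times k_F$.
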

\begin{proof}
Since $G(F^u)_{x,0}$ acts transitively on the apartments in $\mc{B}^\tx{red}(G,F^u)$ containing $x$ \cite[Proposition 4.6.28]{BT2}, we may choose $g \in G(F^u)_{x,0}$ such that $\tx{Ad}(g)S_1'=S_2'$, where $S_i' \subset S_i$ is the maximal unramified subtorus. Since $S_1' \times F^u$ is a maximally split torus in the quasi-split group $G \times F^u$ contained in the maximal torus $S_1 \times F^u$ and the same is true for $S_2$, we have $\tx{Ad}(g)S_1=S_2$. By assumption, the image of $g$ in $\ms{G}_x^\circ(\ol{k_F})$ belongs to the normalizer of $\ms{S}_1'(\ol{k_F})=\ms{S}_2'(\ol{k_F})$. By \cite[Cor. 4.6.13]{BT2}, the restriction of the surjection $\mf{G}_x^\circ(O_{F^u}) \rw \ms{G}_x^\circ(\ol{k_F})$ to $N(S_1,G)(F^u)_{x,0}$ is a surjection onto the Weyl group $\Omega(\ms{S}_1',\ms{G}_x^\circ)$. Thus we may modify $g$ so that its image in $\ms{G}_x^\circ(\ol{k_F})$ belongs to $\ms{S}_1'(\ol{k_F})$. Using the smoothness of the connected Neron model $\mf{S}_1^\circ$ of $S_1$, we can lift this image to an element of $\mf{S}_1^\circ(O_{F^u}) = S_1(F)_{x,0} \subset G(F)_{x,0}$ and use it to modify $g$ so that the image of $g$ in $\ms{G}_x^\circ(\ol{k_F})$ is now trivial. Thus, $g \in G(F^u)_{x,0+}$.

Now consider $g^{-1}\sigma(g)$, where $\sigma$ is the Frobenius. This element belongs to the intersection $N(S_1,G)(F^u) \cap G(F^u)_{x,0+} = S_1(F^u) \cap G(F^u)_{x,0+} = S_1(F^u)_{0+}$.
According to \cite[Prop 2.2]{Yu01}, $H^1(F^u/F,S_1(F^u)_{0+})=0$, thus we can modify $g$ so that it becomes an element of $G(F)_{x,0+}$.
\end{proof}

\begin{lem} \label{lem:weyl} Let $S \subset G$ be a maximally unramified elliptic maximal torus with associated point $x \in \mc{B}^\tx{red}(G,F)$. The natural maps
\[ N(S,G(F)_{x,0})/S(F)_0 \to N(S,G)(F)/S(F) \]
and
\[ N(S,G(F)_{x,0})/S(F)_0 \to N(\ms{S}',\ms{G}_x^\circ)(k_F)/\ms{S}'(k_F) \]
are bijective.
\end{lem}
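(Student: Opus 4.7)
The plan is to prove the second map bijective first and deduce the first map from it using Lemma~\ref{lem:rut1}. The fundamental input is a Weyl-group identification coming from the maximally unramified hypothesis: since $S'\times F^u$ is a maximal split torus of the quasi-split group $G\times F^u$ with centralizer $S\times F^u$, the relative Weyl group $N(S,G)(F^u)/S(F^u)=\Omega(S,G)(F^u)$ acts faithfully on $X_*(S')=X_*(\ms{S}')$; combined with the faithful action of $\Omega(\ms{S}',\ms{G}_x^\circ)(\ol{k_F})$ on the same lattice, this yields a well-defined injection $\Omega(S,G)(F^u)\hookrightarrow \Omega(\ms{S}',\ms{G}_x^\circ)(\ol{k_F})$ induced by the parahoric reduction, and hence an injection $\Omega(S,G)(F)\hookrightarrow \Omega(\ms{S}',\ms{G}_x^\circ)(k_F)$ on Frobenius-fixed points.

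For the second map, the key input is the result of \cite[4.6.13]{BT2} already cited in the proof of Lemma~\ref{lem:rut2}, giving that the reduction carries $N(S,G)(F^u)_{x,0}$ onto $\Omega(\ms{S}',\ms{G}_x^\circ)(\ol{k_F})$. Combined with the surjection $S(F^u)_0\twoheadrightarrow \ms{S}'(\ol{k_F})$ (coming from Lemma~\ref{lem:sred}), this upgrades to a surjection onto $N(\ms{S}',\ms{G}_x^\circ)(\ol{k_F})$ whose kernel equals $N(S,G)(F^u)\cap G(F^u)_{x,0+}$. That kernel lies in $S(F^u)_{0+}$: any element there has Weyl image acting trivially on $\ms{S}'$, hence is trivial by the Weyl-group injection, so the element belongs to $S(F^u)\cap G(F^u)_{x,0+}=S(F^u)_{0+}$. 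Passing to Frobenius-fixed points using $H^1(\sigma,S(F^u)_{0+})=0$ from \cite[Prop.~2.2]{Yu01}, and then dividing by $S(F)_0\twoheadrightarrow \ms{S}'(k_F)$, yields surjectivity of the second map. For injectivity: if the image of $n\in N(S,G(F)_{x,0})$ lies in $\ms{S}'(k_F)$, write $\bar n=\bar s_0$ with $s_0\in S(F)_0$; then $ns_0^{-1}\in N(S,G)(F)\cap G(F)_{x,0+}$ has trivial Weyl image and so lies in $S(F)_{0+}\subset S(F)_0$, forcing $n\in S(F)_0$.

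The first map then follows formally. For surjectivity, any $n\in N(S,G)(F)$ has a Weyl element $w\in\Omega(S,G)(F)$; surjectivity of the second map provides $m\in N(S,G(F)_{x,0})$ whose Weyl image in $\Omega(\ms{S}',\ms{G}_x^\circ)(k_F)$ equals that of $w$, so by injectivity of the Weyl identification the rational Weyl element of $m$ is $w$, whence $nm^{-1}\in S(F)$. For injectivity, any $n\in N(S,G(F)_{x,0})\cap S(F)$ has $\bar n\in\ms{S}'(k_F)$ by the explicit description of $\ms{S}'(k_F)$ in Lemma~\ref{lem:rut1}, hence maps trivially under the second map, and its injectivity forces $n\in S(F)_0$. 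The main technical hurdle is the Weyl-group injection: one must verify that the relative Weyl group theory over $F^u$ for the quasi-split group $G\times F^u$ matches the Weyl group of the reductive quotient at the possibly non-hyperspecial vertex $x$ attached to a ramified torus, which ultimately amounts to identifying the $F^u$-roots of $G$ relevant at $x$ with the roots of $\ms{G}_x^\circ$ on $\ms{S}'$.
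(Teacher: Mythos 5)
The central claim in your argument, that parahoric reduction induces an injection $\Omega(S,G)(F^u)\hookrightarrow \Omega(\ms{S}',\ms{G}_x^\circ)(\ol{k_F})$, is false in general, and the error is fatal rather than cosmetic. The natural containment goes the other way: $N(S,G(F^u)_{x,0})/S(F^u)_0$ injects into \emph{both} Weyl groups, so what one gets is an identification of $\Omega(\ms{S}',\ms{G}_x^\circ)(\ol{k_F})$ with a subgroup of $\Omega(S,G)(F^u)$; the two coincide precisely when $x$ is special in $\mc{B}^\tx{red}(G,F^u)$, which need not hold. (Concretely: for $G=\tx{SL}_3$ and $S$ the norm-one torus of the unramified cubic extension, $x$ is a chamber barycenter over $F^u$, so $\ms{G}_x^\circ=\ms{S}'$ and $\Omega(\ms{S}',\ms{G}_x^\circ)$ is trivial while $\Omega(S,G)(F^u)\cong S_3$.) A lift of a general $w\in\Omega(S,G)(F^u)$ to $N(S,G)(F^u)$ via Steinberg acts on the apartment $\mc{A}^\tx{red}(S,F^u)$ by an affine map that does not fix $x$, so there is no ``parahoric reduction'' of it and no map on Weyl groups in your claimed direction. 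Your kernel analysis for the second map survives — it only needs that both Weyl groups act faithfully and compatibly on $X_*(S')$, not an injection of one into the other — but your deduction of the first map from the second collapses: ``surjectivity of the second map provides $m$ whose Weyl image equals that of $w$'' presupposes that $w$ \emph{has} an image in $\Omega(\ms{S}',\ms{G}_x^\circ)(k_F)$, and that is exactly the content that needs proof.

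What makes the first map surjective is not a general Weyl-group comparison but a rationality phenomenon that your argument never engages with: if $w$ is the image of $n\in N(S,G)(F)$, then the conjugation action of any Steinberg lift $\dot n\in N(S_\tx{sc},G_\tx{sc})(F^u)$ agrees with $\tx{Ad}(n)$ on $S$ and hence is defined over $F$; therefore $\dot n$ acts on $\mc{A}^\tx{red}(S,F^u)$ by a map that commutes with Frobenius and so \emph{must} fix the unique Frobenius-fixed point $x$. This — not an a priori embedding of Weyl groups — is what forces $\dot n\in G_\tx{sc}(F^u)_{x,0}$. After that, one uses $H^1(\tx{Fr},S(F^u)_0)=0$ to adjust $\dot n$ into $G(F)_{x,0}$. (Injectivity of the first map is also simpler than you make it: it is just $S(F)\cap G(F)_{x,0}=S(F)_0$, which is the Haines–Rapoport characterization of the parahoric.) The second map is handled independently in the paper by quoting \cite[4.6.12]{BT2} for the bijection over $F^u$ and descending via the vanishing of $H^1(\tx{Fr},-)$ on $S(F^u)_0$ and $\ms{S}'(\ol{k_F})$; your route through \cite[4.6.13]{BT2} plus the kernel computation can be made to work there, but the two maps really do need separate arguments.
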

\begin{proof}
Consider the first map. Its injectivity follows from the equality $S(F) \cap G(F)_{x,0} = S(F)_0$, which can be seen in different ways, one being the Haines-Rapoport characterization of parahoric subgroups \cite[Appendix]{PR08}. To see surjectivity, let $n \in N(S,G)(F)$ and let $w$ be its image in $[N(S,G)/S](F)$. Write $S_\tx{sc}$ for the preimage of $S$ in $G_\tx{sc}$. The natural map $N(S_\tx{sc},G_\tx{sc})/S_\tx{sc} \to N(S,G)/S$ is an isomorphism of finite algebraic groups and allows us to view $w$ as an element of $[N(S_\tx{sc},G_\tx{sc})/S_\tx{sc}](F)$. Steinberg's theorem \cite[Theorem 1.9]{Ste65reg} asserts that $H^1(I_F,S_\tx{sc}(F^u))=\{0\}$ and lets us lift $w$ to an element $\dot n \in N(S_\tx{sc},G_\tx{sc})(F^u)$. The action of this element on $G(F^u)$ by conjugation preserves $S(F^u)$ and acts on it in the same way as $n$, thus preserving the $F$-structure. It follows that the action of $\dot n$ on $\mc{B}^\tx{red}(G,F^u)$ preserves the apartment $\mc{A}^\tx{red}(S,F^u)$ and induces an action on it that commutes with the action of $\tx{Gal}(F^u/F)$. We conclude that $\dot n$ fixes the point $x$ and hence belongs to $G_\tx{sc}(F^u)_{x,0}$. Let $n' \in G(F^u)_{x,0}$ be the image of $\dot n$. We have
\[ n'^{-1}\tx{Fr}(n') = (n^{-1}n')^{-1}\tx{Fr}(n^{-1}n')\]
and the left hand side belongs to $G(F^u)_{x,0}$, while the right hand side belongs to $S(F^u)$. We conclude that both sides belong to $S(F^u)_0$. From $H^1(\tx{Fr},S(F^u)_0)=\{0\}$ we find $s \in S(F^u)_0$ so that $n's \in G(F^u)_{x,0}^\tx{Fr}=G(F)_{x,0}$. By construction $n's$ is a lift of $w$ and this completes the proof of surjectivity.

Consider now the second map. If we replace $F$ by $F^u$ then its bijectivity is the content of \cite[4.6.12]{BT2}. Moreover, this bijective map is Frobenius-equivariant. The claim now follows from the fact that both $S(F^u)_0$ and $\ms{S}'(\ol{k_F})$ have trivial $H^1(\tx{Fr},-)$.
\end{proof}

We shall call a vertex $x \in \mc{B}^\tx{red}(G,F)$
 \emph{superspecial} if it is a (necessarily special) vertex that is special in $\mc{B}^\tx{red}(G,E)$, where $E$ is any finite Galois extension of $F$ splitting $G$. If $G$ is quasi-split then superspecial vertices exist: The Chevalley valuation \cite[\S4.2.1]{BT2} associated to any $F$-pinning of $G$ is such a vertex.

\begin{lem} \label{lem:weyl2} Let $S \subset G$ be maximally unramified elliptic and assume that its associated point $x \in \mc{B}^\tx{red}(G,F)$ is superspecial. Then the natural inclusion
\[ N(S,G)(F)/S(F) \to \Omega(S,G)(F) \]
is an isomorphism.
\end{lem}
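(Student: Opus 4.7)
The plan is to combine Lemma~\ref{lem:weyl} with a Weyl-group computation on the reductive quotient at $x$. By Lemma~\ref{lem:weyl} there is a canonical isomorphism
\[ N(S,G)(F)/S(F) \;\cong\; N(\ms{S}',\ms{G}_x^\circ)(k_F)/\ms{S}'(k_F), \]
and Lang's theorem applied to the connected algebraic group $\ms{S}'$ over the finite field $k_F$ gives $H^1(k_F,\ms{S}')=0$, so taking Frobenius-invariants of the tautological sequence $1 \to \ms{S}' \to N(\ms{S}',\ms{G}_x^\circ) \to \Omega(\ms{S}',\ms{G}_x^\circ) \to 1$ shows the right-hand side equals $\Omega(\ms{S}',\ms{G}_x^\circ)(k_F)$. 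It therefore suffices to identify $\Omega(\ms{S}',\ms{G}_x^\circ)(k_F)$ with $\Omega(S,G)(F)$.

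I would do this by first establishing a Frobenius-equivariant isomorphism
\[ \Omega(\ms{S}',\ms{G}_x^\circ)(\ol{k_F}) \;\cong\; \Omega(S,G)(F^s)^{I_F} \]
and then taking Frobenius-fixed points. Since $\Omega(S,G)$ is a finite discrete module and $\Gamma_F$ is topologically generated by $I_F$ together with any lift of Frobenius, this gives $\Omega(\ms{S}',\ms{G}_x^\circ)(k_F) = \Omega(S,G)(F^s)^{\Gamma_F} = \Omega(S,G)(F)$, as required. The displayed isomorphism has two ingredients: the relative Weyl group of $S'\times F^u$ in the quasi-split group $G\times F^u$ equals the inertia invariants $\Omega(S,G)(F^s)^{I_F}$, since $S'\times F^u$ is the maximally split torus of $G\times F^u$ with centralizer $S\times F^u$; and at a \emph{special} vertex $x$ of $\mc{B}^\tx{red}(G,F^u)$, the absolute Weyl group of the reductive quotient $\ms{G}_x^\circ\times\ol{k_F}$ coincides with this relative Weyl group, which is the classical characterization of special vertices in Bruhat--Tits theory \cite[\S4.6]{BT2}.

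The main technical step will be to verify that the superspecial vertex $x$ is in particular special as a vertex of $\mc{B}^\tx{red}(G,F^u)$. Since $G$ splits over a tamely ramified extension, we may choose the splitting field $E$ in the definition of superspecial to be tame, so that $x$ is special in $\mc{B}^\tx{red}(G,E)$. I would then descend specialness from $E$ down to $F^u$ using the description of the affine root system over $F^u$ via tame Bruhat--Tits descent from $E$: a point that sees, for every vector root over $E$, a parallel affine root hyperplane through itself does so a fortiori for every vector root over $F^u$, since the latter are restrictions of the former. This descent is expected to be the most delicate point of the argument, but follows from the general machinery of \cite[\S5]{BT2} applied to the tame extension $E/F^u$.
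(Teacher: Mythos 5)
Your proof is correct but takes a genuinely different route from the paper's. The paper works over $F^u$: it invokes Steinberg's theorem to identify $\Omega(S,G)(F^u)$ with $N(S,G)(F^u)/S(F^u)$, then uses \cite[6.2.19]{BT1} (for $S$ not elliptic over $F^u$, Lemma~\ref{lem:weyl} is explicitly unavailable there) to show $N(S,G(F^u)_{x,0})/S(F^u)_0 \to N(S,G)(F^u)/S(F^u)$ is an isomorphism at the special vertex $x$, and finally descends to $F$ by taking Frobenius-invariants via $H^1(\tx{Fr},S(F^u)_0)=0$. You instead apply Lemma~\ref{lem:weyl} over $F$ directly (where $S$ \emph{is} elliptic, so the lemma applies), reducing the left-hand side to the finite Weyl group $\Omega(\ms{S}',\ms{G}_x^\circ)(k_F)$ via Lang, and then compute this finite Weyl group by passing to $\ol{k_F}$ and identifying it with $\Omega(S,G)(F^s)^{I_F}$ Frobenius-equivariantly, before descending back to $F$. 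Both routes pivot on the same two technical facts -- specialness of $x$ over $F^u$, and Frobenius-cohomology vanishing -- but they arrange them differently: the paper does its Galois descent one level up (on $N/S$ over $F^u$), while you push the problem all the way down to the reductive quotient and do the descent there. Your organization has the pedagogical advantage that it recycles the already-proved Lemma~\ref{lem:weyl} directly, whereas the paper's is marginally shorter. Two minor cautions: your citation for the identification of the Weyl group of $\ms{G}_x^\circ$ with the relative Weyl group at a special vertex is closer in spirit to \cite[6.2.19]{BT1} and \cite[4.6.12]{BT2} than to \cite[\S 4.6]{BT2} as a whole; and the descent of specialness from $E$ to $F^u$ -- which you correctly flag as the delicate point -- is also asserted without proof in the paper, so you are not at a disadvantage there.
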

\begin{proof}
By Steinberg's theorem \cite[Theorem 1.9]{Ste65reg} we have $\Omega(S,G)(F^u)=N(S,G)(F^u)/S(F^u)$. The point $x$ remains special over $F^u$ and according to \cite[6.2.19]{BT1} the natural map $N(S,G(F^u)_{x,0})/S(F^u)_0 \to N(S,G)(F^u)/S(F^u)$ is an isomorphism (note that $S$ is not elliptic over $F^u$ and Lemma \ref{lem:weyl} doesn't apply in this setting). Applying again $H^1(\tx{Fr},S(F^u)_0)=\{0\}$ we obtain the lemma.
\end{proof}

\begin{lem} \label{lem:super} Let $G$ be quasi-split, $x \in \mc{B}^\tx{red}(G,F)$ superspecial, and $S \subset G$ a maximally unramified elliptic maximal torus. Then $x$ is the point associated to some maximal torus that is stably conjugate to $S$.
\end{lem}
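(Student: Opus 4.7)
The plan is to translate the problem, via Lemma \ref{lem:rut1}(2), into producing an elliptic maximal torus $\ms{S}_1' \subset \ms{G}_x^\circ$ whose lift to $G$ lies in the stable conjugacy class of $S$. First I fix a reference maximally unramified elliptic maximal torus $S_0 \subset G$ with associated point $x$: the connected reductive $k_F$-group $\ms{G}_x^\circ$ admits an elliptic maximal torus by Lang's theorem, and any such lifts to $G$ via Lemma \ref{lem:rut1}(2). Since $S$ and $S_0$ are both maximally unramified, $S\times F^u$ and $S_0\times F^u$ are centralizers of maximal $F^u$-split tori in $G\times F^u$ and hence $G(F^u)$-conjugate; pick $g\in G(F^u)$ with $\tx{Ad}(g)S_0=S$ and form the cocycle $c(\tx{Fr})=g^{-1}\tx{Fr}(g)\in N(S_0,G)(F^u)$, with image $w(\tx{Fr})\in\Omega(S_0,G)(F^u)$. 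Because $g\in G(F^u)$ is $I_F$-fixed, $w$ is inflated from $\tx{Gal}(F^u/F)$, and its Frobenius-conjugacy class captures the stable class of $S$ relative to $S_0$.

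Next I exploit the superspecial hypothesis to import $w(\tx{Fr})$ into $\ms{G}_x^\circ$. Since $x$ remains special over $F^u$, running the proofs of Lemmas \ref{lem:weyl} and \ref{lem:weyl2} with $F$ replaced by $F^u$ (using \cite[6.2.19]{BT1}, Steinberg's vanishing $H^1(F^u,S_0)=0$, and \cite[4.6.12]{BT2}) yields a $\tx{Fr}$-equivariant chain of isomorphisms
\[
\Omega(S_0,G)(F^u)\;\cong\;N(S_0,G(F^u)_{x,0})/S_0(F^u)_0\;\cong\;\Omega(\ms{S}_0',\ms{G}_x^\circ)(\ol{k_F}).
\]
Let $\ms{w}(\tx{Fr})\in\Omega(\ms{S}_0',\ms{G}_x^\circ)(\ol{k_F})$ be the image of $w(\tx{Fr})$. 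Lift it to $\dot{\ms{w}}\in N(\ms{S}_0',\ms{G}_x^\circ)(\ol{k_F})$ and apply Lang's theorem to the connected group $\ms{G}_x^\circ$ to find $\ms{h}\in\ms{G}_x^\circ(\ol{k_F})$ with $\ms{h}^{-1}\tx{Fr}(\ms{h})=\dot{\ms{w}}$; then $\ms{S}_1'=\tx{Ad}(\ms{h})\ms{S}_0'$ is a $k_F$-rational maximal torus of $\ms{G}_x^\circ$ realizing the prescribed Frobenius-conjugacy class.

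What remains is to verify ellipticity of $\ms{S}_1'$ and stable conjugacy of its lift to $S$. Under the identifications $X_*(\ms{S}_0')=X_*(S_0)^{I_F}$ and the Weyl group matching above, the $\tx{Fr}$-invariants in $X_*(\ms{S}_1')$ coincide with the $w$-twisted $\Gamma_F$-invariants in $X_*(S_0)$, which by the ellipticity of $S$ equal $X_*(A_G)$; combined with the embedding $A_G\hookrightarrow Z(\ms{G}_x^\circ)$ supplied by the reduction of the connected Neron model at the superspecial vertex $x$, this gives ellipticity of $\ms{S}_1'$ in $\ms{G}_x^\circ$. Lemma \ref{lem:rut1}(2) then produces $S_1\subset G$ with associated point $x$. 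For the stable conjugacy, lift $\ms{h}$ via the smoothness of $\mf{G}_x^\circ$ to $h\in G(F^u)_{x,0}$; the element $h^{-1}\tx{Fr}(h)$ reduces to $\dot{\ms{w}}$ and hence, up to elements of $S_0(F^u)_0$, represents the same Frobenius-conjugacy class of Weyl element as $w(\tx{Fr})$, so $gh^{-1}\in G(F^u)$ conjugates $S_1$ to $S$ as $F$-tori. The main technical hurdle is the careful bookkeeping around central tori needed to match ellipticity across the reduction; this is precisely where the assumption that $x$ is superspecial, not merely special, is decisive.
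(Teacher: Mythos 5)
Your approach is genuinely different from the paper's: you push the comparison down to the reductive quotient $\ms{G}_x^\circ$ of the special fiber, solve the conjugacy problem there by Lang's theorem, and lift back via Lemma \ref{lem:rut1}(2). The paper never passes to the special fiber. It fixes a minimal Levi subgroup $T\subset G$ with $x\in\mc{A}^\tx{red}(T,F)$, records the stable class of $S$ as a cocycle in $H^1(\tx{Fr},\Omega(T,G)(F^u))\cong H^1(\tx{Fr},N(T,G(F^u)_{x,0})/T(F^u)_0)$ (using \cite[6.2.19]{BT1} and that $x$ is special over $F^u$), lifts to $H^1(\tx{Fr},N(T,G(F^u)_{x,0}))$ via the vanishing $H^2(\tx{Fr},T(F^u)_0)=0$ of Lemma \ref{lem:h2van}, and then realizes this as a coboundary $\sigma\mapsto g^{-1}\sigma(g)$ with $g\in G(F^u)_{x,0}$ using $H^1(\tx{Fr},G(F^u)_{x,0})=0$. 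The torus $\tx{Ad}(g)T$ then does the job directly. Both routes hinge on $x$ remaining special over $F^u$; what the paper's route buys is that one never has to track the relationship between a $p$-adic torus and its smooth lift of a torus in the special fiber.

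That tracking is exactly where your argument has a gap. You lift $\ms{h}$ ``via the smoothness of $\mf{G}_x^\circ$'' to $h\in G(F^u)_{x,0}$ and then assert that $gh^{-1}$ conjugates $S_1$ to $S$ over $F$. For that you need $\tx{Ad}(h)S_0=S_1$, and an arbitrary smooth lift of $\ms{h}$ will not satisfy this: smooth lifting only pins down $h$ modulo $G(F^u)_{x,0+}$, while $S_1$ is produced by the independent lifting procedure inside the proof of Lemma \ref{lem:rut1}(2), and there is no reason the two should line up. Consequently $h^{-1}\tx{Fr}(h)$ need not lie in $N(S_0,G)(F^u)$ at all (it only agrees with $\dot{\ms{w}}$ modulo $G(F^u)_{x,0+}$), so the phrase ``up to elements of $S_0(F^u)_0$'' has no meaning and the comparison with $w(\tx{Fr})$ does not go through. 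The repair, modeled on the proof of Lemma \ref{lem:rut2}, is to first choose $h\in G(F^u)_{x,0}$ with $\tx{Ad}(h)S_0=S_1$ using transitivity of $G(F^u)_{x,0}$ on apartments through $x$, then modify $h$ on the right by an element of $N(S_0,G(F^u)_{x,0})$ — whose reductions fill out $N(\ms{S}_0',\ms{G}_x^\circ)(\ol{k_F})$ by \cite[Cor.\ 4.6.13]{BT2} — so that $h$ reduces to $\ms{h}$. Then $h^{-1}\tx{Fr}(h)\in N(S_0,G(F^u)_{x,0})$, its Weyl image equals $w(\tx{Fr})$ by your isomorphism chain, and $\tx{Ad}(gh^{-1})|_{S_1}$ is defined over $F$. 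Finally, your closing remark misplaces the role of the superspecial hypothesis: the embedding $A_G\hookrightarrow Z(\ms{G}_x^\circ)$ and the ellipticity count hold for any vertex. What superspeciality really buys is that $x$ stays special over $F^u$, which is what makes $N(T,G(F^u)_{x,0})/T(F^u)_0\to N(T,G)(F^u)/T(F^u)$ an isomorphism — the first link of your chain and the key step of the paper's proof alike.
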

\begin{proof}
Fix a minimal Levi subgroup $T \subset G$ whose apartment contains $x$.

Since both $S$ and $T$ become minimal Levi subgroups over $F^u$ we see that the class $\tx{cls}(S)$ defined in Subsection \ref{sub:reviewtrans} is inflated from $H^1(\tx{Fr},N(T,G)(F^u))$. Projecting to the Weyl group we obtain an element of $H^1(\tx{Fr},\Omega(T,G)(F^u))$. We have $\Omega(T,G)(F^u)=N(T,G)(F^u)/T(F^u)$ by Steinberg's theorem \cite[Theorem 1.9]{Ste65reg}. Since the point $x$ remains special over $F^u$ the natural map $N(T,G(F^u)_{x,0})/T(F^u)_0 \to N(T,G)(F^u)/T(F^u)$ is an isomorphism according to \cite[6.2.19]{BT1}. Via this isomorphism we obtain from $\tx{cls}(S)$ and element of $H^1(\tx{Fr},N(T,G(F^u)_{x,0})/T(F^u)_0)$ which, according to Lemma \ref{lem:h2van} lifts to an element of $H^1(\tx{Fr},N(T,G(F^u)_{x,0}))$. This element can be represented by $\sigma \mapsto g^{-1}\sigma(g)$ for some $g \in G(F^u)_{x,0}$, because $H^1(\tx{Fr},G(F^u)_{x,0})$ is trivial by \cite[Lemma 2.3.1]{DR09}. By construction the images in $\Omega(T,F)(F^u)$ of $g^{-1}\sigma(g)$ and $\tx{cls}(S)$ coincide. We conclude that $\tx{Ad}(g)T$ is stably conjugate to $S$. The point $x=gx$ belongs to $\mc{A}^\tx{red}(\tx{Ad}(g)T,F^u)$ and being Frbenius-fixed it must be the unique Frobenius-fixed element of $\mc{A}^\tx{red}(\tx{Ad}(g)T,F^u)$ and hence the point associated to $\tx{Ad}(g)T$.
\end{proof}

\subsubsection{Depth zero characters}

Let $S \subset G$ be a maximally unramified elliptic maximal torus and let $\theta : S(F) \to \C^\times$ be a character of depth zero. The restriction $\theta|_{S(F)_0}$ is thus a character $\bar\theta$ of $S(F)_0/S(F)_{0+}$, which by Lemma \ref{lem:sred} is the same as $S'(F)_0/S'(F)_{0+}$. This is the group of $k_F$-rational points of the maximal torus $\ms{S}'$ of the reductive group $\ms{G}_x^\circ$, where $x \in \mc{B}^\tx{red}(G,F)$ is the point associated to $S$. In \cite[Definition 5.15]{DL76}, Deligne and Lusztig define two regularity conditions for a character $\bar\theta : \ms{S}'(k_F) \to \C^\times$, which we shall now recall. For this, let us temporarily forget how $\ms{S}'$ and $\ms{G}_x^\circ$ came about and treat them as an arbitrary rational maximal torus $\ms{S}'$ in an arbitrary connected reductive group $\ms{G}_x^\circ$ defined over a finite field $k_F$. They say that $\bar\theta$ is in general position, if its stabilizer in $\Omega(\ms{S}',\ms{G}_x^\circ)(k_F)$ is trivial. They say that $\bar\theta$ is non-singular, if the pull-back of $\bar\theta$ to $X_*(\ms{S}')$ is non-trivial on each coroot $\alpha^\vee \in R^\vee(\ms{S}',\ms{G}_x^\circ) \subset X_*(\ms{S}')$. We need to recall the map $X_*(\ms{S}') \to \ms{S}'(k_F)$, which is given by \eqref{eq:dl1} below. We choose an embedding $\ol{k_F}^\times \to \Q/\Z$. Its image is $(\Q/\Z)_{p'}$, the subgroup of elements whose order is prime to $p$, and this embedding allows us to identify $\ms{S}'(\ol{k_F})$ with $X_*(\ms{S}') \otimes (\Q/\Z)_{p'}$. We have the commutative diagram
\[ \xymatrix{
0\ar[r]&X_*(\ms{S}')\ar[r]\ar[d]^{F-1}&X_*(\ms{S}')\otimes\Q\ar[r]\ar[d]^{F-1}&X_*(\ms{S}')\otimes \Q/\Z\ar[r]\ar[d]^{F-1}&0\\
0\ar[r]&X_*(\ms{S}')\ar[r]&X_*(\ms{S}')\otimes\Q\ar[r]&X_*(\ms{S}')\otimes \Q/\Z\ar[r]&0
}\]
with exact rows, where $F$ is the endomorphism of $X_*(\ms{S}')$ obtained functorially from the Frobenius endomorphism of $\ms{S}'$ (bear in mind that $F$ is not of finite order, but rather $F^n=q^n$ for some natural number $n$). Applying the kernel-cokernel lemma to this diagram and noting the middle vertical arrow is an isomorphism we obtain the exact sequence
\begin{equation} \label{eq:dl1} 0 \to X_*(\ms{S}') \to X_*(\ms{S}') \to \ms{S}'(k_F) \to 1. \end{equation}
We will now reinterpret the notion of non-singular in a way that does not involve the choice of an isomorphism $\ol{k_F}^\times \to (\Q/\Z)_{p'}$ and is closer to the $p$-adic torus $S$.

\begin{fct} \label{fct:dl1} Let $k'$ be a finite extension of $k_F$. The exact sequence \eqref{eq:dl1} fits into the commutative diagram
\[ \xymatrix{
0\ar[r]&X_*(\ms{S}')\ar[r]^{^{F^n-1}}&X_*(\ms{S}')\ar[r]\ar[d]^{\tx{id}}&\ms{S}'(k')\ar[r]\ar[d]^N&1\\
0\ar[r]&X_*(\ms{S}')\ar[r]^{F-1}&X_*(\ms{S}')\ar[r]&\ms{S}'(k_F)\ar[r]&1\\
}\]
where $n=[k':k_F]$ and $N : \ms{S}'(k') \to \ms{S}'(k_F)$ is the norm map.
\end{fct}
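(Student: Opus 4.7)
The plan is to make the connecting maps in the two rows explicit via the snake lemma that produces \eqref{eq:dl1}, and then verify commutativity of the middle square by a direct computation in $X_*(\ms{S}') \otimes \Q$.

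First, I would construct the top row analogously to \eqref{eq:dl1}: replace $F - 1$ by $F^n - 1$ throughout the diagram preceding \eqref{eq:dl1} and apply the snake lemma. The middle vertical arrow $F^n - 1 \colon X_*(\ms{S}') \otimes \Q \to X_*(\ms{S}') \otimes \Q$ is still an isomorphism, because ellipticity of $\ms{S}'$ over $k_F$ ensures that $F$, and hence $F^n$, acts without nonzero fixed vectors on $X_*(\ms{S}') \otimes \Q$. Under this construction the connecting map $X_*(\ms{S}') \to \ms{S}'(k')$ sends $\lambda$ to the class of $(F^n - 1)^{-1}\lambda \in X_*(\ms{S}') \otimes \Q$ in $X_*(\ms{S}') \otimes \Q/\Z$, which is $F^n$-invariant and, via the chosen identification $\ms{S}'(\ol{k_F}) = X_*(\ms{S}') \otimes (\Q/\Z)_{p'}$, lies in $\ms{S}'(k')$. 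The analogous description holds for the bottom row with $F - 1$ replacing $F^n - 1$.

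To check commutativity of the middle square, take $\lambda \in X_*(\ms{S}')$. Along the top-then-right path its image is $N\bigl((F^n - 1)^{-1}\lambda \bmod X_*(\ms{S}')\bigr)$. Under the identification of $\ms{S}'(\ol{k_F})$ with $X_*(\ms{S}') \otimes (\Q/\Z)_{p'}$, the Frobenius $F$ acts by its functorial action on $X_*(\ms{S}')$, so the norm $N$ corresponds to $1 + F + F^2 + \cdots + F^{n-1}$. Thus the image is the class of $(1 + F + \cdots + F^{n-1})(F^n - 1)^{-1}\lambda$. Using the elementary identity $F^n - 1 = (F - 1)(1 + F + \cdots + F^{n-1})$ in $\tx{End}(X_*(\ms{S}') \otimes \Q)$, together with invertibility of $F - 1$ there, we get $(1 + F + \cdots + F^{n-1})(F^n - 1)^{-1} = (F - 1)^{-1}$, so the image equals the class of $(F - 1)^{-1}\lambda$, which by construction is the image of $\lambda$ along the left-then-bottom path.

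The left square commutes by the same factorization, which identifies the induced map on the leftmost $X_*(\ms{S}')$ with the norm $1 + F + \cdots + F^{n-1}$. No serious obstacle arises; the only minor subtlety is bookkeeping with the $p$-part of $\Q/\Z$, handled uniformly by restricting throughout to $(\Q/\Z)_{p'}$, exactly as in the construction of \eqref{eq:dl1}.
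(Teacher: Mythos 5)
Your argument is correct and is the ``direct computation'' the paper alludes to: exhibit both horizontal surjections as $\lambda \mapsto (F^n-1)^{-1}\lambda \bmod X_*(\ms{S}')$ and $\lambda \mapsto (F-1)^{-1}\lambda \bmod X_*(\ms{S}')$, identify the norm map with $1+F+\cdots+F^{n-1}$ under $\ms{S}'(\ol{k_F}) \cong X_*(\ms{S}') \otimes (\Q/\Z)_{p'}$, and invoke the factorization $F^n-1=(F-1)(1+F+\cdots+F^{n-1})$ in $\tx{End}(X_*(\ms{S}')\otimes\Q)$. One small inaccuracy in the justification: you invoke ellipticity of $\ms{S}'$ to conclude that $F-1$ and $F^n-1$ are invertible on $X_*(\ms{S}')\otimes\Q$, but the Fact is stated (as the paper emphasizes just before) for an arbitrary rational maximal torus over a finite field, not only elliptic ones. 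The correct and more general reason is the one the paper records parenthetically: $F$ satisfies $F^m=q^m$ for some natural number $m$, so every eigenvalue of $F$ on $X_*(\ms{S}')$ has absolute value $q>1$ and in particular $1$ is never an eigenvalue, regardless of ellipticity. The conclusion you need is the same, so the proof goes through unchanged.
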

\begin{proof}
This is a direct computation.
\end{proof}

\begin{lem} \label{lem:dl2} Let $k'$ be a finite extension of $k_F$ splitting $\ms{S}'$. A character $\bar\theta : \ms{S}'(k_F) \to \C^\times$ is non-singular if and only if for each $\alpha^\vee \in R^\vee(\ms{S}',\ms{G}_x^\circ)$ the character $\bar\theta\circ N \circ \alpha^\vee : k'^\times \to \C^\times$ is non-trivial.
\end{lem}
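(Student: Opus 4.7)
The plan is to reduce the question to evaluating $\bar\theta \circ N \circ \alpha^\vee$ at a single generator of $k'^\times$, using Fact~\ref{fct:dl1} as a bridge. By definition, $\bar\theta$ is non-singular if and only if for every $\alpha^\vee \in R^\vee(\ms{S}',\ms{G}_x^\circ)$, the value of $\bar\theta$ on the image of $\alpha^\vee$ under the map $X_*(\ms{S}') \to \ms{S}'(k_F)$ of \eqref{eq:dl1} is not $1$. Fact~\ref{fct:dl1} factors this map as
\[
X_*(\ms{S}') \longrightarrow \ms{S}'(k') \xrightarrow{\ N\ } \ms{S}'(k_F),
\]
so it suffices to identify explicitly the image of $\alpha^\vee$ in $\ms{S}'(k')$ and then compare.

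Next, I would describe the top map $X_*(\ms{S}') \to \ms{S}'(k')$. Applying the construction of \eqref{eq:dl1} to $\ms{S}' \times k'$ — which is split since $k'$ splits $\ms{S}'$ — the Frobenius $F^n$ acts as multiplication by $q^n$ on $X_*(\ms{S}')$, and the resulting surjection is obtained by tracing an element $\lambda$ through $(F^n-1)^{-1}$ on $X_*(\ms{S}') \otimes \Q$, reducing modulo $\Z$, and identifying $X_*(\ms{S}') \otimes \Q/\Z$ with $\ms{S}'(\ol{k_F})$ via the fixed embedding $\ol{k_F}^\times \hookrightarrow \Q/\Z$. Under this procedure, $\tfrac{1}{q^n-1} \in \Q/\Z$ corresponds to an element $\zeta \in \ol{k_F}^\times$ of order $q^n-1$, hence to a generator of $k'^\times$, and one checks that $\lambda \in X_*(\ms{S}')$ is sent to $\lambda(\zeta) \in \ms{S}'(k')$, where $\lambda$ is regarded as a $k'$-morphism $\mb{G}_m \to \ms{S}'$.

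Combining the two paragraphs, the image of $\alpha^\vee$ under $X_*(\ms{S}') \to \ms{S}'(k_F)$ is $N(\alpha^\vee(\zeta))$, so non-singularity of $\bar\theta$ amounts to the assertion that
\[
(\bar\theta \circ N \circ \alpha^\vee)(\zeta) \neq 1
\]
for every coroot $\alpha^\vee$. Since $\zeta$ generates the cyclic group $k'^\times$, a character of $k'^\times$ is non-trivial if and only if it is non-trivial at $\zeta$; this immediately yields the claimed equivalence.

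The only step requiring genuine care is the explicit identification of the map $X_*(\ms{S}') \to \ms{S}'(k')$ with $\lambda \mapsto \lambda(\zeta)$, which rests on unwinding the snake-lemma construction of \eqref{eq:dl1} in the split case and on matching the chosen embedding $(\Q/\Z)_{p'} \hookrightarrow \ol{k_F}^\times$ with the cyclic structure of $k'^\times$. Everything else is formal.
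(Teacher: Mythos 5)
Your proof is correct and follows essentially the same route as the paper's: use Fact~\ref{fct:dl1} to reduce to the split torus $\ms{S}' \times k'$, then identify the map $X_*(\ms{S}') \to \ms{S}'(k')$ of \eqref{eq:dl1} as $\lambda \mapsto \lambda(\zeta)$ for a generator $\zeta$ of $k'^\times$, and conclude because a character of the cyclic group $k'^\times$ is non-trivial iff it is non-trivial on $\zeta$. You are a bit more explicit than the paper about unwinding the snake-lemma construction and about working over $k'$ rather than $k_F$ after the reduction, but the idea and key lemma are the same.
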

\begin{proof}
According to Fact \ref{fct:dl1} we may reduce the proof to the case where $\ms{S}'$ is split. In that case the Frobenius endomorphism $F$ of $X_*(\ms{S}')$ is simply given by multiplication by $q$ and the map $X_*(\ms{S}') \to \ms{S}'(k_F)$ sends $\lambda \in X_*(\ms{S}')$ to $\lambda(\zeta)$, where $\zeta \in k_F^\times$ is the generator whose image under the chosen isomorphism $\ol{k_F}^\times \to (\Q/\Z)_{p'}$ is $1/(q-1) \in (\Q/\Z)_{p'}$. By definition, $\bar\theta$ is non-singular if and only if $\bar\theta(\alpha^\vee(\zeta)) \neq 1$ for all $\alpha^\vee \in R^\vee(\ms{S}',\ms{G}_x^\circ)$. Since $\zeta$ is a generator of $k_F^\times$ this is equivalent to requiring that the character $\bar\theta\circ\alpha^\vee$ be non-trivial.
\end{proof}

We will now define a third regularity condition on $\bar\theta$. We say that $\bar\theta$ is absolutely regular, if for some (hence any) finite extension $k'$ of $k_F$ splitting $\ms{S}'$ the character $\bar\theta\circ N$ has trivial stabilizer in $\Omega(\ms{S}',\ms{G}_x^\circ)$. It is clear that absolutely regular implies general position. By \cite[Corollary 5.18]{DL76} general position implies non-singular.

\begin{lem} If the center of $\ms{G}_x^\circ$ is connected, then the notions of non-singular, general position, and absolutely regular, are equivalent.
\end{lem}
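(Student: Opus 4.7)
We already have absolutely regular $\Rightarrow$ general position $\Rightarrow$ non-singular; the content is the reverse implication non-singular $\Rightarrow$ absolutely regular under the hypothesis that $Z(\ms{G}_x^\circ)$ is connected. Write $\Omega := \Omega(\ms{S}',\ms{G}_x^\circ)$ and let $W_\theta$ denote the stabilizer of $\bar\theta \circ N$ in $\Omega$. The plan is to reinterpret $W_\theta$ as the stabilizer of a semisimple element $t$ in the complex dual group $\hat{\ms{G}}_x^\circ$ and then to apply Steinberg's theorem on connectedness of centralizers of semisimple elements in reductive groups with simply connected derived subgroup.

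Fix a finite extension $k'/k_F$ of degree $n$ that splits $\ms{S}'$, along with an embedding $\mu_{q^n - 1}(\ol{k_F}) \hookrightarrow \C^\times$. Since $\ms{S}'$ is split over $k'$, we have $\ms{S}'(k') = X_*(\ms{S}') \otimes_\Z k'^\times$; combined with the chosen embedding this yields an $\Omega$-equivariant bijection between the character group of $\ms{S}'(k')$ and the $(q^n - 1)$-torsion subgroup of the complex dual torus $\hat{\ms{S}}'$. Let $t \in \hat{\ms{S}}'$ be the element corresponding to $\bar\theta \circ N$ under this bijection. Then $W_\theta$ coincides with the stabilizer of $t$ in $\Omega$, where $\Omega$ acts on $\hat{\ms{S}}'$ via its natural action on the cocharacter lattice.

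The hypothesis that $Z(\ms{G}_x^\circ)$ is connected is equivalent to the derived subgroup of $\hat{\ms{G}}_x^\circ$ being simply connected. By Steinberg's theorem on centralizers, $Z_{\hat{\ms{G}}_x^\circ}(t)$ is therefore connected, so the stabilizer of $t$ in $\Omega$ equals the Weyl group of $(Z_{\hat{\ms{G}}_x^\circ}(t), \hat{\ms{S}}')$, which is generated by the reflections $s_\alpha$, where $\alpha$ ranges over those roots of $(\hat{\ms{G}}_x^\circ, \hat{\ms{S}}')$ satisfying $\alpha(t) = 1$. Under the standard duality, the roots of $(\hat{\ms{G}}_x^\circ, \hat{\ms{S}}')$ are precisely the coroots $\alpha^\vee$ of $(\ms{G}_x^\circ, \ms{S}')$, and the condition $\alpha(t) = 1$ unwinds to $\bar\theta \circ N \circ \alpha^\vee = 1$ as characters of $k'^\times$.

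Assembling these identifications, $W_\theta$ is generated by reflections $s_\alpha$ for those coroots $\alpha^\vee$ satisfying $\bar\theta \circ N \circ \alpha^\vee = 1$. By Lemma \ref{lem:dl2}, non-singularity of $\bar\theta$ means no such coroots exist, whence $W_\theta$ is trivial, i.e., $\bar\theta$ is absolutely regular. The only substantive input is Steinberg's connectedness theorem; the main bookkeeping hurdle is to verify carefully that the duality bijection between characters of $\ms{S}'(k')$ and $(q^n-1)$-torsion of $\hat{\ms{S}}'$ is $\Omega$-equivariant and carries the stabilizer of $\bar\theta \circ N$ onto that of $t$, after which the rest is formal.
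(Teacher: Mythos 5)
Your argument is correct, but it takes a genuinely different (more self-contained) route than the paper. The paper deduces the result in three lines by citing Deligne--Lusztig's Proposition~5.16 (non-singular $\Leftrightarrow$ general position when the center is connected), then using Fact~\ref{fct:dl1} to transfer non-singularity between $\bar\theta$ and $\bar\theta\circ N$, and finally observing that general position and absolute regularity coincide for $\bar\theta\circ N$ since $\ms{S}'$ is split over $k'$. You instead re-prove the Deligne--Lusztig input from scratch: you translate the stabilizer $W_\theta$ of $\bar\theta\circ N$ into the stabilizer in $\Omega$ of a semisimple element $t$ of the dual torus $\hat{\ms{S}}'$, invoke Steinberg's connectedness theorem (valid because $Z(\ms{G}_x^\circ)$ connected $\Leftrightarrow$ $\hat{\ms{G}}_{x,\tx{der}}^\circ$ simply connected) to get that $Z_{\hat{\ms{G}}_x^\circ}(t)$ is connected, conclude that the stabilizer of $t$ in $\Omega$ is the Weyl group of that connected centralizer, hence generated by reflections $s_\alpha$ with $\alpha(t)=1$, and then read off via Lemma~\ref{lem:dl2} that non-singularity forbids any such $\alpha$. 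This is essentially an unwinding of the proof of \cite[Proposition~5.16]{DL76}; it buys transparency and independence from the reference at the cost of length, and it requires the $\Omega$-equivariance of the duality identification (which you flag but do not verify, and which is routine). Note also that you implicitly use that the stabilizer of $t$ in $\Omega$ equals $N_{Z_{\hat{\ms{G}}_x^\circ}(t)}(\hat{\ms{S}}')/\hat{\ms{S}}'$, and that this reduces to the reflection subgroup precisely when the centralizer is connected; stating this explicitly would tighten the argument.
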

\begin{proof}
According to \cite[Proposition 5.16]{DL76}, the notions of non-singular and general position are equivalent. By Fact \ref{fct:dl1} $\bar\theta$ is non-singular if and only if $\bar\theta \circ N$ is non-singular. But for $\bar\theta \circ N$ the notions of general position and absolute regularity coincide.
\end{proof}

We now return to the maximally unramified elliptic maximal torus $S$. We shall call the depth-zero character $\theta : S(F) \to \C^\times$ \emph{regular} if the stabilizer of $\theta|_{S(F)_0}$ in $N(S(F),G(F))/S(F)$ is trivial. We will call $\theta$ \emph{extra regular} if the stabilizer of $\theta|_{S(F)_0}$ in $\Omega(S,G)(F)$ is trivial.

\begin{fct} \label{fct:rdzc} $\theta$ is regular if and only if $\bar\theta$ is in general position. If the point of $S$ is superspecial, then $\theta$ is regular if and only if it is extra regular.
\end{fct}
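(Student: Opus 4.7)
The plan is to combine the two bijections of Lemma \ref{lem:weyl} into a single canonical equivariant identification that transfers the stabilizer condition from the $p$-adic group to the finite-field reductive quotient. Composing them yields
\[ N(S,G)(F)/S(F) \;\simeq\; N(S,G(F)_{x,0})/S(F)_0 \;\simeq\; N(\ms{S}',\ms{G}_x^\circ)(k_F)/\ms{S}'(k_F), \]
and by Lang's theorem applied to the connected $k_F$-group $\ms{S}'$ the right-hand group coincides with $\Omega(\ms{S}',\ms{G}_x^\circ)(k_F)$.

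The main step is to verify that this identification is equivariant for the actions on $\theta|_{S(F)_0}$ and on $\bar\theta$ respectively. For any $n \in N(S,G(F)_{x,0})$, conjugation by $n$ preserves the Moy--Prasad filtration $S(F)_0 \supset S(F)_{0+}$: for $S(F)_{0+}$ this is immediate from the characterization \eqref{eq:mpr}, since conjugation by $n$ permutes $X^*(S)$, and for $S(F)_0$ it follows from the functoriality of the Kottwitz homomorphism. Under the identification $S(F)_0/S(F)_{0+} = \ms{S}'(k_F)$ furnished by Lemmas \ref{lem:sred} and \ref{lem:rut1}, the induced automorphism of $S(F)_0/S(F)_{0+}$ agrees with the action of the image of $n$ in $\ms{G}_x^\circ(k_F)$ on $\ms{S}'(k_F)$ by conjugation, i.e.\ with the action through $N(\ms{S}',\ms{G}_x^\circ)(k_F)/\ms{S}'(k_F)$. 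It follows that an element of $N(S,G)(F)/S(F)$ fixes $\theta|_{S(F)_0}$ if and only if its counterpart in $\Omega(\ms{S}',\ms{G}_x^\circ)(k_F)$ fixes $\bar\theta$, which proves the first assertion.

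For the second assertion, when $x$ is superspecial Lemma \ref{lem:weyl2} identifies $N(S,G)(F)/S(F)$ with $\Omega(S,G)(F)$, so the triviality of the stabilizer of $\theta|_{S(F)_0}$ in the former group is equivalent to its triviality in the latter; this is exactly the equivalence of regular and extra regular. There is no genuine obstacle in the argument beyond the compatibility check above, since all the nontrivial input has already been packaged into Lemmas \ref{lem:weyl}, \ref{lem:weyl2}, \ref{lem:sred}, and \ref{lem:rut1}.
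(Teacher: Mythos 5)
Your proof is correct and follows the same route as the paper, which simply cites Lemmas \ref{lem:weyl} and \ref{lem:weyl2}; you have supplied the routine details (the Lang-theorem identification and the equivariance of the reduction map $S(F)_0/S(F)_{0+} \cong \ms{S}'(k_F)$ under conjugation) that the paper leaves implicit.
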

\begin{proof} This follows from Lemmas \ref{lem:weyl} and \ref{lem:weyl2}.
\end{proof}

\subsubsection{Definition and construction}

We now come to the definition and construction of regular depth-zero supercuspidal representations. Let $\pi$ be an irreducible supercuspidal representation of $G(F)$ of depth zero. According to \cite[Proposition 6.8]{MP96} there exists a vertex $x \in \mc{B}^\tx{red}(G,F)$ such that the restriction $\pi|_{G(F)_{x,0}}$ contains the inflation to $G(F)_{x,0}$ of an irreducible cuspidal representation $\kappa$ of $G(F)_{x,0:0+}$. We shall call $\pi$ \emph{regular} if $\kappa$ is a Deligne-Lusztig cuspidal representation $\pm R_{\ms{S}',\bar\theta}$ associated to an elliptic maximal torus $\ms{S}'$ of $\ms{G}_x^\circ$ and a character $\bar\theta : \ms{S}'(k_F) \rw \C^\times$ in general position.

Regular depth-zero supercuspidal representations of $G(F)$ are constructed as follows. Let $S$ be a maximally unramified elliptic maximal torus of $G$ and let $\theta : S(F) \rw \C^\times$ be a regular depth-zero character. The restriction $\theta|_{S(F)_0}$ factors through a character $\bar\theta$ of $S(F)_{0:0+}$ that is in general position according to Fact \ref{fct:rdzc}. Let $x \in \mc{B}^\tx{red}(G,F)$ be the vertex (by Lemma \ref{lem:vertex}) associated to $S$. Let $\kappa_{(S,\theta)} = \pm R_{\ms{S}',\bar\theta}$ be the irreducible cuspidal representation arising from the Deligne-Lusztig construction applied to the reductive quotient $\ms{S}'$ of the special fiber of the connected Neron model of $S$ and the character $\bar\theta$. Identify $\kappa$ with its inflation to $G(F)_{x,0}$.

Note that $S(F)$ normalizes $G(F)_{x,0}$.
It is easy to check, and we shall do so soon, that the normalizer in $G(F)_x$ of $\kappa_{(S,\theta)}$ is equal to $S(F) \cdot G(F)_{x,0}$. This means that, in order to obtain an irreducible representation of $G(F)$, we need to extend $\kappa_{(S,\theta)}$ to $S(F) \cdot G(F)_{x,0}$ before inducing it. In \cite[\S4.4]{DR09} this is done using the fact that $S(F)=Z(F) \cdot S(F)_0$ (see e.g. \cite[Lemma 7.1.1]{KalECI}), which implies $S(F) \cdot G(F)_{x,0} = Z(F) \cdot G(F)_{x,0}$, and it is clear how to extend $\kappa_{(S,\theta)}$ to $Z(F)$. The same is also true for ramified unitary groups \cite[Theorem 3.4.1, Proposition 3.4.2, Proposition 5.2.3]{Roe11}. However, for general tamely ramified groups this is not true and a counterexample was shown to us by Cheng-Chiang Tsai, which we have included at the end of this subsubsection.

The extension of $\kappa_{(S,\theta)}$ to $S(F) \cdot G(F)_{x,0}$ must thus be obtained differently. We shall do this in the next subsubsection, along with the study of its character. For now we just assume that an extension $\tilde\kappa_{(S,\theta)}$ of $\kappa_{(S,\theta)}$ to $S(F) \cdot G(F)_{x,0}$ is given.

\begin{lem} \label{lem:rdzconst} The representation $\pi_{(S,\theta)}=\tx{c-Ind}_{S(F)G(F)_{x,0}}^{G(F)} \tilde\kappa_{(S,\theta)}$ is irreducible (and hence supercuspidal).
\end{lem}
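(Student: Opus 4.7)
The plan is to apply the standard Mackey-type irreducibility criterion for compact induction from an open subgroup with compact-modulo-centre quotient. Set $K := S(F)\cdot G(F)_{x,0}$. Ellipticity of $S$ together with compactness of the parahoric $G(F)_{x,0}$ gives that $K/Z(G)(F)$ is compact, so $\pi_{(S,\theta)}$ admits a central character, and irreducibility of the induced representation will automatically imply supercuspidality. Since $\bar\theta$ is in general position, the Deligne--Lusztig representation $\kappa_{(S,\theta)}=\pm R_{\ms{S}',\bar\theta}$ is irreducible by \cite[Theorem 6.8]{DL76}, hence so is its extension $\tilde\kappa_{(S,\theta)}$. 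The Mackey criterion then reduces the proof to checking
\[ \tx{Hom}_{K\cap gKg^{-1}}(\tilde\kappa_{(S,\theta)},\,{}^g\tilde\kappa_{(S,\theta)})=0 \qquad \text{for every } g\in G(F)\smallsetminus K. \]

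I split the verification according to whether $g$ fixes the vertex $x$. When $gx\neq x$, I proceed as in the classical argument \cite[Lemma 4.5.1]{DR09}: using Bruhat--Tits convexity along the geodesic from $x$ to $gx$, together with the Moy--Prasad factorization at $x$, one exhibits a nontrivial subgroup $U\subset K\cap gKg^{-1}$ whose image in $\ms{G}_x^\circ(k_F)$ is the unipotent radical of a proper $k_F$-parabolic. Cuspidality of $\kappa_{(S,\theta)}$ forces $\tilde\kappa_{(S,\theta)}^U=0$, and any intertwiner must vanish. This argument uses only the structure of the parahoric at $x$ and the cuspidality of the depth-zero piece, so it goes through in the maximally unramified setting without change. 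When $gx=x$, then $g\in G(F)_x$ and $K\cap gKg^{-1}\supseteq G(F)_{x,0}$; any nonzero intertwiner restricts to an isomorphism $\kappa_{(S,\theta)}\cong {}^g\kappa_{(S,\theta)}=\pm R_{g\ms{S}',g\bar\theta}$. The disjointness theorem for cuspidal Deligne--Lusztig representations in general position \cite[Theorem 6.8]{DL76} then says $(g\ms{S}',g\bar\theta)$ is $\ms{G}_x^\circ(k_F)$-conjugate to $(\ms{S}',\bar\theta)$; lifting the conjugating element to $G(F)_{x,0}$ via the smoothness of $\mf{G}_x^\circ$ (and invoking Lemma \ref{lem:rut2}) we may assume $g\in N(S,G)(F)\cap G(F)_x$ with image in $\Omega(\ms{S}',\ms{G}_x^\circ)(k_F)$ fixing $\bar\theta$. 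Regularity of $\theta$ (Fact \ref{fct:rdzc}) forces that image to be trivial, and then Lemma \ref{lem:weyl} places $g$ in $S(F)_0\cdot G(F)_{x,0}\subseteq K$.

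The main obstacle I anticipate lies in handling elements $g\in G(F)_x\smallsetminus G(F)_{x,0}$: Lemma \ref{lem:weyl} directly controls only the parahoric part of the normaliser, whereas the quotient $G(F)_x/G(F)_{x,0}$ can act on $\ms{G}_x/\ms{G}_x^\circ$ by possibly non-inner automorphisms, and accounting for these requires that the full $S(F)$ portion of $K$ absorbs the extra components. Here one uses that the intertwiner must also intertwine the $S(F)$-action provided by the chosen extension $\tilde\kappa_{(S,\theta)}$, which together with the regularity of $\theta$ pins $g$ down modulo $S(F)\cdot G(F)_{x,0}$. A secondary technical point is verifying that the argument is insensitive to the particular choice of extension $\tilde\kappa_{(S,\theta)}$, since this extension is taken as given at this stage of the paper rather than constructed.
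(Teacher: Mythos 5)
Your proof follows essentially the same route as the paper's: the paper invokes \cite[Proposition 6.6]{MP96} (as in \cite[Lemma 4.5.1]{DR09}) to reduce the Mackey vanishing to $g \in G(F)_x$, which is exactly what your convexity-plus-cuspidality argument for $gx \neq x$ re-derives; for $g \in G(F)_x$ it then applies Deligne--Lusztig disjointness, Lemma \ref{lem:rut2}, and the regularity of $\theta|_{S(F)_0}$ just as you do. The two concerns you flag at the end are resolved the way you anticipate: the passage through $G(F)_x/G(F)_{x,0}$ is controlled because the intertwiner also matches the $S(F)$-actions from the extension, at which point regularity of $\theta|_{S(F)_0}$ forces the normalizer to be $S(F)\cdot G(F)_{x,0}$, and the whole argument depends only on the underlying cuspidal $\kappa_{(S,\theta)}$ and on $\theta|_{S(F)_0}$, not on which extension $\tilde\kappa_{(S,\theta)}$ was chosen.
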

\begin{proof}
The proof is the same as for \cite[Lemma 4.5.1]{DR09}.
By \cite[Proposition 6.6]{MP96} it is enough to show that $\tilde\kappa_{(S,\theta)}$ induces irreducibly to the normalizer of $G(F)_{x,0}$ in $G(F)$. Note here that the Levi subgroup $\mc{M}$ in loc. cit. is equal to $G$ in our case since $x$ is a vertex. The normalizer of $G(F)_{x,0}$ is equal to $G(F)_x$, the stabilizer of the vertex $x$ for the action of $G(F)$ on $\mc{B}^\tx{red}(G,F)$. It is enough to show that the normalizer of $\tilde\kappa_{(S,\theta)}$ in $G(F)_x$ is equal to $S(F)\cdot G(F)_{x,0}$. For this, let $h \in G(F)_x$ normalize $\tilde\kappa_{(S,\theta)}$. Then it in particular normalizes $\kappa_{(S,\theta)}$, so by \cite[Theorem 6.8]{DL76} there is $g \in G(F)_{x,0}$ so that $\tx{Ad}(gh)(\ms{S}',\bar\theta)=(\ms{S}',\bar\theta)$. By Lemma \ref{lem:rut2} there is $l \in G(F)_{x,0+}$ so that $\tx{Ad}(lgh)(S,\theta|_{S(F)_0})=(S,\theta|_{S(F)_0})$. Thus $lgh \in N(S,G)(F)$ and then the regularity of $\theta$ implies that $lgh \in S(F)$.
\end{proof}

As we shall see in Lemma \ref{lem:rdzclass}, every regular depth-zero supercuspidal representation is of the form $\pi_{(S,\theta)}$. We will call $\pi_{(S,\theta)}$ \emph{extra regular} if $\theta$ is extra regular. By Fact \ref{fct:rdzc} a regular $\pi_{(S,\theta)}$ is automatically extra regular if the vertex of $S$ is superspecial.

\textbf{Example:} We come to Cheng-Chiang's example of the failure of $Z(F)\cdot S(F)_0 = S(F)$. Consider $F$ of odd residual characteristic and not containing a fourth root of unity, for example $F=\Q_p$ with $4|(p-3)$, and a ramified quadratic extension $E/F$. Let $\bar G = \tx{Res}_{E/F} \tx{PGL}_4$ and $\bar T = \tx{Res}_{E/F} \bar T_0$, where $\bar T_0 = \mb{G}_m^4/\mb{G}_m$ is the standard maximal torus of $\tx{PGL}_4$. We have $X_*(\bar T)=\Z^4/\Z \oplus \Z^4/\Z$. Let $L$ be the kernel of the addition map $\Z^4/\Z \oplus \Z^4/\Z \to \Z/4\Z$. Let $T \to \bar T$ be the isogeny of tori specified by $X_*(T)=L$ and let $G \to \bar G$ be the corresponding isogeny of connected reductive groups. Then $G$ is semi-simple, quasi-split, and its center is $\mu_4$. A direct computation shows $X_*(T)_I=\Z^3 \oplus \Z/2\Z$. Let $S$ be an anisotropic maximal torus of $G$ that is conjugate to $T$ over $F^u$ (such an $S$ does exist). Then $X_*(S)_I=\Z^3 \oplus \Z/2\Z$ with an action of Frobenius. This action must be trivial on the $\Z/2\Z$ factor, which is the torsion subgroup of $X_*(S)_I$. Since $S$ is anisotropic, we conclude that $[X_*(S)_I]^\tx{Fr}=\Z/2\Z$. Since this is the quotient $S(F)/S(F)_0$, in order for $Z(F)\cdot S(F)_0 = S(F)$ to hold the composed map $Z(F) \to S(F) \to [X_*(S)_I]^\tx{Fr}$ must be surjective. However, $Z(F^u)=\mu_4(F^u)$ has order $4$, while $Z(F)=\mu_4(F)$ has order 2, by our assumption on $F$. We see that whatever the map $Z(F^u) \to S(F^u) \to X_*(S)_I$ might be, its restriction to $Z(F) \to S(F) \to [X_*(S)_I]^\tx{Fr}$ is the zero map.

\subsubsection{An extension and its character} \label{subsub:ext}

We shall now construct the representation $\tilde\kappa_{(S,\theta)}$ of $S(F) \cdot G(F)_{x,0}$ that extends $\kappa_{(S,\theta)}$. For this we must first recall the construction of $\kappa_{(S,\theta)}$. Let $\ms{U} \subset \ms{G}_x^\circ$ be the unipotent radical of a Borel subgroup defined over $\ol{k_F}$ and containing the maximal torus $\ms{S}'$. Let
\[ X = \{g \in \ms{G}_x^\circ(\ol{k_F})| g^{-1}\tx{Fr}(g) \in \ms{U} \}\]
be the corresponding Deligne-Lusztig variety, where $\tx{Fr}$ stands for the Frobenius automorphism of $\ms{G}_x^\circ$. By construction, $\ms{G}_x^\circ(k_F)$ acts on this variety by multiplication from the left, and $\ms{S}'(k_F)$ acts by multiplication on the right. The $l$-adic cohomology with compact support $H^i_c(X,\ol{\Q_l})$ is thus a $(\ms{G}_x^\circ(k_F),\ms{S}'(k_F))$-bimodule. It is shown in \cite[Corollary 9.9]{DL76} that if $X$ is assume affine and $\bar\theta : \ms{S}'(k_F) \to \ol{\Q_l}^\times$ is in general position, then the $\bar\theta$-isotypic subspace $H^i_c(X,\ol{\Q_l})_{\bar\theta}$ is non-zero for exactly one value of $i$, namely $i=l(w)$, where $w$ is the Weyl element determined by the maximal torus $\ms{S}'$. In fact, according to \cite[Remark 9.15.1]{DL76} the affineness assumption on $X$ can be relaxed to the assumption that some $X(w')$ is affine, where $w'$ is an element of the Weyl group that is Frobenius-conjugate to $w$. The latter assumption has been proved to always hold \cite[Theorem 1.3]{He08}. We let $V=H^{l(w)}_c(X,\ol{\Q_l})$ and
\[ V_{\bar\theta} = \{v \in V| vt=\bar\theta(t)v,\quad\forall t \in \ms{S}'(k_F)\}. \]
The $\ms{G}_x^\circ(k_F)$-module $V_{\bar\theta}$, inflated to $G(F)_{x,0}$, is the representation $\kappa_{(S,\theta)}$.

The extension of $\kappa_{(S,\theta)}$ to $S(F) \cdot G(F)_{x,0}$ begins with the observation that besides a left $\ms{G}_x^\circ(k_F)$-action and a right $\ms{S}'(k_F)$-action, the variety $X$ also carries an $\ms{S}'_\tx{ad}(k_F)$-action by conjugation, where $\ms{S}'_\tx{ad}$ is the image of $\ms{S}'$ in the adjoint group of $\ms{G}_x^\circ$. This action -- a special case of \cite[1.21]{DL76} -- is simply the restriction to $\ms{S}'_\tx{ad}(k_F)$ of the action of $\ms{S}'_\tx{ad}$ on $\ms{G}_x^\circ$ by conjugation, and is readily seen to preserve $X$. We shall write $\tx{Ad}(\bar s)$ for the action of $\bar s \in \ms{S}'_\tx{ad}(k_F)$ by conjugation on $\ms{G}_x^\circ$ as well as on the subvariety $X$ of $\ms{G}_x^\circ$. The three actions on $X$ have the following compatibility relation:
\[ \tx{Ad}(\bar s)[g\!\cdot\!x\!\cdot\!s'] = [\tx{Ad}(\bar s)g]\!\cdot\![\tx{Ad}(\bar s)x]\!\cdot\!s', \bar s\!\in\!\ms{S}'_\tx{ad}(k_F),g\!\in\!\ms{G}_x^\circ(k_F), s'\!\in\!\ms{S}'(k_F), x\!\in\!X.\]
In particular, the action of $\ms{S}'_\tx{ad}(k_F)$ by conjugation commutes with the action of $\ms{S}'(k_F)$ on the right. Furthermore, the action of $s' \in \ms{S}'(k_F)$ on the right can be recovered as $vs' = \tx{Ad}(\bar s'^{-1})s'v$, where $\bar s' \in \ms{S}'_\tx{ad}(k_F)$ is the image of $s'$.

There is a natural map $S(F) \to \ms{S}'_\tx{ad}(k_F)$ given as follows. To avoid confusion, let $H$ be the adjoint group of $G$. Then the natural map $\ms{G}_x^\circ \to [\ms{G}_x^\circ]_\tx{ad}$ factors as the composition $\ms{G}_x^\circ \to \ms{H}_x^\circ \to [\ms{G}_x^\circ]_\tx{ad}$. In particular, we have the map $\ms{S}_H' \to \ms{S}'_\tx{ad}$, where $S_H$ is the image of $S$ in $H$. Since $S$ is maximally unramified, $X_*(S_H)$ is an induced $I$-module and the norm map for the $I$-action provides an embedding $X_*(S_H)_I \to X_*(S_H)^I$. Since $S$ is elliptic, we see that $[X_*(S_H)_I]^\tx{Fr} \subset X_*(S_H)^\Gamma=\{0\}$. It follows that $S_H(F)_0 = S_H(F)$. The map $S(F) \to \ms{S}'_\tx{ad}(k_F)$ is then obtained as the composition $S(F) \to S_H(F)=S_H(F)_0 \to S_H(F)_{0:0+}=\ms{S}'_H(k_F) \to \ms{S}'_\tx{ad}(k_F)$.

Via the map $S(F) \to \ms{S}'_\tx{ad}(k_F)$ we can let $S(F)$ act on $\ms{G}_x^\circ$ and $X$ by conjugation, and the action on $X$ gives an action on $V$, which preserves the subspace $V_{\bar\theta}$. This action factors through $S(F)/S(F)_{0+}$. The subgroup $\ms{S}'(k_F)$ of $\ms{S}(k_F) = S(F)/S(F)_{0+}$ embeds via $s \mapsto (s,s^{-1})$ into the center of $\ms{G}_x^\circ(k_F) \rtimes \ms{S}(k_F)$ and we let $\tilde{\ms{G}}(k_F)$ be the quotient. We extend the representation of $\ms{G}_x^\circ(k_F)$ on $V_{\bar\theta}$ to $\tilde{\ms{G}}(k_F)$ via the formula
\begin{equation} \label{eq:tkd} (g,s)\cdot v = \theta(s)g[\tx{Ad}(s)v]. \end{equation}
It is immediate to check that this formula gives an action of $\ms{G}_x^\circ(k_F) \rtimes \ms{S}(k_F)$ which descends to $\tilde{\ms{G}}(k_F)$. The maps $G(F)_{x,0} \to \ms{G}_x^\circ(k_F)$ and $S(F) \to \ms{S}(k_F)$ splice together to a map $G(F)_{x,0} \cdot S(F) \to \tilde{\ms{G}}(k_F)$ via which we obtain an action of $G(F)_{x,0} \cdot S(F)$ on $V_{\bar\theta}$. This is the extension  $\tilde\kappa_{(S,\theta)}$ of $\kappa_{(S,\theta)}$ that we were seeking.

We will now compute the character of the representation $\tilde\kappa_{(S,\theta)}$ of the group $G(F)_{x,0}S(F)$. The resulting formula will be used in the classification of regular depth-zero supercuspidal representations in the next subsubsection. We begin with a discussion of a variant of the topological Jordan decomposition. When $S$ is unramified, the equation $G(F)_{x,0}S(F)=G(F)_{x,0}Z(F)$ allows one to use the usual topological Jordan decomposition of elements of $G(F)_{x,0}$ for the computation of the character of $\tilde\kappa_{(S,\theta)}$. The failure of this equation when $S$ is ramified precludes this, because elements of $G(F)_{x,0}S(F)$ are in general not compact and hence do not have a usual topological Jordan decomposition. However, if we let $A_G$ be the maximal split central torus of $G$, and $\bar G=G/A_G$, then the image of $G(F)_{x,0}S(F)$ in $\bar G(F)$ is contained in $\bar G(F)_x$, whose elements are compact and have a topological Jordan decomposition. We recall from \cite[Definition 2.15]{Spice08} that if $T \subset G$ is a maximal torus, an element $\gamma \in T(F)$ is called \emph{absolutely semi-simple modulo $A_G$} if $\chi(\gamma) \in F^{s,\times}$ is a topologically semi-simple element (i.e. a Teichm\"uller representative) for each $\chi \in X^*(T/A_G)$. Recall further that $\gamma \in T(F)$ is called topologically unipotent if it lies in the pro-$p$-Sylow subgroup of $T(F)_b$.

\begin{lem} Let $\gamma \in G(F)_{x,0} \cdot S(F)$ be a strongly regular semi-simple element and let $T$ be its centralizer. Then $\gamma=\gamma_s\gamma_u$, where $\gamma_s \in T(F) \cap G(F)_x$ is absolutely semi-simple modulo $A_G$ and $\gamma_u \in T(F) \cap G(F)_{x,0}$ is topologically unipotent. Both $\gamma_s$ and $\gamma_u$ are unique up to multiplication by elements of $A_G(F)_{0+}$. The image of the decomposition $\gamma=\gamma_s\gamma_u$ in $\mathsf{\bar G}_x(k_F)$ is the usual Jordan decomposition in the (possibly disconnected) finite group of Lie type $\mathsf{\bar G}_x$.
\end{lem}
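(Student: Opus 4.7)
The plan is to reduce to the quotient $\bar G = G/A_G$, where the image of $\gamma$ is compact and admits an ordinary topological Jordan decomposition, and then to lift this decomposition back to $T(F)$. I would first verify that $\gamma \in G(F)_x$: writing $\gamma = g\cdot s$ with $g \in G(F)_{x,0}$ and $s \in S(F)$, clearly $g$ stabilizes $x$; the factor $s$ does as well, since $S(F)$ preserves the apartment $\mc{A}^\tx{red}(S,F^u)$ and commutes with Frobenius, and therefore must fix the unique Frobenius-fixed point $x$. Let $\bar\gamma$ be the image of $\gamma$ in $\bar G(F) = G(F)/A_G(F)$; then $\bar\gamma \in \bar G(F)_x$, which is compact because $Z(\bar G)$ is anisotropic.

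Applying the standard topological Jordan decomposition in the compact group $\bar G(F)_x$ yields $\bar\gamma = \bar\gamma_s \bar\gamma_u$, with $\bar\gamma_s$ of finite order prime to $p$ (absolutely semi-simple) and $\bar\gamma_u$ topologically unipotent. Both pieces arise as limits of integer powers of $\bar\gamma$ and hence lie in the closed torus $\bar T(F) = (T/A_G)(F)$ containing $\bar\gamma$. This point is worth emphasizing, since the full centralizer of $\bar\gamma$ in $\bar G$ may strictly contain $\bar T$ (strong regularity of $\gamma$ in $G$ does not automatically descend to $\bar G$), so one does not get $\bar\gamma_s,\bar\gamma_u \in \bar T(F)$ from a centralizer argument alone.

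Next, since $A_G$ is split and hence inertially induced, Lemma \ref{lem:mpex2} applied to $1 \to A_G \to T \to \bar T \to 1$ at $r=0+$ yields
\[ 1 \to A_G(F)_{0+} \to T(F)_{0+} \to \bar T(F)_{0+} \to 1. \]
Choose any lift $\gamma_u \in T(F)_{0+}$ of $\bar\gamma_u$ and set $\gamma_s := \gamma\gamma_u^{-1} \in T(F)$. Then $\gamma_u$ is topologically unipotent by its membership in $T(F)_{0+}$; its image $\bar\gamma_u$ lies in $\bar G(F)_x$, and since the $G(F)$-action on $\mc{B}^\tx{red}(G,F)$ factors through $\bar G(F)$, we get $\gamma_u \in G(F)_x$, whence Corollary \ref{cor:par} places $\gamma_u$ in $G(F)_{x,0}$. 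The element $\gamma_s$ is absolutely semi-simple modulo $A_G$ because for every $\chi \in X^*(T/A_G)$ we have $\chi(\gamma_s) = \bar\chi(\bar\gamma_s)$, which is Teichm\"uller by construction; and $\gamma_s \in G(F)_x$ as a product of elements of $G(F)_x$.

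For uniqueness, any other decomposition $\gamma = \gamma_s'\gamma_u'$ satisfying the stated properties descends to the topological Jordan decomposition of $\bar\gamma$, so $\gamma_u {\gamma_u'}^{-1} \in A_G(F) \cap T(F)_{0+} = A_G(F)_{0+}$ by Lemma \ref{lem:mpex2}, and $\gamma_s' \gamma_s^{-1} = \gamma_u {\gamma_u'}^{-1}$ is likewise in $A_G(F)_{0+}$. For the final assertion, $\gamma_u \in G(F)_{x,0+}$ maps to the identity in $\mathsf{\bar G}_x(k_F)$, while the image of the Teichm\"uller-type element $\gamma_s$ has order prime to $p$ and is therefore semi-simple; hence the image of $\gamma = \gamma_s\gamma_u$ in $\mathsf{\bar G}_x(k_F)$ is indeed the ordinary Jordan decomposition. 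The main subtlety I expect is the lifting step together with ensuring that $\bar\gamma_s, \bar\gamma_u$ actually live in $\bar T(F)$ and that uniqueness holds up to $A_G(F)_{0+}$ rather than merely $A_G(F)$, both of which hinge on the inertial inducedness of $A_G$ needed to invoke Lemma \ref{lem:mpex2}.
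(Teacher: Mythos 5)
The argument breaks down at the lifting step, precisely in the case the paper is most careful about. You claim to "choose any lift $\gamma_u \in T(F)_{0+}$ of $\bar\gamma_u$," which presupposes $\bar\gamma_u \in \bar T(F)_{0+}$. But topological unipotence of $\bar\gamma_u$ only places it in the pro-$p$-Sylow subgroup of $\bar T(F)_b$. That subgroup coincides with $\bar T(F)_{0+}$ exactly when the quotient $\bar T(F)_b/\bar T(F)_0 \cong [X_*(\bar T)_I^{\mathrm{Fr}}]_{\mathrm{tor}}$ has no $p$-torsion, which is guaranteed when $\bar T$ is tame — but $T$ is merely the centralizer of an arbitrary strongly regular $\gamma \in G(F)_{x,0}\cdot S(F)$ and need not split over a tame extension. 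When $T$ is wildly ramified, $X_*(\bar T)_I$ can have $p$-torsion, $\bar\gamma_u$ may lie outside $\bar T(F)_{0+}$, and the exact sequence from Lemma \ref{lem:mpex2} becomes irrelevant. Your own remark at the end flags the lifting step as the subtlety but misattributes it to the inertial inducedness of $A_G$ (which is automatic, $A_G$ being split); the real obstruction is the possible wild ramification of $T$.

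The paper's proof handles exactly this case with a more roundabout argument that leans on the tameness of $S$ rather than of $T$. Rather than trying to show $\bar\gamma_u \in \bar T(F)_{0+}$, it first shows $\bar\gamma_u \in \bar G(F)_{x,0}$ by looking at orders in the abelian quotient $\bar G(F)_x/\bar G(F)_{x,0}$: since $\bar\gamma \in \bar G(F)_{x,0}\bar S(F)$ and $\bar S(F)/\bar S(F)_0 \cong X_*(\bar S)_I^{\mathrm{Fr}}$ has no $p$-torsion (this is where tameness of $S$, a standing hypothesis, enters), the $p$-part of the order of $\bar\gamma$ modulo $\bar G(F)_{x,0}$ vanishes, forcing $\bar\gamma_u \in \bar G(F)_{x,0}$. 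It then lifts $\bar\gamma_u$ to $T(F)\cap G(F)_{x,0}$ via the surjectivity from Lemma \ref{lem:par}, acknowledges that this lift need not be topologically unipotent, and corrects it by a further profinite splitting of $T(F)_b$ into its pro-$p$ and prime-to-$p$ parts — showing that the prime-to-$p$ error factor lands in $A_G(F)_0$. That correction step, entirely absent from your argument, is what makes the lemma hold without any tameness hypothesis on $T$.
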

\begin{proof} Let $\bar \gamma \in \bar G(F)$ be the image of $\gamma$ and let $\bar T=T/A_G$. Since $\bar G$ has anisotropic center, the group $\bar G(F)_x$ is compact, so $\bar\gamma \in \bar T(F) \cap \bar G(F)_x \subset \bar T(F)_b$. The latter is profinite and decomposes as the product of its pro-$p$-Sylow subgroup and a unique (automatically finite) complement, see for example \cite[Theorem 2.3.15]{RZ10}. Write correspondingly $\bar\gamma = \bar\gamma_s\bar\gamma_u$, where $\bar\gamma_u$ belongs to the pro-$p$-Sylow subgroup and $\bar\gamma_s$ has finite order prime to $p$. Thus $\bar\gamma=\bar\gamma_s \cdot \bar\gamma_u$ is a topological Jordan decomposition  and according to \cite[Lemma 2.21]{Spice08} we have $\bar\gamma_s,\bar\gamma_u \in \bar G(F)_x$. Moreover, the orders of these elements being prime-to-$p$ and pro-$p$ we see that their images in $\mathsf{\bar G}_x$ form the usual Jordan decomposition of the image of $\bar\gamma$ there.

We will now show that $\bar\gamma_u$ lifts to a topologically unipotent element $\gamma_u \in T(F) \cap G(F)_{x,0}$. If $T$ is tame this is immediate, because then $\bar\gamma_u \in \bar T(F)_{0+}$ and we can apply Lemma \ref{lem:mpex1} to lift it to $T(F)_{0+}$, which lies in $T(F) \cap G(F)_{x,0}$ by Corollary \ref{cor:par}.
If $T$ is not tame we need a more roundabout argument, relying on the tameness of $S$. Namely, map the decomposition $\bar\gamma=\bar\gamma_s\bar\gamma_u$ to $\bar G(F)_x/\bar G(F)_{x,0}$. In this abelian group, the order of $\bar\gamma_u$ is again equal to the $p$-part of the order of $\bar\gamma$. But $\bar\gamma$ lies in $\bar G(F)_{x,0} \bar S(F)$ which modulo $\bar G(F)_{x,0}$ equals $\bar S(F)/\bar S(F)_0 \cong X_*(\bar S)_I^\tx{Fr}$. Since $\bar S$ is tame, this group has no $p$-torsion. We conclude that the order of $\bar\gamma$ in the quotient $\bar G(F)_x/\bar G(F)_{x,0}$ has no $p$-part, so the image of $\bar\gamma_u$ in that quotient is trivial. In other words, $\bar\gamma_u \in \bar G(F)_{x,0}$.

Using the surjectivity of $G(F)_{x,0} \to \bar G(F)_{x,0}$ guaranteed by Lemma \ref{lem:par} we lift $\bar\gamma_u$ to an element of $G(F)_{x,0}$. This element automatically belongs to $T(F)$ and hence also to the intersection $T(F) \cap G(F)_{x,0} \subset T(F)_b$. It may not be topologically unipotent. But we can use again the product decomposition of $T(F)_b$ into its pro-$p$-Sylow part and prime-to-$p$ part to write this lift as a product $\gamma_u \cdot \delta$. The images of $\gamma_u$ and $\delta$ in $\bar T(F)$ have orders pro-$p$
 and prime-to-$p$ respectively, but their product is $\bar\gamma_u$ whose order is pro-$p$, so we conclude that the image of $\delta$ in $\bar T(F)$ is trivial. Thus $\delta \in T(F)_b \cap A_G(F)=A_G(F)_0$. Consequently $\gamma_u$ lifts $\bar\gamma_u$ and belongs to $G(F)_{x,0}$. This concludes the proof that $\bar\gamma_u$ lifts to a topologically unipotent element of $T(F) \cap G(F)_{x,0}$.

Set now $\gamma_s = \gamma \gamma_u^{-1}$. Then $\gamma_s$ is a lift of $\bar\gamma_s$, hence lies in $T(F) \cap G(F)_x$ and is absolutely semi-simple modulo $A_G$. It is clear that the ambiguity in the choice of $\gamma_u$ lies in the pro-$p$-Sylow subgroup of $A_G(F)_0$, i.e. $A_G(F)_{0+}$.
\end{proof}

\begin{pro} \label{pro:tkchar} The character of $\tilde\kappa_{(S,\theta)}$ at a strongly regular semi-simple element $\gamma \in G(F)_{x,0}\cdot S(F)$ is given by the formula
\[ (-1)^{r_G-r_S}|C(\gamma_s)^\circ(k_F)|^{-1}\sum_{\substack{k \in \ms{G}_x^\circ(k_F)\\ k^{-1} \gamma_s k \in \ms{S}(k_F)}} \theta(k^{-1}\gamma_sk) Q_{k\ms{S}'k^{-1}}^{C(\gamma_s)^\circ}(\gamma_u), \]
where $C(\gamma_s) \subset \ts{G}_x^\circ$ is the subgroup whose action on $G(F)_{x,0}S(F)/G(F)_{x,0+}$ fixes the image of $\gamma_s$.
\end{pro}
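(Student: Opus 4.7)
The plan is to reduce the trace to a generalized Deligne--Lusztig character formula on the finite group $\tilde{\ms{G}}(k_F)$, and to execute that formula via a fixed-point computation on $X$. First, use the preceding lemma to write $\gamma = \gamma_s\gamma_u$ and to identify this decomposition, via the surjection onto $\mathsf{\bar G}_x(k_F)$, with the ordinary Jordan decomposition in a (possibly disconnected) finite group of Lie type; in particular the image of $\gamma_u$ in $\ms{G}_x^\circ(k_F)$ is a unipotent element of $C(\gamma_s)^\circ(k_F)$. Decompose $\gamma = h\cdot s$ with $h \in G(F)_{x,0}$ and $s \in S(F)$; the extension formula \eqref{eq:tkd} then gives
\[
\tx{tr}\,\tilde\kappa_{(S,\theta)}(\gamma) \;=\; \theta(s)\,\tx{tr}\bigl(\bar h\cdot\tx{Ad}(\bar s);\,V_{\bar\theta}\bigr),
\]
with bars denoting images in $\tilde{\ms{G}}(k_F)$, so it remains to compute the trace of the combined left-times-conjugation operator on $V_{\bar\theta}$.

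Next I would project onto $V_{\bar\theta}$ using the commuting right $\ms{S}'(k_F)$-action on $V = H^{l(w)}_c(X,\ol{\Q_l})$ and pass to $l$-adic cohomology. Since the left, right, and conjugation actions pairwise commute, and by \cite[Cor.~9.9]{DL76} the $\bar\theta$-isotypic cohomology is concentrated in the single degree $l(w)$ (because $\bar\theta$ is in general position), averaging by $\bar\theta^{-1}$ yields
\[
\tx{tr}\bigl(\bar h\,\tx{Ad}(\bar s);\,V_{\bar\theta}\bigr) \;=\; \frac{(-1)^{l(w)}}{|\ms{S}'(k_F)|}\sum_{t \in \ms{S}'(k_F)} \bar\theta(t)^{-1}\,L\bigl(\bar h\,\tx{Ad}(\bar s)\,r_t;\,X\bigr),
\]
where $r_t$ is right translation by $t$ and $L$ is the Grothendieck--Lefschetz number. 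Standard Deligne--Lusztig sign identities identify $(-1)^{l(w)}$ with $(-1)^{r_G-r_S}$, yielding the prefactor in the statement.

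The remaining step is a fixed-point analysis on $X$ patterned on the proof of \cite[Theorem 4.2]{DL76}. The endomorphism $\bar h\,\tx{Ad}(\bar s)\,r_t$ of $X$ coincides with left multiplication by the element $(\bar h\bar s,\bar s^{-1}t) \in \ms{G}_x^\circ(k_F)\rtimes\ms{S}(k_F)$, whose semisimple part (up to the averaging in $t$) corresponds to $\gamma_s$. A standard orbit analysis decomposes the fixed-point locus of this semisimple action on $X$ into a disjoint union of sub-Deligne--Lusztig varieties for $C(\gamma_s)^\circ$, indexed by cosets $k\cdot C(\gamma_s)^\circ(k_F)$ with $k^{-1}\gamma_s k \in \ms{S}(k_F)$ and carrying the torus $k\ms{S}'k^{-1}$. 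The residual unipotent action of $\gamma_u$ on each component contributes the Green function $Q^{C(\gamma_s)^\circ}_{k\ms{S}'k^{-1}}(\gamma_u)$, the overcounting by $C(\gamma_s)^\circ(k_F)$-translation produces the factor $|C(\gamma_s)^\circ(k_F)|^{-1}$, and the combination $\theta(s)\bar\theta(t)^{-1}$ collapses after the sum over $t \in \ms{S}'(k_F)$ -- which pins down $\bar s^{-1}t \cdot k^{-1}\gamma_s k$ modulo $\ms{S}'(k_F)$ -- into the well-defined value $\theta(k^{-1}\gamma_s k)$ on $\ms{S}(k_F)$.

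The main obstacle is the disconnected-group aspect: one must verify that the classical Deligne--Lusztig orbit decomposition of the $\gamma_s$-fixed locus on $X$ persists when $\gamma_s$ lies outside $\ms{G}_x^\circ(k_F)$ and acts on $X$ as a combination of a left translation and a conjugation, and one must track the precise combinatorial coalescence of the three characters $\theta(s)$, $\bar\theta(t)^{-1}$, and the Jordan data into the single evaluation $\theta(k^{-1}\gamma_s k)$. This is the payoff of having built the extension $\tilde\kappa_{(S,\theta)}$ via the $\ms{S}'_\tx{ad}(k_F)$-conjugation action on $X$, rather than via the central subgroup $Z(F)$ (which is unavailable in the general ramified setting as Tsai's counterexample shows).
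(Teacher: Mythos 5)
Your plan follows the same route as the paper's proof: write $\gamma=gr$ with $g \in G(F)_{x,0}$ and $r\in S(F)$, reduce to a Lefschetz-number computation on $X$ via the vanishing theorem and the $\bar\theta$-projector, decompose the fixed-point locus of the semisimple part into translates of Deligne--Lusztig varieties for the connected centralizer, and pick up Green functions from the unipotent part. The prefactor identification $(-1)^{l(w)}=(-1)^{r_G-r_S}$ is also as in the paper.

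However, you explicitly flag the hard step but do not carry it out, and that step is where the real content lies. The issue is not merely ``disconnectedness'' in the abstract: when you lift $\bar r \in \ms{S}'_{\mathrm{ad}}(k_F)$ to $\dot r \in \ms{S}'(\ol{k_F})$ and take the algebraic Jordan decomposition $g\dot r = su$, the semisimple part $s$ is \emph{not} a $k_F$-point. One has $\tx{Fr}(\dot r)=z\dot r$ and $\tx{Fr}(s)=zs$ for some $z \in Z(\ms{G}_x^\circ)$, so neither $s$ nor $\dot r$ is Frobenius-fixed, yet $s\dot r^{-1}$ is, and the centralizer $C(s)$ (equivalently $C(t\dot r^{-1})$) nevertheless carries a $k_F$-structure. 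The standard Deligne--Lusztig argument (Carter's Propositions 7.2.6--7.2.8) must be rerun under this twisted hypothesis, and one needs the explicit formula for the Jordan decomposition in the semidirect product $\ms{G}_x^\circ\rtimes\ms{S}'_{\mathrm{ad}}$: for $g\rtimes\bar t$, choose a lift $t$ of $\bar t$, set $gt=su$ in $\ms{G}_x^\circ$, and then $[g\rtimes\bar t]=[st^{-1}\rtimes\bar t]\cdot[u\rtimes 1]$, with $st^{-1}$ $k_F$-rational even though $s$ and $t$ are not. Finally, the ``coalescence'' of $\theta(r)$, $\bar\theta(t^{-1})$, and the conjugation data into $\theta(k^{-1}\gamma_s k)$ is not a formal cancellation: it requires the identity $\gamma_s = s\dot r^{-1}r$ in $G(F)_{x,0}S(F)/G(F)_{x,0+}$ and the observation that $\dot r^{-1}r$ centralizes $\ms{G}_x^\circ$, whence $C(\gamma_s)=C(s)$ and $k^{-1}sk\dot r^{-1}r = k^{-1}\gamma_s k$. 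Until these points are pinned down, the argument has a genuine gap at exactly the place you identified.
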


\begin{proof}
We first need to replace the decomposition $\gamma=\gamma_s\gamma_u$ by a different one, which appears more cumbersome but is better adapted to computations over the finite field. Write $\gamma=gr$, where $r \in S(F)$ and $g \in G(F)_{x,0}$. We map $r$ to $\bar r \in \ms{S}'_\tx{ad}(k_F)$ via the natural map $S(F) \to \ts{S}'_\tx{ad}(k_F)$ described above and then lift $\bar r$ arbitrarily to $\dot r \in \ms{S}'(\ol{k_F})$. We also map $g$ to $\ms{G}_x^\circ(k_F)$. Let $g\dot r = su$ be the Jordan decomposition of $g\dot r \in \ms{G}_x^\circ$. Let $z \in Z(\ms{G}_x^\circ)$ be such that $\tx{Fr}(\dot r)=z\dot r$. Then $\tx{Fr}(g\dot r)=zg\dot r$ and the uniqueness of the Jordan decomposition implies $u \in \ms{G}_x^\circ(k_F)$ and $\tx{Fr}(s)=zs$. We see that $s\dot r^{-1} \in \ts{G}_x^\circ(k_F)$, and moreover the centralizer $C(s)$ of $s$ in $\ms{G}_x^\circ$ has a $k_F$-structure. Since the action of $S(F^u)/S(F^u)_{0+}$ on $G(F^u)_{x,0}/G(F^u)_{x,0+}=\ts{G}_x^\circ(\ol{k_F})$ by conjugation factors through the natural map $S(F^u) \to \ts{S}'_\tx{ad}(\ol{k_F})$ we see that $\dot r^{-1}r$ acts trivially on $\ts{G}_x^\circ(\ol{k_F})$.

We claim that in the group $G(F)_{x,0}S(F)/G(F)_{x,0+}$ we have the identities $\gamma_s = s\dot r^{-1}r$ and $\gamma_u=u$. Indeed, recalling the notation $H=G_\tx{ad}$, we map the decomposition $\gamma=\gamma_s\gamma_u$ to $H(F)$. The element $\gamma$ then belongs to $H(F)_{x,0}$ and its image in $\ms{H}_x^\circ(k_F)$ has Jordan decomposition given by the images of $\gamma_s$ and $\gamma_u$. Since the images of $\gamma$ and $g\dot r$ in the adjoint group of $\ms{G}_x^\circ$ agree, the images of $\gamma_u$ and $u$ there also agree. Both of these elements being unipotent elements of $\ms{G}_x^\circ$ we conclude that they are equal in $\ms{G}_x^\circ$. Now $\gamma_s=s\dot r^{-1}r$ follows from $s\dot r^{-1}ru=su\dot r^{-1}r=gr=\gamma$, which uses the fact that $\dot r^{-1}r$ commutes with $\ts{G}_x^\circ$.

We will now compute the character of $\tilde\kappa_{(S,\theta)}$ at $\gamma=gr$ using the Jordan decomposition $g\dot r=su$ following the arguments in \cite[\S7]{Carter93}. The virtual $(\ms{G}_x^\circ(k_F),\ms{S}'(k_F))$-bimodule $\sum_i (-1)^i H^i_c(X,\ol{\Q_l})$ will be denoted by $W$ and its $\bar\theta$-isotypic component for the right action of $\ms{S}'(k_F)$ will be denoted by $W_{\bar\theta}$. Thus $W_{\bar\theta}$ is a virtual $\ms{G}_x^\circ(k_F)$-module. By the above mentioned vanishing result we have $W_{\bar\theta}=(-1)^{l(w)}V_{\bar\theta}$, so it will be enough to compute the character of the action of $S(F)\cdot G(F)_{x,0}$ on $W_{\bar\theta}$, noting that $(-1)^{l(w)}=(-1)^{r_{\ms{G}_x^\circ}-r_{\ms{S}'}}$, $r_S=r_{\ms{S}'}$,  and $r_G=r_{\ms{G}_x^\circ}$, the latter according to \cite[Corollary 5.1.11]{BT2}.

We now use all three actions we have on $W$, i.e. the fact that it is a $([\ms{G}_x^\circ(k_F) \rtimes \ms{S}(k_F)],\ms{S}'(k_F))$-bimodule. According to \eqref{eq:tkd}, the action of $gr$ on $W_{\bar\theta}$ is given by $\theta(r)$ times the action of $g \rtimes \bar r \in \ms{G}_x^\circ(k_F) \rtimes \ms{S}'_\tx{ad}(k_F)$ on $W_{\bar\theta}$. The element in the group algebra of $\ms{S}'(k_F)$ given by $e = |\ms{S}'(k_F)|^{-1}\sum_{t \in \ms{S}'(k_F)} \bar\theta(t^{-1})t$ projects $W$ to $W_{\bar\theta}$. Then the trace of $g \rtimes \bar r \in \ms{G}_x^\circ(k_F) \rtimes \ms{S}'_\tx{ad}(k_F)$ on $W_{\bar\theta}$ is equal to the trace of $(g \rtimes \bar r,e)$ on $W$. The element $(g \rtimes \bar r,e)$ of the group algebra of $[\ms{G}_x^\circ(k_F) \rtimes \ms{S}'_\tx{ad}(k_F)] \times \ms{S}'(k_F)$ has the same action on $W$ as the element
\[ e' = |\ms{S}'(k_F)|^{-1} \sum_{t \in \ms{S}'(k_F)} \bar\theta(t^{-1}) [gt \rtimes \bar t^{-1}\bar r] \]
of the group algebra of $\ms{G}_x^\circ(k_F) \rtimes \ms{S}'_\tx{ad}(k_F)$. The trace of $gr$ on $W_{\bar\theta}$ is thus equal to $\theta(r)$ times the trace of $e'$, i.e. to the expression
\[ |\ms{S}'(k_F)|^{-1} \sum_{t \in \ms{S}'(k_F)} \theta(r)\bar\theta(t^{-1}) \mc{L}(gt \rtimes \bar t^{-1}\bar r,X), \]
where $\mc{L}$ denotes the Lefschetz number. For the computation of the Lefschetz number, we use \cite[Property 7.1.10]{Carter93}, which involves the Jordan decomposition in the algebraic group $\ms{G}_x^\circ \rtimes \ms{S}'_\tx{ad}$. This decomposition is computed as follows: Given $g \rtimes \bar t$, lift $\bar t \in \ms{S}'_\tx{ad}(k_F)$ to $t \in \ms{S}'(\ol{k_F})$ and let $gt=su$ be the Jordan decomposition of $gt$ in the algebraic group $\ms{G}_x^\circ$. Then $[g \rtimes \bar t]=[st^{-1} \rtimes \bar t] \cdot [u \rtimes 1]$ is the Jordan decomposition of $g \rtimes \bar t$ in the algebraic group $\ms{G}_x^\circ \rtimes \ms{S}'_\tx{ad}$. Note that $s$ depends on the choice of lift $t$ of $\bar t$ and that $st^{-1}$ is independent of this choice and is a $k_F$-point, even though neither $s$ nor $t$ is a $k_F$-point in general.

Applying this to the element $gt \rtimes \bar t^{-1} \bar r$, we must decompose $g\dot r=su$ in $\ms{G}_x^\circ$ and then obtain $[gt \rtimes \bar t^{-1} \bar r] = [st\dot r^{-1} \rtimes \bar t^{-1}\bar r] \cdot [u \rtimes 1]$ as the Jordan decomposition in $\ms{G}_x^\circ \rtimes \ms{S}'_\tx{ad}$. The subvariety of $X$ fixed by the action of the semi-simple part $[st \dot r^{-1} \rtimes \bar t^{-1}\bar r]$ is $X^{s,t\dot r^{-1}} = \{ x \in X|x^{-1}sx = (t\dot r^{-1})^{-1} \}$. We are following here the notation of \cite[Proposition 7.2.5]{Carter93}, but need to keep in mind that $s$ and $\dot r$ are not Frobenius-fixed, but rather satisfy the relation $F(\dot r)=\dot rz$ and $F(s)=sz$, for some $z$ in the center of $\ms{G}_x^\circ$. Nonetheless, the conclusions of Propositions 7.2.6 and Propositions 7.2.7 remain valid with the same proofs, and the arguments in the proof of Theorem 7.2.8 carry over as well. We give a brief sketch.

The trace of $gr$ on $W_{\bar\theta}$ is now seen to equal
\[ |\ms{S}'(k_F)|^{-1} \sum_{t \in \ms{S}'(k_F)} \theta(r)\bar\theta(t^{-1}) \mc{L}(u,X^{s,t\dot r^{-1}}). \]
One checks that the centralizer $C(t\dot r^{-1})$ of $t\dot r^{-1}$ in $\ms{G}_x^\circ$ is defined over $k_F$, even though $t\dot r^{-1}$ is not. Let $[\ms{G}_x^\circ(k_F)]^{s,t\dot r^{-1}}$ denote the subset $\{g \in \ms{G}_x^\circ(k_F)|g^{-1}sg=(t\dot r^{-1})^{-1}\}$ and let $Y_{t\dot r^{-1}} = X \cap C(t\dot r^{-1})^\circ$. Just as in the proof of Proposition 7.2.6 we see that the morphism
\[ [\ms{G}_x^\circ(k_F)]^{s,t\dot r^{-1}} \times Y_{t\dot r^{-1}} \to X^{s,t\dot r^{-1}},\quad (g,y) \mapsto gy \]
is a surjective and its fibers are the orbits for the action of $C(t\dot r^{-1})^\circ(k_F)$ on $[\ms{G}_x^\circ(k_F)]^{s,t\dot r^{-1}} \times Y_{t\dot r^{-1}}$ given by $c(g,y)=(gc^{-1},cy)$. This implies that $X^{s,t\dot r^{-1}}$ is the disjoint union of closed subsets
\[ X^{s,t\dot r^{-1}} = \bigsqcup_{k \in [\ms{G}_x^\circ(k_F)]^{s,t\dot r^{-1}}/C(t\dot r^{-1})^\circ(k_F)} kY_{t\dot r^{-1}}, \]
each of which is invariant under left multiplication by $u$. Plugging this into the Lefschetz number we obtain the trace of $gr$ on $W_{\bar\theta}$ as
\[ |\ms{S}'(k_F)|^{-1} \sum_{t \in \ms{S}'(k_F)} |C(t\dot r^{-1})^\circ(k_F)|^{-1} \sum_{k \in [\ms{G}_x^\circ(k_F)]^{s,t\dot r^{-1}}} \theta(r)\bar\theta(t^{-1}) \mc{L}(u,kY_{t \dot r^{-1}}). \]
Now $\mc{L}(u,kY_{t\dot r^{-1}})=\mc{L}(k^{-1}uk,Y_{t \dot r^{-1}})=\mc{L}(u,Y_s)=|\ms{S}'(k_F)|Q_{k\ms{S}'k^{-1}}^{C(s)^\circ}(u)$. Combining the two sums and re-indexing we arrive at the formula
\[ (-1)^{r_G-r_S}|C(s)^\circ(k_F)|^{-1}\sum_{\substack{k \in \ms{G}_x^\circ(k_F)\\ k^{-1} s k \in \ms{S}'}} \bar\theta(k^{-1}sk\dot r^{-1})\theta(r) Q_{k\ms{S}'k^{-1}}^{C(s)^\circ}(u). \]
To convert this to the formula in the statement of the proposition, recall that $\dot r^{-1}r$ commutes with every element of $\ts{G}_x^\circ$. This implies that $C(\gamma_s)=C(s)$, and moreover $k^{-1}sk\dot r^{-1}r=k^{-1}(s\dot r^{-1}r)k^{-1}=k^{-1}\gamma_sk$, and $k^{-1}sk \in \ts{S}'$ is equivalent to $k^{-1}\gamma_sk \in \ts{S}$.
\end{proof}

\begin{cor} \label{cor:tktwist}
Let $\phi : G(F) \to \C^\times$ be a character of depth zero. Then $\tilde\kappa_{(S,\phi\cdot\theta)}=\phi|_{S(F)\cdot G(F)_{x,0}} \otimes \tilde\kappa_{(S,\theta)}$.
\end{cor}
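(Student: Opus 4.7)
Both $\tilde\kappa_{(S,\phi\cdot\theta)}$ and $\phi|_{S(F)G(F)_{x,0}}\otimes\tilde\kappa_{(S,\theta)}$ are finite-dimensional semisimple representations of $S(F)\cdot G(F)_{x,0}$ factoring through the finite quotient $\tilde{\ms G}(k_F)$, so the plan is to establish the isomorphism by comparing characters on the dense set of strongly regular semisimple elements of $G(F)_{x,0}\cdot S(F)$, applying Proposition \ref{pro:tkchar} to both sides.

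Fix a strongly regular semisimple $\gamma=\gamma_s\gamma_u\in G(F)_{x,0}\cdot S(F)$. In the formula of Proposition \ref{pro:tkchar}, the $\theta$-dependence is confined to the scalar $\theta(k^{-1}\gamma_s k)$ inside the sum; replacing $\theta$ by $\phi\cdot\theta$ inserts an extra factor $\phi(k^{-1}\gamma_s k)$ in each summand. The key claim is that this factor equals the $k$-independent scalar $\phi(\gamma_s)$, so that it can be pulled out of the sum to give $\tilde\kappa_{(S,\phi\theta)}(\gamma)=\phi(\gamma_s)\cdot\tilde\kappa_{(S,\theta)}(\gamma)$. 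To see this, observe that $\phi|_{S(F)}$ is depth-zero and hence trivial on $S(F)_{0+}\subset G(F)_{x,0+}$, so $\phi(k^{-1}\gamma_s k)$ depends only on the image of $k^{-1}\gamma_s k$ in $\ms{S}(k_F)=S(F)/S(F)_{0+}$; lifting $k$ to $\tilde k\in G(F)_{x,0}$ and writing $\tilde k^{-1}\gamma_s\tilde k=s\cdot u$ with $s\in S(F)$ and $u\in G(F)_{x,0+}$, the depth-zero vanishing of $\phi$ together with the conjugation-invariance of the character $\phi$ of $G(F)$ yields $\phi(k^{-1}\gamma_s k)=\phi(s)=\phi(su)=\phi(\tilde k^{-1}\gamma_s\tilde k)=\phi(\gamma_s)$.

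Since $(\phi\otimes\tilde\kappa_{(S,\theta)})(\gamma)=\phi(\gamma)\cdot\tilde\kappa_{(S,\theta)}(\gamma)=\phi(\gamma_s)\phi(\gamma_u)\cdot\tilde\kappa_{(S,\theta)}(\gamma)$, the proof then reduces to the identity $\phi(\gamma_u)=1$. For this I would factor $\phi$ through the abelianization map $G\to G^\tx{ab}=G/G_\tx{der}$, whose target is a torus; the image of the topologically unipotent element $\gamma_u$ in the torus $G^\tx{ab}(F)$ is again topologically unipotent and hence lies in $G^\tx{ab}(F)_{0+}$, on which the depth-zero character $\phi$ vanishes. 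The only real technical point, rather than a genuine obstacle, will be the bookkeeping needed to identify $\phi(k^{-1}\gamma_s k)$ with $\phi(\gamma_s)$ through the various depth-zero quotients entering Proposition \ref{pro:tkchar}; once that is established, the multiplicative dependence of the character formula on $\theta$ makes the result fall out immediately.
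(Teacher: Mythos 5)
Your plan — compute the characters of both sides on strongly regular semisimple $\gamma=\gamma_s\gamma_u\in G(F)_{x,0}\cdot S(F)$ via Proposition \ref{pro:tkchar} and pull the $\phi$-factor out of the sum — is the right strategy and is essentially the approach of the paper. The conjugation step is handled correctly: lifting $k\in\ms{G}_x^\circ(k_F)$ to $\tilde k\in G(F)_{x,0}$, the two interpretations of $\phi(k^{-1}\gamma_sk)$ differ by an element of $G(F)_{x,0+}$ on which the depth-zero $\phi$ vanishes, and then conjugation-invariance of the character $\phi$ of $G(F)$ gives $\phi(\tilde k^{-1}\gamma_s\tilde k)=\phi(\gamma_s)$. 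This is exactly the point the paper's one-line proof makes (``both $k$ and $\gamma_s$ lift to elements of $G(F)$''). You also correctly observe that the reduction is not finished at that point: one still needs $\phi(\gamma_u)=1$, which the paper's proof passes over in silence.

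However, your argument for $\phi(\gamma_u)=1$ has a genuine gap. You assert that $\phi$ ``factors through the abelianization map $G\to G^\tx{ab}=G/G_\tx{der}$.'' That is not automatic: a character of $G(F)$ is trivial on $[G(F),G(F)]$, but this commutator subgroup is in general a \emph{proper} subgroup of $G_\tx{der}(F)$, so $\phi$ need not kill $G_\tx{der}(F)$ and need not descend to $G^\tx{ab}(F)$. For instance, when $G=\tx{SL}_1(D)$ (anisotropic, semisimple) the target $G^\tx{ab}(F)$ is trivial, yet $G(F)$ admits nontrivial depth-zero characters; your argument would force all of them to vanish at $\gamma_u$ for the wrong reason. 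The conclusion $\phi(\gamma_u)=1$ is still what one wants, but a correct argument must go through the actual structure of $[G(F),G(F)]$ — e.g.\ via the fact (as in the proof of Lemma \ref{lem:charsc}) that topologically unipotent elements land in $G(F)_{y,0+}$ for suitable $y$, or equivalently that their images in $\ms{G}_y^\circ(k_F)$ are unipotent and such elements lie in the subgroup on which every character of $G(F)$ is already forced to vanish — rather than through a factorization over $G^\tx{ab}$.
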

\begin{proof}
It is enough to show that the characters of both sides are equal. This reduces to showing that $\phi(k^{-1}\gamma_sk)=1$, which is clear since both $k$ and $\gamma_s$ lift to elements of $G(F)$.
\end{proof}

\begin{cor} \label{cor:tkcharss}
The character of $\tilde\kappa_{(S,\theta)}$ at a strongly regular semi-simple element $\gamma \in S(F)$ that is absolutely semi-simple modulo $A_G$  is given by the formula
\[ (-1)^{r_G-r_S}\sum_{w \in N(S,G)(F)/S(F)} \theta(\gamma^w). \]
\end{cor}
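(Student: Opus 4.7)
The plan is to specialize Proposition \ref{pro:tkchar} to the case at hand. Since $\gamma$ is absolutely semi-simple modulo $A_G$ by assumption, the uniqueness clause in the preceding lemma allows me to take $\gamma_s = \gamma$ and $\gamma_u = 1$ in the Jordan-type decomposition. Substituting into the formula of Proposition \ref{pro:tkchar}, the character value reduces to
\[ (-1)^{r_G-r_S}|C(\gamma)^\circ(k_F)|^{-1}\sum_{\substack{k \in \ms{G}_x^\circ(k_F)\\ k^{-1} \gamma k \in \ms{S}(k_F)}} \theta(k^{-1}\gamma k)\, Q_{k\ms{S}'k^{-1}}^{C(\gamma)^\circ}(1). \]

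The main step is to identify $C(\gamma)^\circ$ with $\ms{S}'$. The inclusion $\ms{S}' \subseteq C(\gamma)^\circ$ is clear: any element of $\ms{S}'(\ol{k_F})$ lifts into the abelian group $S(F^u)$, hence fixes the image of $\gamma$ in $G(F^u)_{x,0}S(F^u)/G(F^u)_{x,0+}$ under conjugation. For the reverse, strong regularity of $\gamma$ in $G(F)$ forces $\alpha(\gamma) \neq 1$ for every root $\alpha$ of $S$ in $G$, while absolute semi-simplicity modulo $A_G$ makes $\alpha(\gamma)$ a root of unity of order prime to $p$. Each root $\beta$ of $\ms{S}'$ in $\ms{G}_x^\circ$ arises as the restriction of some such $\alpha$ (via $X^*(\ms{S}') = X^*(S')$ and the restriction $X^*(S) \to X^*(S')$), and under the natural map $S(F) \to \ms{S}'_\tx{ad}(k_F)$ the value $\beta(\bar\gamma_\tx{ad})$ equals the reduction of $\alpha(\gamma)$ and is therefore nontrivial. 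Consequently no root of $\ms{S}'$ in $\ms{G}_x^\circ$ vanishes on the image of $\gamma$, so $C(\gamma)^\circ = \ms{S}'$. The Green function $Q_{k\ms{S}'k^{-1}}^{\ms{S}'}(1)$ is thus supported precisely on $k \in N(\ms{S}',\ms{G}_x^\circ)(k_F)$, where it equals $1$.

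Finally, $\ms{S}'(k_F) \subseteq N(\ms{S}',\ms{G}_x^\circ)(k_F)$ acts trivially on $\gamma$ by conjugation (its lifts lie in the abelian $S(F)$), so the $k$-sum collapses to $|\ms{S}'(k_F)|$ times a sum over $w \in N(\ms{S}',\ms{G}_x^\circ)(k_F)/\ms{S}'(k_F)$; this factor cancels against the prefactor $|C(\gamma)^\circ(k_F)|^{-1} = |\ms{S}'(k_F)|^{-1}$. By Lemma \ref{lem:weyl}, $N(\ms{S}',\ms{G}_x^\circ)(k_F)/\ms{S}'(k_F) \cong N(S,G)(F)/S(F)$, and a lift $\tilde n_w \in N(S,G)(F) \cap G(F)_{x,0}$ of a representative $n_w$ satisfies $\theta(n_w^{-1}\gamma n_w) = \theta(\gamma^{\tilde n_w})$ because $\theta$ factors through $S(F)/S(F)_{0+} = \ms{S}(k_F)$ by depth-zeroness. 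This yields the claimed formula. I expect the hard part to be the identification $C(\gamma)^\circ = \ms{S}'$, which requires carefully threading the map $S(F) \to \ms{S}'_\tx{ad}(k_F)$ and verifying the compatibility of root evaluations under reduction modulo $\mf{p}$.
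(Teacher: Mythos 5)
Your proof is correct and follows essentially the same strategy as the paper's: specialize Proposition \ref{pro:tkchar} with $\gamma_s=\gamma$ and $\gamma_u=1$, observe that the summation index is forced into $N(\ms{S}',\ms{G}_x^\circ)(k_F)$, cancel $|\ms{S}'(k_F)|$ against the prefactor $|C(\gamma_s)^\circ(k_F)|^{-1}$, and invoke Lemma \ref{lem:weyl}. You make explicit a step the paper leaves implicit, namely the identification $C(\gamma)^\circ=\ms{S}'$, and your reasoning there is sound (absolute semi-simplicity modulo $A_G$ is exactly what guarantees the reductions of the $\alpha(\gamma)$ remain nontrivial, so the image of $\gamma$ is regular semi-simple in $\ms{G}_x^\circ$); note only that what really restricts $k$ to the normalizer is the summation condition $k^{-1}\gamma_s k\in\ms{S}(k_F)$ together with this regularity, after which the Green-function term is trivially $Q_{\ms{S}'}^{\ms{S}'}(1)=1$, rather than the Green function's vanishing being what enforces the constraint.
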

\begin{proof}
We apply Proposition \ref{pro:tkchar} and use that $\gamma=\gamma_s$. The summation index $k$ then runs over $N(\ms{S},\ms{G}_x^\circ)(k_F)=N(\ms{S}',\ms{G}_x^\circ)(k_F)$. Taking into account the normalizing factor $|C(\gamma_s)^\circ(k_F)|^{-1}$ and the fact that $k \in \ms{S}'(k_F)$ commutes with $\gamma_s$, we see that the formula becomes
\[ (-1)^{r_G-r_S}\sum_{w \in N(\ms{S}',\ms{G}_x^\circ)(k_F)/\ms{S}'(k_F)} \theta(\gamma^w). \]
According Lemma \ref{lem:weyl} the indexing set of this sum is $N(S,G)(F)/S(F)$ and the proof is complete.
\end{proof}

\subsubsection{Classification} \label{subsub:dzclass}

\begin{lem} \label{lem:rdztwist}
Let $S \subset G$ be maximally unramified elliptic, $\theta : S(F) \to \C^\times$ a regular character, and $\phi : G(F) \to \C^\times$ a character. If the depth of $\phi\otimes\pi_{(S,\theta)}$ is zero, then the depth of $\phi$ is zero. If the depth of $\phi$ is zero, then $\phi\otimes\pi_{(S,\theta)}=\pi_{(S,\phi\cdot\theta)}$.
\end{lem}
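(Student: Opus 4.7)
The plan is to prove the second statement first by direct application of Corollary \ref{cor:tktwist}, and then the first statement by a Mackey decomposition combined with the tameness of $S$ and standard facts from Bruhat-Tits theory.

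For the second statement, suppose $\phi$ has depth zero. First I will verify that $\phi\theta$ is a regular depth-zero character of $S(F)$, so that $\pi_{(S,\phi\theta)}$ is defined. Depth zero of $\phi\theta$ on $S(F)$ follows because $\phi|_{G(F)_{x_0,0+}} = 1$ implies $\phi|_{S(F)_{0+}} = 1$ via Corollary \ref{cor:par}, together with the depth-zero assumption on $\theta$. Regularity is inherited from $\theta$: since $\phi|_{S(F)_0}$ is $N(S,G)(F)$-invariant as the restriction of a character of $G(F)$, the stabilizer of $(\phi\theta)|_{S(F)_0}$ in $N(S,G)(F)/S(F)$ coincides with that of $\theta|_{S(F)_0}$, which is trivial. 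Corollary \ref{cor:tktwist} then identifies $\tilde\kappa_{(S,\phi\theta)}$ with $\phi|_H \otimes \tilde\kappa_{(S,\theta)}$ on $H = S(F)G(F)_{x_0,0}$, and passing to compact induction (which commutes with twisting by the character $\phi$ of $G(F)$) yields $\pi_{(S,\phi\theta)} = \phi \otimes \pi_{(S,\theta)}$.

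For the first statement, suppose $\phi \otimes \pi_{(S,\theta)}$ has depth zero. By the Moy-Prasad characterization of depth-zero supercuspidals, there is a vertex $y \in \mc{B}^\tx{red}(G,F)$ such that $\pi_{(S,\theta)}|_{G(F)_{y,0+}}$ contains the character $\phi^{-1}|_{G(F)_{y,0+}}$. Applying Mackey to $\pi_{(S,\theta)} = \tx{c-Ind}_H^{G(F)} \tilde\kappa_{(S,\theta)}$ together with Frobenius reciprocity, there exists $g \in G(F)$ such that $\tilde\kappa_{(S,\theta)}^g|_K$ contains $\phi^{-1}|_K$, where $K = G(F)_{y,0+} \cap gHg^{-1}$. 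The key step is that the quotient $gHg^{-1}/G(F)_{gx_0,0}$ is canonically isomorphic to $S(F)/S(F)_0 \cong X_*(S)_I^\tx{Fr}$, which has no $p$-torsion since $S$ is tame (as in the proof of Lemma \ref{lem:mpex1}). Consequently the pro-$p$ subgroup $K$ maps to zero in this quotient, so $K \subset G(F)_{gx_0,0+}$; in fact $K = G(F)_{y,0+} \cap G(F)_{gx_0,0+}$. Since $\tilde\kappa_{(S,\theta)}$ is trivial on $G(F)_{x_0,0+}$ (being inflated from the reductive quotient), $\tilde\kappa_{(S,\theta)}^g$ is trivial on $K$, and Frobenius reciprocity then forces $\phi|_K = 1$.

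To upgrade $\phi|_K = 1$ to the depth-zero conclusion for $\phi$, I will exploit that any two points of $\mc{B}^\tx{red}(G,F)$ lie in a common apartment. Choose a maximal $F$-split torus $T_s$ of $G$ with $y, gx_0 \in \mc{A}(T_s, F)$ and set $T = Z_G(T_s)$, a maximal $F$-torus of $G$. Then $T(F)_{0+} \subset G(F)_{y,0+} \cap G(F)_{gx_0,0+} = K$, so $\phi|_{T(F)_{0+}} = 1$. The exact sequence of tame tori $1 \to T \cap G_\tx{der} \to T \to G^\tx{ab} \to 1$ (writing $G^\tx{ab} = G/G_\tx{der}$) together with Lemma \ref{lem:mpex1} gives a surjection $T(F)_{0+} \twoheadrightarrow G^\tx{ab}(F)_{0+}$, and since $\phi$ factors through $G^\tx{ab}(F)$, the triviality of $\phi$ on $T(F)_{0+}$ yields its triviality on $G^\tx{ab}(F)_{0+}$, which is precisely the condition that $\phi$ have depth zero. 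The main technical obstacle is the Mackey step combined with the use of tameness of $S$ to constrain $K$; the concluding apartment/tame-torus argument is routine.
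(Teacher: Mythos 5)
Your proof of the second statement (the twist identity) is correct and is essentially the paper's argument: verify that $\phi\theta$ is a regular depth-zero character, apply Corollary~\ref{cor:tktwist}, and use that compact induction commutes with twisting by a character of $G(F)$.

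Your proof of the first statement (that $\phi$ has depth zero) has the same overall structure as the paper's — a Mackey/Frobenius argument producing a $g$ such that $\phi$ is forced to be trivial on an intersection of pro-unipotent radicals of parahorics — but it contains two gaps. First, you claim that because $gHg^{-1}/G(F)_{gx_0,0}\cong S(F)/S(F)_0$ has no $p$-torsion, the pro-$p$ group $K=G(F)_{y,0+}\cap gHg^{-1}$ lies in $G(F)_{gx_0,0+}$. The torsion-free argument only places $K$ inside $G(F)_{gx_0,0}$; the further quotient $G(F)_{gx_0,0}/G(F)_{gx_0,0+}=\ms{G}_{gx_0}^{\circ}(k_F)$ is a finite reductive group with plenty of $p$-power elements, so pro-$p$-ness of $K$ does not force $K\subset G(F)_{gx_0,0+}$. (In $\tx{SL}_2$, with $x_0$ and $y$ adjacent vertices, $G(F)_{y,0+}\cap G(F)_{x_0,0}$ contains unipotent matrices with unit off-diagonal entry, which do not reduce to $1$.) This gap is patchable: you only need $\phi$ trivial on $K'=K\cap G(F)_{gx_0,0+}=G(F)_{y,0+}\cap G(F)_{gx_0,0+}$, and ${}^g\tilde\kappa_{(S,\theta)}$ is visibly trivial there, so $\phi|_{K'}=1$; this is where the paper lands as well. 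Second, and more seriously, your concluding step asserts that $\phi$ factors through $G^{\tx{ab}}(F)=(G/G_{\tx{der}})(F)$. A smooth character of $G(F)$ is trivial on $[G(F),G(F)]$, but the natural map $G(F)/[G(F),G(F)]\to G^{\tx{ab}}(F)$ need not be injective — for instance when $G=\tx{SL}_1(D)$ one has $G^{\tx{ab}}=1$ while $G(F)$ admits nontrivial characters. So your final deduction of depth zero does not go through as written. This is precisely the step the paper handles by invoking \cite[Lemma 2.45, Definition 2.46]{HM08}, which passes directly from triviality of $\phi$ on $G(F)_{y,0+}\cap G(F)_{gx_0,0+}$ to the depth-zero conclusion; some substitute for that reference is needed here. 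Also, note that $Z_G(T_s)$ is a maximal torus only when $G$ is quasi-split; in general one should take a maximal torus $T\subset Z_G(T_s)$ defined over $F$ (e.g.\ via \cite[Corollaire 5.1.12]{BT2}), though this is a minor point compared with the issue above.
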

\begin{proof}
The representation $\phi \otimes\pi_{(S,\theta)}$ is supercuspidal and if its depth is zero, then by \cite[Proposition 6.8]{MP96} it is of the form $\tx{c-Ind}_{G(F)_y}^{G(F)}\tau$ for an irreducible representation $\tau$ of $G(F)_y$, whose restriction to $G(F)_{y,0}$ contains a cuspidal representation $\sigma$. At the same time, writing $\dot\kappa_{(S,\theta)}=\tx{Ind}_{S(F)G(F)_{x,0}}^{G(F)_x}\tilde\kappa_{(S,\theta)}$ we see $\phi\otimes\pi_{(S,\theta)}=\tx{c-Ind}_{G(F)_x}^{G(F)}(\phi \otimes\dot\kappa_{(S,\theta)})$. Applying Kutzko's Mackey formula \cite{Ku77} we see that
\begin{eqnarray*}
\tx{End}_{G(F)}(\phi\otimes\pi_{(S,\theta)})&=&
\tx{Hom}_{G(F)}(\tx{c-Ind}_{G(F)_y}^{G(F)}\tau,\tx{c-Ind}_{G(F)_x}^{G(F)}(\phi \otimes\dot\kappa_{(S,\theta)}))\\
&=&\bigoplus_g \tx{Hom}_{G(F)_y \cap G(F)_{gx}}(\tau,{^g[\phi\otimes\dot\kappa_{(S,\theta)}]}),
\end{eqnarray*}
where $g$ runs over $G(F)_y\lmod G(F)/G(F)_x$. Since the left hand side is non-zero there must exist $g$ for which the corresponding summand on the right is non-zero. This summand is a subgroup of $\tx{Hom}_{G(F)_{y,0+} \cap G(F)_{gx,0+}}(\tau,{^g[\phi\otimes\dot\kappa_{(S,\theta)}]})$. Since both $\tau$ and $^g\dot\kappa_{(S,\theta)}$ are 1-isotypic upon restriction to $G(F)_{y,0+} \cap G(F)_{gx,0+}$ we see that $\phi$ must be trivial upon restriction to this group. By \cite[Lemma 2.45, Definition 2.46]{HM08} this implies that $\phi$ has depth zero.

The equality $\phi\otimes\pi_{(S,\theta)}=\pi_{(S,\phi\cdot\theta)}$ now follows from Corollary \ref{cor:tktwist} and the obvious equality $\phi\otimes\pi_{(S,\theta)}=\tx{c-Ind}_{S(F)G(F)_{x,0}}^{G(F)}(\phi\otimes\tilde\kappa_{(S,\theta)})$.
\end{proof}

\begin{lem} \label{lem:rdzclass} Every regular depth-zero supercuspidal representation of $G(F)$ is of the form $\pi_{(S,\theta)}$ for some maximally unramified elliptic maximal torus $S$ and regular depth-zero character $\theta : S(F) \to \C^\times$. Two representations $\pi_{(S_1,\theta_1)}$ and $\pi_{(S_2,\theta_2)}$ are isomorphic if and only if the pairs $(S_1,\theta_1)$ and $(S_2,\theta_2)$ are $G(F)$-conjugate.
\end{lem}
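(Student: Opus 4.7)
Given a regular depth-zero supercuspidal $\pi$, I will invoke \cite[Proposition 6.8]{MP96} to write $\pi=\tx{c-Ind}_{G(F)_x}^{G(F)}\dot\kappa$ for some vertex $x\in\mc{B}^\tx{red}(G,F)$ and some irreducible representation $\dot\kappa$ of $G(F)_x$ whose restriction to $G(F)_{x,0}$ contains the inflation of a cuspidal Deligne--Lusztig representation $\pm R_{\ms{S}',\bar\theta}$ with $\bar\theta$ in general position. By the second part of Lemma \ref{lem:rut1} the torus $\ms{S}'$ is the reduction of a maximally unramified elliptic maximal torus $S\subset G$ whose associated vertex is $x$, and by Lemma \ref{lem:sred} one can lift $\bar\theta$ to a depth-zero character of $S(F)$. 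The stabilizer computation inside the proof of Lemma \ref{lem:rdzconst} identifies the normalizer of $\kappa_{(S,\bar\theta)}$ in $G(F)_x$ with $S(F)G(F)_{x,0}$, so Clifford theory produces an irreducible extension $\tilde\kappa$ of $\kappa_{(S,\bar\theta)}$ to $S(F)G(F)_{x,0}$ with $\dot\kappa=\tx{Ind}_{S(F)G(F)_{x,0}}^{G(F)_x}\tilde\kappa$. Any two such extensions differ by a character of $S(F)G(F)_{x,0}/G(F)_{x,0}\cong S(F)/S(F)_0$, and the formula \eqref{eq:tkd} shows that as $\theta$ ranges over depth-zero extensions of $\bar\theta$ to $S(F)$ the extensions $\tilde\kappa_{(S,\theta)}$ exhaust every possibility, giving $\pi\cong\pi_{(S,\theta)}$. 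The direction ``conjugacy implies isomorphism'' is then immediate from the $G(F)$-equivariance of the whole construction.

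\textbf{Isomorphism implies conjugacy} is the main obstacle. Suppose $\pi_{(S_1,\theta_1)}\cong\pi_{(S_2,\theta_2)}$. By the uniqueness of depth-zero unrefined minimal $K$-types up to associate classes (a standard consequence of \cite{MP96}), there is $g\in G(F)$ with $gx_1=x_2$ and $\tx{Ad}(g)\kappa_{(S_1,\bar\theta_1)}\cong\kappa_{(S_2,\bar\theta_2)}$. Conjugating $(S_1,\theta_1)$ by $g$, I may assume $x_1=x_2=x$ and that the two cuspidal Deligne--Lusztig representations agree. By \cite[Theorem 6.8]{DL76} there exists $h\in\ms{G}_x^\circ(k_F)$ with $\tx{Ad}(h)(\ms{S}_1',\bar\theta_1)=(\ms{S}_2',\bar\theta_2)$; lifting $h$ to $G(F)_{x,0}$ and applying Lemma \ref{lem:rut2} to absorb the residual ambiguity inside $G(F)_{x,0+}$, I may further assume $S_1=S_2=S$ and $\theta_1|_{S(F)_0}=\theta_2|_{S(F)_0}$.

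It then suffices to show that under these reductions the isomorphism $\pi_{(S,\theta_1)}\cong\pi_{(S,\theta_2)}$ forces $\theta_1=\theta_2^n$ for some $n\in N(S,G)(F)$. For this I would compare Harish--Chandra characters at elements $\gamma\in S(F)$ that are strongly regular and absolutely semi-simple modulo $A_G$. By Corollary \ref{cor:tkcharss} the character of $\tilde\kappa_{(S,\theta_i)}$ at such $\gamma$ equals $(-1)^{r_G-r_S}\sum_{w\in N(S,G)(F)/S(F)}\theta_i(\gamma^w)$, and via the compact-induction character formula together with a standard cuspidality argument -- which eliminates contributions from cosets $gK$ with $gx\ne x$, since a cuspidal Deligne--Lusztig representation shares no constituent with its restriction to a proper ``parabolic-like'' subgroup of the parahoric -- the Harish--Chandra character of $\pi_{(S,\theta_i)}$ at $\gamma$ reduces to a nonzero scalar multiple of the same sum. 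Equating the two sums on a dense set and invoking linear independence of characters of $S(F)$, combined with the regularity of $\theta_i$ (which makes the $|N(S,G)(F)/S(F)|$ summands pairwise distinct), the multisets $\{\theta_1^w\}_w$ and $\{\theta_2^w\}_w$ must coincide, yielding $\theta_1=\theta_2^n$ for some $n\in N(S,G)(F)$.
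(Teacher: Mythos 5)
Your existence argument and the first reductions of the converse track the paper closely (the paper uses \cite[Proposition~6.6]{MP96} and Lemma~\ref{lem:weyl} for existence, and the same \cite[Theorem~3.5]{MP96} / \cite[Theorem~6.8]{DL76} / Lemma~\ref{lem:rut2} chain to reduce to $S_1=S_2=S$, $\theta_1|_{S(F)_0}=\theta_2|_{S(F)_0}$). From there, however, you take a genuinely different route. The paper stays on the Hom-space level: invoking \cite[Propositions~4.2,~5.2]{Mor89} to kill the contributions of cosets $g\notin G(F)_x$ to Kutzko's Mackey decomposition, then applying the ordinary Mackey formula for the finite-index subgroup $S(F)G(F)_{x,0}\subset G(F)_x$, and finally noting from \eqref{eq:tkd} that $\tilde\kappa_2=(\theta_2\theta_1^{-1})\otimes\tilde\kappa_1$ on a common space, so that Schur forces $\theta_1=\theta_2$. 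You instead try to read off the answer from Harish--Chandra characters at absolutely semi-simple elements of $S(F)$, which is closer in spirit to what the paper develops later in Proposition~\ref{pro:charshallowdz}. Both strategies can in principle work, but the paper's is more economical and imposes no extra hypotheses on $F$; in fact Lemma~\ref{lem:rdzclass} is stated in \S\ref{sub:rdz} without any restriction on the residual characteristic.

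There is a real gap in your last step. The set of elements of $S(F)$ that are strongly regular and absolutely semi-simple modulo $A_G$ is a finite union of $A_G(F)$-cosets, hence \emph{not} dense in $S(F)$; the phrase ``on a dense set'' and the bare appeal to ``linear independence of characters of $S(F)$'' therefore do not apply. What is true is that these elements hit every coset of $S(F)_0$, and that within a fixed coset $\gamma_c S(F)_0$ they are exactly the products $\gamma_c\tau$ with $\tau$ a Teichm\"uller lift of an element of $\ms{S}'(k_F)$. Writing $\mu_w$ for the (common) restriction $\theta_i^w|_{S(F)_0}$, which factors through $\ms{S}'(k_F)$ and is injective in $w$ by regularity, the equality $\sum_w\theta_1^w(\gamma_c)\mu_w(\tau)=\sum_w\theta_2^w(\gamma_c)\mu_w(\tau)$ for all such $\tau$ lets you apply linear independence of characters of the \emph{finite} group $\ms{S}'(k_F)$ and conclude $\theta_1^w(\gamma_c)=\theta_2^w(\gamma_c)$ for each $w$ and each coset, hence $\theta_1=\theta_2$. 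That repair, together with a fuller treatment of the ``standard cuspidality argument'' (which the paper carries out via the Harish--Chandra integral formula, uniqueness of the fixed point of a regular topologically semi-simple element in $\mc{B}^\tx{red}(G,F)$, and the last paragraph of the proof of Proposition~\ref{pro:charshallowdz}), would make your argument rigorous --- but you should be aware that the paper's direct Mackey/Morris argument is shorter and self-contained at this stage of the development. Finally, a small point on the existence step: you should not appeal to abstract Clifford theory for the existence of an extension of $\kappa_{(S,\bar\theta)}$ to $S(F)G(F)_{x,0}$ (the cohomological obstruction is not a priori zero since $S(F)/S(F)_0$ need not be cyclic); the extension is supplied explicitly by the construction \eqref{eq:tkd}, which you do cite, and that is what you should lean on.
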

\begin{proof}
The first statement follows from \cite[Proposition 6.6]{MP96} and Lemmas \ref{lem:rut1} and \ref{lem:weyl}. We come to the second statement. The first half of the proof is the same as for \cite[Lemma 3.1.1]{KalIso}. It is clear that conjugate pairs lead to isomorphic representations, so we need to prove the opposite implication. Let $x_i \in \mc{B}^\tx{red}(G,F)$ be the point for $S_i$ and $\tilde \kappa_i$ and $\kappa_i$ the representations of $S_i(F)G(F)_{x_i,0}$ and $G(F)_{x_i,0}$ respectively. By \cite[Theorem 3.5]{MP96} the unrefined minimal $K$-types of depth zero $(G(F)_{x_1,0},\kappa_1)$ and $(G(F)_{x_2,0},\kappa_2)$ are associate, so there exists $g \in G(F)$ with $gx_1=x_2$ and $\tx{Ad}(g)\kappa_1=\kappa_2$. Conjugating $(S_1,\theta_1)$ by $g$ we may assume $g=1$, so $x_1=x_2=:x$ and $\kappa_1=\kappa_2$. By \cite[Theorem 6.8]{DL76} we can find $g \in G(F)_{x,0}$ such that $\tx{Ad}(g)(\ms{S}_1,\bar\theta_1)=(\ms{S}_2,\bar\theta_2)$. By Lemma \ref{lem:rut2} there is $l \in G(F)_{x,0+}$ such that $\tx{Ad}(lg)(S_1,\bar\theta_1)=(S_2,\bar\theta_2)$. We again conjugate $(S_1,\theta_1)$ by $lg$ and assume that $S_1=S_2$ and $\bar\theta_1=\bar\theta_2$. Let's write $S=S_1=S_2$ and $\bar\theta_1=\bar\theta_2=\bar\theta$.

In the unramified case the proof is now complete, because $S(F)=S(F)_0 \cdot Z(F)$, which implies $\theta_1=\theta_2$, because the central character of $\pi_{(S,\theta_i)}$ is $\theta_i|_{Z(F)}$. Since this fails in the ramified case, we need an additional argument. It follows the proof of \cite[Proposition 4.2]{Mor89}. The argument given there, combined with \cite[Proposition 5.2]{Mor89}, shows that for any $g \notin G(F)_x$ the group $\tx{Hom}_{G(F)_x \cap {^gG(F)_x}}(\dot\kappa_1,\dot\kappa_2)$ vanishes, where $\dot\kappa_i = \tx{Ind}_{S(F)G(F)_{x,0}}^{G(F)_x} \tilde\kappa_i$.
We conclude
\[ \tx{Hom}_{G(F)}(\pi_{(S,\theta_1)},\pi_{(S,\theta_2)}) = \tx{Hom}_{G(F)_x}(\dot\kappa_1,\dot\kappa_2), \]
using Kutzko's Mackey formula. We use again the (this time ordinary) Mackey formula to the subgroup $S(F) \cdot G(F)_{x,0}$ of finite index of $G(F)_x$.
Then we get
\[ \tx{Hom}_{G(F)_x}(\dot\kappa_1,\dot\kappa_2) = \bigoplus_{g \in G(F)_x/S(F)G(F)_{x,0}} \tx{Hom}_{S(F)G(F)_{x,0}}(\tilde\kappa_1,{^g\tilde\kappa_2}). \]
For any coset $g$ the corresponding summand on the right is a subgroup of $\tx{Hom}_{G(F)_{x,0}}(\kappa_1,{^g\kappa_2})$. But we already know $\kappa_1 \cong \kappa_2$, so by the same argument as above there exist $h \in G(F)_{x,0}$ and $l \in G(F)_{x,0+}$ such that $\tx{Ad}(g)(S,\bar\theta)=\tx{Ad}(lh)(S,\bar\theta)$. It follows as above that $g^{-1}lh \in S(F)$, which means that $g$ must represent the trivial coset in $G(F)_x/S(F)G(F)_{x,0}$. This implies
\[ \tx{Hom}_{G(F)}(\pi_{(S,\theta_1)},\pi_{(S,\theta_2)}) = \tx{Hom}_{S(F)G(F)_{x,0}}(\tilde\kappa_1,\tilde\kappa_2). \]
From \eqref{eq:tkd} we see that both $\tilde\kappa_1$ and $\tilde\kappa_2$ act on the same vector space $V_{\bar\theta}$ and $\tilde\kappa_2 = \theta_2\theta_1^{-1} \otimes \tilde\kappa_1$, where $\theta_2\theta_1^{-1}$ is a character of $S(F)/S(F)_0 = S(F)G(F)_{x,0}/G(F)_{x,0}$. Since $\kappa_1=\kappa_2$ is already an irreducible representation of $G(F)_{x,0}$, any element of $\tx{Hom}_{S(F)G(F)_{x,0}}(\tilde\kappa_1,\tilde\kappa_2)$ is a scalar multiple of the identity, which forces $\theta_2=\theta_1$.
\end{proof}

\subsection{Review of the work of Hakim and Murnaghan} \label{sub:reviewhm}

In accordance with \cite{HM08} we now assume that the residual characteristic of $F$ is not $2$. Let $((G^0 \subset G^1 \dots \subset G^d),\pi_{-1},(\phi_0,\phi_1,\dots,\phi_d))$ be a reduced generic cuspidal $G$-datum, in the sense of \cite[Definition 3.11]{HM08}. We recall that each $G^i$ is a tame twisted Levi subgroup of $G$, i.e. a connected reductive subgroup of $G$ that is defined over $F$ and becomes a Levi subgroup of $G$ over a tame extension of $F$, $G^d=G$, $\pi_{-1}$ is a depth-zero supercuspidal representation of $G^0(F)$, and $\phi_i : G^i(F) \to \C^\times$ is a smooth character of depth $r_i>0$, which is $G^{i+1}$-generic when $i \neq d$. We refer the reader to \cite[\S3.1]{HM08} for the notion of a generic character, as well as for the precise list of conditions imposed on this data.

From a reduced generic cuspidal $G$-datum, the construction of \cite{Yu01} produces an irreducible supercuspidal representation of $G(F)$. We can think of Yu's construction as a map from the set of reduced generic cuspidal $G$-data to the set of isomorphism classes of irreducible supercuspidal representations of $G(F)$. One of the main results of \cite{HM08} is the description of the fibers of this map. Hakim and Murnaghan introduce three operations on the set of reduced generic cuspidal $G$-data: elementary transformation, $G$-conjugation, and refactorization. According to \cite[Theorem 6.6]{HM08}, the equivalence relation generated by these operations, called $G$-equivalence in \cite{HM08}, places two reduced generic cuspidal $G$-data in the same equivalence class precisely when they lead to isomorphic supercuspidal representations. This theorem is valid under a certain technical hypothesis, called $C(\vec G)$.

Our goal in this subsection is to recall the notion of $G$-equivalence and Hypothesis $C(\vec G)$, and then show that \cite[Theorem 6.6]{HM08} is valid even without assuming $C(\vec G)$. For this, we will first prove that Hypothesis $C(\vec G)$ holds for all $G$ for which the fundamental group of $G_\tx{der}$ has order prime to $p$. In particular, it holds when $G_\tx{der}$ is simply connected. We will then use this to treat the case of general $G$.

We now recall Hypothesis $C(\vec G)$ from \cite[\S2.6]{HM08}. Given a tower $\vec G=(G^0 \subset G^1 \dots \subset G^d)$ of twisted Levi subgroups of $G$, Hypothesis $C(\vec G)$ is the concatenation of hypotheses $C(G^i)$. In turn, Hypothesis $C(G)$ stipulates that whenever $\phi : G(F) \to \C^\times$ is a character of positive depth $r>0$ and $x \in \mc{B}(G,F)$, the restriction $\phi|_{G(F)_{x,(r/2)+}}$ is realized by an element of $\tx{Lie}^*(Z(G)^\circ)(F)_{-r}$. This means the following. Let $\mf{g}=\tx{Lie}(G)$ and let $\Lambda : F \to \C^\times$ be an additive character of depth zero. For any $r > s$ the Pontryagin dual of the abelian group $\mf{g}(F)_{x,s+}/\mf{g}(F)_{x,r+}$ is identified with the abelian group $\mf{g}^*(F)_{x,-r}/\mf{g}^*(F)_{x,-s}$, via the pairing
\[ (Y,X^*) \mapsto \Lambda\<X^*,Y\>,\qquad Y \in \mf{g}(F)_{x,s+},  X^* \in \mf{g}^*(F)_{x,-r}. \]
Whenever $r>s \geq r/2$ we have the Moy-Prasad isomorphism (see \cite[Corollary 2.4]{Yu01} and the discussion following \cite[Definition 2.46]{HM08})
\[ \tx{MP}_x : \mf{g}(F)_{x,s+}/\mf{g}(F)_{x,r+} \to G(F)_{x,s+}/G(F)_{x,r+}, \]
via which $\mf{g}^*(F)_{x,-r}/\mf{g}^*(F)_{x,-s}$ is identified with the Pontryagin dual of the abelian group
$G(F)_{x,s+}/G(F)_{x,r+}$. An element $X^* \in \mf{g}^*(F)_{x,-r}/\mf{g}^*(F)_{x,-s}$ is said to realize the character of $G(F)_{x,s+}/G(F)_{x,r+}$ that it corresponds to under this identification. Now let $\mf{z}=\tx{Lie}(Z(G)^\circ)$. As discussed in \cite[\S8]{Yu01}, there is a natural way to view $\mf{z}^*$ as a subspace of $\mf{g}^*$. Namely, the natural projection $\mf{g}^* \to \mf{z}^*$ that is dual to the inclusion $\mf{z} \to \mf{g}$ becomes an isomorphism upon restriction to the subspace of $\mf{g}^*$ that is 1-isotypic for the adjoint action of $G$.

\begin{lem} \label{lem:charsc}
If the fundamental group of $G_\tx{der}$ has order prime to $p$, then every character of $G_\tx{der}(F)$ has depth at most zero.
\end{lem}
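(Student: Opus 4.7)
My approach is a two-step reduction followed by a Bruhat-Tits-theoretic argument. First, I would reduce the lemma to the analogous statement for the simply connected cover $G_\tx{sc}$ of $G_\tx{der}$. The isogeny $\pi : G_\tx{sc} \to G_\tx{der}$ has kernel $Z=\pi_1(G_\tx{der})$, whose order is coprime to $p$ by hypothesis. Every positive-depth Moy-Prasad filtration subgroup $G(F)_{x,r}$ (with $r>0$) is pro-$p$, and $Z(F)$ has no nontrivial $p$-part, so $\pi$ restricts to a bijection $G_\tx{sc}(F)_{x,r}\to G_\tx{der}(F)_{x,r}$ for every $x\in\mc{B}(G_\tx{sc},F)=\mc{B}(G_\tx{der},F)$ and every $r>0$; injectivity is immediate, and surjectivity follows from the fact that $\pi$ is \'etale and an isomorphism on Lie algebras (so the Moy-Prasad isomorphism identifies the successive filtration quotients), or alternatively by combining Lemma \ref{lem:mpex1} for a maximal torus with the Iwahori/root-group factorization of $G(F)_{x,r}$. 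Consequently, the pullback $\chi\circ\pi$ of any smooth character $\chi$ of $G_\tx{der}(F)$ has the same depth as $\chi$, and it suffices to prove the lemma for $G_\tx{sc}$.

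Next, I would show that whenever $H$ is semisimple and simply connected over $F$, the subgroup $H(F)_{x,r}$ is contained in the closure of $[H(F),H(F)]$ for every $x$ and every $r>0$; any smooth character of $H(F)$ is then automatically trivial on $H(F)_{x,r}$, which is precisely the desired depth bound. In the $F$-isotropic case, Bruhat-Tits theory presents $H(F)_{x,r}$ as the product of the positive-depth piece $T(F)_r$ of a maximally split maximal torus $T$ whose apartment contains $x$, together with the affine root subgroups $U_\psi(F)$ of sufficient level. Each $U_\psi(F)$ lies inside a rank-one $F$-subgroup of $H$ (an $F_\alpha$-form of $\tx{SL}_2$ for an asymmetric or unramified root $\alpha$, and a quasi-split $\tx{SU}_3$-type subgroup for a symmetric ramified one); these rank-one groups are perfect over local fields of odd residual characteristic (our standing assumption), so $U_\psi(F)$ lies in the commutator subgroup of $H(F)$. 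The torus piece $T(F)_r$ is in turn generated by coroot evaluations $\alpha^\vee$ applied to elements of the positive-depth filtration of $F_\alpha^\times$, each sitting inside one of the same rank-one subgroups.

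The main obstacle is the $F$-anisotropic factors of $G_\tx{sc}$, where $H(F)$ has no proper $F$-rational parabolic subgroups and hence no affine root subgroups of the ambient group to exploit. Anisotropic semisimple simply connected groups over a non-archimedean local field are classified as (restrictions of scalars of) groups of the form $\tx{SL}_1(D)$ for central division algebras $D$; Wang's theorem $\tx{SK}_1(D)=1$ over a local field, combined with the prime-to-$p$ hypothesis on $|\pi_1(G_\tx{der})|$, lets one conclude that the pro-$p$ group $H(F)_{x,r}$ likewise lies in the closure of the commutator subgroup of $H(F)$. Making this last transition (from the $\tx{SK}_1$-triviality of $D$ to a statement about Moy-Prasad filtrations of $H(F)$) rigorously is the trickiest part of the argument.
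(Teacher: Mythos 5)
Your reduction to $G_{\mathrm{sc}}$ via the prime-to-$p$ isogeny matches the paper, and so does the overall shape (split into isotropic and anisotropic simple factors). Two comments.

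For the isotropic factors, the paper takes a shorter route than you propose: it simply cites the Kneser--Tits theorem over a local field, which gives that $H(F)=H(F)^{+}$ is perfect, hence has \emph{no} nontrivial characters at all, not merely no positive-depth ones. Your argument via the Bruhat--Tits presentation of $H(F)_{x,r}$ and the perfection of rank-one subgroups would work, but it redoes, at the level of affine root groups, what the Kneser--Tits theorem delivers in one stroke; you do not need to keep track of the torus piece $T(F)_r$ or the various rank-one forms.

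For the anisotropic factors there is a genuine gap, and it sits precisely where you flagged it. Wang's theorem says $\mathrm{SK}_1(D)=D^{(1)}/[D^\times,D^\times]=1$, i.e.\ every reduced-norm-one element of $D$ is a product of commutators \emph{of elements of $D^\times$}. But a character $\chi$ of $H(F)=D^{(1)}$ has no reason to vanish on such commutators $[a,b]$ when $a,b\in D^\times$ lie outside $D^{(1)}$: they are not in the domain of $\chi$. What is actually relevant is the derived subgroup $[D^{(1)},D^{(1)}]$, which is strictly smaller than $D^{(1)}=[D^\times,D^\times]$ --- the abelianization of $D^{(1)}$ is a nontrivial cyclic group of order prime to $p$. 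The input you need is not $\mathrm{SK}_1(D)=1$ but Riehm's finer result (\cite[\S5 Corollary]{Riehm70}, cited in the paper) that $[D^{(1)},D^{(1)}]=(1+\mf{p}_D)\cap D^{(1)}$, which is exactly $D^{(1)}_{x,0+}$ at the unique building point $x$. With that in hand, triviality of any character on $H(F)_{x,0+}$ (hence depth $\leq 0$) is immediate; without it, the ``last transition'' in your plan cannot be made.
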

\begin{proof}
Assume first that $G_\tx{der}$ is simply connected and write $G_\tx{der}=G_\tx{sc}$ as the product $G_\tx{sc} = G_{\tx{sc},1} \times \dots \times G_{\tx{sc},n}$, with each $G_{\tx{sc},i}$ simple over $F$. Then $G_{\tx{sc},i}$ is either isotropic, or else by \cite[\S4.5,\S4.6]{BT3} isomorphic to $\tx{Res}_{E/F}\tx{SL}_1(D)$, where $E/F$ is a finite extension and $D/E$ is a division algebra. In the first case, $G_{\tx{sc},i}$ satisfies the Kneser-Tits conjecture \cite[\S1.2]{TitsWhitehead} and hence $G_{\tx{sc},i}(F)$ has no characters. In the second case $G_{\tx{sc},i}(F)$ is isomorphic to the group $D^{(1)}$ of elements of $D$ whose reduced norm is equal to $1$. According to \cite[\S5 Corollary]{Riehm70}, the derived subgroup of $D^{(1)}$ is equal to $(1+\mf{p}_D) \cap D^{(1)}$, where $\mf{p}_D$ is the maximal ideal of $D$. In terms of Moy-Prasad filtrations this means that the derived subgroup of $G_{\tx{sc},i}(F)$ is $G_{\tx{sc},i}(F)_{x,0+}$, where $x$ is the unique point in the reduced building of $G_{\tx{sc},i}(F)$. We conclude that every character of $G_\tx{sc}(F)$ is trivial on $G_\tx{sc}(F)_{x,0+}$ for any $x \in \mc{B}(G,F)$.

For the general case, let $x \in \mc{B}(G,F)$. Let $A \subset G$ be a maximal split torus such that $x$ belongs to the apartment of $A$. According to \cite[Corollaire 5.1.12]{BT2} there exists a maximal torus $T \subset G$ defined over $F$, containing $A$, and maximally split over $F^u$. Since $G$ is tame, so is $T$. Let $T_\tx{der}$ and $T_\tx{sc}$ be the corresponding maximal tori of $G_\tx{der}$ and $G_\tx{sc}$. Applying Lemma \ref{lem:mpex1} to the isogeny $T_\tx{sc} \to T_\tx{der}$ we see that the natural map $G_\tx{sc}(F)_{x,0+} \to G_\tx{der}(F)_{x,0+}$ is bijective.
\end{proof}

\begin{lem}\label{lem:hypcg}
Assume the fundamental group of $G_\tx{der}$ has order prime to $p$. Then Hypothesis $C(G)$ holds, and moreover  Hypothesis $C(\vec G)$ holds for any tower of twisted Levi subgroups of $G$.
\end{lem}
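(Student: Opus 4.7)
The plan is to deduce Hypothesis $C(G)$ directly from Lemma \ref{lem:charsc} and then propagate the prime-to-$p$ hypothesis from $G$ to each member of a twisted Levi tower, reducing $C(\vec G)$ to $d+1$ instances of $C(G^i)$. Fixing $\phi:G(F)\to\C^\times$ of positive depth $r$ and $x\in\mc{B}(G,F)$, I will first restrict $\phi$ to $G_\tx{der}(F)$; by Lemma \ref{lem:charsc} applied to $G_\tx{der}$ (whose fundamental group is the one assumed to have order prime to $p$), this restriction has depth at most zero, so $\phi$ kills $G_\tx{der}(F)_{x,0+}$, and a fortiori $G_\tx{der}(F)_{x,(r/2)+}$.

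Next I will exploit the $\Ad(G)$-isotypic decomposition $\mf{g}=\mf{z}_0\oplus\mf{g}_\tx{der}$, where $\mf{z}_0=\tx{Lie}(Z(G)^\circ)$ embeds into $\mf{g}$ as the $G$-fixed part. The assumption that $|\pi_1(G_\tx{der})|$ is prime to $p$ ensures that the central isogeny $Z(G)^\circ\times G_\tx{sc}\to G$ has kernel of order prime to $p$, so Lemma \ref{lem:mpex1} together with its Lie-algebra analog for a tamely splitting maximal torus shows that the Moy-Prasad filtrations split along this decomposition. Taking any $X^*\in\mf{g}^*(F)_{x,-r}/\mf{g}^*(F)_{x,-r/2}$ that realizes $\phi|_{G(F)_{x,(r/2)+}/G(F)_{x,r+}}$ via the Moy-Prasad isomorphism, I will decompose $X^*=X^*_0+X^*_\tx{der}$ dually. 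Since $X^*_0$ annihilates $\mf{g}_\tx{der}$, the vanishing of $\phi$ on $G_\tx{der}(F)_{x,(r/2)+}$ translates into $\Lambda\<X^*_\tx{der},\cdot\>$ being trivial on $\mf{g}_\tx{der}(F)_{x,(r/2)+}/\mf{g}_\tx{der}(F)_{x,r+}$, forcing $X^*_\tx{der}\in\mf{g}_\tx{der}^*(F)_{x,-r/2}$. Consequently $X^*$ is represented by $X^*_0\in\mf{z}_0^*(F)_{-r}$, which establishes $C(G)$.

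For the $\vec G$ statement, I will note that any twisted Levi $L$ of $G$ is a Levi subgroup of $G$ over $F^s$, so the coroot-lattice description $\pi_1(-)=X_*(T)/\mb Z\Phi_{(-)}^\vee$ yields an exact sequence $0\to\mb Z\Phi^\vee/\mb Z\Phi_L^\vee\to\pi_1(L)\to\pi_1(G)\to 0$ whose left-hand term, being a sublattice of a free abelian group, is torsion-free. Taking torsion subgroups produces an injection $\pi_1(L_\tx{der})\hookrightarrow\pi_1(G_\tx{der})$, so the prime-to-$p$ hypothesis passes to $L$, and in particular to each $G^i$. Applying the argument above to each $G^i$ in the tower yields $C(\vec G)$.

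The main obstacle I expect is the rigorous verification that the isotypic decomposition $\mf{g}^*=\mf{z}_0^*\oplus\mf{g}_\tx{der}^*$ respects the Moy-Prasad filtration $\mf{g}^*(F)_{x,\cdot}$ as asserted above; this relies crucially on the kernel of the central isogeny $Z(G)^\circ\times G_\tx{sc}\to G$ having order prime to $p$, together with the Lie-algebra analog of Lemma \ref{lem:mpex1} transporting filtrations faithfully through the resulting direct sum on a tamely splitting maximal torus. This is precisely where the assumption $p\nmid|\pi_1(G_\tx{der})|$ is essential, and also why the moreover clause needs the propagation argument about $\pi_1(L_\tx{der})$ above rather than being an automatic consequence of $C(G)$ alone.
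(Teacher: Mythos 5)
Your argument for propagating the prime-to-$p$ condition along the twisted Levi tower is essentially the same as the paper's (the paper states the inclusion $\pi_1(G^i_\tx{der})\subset\pi_1(G_\tx{der})$ without elaboration, and your coroot-lattice justification fills in the detail), so that half of the proposal is fine. The gap is in the reduction to $C(G)$ itself, and it is precisely where you flagged the obstacle.

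The claim that $p\nmid|\pi_1(G_\tx{der})|$ forces the kernel of the central isogeny $Z(G)^\circ\times G_\tx{sc}\to G$ to have order prime to $p$ is false. That kernel sits in an extension
\[
1\to\pi_1(G_\tx{der})\to\ker\to Z(G)^\circ\cap G_\tx{der}\to 1,
\]
and the term $Z(G)^\circ\cap G_\tx{der}$ is not controlled by $\pi_1(G_\tx{der})$ at all. For $G=\tx{GL}_n$ one has $\pi_1(G_\tx{der})=\pi_1(\tx{SL}_n)=1$, which is prime to every $p$, yet $Z(G)^\circ\cap G_\tx{der}=\mu_n$. So for $G=\tx{GL}_p$ (with $p$ odd, satisfying every hypothesis of the lemma) the kernel has order $p$, and there is no splitting of the Moy-Prasad filtration along $\mf{z}_0\oplus\mf{g}_\tx{der}$: in $\mf{gl}_p(F)_{x,0}=\mf{gl}_p(O_F)$ the decomposition $M=\frac{\tr M}{p}I+(M-\frac{\tr M}{p}I)$ of the elementary matrix $E_{11}$ has non-integral central component. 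Thus $X^*_0$ need not lie in $\mf{z}_0^*(F)_{-r}$ even when $X^*$ lies in $\mf{g}^*(F)_{x,-r}$, and the final step of your argument fails exactly in the case ($\tx{GL}_n$ with $p\mid n$) that the lemma is designed to cover.

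The paper sidesteps this by not using a direct sum at all. Instead it sets $D=G/G_\tx{der}$, observes that $\phi|_{G(F)_{x,(r/2)+}}$ descends to $D(F)_{(r/2)+}$ because $G(F)_{x,(r/2)+}\to D(F)_{(r/2)+}$ is surjective (Lemma \ref{lem:mpex1} applied through a tame maximal torus containing $x$ in its apartment), represents the descended character by $X^*\in\tx{Lie}^*(D)(F)_{-r}$, and then pushes $X^*$ forward through the always-available injection $\tx{Lie}^*(D)\hookrightarrow\mf{g}^*$ whose image is Yu's copy of $\tx{Lie}^*(Z(G)^\circ)$. The pairing $\langle X^*,\cdot\rangle$ automatically factors through $\mf{g}\to\tx{Lie}(D)$, so no splitting of the filtration is ever needed. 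To repair your proof you should replace the direct sum decomposition by this quotient-and-section argument.
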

\begin{proof}
The fundamental group of the derived subgroup of any twisted Levi subgroup of $G$ is a subgroup of the fundamental group of the derived subgroup of $G$. It is therefore enough to establish Hypothesis $C(G)$. When $G$ is a torus the statement is clear, because then $\mf{g}^*=\mf{z}^*$. For the general case, let $D=G/G_\tx{der}$ and let $\phi : G(F) \to \C^\times$ be a character of depth $r>0$. The restriction of $\phi$ to $G_\tx{der}(F)_{x,(r/2)+}$ is trivial by Lemma \ref{lem:charsc}, hence its restriction to $G(F)_{x,(r/2)+}$ factors through a character of $D(F)_{(r/2)+}$ (the map $G(F)_{x,(r/2)+} \to D(F)_{(r/2)+}$ is surjective by Lemma \ref{lem:mpex1}). This character is represented by an element $X^* \in \tx{Lie}^*(D)(F)_{-r}$. Under the exact sequence of dual Lie algebras
\[ 1 \rw \tx{Lie}^*(D) \rw \tx{Lie}^*(G) \rw \tx{Lie}^*(G_\tx{der}) \rw 1 \]
the image of $\tx{Lie}^*(D)$ in $\tx{Lie}^*(G)$ is precisely the subspace that Yu identifies with $\tx{Lie}^*(Z(G)^\circ)$ in \cite[\S8]{Yu01}. Thus the image of $X^*$ in $\tx{Lie}^*(G)_{x,-r}$ realizes $\chi|_{G(F)_{x,(r/2)+}}$.
\end{proof}

At the moment we do not know if Hypothesis $C(G)$ holds without the assumption on the fundamental group of $G_\tx{der}$. However, we can still prove that \cite[Theorem 6.6]{HM08} is valid without this assumption, by reducing to the case where $G_\tx{der}$ is simply connected. The main tool that we exploit for that is $z$-extensions, introduced by Langlands and Kottwitz. Recall from \cite[\S1]{Kot82} that a $z$-extension of $G$ is an exact sequence $1 \to K \to \tilde G \to G \to 1$ of connected reductive groups defined over $F$, the derived subgroup of $\tilde G$ is simply connected, and $K$ is an induced torus. In particular, the map $\tilde G(F) \to G(F)$ is surjective. Such a $z$-extension always exists.

\begin{lem} \label{lem:mpex3} Let $x \in \mc{B}(G,F)$. For any $r \geq 0$ the sequence
\[ 1 \to K(F)_r \to \tilde G(F)_{x,r} \to G(F)_{x,r} \to 1 \]
is exact. For any $r>0$ the sequence
\[ 1 \to G_\tx{sc}(F)_{x,r} \to \tilde G(F)_{x,r} \to D(F)_r \to 1 \]
is exact, where $D=\tilde G/G_\tx{sc}$.
\end{lem}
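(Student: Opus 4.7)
The case $r = 0$ of part (a) (and implicitly $r = 0+$) is immediate from Lemma \ref{lem:par} applied to the central, induced (hence inertially induced) torus $K \subset \tilde G$. For $r > 0$ in both parts, we proceed by successive approximation using Moy-Prasad isomorphisms; I sketch the argument for part (a), since part (b) is entirely analogous.

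Choose a maximal $F$-torus $\tilde T \subset \tilde G$ containing $K$ and whose apartment contains a lift of $x$, and let $T = \tilde T/K \subset G$. Since $K$ is central in $\tilde G$, the root systems of $(\tilde T, \tilde G)$ and $(T, G)$ are identified, with $\tilde{\mf{g}}_\alpha = \mf{g}_\alpha$ for each root. The Moy-Prasad filtrations of $\tilde{\mf{g}}$ and $\mf{g}$ at $x$ split along their root space decompositions, so exactness of the Lie algebra sequence
\[ 0 \to \mf{k}(F)_r \to \tilde{\mf{g}}(F)_{x,r} \to \mf{g}(F)_{x,r} \to 0 \]
reduces to exactness on the torus part, which is the Lie algebra analog of Lemma \ref{lem:mpex1} applied to $1 \to K \to \tilde T \to T \to 1$. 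Via the Moy-Prasad isomorphism, which is functorial under both $\tilde G \to G$ and $K \hookrightarrow \tilde G$, one then obtains exactness at the group level modulo depth $s$,
\[ 1 \to K(F)_r/K(F)_s \to \tilde G(F)_{x,r}/\tilde G(F)_{x,s} \to G(F)_{x,r}/G(F)_{x,s} \to 1, \]
whenever $0 < r < s \leq 2r$.

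Given $g \in G(F)_{x,r}$, choose an increasing sequence $r = r_0 < r_1 < r_2 < \cdots$ with $r_{i+1} \leq 2 r_i$ and $r_i \to \infty$, and inductively construct $\tilde g_i \in \tilde G(F)_{x,r}$ lifting $g$ modulo $G(F)_{x,r_i}$ and satisfying $\tilde g_{i+1} \tilde g_i^{-1} \in \tilde G(F)_{x,r_i}$, using the finite-level surjectivity at each step. Completeness of the Moy-Prasad filtration produces a limit $\tilde g \in \tilde G(F)_{x,r}$ lifting $g$, giving surjectivity. Exactness in the middle reduces to $K(F) \cap \tilde G(F)_{x,r} = K(F)_r$, which is the standard compatibility of the intrinsic Moy-Prasad filtration of the tame torus $K$ with the one it inherits from $\tilde G$. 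For part (b), the sequence $1 \to G_\tx{sc} \to \tilde G \to D \to 1$ has $D = \tilde G/G_\tx{sc}$ a torus, since $\tilde G_\tx{der} = G_\tx{sc}$; the Lie algebra splitting $\tilde{\mf{g}} = \mf{g}_\tx{sc} \oplus \mf{z}(\tilde{\mf{g}})$ together with the fact that $\mf{z}(\tilde{\mf{g}}) \to \mf{d}$ induces an isomorphism on filtrations at positive depth (via Lemma \ref{lem:mpex1} when the isogeny kernel is nontrivial) supplies the input for the Lie algebra exactness, and the rest of the argument carries over verbatim. The main obstacle is the finite-level Lie algebra exactness and its transport via Moy-Prasad isomorphisms; once that is in place, the successive approximation step is routine.
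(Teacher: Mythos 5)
Your proposal is correct but takes a genuinely different route from the paper's.

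The paper's proof is much shorter and stays entirely at the level of tori: it picks a maximal torus $T \subset G$ that is maximally split over $F^u$ and whose apartment contains $x$ (via \cite[Corollaire 5.1.12]{BT2}), sets $\tilde T$ to be its preimage in $\tilde G$ and $T_\tx{sc} = \tilde T \cap G_\tx{sc}$, and then applies Lemmas \ref{lem:mpex1} and \ref{lem:mpex2} to the torus sequences $1 \to K \to \tilde T \to T \to 1$ and $1 \to T_\tx{sc} \to \tilde T \to D \to 1$. The passage from the torus filtrations to the group filtrations $G(F)_{x,r}$ is implicit, resting on the Bruhat--Tits/Iwahori factorization of $G(F)_{x,r}$ as the product of $T(F)_r$ and the root-group filtrations $U_\alpha(F)_{x,r}$ together with the observation that the central isogenies $\tilde G \to G$ and $G_\tx{sc} \hookrightarrow \tilde G$ identify the root groups on both sides. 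Your argument instead transits through the Lie algebras and the Moy--Prasad isomorphisms $\mf{g}(F)_{x,r}/\mf{g}(F)_{x,s} \cong G(F)_{x,r}/G(F)_{x,s}$ for $r < s \leq 2r$, proving exactness level-by-level and then passing to the limit by completeness. This is more laborious but has the appeal of being essentially linear-algebraic once the finite-level identification is made.

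Two points you should be more careful about. First, you invoke functoriality of the Moy--Prasad isomorphism under $\tilde G \to G$ and $K \hookrightarrow \tilde G$ without citation; this is true for tame groups but is precisely the kind of statement that needs a reference (it is in effect the same compatibility the paper implicitly uses, repackaged at the Lie algebra level, so you have not entirely avoided the issue). Second, in part (b) you assert a \emph{direct sum} decomposition $\tilde{\mf{g}} = \mf{g}_\tx{sc} \oplus \mf{z}(\tilde{\mf{g}})$, which can fail when $p$ divides $|Z(G_\tx{sc})|$ (so that $\mf{g}_\tx{sc}$ has nontrivial center meeting $\mf{z}(\tilde{\mf{g}})$). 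What you actually need is only the exact sequence $0 \to \mf{g}_\tx{sc} \to \tilde{\mf{g}} \to \mf{d} \to 0$ compatible with the root-space decomposition, which does hold unconditionally; replacing the splitting with this exact sequence repairs the argument without further assumptions on $p$.
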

\begin{proof}
Let $A \subset G$ be a maximal split torus such that $x$ belongs to the apartment of $A$. According to \cite[Corollaire 5.1.12]{BT2} there exists a maximal torus $T \subset G$ defined over $F$, containing $A$, and maximally split over $F^u$. Since $G$ is tame, so is $T$. Let $\tilde T$ be the preimage of $T$ in $\tilde G$ and let $T_\tx{sc} = \tilde T \cap G_\tx{sc}$. The claims now follow from Lemmas \ref{lem:mpex1} and \ref{lem:mpex2} applied to the sequences $1 \to K \to \tilde T \to T \to 1$ and $1 \to T_\tx{sc} \to \tilde T \to D \to 1$.
\end{proof}

Given such a $z$-extension, we can pull-back the reduced generic cuspidal $G$-datum to $\tilde G$ to obtain $((\tilde G^0 \subset \tilde G^1 \dots \subset \tilde G^d),\tilde\pi_{-1},(\tilde\phi_0,\tilde\phi_1,\dots,\tilde\phi_d))$. Here $\tilde G^i$ is the preimage of $G^i$ in $\tilde G$ and is a twisted Levi subgroup of $G$, $\tilde \pi_{-1}$ is the composition of $\pi_{-1}$ with the surjective homomorphism $\tilde G^0(F) \to G^0(F)$ and is an irreducible supercuspidal representation of depth-zero, and $\phi_i$ is the composition of $\phi_i$ with the surjection $\tilde G^i(F) \to G^i(F)$ and is a character of the same depth as $\phi_i$, generic when $i \neq d$. The result of this procedure is a reduced cuspidal generic datum for $\tilde G$. The irreducible supercuspidal representation of $\tilde G(F)$ associated to this datum by Yu's construction is the pull-back of the irreducible supercuspidal representation of $G(F)$ associated to the reduced cuspidal $G$-datum we started with.

We now recall the notion of $G$-equivalence of reduced generic cuspidal $G$-data. It is the equivalence relation generated by three operations: $G$-conjugation, elementary transformation, and refactorization. The operation of $G$-conjugation is obvious from its name -- one replaces each member of the $G$-datum by its conjugate under a given $g \in G(F)$. An elementary transformation consists of replacing $\pi_{-1}$ by an isomorphic representation. If we are already thinking of $\pi_{-1}$ as an isomorphism class of representations, then this operation is vacuous. Finally, a datum $((G'^0 \subset G'^1 \dots \subset G'^d),\pi'_{-1},(\phi'_0,\phi'_1,\dots,\phi'_d))$ is a refactorization of $((G^0 \subset G^1 \dots \subset G^d),\pi_{-1},(\phi_0,\phi_1,\dots,\phi_d))$ if the following conditions involving
\[ \chi_i : G^i(F) \to \C^\times,\qquad \chi_i(g) := \prod_{j=i}^d\phi_j(g)\phi_j'(g)^{-1}, \]
are satisfied:
\begin{enumerate}
	\item[F0.] If $\phi_d=1$ then $\phi'_d=1$;
	\item[F1.] $\chi_i$ is of depth at most $r_{i-1}$ for all $i = 0, \dots, d$, where $r_{-1}=0$;
	\item[F2.] $\pi_{-1}'=\pi_{-1}\otimes\chi_0$.
\end{enumerate}

Note that the three operations of $G$-conjugation, elementary transformation, and refactorization, commute.

\begin{lem} Let $1 \to K \to \tilde G \to G \to 1$ be a $z$-extension. Two reduced generic cuspidal $G$-data are $G$-equivalent if and only if their pull-backs to $\tilde G$ are $\tilde G$-equivalent.
\end{lem}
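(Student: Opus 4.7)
My plan is to prove the two implications separately, with the forward direction being routine and the backward direction requiring a normal-form argument. For the forward implication, I would handle the three generators of $G$-equivalence one at a time. A $G$-conjugation by $g\in G(F)$ lifts to $\tilde G$-conjugation by any preimage $\tilde g\in\tilde G(F)$, which exists because $H^1(F,K)=0$ for the induced torus $K$. An elementary transformation of $\pi_{-1}$ pulls back to an elementary transformation of $\tilde\pi_{-1}$. A refactorization via characters $\chi_i$ of $G^i(F)$ pulls back to a refactorization via the inflations $\tilde\chi_i=\chi_i\circ p$, with depth preserved by Lemma~\ref{lem:mpex3}.

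For the converse, the first step is to place any $\tilde G$-equivalence in a normal form. I plan to verify that the three operations commute up to adjustment of parameters: composites of refactorizations are again refactorizations (characters multiply pointwise, and the depth bound $r_{i-1}$ is stable under multiplication), conjugation transports refactorization characters via the adjoint action, and elementary transformations commute with the rest by naturality of twisting. After this check, any $\tilde G$-equivalence from $\tilde\Psi_1$ to $\tilde\Psi_2$ may be written as a single conjugation by some $\tilde g\in\tilde G(F)$, followed by a single refactorization via characters $\tilde\chi_0,\dots,\tilde\chi_d$ of the conjugated twisted Levi tower, followed by a single elementary transformation.

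The heart of the argument is then to show that when $\tilde\Psi_1$ and $\tilde\Psi_2$ are pullbacks, the $\tilde\chi_i$ in this normal form must be trivial on $K(F)$. Centrality of $K$ in $\tilde G$ ensures $K\subset\tilde G^j$ for every $j$ and that $\Ad(\tilde g^{-1})$ fixes $K$ pointwise, so both $\tilde\phi_j\circ\Ad(\tilde g^{-1})$ and $\tilde\phi_j'$ are trivial on $K(F)$. The refactorization relation then gives $(\tilde\chi_{j+1}/\tilde\chi_j)|_{K(F)}=1$ for every $j$, and a descending induction starting from $\tilde\chi_{d+1}=1$ forces $\tilde\chi_j|_{K(F)}=1$ for all $j$. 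Each $\tilde\chi_j$ therefore descends to a character $\chi_j$ of $gG^jg^{-1}(F)$, where $g$ is the image of $\tilde g$, and by Lemma~\ref{lem:mpex3} the descended characters inherit the depth conditions F1. The elementary transformation descends as well, since an isomorphism of pullbacks of $G^0(F)$-representations is automatically $G^0(F)$-equivariant. Assembling these descents produces the desired $G$-equivalence between $\Psi_1$ and $\Psi_2$.

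The main obstacle I anticipate is the careful bookkeeping for the normal-form step, particularly verifying that composites of refactorizations respect condition F0 and the depth bounds F1 under arbitrary reorderings of the three operations. Once the normal form is secured, the backward direction reduces to the clean centrality-and-descent computation above.
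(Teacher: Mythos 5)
Your proposal is correct and follows the same strategy as the paper, whose proof is compressed into a single sentence appealing to the surjectivity of $\tilde G^i(F)\to G^i(F)$ and $\tilde G^i(F)_{x,r}\to G^i(F)_{x,r}$; the normal-form reduction you spell out is precisely what makes the converse direction rigorous, and the paper relies on the (unproved) commutativity remark stated just before the lemma to justify it. One minor simplification: once the chain is in normal form and all three data are known to be pullbacks, the defining formula $\tilde\chi_j=\prod_{i\ge j}\bigl(\tilde\phi_i\circ\Ad(\tilde g^{-1})\bigr)\cdot(\tilde\phi_i')^{-1}$ already shows $\tilde\chi_j|_{K(F)}=1$ term by term, so the descending induction can be dispensed with.
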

\begin{proof}
Let $((G^0 \subset G^1 \dots \subset G^d),\pi_{-1},(\phi_0,\phi_1,\dots,\phi_d))$ and $((G'^0 \subset G'^1 \dots \subset G'^d),\pi'_{-1},(\phi'_0,\phi'_1,\dots,\phi'_d))$ be the two reduced data for $G$. It is enough to check the three relations that generate $G$-equivalence: $G$-conjugacy, elementary transformation, and refactorization. For these, the statement follows immediately from the surjectivity of the maps $\tilde G^i(F) \to G^i(F)$ and $\tilde G^i(F)_{x,r} \to G^i(F)_{x,r}$ for any $x \in \mc{B}(G^i,F)$ and $r \geq 0$.
\end{proof}

\begin{cor} \label{cor:hm} Let $\Psi$ and $\dot\Psi$ be two reduced generic cuspidal $G$-data, and let $\pi(\Psi)$ and $\pi(\dot \Psi)$ be the corresponding irreducible supercuspidal representations of $G(F)$. Then $\pi(\Psi)$ and $\pi(\dot\Psi)$ are isomorphic if and only if $\Psi$ and $\dot\Psi$ are $G$-equivalent (without assuming Hypotheses $C(\vec G)$ and $C(\vec {\dot G})$).
\end{cor}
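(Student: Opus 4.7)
The plan is to reduce to the case treated by \cite[Theorem 6.6]{HM08} via a $z$-extension. Fix a $z$-extension $1 \to K \to \tilde G \to G \to 1$, which exists by \cite[\S1]{Kot82}. By construction the derived subgroup $\tilde G_{\tx{der}}$ is simply connected, so by Lemma \ref{lem:hypcg} Hypothesis $C(\vec{\tilde G})$ holds for every tower of tame twisted Levi subgroups of $\tilde G$. In particular \cite[Theorem 6.6]{HM08} applies unconditionally to reduced generic cuspidal $\tilde G$-data.

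Let $\tilde\Psi$ and $\dot{\tilde\Psi}$ denote the pull-backs of $\Psi$ and $\dot\Psi$ to $\tilde G$, as described in the paragraph preceding the corollary. As noted there, Yu's construction is compatible with this pull-back, so that $\pi(\tilde\Psi)$ is the inflation of $\pi(\Psi)$ along $\tilde G(F) \to G(F)$, and similarly for $\dot\Psi$. Since $K$ is an induced torus, $H^1(F,K)=0$ and the map $\tilde G(F) \to G(F)$ is surjective. Consequently $\pi(\Psi) \cong \pi(\dot\Psi)$ as $G(F)$-representations if and only if $\pi(\tilde\Psi) \cong \pi(\dot{\tilde\Psi})$ as $\tilde G(F)$-representations: one direction is obvious, and for the other any $\tilde G(F)$-equivariant isomorphism between two representations that factor through $G(F)$ is automatically $G(F)$-equivariant because each element of $G(F)$ has a lift to $\tilde G(F)$.

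By \cite[Theorem 6.6]{HM08} applied to $\tilde G$, the condition $\pi(\tilde\Psi) \cong \pi(\dot{\tilde\Psi})$ is equivalent to $\tilde\Psi$ and $\dot{\tilde\Psi}$ being $\tilde G$-equivalent. Finally, by the previous lemma, $\tilde G$-equivalence of $\tilde\Psi$ and $\dot{\tilde\Psi}$ is equivalent to $G$-equivalence of $\Psi$ and $\dot\Psi$. Chaining these equivalences gives the corollary.

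The only real point to check is the compatibility of Yu's construction with the $z$-extension, but this was already recorded in the discussion immediately preceding the corollary. The surjectivity of $\tilde G^i(F) \to G^i(F)$ and $\tilde G^i(F)_{x,r}\to G^i(F)_{x,r}$ (from $H^1(F,K)=0$ and Lemma \ref{lem:mpex3}) is what ensures that each of $G$-conjugation, elementary transformation, and refactorization transfers cleanly between the two settings, so no additional compactness or genericity subtleties arise.
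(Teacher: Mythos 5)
Your proof is correct and is essentially the paper's argument, just spelled out in full: the paper's proof simply says ``This follows immediately from the preceding discussion and \cite[Theorem 6.6]{HM08},'' and your write-up is exactly what that ``preceding discussion'' amounts to — reduce to the $z$-extension $\tilde G$, invoke Lemma \ref{lem:hypcg} to get Hypothesis $C(\vec{\tilde G})$, apply the Hakim–Murnaghan theorem unconditionally there, and descend both the representation-isomorphism and the $G$-equivalence statements via the preceding lemma and the surjectivity of $\tilde G(F) \to G(F)$.
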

\begin{proof}
This follows immediately from the preceding discussion and \cite[Theorem 6.6]{HM08}.
\end{proof}

\subsection{Tame regular elliptic pairs} \label{sub:rs}

\begin{dfn} \label{dfn:normalized} We shall call a reduced generic cuspidal $G$-datum $((G^0 \subset G^1 \dots \subset G^d),\pi_{-1},(\phi_0,\phi_1,\dots,\phi_d))$ \emph{normalized} if the pull-back of $\phi_i$ to $ G^i_\tx{sc}(F)$ is trivial, for all $0 \leq i \leq d$.
\end{dfn}

\begin{lem} If $p$ does not divide the order of the fundamental group of $G_\tx{der}$, then every reduced generic cuspidal $G$-datum is $G$-equivalent to a normalized one. \end{lem}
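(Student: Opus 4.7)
I would construct the normalized datum as a refactorization of the given one. Recall that a refactorization is parametrized by a sequence of characters $\chi_i : G^i(F) \to \C^\times$ for $i=0,\dots,d$ (with the convention $\chi_{d+1}:=1$) subject to condition F1 that $\chi_i$ has depth at most $r_{i-1}$; it produces $\phi_i'=\phi_i\chi_{i+1}\chi_i^{-1}$ (with $\chi_{i+1}$ restricted to $G^i(F)$) and $\pi'_{-1}=\pi_{-1}\otimes\chi_0$. The normalization condition $\phi'_i|_{G^i_{\tx{sc}}(F)}=1$ translates into the recursive requirement
\[ \chi_i|_{G^i_{\tx{sc}}(F)}=(\phi_i\cdot\chi_{i+1})|_{G^i_{\tx{sc}}(F)}, \]
which prescribes the restriction of $\chi_i$ to $G^i_{\tx{sc}}(F)$ top-down from $i=d$ to $i=0$. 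Since $r_{i-1}\geq 0$, it suffices at each step to realize this prescribed restriction by a character of $G^i(F)$ of depth at most zero.

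\textbf{Key extension claim.} For any character $\phi$ of $G^i(F)$, there exists a character $\chi$ of $G^i(F)$ of depth at most zero with $\chi|_{G^i_{\tx{sc}}(F)}=\phi|_{G^i_{\tx{sc}}(F)}$. To prove it, let $f_i:G^i_{\tx{sc}}(F)\to G^i(F)$ denote the natural map, put $M_i:=f_i(G^i_{\tx{sc}}(F))$, and fix $y\in\mc{B}(G^i,F)$. By Lemma \ref{lem:charsc} the character $\phi\circ f_i$ has depth at most zero, hence vanishes on $f_i(G^i_{\tx{sc}}(F)_{y,0+})$, which by Lemma \ref{lem:mpex1} applied to the isogeny $T_{\tx{sc}}\to T_{\tx{der}}$ of a maximal torus $T\subset G^i$ whose apartment contains $y$ equals $G^i_{\tx{der}}(F)_{y,0+}=M_i\cap G^i(F)_{y,0+}$. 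Consequently $\phi\circ f_i$ descends to a character $\bar\phi$ of $M_i$, and extends (trivially on the second factor) to a character $\tilde\phi$ of the closed normal subgroup $M_i\cdot G^i(F)_{y,0+}$ of $G^i(F)$; well-definedness and multiplicativity are immediate from the normality of $M_i$.

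\textbf{Pontryagin-duality step.} Since $[G^i(F),G^i(F)]\subseteq M_i$ (every commutator lies in $G^i_{\tx{der}}(\bar F)$ and lifts to $G^i_{\tx{sc}}(\bar F)$, and the lift can be made Galois-invariant after adjustment by a central element, which is a standard fact for connected reductive groups), the quotient $G^i(F)/[M_i\cdot G^i(F)_{y,0+}]$ is a Hausdorff locally profinite abelian group. Pontryagin duality for such groups ensures that every continuous character of a closed subgroup extends to a continuous character of the whole group, so $\tilde\phi$ extends to a character $\chi$ of $G^i(F)$. By construction $\chi|_{G^i(F)_{y,0+}}=1$, so $\chi$ has depth at most zero.

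\textbf{Verification.} The resulting $\phi'_i=\phi_i\chi_{i+1}\chi_i^{-1}$ retain the depth $r_i$ and the genericity of $\phi_i$, since twisting by depth-zero characters leaves the restriction to $G^i(F)_{y,r_i}$ unchanged for $r_i>0$; likewise $\pi'_{-1}=\pi_{-1}\otimes\chi_0$ remains a depth-zero irreducible supercuspidal representation. Condition F0 is enforced by taking $\chi_d=1$ whenever $\phi_d=1$, which is consistent since then $\phi_d|_{G^d_{\tx{sc}}(F)}=1$. The main obstacle is the Pontryagin-duality extension step, whose viability rests on the inclusion $[G^i(F),G^i(F)]\subseteq M_i$ together with the tameness assumption that yields the Moy-Prasad bijection $G^i_{\tx{sc}}(F)_{y,0+}\to G^i_{\tx{der}}(F)_{y,0+}$.
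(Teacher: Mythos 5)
Your argument is correct, and the top-down recursion for the $\chi_i$ is, after unwinding, the same bookkeeping as the paper's iterative peeling of depth-zero remainders from $\phi_d$ downward. Where you genuinely differ is the extension step. The paper keeps everything at the level of the torus $D^i = G^i/G^i_\tx{der}$: the restriction of $\phi_i\cdot\chi_{i+1}$ to $G^i(F)_{y,0+}$ factors through $D^i(F)_{0+}$ (Lemmas \ref{lem:charsc} and \ref{lem:mpex1}), extends by Pontryagin duality to a character of $D^i(F)$, and is pulled back; the resulting $\phi_i'$ is then trivial on all of $G^i_\tx{der}(F)$, not merely on the image $M_i$ of $G^i_\tx{sc}(F)$, and only elementary facts about tori are used. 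You instead glue $\phi|_{M_i}$ with the trivial character of $G^i(F)_{y,0+}$ and extend to $G^i(F)$ via Pontryagin duality on the abelianization; this requires the additional group-theoretic input $[G^i(F),G^i(F)]\subseteq M_i$, which you correctly justify by the lift-to-$G^i_\tx{sc}$ argument (central ambiguities cancel in a commutator). The price is that your route brings in the normality of $M_i\cdot G^i(F)_{y,0+}$ and the commutator containment as extra overhead, whereas the paper's $D^i$-route avoids them. One imprecision to flag: the equality $G^i_\tx{der}(F)_{y,0+}=M_i\cap G^i(F)_{y,0+}$ is not itself a consequence of Lemma \ref{lem:mpex1}, which directly gives only $f_i(G^i_\tx{sc}(F)_{y,0+})=G^i_\tx{der}(F)_{y,0+}$; the containment $M_i\cap G^i(F)_{y,0+}\subseteq G^i_\tx{der}(F)_{y,0+}$ further uses the filtration compatibility $G^i_\tx{der}(F)\cap G^i(F)_{y,0+}=G^i_\tx{der}(F)_{y,0+}$, which one deduces from the Iwahori factorization of $G^i(F)_{y,0+}$ together with Lemma \ref{lem:mpex1} for the ambient maximal torus. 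The paper's proof relies on the same compatibility just as tacitly, so this is not a defect unique to your version, but it deserves to be spelled out rather than attributed to Lemma \ref{lem:mpex1} alone.
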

\begin{proof}
Let $\phi : G(F) \to \C^\times$ be a character of depth $r>0$. Put $D=G/G_\tx{der}$. Let $x \in \mc{B}(G,F)$. By Lemma \ref{lem:charsc} $\phi|_{G_\tx{der}(F)_{x,0+}}$ is trivial, while Lemma \ref{lem:mpex1} implies that $G(F)_{x,0+} \to D(F)_{0+}$ is surjective. Thus $\phi$ induces a smooth character of $D(F)_{0+}$. It is of finite order and by Pontryagin duality extends to a character $\phi'$ of $D(F)$, which we may pull back to $G(F)$. Then $\phi\cdot(\phi')^{-1}$ is a character trivial on $G(F)_{x,0+}$.

If $(\phi_0,\dots,\phi_d)$ is the sequence of characters in a reduced generic cuspidal $G$-datum, we can use this procedure to replace $(\phi_{d-1},\phi_d)$ by $(\phi_{d-1}\phi_d(\phi_d')^{-1},\phi_d')$, thereby obtaining a refactorization for which $\phi_d'$ is trivial on $G_\tx{der}(F)$. Doing this inductively leads to the desired refactorization.
\end{proof}

\begin{dfn} \label{dfn:regdat} Let $((G^0 \subset G^1 \dots \subset G^d),\pi_{-1},(\phi_0,\phi_1,\dots,\phi_d))$ be a reduced generic cuspidal $G$-datum. We shall call it
\begin{enumerate}
\item \emph{regular}, if $\pi_{-1}$ is a regular depth-zero supercuspidal representation of $G^0(F)$ in the sense of Subsection \ref{sub:rdz};
\item \emph{extra regular}, if it is normalized and $\pi_{-1}$ is an extra regular depth-zero supercuspidal representation of $G^0(F)$ in the sense of Subsection \ref{sub:rdz}.
\end{enumerate}
\end{dfn}

The regularity of $\pi_{-1}$ is a non-trivial restriction. Already in the case of $\tx{SL}_2$, four of the supercuspidal representations of this group are not regular. Nonetheless, most depth-zero supercuspidal representations are regular.

According to Lemma \ref{lem:rdzclass}, we can replace $\pi_{-1}$ by a maximally unramified elliptic maximal torus $S$ of $G^0$ and a depth-zero character $\phi_{-1} : S(F) \rw \C^\times$ that is (extra) regular with respect to $G^0$. It will sometimes be convenient to write $S=G^{-1}$ and $r_{-1}=0$. Note that it may happen that $G^{-1}=G^0$.

Using Lemma \ref{lem:rdztwist} one sees that the regularity of a $G$-datum is an invariant of its $G$-equivalence class. One also sees that the extra regularity of a normalized $G$-datum is an invariant of its $G$-equivalence class, provided we only consider normalized $G$-data, for then we are only allowed to replace $\pi_{-1}$ by $\chi_0 \otimes \pi_{-1}$, where $\chi_0 : G^0(F) \to \C^\times$ is trivial on $G^0_\tx{sc}(F)$, but given $w \in \Omega(S,G^0)(F)$ and $s \in S(F)$ the element $wsw^{-1}s^{-1} \in S(F)$ lifts to $G^0_\tx{sc}(F)$. On the other hand, the $\pi_{-1}$ component of a $G$-datum that is not normalized, but equivalent to a normalized extra regular datum, need not be an extra regular depth-zero supercuspidal representation of $G^0(F)$. As an example we can take $G^0(F)$ to be the norm-1 elements in a division algebra $D/F$ of degree $d$ and $S(F)$ to be the norm-1 elements in the unramified extension $F_d$ of $F$ of degree $d$, embedded into $D$. Then $\Omega(S,G)(F)$ is isomorphic to $\tx{Gal}(F_d/F)$ and acts on $S(F)=F_d^1$ in the natural way. The Frobenius element of $\tx{Gal}(F_d/F)$ can be represented by an element $g \in D^\times$ normalizing $F_d^\times$ and whose valuation is $a/d$ for some $a \in \Z$ comprime to $d$. Thus $N(S,G)(F)=S(F)$ and we see that all characters of $S(F)$ are regular, even the trivial character. On the other hand, extra regular is a non-trivial condition. We have $S(F)_{0:0+}=k_{F_d}^1$, the elements of $k_{F_d}^\times$ whose $k_{F_d}/k_F$-norm is trivial. It is easy to see that extra regular characters exist. Furthermore, by \cite[Theorem 1.8, Proposition 1.8]{PR94} the inclusion $S(F) \to G^0(F)$ induces a bijection $S(F)_{0:0+} \to G^0(F)_{x,0:0+}$, where $x$ is the unique point in the building of $G^0$. Thus every extra regular character extends (even uniquely) to a character of $G^0(F)$.

\begin{dfn} \label{dfn:regrep}
Assume $p$ does not divide the order of the fundamental group of $G_\tx{der}$. We shall call a supercuspidal representation of $G(F)$ \emph{(extra) regular} if it arises via Yu's construction from an (extra) regular (reduced generic cuspidal) Yu-datum.
\end{dfn}
We will drop the condition on $p$ in Subsection \ref{sub:nsc}.

Our goal for the reminder of this Section will be to show that the (extra) regular supercuspidal representations can be described by data that is much simpler than (extra) regular Yu-data, namely tame elliptic (extra) regular pairs.

\begin{dfn} \label{dfn:tre} Let $S \subset G$ be a maximal torus and $\theta : S(F) \rw \C^\times$ a character. We shall call the pair $(S,\theta)$ tame elliptic regular (resp. extra regular) if
\begin{enumerate}
\item $S$ is elliptic and split over a tame extension;
\item the action of inertia on the root subsystem
\[ R_{0+} = \{\alpha \in R(S,G)| \theta(N_{E/F}(\alpha^\vee(E^\times_{0+})))=1 \} \]
preserves a positive set of roots, where $E/F$ is any tame Galois extension splitting $S$;
\item the character $\theta|_{S(F)_0}$ has trivial stabilizer for the action of $N(S,G^0)(F)/S(F)$ (resp. $\Omega(S,G^0)(F)$), where $G^0 \subset G$ is the reductive subgroup with maximal torus $S$ and root system $R_{0+}$.
\end{enumerate}
\end{dfn}

We recall from Fact \ref{fct:relur} that the second condition is equivalent to saying that $S$ is a maximally unramified maximal torus of $G^0$. Under a mild assumption on $p$, one can replace $G^0$ by $G$ in the third condition, due to the following lemma.

\begin{lem} \label{lem:wr}
Assume that $p$ is not a bad prime for $G$ and does not divide the order of the fundamental group of $G_\tx{der}$. If a pair $(S,\theta)$ satisfies conditions 1 and 2 of Definition \ref{dfn:tre} and furthermore $\theta|_{S(F)_0} \neq 1$, then any element of $\Omega(S,G)(F)$ stabilizing $\theta|_{S(F)_0}$ belongs to $\Omega(S,G^0)(F)$.
\end{lem}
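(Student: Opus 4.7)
The plan is to reduce the lemma to Steinberg's stabilizer theorem in the absolute Weyl group, applied layer-by-layer to the Moy-Prasad decomposition of $\theta|_{S(F)_{0+}}$.

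If $\theta|_{S(F)_{0+}}=1$, then $R_{0+}=R(S,G)$ and $G^0=G$, so the conclusion is vacuous. I thus assume $\theta|_{S(F)_{0+}}\ne 1$ and show the stronger statement that any $w\in\Omega(S,G)(F)$ stabilizing $\theta|_{S(F)_{0+}}$ already lies in $\Omega(S,G^0)(F)$.

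I decompose $\theta|_{S(F)_{0+}}$ along the Moy-Prasad filtration: for each positive real $r$ at which $\theta$ jumps, the induced character of $S(F)_r/S(F)_{r+}$ is realized by an $F$-rational class $X_r^*\in\mf{s}^*(F)_{-r}/\mf{s}^*(F)_{-r+}$, via Moy-Prasad duality for the tame torus $S$. Since $w\in\Omega(S,G)(F)$ preserves the filtration of $\mf{s}^*(F)$ and fixes $\theta|_{S(F)_{0+}}$, it fixes each $X_r^*$. Using that $p$ is not a bad prime for $G$, a nondegenerate $G$-invariant form identifies $\mf{g}^*\cong\mf{g}$ over $F$, converting $X_r^*$ into the class of a semisimple element $X_r\in\mf{s}(F)_r/\mf{s}(F)_{r+}$. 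Choosing any lift $X_r\in\mf{s}(F)_r$, its connected centralizer $H_r:=Z_G(X_r)^\circ$ is a reductive subgroup of $G$ defined over $F$, containing $S$, whose root system $R_r\subset R(S,G)$ is the set of roots that annihilate $X_r$ to appropriate depth. The hypotheses that $p$ is good and does not divide $|\pi_1(G_\tx{der})|$ ensure that $H_r$ has the expected root-system description and that passing between multiplicative and additive filtrations via Lemma \ref{lem:mpex1} is well-behaved.

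By Steinberg's stabilizer theorem applied in the absolute Weyl group, the absolute stabilizer of $X_r$ in $\Omega(S,G)$ equals $\Omega(S,H_r)$. Since $w\in\Omega(S,G)^\Gamma$ and $H_r$ is $F$-defined, this yields $w\in\Omega(S,H_r)^\Gamma=\Omega(S,H_r)(F)$. Unwinding the definition of $R_{0+}$: by Moy-Prasad duality together with the compatibility of $N_{E/F}$ with the filtration on $E^\times$, the equation $\theta(N_{E/F}(\alpha^\vee(E^\times_{0+})))=1$ expands layer-by-layer into the condition that $\alpha$ annihilate every $X_r$, i.e.\ $\alpha\in R_r$ for every $r$ at which $\theta$ jumps. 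Hence $R_{0+}=\bigcap_r R_r$, so $G^0=\bigcap_r H_r$, and $w\in\bigcap_r\Omega(S,H_r)(F)=\Omega(S,G^0)(F)$, as desired.

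The main obstacle will be the precise layer-by-layer identification relating the multiplicative definition of $R_{0+}$ (via $\theta$ and the norm on coroots) to the additive condition that $\alpha$ annihilate the leading-term functionals $X_r$. This requires careful bookkeeping of how $N_{E/F}$ interacts with the Moy-Prasad filtrations on $E^\times$ and on $\mf{s}(E)$, using \eqref{eq:mpr} and Lemma \ref{lem:mpex1}. The hypotheses on $p$ are essential both here and in applying Steinberg's theorem.
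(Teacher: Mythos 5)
You share the paper's overall strategy — filter $\theta$ by depth and control the Weyl-group stabilizer layer by layer — but the central step does not go through as written. Having observed that $w$ fixes the class $X_r^*\in\mf{s}^*(F)_{-r}/\mf{s}^*(F)_{-r+}$, you choose a lift $X_r\in\mf{s}(F)_r$, set $H_r=Z_G(X_r)^\circ$, and invoke Steinberg's stabilizer theorem to deduce $w\in\Omega(S,H_r)$. This is a category error: $w$ fixes only the residue class of $X_r$, i.e.\ $wX_r-X_r\in\mf{s}(F)_{r+}$, not $X_r$ itself, so the stabilizer theorem applied to the lift gives you nothing about $w$. In addition, $R(S,Z_G(X_r)^\circ)=\{\alpha : d\alpha(X_r)=0\}$, and for a generic lift this is empty — it is \emph{not} the set of roots pairing with $X_r$ to depth $>-r$ — so $H_r$ is just $S$ rather than the intended twisted Levi unless the lift is chosen with considerable care.

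What the argument actually needs is a ``Steinberg mod $p$'' statement: that the stabilizer in $\Omega(S,G)$ of the class $[X^*]$, acting on the residue module $\mf{s}^*(F^u)_{-r}/\mf{s}^*(F^u)_{-r+}$, is the absolute Weyl group of the Levi subsystem cut out by $[X^*]$. This is precisely Yu's genericity condition GE2, which is not automatic in residual characteristic $p$; Yu's Lemma 8.1 in \cite{Yu01} derives GE2 from GE1 under the torsion-prime hypothesis, and this is the decisive place where the assumption on $p$ enters — not merely in furnishing an invariant form or in invoking Lemma \ref{lem:mpex1}. The paper's proof sidesteps having to re-prove this by taking a Howe factorization $(\phi_{-1},\dots,\phi_d)$ of $\theta$ (Proposition \ref{pro:howefacex}): each $\phi_i$ is $G^{i+1}$-generic by construction, genericity supplies exactly the identity that any $w\in\Omega(S,G^{i+1})(F)$ fixing $\phi_i|_{S(F)_{r_i}}$ lies in $\Omega(S,G^i)(F)$, and the $\Omega(S,G^i)(F)$-invariance of $\phi_i|_{S(F)}$ (because $\phi_i$ is trivial on $S^i_\tx{sc}(F)$) permits a descending induction. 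Your Moy--Prasad decomposition is close in spirit to this, but without an appeal to GE2 (or a direct proof of it), the passage from a fixed residue class to a fixed lift, and from there to the stabilizer being a Levi Weyl group, is unjustified.
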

We postpone the proof of this lemma to the next subsection.

\begin{fct} \label{fct:srtwist} If $(S,\theta)$ is a regular (resp. extra regular) tame elliptic pair and $\delta_0 : S(F) \to \C^\times$ is a character of depth zero that is invariant under $N(S,G^0)(F)/S(F)$ (resp. in $\Omega(S,G^0)(F)$), then $(S,\theta\delta_0)$ is regular (resp. extra regular).
\end{fct}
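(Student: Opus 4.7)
The plan is to verify, one by one, the three defining conditions of Definition \ref{dfn:tre} for the pair $(S,\theta\delta_0)$, exploiting the two hypotheses on $\delta_0$: that it has depth zero and that it is invariant under the appropriate normalizer/Weyl group of $S$ in $G^0$. Condition~(1) concerns only $S$ and is therefore automatic.

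For condition~(2), the key observation I plan to use is that the subsystem $R_{0+}$, and hence the twisted Levi $G^0$, is an invariant of the restriction of the character to $S(F)_{0+}$, so it is unaffected by multiplication with $\delta_0$. Concretely, for each $\alpha \in R(S,G)$ with tame splitting field $E$, the cocharacter $\alpha^\vee$ maps $E^\times_{0+}$ into $S(E)_{0+}$ by the explicit description \eqref{eq:mpr} of the Moy-Prasad filtration, and the norm $N_{E/F}$ sends $S(E)_{0+}$ into $S(F)_{0+}$ (the cleanest justification is that $S(F)_{0+}$ is the unique maximal pro-$p$ subgroup of $S(F)_b$, and the continuous image of a pro-$p$ subgroup is pro-$p$). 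Since $\delta_0$ is trivial on $S(F)_{0+}$ by the depth-zero hypothesis, the evaluation $\delta_0(N_{E/F}(\alpha^\vee(E^\times_{0+})))$ is identically~$1$. Hence $R_{0+}$ and $G^0$ computed from $\theta\delta_0$ agree with those computed from $\theta$, and condition~(2) transfers verbatim.

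For condition~(3), I would simply decompose $(\theta\delta_0)|_{S(F)_0} = \theta|_{S(F)_0}\cdot\delta_0|_{S(F)_0}$ and note that if $w \in N(S,G^0)(F)/S(F)$ (resp.\ $w \in \Omega(S,G^0)(F)$) stabilizes the product, then by the invariance of $\delta_0$ under $w$, the restriction $\delta_0|_{S(F)_0}$ is also stabilized, and so $w$ stabilizes $\theta|_{S(F)_0}$. The (extra) regularity of $(S,\theta)$ then forces $w=1$.

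I do not foresee a serious obstacle: the argument is essentially a bookkeeping exercise. The only technical point requiring a moment of care is the inclusion $N_{E/F}(\alpha^\vee(E^\times_{0+})) \subset S(F)_{0+}$, which is standard but worth stating explicitly so that the identification of the root systems $R_{0+}$ for $\theta$ and for $\theta\delta_0$ is on solid footing.
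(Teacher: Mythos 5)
Your proof is correct and follows exactly the route the paper intends: the paper's own justification is the one-sentence remark that ``neither $R_{0+}$ nor the appropriate stabilizer of $\theta|_{S(F)_0}$ changes when we pass from $\theta$ to $\theta\delta_0$,'' and your three-step verification (condition~(1) is about $S$ alone; condition~(2) is preserved because $\delta_0$ kills $N_{E/F}(\alpha^\vee(E^\times_{0+})) \subset S(F)_{0+}$; condition~(3) is preserved because the $w$-invariance of $\delta_0$ makes the stabilizers of $(\theta\delta_0)|_{S(F)_0}$ and $\theta|_{S(F)_0}$ coincide) is simply a full unpacking of that remark. The small technical point you flag — that $N_{E/F}(\alpha^\vee(E^\times_{0+}))$ lands in $S(F)_{0+}$ — is justified correctly; note that your characterization of $S(F)_{0+}$ as the unique maximal pro-$p$ subgroup of $S(F)_b$ uses the tameness of $S$ (which guarantees $S(F)_b/S(F)_0$ has no $p$-torsion), but $S$ is tame by condition~(1) of Definition \ref{dfn:tre}, so this is in order.
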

\begin{proof} This follows from the fact that neither $R_{0+}$ nor the appropriate stabilizer of $\theta|_{S(F)_0}$ changes when we pass from $\theta$ to $\theta\delta_0$.
\end{proof}

Recall that when $p \nmid N$ the supercuspidal representations of $\tx{GL}_N$ are classified by admissible characters. The notion of admissible character and the construction of a supercuspidal representation from an admissible character appears in \cite{Howe77}, while the exhaustion is proved in \cite{Moy86} (under the assumption that $F$ has characteristic zero). We will now argue that the notion of a tame elliptic (extra) regular pair is a generalization of the notion of an admissible character to an arbitrary tamely ramified reductive $p$-adic group. An admissible character is really a pair $(K^\times,\theta)$, where $K/F$ is a field extension of degree $N$ and $\theta : K^\times \to \C^\times$ is a character satisfying certain axioms, listed on the first page of \cite{Howe77}, see also \cite[Definition 3.29]{HM08}. Since the equation $S(F)=K^\times$ provides a bijection between the conjugacy classes of elliptic maximal tori $S$ of $\tx{GL}_N$ and the isomorphism classes of field extensions $K/F$ of degree $N$, admissibility can be seen as a property of a pair $(S,\theta)$, where $S$ is an elliptic maximal torus of $\tx{GL}_N$ and $\theta : S(F) \to \C^\times$ is a character. Note that the splitting extension $E/F$ of $S$ is the normal closure of $K$, and in particular $E/K$ is unramified.

\begin{lem} If $G=\tx{GL}_N$ then the notions of extra regular, regular, and admissible, pairs coincide.

\end{lem}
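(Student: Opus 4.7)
My plan is to exhibit an explicit dictionary between tame elliptic (extra) regular pairs for $G=\tx{GL}_N$ and Howe admissible characters. First I would observe that Hilbert~90 gives $H^1(F,S)=0$ for any $S = \tx{Res}_{K/F}\mb{G}_m$, so the exact sequence $1 \to S \to N(S,G^0) \to \Omega(S,G^0) \to 1$ yields $N(S,G^0)(F)/S(F) = \Omega(S,G^0)(F)$ for every $F$-rational connected reductive subgroup $G^0 \subseteq \tx{GL}_N$ containing $S$. Consequently the regular and extra regular conditions of Definition~\ref{dfn:tre} coincide for $\tx{GL}_N$, and it suffices to match either of them with admissibility. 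Under the standard bijection, $F$-rational elliptic maximal tori of $\tx{GL}_N$ correspond to degree-$N$ field extensions $K/F$ (with tameness matching tameness), while $F$-rational connected reductive subgroups containing $S = \tx{Res}_{K/F}\mb{G}_m$ correspond to intermediate subfields $F \subseteq K' \subseteq K$, the subgroup attached to $K'$ being $\tx{Res}_{K'/F}\tx{GL}_{[K:K']}$, with rational Weyl group $\tx{Aut}_{K'}(K)$.

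The crucial step is to identify the root subsystem $R_{0+}$. Indexing the roots of $S$ in $\tx{GL}_N$ by ordered pairs $\alpha_{\sigma,\tau}$ of distinct $F$-embeddings $\sigma,\tau \colon K \hookrightarrow F^s$ and computing the coroot $\alpha_{\sigma,\tau}^\vee$ together with the norm $N_{F_\alpha/F}$, a direct calculation shows that the character $\theta \circ N_{F_\alpha/F} \circ \alpha_{\sigma,\tau}^\vee$ on $F_\alpha^\times$ is, via Shapiro's lemma, the character $x \mapsto \theta(\sigma(x)\tau(x)^{-1})$ of $K^\times$. Hence $\alpha_{\sigma,\tau} \in R_{0+}$ if and only if $\theta|_{1+\mf{p}_K}$ is invariant under the $F$-automorphism $\tau^{-1}\sigma \in \tx{Aut}_F(K)$. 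Writing $K^0$ for the smallest intermediate subfield through which $\theta|_{1+\mf{p}_K}$ factors via $N_{K/K^0}$, this identifies $R_{0+}$ as the set of $\alpha_{\sigma,\tau}$ with $\sigma|_{K^0} = \tau|_{K^0}$, and hence $G^0 = \tx{Res}_{K^0/F}\tx{GL}_{[K:K^0]}$.

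With $G^0$ identified, the remaining conditions of Definition~\ref{dfn:tre} translate cleanly. Condition~(2), that $S$ is maximally unramified in $G^0$, reduces to $K^0/F$ being unramified, which is the ``unramified jump'' clause of Howe admissibility. Condition~(3), triviality of the stabilizer of $\theta|_{O_K^\times}$ in $\tx{Aut}_{K^0}(K)$, says that $\theta|_{O_K^\times}$ does not factor through $N_{K/K'}$ for any $K^0 \subsetneq K' \subseteq K$; combined with the minimality of $K^0$, which rules out factoring through $N_{K/K'}$ for subfields not containing $K^0$, this becomes the full non-factorization clause of Howe. The main obstacle is the explicit coroot-norm computation in the second paragraph; once it identifies $K^0$ with the minimal norm-factorization subfield, the rest is a matter of matching subfield lattices to the twisted Levi structure and unwinding the characterization of maximally unramified maximal tori of $\tx{Res}_{K^0/F}\tx{GL}_n$.
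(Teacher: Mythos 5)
Your first paragraph matches the paper exactly: Hilbert~90 gives $H^1(F,S)=0$ and hence $N(S,G^0)(F)/S(F)=\Omega(S,G^0)(F)$, so regular and extra regular coincide. The rest of the proposal, however, has genuine problems. The central claim of your second paragraph --- that $\theta \circ N_{F_\alpha/F} \circ \alpha_{\sigma,\tau}^\vee$ is the character $x \mapsto \theta(\sigma(x)\tau(x)^{-1})$ of $K^\times$, and hence $\alpha_{\sigma,\tau} \in R_{0+}$ iff $\theta|_{1+\mf{p}_K}$ is invariant under $\tau^{-1}\sigma \in \tx{Aut}_F(K)$ --- does not make sense when $K/F$ is not Galois, which is the generic case. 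For distinct embeddings $\sigma,\tau \colon K \hookrightarrow F^s$ one has $F_\alpha = \sigma(K)\tau(K)$, the element $\sigma(x)\tau(x)^{-1}$ lies in $F_\alpha^\times$ rather than $K^\times$, and $\tau^{-1}\sigma$ is not an automorphism of $K$. Working through $N=3$ with $K$ a non-Galois cubic extension one finds $N_{E/F}(\alpha_{\sigma,\tau}^\vee(x)) = y\,/\,g(y)$ with $y = N_{E/\sigma(K)}(x)$ and $g$ an element of $\Gamma_F$ taking $\sigma(K)$ to $\tau(K)$: the quotient lands in $K^\times$, but neither factor does, and the resulting subgroup of $K^\times$ is not what your formula predicts. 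So the identification of $R_{0+}$ and of $K^0$ in the way you describe is not established.

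Two further points. First, you implicitly assume that the set of intermediate fields $L$ with $\theta|_{1+\mf{p}_K}$ factoring through $N_{K/L}$ has a smallest element $K^0$; this is not automatic, and the paper obtains it for free from the fact (Lemma~\ref{lem:levisystem}, proved on the dual side via the Langlands parameter of $\theta$) that $R_{0+}$ is a Levi subsystem, which then forces $G^0 = \tx{Res}_{K^0/F}\tx{GL}_{[K:K^0]}$ with $K^0$ minimal. Second, the unramified condition is stated backwards: admissibility and condition~(2) of Definition~\ref{dfn:tre} require $K/K^0$ to be unramified (so that $S=\tx{Res}_{K/F}\mb{G}_m$ becomes a minimal Levi of $\tx{Res}_{K^0/F}\tx{GL}_{[K:K^0]}$ over $F^u$), not $K^0/F$. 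The paper sidesteps all of this: for ``regular $\Rightarrow$ admissible'' it assumes $\theta$ (or its $0+$-restriction) norm-factors through $L$, produces the twisted Levi $\tx{Res}_{L/F}\tx{GL}_{N'}$, and derives a contradiction with the regularity of the pair; for ``admissible $\Rightarrow$ regular'' it invokes Lemma~\ref{lem:levisystem} to see $R_{0+}$ is a Levi subsystem and then reads off $K^0$ from the exact sequence $1 \to S_{M_\tx{der}} \to S \to M/M_\tx{der} \to 1$ via the determinant. If you want a direct root-by-root computation, you should fix a representative $\alpha_{\tx{id},\tau}$ in each $\Gamma$-orbit, compute $N_{E/F}\circ\alpha^\vee$ as a map $E^\times \to K^\times$ honestly, and then show its image on $E^\times_{0+}$ generates the kernel of the appropriate norm $K^\times_{0+} \to L^\times_{0+}$; this is more delicate than the sketch suggests.
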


\begin{proof}
The notions of extra regular and regular coincide, because $H^1(F,S)=0$ for every maximal torus $S$, hence $\Omega(S,G^0)(F)=N(S,G^0)(F)/S(F)$.

To show that a regular elliptic pair $(S,\theta)$ is admissible, let $K/F$ be the degree-$N$ extension such that $S(F)=K^\times$. If there exists an intermediate extension $K/L/F$ such that $\theta=\theta_L \circ N_{K/L}$ for some $\theta_L : L^\times \to \C^\times$, then we can consider the twisted Levi subgroup $M=\tx{Res}_{L/F}\tx{GL}_{N'}$, where $N'=[K:L]$, and realize $S$ as a maximal torus inside of it. Then $\theta$ is the restriction to $S(F)$ of the character $\theta_L \circ \tx{Res}_{L/F}(\tx{det})$ of $M(F)$. For every $\alpha \in R(S,M)$ the character $\theta \circ N_{E/F} \circ \alpha^\vee$ of $E^\times$ is trivial, in particular $R(S,M) \subset R_{0+}$. But $\theta$ is obviously invariant under $\Omega(S,M)(F)$. This contradicts Definition \ref{dfn:tre} unless $S=M$, i.e. $L=K$.

Now assume that $\theta|_{K^\times_{0+}}=\theta_L \circ N_{K/L}$ for an intermediate extension $K/L/F$ and a character $\theta_L : L^\times_{0+} \to \C^\times$. With $M$ as above we see again that $R(S,M) \subset R_{0+}$. By Definition \ref{dfn:tre} the action of inertia preserves a positive subsystem of $R_{0+}$ and hence of $R(S,M)$. This means that over $F^u$ the torus $S=\tx{Res}_{K/F}\mb{G}_m$ becomes a minimal Levi subgroup of $M=\tx{Res}_{L/F}\tx{GL}_{N'}$, which implies that $K/L$ is unramified.

Conversely let $(S,\theta)$ be admissible. Since $S=\tx{Res}_{K/F}\mb{G}_m$ for some field extension $K/F$ of degree $N$, it is automatically a tame elliptic maximal torus. Consider the root system $R_{0+}$. By Lemma \ref{lem:levisystem} below, it spans a twisted Levi subgroup $M \subset G$ whose center is contained in $S$ and hence anisotropic mod $Z(G)$. Thus $M=\tx{Res}_{L/F}\tx{GL}_{N'}$ for some intermediate extension $K/L/F$. Let $S_{M_\tx{der}}$ be the intersection of $S$ with the derived subgroup of $M$. The group $S_{M_\tx{der}}(F)_{0+}$ is generated by its subgroups $N_{E/F}\alpha^\vee(E_{0+}^\times)$ for $\alpha \in R_{0+}$ and thus $\theta|_{S(F)_{0+}}$ factors through the exact sequence
\[ 1 \to S_{M_\tx{der}}(F)_{0+} \to S(F)_{0+} \to [M/M_\tx{der}](F)_{0+} \to 1 \]
of Lemma \ref{lem:mpex1}. We have the isomorphism $\tx{Res}_{L/F}(\tx{det}) : M/M_\tx{der} \to \tx{Res}_{L/F}\mb{G}_m$, which restricted to $S$ becomes the map $N_{K/L} : S(F)=K^\times \to L^\times$. Thus $\theta|_{S(F)_{0+}}$ factors through $N_{K/L}$ and the admissibility of $\theta$ implies that $K/L$ is unramified. This means that $\tx{Res}_{K/L}\mb{G}_m$ splits over $L^u$, or equivalently the maximal torus $S=\tx{Res}_{K/F}\mb{G}_m$ of $M=\tx{Res}_{L/F}\tx{GL}_{N'}$ becomes a minimal Levi over $F^u$. This implies that inertia preserves a set of positive roots in $R_{0+}$.

Now consider the stabilizer of $\theta|_{S(F)_0}$ in $\Omega(S,G^0)(F)$. We may as well remove the restriction of scalars $L/F$ and consider $S=\tx{Res}_{K/L}\mb{G}_m$ as a maximal torus of $G^0=\tx{GL}_{N'}/L$ and the stabilizer in $\Omega(S,G^0)(L)$ of the character $\theta : S(L)_0 \to \C^\times$. Now $S(L)=K^\times = O_K^\times \cdot L^\times = S(L)_0 \cdot Z(G^0)(L)$. It follows that the stabilizer in $\Omega(S,G^0)(L)$ of $\theta|_{S(L)_0}$ is the same as the stabilizer of $\theta$. Since $K/L$ is unramified, the splitting field $E$ of $S$ is equal to $K$. The ellipticity of $S$ then implies that the cyclic group $\tx{Gal}(K/L)$ acts on $X^*(S)$ is via a Coxeter element (i.e. an $N'$-cycle) of $\Omega(S,G^0)=S_{N'}$, which in turn implies that $\Omega(S,G^0)(L)$ is cyclic and generated by that Coxeter element. That is, the action of $\Omega(S,G^0)(L)$ on $S(L)$ is translated via $S(L)=K^\times$ to the action of $\tx{Gal}(K/L)$ on $K^\times$. If some element $\sigma \in \tx{Gal}(K/L)$ leaves $\theta$ invariant, then $\theta$ will factor through the group of co-invariants $K^\times_\sigma$. Letting $K_1$ be the subfield of $K$ fixed by $\sigma$, the vanishing of $H^{-1}(\Gamma_{K/K_1},K^\times)=0$ implies that the norm map $K^\times \to K_1^\times$ descends to an isomorphism $K_\sigma^\times \to K_1^\times$. Thus $\theta$ factors through this norm map and its admissibility implies $K_1=K$, i.e. $\sigma=1$.
\end{proof}

We will now begin to connect tame elliptic (extra) regular pairs to (extra) regular reduced generic cuspidal $G$-data. The following Lemma establishes one of the two directions of this connection.

\begin{lem} \label{lem:yutopair} Let $((G^0 \subset G^1 \dots \subset G^d),\pi_{(S,\phi_{-1})},(\phi_0,\phi_1,\dots,\phi_d))$ be a (extra) regular reduced generic cuspidal $G$-datum. Let $\theta = \prod_{i=-1}^{d} \phi_i|_{S(F)}$. Then $(S,\theta)$ is a tame elliptic (extra) regular pair.
\end{lem}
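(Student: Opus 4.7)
The plan is to verify the three defining properties of a tame elliptic (extra) regular pair for $(S, \theta)$, tracing how each piece of the Yu-datum contributes to $\theta$. A preliminary observation simplifies the argument: refactorization leaves the product $\theta = \prod_{i=-1}^d \phi_i|_{S(F)}$ invariant (the factors $\chi_i$ telescope and the change $\pi_{-1} \leftrightarrow \pi_{-1}\otimes\chi_0$ translates, by Lemma \ref{lem:rdztwist}, to $\phi_{-1} \leftrightarrow \phi_{-1}\chi_0|_{S(F)}$). In the regular case I may therefore replace the datum by a $G$-equivalent normalized one without affecting $\theta$ or the regularity of $\pi_{(S,\phi_{-1})}$; in the extra regular case the datum is already normalized by definition. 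In either case each $\phi_i$ ($i \geq 0$) is trivial on the image of $G^i_{\mathrm{sc}}(F)$ in $G^i(F)$.

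First I would establish that $S$ is tame and elliptic in $G$, and identify $R_{0+}$ with $R(S, G^0)$. Tameness follows from $S$ being maximally unramified in the tame group $G^0$, and ellipticity in $G$ from the ellipticity of $S$ in $G^0$ combined with the standing Yu-datum hypothesis that $Z(G^0)^\circ/Z(G)^\circ$ is $F$-anisotropic. For the root system, fix $\alpha \in R(S,G)$ and let $j(\alpha)$ be the smallest index with $\alpha \in R(S, G^{j(\alpha)})$. Lifting $\alpha^\vee$ to $\tilde\alpha^\vee \colon \mathbb{G}_m \to G^i_{\mathrm{sc}}$ for $i \geq j(\alpha)$ places $N_{E/F}(\alpha^\vee(E^\times))$ inside the image of $G^i_{\mathrm{sc}}(F)$, so normalization gives $\phi_i \circ N_{E/F} \circ \alpha^\vee = 1$. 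When $j(\alpha) \geq 1$, the genericity of $\phi_{j(\alpha)-1}$ with respect to $G^{j(\alpha)}$ forces $\phi_{j(\alpha)-1} \circ N \circ \alpha^\vee$ to have depth exactly $r_{j(\alpha)-1} > 0$, strictly larger than the depths of the remaining factors $\phi_i \circ N \circ \alpha^\vee$ for $-1 \leq i < j(\alpha)-1$; hence $\theta \circ N \circ \alpha^\vee$ has depth $r_{j(\alpha)-1}$ and is nontrivial on $E^\times_{0+}$, so $\alpha \notin R_{0+}$. When $j(\alpha) = 0$ only the depth-zero character $\phi_{-1}$ contributes, and it vanishes on $E^\times_{0+}$. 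This identifies $R_{0+} = R(S, G^0)$ and hence matches the $G^0$ of Definition \ref{dfn:tre} with the $G^0$ of the Yu-tower, so the inertia-stable positive system required in (2) of Definition \ref{dfn:tre} is supplied by Fact \ref{fct:relur}, $S$ being maximally unramified in $G^0$.

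Finally I would verify the regularity condition on $\theta|_{S(F)_0}$. Decompose $\theta|_{S(F)_0} = \phi_{-1}|_{S(F)_0} \cdot \prod_{i \geq 0} \phi_i|_{S(F)_0}$. Each $\phi_i$ ($i \geq 0$) is a character of $G^i(F)$ and thus invariant under conjugation by $N(S,G^0)(F) \subseteq G^i(F)$, so its restriction to $S(F)_0$ is $N(S,G^0)(F)/S(F)$-fixed; combined with the trivial stabilizer of $\phi_{-1}|_{S(F)_0}$ coming from the regularity of $\pi_{(S,\phi_{-1})}$, this settles the regular case. For the extra regular case I need invariance of each $\phi_i|_{S(F)_0}$ ($i \geq 0$) under the potentially larger group $\Omega(S, G^0)(F)$. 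Given $w \in \Omega(S,G^0)(F)$ and $s \in S(F)_0$, I would lift $w$ to $\tilde n \in N(\widetilde S_0, G^0_{\mathrm{sc}})(F^s)$ and $s$ to $\tilde s \in \widetilde S_0(F^s)$ (where $\widetilde S_0$ denotes the preimage of $S$ in $G^0_{\mathrm{sc}}$), obtaining $w(s)s^{-1} = \pi([\tilde n, \tilde s]) \in \pi(G^0_{\mathrm{sc}}(F^s))$; normalization then kills $\phi_i(w(s)s^{-1})$ provided this element is in fact in $\pi(G^0_{\mathrm{sc}}(F))$. I expect the rational descent of $w(s)s^{-1}$ to an $F$-point of $G^0_{\mathrm{sc}}$ to be the main obstacle: at filtration level $0$ it should require a cohomological argument in the spirit of the proof of Lemma \ref{lem:weyl}, using the prime-to-$p$ hypothesis on $\pi_1(G_{\mathrm{der}})$ together with Lemma \ref{lem:mpex1} to convert the $F^s$-level identity into the required $F$-rational statement.
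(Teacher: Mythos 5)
Your proposal follows essentially the same structure as the paper's proof: identify $R_{0+}$ with $R(S,G^0)$ by bounding the depth of each $\phi_i\circ N_{E/F}\circ\alpha^\vee$, derive ellipticity and tameness of $S$ from the corresponding properties in $G^0$, and settle the regularity conditions by observing that the factors $\phi_i$ ($i\ge 0$) are invisible to the relevant Weyl conjugations. Your normalization reduction in the regular case is a valid alternative to the paper's route: the paper does not refactorize but instead invokes Lemma~\ref{lem:charsc} directly (each $\phi_i$ is trivial on $G^i_\tx{sc}(F)_{0+}$), which is a weaker but sufficient conclusion; both routes rest on the same prime-to-$p$ hypothesis on $\pi_1(G_\tx{der})$, so neither is materially more economical.

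The one place you overshoot is the extra regular case, where you anticipate a cohomological descent problem that does not arise. Once one fixes arbitrary $F^s$-lifts $\tilde n\in N(S^0_\tx{sc},G^0_\tx{sc})(F^s)$ of $w$ and $\tilde s\in S^0_\tx{sc}(F^s)$ of $s$, the commutator $[\tilde n,\tilde s]$ is independent of both choices: changing $\tilde n$ by $t\in S^0_\tx{sc}$ has no effect since $t$ commutes with $\tilde s$, and changing $\tilde s$ by $z\in Z(G^0_\tx{sc})$ has no effect since $z$ is central. Applying any $\sigma\in\Gamma$ therefore replaces the pair of lifts by another pair of lifts of the same $(w,s)$ (using $w\in\Omega(S,G^0)(F)$ and $s\in S(F)$), and so $[\tilde n,\tilde s]$ is already Galois-fixed, hence lies in $S^0_\tx{sc}(F)$. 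Equivalently, the assignment $s\mapsto s^w s^{-1}$ is an $F$-rational morphism $S\rightarrow S^0_\tx{sc}$, since on cocharacters it is $\lambda\mapsto w\lambda-\lambda$, which lands in the coroot lattice. No appeal to Lemma~\ref{lem:weyl}, to Lemma~\ref{lem:mpex1}, or to the prime-to-$p$ hypothesis is needed at this step; the paper asserts the lift to $S^0_\tx{sc}(F)$ without comment for this reason.
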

\begin{proof}
Note that at the moment we have two objects labelled as $G^0$ -- the component of the twisted Levi sequence from the statement of this lemma, and the reductive group with root system $R_{0+}$ from Definition \ref{dfn:tre}. We begin by showing that they coincide, i.e. $R(S,G^0)=R_{0+}$, where for now $G^0$ stands for the component of the twisted Levi sequence. For this, note first that if $\alpha \in R(S,G^{i+1})$ and $j>i$ then
\[ \phi_j \circ N_{E/F} \circ \alpha^\vee|_{E^\times_{0+}} = 1, \]
because $N_{E/F} \circ \alpha^\vee$ takes values in $G^{i+1}_{\tx{sc}}(F) \to G^{j}_{\tx{sc}}(F)$ and $\phi_j$ is trivial on $G^j_\tx{sc}(F)_{0+}$ by Lemma \ref{lem:charsc}. From this it follows that for all $\alpha \in R(S,G^0)$ we have $\theta(N_{E/F}(\alpha^\vee(E^\times_{0+})))=\phi_{-1}(N_{E/F}(\alpha^\vee(E^\times_{0+})))=1$, which implies $R(S,G^0) \subset R_{0+}$. To show $R_{0+} \subset R(S,G^0)$, let $\alpha \in R(S,G) \sm R(S,G^0)$ and let $i \geq 0$ be so that $\alpha \in R(S,G^{i+1}) \sm R(S,G^i)$. For $j>i$ we have $\phi_j \circ N_{E/F} \circ \alpha^\vee(E^\times_{0+}) = 1$ as noted above. At the same time, for $j<i$, $\phi_j \circ N_{E/F}\circ \alpha^\vee$ is trivial on $E^\times_{r_i}$, due to $r_j<r_i$. We will show that $\phi_i \circ N_{E/F}\circ \alpha^\vee$ is non-trivial on $E^\times_{r_i}$. A direct computation shows
\[ \phi_i(N_{E/F}(\alpha^\vee(1+x))) = \Lambda\circ\tx{tr}_{E/F}(x\<X^*_i,H_\alpha\>), \]
where $X^*_i \in \tx{Lie}^*(Z(G^i))(F)_{-r_i}$ represents $\phi_i$. By assumption $\tx{ord}(\<X^*_i,H_\alpha\>)=-r_i$, so every element of $O_E^\times$ can be written as $x\<X^*_i,H_\alpha\>$ for some $x \in E_{r_i}$. The character $\Lambda \circ \tx{tr}_{E/F}$ is non-trivial on $O_E$ and trivial on $\mf{p}_E$, so the left-hand side is non-trivial for some $x \in E_{r_i}$. We conclude that $\phi_i\circ N_{E/F}\circ \alpha^\vee$, and hence also $\theta\circ N_{E/F} \circ \alpha^\vee$, is non-trivial on $E_{r_i}^\times \subset E_{0+}^\times$, showing $\alpha \notin R_{0+}$.

Returning to the proof of the lemma, by definition, $S$ is elliptic in $G^0$ and $Z(G^0)/Z(G)$ is anisotropic, so $S$ is elliptic. Also by definition, $S$ is maximally unramified in $G^0$ and $G^0$ is tame, so $S$ is tame. The product $\prod_{i=0}^d\phi_i|_{S(F)}$ is fixed under $N(S,G^0)(F)$, so the regularity of $\pi_{-1}$ implies the regularity of $\phi_{-1}$ and hence that of $\theta$.

Now assume the datum is extra regular. Let $w \in \Omega(S,G^0)(F)$. We may assume $G^0 \neq S$, for otherwise $w=1$. For $i \geq 0$ let $S^i_\tx{sc}$ be the preimage of $S$ in $G^i_\tx{sc}$. The character $\phi_i$ has trivial restriction to $S^i_\tx{sc}(F)$ and in particular to $S^0_\tx{sc}(F)$. For $s \in S(F)$ the element $s^w \cdot s^{-1}$ lifts to $S^0_\tx{sc}(F)$ and we conclude that $[\theta\circ w] \cdot \theta^{-1}=[\phi_{-1}\circ w]\cdot\phi_{-1}^{-1}$.
\end{proof}

This lemma establishes a map from the set of (extra) regular reduced generic cuspidal $G$-data to the set of tame (extra) regular elliptic pairs. In the next subsection we will see that this map is surjective and that each of its fibers lies in a single $G$-equivalence class.

\subsection{Howe factorization} \label{sub:howe}

In this subsection we assume that $p$ is not a bad prime (in particular also not a torsion prime) for $G$. We further assume that $p$ does not divide the order of the fundamental group of $G_\tx{der}$.

Let $(S,\theta)$ be a pair consisting of a tame maximal torus $S \subset G$ and a character $\theta : S(F) \to \C^\times$ of zero or positive depth. For each positive real number $r$ consider the set of roots
\[ R_r = \{ \alpha \in R(S,G)| \theta(N_{E/F}(\alpha^\vee(E_r^\times)))=1\}. \]
Then $r \mapsto R_r$ is a filtration of $R(S,G)$. We have $R_s \subset R_r$ for $s<r$ and define $R_{r+}=
\bigcap_{s>r} R_s$. Let $r_{d-1} > r_{d-2} > \dots > r_0 > 0$ be the breaks of this filtration, that is, the positive real numbers $r$ with $R_{r+} \neq R_r$. We allow here $d=0$, which signifies that there are no breaks, i.e. $R_{0+}=R(S,G)$. We set in addition $r_{-1}=0$ and $r_d=\tx{depth}(\theta)$, so that $r_d \geq r_{d-1} > \dots > r_0 > r_{-1}=0$ if $d>0$ and $r_0 \geq r_{-1}=0$ if $d=0$. By Lemma \ref{lem:levisystem} each $R_r$ is a Levi subsystem of $R(S,G)$. It is moreover invariant under $\Gamma$. For each $d \geq i \geq 0$ we can thus define a twisted Levi subgroup $G^i$ by taking the reductive subgroup of $G$ with maximal torus $S$ and root system $R_{r_{i-1}+}$. By definition the root system of $G^d$ is $R(S,G)$, so $G^d=G$. Moreover, the root system of $G^0$ is $R_{0+}$, which may or may not be empty. If it is empty, then $G^0=S$. Recall that by convention $G^{-1}=S$.

\begin{dfn} \label{dfn:howe} A Howe factorization of $(S,\theta)$ is a sequence of characters $\phi_i : G^i(F) \to \C^\times$ for $i=-1,\dots,d$ with the following properties.
\begin{enumerate}
	\item \begin{equation} \label{eq:thetaprod} \theta = \prod_{i=-1}^{d}\phi_i|_{S(F)}. \end{equation}
	\item For all $0 \leq i \leq d$ the character $\phi_i$ is trivial on $G^i_{\tx{sc}}(F)$.
	\item For all $0 \leq i < d$, $\phi_i$ has depth $r_i$ and is $G^{i+1}$-generic. For $i=d$, $\phi_d$ is trivial if $r_{d}=r_{d-1}$ and has depth $r_{d}$ otherwise. For $i=-1$, $\phi_{-1}$ is trivial if $G^0=S$ and otherwise satisfies $\phi_{-1}|_{S(F)_{0+}}=1$.
\end{enumerate}

\end{dfn}

The discussion of \cite[\S3.5]{HM08} makes clear the direct parallel between this definition and the original notion of Howe factorization for the group $\tx{GL}_N$, as formulated for example in \cite[Defintion 3.33]{HM08}. Before we discuss the existence of Howe factorizations, let us first collect some of their properties.

\begin{fct} \label{fct:fri} Let $S^i_{\tx{sc}}$ be the preimage of $S$ in $G^i_{\tx{sc}}$. Then the restrictions of $\phi_i$ and $\theta$ to $S^{i+1}_{\tx{sc}}(F)_{r_i}$ agree.
\end{fct}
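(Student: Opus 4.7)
The plan is to expand $\theta$ via \eqref{eq:thetaprod} and show that every factor $\phi_j|_{S(F)}$ with $j \neq i$ vanishes on the image of $S^{i+1}_\tx{sc}(F)_{r_i}$ in $S(F)$. First I would verify the preliminary claim that the natural map $S^{i+1}_\tx{sc}(F)_{r_i} \to S(F)_{r_i}$ is well defined and bijective. The homomorphism $S^{i+1}_\tx{sc} \to S$ is an isogeny whose kernel embeds into the center of $G^{i+1}_\tx{sc}$, hence into $\pi_1(G^{i+1}_\tx{der}) \subset \pi_1(G_\tx{der})$; by the standing hypothesis this kernel has order prime to $p$, so Lemma \ref{lem:mpex1} (applicable since $r_i>0$ for $i\geq 0$) gives the desired bijection. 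In particular, the image $\bar s \in S(F)$ of any $s \in S^{i+1}_\tx{sc}(F)_{r_i}$ lies in $S(F)_{r_i}$.

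Next I would treat the indices $j \neq i$ case by case. For $j \geq i+1$, the map $S^{i+1}_\tx{sc} \to G^j$ factors as $S^{i+1}_\tx{sc} \hookrightarrow G^{i+1}_\tx{sc} \hookrightarrow G^j_\tx{sc} \to G^j$, so the pull-back of $\phi_j$ to $S^{i+1}_\tx{sc}(F)$ factors through $G^j_\tx{sc}(F)$, on which $\phi_j$ is trivial by condition 2 of Definition \ref{dfn:howe}. For $j=-1$, condition 3 gives $\phi_{-1}|_{S(F)_{0+}}=1$, and $\bar s \in S(F)_{r_i} \subset S(F)_{0+}$ because $r_i>0$. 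For $0 \leq j < i$, the character $\phi_j$ has depth $r_j<r_i$, so $\phi_j|_{S(F)}$ is trivial on $S(F)_{r_j+}$, which contains $S(F)_{r_i}$.

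Putting these vanishings together in \eqref{eq:thetaprod} yields $\theta(s) = \phi_i(s)$ for every $s \in S^{i+1}_\tx{sc}(F)_{r_i}$. I do not anticipate any substantive obstacle; the only mild subtlety is the passage between the Moy-Prasad filtrations of $S^{i+1}_\tx{sc}$ and $S$, which is precisely where the hypothesis $p \nmid |\pi_1(G_\tx{der})|$ enters (via Lemma \ref{lem:mpex1}). The remaining verifications are immediate from the structural conditions built into Definition \ref{dfn:howe}.
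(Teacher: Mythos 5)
Your overall strategy is the same as the paper's: expand $\theta$ via \eqref{eq:thetaprod} and show that each factor $\phi_j$ with $j \neq i$ vanishes on the image of $S^{i+1}_\tx{sc}(F)_{r_i}$ in $S(F)$. The case-by-case treatment of $j > i$ (factoring through $G^j_\tx{sc}(F)$, where $\phi_j$ is trivial by condition 2 of Definition \ref{dfn:howe}), of $j = -1$ (using condition 3 and $r_i > 0$), and of $0 \leq j < i$ (using that $\phi_j$ has depth $r_j < r_i$) is exactly what the paper's one-sentence proof is compressing. So the substance of the argument is right.

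One step of your preliminary claim is incorrect, though the error does not propagate. The homomorphism $S^{i+1}_\tx{sc} \rw S$ is in general \emph{not} an isogeny: while its kernel is indeed finite (a subgroup of $\pi_1(G^{i+1}_\tx{der})$), its image is only the subtorus $S \cap G^{i+1}_\tx{der}$, which is proper in $S$ whenever $Z(G^{i+1})^\circ$ is nontrivial. Consequently the induced map $S^{i+1}_\tx{sc}(F)_{r_i} \to S(F)_{r_i}$ is not surjective in general, and the second part of Lemma \ref{lem:mpex1} does not apply as you invoke it. What your proof actually uses is only the weaker containment that the image of $S^{i+1}_\tx{sc}(F)_{r_i}$ lies in $S(F)_{r_i}$, and this needs neither Lemma \ref{lem:mpex1} nor the hypothesis $p \nmid |\pi_1(G_\tx{der})|$: for tame tori the positive-depth Moy-Prasad filtration is given by \eqref{eq:mpr}, and for any homomorphism $f : A \rw B$ of tame tori, $a \in A(F)_r$, and $\chi \in X^*(B)$, one has $\chi(f(a)) = (\chi \circ f)(a)$ with $\chi \circ f \in X^*(A)$, so $\tx{ord}(\chi(f(a))-1) \geq r$. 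You should therefore drop the isogeny and bijectivity assertions and retain only the containment; the rest of your argument then goes through unchanged.
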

\begin{proof}
This follows from \eqref{eq:thetaprod}, since $\phi_{i+1},\dots,\phi_{d}$ restrict trivially to $G^{i+1}_{\tx{sc}}(F)$ and $\phi_{i-1},\dots,\phi_{-1}$ restrict trivially to $S(F)_{r_i}$.
\end{proof}

\begin{lem} \label{lem:refactor} If $(\phi_{-1},\dots,\phi_d)$ and $(\dot\phi_{-1},\dots,\dot\phi_d)$ are two Howe factorizations of the same pair $(S,\theta)$, then they are refactorizations of each other in the sense of \cite[Definition 4.19]{HM08}
\end{lem}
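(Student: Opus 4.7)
The plan is to verify the three conditions F0, F1, F2 that define a refactorization; the reverse direction follows by symmetry. The twisted Levi tower $G^{-1} = S \subset G^0 \subset \cdots \subset G^d$ is determined by $(S,\theta)$ alone, so it is the same for both factorizations. Set $\chi_i(g) := \prod_{j=i}^d \phi_j(g) \dot\phi_j(g)^{-1}$ for $0 \leq i \leq d$. Condition F0 holds because $\phi_d$ (and $\dot\phi_d$) is trivial precisely when $r_d = r_{d-1}$, a property intrinsic to $(S,\theta)$. For F2, Lemma \ref{lem:rdztwist} reduces the claim to showing $\dot\phi_{-1} = \phi_{-1} \cdot \chi_0|_{S(F)}$, which is a direct rearrangement of the identity $\prod_{j=-1}^d \phi_j|_{S(F)} = \theta = \prod_{j=-1}^d \dot\phi_j|_{S(F)}$ from Definition \ref{dfn:howe}(1).

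The substance lies in F1: $\chi_i$ has depth at most $r_{i-1}$. I establish this in two steps. First, $\chi_i$ is trivial on $G^i_{\tx{sc}}(F)$: each factor $\phi_j$ and $\dot\phi_j$ (for $j \geq i$) vanishes on $G^j_{\tx{sc}}(F)$ by Definition \ref{dfn:howe}(2), and hence on the image of $G^i_{\tx{sc}}(F)$ under the natural map $G^i_{\tx{sc}} \to G^j_{\tx{sc}}$. Second, $\chi_i|_{S(F)_{r_{i-1}+}} = 1$: for $j < i$, Definition \ref{dfn:howe}(3) ensures $\phi_j|_{S(F)_{r_{i-1}+}} = 1$ (and likewise for $\dot\phi_j$, noting the special clauses covering both $G^0 = S$ and $G^0 \neq S$ when $j = -1$), so restricting the $\theta$-identity to $S(F)_{r_{i-1}+}$ yields $\prod_{j=i}^d \phi_j = \prod_{j=i}^d \dot\phi_j$ there, i.e.\ $\chi_i|_{S(F)_{r_{i-1}+}} = 1$.

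To conclude, I combine these two facts into triviality of $\chi_i$ on $G^i(F)_{y, r_{i-1}+}$ for any $y \in \mc{B}(G^i, F)$ chosen in the apartment of a maximally split subtorus of $S$. Set $D^i := G^i/G^i_{\tx{der}}$. Since $p \nmid |\pi_1(G_{\tx{der}})|$ and $\pi_1(G^i_{\tx{der}})$ injects into $\pi_1(G_{\tx{der}})$ (as noted in the proof of Lemma \ref{lem:hypcg}), the isogeny $G^i_{\tx{sc}} \to G^i_{\tx{der}}$ has prime-to-$p$ kernel, so Lemma \ref{lem:mpex1} gives a bijection $G^i_{\tx{sc}}(F)_{y, r+} \to G^i_{\tx{der}}(F)_{y, r+}$ for $r > 0$, with an analogous $r = 0+$ statement from pro-$p$ considerations. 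Combined with the Moy-Prasad exact sequence for the central tame quotient $G^i \to D^i$ (Lemma \ref{lem:mpex1} for $r > 0$, a Lemma \ref{lem:par}-type argument for $r = 0+$) and the bijectivity of $S(F)_{r+} \to D^i(F)_{r+}$ from Lemma \ref{lem:mpex1}, the triviality of $\chi_i$ on $G^i_{\tx{der}}(F)_{y, r_{i-1}+}$ and on $S(F)_{r_{i-1}+}$ forces triviality on all of $G^i(F)_{y, r_{i-1}+}$. The main technical obstacle is the $i = 0$, $r_{-1} = 0+$ case: the required exact Moy-Prasad sequence for the reductive-group quotient $G^0 \to D^0$ is not purely torus-theoretic, and one must separately verify $G^0_{\tx{der}}(F)_{y, 0+} = \ker\bigl(G^0(F)_{y, 0+} \to D^0(F)_{0+}\bigr)$ together with surjectivity of the latter map, drawing on the Moy-Prasad structure of the parahoric $G^0(F)_{y, 0}$.
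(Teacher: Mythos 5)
Your treatment of F0 and F2 matches the paper exactly. The divergence is F1, where the paper argues in one line: by \eqref{eq:thetaprod} one has $\chi_i|_{S(F)} = \prod_{j=-1}^{i-1}\phi_j^{-1}\dot\phi_j|_{S(F)}$, and since each factor has depth at most $r_{i-1}$ when restricted to $S(F)$, so does $\chi_i|_{S(F)}$; the bound $\tx{depth}(\chi_i) \leq r_{i-1}$ then follows from the fact, available in \cite{HM08}, that the depth of a quasi-character of $G^i(F)$ equals the depth of its restriction to any maximal $F$-torus. You instead try to re-prove this restriction-to-torus principle by hand via Moy-Prasad exact sequences, and the execution has problems. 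First, the map $S(F)_{r+}\to D^i(F)_{r+}$ supplied by Lemma \ref{lem:mpex1} (applied to $1\to S_\tx{der}\to S \to D^i\to 1$) is surjective, not bijective; its kernel $S_\tx{der}(F)_{r+}$ is positive-dimensional whenever $G^i\neq S$. Second, Lemma \ref{lem:par} concerns quotients by an inertially induced central torus and is not the right tool for the derived-group quotient $G^i\to D^i$; the relevant exact sequence is the second one in Lemma \ref{lem:mpex3}, applied to a $z$-extension of $G^i$. Most seriously, you explicitly flag the $i=0$ case as ``the main technical obstacle'' and do not resolve it, so your proof of F1 is left incomplete. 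That gap can in fact be closed (the $r=0+$ exactness statement is the directed union over $s>0$ of the $r=s$ cases of Lemma \ref{lem:mpex3}, and Lemma \ref{lem:charsc} gives triviality of $\chi_0$ on $G^0_\tx{der}(F)_{y,0+}$), but the larger point is that once the depth-via-torus-restriction principle from \cite{HM08} is invoked, all of this Moy-Prasad bookkeeping evaporates, which is why the paper disposes of F1 in a single sentence.
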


\begin{proof}
We need to check the three properties F0,F1,F2 in \cite[Definition 4.19]{HM08}. As there, we set
\[ \chi_i = \prod_{j=i}^d \phi_j \cdot \dot\phi_j^{-1}, \]
for $0 \leq i \leq d$. For F0, the definition of Howe factorization implies that $\phi_d = 1$ if and only if $r_d=r_{d-1}$ if and only if $\dot\phi_d=1$. For F1, we observe that by \eqref{eq:thetaprod} we have $\chi_i=\prod_{j=-1}^{i-1}\phi_j^{-1}\dot\phi_j$, which implies $\tx{depth}(\chi_i) \leq r_{i-1}$. For F2, we have again by \eqref{eq:thetaprod} the equality $\dot\phi_{-1}=\phi_{-1}\chi_0$ and the statement follows from Lemma \ref{lem:rdztwist}.
\end{proof}

We will now show that Howe factorizations exist and give an explicit recursive construction for them.

We first deal with the following two trivial cases: If $d=0$ and $r_0=r_{-1}=0$ then the twisted Levi sequence we have is $G=G^0 \supset S$ and we set $\phi_0=1$ and $\phi_{-1}=\theta$ and we are done.
If $d=1$, $r_1=r_0>r_{-1}=0$, and $R_{0+}=\emptyset$, then the twisted Levi sequence we have is $G=G^1 \supset G^0 = S$ and $\theta$ is a $G$-generic character of $S(F)$ of depth $r_1=r_0$ according to Lemma \ref{lem:genchar}, and we set $\phi_1=1$, $\phi_0=\theta$, and $\phi_{-1}=1$.%

Assume now $r_{d}>0$. To begin the recursion: If $r_d>r_{d-1}$ let $i=d$ and $\theta_d=\theta$, and if $r_{d}=r_{d-1}$ let $i={d-1}$, $\phi_d=1$, and $\theta_{d-1}=\theta_d=\theta$.

The recursion step assumes that we are in the following situation (which is tautologically true in the beginning): $\theta_i : S(F) \to \C^\times$ is a character of depth $r_i>0$, $R(S,G^i) \neq \emptyset$, and that for any $r \geq 0$ and $\alpha \in R(S,G^i)$, we have
\[ \theta_i(N_{E/F}(\alpha^\vee(E_{r}^\times)))=\theta(N_{E/F}(\alpha^\vee(E_{r}^\times))). \]
Given that, we apply Corollary \ref{cor:factstep} to $G^i$ and $\theta_i$ and obtain a character $\phi_i : G^i(F) \rw \C^\times$ of depth $r_i$ and set $\theta_{i-1}=\theta_i \cdot \phi_i^{-1}|_{S(F)}$. Note that the character $\phi_i$ is trivial on $G^i_{\tx{sc}}(F)$ and hence for all $r \geq 0$ and $\alpha \in R(S,G^i)$ we have the following strengthening of the second recursion hypothesis
\begin{equation} \label{eq:recstr} \theta_{i-1}(N_{E/F}(\alpha^\vee(E_{r}^\times)))=\theta_i(N_{E/F}(\alpha^\vee(E_{r}^\times)))=\theta(N_{E/F}(\alpha^\vee(E_{r}^\times))). \end{equation}
We claim that $r':=\tx{depth}(\theta_{i-1})$ is equal to $r_{i-1}$ if $i>0$, and $r' \leq r_{i-1}=r_{-1}=0$ if $i=0$. If we assume $r'>r_{i-1}$, then $r'>0$ and according to Corollary \ref{cor:factstep} we would have a root $\alpha \in R(S,G^i)$ satisfying $1 \neq \theta_{i-1}(N_{E/F}(\alpha^\vee(E_{r'}^\times))) = \theta(N_{E/F}(\alpha^\vee(E_{r'}^\times)))$, contradicting the definition of $G^i$. Thus $r' \leq r_{i-1}$. If we in addition assume $i>0$, then $r_{i-1}$ is a jump of the filtration $R_r$ and so there exists $\alpha \in R(S,G^i)$ such that $1 \neq \theta(N_{E/F}(\alpha^\vee(E_{r_{i-1}}^\times)))=\theta_{i-1}(N_{E/F}(\alpha^\vee(E_{r_{i-1}}^\times)))$, showing $r' \geq r_{i-1}$.

If $i=0$ the recursion stops and we set $\phi_{-1}=\theta_{-1}$. If $i=1$ but $G^0=S$ the recursion also stops and we set $\phi_0=\theta_0$ and $\phi_{-1}=1$. Otherwise, we have just checked that $(G^{i-1},\theta_{i-1})$ meets the requirements of the recursion step, and we continue with it.

Let us now show that the characters $\phi_i$ obtained in this way are a Howe factorization of $(S,\theta)$. The first two parts of the definition of Howe factorization are immediate from the construction, as well as the claims about $\phi_d$ and $\phi_{-1}$. Let now $d>i>-1$. According to Corollary \ref{cor:factstep} we have $\tx{depth}(\phi_i)=\tx{depth}(\theta_i)=r_i$. Moreover, for $\alpha \in R(S,G^{i+1}) \sm R(S,G^i)$
\[ 1 \neq \theta(N_{E/F}(\alpha^\vee(E_{r_i}^\times)))=\theta_i(N_{E/F}(\alpha^\vee(E_{r_i}^\times)))=\phi_i(N_{E/F}(\alpha^\vee(E_{r_i}^\times))),\]
the first (non)equality holding by definition of $R(S,G^{i+1})\sm R(S,G^i)$, the second by \eqref{eq:recstr}, and the third by the fact that $\tx{depth}(\theta_{i-1})<r_i=\tx{depth}(\theta_i)=\tx{depth}(\phi_i)$. According to Lemma \ref{lem:genchar} $\phi_i$ is $G^{i+1}$-generic.

Let us now discuss how additional properties of the pair $(S,\theta)$ are inherited by a Howe factorization. Since $S$ is split over a tamely ramified extension, so is $G^i$ for all $i$. If $S$ is elliptic, then $Z(G^0)/Z(G)$ is anisotropic and we have a canonical point $x \in \mc{B}(G^0,F) \subset \mc{B}(G,F)$, namely the point associated to $S$ by \cite{Pr01}.

In the case $G^0 \neq S$ we have defined $\phi_{-1}=\theta_{-1}$, so that $\phi_{-1}=\theta\cdot (\phi_{\geq 0}|_{S(F)})^{-1}$, where $\phi_{\geq 0}=\prod_{i=0}^d\phi_i|_{G^0(F)}$. Since $\phi_{\geq 0}$ restricts trivially to $G^0_\tx{sc}(F)$ and for any $w \in \Omega(S,G^0)(F)$ and $s \in S(F)$ the element $wsw^{-1}s^{-1}$ of $S(F)$ lifts to $G^0_\tx{sc}(F)$, we see that the stabilizer of $\phi_{-1}|_{S(F)_0}$ in $\Omega(S,G^0)(F)$ is equal to the stabilizer of $\theta|_{S(F)_0}$ in $\Omega(S,G^0)(F)$. The (extra) regularity of the tame elliptic pair $(S,\theta)$ then implies the (extra) regularity of the depth-zero character $\phi_{-1}$ with respect to $G^0$.

Finally, if $G^0 \neq S$ and the action of inertia leaves a basis of $R_{0+}$ invariant, then $S$ is a maximally unramified maximal torus of $G^0$.

We formulate this discussion as follows.

\begin{pro} \label{pro:howefacex} Any pair $(S,\theta)$ consisting of a tame maximal torus $S \subset G$ and a character $\theta : S(F) \to \C^\times$ of zero or positive depth has a Howe factorization $(\phi_{-1},\dots,\phi_d)$. If the pair $(S,\theta)$ is (extra) regular tame elliptic, then $S$ is a maximally unramified elliptic maximal torus of $G^0$, $\phi_{-1}$ has depth zero and is (extra) regular with respect to $G^0$, and $((G^0\subset \dots \subset G^d),\pi_{S,\phi_{-1}}^{G^0},(\phi_0,\dots,\phi_d))$ is a normalized (extra) regular reduced cuspidal generic $G$-datum.
\end{pro}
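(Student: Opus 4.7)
The plan is to establish this in two stages, following the explicit recursive construction developed in the preceding discussion. For existence, I would first dispatch two trivial base cases: if $d=0$ and $r_0 = 0$ the twisted Levi sequence collapses to $G \supset S$ and we set $\phi_0=1$, $\phi_{-1}=\theta$; if $d=1$, $r_1=r_0>0$ and $R_{0+}=\emptyset$ then the sequence is $G \supset G^0=S$ and we set $\phi_1=1$, $\phi_0=\theta$, $\phi_{-1}=1$, invoking Lemma \ref{lem:genchar} to verify that $\theta$ is $G$-generic of the required depth.

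For the general case with $r_d>0$, I would initialize the recursion at $i=d$ with $\theta_d=\theta$ (or at $i=d-1$ with $\phi_d=1$ and $\theta_{d-1}=\theta$ if $r_d=r_{d-1}$) and maintain the inductive invariant that $\theta_i : S(F)\to\C^\times$ has depth $r_i>0$ and satisfies $\theta_i(N_{E/F}(\alpha^\vee(E_r^\times))) = \theta(N_{E/F}(\alpha^\vee(E_r^\times)))$ for all $\alpha \in R(S,G^i)$ and all $r\geq 0$. The recursive step then applies Corollary \ref{cor:factstep} to the pair $(G^i,\theta_i)$ to produce $\phi_i : G^i(F)\to\C^\times$ of depth $r_i$, trivial on $G^i_\tx{sc}(F)$, after which $\theta_{i-1}:=\theta_i\cdot\phi_i^{-1}|_{S(F)}$ inherits the compatibility condition (because $\phi_i|_{G^i_\tx{sc}(F)}=1$) and has depth exactly $r_{i-1}$ when $i>0$: the upper bound $\tx{depth}(\theta_{i-1}) \leq r_{i-1}$ comes from reading Corollary \ref{cor:factstep} backwards against the definition of $G^i$ via $R_{r_{i-1}+}$, while the lower bound uses that $r_{i-1}$ is a jump of the filtration $r\mapsto R_r$. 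The recursion terminates at $i=0$ with $\phi_{-1}=\theta_{-1}$, or at $i=1$ with $G^0=S$ by setting $\phi_0=\theta_0$, $\phi_{-1}=1$.

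The three defining properties of a Howe factorization (the product formula \eqref{eq:thetaprod}, triviality of $\phi_i$ on $G^i_\tx{sc}(F)$, and the depth/genericity conditions) are essentially immediate from the construction. The genericity of $\phi_i$ for $0\leq i<d$ is obtained from Lemma \ref{lem:genchar}: for any $\alpha\in R(S,G^{i+1})\setminus R(S,G^i)$ one has $\theta(N_{E/F}(\alpha^\vee(E_{r_i}^\times)))\neq 1$ by the very definition of $R(S,G^{i+1})$ as a jump, and this value equals $\phi_i(N_{E/F}(\alpha^\vee(E_{r_i}^\times)))$ because $\tx{depth}(\theta_{i-1}) < r_i = \tx{depth}(\phi_i)$.

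For the second half of the proposition, I would verify the consequences of the regularity hypothesis directly: condition (2) of Definition \ref{dfn:tre} combined with Fact \ref{fct:relur} gives that $S$ is maximally unramified in $G^0$, while $S$ elliptic in $G$ together with $Z(G)\subset Z(G^0)$ forces $S$ elliptic in $G^0$; the character $\phi_{-1}$ has depth zero by the termination step, and the key observation is that $\phi_{\geq 0}:=\prod_{i=0}^d \phi_i|_{G^0(F)}$ is trivial on $G^0_\tx{sc}(F)$, so since commutators $wsw^{-1}s^{-1}\in S(F)$ lift to $G^0_\tx{sc}(F)$ for any $w\in \Omega(S,G^0)(F)$, the product $\phi_{\geq 0}|_{S(F)}$ is $\Omega(S,G^0)(F)$-invariant and thus the stabilizer of $\phi_{-1}|_{S(F)_0}$ matches that of $\theta|_{S(F)_0}$, yielding (extra) regularity of $\phi_{-1}$ with respect to $G^0$. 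Finally, one verifies that the resulting datum is a normalized reduced generic cuspidal $G$-datum by checking each of Yu's conditions in turn, where normalization is built into Definition \ref{dfn:howe}(2) and regularity has just been established. The main subtlety, handled above in the bookkeeping of the recursion, is ensuring at each step that the hypothesis of Corollary \ref{cor:factstep} is met and that the depth of $\theta_{i-1}$ drops exactly to $r_{i-1}$ — any overshoot or undershoot would derail the subsequent step or violate the filtration jump structure.
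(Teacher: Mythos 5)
Your proposal is correct and follows essentially the same route as the paper: the proposition is stated as a summary of the explicit recursive construction that precedes it, and your account reproduces that construction faithfully, including the two base cases, the initialization, the use of Corollary \ref{cor:factstep} at each step, the sandwich argument pinning $\tx{depth}(\theta_{i-1})$ to exactly $r_{i-1}$ (upper bound from the ``witness root'' clause of the corollary together with the recursion invariant, lower bound from the jump structure), and the final observation that $\phi_{\geq 0}|_{S(F)}$ is $\Omega(S,G^0)(F)$-invariant because it kills $G^0_\tx{sc}(F)$-lifts of commutators, which transfers (extra) regularity from $\theta$ to $\phi_{-1}$.
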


\begin{proof}[Proof of Lemma \ref{lem:wr}]
A pair $(S,\theta)$ as in the statement of the lemma has a Howe factorization $(\phi_{-1},\dots,\phi_d)$. The genericity of $\phi_i$ implies that any element of $\Omega(S,G^{i+1})(F)$ fixing $\phi_i|_{S(F)_{r_i}}$ must lie in $\Omega(S,G^i)(F)$. At the same time, $\phi_i|_{S(F)}$ is invariant under $\Omega(S,G^i)(F)$, because for $w \in \Omega(S,G^i)(F)$ and $s \in S(F)$ the element $wsw^{-1}$ lifts to $S^i_\tx{sc}(F)$ (notation as in the proof of Lemma \ref{lem:yutopair}) and is thus killed by $\phi_i$. The claim now follows by induction.
\end{proof}

The remainder of this subsection consists of the proofs of the claims that were used in the construction of the factorization of $\theta$.

\begin{lem} \label{lem:genchar} Let $H \subset G$ be a twisted Levi subgroup containing $S$ and let $\phi : H(F) \to \C^\times$ a character of positive depth $r$, trivial on $H_\tx{sc}(F)$. If for all $\alpha \in R(S,G) \sm R(S,H)$ we have $\phi(N_{E/F}(\alpha^\vee(E_r^\times)))\neq 1$ then $\phi$ is generic.
\end{lem}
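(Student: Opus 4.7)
The plan is to reduce the hypothesis to Yu's genericity condition GE1, which under our standing assumption that $p$ is not a bad prime for $G$ implies GE2 as well (cf.\ \cite[\S8]{Yu01}). Fix a point $x$ in the apartment of $S$ inside $\mc{B}(H,F)$. Since the fundamental group of $H_\tx{der}$ has order prime to $p$, Lemma \ref{lem:hypcg} provides Hypothesis $C(H)$: the restriction $\phi|_{H(F)_{x,(r/2)+}}$ is realized by some $X^* \in \tx{Lie}^*(Z(H)^\circ)(F)_{-r}$, viewed inside $\tx{Lie}^*(H)(F)_{-r}$ in Yu's canonical fashion. Genericity of $\phi$ amounts to the statement that for every $\alpha \in R(S,G) \sm R(S,H)$, one has $\tx{ord}(\<X^*,H_\alpha\>) = -r$, where the order is taken relative to the splitting field $E$ normalized so that $X^* \in \tx{Lie}^*(H)(F)_{-r}$.

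The key computation is the identity
\[ \phi(N_{E/F}(\alpha^\vee(1+y))) = \Lambda\bigl(\tx{tr}_{E/F}(y\<X^*,H_\alpha\>)\bigr) \]
for $y \in \mathfrak{p}_E^{\lceil er \rceil}$, which was already used in the proof of Lemma \ref{lem:yutopair}. The inclusion $X^* \in \tx{Lie}^*(H)(F)_{-r}$ guarantees automatically that $\tx{ord}(\<X^*,H_\alpha\>) \geq -r$. On the other hand, the hypothesis of the lemma says that for each $\alpha \in R(S,G)\sm R(S,H)$, the character $y \mapsto \Lambda(\tx{tr}_{E/F}(y\<X^*,H_\alpha\>))$ is non-trivial on $E_r = \mathfrak{p}_E^{\lceil er\rceil}$. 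Since $\Lambda\circ\tx{tr}_{E/F}$ has a fixed conductor, this non-triviality forces $\<X^*,H_\alpha\>$ to have valuation strictly less than the next lower jump, i.e.\ exactly $-r$.

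Combining the two inequalities gives $\tx{ord}(\<X^*,H_\alpha\>) = -r$ for every $\alpha \in R(S,G) \sm R(S,H)$, establishing GE1, hence (under our hypotheses on $p$) also GE2, hence the genericity of $\phi$. No step appears to present a real obstacle; the only care required is in matching the normalizations of depth, valuation on $E$, and the conductor of $\Lambda\circ\tx{tr}_{E/F}$, but these are the same conventions already fixed in Subsection~\ref{sub:tori} and used without issue in Lemma \ref{lem:yutopair}.
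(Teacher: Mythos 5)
Your proof is correct and follows essentially the same approach as the paper's: construct an element $X^* \in \tx{Lie}^*(Z(H)^\circ)(F)_{-r}$ realizing $\phi$ and then use the identity $\phi(N_{E/F}(\alpha^\vee(1+y))) = \Lambda(\tx{tr}_{E/F}(y\<X^*,H_\alpha\>))$ to pin down $\tx{ord}\<X^*,H_\alpha\>=-r$, which is GE1, from which GE2 follows by Yu's Lemma~8.1. The one structural difference is that you obtain $X^*$ by invoking Lemma~\ref{lem:hypcg} (Hypothesis $C(H)$) outright, whereas the paper first constructs $X^*$ explicitly when $G_\tx{der}$ is simply connected and then reduces the general case via a $z$-extension of $H$; both routes are fine since the standing hypothesis $p\nmid|\pi_1(G_\tx{der})|$ is in force in \S\ref{sub:howe}, though you should note (as the proof of Lemma~\ref{lem:hypcg} does) that this hypothesis passes to $H_\tx{der}$ because $H$ is a twisted Levi of $G$.
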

\begin{proof}
Assume first that the derived subgroup of $G$, and hence also of $H$, is simply connected. Put $D=H/H_\tx{sc}$. Fix a point $x \in \mc{A}(S,E) \cap \mc{B}(H,F)$. By Kneser's theorem \cite[\S4.7]{BT3} $H(F) \to D(F)$ is surjective and by Lemma \ref{lem:mpex1} $H(F)_{x,s} \to D(F)_s$ remains surjective for all $s>0$, so $\phi$ is inflated from a character of the torus $D(F)$ of depth $r$. Let $\Lambda : F \to \C^\times$ be a character of depth zero, i.e. trivial on $F_1$ but not on $F_0$. Recall the Moy-Prasad isomorphism
\[ \tx{MP}_{H,x,r} : \tx{Lie}(H)(F)_{x,r}/\tx{Lie}(H)(F)_{x,r+} \to H(F)_{x,r}/H(F)_{x,r+}. \]
Let $X^* \in \tx{Lie}^*(D)(F)_{-r}/\tx{Lie}^*(D)(F)_{-r+}$ be the element satisfying
\[ \phi(\tx{MP}_{D,r}(Y)) = \Lambda\<X^*,Y\>, \quad \forall Y \in \tx{Lie}(D)(F)_r. \]
The surjection $H \to D$ leads to an injection $\tx{Lie}^*(D) \to \tx{Lie}^*(H)$ whose image is precisely the subspace that Yu identifies with $\tx{Lie}^*(Z(H)^\circ)$ in \cite[\S8]{Yu01}. Letting $X^*$ stand also for its image in $\tx{Lie}^*(H)$ we then obtain
\[ \phi(\tx{MP}_{H,x,r}(Y)) = \Lambda\<X^*,Y\>, \quad \forall Y \in \tx{Lie}(H)(F)_r. \]
We can restrict this equation to $Y \in \tx{Lie}(S)(F)_r$. Using the fact that $\tx{MP}_{S,r}$ is $\Gamma$-equivariant we see that for all $Y \in \tx{Lie}(S)(E)_r$ we have
\[ \phi(N_{E/F}(\tx{MP}_{S,r}(Y)))\!\!=\!\! \phi(\tx{MP}_{S,r}(\tx{tr}_{E/F}(Y)))\!\!=\!\!\Lambda\<X^*,\tx{tr}_{E/F}Y\>\!\!=\!\!\Lambda\circ\tx{tr}_{E/F}\<X^*,Y\>. \]
From the functoriality of the Moy-Prasad isomorphism in the case of the split torus $S \times E$ we have for all $y \in E_r/E_{r+}$
\[ \tx{MP}_{S,r}(d\alpha^\vee(y))=\alpha^\vee(y+1). \]
Combining the last two equations we obtain
\[ \phi(N_{E/F}(\alpha^\vee(y+1)))=\Lambda\circ\tx{tr}_{E/F}(y\<X^*,H_\alpha\>). \]
The character $\Lambda\circ\tx{tr}_{E/F}$ is of depth zero and the assumption of the lemma implies that $y\<X^*,H_\alpha\>$ doesn't vanish for some $y \in E_r$, which is only possible if $\tx{ord}\<X^*,H_\alpha\>=-r$. Thus $\phi$ satisfies condition GE1 of \cite[\S8]{Yu01} and by \cite[Lemma 8.1]{Yu01} it also satisfies condition GE2.

Now drop the assumption that the derived subgroup of $G$ is simply connected. Let $1 \to K \to \tilde G \to G \to 1$ be a z-extension. Pull-back along the inclusion $H \to G$ gives a z-extension $1 \to K \to \tilde H \to H \to 1$. Let $\tilde\phi : \tilde H(F) \to \C^\times$ be the pull-back of $\phi$. If $X^* \in \tx{Lie}^*(H)(F)_{-r}$ represents $\phi|_{H(F)_r}$ as above, then its image $\tilde X^* \in \tx{Lie}^*(\tilde H)(F)_{-r}$ under the natural inclusion represents $\tilde\phi|_{\tilde H(F)_r}$. Since $H_\alpha$ is naturally an element of $\tx{Lie}(\tilde H)$ the above argument shows $\tx{ord}\<X^*,H_\alpha\> = \tx{ord}\<\tilde X^*,H_\alpha\>=-r$.

\end{proof}

\begin{lem} \label{lem:factstep} Assume that the depth $r$ of $\theta$ is positive. If for all $\alpha \in R(S,G)$ we have $\theta \circ N_{E/F} \circ \alpha^\vee|_{E^\times_r}=1$, then there exists a character $\phi : G(F) \rw \C^\times$ of depth $r$ that is trivial on $G_\tx{sc}(F)$ and satisfies $\phi|_{S(F)_r}=\theta|_{S(F)_r}$.
\end{lem}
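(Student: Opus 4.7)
The plan is to show that $\theta|_{S(F)_r}$ factors through the natural projection $S(F)\to D(F)$, where $D:=G/G_\tx{der}$, then extend the resulting character of $D(F)_r$ to all of $D(F)$ and pull it back.

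Let $T_\tx{der}\subset S$ be the maximal subtorus contained in $G_\tx{der}$ (equivalently, the kernel of $S\to D$), and let $S_\tx{sc}$ denote its preimage in $G_\tx{sc}$. The isogeny $S_\tx{sc}\to T_\tx{der}$ has kernel $\pi_1(G_\tx{der})$, of order prime to $p$ by hypothesis; all tori involved are tame since $S$ is. First I would apply Lemma \ref{lem:mpex1} to this isogeny to obtain the bijection $S_\tx{sc}(E)_r\cong T_\tx{der}(E)_r$ for $r>0$. Since $S_\tx{sc}$ is split over $E$, formula \eqref{eq:mpr} gives $S_\tx{sc}(E)_r=X_*(S_\tx{sc})\otimes E^\times_r$, and since the coroots $\alpha^\vee$ for $\alpha\in R(S,G)$ generate $X_*(S_\tx{sc})$ as a $\Z$-module, the group $T_\tx{der}(E)_r$ is generated by the subgroups $\alpha^\vee(E^\times_r)$.

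Next I would apply Lemma \ref{lem:mpex1} to the exact sequence of tame tori $1\to\tx{ker}(N_{E/F})\to\tx{Res}_{E/F}(T_\tx{der}\times E)\to T_\tx{der}\to 1$ to conclude that $N_{E/F}:T_\tx{der}(E)_r\twoheadrightarrow T_\tx{der}(F)_r$. Hence $T_\tx{der}(F)_r$ is generated by the elements $N_{E/F}(\alpha^\vee(E^\times_r))$, on all of which $\theta$ vanishes by hypothesis. A further application of Lemma \ref{lem:mpex1} to $1\to T_\tx{der}\to S\to D\to 1$ then shows that $\theta|_{S(F)_r}$ descends to a character $\theta':D(F)_r\to\C^\times$, which is trivial on $D(F)_{r+}$ because $\theta|_{S(F)_{r+}}$ is trivial.

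Since $D(F)_r$ is open in the locally compact abelian group $D(F)$ and $\C^\times$ is divisible, $\theta'$ extends to a continuous character $\phi_D:D(F)\to\C^\times$. Pulling back along the projection $G(F)\to D(F)$ yields $\phi$, which is trivial on $G_\tx{der}(F)\supseteq G_\tx{sc}(F)$ and whose restriction to $S(F)_r$ coincides with $\theta|_{S(F)_r}$ by construction. Its depth equals $r$ because $\phi_D$ is trivial on $D(F)_{r+}$ (so the depth is at most $r$) while $\phi|_{S(F)_r}$ is nontrivial (so the depth is at least $r$). The main technical point is the descent of $\theta|_{S(F)_r}$ through $S\to D$, which hinges on the surjectivity of the norm map on $r$-th filtrations and crucially uses both the tameness of $S$ and the assumption $p\nmid|\pi_1(G_\tx{der})|$.
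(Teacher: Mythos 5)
Your proof is correct and follows essentially the same approach as the paper's. The only cosmetic difference is in the chain establishing triviality of $\theta$ on $T_\tx{der}(F)_r$: you apply the isogeny comparison over $E$ first and then the norm surjectivity for $\tx{Res}_{E/F}(T_\tx{der}\times E)\to T_\tx{der}$, whereas the paper applies the norm surjectivity for $\tx{Res}_{E/F}(S_\tx{sc}\times E)\to S_\tx{sc}$ first and then the isogeny $S_\tx{sc}\to S_\tx{der}$ over $F$; both invoke Lemma \ref{lem:mpex1} twice and use the same inputs (coroots generate $X_*(S_\tx{sc})$, $p\nmid|\pi_1(G_\tx{der})|$, tameness), and the descent to $D$ followed by Pontryagin extension and pullback is identical.
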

\begin{proof}
Let $S_\tx{der}$ and $S_\tx{sc}$ be the preimages of $S$ in $G_\tx{der}$ and $G_\tx{sc}$. We claim that $\theta|_{S_\tx{der}(F)_r}=1$. Let $R = \tx{Res}_{E/F}(S_\tx{sc} \times E)$. We have the norm homomorphism $N_{E/F} : R \rw S_\tx{sc}$. It is surjective and we call its kernel $R^1$. We have the exact sequence
\[ 1 \rw R^1 \rw R \rw S_\tx{sc} \rw 1 \]
of tori defined over $F$ and split over $E$. According to Lemma \ref{lem:mpex1} the homomorphism $S_\tx{sc}(E)_r = R(F)_r \rw S_\tx{sc}(F)_r \to S_\tx{der}(F)_r$ is surjective. The claim would thus follow from $\theta\circ N_{E/F}(S_\tx{sc}(E)_r)=1$. However, $X_*(S_\tx{sc})$ is generated by $R(S,G)^\vee$ over $\Z$ so the latter follows immediately from the assumption of the lemma.

With the claim proved, we turn to the proof of the lemma. Let $D=G/G_\tx{der}$. From Lemma \ref{lem:mpex1} we have the equality $D(F)_r=S(F)_r/S_\tx{der}(F)_r$ and the claim we just proved tells us that $\theta$ descends to a non-trivial character of $D(F)_r$ that is trivial on $D(F)_{r+}$. This finite order character of $D(F)_r$ can be extended by Pontryagin duality to a character $\phi : D(F) \rw \C^\times$, trivial on $D(F)_{r+}$. This character pulls back to a character of $G(F)$ whose restriction to $S(F)_r$ is equal to that of $\theta$.
\end{proof}

\begin{cor} \label{cor:factstep} Let $\theta : S(F) \rw \C^\times$ be a character of positive depth $r$. Assume that $\theta \circ N_{E/F} \circ \alpha^\vee|_{E^\times_r}=1$ for all $\alpha \in R(S,G)$. Then there exists a character $\phi : G(F) \rw \C^\times$ of depth $r$, trivial on $G_\tx{sc}(F)$, such that $\theta' = \theta \cdot \phi^{-1}|_{S(F)}$ has depth $r'<r$. Moreover, if $r'>0$ there exists a root $\alpha \in R(S,G)$ such that $\theta'(N_{E/F}(\alpha^\vee(E^\times_{r'}))) \neq 1$.
\end{cor}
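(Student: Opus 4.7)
The first assertion follows immediately from Lemma \ref{lem:factstep}: that lemma furnishes a character $\phi$ of $G(F)$ of depth $r$, trivial on $G_\tx{sc}(F)$, with $\phi|_{S(F)_r}=\theta|_{S(F)_r}$, whence $\theta':=\theta\phi^{-1}|_{S(F)}$ is trivial on $S(F)_r$ and therefore has depth $r'<r$.

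To secure the ``moreover'' clause, the plan is to refine $\phi$ iteratively. Suppose that the current candidate $\phi$ produces a $\theta'$ whose depth $r'$ is positive and for which no $\alpha\in R(S,G)$ satisfies $\theta'(N_{E/F}(\alpha^\vee(E^\times_{r'})))\neq 1$. Then the hypothesis of Lemma \ref{lem:factstep} is precisely in force for $\theta'$ at its depth $r'$, and that lemma supplies a character $\psi$ of $G(F)$ of depth $r'$, trivial on $G_\tx{sc}(F)$, with $\psi|_{S(F)_{r'}}=\theta'|_{S(F)_{r'}}$. Replace $\phi$ by $\phi\psi$: the new character is still trivial on $G_\tx{sc}(F)$, still has depth $r$ (since $\psi$ has smaller depth $r'<r$ and hence vanishes on $S(F)_r\subset S(F)_{r'+}$), and still satisfies $(\phi\psi)|_{S(F)_r}=\theta|_{S(F)_r}$; the new $\theta'=\theta(\phi\psi)^{-1}|_{S(F)}$ now has depth strictly less than the previous $r'$, since $\psi|_{S(F)_{r'}}=\theta'|_{S(F)_{r'}}$ by construction.

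Because $S$ is tame, the jumps of the Moy-Prasad filtration on $S(F)$, and hence the possible depths of characters of $S(F)$, lie in a discrete subset of $\R_{\geq 0}$. The strictly decreasing sequence of depths produced by the iteration therefore terminates after finitely many steps. At termination the current $\phi$ yields a $\theta'$ that is either of depth $0$, in which case the moreover clause is vacuous, or of positive depth admitting a witnessing root, which is exactly the claimed conclusion.

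The main point to appreciate is that a single invocation of Lemma \ref{lem:factstep} does not automatically produce a witnessing root: the descent of $\theta\phi^{-1}$ to the cocenter $S(F)/S_\tx{der}(F)$ may still be non-trivial at depths below $r$, and the iteration is precisely the mechanism that absorbs this residual contribution into $\phi$, forcing the remaining character either to trivialize on $S(F)_{0+}$ or to have its depth detected by a root. Termination is elementary once one invokes the discreteness of the depth spectrum for tame tori.
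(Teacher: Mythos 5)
Your proof is correct and follows essentially the same route as the paper: a single application of Lemma \ref{lem:factstep}, followed by an iterative refinement (the paper accumulates a product $\phi=\prod_i\phi_i$; you replace $\phi$ by $\phi\psi$ at each stage, which is the same thing), with termination guaranteed by the discreteness of the depth spectrum of a tame torus. Your observation that a single pass through Lemma \ref{lem:factstep} need not produce a witnessing root, and that the recursion is what handles the residual cocenter contribution, matches the intent of the paper's argument.
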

\begin{proof}
We work recursively on the depth of $\theta$. Put $\theta_0=\theta$ and $r_0=r$ and apply Lemma \ref{lem:factstep} and obtain $\phi_0 : G(F) \rw \C^\times$ of depth $r$ such that the depth $r_1$ of $\theta_1 := \theta_0 \cdot \phi^{-1}|_{S(F)}$ is strictly less than $r_0$. If $\theta_1(N_{E/F}(\alpha^\vee(E_{r_1}^\times))) = 1$ for all $\alpha \in R(S,G)$ we apply the recursion step again with $\theta_1$ and $r_1$. The recursion eventually stops because the set of positive numbers that are depths of characters of $S(F)$ (i.e. the breaks of the Moy-Prasad filtration of $S(F)$) has no accumulation points. Let $\phi$ be the product of all $\phi_i$. Its depth is equal to that of $\phi_0$, which is $r_0$.
\end{proof}

\begin{lem} \label{lem:levisystem}
Let $\theta : S(F) \rw \C^\times$ be a character. Then
\[ R_\theta := \{ \alpha \in R(S,G)| \theta(N_{E/F}(\alpha^\vee(E^\times_r)))=1 \} \]
is a Levi subsystem of $R(S,G)$ for any $r>0$.
\end{lem}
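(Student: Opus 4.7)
The plan is to reduce the claim that $R_\theta$ is a Levi subsystem to a saturation question for a sublattice of $X_*(S)$. Recall that a subsystem $R' \subseteq R$ of a root system is a Levi subsystem if and only if $R' = R \cap \Q R'$, the $\Q$-span being taken in $X^*(S)\otimes\Q$. Thus it suffices to prove that whenever $\alpha \in R(S,G)$ is a $\Q$-linear combination of elements of $R_\theta$, the root $\alpha$ itself lies in $R_\theta$.

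First I reformulate the condition via coroots. Set
\[ L_r := \{\lambda \in X_*(S) \mid \theta(N_{E/F}(\lambda(x))) = 1 \text{ for all } x \in E^\times_r\}, \]
so that $L_r$ is the kernel of the homomorphism $\Psi\colon X_*(S) \to \tx{Hom}_\tx{cts}(E^\times_r,\C^\times)$ sending $\lambda$ to $\theta\circ N_{E/F}\circ\lambda|_{E^\times_r}$, and $\alpha \in R_\theta$ if and only if $\alpha^\vee \in L_r$. Because $r>0$, the group $E^\times_r = 1 + \mf{p}_E^{\lceil er\rceil}$ is pro-$p$, so every continuous character of it takes values in $p$-power roots of unity; hence the image of $\Psi$, and therefore $X_*(S)/L_r$, is a finite abelian $p$-group.

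Using the standard identity $\alpha^\vee = 2\alpha/(\alpha,\alpha)$ with respect to any Weyl-invariant inner product on $X^*(S)\otimes\R$, a short computation shows that if $\alpha = \sum q_i\alpha_i$ with $\alpha_i \in R_\theta$ and $q_i \in \Q$, then $\alpha^\vee = \sum q_i\frac{(\alpha_i,\alpha_i)}{(\alpha,\alpha)}\alpha_i^\vee$, so $\alpha^\vee \in \Q R_\theta^\vee$. Therefore there exists a positive integer $M$ with $M\alpha^\vee \in \Z R_\theta^\vee \subseteq L_r$. The proof is concluded once $M$ can be chosen coprime to $p$: for then multiplication by $M$ is an automorphism of the finite $p$-group $X_*(S)/L_r$, so $M\alpha^\vee \in L_r$ forces $\alpha^\vee \in L_r$, i.e.\ $\alpha \in R_\theta$.

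The main obstacle is precisely this coprimality of $M$: it amounts to the root-theoretic assertion that for any root subsystem $R' \subseteq R$ and any prime $p$ which is not a bad prime for $R$, the index $[(\Q R'^\vee \cap \Z R^\vee):\Z R'^\vee]$ is prime to $p$. This is a statement purely about lattices generated by coroots, verified by reduction to irreducible root systems and case-by-case inspection; the point is that torsion in such coroot-sublattice quotients is concentrated at the bad primes of the ambient root system. The standing hypothesis of the subsection that $p$ is not a bad prime for $G$ then supplies the required coprimality for $R = R(S,G)$.
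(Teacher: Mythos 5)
Your argument takes a genuinely different route from the paper. Where the paper passes through the local Langlands correspondence for tori (so that the defining condition on $\alpha$ is re-expressed as $\hat\alpha(\varphi(I^r))=1$, whence $R_\theta^\vee$ is identified with the root system of the connected centralizer in $\hat G$ of the finite $p$-group $\varphi(I^r)$, and the Levi property is extracted by iterating \cite[Proposition A.7]{AS08}), you stay entirely on the automorphic side and reduce the Levi property to the saturation of the coroot sublattice $\Z R_\theta^\vee$ inside $\Z R(S,G)^\vee$. The observation that drives both arguments is the same --- because $r>0$, the obstruction lives in a pro-$p$ object (for the paper, $\varphi(I^r)\subseteq\varphi(P_F)$; for you, the finite quotient $X_*(S)/L_r$) --- so that good-primality of $p$ eliminates it. Your reduction to the $p$-group statement is clean and correct, and the identification of $R_\theta$ with $\{\alpha:\alpha^\vee\in L_r\}$, with $L_r$ of $p$-power cotorsion, is exactly right.

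There is, however, a genuine gap, and it is precisely the statement you flag as the ``main obstacle.'' You assert, for \emph{any} root subsystem $R'\subseteq R$ and any $p$ that is not bad for $R$, that $[(\Q R'^\vee\cap\Z R^\vee):\Z R'^\vee]$ is prime to $p$, and you offer in justification only that this is ``verified by reduction to irreducible root systems and case-by-case inspection.'' That is not a proof, and the assertion is not a triviality: it is the nontrivial content that the paper outsources to \cite[Proposition A.7]{AS08}. When $R'$ is a \emph{closed} subsystem of $R$, your assertion is the classical theorem that the torsion primes of $R$ (primes dividing the torsion of $\Z R^\vee/\Z R_1^\vee$ over closed $R_1\subseteq R$) are contained among the bad primes; a reference here would close the gap. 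But you state the claim for arbitrary subsystems, and you never establish that $R_\theta$ is closed in $R$. It is true --- the identity $R_\theta^\vee = R(S,G)^\vee\cap L_r$ shows at once that $R_\theta^\vee$ is closed in $R(S,G)^\vee$, and one can deduce closedness of $R_\theta$ itself by another application of the same $p$-group trick to the equality $(\alpha+\beta)^\vee = c_\alpha\alpha^\vee + c_\beta\beta^\vee$ with denominators supported at bad primes --- but none of this is in your write-up, and without it the cited torsion-prime theorem does not directly apply. To complete the proof you should (i) prove or cite that $R_\theta$ is closed in $R(S,G)$, and (ii) cite the torsion-prime $\subseteq$ bad-prime theorem (for instance from Steinberg's work, or the appendix of \cite{SS70}) rather than gesture at a case-by-case check.
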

\begin{proof}
Let $\varphi \in Z^1(W_F,\hat S)$ be the Langlands parameter of $\theta$. The Langlands parameter of the  character $\theta\circ N_{E/F}\circ\alpha^\vee : E^\times \to \C^\times$ is $\hat\alpha\circ\varphi|_{W_E}$. By local class field theory, $\theta\circ N_{E/F}\circ\alpha^\vee$ has non-zero restriction to $E_r^\times$ if and only if $\hat\alpha\circ\varphi|_{W_E}$ has non-zero restriction to $I_E^r=I_F^r$. Thus
\[ R_\theta = \{\alpha \in R(S,G)| \hat\alpha(\varphi(I^r)) = 1\}. \]
But $\varphi(I^r)$ is a finite subgroup of $\hat S$ and then $R^\vee_\theta=\{\hat\alpha\in R(\hat S,\hat G)|\hat\alpha(\varphi(I^r)) = 1\}$ is the root system of the connected centralizer in $\hat G$ of this finite group. This connected centralizer is a Levi subgroup, as one sees by applying \cite[Proposition A.7]{AS08} repeatedly to the elements in the image of $\varphi(I^r)$. Thus $R_\theta^\vee$ is a Levi subsystem of $R(S,G)^\vee$.
\end{proof}

\subsection{Classification of regular supercuspidal representations} \label{sub:regclass}

In this subsection we are still assuming that $p$ is not a bad prime for $G$ and does not divide the order of the fundamental group of the derived subgroup of $G$.

In the previous subsection we obtained from a tame (extra) regular elliptic pair $(S,\theta)$ a sequence of tame twisted Levi subgroups $G^0 \subset G^1 \subset \dots \subset G^d=G$ such that $S \subset G^0$ is maximally unramified, as well as a Howe-factorization $(\phi_{-1},\dots,\phi_d)$. Then $((G^0 \subset G^1 \dots \subset G^d),\pi_{(S,\phi_{-1})}^{G^0},(\phi_0,\phi_1,\dots,\phi_d))$ is a reduced generic cuspidal $G$-datum in the sense of \cite[Definition 3.11]{HM08}. It is moreover normalized in the sense of Definition \ref{dfn:normalized} and (extra) regular in the sense of Definition \ref{dfn:regdat}. This datum in turn leads to a (extra) regular supercuspidal representation $\pi_{(S,\theta)}$ of $G(F)$ via Yu's construction \cite{Yu01}. The characters $(\phi_{-1},\dots,\phi_d)$ of the Howe factorization were chosen with some ambiguity. Nonetheless, the work of Hakim and Murnaghan allows us to conclude that $(S,\theta) \leftrightarrow \pi_{(S,\theta)}$ is a 1-1 correspondence between the set of $G(F)$-conjugacy classes of tame (extra) regular elliptic pairs and the set of isomorphism classes of (extra) regular supercuspidal representations, as follows.

\begin{cor} \label{cor:rsexist} The representation $\pi_{(S,\theta)}$ depends only on the pair $(S,\theta)$, and not on the Howe factorization. Moreover, the representations $\pi_{(S,\theta)}$ obtained from all (extra) regular tame elliptic pairs $(S,\theta)$ are precisely the (extra) regular supercuspidal representations.
\end{cor}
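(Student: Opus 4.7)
The plan is to assemble the two claims from the tools already proved in Subsections \ref{sub:reviewhm}--\ref{sub:howe}, relying crucially on Corollary \ref{cor:hm} (the $G$-equivalence theorem of Hakim-Murnaghan, valid without hypothesis $C(\vec G)$).

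For the independence of the Howe factorization, let $(\phi_{-1},\dots,\phi_d)$ and $(\dot\phi_{-1},\dots,\dot\phi_d)$ be two Howe factorizations of the same pair $(S,\theta)$. By Lemma \ref{lem:refactor}, the character sequences $(\phi_0,\dots,\phi_d)$ and $(\dot\phi_0,\dots,\dot\phi_d)$ satisfy the numerical conditions F0--F2 required of a refactorization. To promote this to a refactorization of the full reduced generic cuspidal $G$-data, one must also verify the depth-zero socles match correctly, i.e.\ $\pi_{(S,\dot\phi_{-1})}^{G^0} = \pi_{(S,\phi_{-1})}^{G^0} \otimes \chi_0$ with $\chi_0 = \prod_{j=0}^d \phi_j \dot\phi_j^{-1}$. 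But \eqref{eq:thetaprod} gives $\chi_0|_{S(F)} = \dot\phi_{-1}\phi_{-1}^{-1}$, so this is immediate from Lemma \ref{lem:rdztwist}. Corollary \ref{cor:hm} then guarantees that the associated supercuspidal representations are isomorphic, so $\pi_{(S,\theta)}$ depends only on $(S,\theta)$.

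For exhaustion, let $\pi$ be an (extra) regular supercuspidal representation. By Definition \ref{dfn:regrep}, and after refactoring to a normalized datum (which is always possible as noted in Subsection \ref{sub:rs}), $\pi$ arises from a normalized (extra) regular reduced generic cuspidal $G$-datum $\Psi = ((G^0\subset\dots\subset G^d),\pi_{(S,\phi_{-1})}^{G^0},(\phi_0,\dots,\phi_d))$. Setting $\theta = \prod_{i=-1}^d \phi_i|_{S(F)}$, Lemma \ref{lem:yutopair} produces a tame (extra) regular elliptic pair $(S,\theta)$. The decisive observation is that $\Psi$ itself is a Howe factorization of $(S,\theta)$ in the sense of Definition \ref{dfn:howe}: the computation in the proof of Lemma \ref{lem:yutopair} identifies $R(S,G^0) = R_{0+}$ and more generally shows that $R(S,G^{i+1}) \setminus R(S,G^i)$ consists exactly of the roots $\alpha$ for which $\theta \circ N_{E/F} \circ \alpha^\vee$ is nontrivial on $E^\times_{r_i}$, so that the twisted Levi sequence $G^\bullet$ and the depth sequence $r_\bullet$ are precisely those produced by the recipe of Subsection \ref{sub:howe}. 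The normalization of $\Psi$ ensures that each $\phi_i$ is trivial on $G^i_{\mathrm{sc}}(F)$, and Fact \ref{fct:fri} read in reverse shows the remaining conditions of Definition \ref{dfn:howe} hold. Thus $\pi \cong \pi_{(S,\theta)}$ by the first part.

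The main obstacle is the exhaustion step, specifically the identification of the twisted Levi sequence of an arbitrary normalized regular datum with the one canonically associated to $(S,\theta)$ via the filtration $R_r$. Once this is done the result is formal, and as a bonus the promised bijection with $G(F)$-conjugacy classes of pairs follows: if $\pi_{(S_1,\theta_1)} \cong \pi_{(S_2,\theta_2)}$, Corollary \ref{cor:hm} says the corresponding Howe-factorization data are $G$-equivalent, and tracing the elementary transformation, $G$-conjugation and refactorization operations through the map ``Howe datum $\mapsto (S,\theta)$'' of Lemma \ref{lem:yutopair} shows that each operation either preserves $(S,\theta)$ up to $G(F)$-conjugacy or (in the refactorization case) preserves it exactly, yielding the converse direction of the bijection.
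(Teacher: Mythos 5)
Your proof is correct and follows essentially the same route as the paper: Lemma \ref{lem:refactor} (which in fact already establishes all three conditions F0--F2, including the socle condition F2 that you re-derive) together with the Hakim--Murnaghan equivalence theorem gives the independence, and Lemma \ref{lem:yutopair} plus the observation that the given datum is itself a Howe factorization of $(S,\theta)$ gives exhaustion. Your explicit step of first refactoring to a normalized datum usefully fills in a point the paper's terse proof passes over, and the closing remarks on $G(F)$-conjugacy are correct but belong to Lemma \ref{lem:rsuniq} rather than to this corollary.
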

\begin{proof} That $\pi_{(S,\theta)}$ is independent of the Howe factorization follows from \cite[Theorem 6.6]{HM08} together with Lemma \ref{lem:refactor} and Lemma \ref{lem:hypcg}. By definition it is a (extra) regular supercuspidal representation. To see that any (extra) regular supercuspidal representation $\pi$ arizes this way, we see from the definition that $\pi$ arises from a (extra) regular reduced generic cuspidal $G$-datum. This datum leads via Lemma \ref{lem:yutopair} to a (extra) regular tame elliptic pair $(S,\theta)$ as well as to a pre-chosen Howe factorization $(\phi_{-1},\dots,\phi_d)$ thereof. The representation obtained from this factorization is tautologically isomorphic to $\pi$, thus $\pi=\pi_{(S,\theta)}$.
\end{proof}

\begin{lem} \label{lem:rsuniq} Let $(S_i,\theta_i)$, $i=1,2$, be two tame regular elliptic pairs. The representations $\pi_{(S_i,\theta_i)}$ are isomorphic if and only if the pairs $(S_i,\theta_i)$ are $G(F)$-conjugate.
\end{lem}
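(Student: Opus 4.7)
The direction $(\Leftarrow)$ is immediate from the $G(F)$-equivariance of Yu's construction: conjugating a Howe factorization of $(S_1,\theta_1)$ by $g\in G(F)$ yields a Howe factorization of $(gS_1g^{-1},g\cdot\theta_1)$, and Yu's construction produces the correspondingly conjugated representation.

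For the direction $(\Rightarrow)$, the plan is to apply Proposition \ref{pro:howefacex} to produce, for each $i=1,2$, a Howe factorization $(\phi_{i,-1},\dots,\phi_{i,d_i})$ of $(S_i,\theta_i)$, yielding a normalized regular reduced generic cuspidal $G$-datum $\Psi_i$ whose image under Yu's construction is $\pi_{(S_i,\theta_i)}$. Assuming $\pi_{(S_1,\theta_1)}\cong\pi_{(S_2,\theta_2)}$, Corollary \ref{cor:hm} asserts that $\Psi_1$ and $\Psi_2$ are $G$-equivalent. The goal then becomes to extract from this $G$-equivalence an element of $G(F)$ conjugating $(S_1,\theta_1)$ to $(S_2,\theta_2)$.

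The key step is to show that the pair $(S,\theta)$ recovered from a regular cuspidal Yu-datum as in Lemma \ref{lem:yutopair} is an invariant of its $G$-equivalence class, up to $G(F)$-conjugation. Since the three generating operations commute, we can verify this one operation at a time. A $G(F)$-conjugation by $g$ manifestly sends $(S,\theta)$ to $(gSg^{-1},g\cdot\theta)$. An elementary transformation replaces $\pi_{(S,\phi_{-1})}$ by an isomorphic representation $\pi_{(S',\phi'_{-1})}$ of $G^0(F)$, and Lemma \ref{lem:rdzclass} guarantees that $(S',\phi'_{-1})$ is $G^0(F)$-conjugate, hence $G(F)$-conjugate, to $(S,\phi_{-1})$. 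For refactorization, with $\chi_i=\prod_{j=i}^d\phi_j(\phi'_j)^{-1}$, property F2 gives $\pi_{(S,\phi'_{-1})}=\pi_{(S,\phi_{-1})}\otimes\chi_0$, which by Lemma \ref{lem:rdztwist} equals $\pi_{(S,\phi_{-1}\cdot\chi_0|_{S(F)})}$; Lemma \ref{lem:rdzclass} then lets us absorb a $G^0(F)$-conjugation to assume $\phi'_{-1}|_{S(F)}=\phi_{-1}|_{S(F)}\cdot\chi_0|_{S(F)}$. A telescoping computation yields
\[ \theta' \;=\; \phi'_{-1}|_{S(F)}\cdot\prod_{i=0}^d\phi'_i|_{S(F)} \;=\; \phi_{-1}|_{S(F)}\cdot\prod_{i=0}^d\phi_i(\phi'_i)^{-1}|_{S(F)}\cdot\prod_{i=0}^d\phi'_i|_{S(F)} \;=\; \theta. \]

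The main obstacle is the refactorization step, where one must harmonize the depth-zero ambiguity from Lemma \ref{lem:rdzclass} with the twisting identity of Lemma \ref{lem:rdztwist} before the telescoping goes through. Once this invariance is established, the $G$-equivalence of $\Psi_1$ and $\Psi_2$ forces $(S_1,\theta_1)$ and $(S_2,\theta_2)$ to be $G(F)$-conjugate, completing the proof.
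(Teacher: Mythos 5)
Your proof is correct and takes essentially the same route as the paper's. Both directions use Proposition \ref{pro:howefacex} together with the Hakim--Murnaghan equivalence theorem, and the backward direction proceeds by analyzing how the pair $(S,\theta)$ recovered via Lemma \ref{lem:yutopair} transforms under the three generators of $G$-equivalence. The only organizational difference: the paper first invokes the commutativity of the three operations to factor the $G$-equivalence as a single conjugation $g$ followed by an elementary transformation and refactorization, then packages the latter two into a single isomorphism $\tx{Ad}(g)[\pi_{(S_2,\phi_{2,-1})}\otimes\phi_2]\cong[\pi_{(S_1,\phi_{1,-1})}\otimes\phi_1]$ and unwinds it via Lemmas \ref{lem:rdztwist} and \ref{lem:rdzclass}, whereas you verify invariance of the recovery $(S,\theta)$ operation-by-operation and deduce invariance of the composite. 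Both are valid; yours is slightly more verbose but doesn't even need to invoke the commutativity fact, since invariance under each generator automatically gives invariance under arbitrary compositions. One small simplification you could make: in the refactorization step, Lemma \ref{lem:rdztwist} already produces the realization $\pi'_{-1}=\pi_{(S,\phi_{-1}\cdot\chi_0|_{S(F)})}$ with the same torus $S$, so there is no need to appeal to Lemma \ref{lem:rdzclass} before the telescoping; it is only needed for the elementary-transformation ambiguity, which you treat separately anyway.
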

\begin{proof} Assume first  the pairs $(S_i,\theta_i)$
are conjugate. If $(S_1,\theta_1)$ leads to the twisted Levi tower $\vec G_1$ and has a Howe factorization $\vec \phi_1$, and if $g \in G(F)$ conjugates $(S_1,\theta_1)$ to $(S_2,\theta_2)$, then $\tx{Ad}(g)(\vec G_1,\vec \phi_1)$ will be the twisted Levi tower and a Howe factorization of $(S_2,\theta_2)$, and the claim follows from \cite[Theorem 6.6]{HM08}.

Conversely, assume that $\pi_{(S_i,\theta_i)}$ are isomorphic. Let $(\vec G_1,\vec \phi_1)$ and $(\vec G_2,\vec \phi_2)$ be pairs consisting of the twisted Levi tower and a Howe factorization of $(S_1,\theta_1)$ and $(S_2,\theta_2)$ respectively. Applying \cite[Theorem 6.6]{HM08} we find $g \in G(F)$ such that $\tx{Ad}(g)(\vec G_2)=\vec G_1$ and $\tx{Ad}(g)[\pi_{(S_2,\phi_{2,-1})}^{G_2^0} \otimes \phi_2] \cong [\pi_{(S_1,\phi_{1,-1})}^{G_1^0} \otimes \phi_1]$, where $\phi_i$ is the product of the restrictions to $G_i^0$ of $\phi_{i,0},\dots,\phi_{i,d}$ (recall condition F2 in the definition of refactorization). We may as well conjugate $(S_2,\theta_2)$ by $g$ and assume $g=1$. If $G^0 = S$ then we are done. Otherwise we apply Lemma \ref{lem:rdztwist} and see that $\phi_2 \cdot \phi_1^{-1}$ is a depth-zero character. We then apply Lemma \ref{lem:rdzclass} and see that $(S_1,\phi_{1,-1} \cdot \phi_1\cdot\phi_2^{-1})$ and $(S_2,\phi_{2,-1})$ are $G^0(F)$-conjugate. Conjugating by an element of $G^0(F)$ we may thus assume $S_1 = S_2$ and $\phi_{1,-1} \cdot \phi_1\cdot \phi_2^{-1} = \phi_{2,-1}$. But then \eqref{eq:thetaprod} implies $\theta_1=\theta_2$.
\end{proof}

\begin{fct} \label{fct:centchar} The central character of $\pi_{(S,\theta)}$ is the restriction of $\theta$ to $Z(G)(F)$.
\end{fct}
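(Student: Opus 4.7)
The plan is to induct on the stages of Yu's construction, tracking how $Z(G)(F)$ acts. First I would observe that since $Z(G)\subset S\subset G^i$ and $Z(G)$ centralizes all of $G^i$, we have $Z(G)\subset Z(G^i)$ for every $-1\leq i\leq d$. Fix a Howe factorization $(\phi_{-1},\ldots,\phi_d)$ of $(S,\theta)$ and let $\pi_i$ denote the representation of $G^i(F)$ produced at the $i$-th stage of Yu's construction applied to the $G$-datum built from $(\phi_{-1},\ldots,\phi_d)$, so that $\pi_d=\pi_{(S,\theta)}$. I will show by induction on $i$ that the central character of $\pi_i$ restricted to $Z(G)(F)$ equals $\prod_{j=-1}^{i}\phi_j|_{Z(G)(F)}$. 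Taking $i=d$ and restricting the defining identity $\theta=\prod_{i=-1}^{d}\phi_i|_{S(F)}$ of Definition \ref{dfn:howe} to the subgroup $Z(G)(F)\subset S(F)$ then gives the desired equality.

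For the base case, $\pi_{-1}=\pi_{(S,\phi_{-1})}^{G^0}$ is constructed by compactly inducing the extension $\tilde\kappa_{(S,\phi_{-1})}$ of Subsubsection \ref{subsub:ext} from the subgroup $S(F)\cdot G^0(F)_{x,0}$ to $G^0(F)$. The key observation is that for $z\in Z(G)(F)\subset S(F)$ the image in $\ms{S}'_\tx{ad}(k_F)$ is trivial, because the natural map $S(F)\to\ms{S}'_\tx{ad}(k_F)$ factors through the image of $Z(G)(F)$ in $G_\tx{ad}(F)$, which vanishes. Formula \eqref{eq:tkd} then reads $(1,z)\cdot v=\phi_{-1}(z)v$, so the inducing representation acts on $Z(G)(F)$ by the scalar character $\phi_{-1}|_{Z(G)(F)}$; since $z$ is already central, compact induction preserves this scalar, yielding the base case.

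For the inductive step at stage $i\geq 0$, Yu's construction realizes $\pi_i$ as compactly induced from a compact-mod-center open subgroup $K^i\subset G^i(F)$ containing $Z(G^i)(F)\supset Z(G)(F)$, and the inducing representation is built by tensoring (the restriction of) $\pi_{i-1}$ with a Heisenberg-Weil extension of $\phi_i$. On $Z(G)(F)$ the Heisenberg-Weil extension acts by the scalar $\phi_i|_{Z(G)(F)}$: the underlying Heisenberg group sits inside filtration quotients of $\mf{g}^i$ on which the adjoint action of $Z(G)$ is trivial, so no nontrivial Weil twist can arise and only the character $\phi_i$ itself survives on the center. Combined with the fact that compact induction preserves central characters on $Z(G)(F)\subset K^i$, the inductive hypothesis now propagates: the central character of $\pi_i$ on $Z(G)(F)$ equals that of $\pi_{i-1}$ multiplied by $\phi_i|_{Z(G)(F)}$. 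The only substantive point, and essentially the sole thing to check carefully, is this scalar action of $Z(G)$ on the Heisenberg-Weil piece; the remainder is formal bookkeeping through Yu's recursion, and the well-definedness of the answer is guaranteed by Corollary \ref{cor:rsexist}.
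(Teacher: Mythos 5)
Your proof is correct and takes essentially the same route as the paper: fix a Howe factorization, examine Yu's construction, and observe that each factor $\phi_i$ contributes multiplicatively to the central character, with the Heisenberg--Weil piece acting on $Z(G)(F)$ only through $\phi_i$. The only organizational difference is that you induct through the tower $\pi_{-1}, \pi_0, \dots, \pi_d$, whereas the paper examines the tensor decomposition $\kappa_{-1}\otimes\cdots\otimes\kappa_d$ of the inducing representation on $K$ directly; these amount to the same bookkeeping, and your observation that $Z(G)(F)$ has trivial image in $\ms{S}'_\tx{ad}(k_F)$ (hence $\tx{Ad}(\bar s)$ is trivial in \eqref{eq:tkd}) is precisely what underlies the base case in either presentation.
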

\begin{proof}
Let $(\phi_{-1},\dots,\phi_d)$ be a Howe factorization. We examine Yu's construction, following the exposition in \cite[\S3.4]{HM08}. There is a sequence of compact modulo center subgroups of $G(F)$
\[ G^0(F)_x = K^0 \subset K^1 \subset \dots \subset K^d=K \]
On each $K^i$ there is an irreducible representation $\kappa_i$. There is a natural inflation process that makes $\kappa_i$ into a representation of $K$. The tensor product $\kappa_{-1} \otimes \dots \otimes \kappa_d$ is irreducible and its compact induction from $K$ to $G$ is $\pi_{(S,\theta)}$.

Since $Z(G)(F) \subset K^0$, the inflation process doesn't disturb the central character of $\kappa_i$. For $i=0$ the representation $\kappa_i$ is $\phi_0 \otimes \tx{Ind}_{S(F)G^0(F)_{x,0}}^{G^0(F)_x} \tilde\kappa_{(S,\phi_{-1})}$, where $\tilde\kappa$ is as in Lemma \ref{lem:rdzconst}. The central character of this representation is $\phi_0 \cdot \phi_{-1}$. For the intermediate indices $i$ the representation $\kappa_i$ is described after \cite[Remark 3.25]{HM08} and its construction may or may not involve the Weil representation. In either case, its restriction to $Z(G)(F)$ is seen to act via the character $\phi_i$.
\end{proof}

\subsection{When $p$ divides $|\pi_1(G_\tx{der})|$} \label{sub:nsc}
In the previous two subsections we assumed that $p$ is not a bad prime for $G$ and does not divide the order of $\pi_1(G_\tx{der})$. In this subsection we will remove the condition that $p$ does not divide the order of $\pi_1(G_\tx{der})$. This is only an issue for Dynkin type $A_n$, for which there are no bad primes while $\pi_1(G_\tx{der})$ can be any divisor of $n$. For all other Dynkin types a prime that divides $\pi_1(G_\tx{der})$ is automatically bad for $G$, and equals either $2$ or $3$.

Fix a $z$-extension $1 \to K \to G_1 \to G \to 1$.

\begin{lem} \label{lem:updown} Let $(S,\theta)$ be a tame elliptic (extra) regular pair for $G$. Let $S_1$ be the preimage of $S$ in $G_1$ and let $\theta_1$ be the inflation of $\theta$ to $S_1(F)$. The map $(S,\theta) \mapsto (S_1,\theta_1)$ is a depth-preserving bijection between the $G(F)$-conjugacy classes of tame elliptic (extra) regular pairs of $G$ and those tame elliptic (extra) regular pairs of $G_1$ for which $\theta_1|_{K(F)}=1$.
\end{lem}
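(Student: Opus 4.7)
The plan is to verify that the assignment is a well-defined bijection in both directions and that every ingredient in Definition \ref{dfn:tre} as well as the depth and $G(F)$-conjugacy relation transfer cleanly across the central isogeny $\pi : G_1 \to G$ with kernel $K$.

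First, I would set up the correspondence explicitly. Given a pair $(S,\theta)$ for $G$, the preimage $S_1 = \pi^{-1}(S)$ is a maximal torus of $G_1$ (note $K \subset Z(G_1) \subset S_1$ automatically), and $\theta_1 = \theta \circ \pi$ is a character of $S_1(F)$ that kills $K(F)$ by construction. Conversely, given $(S_1,\theta_1)$ with $\theta_1|_{K(F)}=1$, the quotient $S = S_1/K \subset G$ is a maximal torus, and $\theta_1$ descends to a character $\theta$ of $S(F)$. The key input for the descent is that $S_1(F) \to S(F)$ is surjective: this follows from $H^1(F,K)=1$ (Shapiro's lemma plus Hilbert 90, since $K$ is induced). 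Observing that the two assignments are mutually inverse is then formal.

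Second, I would verify that tameness, ellipticity, and the root-theoretic regularity/extra regularity conditions transfer. Since $\pi$ is a central isogeny with induced (hence tame) kernel, $S_1$ splits over a tame extension iff $S$ does. For ellipticity, note that $K \subset Z(G_1)$ and $Z(G_1) = \pi^{-1}(Z(G))$, so $S_1/Z(G_1) \cong S/Z(G)$ and the anisotropy conditions match. The central isogeny identifies the root data: every $\alpha \in R(S,G)$ pulls back to a root $\alpha \circ \pi \in R(S_1,G_1)$, and conversely every coroot $\alpha^\vee \in X_*(S)$ lifts uniquely to a cocharacter $\alpha_1^\vee$ of $S_1$ landing in $G_{1,\mathrm{der}} = G_{1,\mathrm{sc}}$. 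Consequently
\[
\theta_1\bigl(N_{E/F}(\alpha_1^\vee(E^\times_{0+}))\bigr) = \theta\bigl(N_{E/F}(\alpha^\vee(E^\times_{0+}))\bigr),
\]
so the root subsystems $R_{0+}$ coincide under the identification $R(S_1,G_1) = R(S,G)$. Hence the twisted Levi subgroup $G_1^0$ attached to $(S_1,\theta_1)$ is precisely $\pi^{-1}(G^0)$, and the Weyl groups $N(S_1,G_1^0)(F)/S_1(F)$ and $\Omega(S_1,G_1^0)(F)$ identify with their counterparts for $(S,\theta)$; the regularity and extra-regularity stabilizer conditions then match verbatim after noting that $\theta|_{S(F)_0}$ and $\theta_1|_{S_1(F)_0}$ have the same stabilizers under this identification (since $K(F) \subset S_1(F)_0$ lies in the kernel of $\theta_1$).

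Third, depth preservation is an immediate consequence of Lemmas \ref{lem:mpex1} and \ref{lem:mpex2} applied to the exact sequence $1 \to K \to S_1 \to S \to 1$: because $K$ is induced (hence inertially induced and tame), the sequence
\[
1 \to K(F)_r \to S_1(F)_r \to S(F)_r \to 1
\]
is exact for $r=0$, $r=0+$, and every $r>0$. Therefore $\theta_1$ is trivial on $S_1(F)_r$ iff $\theta$ is trivial on $S(F)_r$, so the depths agree. Finally, for $G(F)$-conjugacy: the surjection $G_1(F) \twoheadrightarrow G(F)$ (again from $H^1(F,K)=1$) sends $G_1(F)$-conjugations of $(S_1,\theta_1)$ to $G(F)$-conjugations of $(S,\theta)$ and conversely lifts $G(F)$-conjugations of $(S,\theta)$ back to $G_1(F)$; since $K$ is central, the choice of lift is immaterial. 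The main point requiring care is the compatibility of Moy--Prasad filtrations across the exact sequence involving $K$, which is exactly the content of Lemmas \ref{lem:mpex1} and \ref{lem:mpex2}; no serious obstacle remains beyond bookkeeping.
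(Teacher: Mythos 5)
Your proposal is correct and takes essentially the same approach as the paper: the paper's one-line proof simply cites Lemma \ref{lem:mpex3}, which is itself proved by reducing to Lemmas \ref{lem:mpex1} and \ref{lem:mpex2} applied to tori, and your argument spells out exactly that reduction together with the surjectivity of $S_1(F)\to S(F)$ and $G_1(F)\to G(F)$ coming from $H^1(F,K)=1$ for the induced torus $K$.
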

\begin{proof}
This follows at once from Lemma \ref{lem:mpex3}.
\end{proof}

A supercuspidal representation $\pi$ of $G(F)$ will be called (extra) regular if its inflation $\pi_1$ to $G_1(F)$ is so. This definition together with Corollary \ref{cor:rsexist}, Lemma \ref{lem:rsuniq}, Fact \ref{fct:centchar}, and Lemma \ref{lem:updown}, show that there is a bijection between the $G(F)$-conjugacy classes of tame elliptic (extra) regular pairs of $G(F)$ and (extra) regular supercuspidal representations of $G(F)$.

We claim that the notion of (extra) regularity and this bijection are independent of the choice of $G_1$. Choose another $z$-extension $G_2$ of $G$ and consider the fiber product $G_3$ of $G_1$ and $G_2$ over $G$. Then $G_3$ is a $z$-extension of $G$, of $G_1$, and of $G_2$. If the inflation $\pi_1$ of $\pi$ is (extra) regular then $\pi_1=\pi_{(S_1,\theta_1)}$ for some (extra) regular tame elliptic pair $(S_1,\theta_1)$. Let $(S,\theta)$ be the pair of $G$ corresponding to $(S_1,\theta_1)$. Let $(S_2,\theta_2)$ and $(S_3,\theta_3)$ be the pairs on $G_2$ and $G_3$ corresponding to $(S,\theta)$. According to Lemma \ref{lem:updown}, $(S_3,\theta_3)$ is tame elliptic (extra) regular, and hence $(S_2,\theta_2)$ is tame elliptic (extra) regular. According to Lemmas \ref{lem:mpex1} and \ref{lem:mpex2} the pull-back to $G_3$ of a Howe factorization for $\theta_1$ is a Howe factorization for $\theta_3$. The same is true for $\theta_2$ in place of $\theta_1$. Thus the representation $\pi_{(S_3,\theta_3)}$ is the pull-back to $G_3(F)$ of the representation $\pi_{(S_1,\theta_1)}$ and also of the representation $\pi_{(S_2,\theta_2)}$. But then $\pi_{(S_3,\theta_3)}$ is the pull-back of $\pi$ to $G_3(F)$, which then implies that $\pi_{(S_2,\theta_2)}$ is the pull-back of $\pi$ to $G_2(F)$. We conclude that the pull-back of $\pi$ to $G_2(F)$ is (extra) regular.

\section{The character formula} \label{sec:char}

Let $F$ be a non-archimedean local field and let $G$ be a connected reductive group defined over $F$ and splitting over a tame extension of $F$. Let $((G^0 \subset G^1 \dots \subset G^d),\pi_{-1},(\phi_0,\phi_1,\dots,\phi_d))$ be a reduced generic cuspidal $G$-datum in the sense of \cite[Definition 3.11]{HM08}. From this datum Yu's construction produces not just a single supercuspidal representation $\pi$ of depth $r=r_d$, but in fact a supercuspidal representation $\pi_i$ of the group $G^i(F)$ of depth $r_i$, for each $0 \leq i \leq d$, and $\pi=\pi_d$.

The Harish-Chandra character $\Theta_\pi$ of $\pi$ has been computed in the work of Adler and Spice \cite{AS08,AS09} and later reinterpreted in the work of DeBacker and Spice \cite{DS}. At the moment this work has the additional technical assumption that $G^{d-1}/Z_G$ is anisotropic, but we are hopeful that this condition will be eliminated in the future. The resulting character formula involves various roots of unity. The main purpose of this section is to provide an alternative description of these roots of unity. As we shall see, they can be interpreted in a way that ties them closely to the Langlands-Shelstad transfer factors from the theory of endoscopy \cite{LS87}. More precisely, we shall define certain terms $\epsilon$ and $\Delta_{II}^\tx{abs}$, that can be seen as absolute versions of the corresponding pieces of the transfer factor, and will show that they describe the roots of unity occurring in the character formula. These terms are absolute in the sense that they are associated to the group $G$, a maximal torus of it, and some auxiliary data. In the presence of an endoscopic group $H$, the quotient of either term for $G$ by the corresponding term for $H$ will be equal to the analogous term occurring in the Langlands-Shelstad transfer factor.

Once the roots of unity in the character formula have been reinterpreted in this way, we will show that the resulting expression for the character of a regular supercuspidal representation evaluated at a sufficiently shallow element is precisely analogous to the character formula for discrete series representations of real reductive groups.

\subsection{Hypotheses}

The papers \cite{AS08} and \cite{AS09} impose various hypotheses on the group $G$ under which the character formula is obtained. Besides the assumption that $G^{d-1}/Z_G$ is compact that we already mentioned, these are Hypotheses (A)-(D) of \cite[\S2]{AS08}, Hypothesis 2.3 of \cite{AS09}, and the assumption \cite[\S1.1]{AS09} that the residual characteristic of $F$ is not $2$. As remarked in \cite[\S1.2]{AS09}, Hypotheses (A) and (D) are implied by the tameness of $G$. Hypothsis (C) is satisfied when $G$ has simply connected derived subgroup. According to Lemma \ref{lem:hypcg} the same is also true for Hypothesis 2.3. However, the character function of a representation $\pi$ of $G(F)$ can be computed by first taking a $z$-extension $\tilde G$ of $G$, pulling $\pi$ back to a representation $\tilde\pi$ of $\tilde G(F)$, and then computing the character function of $\tilde\pi$. This means that Hypotheses (C) and 2.3 are in fact superfluous. Finally, Hypothesis (B) is satisfied when the residual characteristic of $F$ is not a bad prime for $G$. Thus we shall make the assumption that the residual characteristic of $F$ is not a bad prime for $G$ whenever we apply the character formula of \cite{AS09}, in addition to our standing assumption that it is not equal to $2$. On the other hand, some results of \cite{AS08} are valid and can be used without this assumption.

\subsection{Review of orbital integrals} \label{sub:orb}

Let $\Lambda : F \rw \C^\times$ be a non-trivial character. Let $S \subset G$ be a maximal torus defined over $F$. We can view $\mf{s}^*=\tx{Lie}^*(S)$ as a subspace of $\mf{g}^*=\tx{Lie}^*(G)$ as explained in \cite[\S8]{Yu01}, namely as the trivial-weight space for the coadjoint action of $G$. Let $X^* \in \mf{s}^*(F) \subset \mf{g}^*(F)$ be an element whose stabilizer for the coadjoint action of $G$ is $S$. For any function $f^* \in \mc{C}^\infty_c(\mf{g}^*(F))$ we have the orbital integral
\[ O_{X^*}(f^*) = \int_{G(F)/S(F)} f^*(\tx{Ad}^*(g)X^*)dg. \]
The measure used for integration is the quotient of a measure on $G(F)$ by a measure on $S(F)$, and on both groups we take the canonical measure introduced by Waldspurger in \cite[\S I.4]{Wal01}, as is done in \cite[Definition 4.1.6]{DS}. For a function $f \in \mc{C}^\infty_c(\mf{g}(F))$ we define its Fourier-transform $\hat f_{\Lambda,dY} \in \mc{C}^\infty_c(\mf{g}^*(F))$ by
\[ \hat f_{\Lambda,dY}(Y^*) = \int_{\mf{g}(F)} f(Y)\Lambda\<Y,Y^*\> dY, \]
where we have indicated as subscripts the dependence on the choices of the character $\Lambda$ and the measure $dY$. A fundamental result of Harish-Chandra is that the distribution $f \mapsto O_{X^*}(\hat f_{\Lambda,dY})$ is represented by a function, i.e. there exists a function $\hat\mu_{X^*,\Lambda}$ on $\mf{g}(F)$ such that
\[ O_{X^*}(\hat f_{\Lambda,dY}) = \int_{\mf{g}(F)} \hat\mu_{X^*,\Lambda}(Y)f(Y) dY \]
for all $f \in \mc{C}^\infty_c(\mf{g}(F))$. The function $\hat\mu_{X^*,\Lambda}$ does not depend on the choice of measure $dY$. We can renormalize it using the usual Weyl discriminants \cite[Definition 2.2.8]{DS} and obtain
\[ \hat\iota_{X^*,\Lambda}(Y) = |D(X^*)|^\frac{1}{2} |D(Y)|^\frac{1}{2}\hat\mu_{X^*,\Lambda}(Y). \]
The function $\hat\mu_{X^*,\Lambda}$ depends on $\Lambda$ via the equation
\[ \hat\iota_{X^*,\Lambda \cdot c} = \hat\iota_{cX^*,\Lambda}, \]
where $c \in F^\times$ and $[\Lambda \cdot c](x)=\Lambda(cx)$. The same is true for $\hat\iota_{X^*,\Lambda}$ provided $c \in O_F^\times$.

\subsection{Review of the work of Adler-Spice and DeBacker-Spice} \label{sub:asds}

In this subsection $F$ is a local field of odd residual characteristic that is not a bad prime for $G$. Set $r=r_{d-1}$ and $\pi=\pi_d$. Let $x$ be the unique point in the building $\mc{B}(G^{d-1},F)$. The formula of Adler-Spice gives the value of the function $\Theta_\pi$ at any regular semi-simple element $\gamma \in G(F)$ that has an $r$-approximation $\gamma = \gamma_{<r} \cdot \gamma_{\geq r}$ in terms of the value of $\Theta_{\pi_{d-1}}$. It is more convenient to replace $\Theta_\pi(\gamma)$ by its renormalization $\Phi_{\pi}(\gamma)=|D_G(\gamma)|^\frac{1}{2}\Theta_\pi(\gamma)$. In the form presented in \cite[Theorem 4.6.2]{DS} the formula for $\Phi_\pi(\gamma)$ is
\begin{equation} \label{eq:char}
\phi_d(\gamma)\ssum{\substack{g \in J^d(F) \lmod G^d(F) / G^{d-1}(F)\\ \gamma_{<r}^g \in G^{d-1}(F)}}\epsilon_{s,r}(\gamma_{<r}^g) \epsilon^r(\gamma_{<r}^g) \tilde e(\gamma_{<r}^g) \cdot \Phi_{\pi_{d-1}}(\gamma_{<r}^g) \hat\iota_{{\mf{j}^d}, {^gX_{d-1}^*}}(\log(\gamma_{\geq r}))
\end{equation}
We need to explain the notation. Fix again a non-trivial character $\Lambda : F \rw \C^\times$, with the additional assumption that $\Lambda$ is trivial on $\mf{p}_F$ but non-trivial on $O_F$. Let $X_{d-1}^* \in \tx{Lie}^*(Z(G^{d-1}))(F)_{-r}$ be a generic element (in the sense of \cite[\S8]{Yu01}) that realizes (in the sense of \cite[\S5]{Yu01}) the character $\phi_{d-1}$. We abbreviate $\gamma_{<r}^g = g^{-1}\gamma_{<r}g$ and $^gX_{d-1}^*=\tx{Ad}^*(g)X_{d-1}^*$. Setting $J^d=\tx{Cent}(\gamma_{<r},G^d)^\circ$, the condition $\gamma_{<r}^g$ on the summation index $g$ implies that $\tx{Ad}(g)Z(G^{d-1})$ is a subgroup of $J^d$ and in particular $^gX_{d-1}^* \in \mf{j}^{d,*}(F)$. Therefore, the function $\hat\iota_{{\mf{j}^d},{^gX_{d-1}^*}}$ that represents the normalized Fourier-transform of the integral along the coadjoint orbit of $^gX_{d-1}^*$ in $\mf{j}^{d,*}(F)$ (as recalled in the Subsection \ref{sub:orb}) makes sense. Moreover, since both the function itself and the element $X_{d-1}^*$ now depend on the choice of $\Lambda$ in a parallel way, the entire expression $\hat\iota_{{\mf{j}^d},{^gX_{d-1}^*}}$ is independent of $\Lambda$. The map $\tx{log}$ is either the true logarithm function, provided it converges at $\gamma_{\geq r}$, or else the inverse of a mock-exponential map \cite[A1]{AS09}.

One place where the technical assumption on the compactness of $G^{d-1}(F)$ modulo $Z_G(F)$ enters is the evaluation of $\Phi_{\pi_{d-1}}(\gamma_{<r}^g)$, because the semi-simple element $\gamma_{<r}^g \in G^{d-1}(F)$ need not be regular. When $G^{d-1}(F)/Z_G(F)$ is compact, the character $\Theta_{\pi_{d-1}}$ is defined on all (semi-simple) elements of $G^{d-1}(F)$, not just the regular elements. Thus the function $\Phi_{\pi_{d-1}}=|D_{G^{d-1}}^\tx{red}|^\frac{1}{2}\Theta_{\pi_{d-1}}$ is also defined on all of $G^{d-1}(F)$. When $\gamma_{<r}$ is itself regular one can hope that the above formula applies even without the compactness assumption and we shall prove in the next subsection that it does, at least in the case when $\gamma=\gamma_{<r}$ is topologically semi-simple modulo $Z(G)^\circ$.

The remaining objects in the formula: $\epsilon_{s,r}$, $\epsilon^r$, and $\tilde e$, are all complex roots of unity of order dividing $4$ and will be the focus of our study. We shall now give their definition following \cite[\S4.3]{DS}. Let $T$ be a maximal torus of $G^{d-1}$ containing $\gamma_{<r}^g$ and such that $x \in \mc{A}(T,E)^\Gamma$ for some finite Galois extension $E/F$ splitting $T$. We consider the following subset of the real numbers, defined for each $\alpha \in R(T,G)$ by
\[ \tx{ord}_x(\alpha) = \{ r \in \R| \mf{g}_\alpha(F_\alpha)_{x,r+} \neq \mf{g}_{\alpha}(F_\alpha)_{x,r} \}, \]
where we have abbreviated by $\mf{g}_\alpha(F_\alpha)_{x,r}$ the intersection $\mf{g}_\alpha(F_\alpha) \cap \mf{g}(F_\alpha)_{x,r}$. Based on this set we define the following subsets of the root system $R(T,G)$

\begin{eqnarray*}
	R_{\gamma_{<r}^g}&=&\{ \alpha \in R(T,G) \sm R(T,G^{d-1})| \alpha(\gamma_{<r}^g) \neq 1 \},\\
	R_{r/2}&=&\{ \alpha \in R_{\gamma_{<r}^g}| r \in 2\tx{ord}_x(\alpha)\},\\
	R_{(r-\tx{ord}_{\gamma_{<r}^g})/2}&=&\{ \alpha \in R_{\gamma_{<r}^g}| r-\tx{ord}(\alpha(\gamma_{<r}^g)-1) \in 2\tx{ord}_x(\alpha)\}.\\
\end{eqnarray*}
For $\alpha \in R_{(r-\tx{ord}_{\gamma_{<r}^g})/2}$ symmetric and ramified we define
\[ t_{\alpha}=\frac{1}{2}e_\alpha N_{F_\alpha/F_{\pm\alpha}}(w_\alpha)\<d\alpha^\vee(1),X_{d-1}^*\>(\alpha(\gamma_{<r}^g)-1) \in O_{F_\alpha}^\times. \]
Here $e_\alpha$ is the ramification degree of $F_\alpha/F$ and $w_\alpha \in F_\alpha^\times$ is any element of valuation $(\tx{ord}(\alpha(\gamma_{<r})-1)-r)/2$. The existence of $w_\alpha$ is argued in the proof of \cite[Proposition 5.2.13]{AS09}. It also follows from Proposition \ref{pro:ordx} below. Finally, we introduce the Gauss sum
\[ \mf{G}=q^{-1/2}\sum_{x\in k_F}\Lambda(x^2) \in \C^\times. \]
With this notation at hand, we come to the definition of the three roots of unity.

\begin{equation} \label{eq:esr}
	\epsilon_{s,r}(\gamma_{<r}^g) = \prod_{\alpha \in \Gamma \lmod (R_{(r-\tx{ord}_{\gamma_{<r}^g})/2})_{\tx{symm},\tx{ram}}} \tx{sgn}_{F_{\pm\alpha}}(G_{\pm\alpha})(-\mf{G})^{f_\alpha}\tx{sgn}_{k_{F_\alpha}^\times}(t_{\alpha}).
\end{equation}
The product here runs over the $\Gamma$-orbits of symmetric ramified roots belonging to $R_{(r-\tx{ord}(\gamma_{<r}^g))/2}$. For each such $\alpha$, let $G_{\pm\alpha}$ be the subgroup of $G$ generated by the root subgroups for the two roots $\alpha$ and $-\alpha$. It is a semi-simple group of rank 1 defined over $F_{\pm\alpha}$, and $\tx{sgn}_{F_{\pm\alpha}}$ denotes its Kottwitz sign \cite{Kot83}, which equals $1$ if the $G_{\pm\alpha}$ is split and $-1$ if it is anisotropic. Furthermore, $f_\alpha$ is the degree of the field extension $k_{F_\alpha}/k_F$, and $\tx{sgn}_{k_{F_\alpha}^\times}$ is the quadratic character of the cyclic group $k_{F_\alpha}^\times$, onto which we can project the element $t_\alpha \in O_{F_\alpha}^\times$. Both $\mf{G}$ and $t_\alpha$ depend on the choice of $\Lambda$ (the latter through $X_{d-1}^*$) and it is easy to check that this dependence cancels out.

\begin{equation} \label{eq:er}
\epsilon^r(\gamma_{<r}^g) =\!\!\!\!\!\! \prod_{\alpha \in \Gamma \times \{\pm 1\} \lmod (R_{r/2})^\tx{sym}}\!\!\!\!\!\!\tx{sgn}_{k_{F_\alpha}^\times}(\alpha(\gamma_{<r}^g)) \cdot\!\!\!\!\!\! \prod_{\alpha \in \Gamma \lmod (R_{r/2})_{\tx{sym},\tx{unram}}}\!\!\!\!\!\! \tx{sgn}_{k_{F_\alpha}^1}(\alpha(\gamma_{<r}^g)).
\end{equation}
Here the superscript sym means that we are taking $\Gamma \times \{\pm 1\}$-orbits of asymmetric roots, while the subscripts sym,unram mean that we are taking $\Gamma$-orbits of roots that are symmetric and unramified. In the first product, we project $\alpha(\gamma_{<r}^g) \in O_{F_\alpha}^\times$ to $k_{F_\alpha}^\times$. In the second product, the $F_\alpha/F_{\pm\alpha}$-norm of the element $\alpha(\gamma_{<r}^g) \in O_{F_\alpha}^\times$ vanishes, because the root $\alpha$ is symmetric. The same is true for the projection of $\alpha(\gamma_{<r}^g)$ to $k_{F_\alpha}^\times$, because the symmetric root $\alpha$ is unramified. The group $k_{F_\alpha}^1$ of elements of $k_{F_\alpha}^\times$ with vanishing $k_{F_\alpha}/k_{F_{\pm\alpha}}$-norm is cyclic and we apply its quadratic character to the projection of $\alpha(\gamma_{<r}^g)$. Finally

\begin{equation} \label{eq:et}
	\tilde e(\gamma_{<r}^g) = \prod_{\alpha \in \Gamma \lmod (R_{(r-\tx{ord}_{\gamma_{<r}^g})/2})_\tx{sym}} (-1).
\end{equation}

Note that while the original definition of $\tilde e$ does not contain the subscript sym, we may restrict the product to symmetric roots by \cite[Remark 4.3.4]{DS}.

Each of these signs implicitly depends on $T$, but their product is independent of $T$. We will soon give an alternative formula for the product $\epsilon_{s,r} \cdot \tilde e$. About $\epsilon^r$ we will only need to know the following:

\begin{fct} \label{fct:eramweyl}
The function $\gamma \mapsto \epsilon^r(\gamma)$ is an $\Omega(T,G)(F)$-invariant character of $T(F)$.
\end{fct}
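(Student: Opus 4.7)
The plan is to verify both claims by direct inspection of \eqref{eq:er}.

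\textbf{Character property.} Each factor in \eqref{eq:er} has the form $\tx{sgn}(\alpha(\gamma))$ for a root $\alpha$. For asymmetric $\alpha$, the quadratic character $\tx{sgn}_{k_{F_\alpha}^\times}$ inflates to $O_{F_\alpha}^\times$ and extends canonically to a character of $F_\alpha^\times$ (trivial on any chosen uniformizer); composed with the homomorphism $\alpha: T(F)\to F_\alpha^\times$ it yields a character of $T(F)$. For symmetric unramified $\alpha$, one first checks that $\alpha(\gamma) \in O_{F_\alpha}^{\times,1}$ automatically for every $\gamma \in T(F)$: indeed symmetry forces $\sigma(\alpha(\gamma))=\alpha(\gamma)^{-1}$ for the nontrivial element $\sigma$ of $\Gamma_{\pm\alpha}/\Gamma_\alpha$ (so the norm to $F_{\pm\alpha}$ is $1$), and unramifiedness then forces $\tx{val}(\alpha(\gamma))=0$. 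Consequently $\tx{sgn}_{k_{F_\alpha}^1}\circ\alpha$ is a character of $T(F)$. The passage to orbits is unambiguous: within a $\Gamma$-orbit the value is controlled by $\Gamma$-equivariance combined with $\Gamma$-stability of $\gamma$, and within a $\Gamma\times\{\pm 1\}$-orbit the swap $\alpha\leftrightarrow -\alpha$ is harmless because the signs are quadratic. The product is therefore a character of $T(F)$. The implicit dependence of $R_{r/2}$ on $\gamma$ through the condition $\alpha(\gamma)\neq 1$ is innocuous, as $\tx{sgn}(1)=1$, so we may take the product over the $\gamma$-independent set $\{\alpha\in R(T,G):r\in 2\tx{ord}_x(\alpha)\}$ (intersected with the appropriate symmetry class).

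\textbf{Weyl invariance.} For $w\in\Omega(T,G)(F)$, choose a lift $\dot w\in N(T,G)(F^s)$; as noted in Subsection~\ref{sub:reviewtrans}, conjugation by $\dot w$ preserves $T(F)$. Substituting $\gamma\mapsto \dot w\gamma\dot w^{-1}$ in \eqref{eq:er} transforms each factor $\tx{sgn}(\alpha(\dot w\gamma\dot w^{-1}))$ into $\tx{sgn}((w^{-1}\alpha)(\gamma))$, and re-indexing by $\beta=w^{-1}\alpha$ exhibits $\epsilon^r(\dot w\gamma\dot w^{-1})$ as the same formula run over $w^{-1}$ applied to the original indexing set. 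It therefore suffices to check that the indexing set is $w$-stable. This amounts to: (i) the action of $w$ on $R(T,G)$ commutes with $\Gamma$ (automatic since $w$ is $F$-rational), hence permutes $\Gamma$- and $\Gamma\times\{\pm 1\}$-orbits; (ii) the symmetry and ramification types are preserved, since $\Gamma_{w\alpha}=\Gamma_\alpha$ and $\Gamma_{\pm w\alpha}=\Gamma_{\pm\alpha}$, whence $F_{w\alpha}=F_\alpha$ and $F_{\pm w\alpha}=F_{\pm\alpha}$; and (iii) the depth condition $r\in 2\tx{ord}_x(\alpha)$ is $w$-invariant.

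\textbf{Main obstacle.} Point (iii) is the delicate part, because a general $w\in\Omega(T,G)(F)$ need not admit a lift $\dot w$ that fixes the point $x\in\mc{B}(G^{d-1},F)$, so one cannot directly transport the filtration on $\mf{g}_\alpha$ to $\mf{g}_{w\alpha}$ by $\tx{Ad}(\dot w)$ and conclude $\tx{ord}_x(\alpha)=\tx{ord}_x(w\alpha)$. The plan is to invoke Proposition~\ref{pro:ordx} from Subsection~\ref{sub:ordx}, which identifies $\tx{ord}_x(\alpha)$ with a set determined only by the ramification type of $\alpha$ and by the toral invariant of \cite[\S3]{KalEpi}, both of which are intrinsic to the root and manifestly stable under the Weyl action. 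Combined with (i) and (ii) this gives $w$-stability of the indexing set and hence $\epsilon^r(\dot w\gamma\dot w^{-1})=\epsilon^r(\gamma)$.
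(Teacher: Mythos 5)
The argument for the character property is fine, but the Weyl-invariance argument contains a genuine gap: your resolution of point (iii) rests on Proposition~\ref{pro:ordx}, which computes $\tx{ord}_x(\alpha)$ intrinsically (ramification type plus toral invariant) only for \emph{symmetric} roots $\alpha \in R(T,G)_\tx{sym}$. The first product in \eqref{eq:er}, however, runs over $\Gamma\times\{\pm 1\}$-orbits of \emph{asymmetric} roots, for which the paper gives no such intrinsic description of $\tx{ord}_x(\alpha)$. Indeed, for asymmetric $\alpha$ the set $\tx{ord}_x(\alpha)$ is a coset of $e_\alpha^{-1}\Z$ whose offset genuinely depends on the position of $x$ in the apartment; there is nothing a priori ``intrinsic to the root'' pinning it down. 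So the claim that the depth condition $r\in 2\tx{ord}_x(\alpha)$ is manifestly $w$-invariant is unsubstantiated exactly where it is needed for the first product.

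The ``main obstacle'' paragraph also dismisses too quickly the conjugation argument that, for the bulk of the group, actually works: $T$ is a maximal torus of $G^{d-1}$ and $G^{d-1}/Z(G)$ is anisotropic, so $T$ is elliptic in $G$; hence $x$ is the \emph{unique} $\Gamma$-fixed point of $\mc{A}^\tx{red}(T,E)$, and any $\dot w \in N(T,G)(F)$ necessarily fixes $x$. Transporting the filtration by $\tx{Ad}(\dot w)$, which carries $\mf{g}_\alpha(F_\alpha)$ to $\mf{g}_{w\alpha}(F_\alpha)$ over $F_\alpha=F_{w\alpha}$ and preserves the depths at $x$, gives $\tx{ord}_x(\alpha)=\tx{ord}_x(w\alpha)$ for all roots, asymmetric ones included. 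This handles all $w$ in the image of $N(T,G)(F)$; it is only the possible discrepancy between $N(T,G)(F)/T(F)$ and $\Omega(T,G)(F)$ that requires further thought, and Proposition~\ref{pro:ordx} does not close that gap for asymmetric roots. A small further slip: the $\gamma$-independent indexing set you write down omits the exclusion $\alpha \notin R(T,G^{d-1})$ that is built into $R_{\gamma_{<r}^g}$; that constraint has to be carried along, and its $w$-stability also requires comment. For reference, the paper itself gives no proof: the sentence following the Fact attributes the character observation to \cite{DS} and states the invariance without argument.
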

The observation that this function is a character was already made in \cite{DS} and will be very useful.

\subsection{Character values at shallow elements} \label{sub:charshallow}
In this subsection $F$ is a local field of odd residual characteristic that is not a bad prime for $G$. In the special case $\gamma=\gamma_{<r}$ formula \eqref{eq:char} specializes to
\begin{equation} \label{eq:charshallow1} \Phi_\pi(\gamma)=\phi_d(\gamma)\ssum{\substack{g \in J^d(F) \lmod G^d(F) / G^{d-1}(F)\\ \gamma^g \in G^{d-1}(F)}}\epsilon_{s,r}(\gamma^g) \epsilon^r(\gamma^g) \tilde e(\gamma^g) \cdot \Phi_{\pi_{d-1}}(\gamma^g).
\end{equation}
This formula still requires the assumption that $G^{d-1}$ is anisotropic modulo center. However, we expect that this condition is unnecessary. In this subsection, we will prove that this formula is valid without this condition, but under the stronger assumption on $\gamma$ that it is tame elliptic and topologically semi-simple modulo $Z(G)^\circ$. That is, we are assuming that $\gamma=\gamma_0$, which is stronger than $\gamma=\gamma_{<r}$, and that furthermore $\gamma$ belongs to a tame elliptic maximal torus, which is then equal to $J_d$ by definition. To remind ourselves that this is a tame elliptic maximal torus, let us write $S_\gamma$ for it.

We follow the proof of \cite[Theorem 6.4]{AS09}. We have the point $x \in \mc{B}^\tx{red}(G,F)$, which is called $\bar x$ in loc. cit. Fix a preimage $\dot x \in \mc{B}^\tx{enl}(G,F)$, which will serve as the point $x$ in loc. cit. Recall that the representation $\pi$ is compactly induced from a finite dimensional irreducible representation $\sigma$ of the group $K_\sigma=G^{d-1}(F)_x \cdot G(F)_{x,0+}$. We denote by $\theta_\sigma$ the character of this representation, and by $\dot\theta_\sigma$ the extension by zero of the function $\theta_\sigma$ to all of $G(F)$.

We claim that the function
\[ g \mapsto \dot\theta_\sigma({^g\gamma}) \]
on $G(F)/Z(F)$ is compactly supported. For this, note that because $\gamma$ is topologically semi-simple modulo $Z(G)^\circ$ all of its root values are topologically semi-simple. It follows from \cite[\S3.6]{Tits79} that the set of fixed points of $\gamma$ in $\mc{B}^\tx{red}(G,E)$, where $E$ is the splitting field of $S_\gamma$, is precisely the apartment $\mc{A}^\tx{red}(S_\gamma,E)$. Thus, the set of fixed points of $\gamma$ in $\mc{B}^\tx{red}(G,F)$ is a singleton set $\{x_\gamma\}$. The same is of course true for the element $^g\gamma$, which then has the unique fixed point $gx_\gamma$. Thus, unless $gx_\gamma=x$, the element $^g\gamma$ does not belong to $K_\sigma \subset G(F)_x$ and consequently $\dot\theta_\sigma({^g\gamma})$ vanishes. This function is thus supported on a single coset of $G(F)_x/Z(F)$ in $G(F)/Z(F)$, which is compact.

According to the Harish-Chandra integral character formula, we have
\[ \Theta_\pi(\gamma)=\frac{\tx{deg}(\pi)}{\tx{deg}(\sigma)}\phi_d(\gamma)\int_{G(F)/Z(F)}\int_K\dot\theta_\sigma({^{gc}\gamma})dcdg,\]
where $K$ is any compact open subgroup of $G(F)$ with Haar measure $dc$ normalized so that the volume of $K$ is equal to $1$. We can take for example $K=G(F)_{x_\gamma,0}$. Since the integrand is compactly supported as a function of $g$, we switch the two integrals and then remove the integral over $K$. We arrive at
\[ \Theta_\pi(\gamma)=\frac{\tx{deg}(\pi)}{\tx{deg}(\sigma)}\phi_d(\gamma)\sum_{g \in K_\sigma \lmod G(F)/S_\gamma(F)} \int_{K_\sigma g S_\gamma(F)/Z(F)} \dot\theta_\sigma({^{kgs}\gamma})dkds.\]
We have $\dot\theta_\sigma({^{kgs}\gamma})=\dot\theta({^g\gamma})$ which, as we already discussed, is zero unless $gx_\gamma=x$. Recall the subset $\mc{B}_r(\gamma)$ of \cite[Definition 9.5]{AS08}. By \cite[Lemma 9.6]{AS08} it is equal to $\mc{B}^\tx{enl}(S_\gamma,F)$, which is the preimage of $x_\gamma$ in $\mc{B}^\tx{enl}(G,F)$. Thus, if $gx_\gamma=x$ then $\dot x$ belongs to $\mc{B}_r({^g\gamma})$. Because of this, the rest of the argument in the proof of \cite[Theorem 6.4]{AS09} goes through:  \cite[Corollaries 4.5,4.6]{AS09} can now be used without the assumption that $G^{d-1}(F)/Z(F)$ is anisotropic, whose purpose was to guarantee, via \cite[Lemma 9.13]{AS08}, that $x \in \mc{B}_r({^g\gamma})$.

\subsection{Computation of $\tx{ord}_x(\alpha)$} \label{sub:ordx}

In this subsection $F$ is a local field of odd residual characteristic. The roots of unity occurring in the character formula \eqref{eq:char} of Adler-DeBacker-Spice depend on the sets $\tx{ord}_x(\alpha) \subset \R$ for the various roots $\alpha \in R(T,G)$. According to \cite[Corollary 3.1.9]{DS}, when $\alpha$ is symmetric there are only two possibilities for $\tx{ord}_x(\alpha)$, namely $e_\alpha^{-1}\Z$ and $e_\alpha^{-1}(\Z + \frac{1}{2})$, where $e_\alpha$ is the ramification degree of the extension $F_\alpha/F$. The key to our reinterpretation of these roots of unity is the exact computation of $\tx{ord}_x(\alpha)$, which is as follows.

\begin{pro} \label{pro:ordx} Let $T \subset G$ be a maximal torus with a tamely ramified splitting field $E/F$and let  $x \in \mc{B}(G,F) \cap \mc{A}(T,E)$. For any $\alpha \in R(T,G)_\tx{sym}$ we have
\[ \tx{ord}_x(\alpha) = \begin{cases}
e_\alpha^{-1}\Z,&\tx{if }\alpha\tx{ is ramified}\\
e_\alpha^{-1}\Z,&\tx{if }\alpha\tx{ is unramified and }f_{(G,T)}(\alpha)=+1\\
e_\alpha^{-1}(\Z+\frac{1}{2}),&\tx{if }\alpha\tx{ is unramified and }f_{(G,T)}(\alpha)=-1\\
\end{cases} \]
where $f_{(G,T)}(\alpha)$ is the toral invariant defined in \cite[\S3.1]{KalEpi}.
\end{pro}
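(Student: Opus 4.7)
My plan is to reduce to a rank-one calculation, compute $\tx{ord}_x(\alpha)$ explicitly using a Chevalley pinning, and then match the resulting offset with the toral invariant.

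First, I would restrict attention to the semisimple rank-one subgroup $G_{\pm\alpha} \subset G$ with root system $\{\pm\alpha\}$ relative to $T_{\pm\alpha} = T \cap G_{\pm\alpha}$, which is defined over $F_{\pm\alpha}$. Since the Moy-Prasad filtration on $\mf{g}(F_\alpha)$ intersects $\mf{g}_{\pm\alpha}(F_\alpha)$ to give the analogous filtration for $G_{\pm\alpha}$ at the image of $x$ in $\mc{B}(G_{\pm\alpha}, F_{\pm\alpha})$, we may replace $(G,T,F)$ by $(G_{\pm\alpha}, T_{\pm\alpha}, F_{\pm\alpha})$ and assume from the outset that $G$ has rank one and $F = F_{\pm\alpha}$, so that $F_\alpha/F$ is a separable quadratic extension of ramification index $e_\alpha \in \{1,2\}$.

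Next, I would fix a Chevalley pinning of $G \times F_\alpha$, giving a base point $x_0 \in \mc{A}(T,F_\alpha)$ and root vectors $X_\alpha, X_{-\alpha}$ with $[X_\alpha, X_{-\alpha}] = H_\alpha$. Then $\mf{g}_\alpha(F_\alpha)_{x,r} = \{cX_\alpha : v_{F_\alpha}(c) \geq e_\alpha(r - \alpha(x - x_0))\}$, so $\tx{ord}_x(\alpha) = \alpha(x - x_0) + e_\alpha^{-1}\Z$. Let $\sigma$ be the non-trivial element of $\tx{Gal}(F_\alpha/F)$. Since $\alpha$ is symmetric, $\sigma(X_\alpha) = cX_{-\alpha}$ for some $c \in F^\times$ (the containment in $F$ follows from $[X_\alpha, \sigma(X_\alpha)]$ being $\sigma$-anti-invariant, hence an $F$-scalar multiple of the $\sigma$-anti-invariant element $H_\alpha$). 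Comparing the Moy-Prasad depth of $X_{-\alpha}$ computed via the pinning at $x_0$ (depth $-\alpha(y-x_0)$ at any $y$) with that computed via the $\sigma$-transported pinning at $\sigma(x_0)$ (in which $\sigma(X_\alpha)$ is the root vector for $-\alpha$, of depth $-\alpha(y-\sigma(x_0))$ at $y$) yields the key identity $v_{F_\alpha}(c) = e_\alpha\,\alpha(\sigma(x_0) - x_0)$. Using $\sigma(x) = x$ and $\sigma\alpha = -\alpha$, one finds $2\alpha(x - x_0) = -\alpha(\sigma(x_0) - x_0)$, so $\alpha(x - x_0) = -v_F(c)/2$, where $v_{F_\alpha}(c) = e_\alpha v_F(c)$ since $c \in F^\times$.

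Finally, I would identify this formula with the toral invariant. By \cite[\S3.1]{KalEpi}, $f_{(G,T)}(\alpha) \in \{\pm 1\}$ is the class of $c$ in $F^\times / N_{F_\alpha/F}(F_\alpha^\times) \cong \{\pm 1\}$. In the ramified case ($e_\alpha = 2$), $-v_F(c)/2 \in (1/2)\Z = e_\alpha^{-1}\Z$ regardless of the parity of $v_F(c)$, giving $\tx{ord}_x(\alpha) = e_\alpha^{-1}\Z$. In the unramified case ($e_\alpha = 1$), $N_{F_\alpha/F}(F_\alpha^\times) = \{y \in F^\times : v_F(y) \in 2\Z\}$, so $f_{(G,T)}(\alpha) = (-1)^{v_F(c)}$, yielding $\tx{ord}_x(\alpha) = \Z$ when $f_{(G,T)}(\alpha) = +1$ and $\tx{ord}_x(\alpha) = \Z + 1/2$ when $f_{(G,T)}(\alpha) = -1$, matching the claim.

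The main obstacle is the identity $v_{F_\alpha}(c) = e_\alpha\,\alpha(\sigma(x_0) - x_0)$, which demands careful tracking of how the Chevalley pinning transforms under Galois and of the normalization of the Moy-Prasad filtration on root subspaces; once established, the remainder is book-keeping, including the verification that the final answer is independent of the pinning (which follows because a change of pinning modifies $c$ by $N_{F_\alpha/F}(F_\alpha^\times)$).
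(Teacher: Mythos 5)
Your proof is correct and takes a genuinely different, more direct route than the paper's. Both arguments begin by restricting to the rank-one subgroup $G_{\pm\alpha}$; the paper carries this out via a detailed lemma comparing Chevalley valuations, while you assert it (that is fine for a sketch, but the step is not entirely formal). After that the two proofs diverge. The paper further reduces to $G=\tx{SL}_2$, proves a separate lemma showing that $\tx{ord}_x(\alpha)$ and the toral invariant vary together under inner twists of the rank-one group, and then checks a single explicit base case ($S$ anisotropic, built from a Cayley transform). You instead compute all rank-one cases at once: you express the offset $\alpha(x-x_0)$ of the apartment point from the Chevalley base point as $\pm v_F(c)/2$, where $\sigma(X_\alpha)=cX_{-\alpha}$, and then observe that $c\in F_{\pm\alpha}^\times$ is exactly the element whose class in $F_{\pm\alpha}^\times/N_{F_\alpha/F_{\pm\alpha}}(F_\alpha^\times)$ defines $f_{(G,T)}(\alpha)$. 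This collapses the paper's inner-twist lemma and base-case verification into a single elementary calculation and makes the appearance of the toral invariant completely transparent. The trade-off is that you do not obtain the paper's standalone statement about behavior of $\tx{ord}_x(\alpha)$ under inner twists, which has some independent interest.

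Two small points. Your sign in the offset relation is wrong: from $\sigma(x)=x$ and $\alpha\circ\sigma=-\alpha$ on $X_*(T)\otimes\R$ one gets $\alpha(\sigma(x_0)-x_0)=+2\alpha(x-x_0)$, hence $\alpha(x-x_0)=+v_F(c)/2$. Since only the class of $\alpha(x-x_0)$ modulo $e_\alpha^{-1}\Z$ matters and $v_F(c)\in\Z\subset e_\alpha^{-1}\Z$, the sign does not affect the conclusion, but it should be fixed. Also, the "key identity" $v_F(c)=\alpha(\sigma(x_0)-x_0)$ is correct, but the comparison of depths via the two pinnings tacitly uses the Galois compatibility of Moy-Prasad filtrations, $\sigma\bigl(\mf{g}(F_\alpha)_{y,r}\bigr)=\mf{g}(F_\alpha)_{\sigma y,r}$; this is what justifies reading off the depth of $\sigma(X_\alpha)$ from the $\sigma$-transported pinning, and it is worth stating explicitly.
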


The proof will occupy this subsection. The crucial step is the reduction of the proof to the case of semi-simple groups of rank 1.

Let $G_{\pm\alpha}$ be the subgroup of $G$ generated by the root subgroups for the roots $\alpha$ and $-\alpha$. It is a semi-simple group of absolute rank 1 and is defined over $F_{\pm\alpha}$. Its Lie-algebra is $\mf{g}_{\pm\alpha} = \mf{g}_{-\alpha} \oplus \mf{s}_\alpha \oplus \mf{g}_{\alpha}$, where $\mf{s}_\alpha$ is the 1-dimensional subspace of $\mf{g}$ spanned by the coroot $H_\alpha$. Let $S_\alpha \subset G_{\pm\alpha}$ be the maximal torus whose Lie-algebra is $\mf{s}_\alpha$. It is a 1-dimensional anisotropic torus defined over $F_{\pm\alpha}$ and split over $F_\alpha$. Let $x_{\pm\alpha} \in \mc{B}(G_{\pm\alpha},F_{\pm\alpha})$ be the unique point in $\mc{A}(S_\alpha,F_\alpha)^{\Gamma_{\pm\alpha}}$ (we are using here again \cite{Pr01}).

\begin{lem}
The filtration $\mf{g}_\alpha(F_\alpha) \cap \mf{g}(F_\alpha)_{x,r}$ coincides with the filtration $\mf{g}_\alpha(F_\alpha) \cap \mf{g}_{\pm\alpha}(F_\alpha)_{x_{\pm\alpha},r}$.
\end{lem}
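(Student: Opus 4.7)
The plan is to reduce the equality of filtrations on $\mf{g}_\alpha(F_\alpha)$ to the identity $\alpha(x - x_{\pm\alpha}) = 0$ of affine-root values, which will be forced by the Galois sign-change that characterizes symmetric roots.

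First I would pass to a tame Galois extension $E/F$ containing $F_\alpha$ and splitting $T$. By the tame descent property of Moy--Prasad filtrations (cf.\ \cite[\S5, Theorem 8.3]{Yu03}), both $\mf{g}(F_\alpha)_{x,r}$ and $\mf{g}_{\pm\alpha}(F_\alpha)_{x_{\pm\alpha},r}$ arise as the $\tx{Gal}(E/F_\alpha)$-invariants of the corresponding filtrations over $E$, so it is enough to prove
\[
\mf{g}_\alpha(E) \cap \mf{g}(E)_{x,r} \;=\; \mf{g}_\alpha(E) \cap \mf{g}_{\pm\alpha}(E)_{x_{\pm\alpha},r}
\]
for every $r \in \R$.

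Over $E$ both $T$ and $G_{\pm\alpha}$ are split, so the Moy--Prasad filtration on the $E$-line $\mf{g}_\alpha(E)$ admits an explicit description in terms of the affine root attached to $\alpha$: fixing a nonzero $X_\alpha \in \mf{g}_\alpha(E)$ coming from an $E$-Chevalley basis of $G_{\pm\alpha}$, one has $\mf{g}_\alpha(E)\cap\mf{g}(E)_{y,r} = \{cX_\alpha : v_E(c) \geq e_E(r - \alpha(y - y_0))\}$ for any choice of origin $y_0 \in \mc{A}(T,E)$, and the same formula, with the same $X_\alpha$, governs the filtration coming from $G_{\pm\alpha}$ at a point $y \in \mc{A}(S_\alpha,E) \subset \mc{A}(T,E)$, where the inclusion of apartments is given by $X_*(S_\alpha) \hookrightarrow X_*(T)$. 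Taking the common origin $y_0 := x_{\pm\alpha}$, the step thus reduces to the identity $\alpha(x - x_{\pm\alpha}) = 0$.

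For this identity, let $\sigma$ generate $\tx{Gal}(F_\alpha/F_{\pm\alpha})$; since $\alpha$ is symmetric, $\sigma\cdot\alpha = -\alpha$. The point $x_{\pm\alpha}$ is $\sigma$-fixed by definition, and $x$ is $\sigma$-fixed because $\mc{B}(G,F)\subset \mc{B}(G,F_{\pm\alpha}) = \mc{B}(G,F_\alpha)^\sigma$ by tame Galois descent for Bruhat--Tits buildings. Hence $x - x_{\pm\alpha}$ lies in the $\sigma$-fixed subspace of the translation space $X_*(T)\otimes\R$ of $\mc{A}(T,E)$, and the Galois-equivariance of the pairing $X^*(T)\times X_*(T) \to \Z$ gives
\[
\alpha(x - x_{\pm\alpha}) \;=\; (\sigma\cdot\alpha)\bigl(\sigma\cdot(x - x_{\pm\alpha})\bigr) \;=\; -\alpha(x - x_{\pm\alpha}),
\]
so $\alpha(x - x_{\pm\alpha}) = 0$, completing the argument.

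The main obstacle is really just the bookkeeping of identifications: to make sense of the explicit formula on both sides simultaneously, one must check that Landvogt's functorial embedding $\mc{B}(G_{\pm\alpha},E) \hookrightarrow \mc{B}(G,E)$ carries $x_{\pm\alpha}$ into the apartment $\mc{A}(T,E)$ in a way that is compatible with $X_*(S_\alpha) \hookrightarrow X_*(T)$ and with the chosen root vector $X_\alpha$. This is a standard compatibility in the split rank-one setting, and after anchoring both apartments at the common Galois-fixed origin $x_{\pm\alpha}$, the only remaining content is the sign calculation above.
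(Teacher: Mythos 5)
Your argument is essentially the paper's, repackaged slightly. Where the paper shows that the restriction $\psi_{x,\pm\alpha}$ of the valuation at $x$ to the rank-one root datum $\tx{RD}_{G_{\pm\alpha}}$ is equipollent to a Chevalley valuation and hence corresponds to a $\Gamma_{\pm\alpha}$-fixed point of $\mc{A}(S_\alpha,E)$ --- which must be $x_{\pm\alpha}$ by uniqueness, since $S_\alpha$ is $F_{\pm\alpha}$-anisotropic --- you instead reduce to the vanishing $\langle\alpha,x-x_{\pm\alpha}\rangle=0$ and extract it from the relation $\sigma\cdot\alpha=-\alpha$. These are two faces of the same fact: the uniqueness of the fixed point of $\mc{A}(S_\alpha,E)^{\Gamma_{\pm\alpha}}$ and the sign-flip in your computation both come from $\sigma$ acting by $-1$ on $X_*(S_\alpha)\otimes\R$.

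The place where your write-up is genuinely thin is the step you defer at the end as a ``standard compatibility.'' The claim that the two filtrations on $\mf{g}_\alpha(E)$ --- the one cut out by $\mf{g}(E)_{x,\bullet}$ and the one cut out by $\mf{g}_{\pm\alpha}(E)_{x_{\pm\alpha},\bullet}$ --- are both governed by the same affine form in $\alpha$, measured against a common origin in a common apartment, is not a formality. It is exactly the equipollence computation that occupies most of the paper's proof: one must verify that the restricted valuation $\psi_{x,\pm\alpha}$ satisfies the Chevalley axioms for $\tx{RD}_{G_{\pm\alpha}}$, and the nontrivial point there is replacing the translation parameter $v\in X_*(T_\tx{ad})\otimes\R$ by one in $X_*(S_\alpha)\otimes\R$, which requires the explicit section $X^*(T_\tx{ad})\otimes\R\to X^*(S_\alpha)\otimes\R$ sending $\alpha$ to $\alpha$. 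Until that is established, the quantity $\alpha(x-x_{\pm\alpha})$ controlling the difference of filtrations is not yet well-defined, so the reduction it rests on is still open. The Galois sign argument itself is clean and correct, but it is the lesser half of the proof.
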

\begin{proof}

Since $E/F_\alpha$ is tame we have $\mf{g}(F_\alpha)_{x,r} = \mf{g}(E)_{x,r} \cap \mf{g}(F_\alpha)$ for all $r \in \R$, and the same is true for $\mf{g}_{\pm\alpha}$. We may thus extend scalars to $E$ for the comparison of the filtrations. Consider the root datum $\tx{RD}_G := (T,\{U_{\beta}\}_{\beta \in R(T,G)})$ in the sense of \cite[\S6.1.1]{BT1}, where we have omitted the data $M_\beta$ from the notation because they are redundant in this case, see \cite[\S4.1.19(i)]{BT2}. The point $x \in \mc{B}(G,F) \subset \mc{B}(G,E)$ gives a valuation $\psi_x$ of $\tx{RD}_G$, consisting of functions $\psi_{x,\beta} : U_\beta(E) \rw \R \cup \{\infty\}$, one for each $\beta \in R(T,G)$, satisfying \cite[Definition 6.2.1]{BT1}. On the group $G_{\pm\alpha}$ we have the root datum $\tx{RD}_{G_{\pm\alpha}} := (S_\alpha,\{U_\beta\}_{\beta=\pm\alpha})$ and it is easy to see that the functions $\{\psi_{x,\alpha},\psi_{x,-\alpha}\}$ satisfy the conditions of \cite[Definition 6.2.1]{BT1} and hence form a valuation of $\tx{RD}_{G_{\pm\alpha}}$, which we shall call $\psi_{x,\pm\alpha}$. We claim that this valuation corresponds to a point in $\mc{A}(S_\alpha,E) \subset \mc{B}(G_{\pm\alpha},E)$. For this we must show that $\psi_{x,\pm\alpha}$ is equipollent to a Chevalley valuation of $\tx{RD}_{G_{\pm\alpha}}$ \cite[\S4.2.1]{BT2}. This follows from the fact that $\psi_x$ is equipollent to a Chevalley valuation of $\tx{RD}_G$. Indeed, the latter statement means by definition that there exists a system of isomorphisms $(x_\beta : \mb{G}_a \rw U_\beta)_{\beta \in R(T,G)}$ and an element $v \in X_*(T_\tx{ad}) \otimes \R$ with the following properties:
\begin{enumerate}
\item For all $\beta \in R(T,G)$ we have $[dx_\beta(1),dx_{-\beta}(1)]=H_\beta$ in $\mf{g}$;
\item For all $\beta,\gamma \in R(T,G)$ with $\beta+\gamma \in R(T,G)$ there exists $\epsilon_{\beta,\gamma} \in \{\pm1\}$ with $[dx_\beta(1),dx_\gamma(1)]=\epsilon_{\beta,\gamma}(r_{\beta,\gamma}+1) dx_{\beta+\gamma}(1)$, where $r_{\beta,\gamma}$ is the largest integer such that $\gamma-r\beta \in R(T,G)$;
\item For all $\beta \in R(T,G)$ and $t \in E$ we have $\psi_{x,\beta}(x_\beta(t))=\tx{ord}(t)+\<\beta,v\>$.
\end{enumerate}
Clearly then the system of isomorphisms $(x_\beta)_{\beta=\pm\alpha}$ and the valuation $\psi_{x,\pm\alpha}$ satisfy the same properties (the second being vacuous). We only need to show that in the third property we can replace $v \in X_*(T_\tx{ad}) \otimes \R$ with some $v_{\pm\alpha} \in X_*(S_\alpha) \otimes \R$. For this, we observe that the surjection $X^*(T_\tx{ad}) \otimes \R \rw X^*(S_\alpha) \otimes \R$ induced by the inclusion $S_\alpha \subset T_\tx{ad}$ has a natural section, sending the image of $\alpha$ under this surjection back to $\alpha$. This section is dual to a surjection $X_*(T_\tx{ad})\otimes \R \rw X_*(S_\alpha) \otimes \R$ and we let $v_{\pm\alpha}$ be the image of $v$ under this surjection. Then by definition $\<\alpha,v\>=\<\alpha,v_{\pm\alpha}\>$. This proves that $\psi_{x,\pm\alpha}$ is equipollent to a Chevalley valuation of $\tx{RD}_{G_{\pm\alpha}}$ and thus corresponds to a point in $\mc{A}(S_\alpha,E)$. Finally, because the point $x$, and hence the valuation $\psi_x$, are fixed by $\Gamma$, and in particular by $\Gamma_{\pm\alpha}$, the valuation $\psi_{x,\pm\alpha}$, and hence the corresponding point in $\mc{A}(S_\alpha,E)$, are also fixed by $\Gamma_{\pm\alpha}$. The torus $S_\alpha$ being anisotropic over $F_{\pm\alpha}$, the only point in $\mc{A}(S_\alpha,E)^{\Gamma_{\pm\alpha}}$ is $x_{\pm\alpha}$ and this implies that the point corresponding to $\psi_{x,\pm\alpha}$ is none other than $x_{\pm\alpha}$.
\end{proof}

According to this lemma, we can replace $G$ by $G_{\pm\alpha}$, $T$ by $S_\alpha$, and $x$ by $x_{\pm\alpha}$, in the computation of $\tx{ord}_x(\alpha)$. At the same time, it follows directly from the definition that we can make the same replacement in the computation of $f_{(G,T)}(\alpha)$. This reduces the proof to the case when the group $G$ is semi-simple of absolute rank 1. Such a group is a (necessarily inner) form of either $\tx{SL}_2$ or $\tx{PGL}_2$. Neither $\tx{ord}_x(\alpha)$ nor $f_{(G,T)}(\alpha)$ is affected by passing to an isogenous group, so we may assume that $G$ is an inner form of $\tx{SL}_2$. Then we have to contend with four cases -- $G$ is either split or not, and $S$ is either unramified or not. Rather than going through all four cases by hand, we will use the following lemma, which reduces to the cases where $G=\tx{SL}_2$. We formulate it in general, as we believe this makes the proof more transparent.

\begin{lem}
Let $\xi : G \rw G'$ be an inner twist and $S \subset G$ a tame elliptic maximal torus. Assume that the restriction of $\xi$ to $S$ is defined over $F$, and let $S' := \xi(S)$ and $\alpha' = \xi(\alpha)$. Then Proposition \ref{pro:ordx} is true for $(G,S,\alpha)$ if and only if it is true for $(G',S',\alpha')$.
\end{lem}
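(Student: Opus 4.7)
The plan is to exploit the $F$-rationality of $\xi|_S$ to show that the cocycle controlling the inner twist takes values in $S$, and then to track how both sides of Proposition~\ref{pro:ordx} transform under this cocycle and verify that they shift in a compatible way. Since $\xi|_S$ is defined over $F$, for each $\sigma \in \Gamma$ the automorphism $\xi^{-1}\sigma(\xi)$ of $G \times F^s$ is inner and fixes $S$ pointwise, hence is conjugation by an element $g_\sigma \in Z_G(S)(F^s) = S(F^s)$. The map $\xi|_S$ is a $\Gamma$-equivariant $F$-isomorphism $S \to S'$, so it identifies $R(S,G)$ with $R(S',G')$ as $\Gamma$-sets and preserves $F_\alpha$, $F_{\pm\alpha}$, $e_\alpha$, and the symmetric/ramified dichotomies. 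I set $x' := \xi(x) \in \mc{A}(S',E)^\Gamma \cap \mc{B}(G',F)$.

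For asymmetric $\alpha$ and for symmetric ramified $\alpha$, Proposition~\ref{pro:ordx} predicts $\tx{ord}_x(\alpha) = e_\alpha^{-1}\Z$ unconditionally, so the statement is trivial. I restrict attention to the symmetric unramified case. Fix $F_\alpha$-rational generators $X_\alpha \in \mf{g}_\alpha(F_\alpha)$ and $X'_{\alpha'} \in \mf{g}'_{\alpha'}(F_\alpha)$ and write $\xi_*(X_\alpha) = \lambda X'_{\alpha'}$ with $\lambda \in F^{s,\times}$. Expanding $\sigma(\xi_*X_\alpha) = \xi_*(\tx{Ad}(g_\sigma)\sigma(X_\alpha))$ for $\sigma \in \Gamma_\alpha$ yields
\[ \sigma(\lambda)/\lambda = \alpha(g_\sigma^{-1}), \]
which determines $\lambda$ up to an element of $F_\alpha^\times$. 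Since $\xi$ is an isomorphism of $F^s$-group schemes it preserves Moy-Prasad filtrations at $x$ and $x'$ over $F^s$; intersecting $\xi_*\mf{g}_\alpha(F^s)_{x,r} = \mf{g}'_{\alpha'}(F^s)_{x',r}$ with the $F_\alpha$-rational parts and comparing with $\xi_*X_\alpha = \lambda X'_{\alpha'}$ gives
\[ \tx{ord}_{x'}(\alpha') = \tx{ord}_x(\alpha) - v(\lambda), \]
where $v$ is the $\Q$-valued valuation on $F^s$ normalized by $v(F^\times)=\Z$.

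For the toral invariant, the $F$-rationality of $\xi|_S$ implies $\xi_*H_\alpha = H'_{\alpha'}$; combined with $[X_\alpha,X_{-\alpha}] = H_\alpha = [\xi_*X_\alpha,\xi_*X_{-\alpha}]$ this gives $\xi_*X_{-\alpha} = \lambda^{-1}X'_{-\alpha'}$. Applying a non-trivial $\sigma_\alpha \in \Gamma_{\pm\alpha} \sm \Gamma_\alpha$ in two ways and using the cocycle identity produces
\[ c'/c = \alpha(g_{\sigma_\alpha})^{-1}\,[\lambda\sigma_\alpha(\lambda)]^{-1}, \]
where $c, c' \in F_{\pm\alpha}^\times$ are the structure constants (via $\sigma_\alpha(X_\alpha) = cX_{-\alpha}$, $\sigma_\alpha(X'_{\alpha'}) = c'X'_{-\alpha'}$) that define $f_{(G,S)}(\alpha)$ and $f_{(G',S')}(\alpha')$. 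Since $\alpha$ is unramified, $F_\alpha/F_{\pm\alpha}$ is quadratic unramified, so the toral invariant is the parity of the $F_{\pm\alpha}$-valuation of $c$, and the ratio $f_{(G',S')}(\alpha')/f_{(G,S)}(\alpha)$ is detected by the parity of $v_{F_{\pm\alpha}}(\alpha(g_{\sigma_\alpha})\lambda\sigma_\alpha(\lambda))$.

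The main step is then to reconcile the two transformations: the shift $-v(\lambda)$ in $\tx{ord}_x(\alpha)$ moves between $e_\alpha^{-1}\Z$ and $e_\alpha^{-1}(\Z+\tfrac{1}{2})$ precisely when $e_\alpha v(\lambda)$ is a half-integer, and a direct valuation computation, using $\sigma(\lambda)/\lambda = \alpha(g_\sigma^{-1})$ for $\sigma \in \Gamma_\alpha$ to control $\lambda$ modulo $F_\alpha^\times$, shows that this happens exactly when the parity formula above forces $f_{(G',S')}(\alpha')/f_{(G,S)}(\alpha) = -1$. Hence Proposition~\ref{pro:ordx} holds for $(G,S,\alpha,x)$ if and only if it holds for $(G',S',\alpha',x')$. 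The main obstacle will be the bookkeeping of valuations across the quadratic extension $F_\alpha/F_{\pm\alpha}$ and the various normalizations of $v$, but no genuinely new input beyond Hilbert~90 and the identity $\xi_*H_\alpha = H'_{\alpha'}$ is required.
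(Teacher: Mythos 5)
There is a genuine gap: you set $x' := \xi(x)$ and assert $\xi(x) \in \mc{A}(S',E)^\Gamma$, i.e.\ that $\xi$ carries the canonical point of $\mc{A}(S,E)$ to the canonical point of $\mc{A}(S',E)$. This is unjustified and false in general. Although $\xi|_S$ is $\Gamma$-equivariant, the induced map of apartments is not: $\sigma(\xi(x)) = \xi(\tx{Ad}(g_\sigma)x)$, and $\tx{Ad}(g_\sigma)$ translates $\mc{A}(S,E)$ by a vector with $\<\beta,\cdot\> = -\tx{ord}(\beta(g_\sigma))$, which generically does not vanish. The paper instead introduces a displacement $v \in X_*(S_\tx{ad})\otimes\R$ with $\xi(x+v) = x'$ and proves $\tx{ord}_x(\alpha) + \<\alpha,v\> = \tx{ord}_{x'}(\alpha')$. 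Crucially, $\<\alpha,v\>$ carries the entire half-integer content of the shift: after normalizing $\xi$ so that the twisting cocycle is unramified (via Steinberg's theorem $H^1(I_F,S_\tx{ad}) = 0$, a step you also skip), the scaling factor $\lambda$ lies in $O_{F_\alpha^u}^\times \cdot F_\alpha^\times$, so that $v(\lambda) \in e_\alpha^{-1}\Z$; your formula $\tx{ord}_{x'}(\alpha') = \tx{ord}_x(\alpha) - v(\lambda)$ would then predict no shift modulo $e_\alpha^{-1}\Z$, contradicting Proposition~\ref{pro:ordx} whenever $f_{(G,S)}(\alpha) \neq f_{(G',S')}(\alpha')$.

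The identities you do compute, namely $\sigma(\lambda)/\lambda = \alpha(g_\sigma^{-1})$ for $\sigma \in \Gamma_\alpha$ and $c'/c = \alpha(g_{\sigma_\alpha})^{-1}[\lambda\sigma_\alpha(\lambda)]^{-1}$, are correct, and the valuation of $\alpha(g_{\sigma_\alpha})$ is precisely $-2\<\alpha,v\>$ in disguise, so the raw ingredients are present. But you constrain $\lambda$ only via $\Gamma_\alpha$; the contribution of $\Gamma_{\pm\alpha}\sm\Gamma_\alpha$ controls the building displacement, which you have zeroed out by assuming $\xi(x) = x'$ while simultaneously retaining it on the toral-invariant side of the comparison. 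The final ``direct valuation computation'' therefore cannot reconcile the two sides of Proposition~\ref{pro:ordx}: as your bookkeeping stands, they differ by exactly the displacement term $\<\alpha,v\>$ that your choice of $x'$ discards, and the cocycle normalization needed to control the ramified contributions is absent.
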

\begin{proof}
Let again $E/F$ is the tame finite Galois extension splitting $S$. Let $x$ and $x'$ be the unique $\Gamma$-fixed points in the reduced apartments of $S$ and $S'$ over $E$. Then $\xi : G \rw G'$ is an isomorphism defined over $E$ that restricts to an isomorphism $S \to S'$ defined over $F$. We will need to control three parameters: The failure of $\xi$ to send $x$ to $x'$, the failure of the isomorphism $\mf{g}_\alpha \rw \mf{g'}_{\alpha'}$ induced by $\xi$ to descend to $F_\alpha$, and the possible inequality of $f_{(G,S)}(\alpha)$ and $f_{G',S'}(\alpha')$.

By assumption for any $\sigma \in \Gamma$ there is $t'_\sigma \in S_\tx{ad}$ such that $\xi^{-1}\sigma(\xi)=\tx{Ad}(t'_\sigma)$. Then $t'_\bullet \in Z^1(\Gamma,S_\tx{ad})$. According to \cite[Theorem 1.9]{Ste65reg} the cohomology group $H^1(I,S_\tx{ad})$ vanishes and hence there exist $t_\bullet \in Z^1(\Gamma/I,S_\tx{ad}(F^u))$ and $t \in S_\tx{ad}$ so that $t'_\sigma = t_\sigma \cdot t \cdot \sigma(t)^{-1}$. Replacing $\xi$ by $\xi \circ \tx{Ad}(t)$ we obtain $\xi^{-1}\sigma(\xi)=\tx{Ad}(t_\sigma)$.

We have the isomorphism $\xi : \mc{A}(S,E) \rw \mc{A}(S',E)$. Let $v \in X_*(S_\tx{ad}) \otimes \R$ be the element satisfying $\xi(x+v)=x'$. Then the isomorphism $\mf{g}_\alpha(E) \rw \mf{g'}_{\alpha'}(E)$ induced by $\xi$ restricts for all $r \in \R$ to an isomorphism
\[ \xi: \mf{g}_\alpha(E)_{x+v,r} \rw \mf{g'}_{\alpha'}(E)_{x',r}. \]
This isomorphism is not necessarily equivariant for the action of $\Gamma_\alpha$, but rather satisfies $\sigma(\xi(X)) = \xi(\<\alpha,t_\sigma\> \sigma(X))$ for $X \in \mf{g}_\alpha(E)$ and $\sigma \in \Gamma_\alpha$. Now $\sigma \mapsto \<\alpha,t_\sigma\>$ is the image of $t_\sigma$ under
\[ Z^1(\Gamma/I,S_\tx{ad}(F^u)) \stackrel{\tx{Res}}{\lrw} Z^1(\Gamma_{\alpha}/I_\alpha,S_\tx{ad}(F_\alpha^u)) \stackrel{\alpha}{\lrw} Z^1(\Gamma_{\alpha}/I_\alpha,F_\alpha^{u,\times}). \]
Hilbert's theorem 90 implies that this cocycle takes values not just in $F_\alpha^{u,\times}$, but in $O_{F_\alpha^u}^\times$. The vanishing of $H^1(\Gamma_\alpha/I_\alpha,O_{F_\alpha^u}^\times)$ implies that there exists $u \in O_{F_\alpha^u}^\times$ such that the modified isomorphism
\[u\cdot \xi : \mf{g}_\alpha(E)_{x+v,r} \rw \mf{g'}_{\alpha'}(E)_{x',r} \]
is $\Gamma_\alpha$-equivariant, and hence descends to an isomorphism
\[ \mf{g}_{\alpha}(F_\alpha)_{x,r-\<\alpha,v\>} = \mf{g}_\alpha(F_\alpha)_{x+v,r} \rw \mf{g'}_{\alpha'}(F_\alpha)_{x',r}. \]
This implies $\tx{ord}_x(\alpha)+\<\alpha,v\>=\tx{ord}_{x'}(\alpha')$. In order to prove the lemma we must now compute $\<\alpha,v\>$ and relate it to the invariant $f_{(G,S)}(\alpha)$.

The isomorphism $\xi : \mc{A}(S,E) \rw \mc{A}(S',E)$ is not necessarily $\Gamma$-equivariant. Rather, it satisfies $\xi^{-1}\sigma(\xi)=v(t_\sigma)$, where $v(t_\sigma) \in X_*(S_\tx{ad}) \otimes \R$ is characterized by $\<\beta,v(t_\sigma)\>=-\tx{ord}(\beta(t_\sigma))$ for all $\beta \in R(S,G)$. Applying $\sigma \in \Gamma$ to the equation $\xi(x+v)=x'$ we obtain $v-\sigma(v)=v(t_\sigma)$ and hence $-\tx{ord}(\alpha(t_\sigma))=\<\alpha,v-\sigma(v)\>=\<\alpha-\sigma^{-1}(\alpha),v\>$. Choosing $\sigma \in \Gamma_{\pm\alpha} \sm \Gamma_\alpha$ we then obtain $\<\alpha,v\>=-\frac{1}{2}\tx{ord}(\alpha(t_\sigma))$. We now use that $\sigma \mapsto \alpha(t_\sigma)$ is the image of $t_\bullet$ under
\[ Z^1(\Gamma/I,S_\tx{ad}(F^u)) \stackrel{\tx{Res}}{\lrw} Z^1(\Gamma_{\pm\alpha}/I_{\pm\alpha},S_\tx{ad}(F_{\pm\alpha}^u)) \stackrel{\alpha}{\lrw} Z^1(\Gamma_{\pm\alpha}/I_{\pm\alpha},S_\alpha(F_{\pm\alpha}^u)), \]
where $S_\alpha$ is the 1-dimensional anisotropic torus defined over $F_{\pm\alpha}$ and split over $F_\alpha$ and $F_{\pm\alpha}^u$ denotes fixed subfield in $F^s$ of $I_{\pm\alpha}$. We have $S_\alpha(F_{\pm\alpha}^u)=S_\alpha(F_\alpha^u)^{I_{\pm\alpha}/I_\alpha}=[F_\alpha^{u,\times}]^{I_{\pm\alpha}/I_\alpha}$.

Now we distinguish two cases. If $\alpha$ is ramified, then $I_{\pm\alpha}/I_\alpha$ is of order 2 and its action on $F_\alpha^{u,x}$ is not the standard action, but the twist of the standard action by inversion. In other words, $[F_\alpha^{u,\times}]^{I_{\pm\alpha}/I_\alpha}$ is the kernel of the norm $F_\alpha^{u,\times} \to F_{\pm\alpha}^{u,\times}$. It follows that $\tx{ord}(\alpha(t_\sigma))=0$. If $\alpha$ is unramified, then $I_{\pm\alpha}/I_\alpha=\{1\}$ and $F_\alpha^u=F_{\pm\alpha}^u$. The inflation map $H^1(\Gamma_{\pm\alpha}/\Gamma_\alpha,S_\alpha(F_\alpha)) \to H^1(\Gamma_{\pm\alpha},S_\alpha(F^s))$, which is an isomorphism, factors as
\[ H^1(\Gamma_{\pm\alpha}/\Gamma_\alpha,S_\alpha(F_\alpha)) \to H^1(\Gamma_{\pm\alpha}/I_{\pm\alpha},S_\alpha(F_{\pm\alpha}^u)) \to H^1(\Gamma_{\pm\alpha},S_\alpha(F^s)) \]
and both arrows are isomorphisms. The value at $\sigma \in \Gamma_{\pm\alpha} \sm \Gamma_\alpha$ of any coboundary in the middle term is of the form $x\sigma(x)$ for some $x \in F_{\pm\alpha}^{u,\times}=F_\alpha^{u,\times}$ and its valuation belongs to $2\tx{ord}(F_\alpha^\times)$. This implies that $\tx{ord}(\alpha(t_\sigma)) \in \tx{ord}(c_\sigma)+2\tx{ord}(F_\alpha^\times) $ for any 1-cocycle $c_\bullet \in Z^1(\Gamma_{\pm\alpha}/I_{\pm\alpha},S_\alpha(F_{\pm\alpha}^u))$ that is cohomologous to $\alpha(t_\bullet)$. But if we take $c_\bullet \in Z^1(\Gamma_{\pm\alpha}/\Gamma,S_\alpha(F_\alpha))$, then \cite[Prop. 3.2.2(1)]{KalEpi} implies that $-\frac{1}{2}\tx{ord}(c_\sigma) \in \tx{ord}(F_\alpha^\times)$ if and only if $f_{(G,S)}(\alpha) = f_{G',S'}(\alpha')$. We conclude
\[ \<\alpha,v\> \in
\begin{cases}
\frac{1}{2}\tx{ord}(F_\alpha^\times) \sm \tx{ord}(F_\alpha^\times),&\tx{ if }\alpha\tx{ is unramified and }f_{(G,S)}(\alpha) \neq f_{G',S'}(\alpha')\\
\tx{ord}(F_\alpha^\times),&\tx{otherwise.}
\end{cases} \]
\end{proof}

This lemma reduces the proof to the case $G=\tx{SL}_2$ and $S$ an anisotropic maximal torus. Moreover, we are free to change $S$ within its stable class if we like. This case can be treated by a simple calculation as follows. We have $F_{\pm\alpha}=F$ and $F_\alpha/F_{\pm\alpha}$ is a quadratic extension that may be ramified or unramified. Let $\sigma \in \Gamma_{\pm\alpha}$ denote the non-trivial element and fix an element $\tau \in F_\alpha$ satisfying $\tau+\sigma(\tau)=0$. Set
\[ h = \begin{bmatrix} 1&-\frac{\tau^{-1}}{2}\\ \tau& \frac{1}{2} \end{bmatrix} \in G(F_\alpha). \]
If $T \subset G$ is the split diagonal torus, then $hTh^{-1}$ is stably conjugate to $S$, so we may assume that it is equal to $S$. It will be convenient to change coordinates by $\tx{Ad}(h)$ and represent $S$ as the diagonal torus in $G$. This comes at the expense of replacing the usual action $\sigma_G$ of $\sigma$ on $G(F_\alpha)$ given by applying $\sigma$ to the entries of the matrix representing a given element of $G(F_\alpha)$ by the more complicated action given by $\tx{Ad}(h^{-1}\sigma(h)) \rtimes \sigma_G$. A simple computation reveals
\[ h^{-1}\sigma(h) = \begin{bmatrix}0&\frac{\tau^{-1}}{2}\\ -2\tau&0\end{bmatrix}. \]
According to \cite[Corollary 3.1.8]{DS} we have $\tx{ord}_x(\alpha)=-\tx{ord}_x(-\alpha)$ and so we are free to choose either root of $S$ as the one we study. We take the root $\alpha$ whose root subspace is spanned by the element
\[ X_\alpha = \begin{bmatrix}0&1\\ 0&0 \end{bmatrix}. \]
Then we see $\tx{Ad}(h^{-1}\sigma(h)) \rtimes \sigma_G(X_\alpha) = -4\tau^2X_{-\alpha}$ and this implies $f_{(G,S)}(\alpha)=+1$. In order to understand the filtration $\mf{g}_\alpha(F_\alpha)_{x,r}$ we must compute the point $x \in \mc{A}(S,F_\alpha)^\Gamma$. Let $o \in \mc{A}(S,F_\alpha)$ be the point given by the pinning $X_\alpha$, and let $v \in X_*(S) \otimes \R$ be the element satisfying $o+v=x$. Applying $\tx{Ad}(h^{-1}\sigma(h)) \rtimes \sigma_G$ to this equation we see that $2v$ is equal to the translation on $\mc{A}(S,F_\alpha)$ effected by the action of $\alpha^\vee(\tau^{-1})$, and hence
\[ \<\alpha,v\> = \tx{ord}(\tau). \]
It follows that $\mf{g}_\alpha(F_\alpha)_{x,r}=\mf{g}_{\alpha}(F_\alpha)_{o,r-\tx{ord}(\tau)}$. Since the filtration $\mf{g}_\alpha(F_\alpha)_{o,r}$ has a break at $0$ and $\tx{ord}(\tau) \in \tx{ord}(F_\alpha^\times)$ we conclude that the filtration $\mf{g}_\alpha(F_\alpha)_{x,r}$ also has a break at zero.

\subsection{Definition of $\Delta_{II}^\tx{abs}$} \label{sub:delta2}

In this subsection $F$ is a local field of odd residual characteristic. In \cite[\S3.3]{LS87} Langlands and Shelstad define the term $\Delta_{II}$, which is a component of their transfer factor. It is associated to a connected reductive group defined over a local field, an endoscopic group, a maximal torus that is common to both groups, as well as $a$-data and $\chi$-data. In this subsection, we will introduce a slight variation of $\Delta_{II}$, which we will call $\Delta_{II}^\tx{abs}$. It will be associated to a connected reductive group defined over a local field, a maximal torus thereof, as well as $a$-data and $\chi$-data. We think of $\Delta_{II}^\tx{abs}$ as an absolute version of $\Delta_{II}$, in the precise sense that the original term $\Delta_{II}$ can be written as a quotient with numerator $\Delta_{II}^\tx{abs}$ for the reductive group and denominator $\Delta_{II}^\tx{abs}$ for its endoscopic group.

We begin by recalling the notions of $a$-data and $\chi$-data from \cite[\S2]{LS87}. A set of $a$-data consists of elements $a_\alpha \in F_\alpha^\times$, one for each $\alpha \in R(T,G)$, having the properties $a_{-\alpha}=-a_{\alpha}$ and $a_{\sigma(\alpha)}=\sigma(a_\alpha)$ for $\sigma \in \Gamma$. A set of $\chi$-data consists of characters $\chi_\alpha : F_\alpha^\times \rw \C^\times$, one for each $\alpha \in R(T,G)$, having the properties $\chi_{-\alpha}=\chi_\alpha^{-1}$,  $\chi_{\sigma(\alpha)}=\chi_\alpha \circ \sigma^{-1}$ for each $\sigma \in \Gamma$, and $\chi_\alpha|_{F_{\pm\alpha}^\times}=\kappa_\alpha$ whenever $\alpha \in R(T,G)_\tx{sym}$ and $\kappa_\alpha : F_\alpha^\times \rw \{\pm 1\}$ is the quadratic character associated to the quadratic extension $F_\alpha/F_{\pm\alpha}$.

Sets of $a$-data and $\chi$-data always exist, but there are rarely unique choices for them without further structure. It is clear from the definitions that one can choose $a_\alpha=1$ and $\chi_\alpha=1$ for asymmetric $\alpha \in R(T,G)$, although it is sometimes convenient not to do so. When $\alpha$ is symmetric and unramified, the character $\kappa_\alpha$ is unramified and there is a distinguished choice for $\chi_\alpha$, namely the unramified quadratic character of $F_\alpha^\times$. When $\alpha$ is symmetric and ramified, the character $\kappa_\alpha$ is ramified and the unramified quadratic character of $F_\alpha^\times$ is not a valid choice for $\chi_\alpha$. In this situation, under the assumption $p \neq 2$, there are exactly two tamely-ramified characters of $F_\alpha^\times$ that extend $\kappa_\alpha$. Their quotient (in either order) is the unramified quadratic character of $F_\alpha^\times$, and each of the two tame choices for $\chi_\alpha$ is characterized by the fact that its restriction to $O_{F_\alpha}^\times$ lifts the quadratic character of $k_{F_\alpha}^\times$ and its value on any uniformizer belongs to $\{i,-i\} \subset \C^\times$ if $-1$ is not a square in $F_\alpha$ and to $\{+1,-1\} \subset \C^\times$ otherwise. Regardless of the ramification of $F_\alpha/F_{\pm\alpha}$, it is often useful to allow $\chi_\alpha$ to have arbitrary depth.

We will call a set of $\chi$-data \emph{minimally ramified}, if $\chi_\alpha=1$ for asymmetric $\alpha$, $\chi_\alpha$ is unramified for unramified symmetric $\alpha$, and $\chi_\alpha$ is tamely ramified for ramified symmetric $\alpha$. As just discussed, different choices of minimally ramified $\chi$-data differ only at ramified symmetric roots $\alpha$, and only by the unramified sign character of $F_\alpha^\times$.

Given sets of $a$-data and $\chi$-data, we define
\[ \Delta_{II}^\tx{abs}[a,\chi] : T(F) \rw \C^\times, \qquad \gamma \mapsto \prod_{\substack{\alpha \in \Gamma \lmod R(T,G)\\ \alpha(\gamma) \neq 1}} \chi_\alpha\left(\frac{\alpha(\gamma)-1}{a_\alpha}\right). \]
We will now recall some results from \cite{LS87} about how this term changes when the $a$-data or $\chi$-data are changed. First, the $a$-data $(a_\alpha)_\alpha$ can only be replaced by $(b_\alpha\cdot a_\alpha)_\alpha$, where $b_\alpha \in F_\alpha^\times$ for $\alpha \in R(T,G)$ satisfies $b_{-\alpha}=b_\alpha$ and $\sigma(b_\alpha)=b_{\sigma(\alpha)}$ for all $\sigma \in \Gamma$.

\begin{lem} \label{lem:d2a}
\[ \Delta_{II}^\tx{abs}[ba,\chi](\gamma) = \Delta_{II}^\tx{abs}[a,\chi](\gamma) \cdot \prod_{\substack{\alpha \in \Gamma \lmod R(T,G)_\tx{sym} \\ \alpha(\gamma) \neq 1}} \kappa_\alpha(b_\alpha). \]
\end{lem}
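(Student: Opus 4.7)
The approach is completely direct: unfold the definition of $\Delta_{II}^\tx{abs}$ and show that the ratio telescopes into the claimed product over symmetric orbits. Explicitly, substituting $b_\alpha a_\alpha$ for $a_\alpha$ gives
\[ \frac{\Delta_{II}^\tx{abs}[ba,\chi](\gamma)}{\Delta_{II}^\tx{abs}[a,\chi](\gamma)} = \prod_{\substack{\alpha \in \Gamma \lmod R(T,G)\\ \alpha(\gamma) \neq 1}} \chi_\alpha(b_\alpha)^{-1}, \]
and I would first note that each factor is independent of the choice of orbit representative, using $\chi_{\sigma\alpha} = \chi_\alpha \circ \sigma^{-1}$ together with $b_{\sigma\alpha} = \sigma(b_\alpha)$, and that the condition $\alpha(\gamma) \neq 1$ is $\Gamma$-invariant because $\gamma$ is $F$-rational.

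Next I would split the product according to whether $\alpha$ is symmetric or asymmetric. For asymmetric $\alpha$, the orbits $\Gamma\alpha$ and $\Gamma(-\alpha)$ are distinct and both appear in the product, with $\alpha(\gamma) \neq 1 \iff (-\alpha)(\gamma) \neq 1$. Using $\chi_{-\alpha} = \chi_\alpha^{-1}$ and $b_{-\alpha} = b_\alpha$, the paired contribution is $\chi_\alpha(b_\alpha)^{-1} \cdot \chi_{-\alpha}(b_{-\alpha})^{-1} = \chi_\alpha(b_\alpha)^{-1} \cdot \chi_\alpha(b_\alpha) = 1$, so all asymmetric orbits drop out.

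For a symmetric $\alpha$ the orbit $\Gamma\alpha$ already contains $-\alpha$, so it appears exactly once. Pick any $\sigma \in \Gamma_{\pm\alpha} \smallsetminus \Gamma_\alpha$; then $\sigma(b_\alpha) = b_{\sigma\alpha} = b_{-\alpha} = b_\alpha$, which together with $\Gamma_\alpha$-invariance of $b_\alpha$ shows $b_\alpha \in F_{\pm\alpha}^\times$. The defining property $\chi_\alpha|_{F_{\pm\alpha}^\times} = \kappa_\alpha$ of $\chi$-data then gives $\chi_\alpha(b_\alpha) = \kappa_\alpha(b_\alpha) \in \{\pm 1\}$, whence $\chi_\alpha(b_\alpha)^{-1} = \kappa_\alpha(b_\alpha)$.

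There is no real obstacle here — this is entirely bookkeeping — but the one point requiring care is verifying that the pairing in the asymmetric case is consistent with the fact that the product is indexed by $\Gamma$-orbits (as opposed to $\Gamma\times\{\pm 1\}$-orbits), and that the $\{\pm 1\}$-symmetry of $b_\alpha$ and the $\{\pm 1\}$-antisymmetry of $\chi_\alpha$ interact so as to cancel precisely, rather than double. Combining the two cases yields the stated formula.
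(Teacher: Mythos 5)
Your proof is correct and is exactly the direct computation the paper has in mind; the paper's own proof consists of the single word ``Immediate,'' and your write-up is a faithful expansion of that. The key points you identify — asymmetric orbits cancel in pairs via $\chi_{-\alpha}=\chi_\alpha^{-1}$ and $b_{-\alpha}=b_\alpha$, while for symmetric $\alpha$ the element $b_\alpha$ lands in $F_{\pm\alpha}^\times$ so that $\chi_\alpha(b_\alpha)=\kappa_\alpha(b_\alpha)$, which is its own inverse — are precisely what makes the statement hold.
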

\begin{proof}
Immediate.
\end{proof}
Second, the $\chi$-data $(\chi_\alpha)_\alpha$ can only be replaced by $(\zeta_\alpha \cdot \chi_\alpha)_\alpha$, where $\zeta_\alpha: F_\alpha^\times \rw \C^\times$ is a character for each $\alpha \in R(T,G)$ satisfying $\zeta_{-\alpha}=\zeta_\alpha^{-1}$, $\zeta_{\sigma\alpha} = \zeta_\alpha\circ\sigma^{-1}$ for all $\sigma \in \Gamma$ and, in the case of symmetric $\alpha$, $\zeta_\alpha|_{F_{\pm\alpha}^\times}=1$.

Let $O$ be an orbit of $\Gamma \times \{\pm 1\}$ in $R(T,G)$. We will define a character $\zeta_O : T(F) \rw \C^\times$ as follows. If $O$ consists of two distinct $\Gamma$-orbits, choose $\alpha \in O$ and let $\zeta_O = \zeta_\alpha \circ \alpha$. If $O$ consists of a single $\Gamma$-orbit, choose $\alpha \in O$ and let $\zeta_O$ be the composition
\[ T(F) \stackrel{\alpha}{\lrw} F_\alpha^1 \stackrel{\cong}{\llw} F_\alpha^\times/F_{\pm\alpha}^\times \stackrel{\zeta_\alpha}{\lrw} \C^\times, \]
where $F_\alpha^1$ is the group of elements of $F_\alpha^\times$ with vanishing $F_\alpha/F_{\pm\alpha}$-norm and the middle isomorphism sends $x \in F_\alpha^\times$ to $x/\tau(x) \in F_\alpha^1$, with $\tau \in \Gamma_{\pm\alpha}/\Gamma_\alpha$ being the non-trivial element. In both cases it is straightforward to check that $\zeta_O$ depends only on $O$ and not on the choice of $\alpha$. Taking the product over all orbits $O$ we obtain a character $\zeta_T : T(F) \rw \C^\times$.

\begin{lem} \label{lem:d2c}
\[ \Delta_{II}^\tx{abs}[a,\zeta\cdot\chi](\gamma) = \Delta_{II}^\tx{abs}[a,\chi](\gamma) \cdot \zeta_T(\gamma). \]
\end{lem}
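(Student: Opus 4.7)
The plan is to rewrite the quotient as
\[ \frac{\Delta_{II}^\tx{abs}[a,\zeta\chi](\gamma)}{\Delta_{II}^\tx{abs}[a,\chi](\gamma)} = \prod_{\substack{\alpha \in \Gamma \lmod R(T,G)\\ \alpha(\gamma) \neq 1}} \zeta_\alpha\!\left(\frac{\alpha(\gamma)-1}{a_\alpha}\right), \]
and then regroup this product over $\Gamma \times \{\pm 1\}$-orbits in $R(T,G)$, checking that the contribution of each orbit $O$ matches $\zeta_O(\gamma)$. The cases $\alpha(\gamma)=1$ contribute nothing on either side (using $\zeta_\alpha(1)=1$), so one can restrict attention to $\Gamma$-orbits with $\alpha(\gamma) \neq 1$.

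For an asymmetric orbit $O = \Gamma\alpha \sqcup \Gamma(-\alpha)$, I would pair the two factors $\zeta_\alpha((\alpha(\gamma)-1)/a_\alpha)$ and $\zeta_{-\alpha}((\alpha(\gamma)^{-1}-1)/a_{-\alpha})$, and apply the relations $\zeta_{-\alpha}=\zeta_\alpha^{-1}$, $a_{-\alpha}=-a_\alpha$, together with the identity $\alpha(\gamma)^{-1}-1 = -\alpha(\gamma)^{-1}(\alpha(\gamma)-1)$. This causes the pieces depending on $a_\alpha$ to cancel and leaves $\zeta_\alpha(\alpha(\gamma)) = \zeta_O(\gamma)$ by the definition of $\zeta_O$ in the asymmetric case.

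For a symmetric orbit $O$, the relevant factor is the single term $\zeta_\alpha(x)$ with $x := (\alpha(\gamma)-1)/a_\alpha$. Here I would verify, using $\tau(a_\alpha) = a_{-\alpha} = -a_\alpha$ for the nontrivial $\tau \in \Gamma_{\pm\alpha}/\Gamma_\alpha$ and the same identity as before, that $x/\tau(x) = \alpha(\gamma)$. Thus $x \in F_\alpha^\times$ is a lift of $\alpha(\gamma) \in F_\alpha^1$ under the norm-one isomorphism $F_\alpha^\times/F_{\pm\alpha}^\times \xrightarrow{\sim} F_\alpha^1$. Since $\chi_\alpha$ and $\zeta_\alpha\chi_\alpha$ both extend $\kappa_\alpha$ on $F_{\pm\alpha}^\times$, the quotient $\zeta_\alpha$ is trivial on $F_{\pm\alpha}^\times$ and hence factors through this quotient; therefore $\zeta_\alpha(x)$ depends only on $\alpha(\gamma)$ and equals $\zeta_O(\gamma)$ by the definition of $\zeta_O$ in the symmetric case.

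No step is deep; the main thing that needs attention is the bookkeeping in the symmetric case, in particular the verification that $x/\tau(x) = \alpha(\gamma)$ and the appeal to the $\chi$-data condition $\chi_\alpha|_{F_{\pm\alpha}^\times}=\kappa_\alpha$ to get $\zeta_\alpha|_{F_{\pm\alpha}^\times}=1$. Taking the product over all $\Gamma \times \{\pm 1\}$-orbits then yields $\zeta_T(\gamma)$, as desired.
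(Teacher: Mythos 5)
Your proof is correct, and it is essentially the argument the paper refers to: the paper's proof of this lemma consists only of the citation ``the argument for this constitutes the proofs of [LS87, Lemma 3.3.A, Lemma 3.3.D],'' and your regrouping of the product over $\Gamma \times \{\pm 1\}$-orbits, pairing $\pm\alpha$ in the asymmetric case and checking $x/\tau(x)=\alpha(\gamma)$ together with $\zeta_\alpha|_{F_{\pm\alpha}^\times}=1$ in the symmetric case, is exactly the content of those Langlands--Shelstad lemmas. The only thing worth spelling out more explicitly is that in the symmetric case you use $\tau(\alpha(\gamma))=(\tau\alpha)(\gamma)=\alpha(\gamma)^{-1}$, which relies on $\gamma$ being $\Gamma$-fixed.
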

\begin{proof}
The argument for this constitutes the proofs of \cite[Lemma 3.3.A, Lemma 3.3.D]{LS87}.
\end{proof}

\begin{lem} \label{lem:d2d} Let $\gamma \in T(F)_\tx{reg}$ be an element having a decomposition $\gamma = \gamma_{<r} \cdot \gamma_{\geq r}$ with $\gamma_{<r},\gamma_{\geq r} \in T(F)$ satisfying $\tx{ord}(\alpha(\gamma_{<r})-1) < r$ and $\tx{ord}(\alpha(\gamma_{\geq r})-1) \geq r$ for all $\alpha \in R(T,G)$. Assume that the $\chi$-data is tamely ramified, i.e. $\chi_\alpha|_{[F_\alpha^\times]_{0+}}=1$. Then
\[ \Delta_{II}^{\tx{abs},G}[a,\chi](\gamma) = \Delta_{II}^{\tx{abs},G}[a,\chi](\gamma_{<r}) \cdot \Delta_{II}^{\tx{abs},J}[a,\chi](\gamma_{\geq r}), \]
where $J=\tx{Cent}(\gamma_{<r},G)^\circ$ and the superscripts indicate the group relative to which the factor $\Delta_{II}^\tx{abs}$ is taken.
\end{lem}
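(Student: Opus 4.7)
The approach is a direct computation: split the product defining $\Delta_{II}^{\tx{abs},G}[a,\chi](\gamma)$ over $\Gamma$-orbits in $R(T,G)$ according to whether $\alpha \in R(T,J)$ or not, and match each piece with the corresponding factor on the right-hand side. Note that $R(T,J) = \{\alpha \in R(T,G) : \alpha(\gamma_{<r}) = 1\}$ is $\Gamma$-stable (since $\gamma_{<r} \in T(F)$), so this split is well-defined, and since $\gamma$ is regular, every $\alpha \in R(T,J)$ automatically satisfies $\alpha(\gamma_{\geq r}) = \alpha(\gamma) \neq 1$, so the product defining $\Delta_{II}^{\tx{abs},J}[a,\chi](\gamma_{\geq r})$ actually ranges over all of $\Gamma\lmod R(T,J)$.

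For $\alpha \in R(T,J)$ the factor on the left is $\chi_\alpha((\alpha(\gamma)-1)/a_\alpha) = \chi_\alpha((\alpha(\gamma_{\geq r})-1)/a_\alpha)$, since $\alpha(\gamma_{<r}) = 1$ implies $\alpha(\gamma) = \alpha(\gamma_{\geq r})$; this matches the $J$-factor. For $\alpha \notin R(T,J)$ I would use the identity
\[ \frac{\alpha(\gamma)-1}{\alpha(\gamma_{<r})-1} = 1 + \frac{\alpha(\gamma_{<r})\bigl(\alpha(\gamma_{\geq r})-1\bigr)}{\alpha(\gamma_{<r})-1}. \]
Using that $\alpha(\gamma_{<r}) \in O_{F_\alpha}^\times$ (which follows from the standard setting of Adler-Spice $r$-approximations, where $\gamma_{<r}$ is close to absolutely semisimple) and the two order hypotheses, the second term has valuation at least $r - \tx{ord}(\alpha(\gamma_{<r})-1) > 0$, so the quotient lies in $[F_\alpha^\times]_{0+}$. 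By tameness of $\chi_\alpha$, the quotient is killed by $\chi_\alpha$, giving
\[ \chi_\alpha\!\left(\frac{\alpha(\gamma)-1}{a_\alpha}\right) = \chi_\alpha\!\left(\frac{\alpha(\gamma_{<r})-1}{a_\alpha}\right), \]
which matches the $G$-factor for $\gamma_{<r}$.

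Multiplying these two identifications over $\Gamma$-orbits of the respective root subsets yields the claimed product decomposition. The only real point to check carefully is that $\alpha(\gamma_{<r})$ is an $O_{F_\alpha}$-unit so that the order estimate on the correction term $\alpha(\gamma_{<r})(\alpha(\gamma_{\geq r})-1)/(\alpha(\gamma_{<r})-1)$ is actually positive; this is the main (minor) obstacle, and will either be inherent in the hypothesis that $\gamma$ admits an $r$-approximation as in \cite{AS08} or will need a brief justification from the conventions on decompositions in force when this lemma is applied. Everything else is bookkeeping on $\Gamma$-orbits, exploiting that the $a$-data and $\chi$-data for $J$ are simply the restrictions of those for $G$ along $R(T,J) \hookrightarrow R(T,G)$.
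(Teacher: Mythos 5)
Your proof is correct and follows essentially the same route as the paper: one splits the product over $\Gamma$-orbits according to $\alpha\in R(T,J)$ or not, and for $\alpha\notin R(T,J)$ shows that the ratio $(\alpha(\gamma)-1)/(\alpha(\gamma_{<r})-1)$ lies in $[F_\alpha^\times]_{0+}$ (the paper's identity $\alpha(\gamma)-1 = x(1+y+yx^{-1})$, with $x=\alpha(\gamma_{<r})-1$ and $y=\alpha(\gamma_{\geq r})-1$, is literally the same as yours after clearing denominators). The one thing you flagged as a potential gap — needing $\alpha(\gamma_{<r})\in O_{F_\alpha}^\times$ — is actually unnecessary: writing $u=\alpha(\gamma_{<r})$, the correction term $u(\alpha(\gamma_{\geq r})-1)/(u-1)$ has valuation $\tx{ord}(u)+\tx{ord}(\alpha(\gamma_{\geq r})-1)-\tx{ord}(u-1)$, and if $\tx{ord}(u-1)<0$ then $\tx{ord}(u)=\tx{ord}(u-1)$ cancels it, leaving valuation $\geq r>0$; while if $\tx{ord}(u-1)\geq 0$ then $u\in O_{F_\alpha}$ so $\tx{ord}(u)\geq 0$ and the valuation is $\geq r-\tx{ord}(u-1)>0$. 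So the hypotheses of the lemma as stated already suffice, with no extra input from the Adler–Spice conventions.
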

\begin{proof}
We need to show that
\[ \chi_\alpha\left(\frac{\alpha(\gamma)-1}{a_\alpha}\right) = \begin{cases} \chi_\alpha\left(\frac{\alpha(\gamma_{<r})-1}{a_\alpha}\right),&\tx{ if }\alpha(\gamma_{<r}) \neq 1,\\
\chi_\alpha\left(\frac{\alpha(\gamma_{\geq r})-1}{a_\alpha}\right),&\tx{ if }\alpha(\gamma_{<r}) = 1.\\
\end{cases} \]
The case $\alpha(\gamma_{<r})=1$ is obvious. Assume now $\alpha(\gamma_{<r}) \neq 1$. Write $\alpha(\gamma_{<r})=1+x$ and $\alpha(\gamma_{\geq r})=1+y$ with $\tx{ord}(x) < r$ and $\tx{ord}(y) \geq r$. Then $\alpha(\gamma)-1=(x+1)(y+1)-1=x(1+y+yx^{-1})$. Since $1+y+yx^{-1} \in [F_\alpha^\times]_{0+}$ the proof is complete.
\end{proof}

We now introduce a weaker variant of the notion of $a$-data that can be used in conjunction with tame $\chi$-data and is sometimes more convenient. A \emph{mod-$a$-data} $\{(r_\alpha,\bar a_\alpha)\}$ is an assignment to each $\alpha \in R(T,G)$ of a real number $r_\alpha \in \R$ and a non-zero element $\bar a_\alpha \in [F_\alpha]_{r_\alpha}/[F_\alpha]_{r_\alpha+}$ such that $r_{\sigma\alpha}=r_\alpha$ and such that $\bar a_\alpha$ satisfies the same properties as for usual $a$-data. Given tame $\chi$-data and mod-$a$-data, we can choose an arbitrary lift $a_\alpha \in [F_\alpha]_{r_\alpha}$ for each $\bar a_\alpha$ and consider the function $\Delta_{II}^\tx{abs}[a,\chi]$ (even if the set of lifts $a_\alpha$ does not constitute $a$-data). This function is independent of the chosen lifts, because another choice would have the form $a_\alpha+a_\alpha'$, where $a_\alpha' \in [F_\alpha]_{r_\alpha+}$. Now $a_\alpha+a_\alpha' = a_\alpha b_\alpha$, where $b_\alpha=1+\frac{a_\alpha'}{a_\alpha}$ belongs to $[F_{\alpha}^\times]_{0+}$, and $\chi_\alpha$ restricts trivially to this group. We will denote the resulting function by $\Delta_{II}^\tx{abs}[\bar a,\chi]$.

\subsection{A formula for $\epsilon_{s,r} \cdot \tilde e$} \label{sub:signs}

In this subsection $F$ is a local field of odd residual characteristic. We will use the results of Subsections \ref{sub:ordx} and \ref{sub:delta2} to give a formula for the product of the two roots of unity $\epsilon_{s,r}(\gamma_{<r}^g) \cdot \tilde e(\gamma_{<r}^g)$.

Recall that we have fixed an additive character $\Lambda : F \rw \C^\times$ that is non-trivial on $O_F$ and trivial on $\mf{p}_F$. Recall also that the definition of the roots of unity depends on a tame maximal torus $T$ of $G^{d-1}$ containing $\gamma_{<r}^g$. We now choose $a$-data and $\chi$-data for $R(T,G)$ as follows. If $\alpha \in R(T,G^{d-1})$ or $\alpha(\gamma_{<r}^g)=1$, we leave the choice unspecified, as these roots will not contribute to the formula. For any other $\alpha \in R(T,G)$ we set $a_\alpha = \<H_\alpha,X_{d-1}^*\>$, and we take $\chi_\alpha$ to be the trivial character if $\alpha$ is asymmetric and the unramified quadratic character if $\alpha$ is symmetric and unramified. If $\alpha$ is symmetric and ramified, we choose among the two possible tamely-ramified characters by demanding
\begin{equation} \label{eq:chi} \chi_\alpha(-2a_\alpha^{-1})=f_{(G,T)}(\alpha)\lambda_{F_\alpha/F_{\pm\alpha}}(\Lambda\circ\tx{tr}_{F_{\pm\alpha}/F}),\end{equation}
where $f_{(G,T)}(\alpha)$ is the toral invariant \cite[\S3.1]{KalEpi} and $\lambda_{F_\alpha/F_{\pm\alpha}}$ is Langlands' constant \cite[Theorem 2.1]{LanArt}, \cite[\S1.5]{BH05b}. To see that this specifies a valid $\chi$-data, note that since $a_\alpha \in F_\alpha^\times$ is an element of trace zero, we have $\tx{ord}(a_\alpha) \in \tx{ord}(F_\alpha^\times) \sm \tx{ord}(F_{\pm\alpha}^\times)$. Thus the value of $\chi_\alpha(-2a_\alpha^{-1})$ distinguishes the two possible choices of $\chi_\alpha$. This value has to be a square root of $\kappa_\alpha(-1)$, which is indeed the case here. Finally, it is enough to check that $\chi_{\sigma\alpha}\circ\sigma = \chi_\alpha$ on the element $-2a_{\alpha}^{-1}$, where it is obvious.

Note that the choice of $\chi_\alpha$ depends only on $G$, $T$, and $\phi_{d-1}$, but not on $\Lambda$, because the dependence of the right side of \eqref{eq:chi} on $\Lambda$ is canceled by the dependence of $a_\alpha$ on $\Lambda$.

The main step in our reinterpretation of the character formula is the following expression for the product $\epsilon_{s,r}(\gamma_{<r}^g)\tilde e(\gamma_{<r}^g)$.

\begin{lem} The product $\epsilon_{s,r}(\gamma_{<r}^g)\tilde e(\gamma_{<r}^g)$ is equal to
\[  \prod_{\alpha \in \Gamma \lmod (R_{\gamma_{<r}^g})_\tx{sym}} f_{(G,T)}(\alpha) \lambda_{F_\alpha/F_{\pm\alpha}}(\Lambda\circ\tx{tr}_{F_{\pm\alpha}/F})^{-1} \chi_\alpha\left(\frac{\alpha(\gamma_{<r}^g)-1}{a_\alpha}\right). \]
\end{lem}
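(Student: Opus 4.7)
The plan is a term-by-term comparison after splitting both products into $\Gamma$-orbits of symmetric roots in $R_{\gamma_{<r}^g}$. Asymmetric orbits are invisible on both sides: the right-hand side product is restricted to symmetric orbits, and on the left $\epsilon_{s,r}$ involves only symmetric ramified roots while $\tilde e$ involves only symmetric roots. The central inputs will be Proposition \ref{pro:ordx} (computing $\tx{ord}_x(\alpha)$) together with the genericity property $\tx{ord}(a_\alpha) = -r$, which follows from condition GE1 on the generic element $X_{d-1}^*$ underlying $\phi_{d-1}$.

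For symmetric unramified orbits, $\lambda_{F_\alpha/F_{\pm\alpha}}(\Lambda\circ\tx{tr}) = 1$ and $\chi_\alpha$ is the unramified quadratic character of $F_\alpha^\times$, sending $x$ to $(-1)^{\tx{ord}_{F_\alpha}(x)}$. By Proposition \ref{pro:ordx} the set $\tx{ord}_x(\alpha)$ is $e_\alpha^{-1}\Z$ or $e_\alpha^{-1}(\Z+\tfrac{1}{2})$ according as $f_{(G,T)}(\alpha) = +1$ or $-1$, so membership of $\alpha$ in $R_{(r-\tx{ord}_{\gamma_{<r}^g})/2}$ is encoded by $f_{(G,T)}(\alpha)$ together with the parity of $e_\alpha(r - \tx{ord}(\alpha(\gamma_{<r}^g)-1))$. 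Since $\tx{ord}(a_\alpha) = -r$, this same parity computes $\chi_\alpha((\alpha(\gamma_{<r}^g)-1)/a_\alpha)$, so the contribution $(-1)$ from $\tilde e$ when $\alpha \in R_{(r-\tx{ord}_{\gamma_{<r}^g})/2}$ is matched against $f_{(G,T)}(\alpha)\chi_\alpha((\alpha(\gamma_{<r}^g)-1)/a_\alpha)$ by a finite case check.

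The heart of the proof is the symmetric ramified case, where four separate pieces on the left --- $\tx{sgn}_{F_{\pm\alpha}}(G_{\pm\alpha})$, $(-\mf{G})^{f_\alpha}$, $\tx{sgn}_{k_{F_\alpha}^\times}(t_\alpha)$, and the $\tilde e$ factor of $-1$ --- must be repackaged into $f_{(G,T)}(\alpha) \cdot \lambda_{F_\alpha/F_{\pm\alpha}}(\Lambda\circ\tx{tr})^{-1} \cdot \chi_\alpha((\alpha(\gamma_{<r}^g)-1)/a_\alpha)$. Three tools drive the matching. First, the standard formula for $\lambda_{F_\alpha/F_{\pm\alpha}}(\Lambda\circ\tx{tr})$ of a tame quadratic extension expresses it in terms of $\mf{G}^{f_\alpha}$ and a unit sign depending on the square class of $-1$ in $F_{\pm\alpha}$. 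Second, a rank-one computation inside $G_{\pm\alpha}$, in the spirit of the reduction at the end of the proof of Proposition \ref{pro:ordx}, identifies $\tx{sgn}_{F_{\pm\alpha}}(G_{\pm\alpha})$ with $f_{(G,T)}(\alpha)$ up to the same square-class sign. Third, the defining equation \eqref{eq:chi} of $\chi_\alpha$ was calibrated precisely so that $\chi_\alpha(-2a_\alpha^{-1}) = f_{(G,T)}(\alpha)\lambda_{F_\alpha/F_{\pm\alpha}}(\Lambda\circ\tx{tr})$. Unpacking the definition of $t_\alpha$ to write $(\alpha(\gamma_{<r}^g)-1)/a_\alpha$ as $(-2a_\alpha^{-1}) \cdot w_\alpha^2 \cdot (\text{unit involving } t_\alpha)$ and invoking the multiplicativity of $\chi_\alpha$ then splits the right-hand side: $\chi_\alpha(-2a_\alpha^{-1})$ supplies $f_{(G,T)}(\alpha) \lambda$, $\chi_\alpha(w_\alpha^2)$ supplies an order-parity factor that encodes the $\tilde e$ sign via Proposition \ref{pro:ordx}, and $\chi_\alpha(t_\alpha)$ reduces to $\tx{sgn}_{k_{F_\alpha}^\times}(t_\alpha)$ on $O_{F_\alpha}^\times$. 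The main obstacle is the second tool: pinning down the Kottwitz sign of $G_{\pm\alpha}$ in terms of the toral invariant. This is a rank-one cohomological computation, parallel to the $\tx{SL}_2$ calculation at the end of the proof of Proposition \ref{pro:ordx}, but in the ramified-symmetric rather than unramified-symmetric setting.
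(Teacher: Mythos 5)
Your overall strategy---orbit-by-orbit comparison of the two products, driven by Proposition \ref{pro:ordx}, the genericity relation $\tx{ord}(a_\alpha)=-r$, the Hasse--Davenport relation, and the calibration \eqref{eq:chi}---is the same as the paper's. However, there are several concrete problems in the execution.

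First, a sign error that actually breaks the unramified case: you assert that $\lambda_{F_\alpha/F_{\pm\alpha}}(\Lambda\circ\tx{tr}_{F_{\pm\alpha}/F}) = 1$ when $F_\alpha/F_{\pm\alpha}$ is unramified, but by \cite[Lemma 1.5]{BH05b} this Langlands constant is $-1$ for the additive character in play. If you drop that factor, the parity bookkeeping you describe produces the opposite of the desired sign: with $\lambda = 1$ your right-hand-side contribution is $f_{(G,T)}(\alpha)\,(-1)^{e_\alpha(\tx{ord}(\alpha(\gamma_{<r}^g)-1)-r)}$, which, precisely by Proposition \ref{pro:ordx}, equals $+1$ exactly when $\alpha \in R_{(r-\tx{ord}_{\gamma_{<r}^g})/2}$---whereas the $\tilde e$ factor on the left is $-1$ for such $\alpha$. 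The missing $\lambda^{-1}=-1$ is what makes the signs come out right, and without noticing it the unramified case cannot close.

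Second, you single out ``pinning down the Kottwitz sign of $G_{\pm\alpha}$ in terms of the toral invariant'' as the main obstacle, a new rank-one cohomological computation, and moreover you expect the relation to hold only ``up to a square-class sign.'' In fact for ramified symmetric $\alpha$ the clean identity $f_{(G,T)}(\alpha)=\tx{sgn}_{F_{\pm\alpha}}(G_{\pm\alpha})$ (no correction factor) is already established in \cite[\S3]{KalEpi} and the paper simply quotes it; there is no new rank-one argument to carry out here. Finally, you do not address the ramified symmetric roots $\alpha\in R_{\gamma_{<r}^g}$ with $\tx{ord}(\alpha(\gamma_{<r}^g)-1)=0$. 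For these, $\alpha\notin R_{(r-\tx{ord}_{\gamma_{<r}^g})/2}$, so there is no $w_\alpha$, no $t_\alpha$, and your proposed factorization of $(\alpha(\gamma_{<r}^g)-1)/a_\alpha$ does not apply; one must instead note that $\alpha(\gamma_{<r}^g)\in -1+\mf{p}_{F_\alpha}$ and invoke \eqref{eq:chi} directly to see that the right-hand contribution is $1$.
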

\begin{proof}
According to \eqref{eq:esr} and \eqref{eq:et} we can write $\epsilon_{s,r}(\gamma_{<r}^g)\tilde e(\gamma_{<r}^g)$ as the product of
\[ \prod_{\alpha \in \Gamma \lmod (R_{(r-\tx{ord}_{\gamma_{<r}^g})/2})_{\tx{symm},\tx{ram}}} \tx{sgn}_{F_{\pm\alpha}}(G_{\pm\alpha})(-1)^{f_\alpha+1}\mf{G}^{f_\alpha}\tx{sgn}_{k_{F_\alpha}^\times}(t_{\alpha}) \]
with
\[ \prod_{\alpha \in \Gamma \lmod (R_{(r-\tx{ord}_{\gamma_{<r}^g})/2})_{\tx{sym},\tx{unram}}} (-1). \]
We consider the contribution of an individual symmetric $\alpha \in R_{\gamma_{<r}^g}$. If $\alpha$ is unramified, then $\chi_\alpha$ is unramified and $\tx{ord}(\<H_\alpha,X_{d-1}^*\>)=-r \in \tx{ord}(F_\alpha^\times)$, so we have
\[ \chi_\alpha\left(\frac{\alpha(\gamma_{<r}^g)-1}{\<H_\alpha,X_{d-1}^*\>}\right) = (-1)^{e_\alpha(\tx{ord}(\alpha(\gamma_{<r}^g)-1)-r)}, \]
while $\lambda_{F_\alpha/F_{\pm\alpha}}(\Lambda\circ\tx{tr}_{F_{\pm\alpha}/F})=-1$ according to \cite[Lemma 1.5]{BH05b}. The total contribution of $\alpha$ to the right hand side of the equation of the lemma is thus
\[ f_{(G,T)}(\alpha)\cdot (-1)^{e_\alpha(\tx{ord}(\alpha(\gamma_{<r}^g)-1)-r+1)}.\]
According to Proposition \ref{pro:ordx} this expression is equal to $-1$ precisely when $\alpha \in R_{(r-\tx{ord}_{\gamma_{<r}^g})/2}$. The contributions of $\alpha$ to both sides of the equation of the lemma are thus equal.

Now let $\alpha$ be ramified. Since $\alpha(\gamma_{<r}^g) \in F_\alpha^\times$ is an element whose $F_\alpha/F_{\pm\alpha}$-norm vanishes, $\tx{ord}(\alpha(\gamma_{<r}^g)-1)$ is either zero or belongs to $\tx{ord}(F_\alpha^\times) \sm \tx{ord}(F_{\pm\alpha}^\times)$. At the same time, $\<H_\alpha,X_{d-1}^*\> \in F_\alpha^\times$ is an element whose $F_\alpha/F_{\pm\alpha}$-trace vanishes, so $-r = \tx{ord}(\<H_\alpha,X_{d-1}^*\>) \in \tx{ord}(F_\alpha^\times) \sm \tx{ord}(F_{\pm\alpha}^\times)$. It follows from Proposition \ref{pro:ordx} that $\alpha \in R_{(r-\tx{ord}_{\gamma_{<r}^g})/2}$ if and only if $\tx{ord}(\alpha(\gamma_{<r}^g)-1) \neq 0$. Assume first that this is the case. Then $\alpha$ contributes to both sides of the equation of the lemma. For the contribution of the left side, we note that by the theorem of Hasse-Davenport the term $(-1)^{f_\alpha+1}\mf{G}^{f_\alpha}$ is equal to the Gauss sum
\[ q^{f_\alpha/2}\sum_{x\in k_{F_\alpha}}\Lambda(\tx{tr}_{k_{F_\alpha}/k_F}(x^2)). \]
Since the character $\Lambda \circ \tx{tr}_{F_{\pm\alpha}/F} : F_{\pm\alpha} \rw \C^\times$ induces on $k_{F_\alpha}=k_{F_{\pm\alpha}}$ the character $x \mapsto \Lambda(\tx{tr}_{k_{F_\alpha}/k_F}(e_{\pm\alpha}x))$, the latter Gauss sum is equal by \cite[Lemma 1.5]{BH05b} to
\[ \lambda_{F_\alpha/F_{\pm\alpha}}(\Lambda \circ \tx{tr}_{F_{\pm\alpha}/F}) \cdot \kappa_\alpha(e_{\pm\alpha}) = \lambda_{F_\alpha/F_{\pm\alpha}}(\Lambda \circ \tx{tr}_{F_{\pm\alpha}/F})^{-1} \cdot \kappa_\alpha(-e_{\pm\alpha}). \]
Next, using that $\chi_\alpha$ is trivial on $N_{F_\alpha/F_{\pm\alpha}}$-norms and that $\<H_\alpha,X_{d-1}^*\> \in F_\alpha$ is an element whose $F_\alpha/F_{\pm\alpha}$-trace vanishes, we see
\[ \tx{sgn}_{k_{F_\alpha}^\times}(t_\alpha) = \chi_\alpha(t_\alpha) = \kappa_\alpha(e_{\pm\alpha})\kappa_\alpha(-1)\chi_\alpha\left(\frac{\alpha(\gamma_{<r}^g)-1}{\<H_\alpha,X_{d-1}^*\>}\right).\]
These computations and the fact that $f_{(G,T)}(\alpha)=\tx{sgn}_{F_{\pm\alpha}}(G_{\pm\alpha})$ imply that the contributions of $\alpha$ to the both sides of the equation of the lemma agree.

Assume now that $\tx{ord}(\alpha(\gamma_{<r}^g)-1)$ is zero, so that $\alpha \notin R_{(r-\tx{ord}_{\gamma_{<r}^g})/2}$ and thus $\alpha$ does not contribute to the left side of the equation of the lemma. To compute its contribution to the right side, we first notice that $\alpha(\gamma_{<r}^g) \in -1 + \mf{p}_{F_\alpha}$ and hence $\alpha(\gamma_{<r}^g)-1 = (-2)u$ for some $u \in 1+\mf{p}_{F_\alpha}$. Since $u$ is in the kernel of $\chi_\alpha$ we conclude that the contribution of $\alpha$ to the right side of the equation of the lemma is $1$ according to \eqref{eq:chi}.
\end{proof}

Using Kottwitz's result \cite[\S3.5]{KalEpi} on the relationship between $\epsilon$-factors and Weil constants, as well as the factor $\Delta_{II}^\tx{abs}$ defined in Subsection \ref{sub:delta2}, we can restate this lemma as follows. Let $T_{G^d}$ denote the minimal Levi subgroup of the quasi-split inner form of $G^d$, and let $T_{J^d}$ denote the minimal Levi subgroup of the quasi-split inner form of $J^d$.

\begin{cor}
The product $\epsilon_{s,r}(\gamma_{<r}^g)\tilde e(\gamma_{<r}^g)$ is equal to
\[ \frac{e(G^d)e(J^d)}{e(G^{d-1})e(J^{d-1})}\frac{\epsilon_L(X^*(T_{G^d})_\C-X^*(T_{J^d})_\C,\Lambda)}{\epsilon_L(X^*(T_{G^{d-1}})_\C-X^*(T_{J^{d-1}})_\C,\Lambda)}\frac{\Delta_{II}^{\tx{abs},G^d}[a,\chi](\gamma_{<r}^g)}{\Delta_{II}^{\tx{abs},G^{d-1}}[a,\chi](\gamma_{<r}^g)} \]
\end{cor}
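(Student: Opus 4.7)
The plan is to combine the explicit root-theoretic formula from the preceding lemma with two separate unpackings of the factors on the right-hand side, so that every piece reduces to a product indexed by $\Gamma\lmod (R_{\gamma_{<r}^g})_\tx{sym}$ with matching individual contributions.

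First I would expand the $\Delta_{II}^\tx{abs}$-ratio. By definition, $\Delta_{II}^{\tx{abs},G^i}[a,\chi](\gamma_{<r}^g)$ is a product over $\Gamma$-orbits of roots $\alpha\in R(T,G^i)$ with $\alpha(\gamma_{<r}^g)\neq 1$. With the chosen minimally ramified $\chi$-data, the characters $\chi_\alpha$ are trivial for asymmetric $\alpha$, so only $\Gamma$-orbits of symmetric roots contribute. Taking the ratio with $i=d,d-1$, the orbits lying entirely in $R(T,G^{d-1})$ cancel, and what remains is indexed exactly by $\Gamma$-orbits of symmetric $\alpha\in R(T,G^d)\sm R(T,G^{d-1})$ with $\alpha(\gamma_{<r}^g)\neq1$, i.e.\ by $\Gamma\lmod (R_{\gamma_{<r}^g})_\tx{sym}$. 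The factor produced by each orbit is precisely $\chi_\alpha\!\left(\tfrac{\alpha(\gamma_{<r}^g)-1}{a_\alpha}\right)$.

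Next I would treat the combined $e\cdot\epsilon_L$-ratio using Kottwitz's computation of $\epsilon$-factors of virtual Galois representations of the form $X^*(T_H)_\C-X^*(T)_\C$ as recalled in \cite[\S3.5]{KalEpi}. That formula expresses $e(H)\,e(J')\,\epsilon_L(X^*(T_H)_\C-X^*(T_{J'})_\C,\Lambda)$, for $J'\subset H$ a connected reductive subgroup containing a common torus $T$, as a product over $\Gamma$-orbits of symmetric roots of $T$ in $H$ but not in $J'$ of the local term $f_{(H,T)}(\alpha)\,\lambda_{F_\alpha/F_{\pm\alpha}}(\Lambda\circ\tx{tr}_{F_{\pm\alpha}/F})^{-1}$ (using that $e(J')=e(J')^{-1}$). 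Applying this to $H=G^i$ and $J'=J^i$ for $i=d,d-1$ and noting $J^{d-1}=J^d\cap G^{d-1}$ (so $R(T,J^d)\cap R(T,G^{d-1})=R(T,J^{d-1})$), the ratio collapses to a product over the symmetric $\Gamma$-orbits in $R(T,G^d)\sm R(T,G^{d-1})$ with $\alpha(\gamma_{<r}^g)\neq1$—again $\Gamma\lmod (R_{\gamma_{<r}^g})_\tx{sym}$—with each orbit contributing $f_{(G,T)}(\alpha)\,\lambda_{F_\alpha/F_{\pm\alpha}}(\Lambda\circ\tx{tr}_{F_{\pm\alpha}/F})^{-1}$. (Here we use that $f_{(G^d,T)}(\alpha)=f_{(G,T)}(\alpha)$ for the roots in question, since the invariant depends only on the rank-one subgroup $G_{\pm\alpha}$.)

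Multiplying the two ratios gives exactly the product over $\Gamma\lmod (R_{\gamma_{<r}^g})_\tx{sym}$ of $f_{(G,T)}(\alpha)\,\lambda_{F_\alpha/F_{\pm\alpha}}(\Lambda\circ\tx{tr}_{F_{\pm\alpha}/F})^{-1}\chi_\alpha\!\left(\tfrac{\alpha(\gamma_{<r}^g)-1}{a_\alpha}\right)$, which by the preceding lemma equals $\epsilon_{s,r}(\gamma_{<r}^g)\,\tilde e(\gamma_{<r}^g)$. The main obstacles are purely bookkeeping: verifying the cancellation $R(T,J^d)\cap R(T,G^{d-1})=R(T,J^{d-1})$ and invoking \cite[\S3.5]{KalEpi} in the exact form needed, being careful that the roots whose $\Gamma$-orbits contribute are precisely the symmetric ones in the intended complement.
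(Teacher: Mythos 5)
Your proof is correct and takes essentially the same route as the paper's own (very terse) proof, which simply cites \cite[Corollary 3.5.2]{KalEpi} and the additivity of $\epsilon_L$; you have unwound exactly that reference. The bookkeeping you flag — the identity $R(T,J^d)\cap R(T,G^{d-1})=R(T,J^{d-1})$, the cancellation over symmetric orbits, and the fact that the choice of $\chi$-data on $R(T,G^{d-1})$ cancels in the $\Delta_{II}^\tx{abs}$-ratio — all checks out, and the application of the Kottwitz formula $e(H)\,\epsilon_L(X^*(T_H)_\C-X^*(T)_\C,\Lambda)=\prod_{\alpha\in R(T,H)_\tx{sym}/\Gamma} f_{(H,T)}(\alpha)\,\lambda_{F_\alpha/F_{\pm\alpha}}^{-1}$ to each of $H\in\{G^d,G^{d-1},J^d,J^{d-1}\}$ in combination with additivity gives exactly the desired product.
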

\begin{proof}
This follows immediately from \cite[Corollary 3.5.2]{KalEpi} and the additivity of $\epsilon_L$ in degree zero. Note that there is a typo in loc.cit: $\tx{tr}_{F_\alpha/F_{\pm\alpha}}$ should read $\tx{tr}_{F_{\pm\alpha}/F}$. Note also that $J^d \cong \tx{Ad}(g^{-1})J^d$ and $J^{d-1} \cong \tx{Ad}(g^{-1})J^{d-1}$ as reductive groups over $F$.
\end{proof}

We remark here that this expression does not depend on the choice of $\Lambda$, because both $\epsilon_L$ and the $a$-data $a_\alpha=\<H_\alpha,X_{d-1}^*\>$ depend on $\Lambda$ in a parallel way. Thus we may from now on use an arbitrary additive character $\Lambda$, i.e. remove the condition on its depth.

A slight variant of this corollary will also be useful later when we study $L$-packets. It involves the following modified choice of $\chi$-data, where we use
\begin{equation} \label{eq:chi'}
\chi'_\alpha(-2a_\alpha^{-1})=\lambda_{F_\alpha/F_{\pm\alpha}}(\Lambda\circ\tx{tr}_{F_{\pm\alpha}/F})
\end{equation}
instead of \eqref{eq:chi}. Then $\chi'_\alpha$ is a valid set of $\chi$-data for the same reasons that $\chi_\alpha$ was. The usefulness of $\chi_\alpha'$ comes from the fact that it depends only on the torus $T$ and the character $\phi_{d-1}$, but not on the group $G$ in the sense that it is insensitive to replacing $T$ by a stably conjugate torus in an inner form of $G$. The relationship between the two $\chi$-data can be expressed by
\[ \chi'_\alpha(x) = \chi_\alpha(x) \epsilon_\alpha(x), \]
where $\epsilon_\alpha : F_\alpha^\times \rw \C^\times$ is the trivial character unless $\alpha$ is symmetric and ramified, in which case it is given by $\epsilon_\alpha(x)=f_{(G,T)}(\alpha)^{e_\alpha\tx{ord}(x)}$. The collection $\epsilon_\alpha$ satisfies the conditions of \cite[Corollary 2.5.B]{LS87} and thus provides a character $\epsilon_{f,r} : T(F) \rw \C^\times$. This character is similar, but not the same, as the one introduced in \cite[\S3.6]{KalEpi}.

\begin{lem} \label{lem:efr} For all $\gamma \in T(F)$ we have
\[ \epsilon_{f,r}(\gamma) = \prod_{\substack{\alpha \in R(T,G)_\tx{sym,ram}/\Gamma\\ \alpha(\gamma) \neq 1 \\ \tx{ord}(\alpha(\gamma)-1) = 0}} f_{(G,T)}(\alpha). \]
\end{lem}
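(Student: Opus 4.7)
The plan is to unpack the construction of $\epsilon_{f,r}$ via the Langlands-Shelstad formalism (exactly as recalled for $\zeta_T$ in Subsection \ref{sub:delta2}), identify the contribution of each $\Gamma$-orbit as a character of $F_\alpha^1$, and then pin down that character by a structural computation on $F_\alpha^1$ modulo its pro-$p$-Sylow.

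First, since $\epsilon_\alpha = 1$ for asymmetric and for unramified symmetric roots, only the $\Gamma$-orbits of symmetric ramified roots contribute to $\epsilon_{f,r}$. For each such orbit, represented by $\alpha$, the contribution at $\gamma$ is (by the formula recalled in Subsection \ref{sub:delta2}) the value $\epsilon_\alpha(x)$, where $x \in F_\alpha^\times$ is any element with $x/\tau(x) = \alpha(\gamma)$ for $\tau$ the non-trivial element of $\Gamma_{\pm\alpha}/\Gamma_\alpha$. Since $\epsilon_\alpha(x) = f_{(G,T)}(\alpha)^{e_\alpha\tx{ord}(x)}$ takes values in $\{\pm 1\}$ and $f_{(G,T)}(\alpha)^2=1$, the value depends only on the parity of $e_\alpha\tx{ord}(x)$, i.e.\ on the class of $x$ in $F_\alpha^\times/F_{\pm\alpha}^\times$; thus it depends only on $\alpha(\gamma) \in F_\alpha^1$ (well-definedness also follows from $\epsilon_\alpha|_{F_{\pm\alpha}^\times} = 1$, since $F_{\pm\alpha}^\times$ has even valuation in $F_\alpha$). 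Writing $\varepsilon_\alpha : F_\alpha^1 \to \{\pm 1\}$ for the resulting character, the problem reduces to identifying $\varepsilon_\alpha$.

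Next I would analyse $F_\alpha^1$ modulo its pro-$p$ part. The open subgroup $[F_\alpha^1]_{0+} := F_\alpha^1 \cap (1+\mf{p}_{F_\alpha})$ is pro-$p$, and since $F_\alpha/F_{\pm\alpha}$ is totally ramified of degree $2$, the residue field is preserved and the $\tau$-action on $k_{F_\alpha}^\times$ is trivial, so the norm on residue fields is squaring. Hence $F_\alpha^1/[F_\alpha^1]_{0+}$ identifies with the $2$-torsion $\{\pm 1\} \subset k_{F_\alpha}^\times$. Equivalently, via Hilbert 90, $F_\alpha^\times/F_{\pm\alpha}^\times$ modulo its pro-$p$-Sylow is $\Z/2\Z$ detected by the parity of the $F_\alpha$-valuation: the non-trivial class is represented by any uniformizer $\varpi$ of $F_\alpha$, and $\varpi/\tau(\varpi) \equiv -1 \bmod [F_\alpha^1]_{0+}$. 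The character $\varepsilon_\alpha$ is therefore pulled back from a character of the quotient $\Z/2\Z$, and its formula shows that it sends the non-trivial class to $f_{(G,T)}(\alpha)$.

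Finally, I conclude by characterising which $\alpha(\gamma) \in F_\alpha^1$ lie in $[F_\alpha^1]_{0+}$: precisely those with $\alpha(\gamma) = 1$ or $\tx{ord}(\alpha(\gamma)-1) > 0$. Thus the $\alpha$-contribution is $f_{(G,T)}(\alpha)$ when $\alpha(\gamma)\neq 1$ and $\tx{ord}(\alpha(\gamma)-1) = 0$, and is $1$ otherwise, giving the stated formula. The only mildly subtle point -- and the step I would verify most carefully -- is the identification of $F_\alpha^1/[F_\alpha^1]_{0+}$ with $\{\pm 1\}$, together with the compatibility of the Hilbert 90 isomorphism with the parity of the valuation; once this is in hand, the lemma falls out by inspection.
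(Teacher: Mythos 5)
Your proposal is correct and follows essentially the same route as the paper's proof: both reduce $\epsilon_{f,r}(\gamma)$ via the Langlands--Shelstad formalism to a product over $\Gamma$-orbits of symmetric ramified roots of $\epsilon_\alpha$ evaluated on a Hilbert-90 square root of $\alpha(\gamma)$, then observe that this is trivial on $F_\alpha^1\cap(1+\mf{p}_{F_\alpha})$ and equals $f_{(G,T)}(\alpha)$ on the nontrivial coset represented by $-1$. The paper simply makes the choice of square root $\delta^\alpha = \omega\cdot v$ explicit in each case rather than invoking the quotient structure of $F_\alpha^1$, but the substance is the same.
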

\begin{proof}
The argument is a slight variant of the proof of \cite[Lemma 3.6.1]{KalEpi}. Following the argument in the proof of \cite[Lemma 3.5.A]{LS87} and using that $\epsilon_\alpha$ is trivial unless $\alpha$ is symmetric and ramified we arrive at
\[ \epsilon_{f,r}(\gamma) = \prod_{\alpha \in R(T,G)_\tx{sym,ram}} \epsilon_\alpha(\delta^\alpha)^{-1}, \]
where $\delta^\alpha \in F_\alpha^\times$ is any element satisfying $\delta^\alpha/\sigma(\delta^\alpha)=\alpha(\gamma)$ for the non-trivial element $\sigma \in \Gamma_{\pm\alpha}/\Gamma_\alpha$. If $\tx{ord}(\alpha(\gamma)-1)>0$ we have $\tx{ord}(\delta^\alpha-1)>0$ and hence $\epsilon_\alpha(\delta^\alpha)=1$. If $\tx{ord}(\alpha(\gamma)-1)=0$ then the fact that $\alpha$ is symmetric and ramified implies $\alpha(\gamma) \in -1 + \mf{p}_{F_\alpha}$. Writing $\alpha(\gamma)=-1 \cdot u$ with $u \in 1+\mf{p}_{F_\alpha}$ we may choose $\delta^\alpha=\omega \cdot v$ with $v \in 1+\mf{p}_{F_\alpha}$ satisfying $v/\sigma(v)=u$ and $\omega \in F_\alpha^\times$ a uniformizer with $\sigma(\omega)=-\omega$. Then $\epsilon_\alpha(\delta^\alpha)=f_{(G,T)}(\alpha)$.
\end{proof}

\begin{fct} \label{fct:eframweyl}
The character $\epsilon_{f,r}$ is $N(T,G)(F)$-invariant.
\end{fct}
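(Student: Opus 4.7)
The plan is to deduce the claim from the explicit formula of Lemma \ref{lem:efr}, which expresses $\epsilon_{f,r}(\gamma)$ as the product of the toral invariants $f_{(G,T)}(\alpha)$ over those $\Gamma$-orbits of symmetric ramified roots $\alpha$ satisfying $\alpha(\gamma)\neq 1$ and $\tx{ord}(\alpha(\gamma)-1)=0$. Given $n\in N(T,G)(F)$ with image $\bar w\in \Omega(T,G)(F)$, the standard identity $\alpha(\gamma^n)=(\bar w^{-1}\alpha)(\gamma)$ transports the two defining conditions for the indexing set from $\alpha$ against $\gamma^n$ to $\bar w^{-1}\alpha$ against $\gamma$. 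Because $\bar w$ is $\Gamma$-fixed, it commutes with the Galois action on $R(T,G)$, hence permutes $\Gamma$-orbits while preserving the stabilizers $\Gamma_\alpha$ and $\Gamma_{\pm\alpha}$, the fields $F_\alpha=F_{\bar w\alpha}$ and $F_{\pm\alpha}=F_{\pm\bar w\alpha}$, and in particular the locus of symmetric ramified orbits. After reindexing $\alpha\mapsto\bar w\alpha$, the indexing set of the product defining $\epsilon_{f,r}(\gamma^n)$ coincides with that of $\epsilon_{f,r}(\gamma)$, and the problem reduces to showing $f_{(G,T)}(\bar w\alpha)=f_{(G,T)}(\alpha)$ for every such $\alpha$.

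For this reduction step, I would appeal to the identification $f_{(G,T)}(\alpha)=\tx{sgn}_{F_{\pm\alpha}}(G_{\pm\alpha})$ that is used in the proof of the formula for $\epsilon_{s,r}\cdot\tilde e$, so that the claim becomes the assertion that the Kottwitz signs of $G_{\pm\alpha}$ and $G_{\pm\bar w\alpha}$ as $F_{\pm\alpha}$-groups are equal. Choose any lift $\dot n\in N(T_\tx{sc},G_\tx{sc})(F^u)$ of $\bar w$ furnished, as in the proof of Lemma \ref{lem:weyl}, by Steinberg's theorem applied to $G_\tx{sc}$; then $\tx{Ad}(\dot n)\colon G_{\pm\alpha}\to G_{\pm\bar w\alpha}$ is an isomorphism of algebraic groups over $F^s$, and since $\bar w$ is $F$-rational we have $\dot n^{-1}\sigma(\dot n)\in T_\tx{sc}(F^s)$ for every $\sigma\in\Gamma$. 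Restricting to $\sigma\in\Gamma_{\pm\alpha}=\Gamma_{\pm\bar w\alpha}$ and projecting to the adjoint group of $G_{\pm\alpha}$, the resulting cocycle takes values in the image of $T_\tx{sc}$ in $G_{\pm\alpha,\tx{ad}}$, which equals $S_{\alpha,\tx{ad}}$; but the Kottwitz sign is an inner-form invariant that is unchanged by twisting by a cocycle in the torus modulo the center, so the equality of signs follows.

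The main obstacle I expect is the last verification, namely confirming that the ambiguity in the descent of $\tx{Ad}(\dot n)$ to an $F_{\pm\alpha}$-isomorphism lies in inner automorphisms of $G_{\pm\alpha}$ as an $F_{\pm\alpha}$-group, so that the Kottwitz sign is genuinely preserved. If the cleanest path to this is blocked by delicate issues in the definition of $f_{(G,T)}$ from \cite[\S3.1]{KalEpi}, I would instead give an alternative derivation at the level of $\chi$-data: one verifies that the characters $\chi_\alpha$ and $\chi'_\alpha$ introduced in \eqref{eq:chi} and \eqref{eq:chi'} are each equivariant for $\bar w$ in the sense that $\chi_{\bar w\alpha}=\chi_\alpha\circ\bar w^{-1}$, so the same holds for their ratio $\epsilon_\alpha$, and then apply the construction of \cite[Corollary 2.5.B]{LS87} to conclude that the resulting character $\epsilon_{f,r}$ of $T(F)$ is invariant under $\Omega(T,G)(F)$, and hence under $N(T,G)(F)$.
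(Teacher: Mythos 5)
Your reduction is sound and in fact matches the paper's own one-line proof, which simply cites the $N(T,G)(F)$-invariance of $f_{(G,T)}$: via Lemma \ref{lem:efr} (or, equally, directly from the definition of the collection $\epsilon_\alpha$ fed into \cite[Corollary 2.5.B]{LS87}), and the fact that the image $\bar w$ of $n\in N(T,G)(F)$ in $\Omega(T,G)$ is $\Gamma$-fixed and so preserves $\Gamma_\alpha$, $\Gamma_{\pm\alpha}$, $F_\alpha$, $F_{\pm\alpha}$, the whole claim reduces to $f_{(G,T)}(\bar w\alpha)=f_{(G,T)}(\alpha)$ for symmetric ramified $\alpha$.

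The gap is in how you then verify this equality. The assertion that the Kottwitz sign ``is unchanged by twisting by a cocycle in the torus modulo the center'' is false as a general principle: for the anisotropic torus $S_{\alpha,\tx{ad}}$, a class in $H^1(\Gamma_{\pm\alpha},S_{\alpha,\tx{ad}})$ can map onto the nontrivial class in $H^1(\Gamma_{\pm\alpha},G_{\pm\alpha,\tx{ad}})$, which does change the inner form of $G_{\pm\alpha}$ and flips its Kottwitz sign. So you cannot appeal to this. The culprit is that you introduced the Steinberg lift $\dot n$ at all. You already hold $n\in N(T,G)(F)$; since $n$ is $\Gamma$-fixed, $\tx{Ad}(n)$ is an $F$-automorphism of $G$ that carries $T$ to $T$ equivariantly, sends $\alpha$ to $\bar w\alpha$, carries $\mf{g}_\alpha$ onto $\mf{g}_{\bar w\alpha}$ $\Gamma$-equivariantly, and therefore restricts to an isomorphism $G_{\pm\alpha}\to G_{\pm\bar w\alpha}$ already defined over $F_{\pm\alpha}=F_{\pm\bar w\alpha}$. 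No descent cocycle arises, and $\tx{sgn}_{F_{\pm\alpha}}(G_{\pm\alpha})=\tx{sgn}_{F_{\pm\bar w\alpha}}(G_{\pm\bar w\alpha})$ follows immediately. (The same $\Gamma$-equivariant identification of $\mf{g}_\alpha$ with $\mf{g}_{\bar w\alpha}$ also gives $f_{(G,T)}(\bar w\alpha)=f_{(G,T)}(\alpha)$ straight from the definition in \cite[\S 3.1]{KalEpi}, without routing through the Kottwitz-sign identification, which in any case only holds on the ramified symmetric locus.) Finally, your suggested fallback via $\chi$ and $\chi'$ is circular: the $\bar w$-equivariance of $\chi_\alpha$ as specified by \eqref{eq:chi} is exactly the $\bar w$-invariance of $f_{(G,T)}$ that you are trying to establish.
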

\begin{proof}
This follows from the $N(T,G)(F)$-invariance of the function $f_{(G,T)}$.
\end{proof}

\begin{cor} \label{cor:prodeps}
 The product $\epsilon_{s,r}(\gamma_{<r}^g)\tilde e(\gamma_{<r}^g)$ is equal to
\[  \frac{\epsilon_{f,r}^{G^d}(\gamma_{<r}^g)e(G^d)e(J^d)}{\epsilon_{f,r}^{G^{d-1}}(\gamma_{<r}^g)e(G^{d-1})e(J^{d-1})} \frac{\epsilon_L(X^*(T_{G^d})_\C-X^*(T_{J^d})_\C,\Lambda)}{\epsilon_L(X^*(T_{G^{d-1}})_\C-X^*(T_{J^{d-1}})_\C,\Lambda)} \cdot \frac{\Delta_{II}^{\tx{abs},G^d}[a,\chi'](\gamma_{<r}^g)}{\Delta_{II}^{\tx{abs},G^{d-1}}[a,\chi'](\gamma_{<r}^g)}. \]
\end{cor}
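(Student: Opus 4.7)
The plan is to deduce this corollary directly from the immediately preceding corollary by substituting the $\chi$-data. The preceding corollary gives the same formula but with $\chi$-data $\chi$ (defined by \eqref{eq:chi}) rather than $\chi'$ (defined by \eqref{eq:chi'}), and with no $\epsilon_{f,r}$ factor. So the task is purely to convert between $\Delta_{II}^{\tx{abs}}[a,\chi]$ and $\Delta_{II}^{\tx{abs}}[a,\chi']$, identifying the resulting discrepancy as a ratio of $\epsilon_{f,r}$ factors.

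First, I would observe that by the two defining formulas \eqref{eq:chi} and \eqref{eq:chi'} we have the pointwise relation $\chi'_\alpha = \chi_\alpha \cdot \epsilon_\alpha$, where $\epsilon_\alpha$ is trivial unless $\alpha$ is symmetric ramified, in which case $\epsilon_\alpha(x) = f_{(G,T)}(\alpha)^{e_\alpha \tx{ord}(x)}$. One checks immediately that the collection $(\epsilon_\alpha)$ satisfies the hypotheses of \cite[Corollary 2.5.B]{LS87}: it is trivial on $F_{\pm\alpha}^\times$ for symmetric $\alpha$ (because $\tx{ord}_{F_\alpha}$ is even there, owing to the ramified quadratic extension), and it is Galois-equivariant because $f_{(G,T)}$ is constant on $\Gamma$-orbits. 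Thus the collection $(\epsilon_\alpha)$ is a valid $\zeta$ in the sense preceding Lemma \ref{lem:d2c}, and the associated character on $T(F)$ is by definition $\epsilon_{f,r}$.

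Applying Lemma \ref{lem:d2c} with $\zeta = \epsilon$ to each of $G^d$ and $G^{d-1}$ gives
\[ \Delta_{II}^{\tx{abs},G^i}[a,\chi'](\gamma_{<r}^g) = \Delta_{II}^{\tx{abs},G^i}[a,\chi](\gamma_{<r}^g) \cdot \epsilon_{f,r}^{G^i}(\gamma_{<r}^g), \quad i=d,d-1, \]
so
\[ \frac{\Delta_{II}^{\tx{abs},G^d}[a,\chi](\gamma_{<r}^g)}{\Delta_{II}^{\tx{abs},G^{d-1}}[a,\chi](\gamma_{<r}^g)} = \frac{\Delta_{II}^{\tx{abs},G^d}[a,\chi'](\gamma_{<r}^g)}{\Delta_{II}^{\tx{abs},G^{d-1}}[a,\chi'](\gamma_{<r}^g)} \cdot \frac{\epsilon_{f,r}^{G^{d-1}}(\gamma_{<r}^g)}{\epsilon_{f,r}^{G^{d}}(\gamma_{<r}^g)}. \]
Substituting into the preceding corollary and noting that $\epsilon_{f,r}$ takes values in $\{\pm 1\}$, so the ratio $\epsilon_{f,r}^{G^{d-1}}/\epsilon_{f,r}^{G^d}$ coincides with $\epsilon_{f,r}^{G^d}/\epsilon_{f,r}^{G^{d-1}}$, one obtains the desired identity.

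No serious obstacle is expected: the only subtlety is verifying that $(\epsilon_\alpha)$ genuinely satisfies the conditions to apply Lemma \ref{lem:d2c} (which is the content of the check that $\epsilon_\alpha|_{F_{\pm\alpha}^\times} = 1$ for symmetric ramified $\alpha$, via the parity of $\tx{ord}_{F_\alpha}$ on $F_{\pm\alpha}^\times$), and the bookkeeping identification of the resulting $\zeta_T$ with the character $\epsilon_{f,r}$ appearing in Lemma \ref{lem:efr}. The latter identification follows because both constructions produce the same character: the $\zeta_T$ recipe from Subsection \ref{sub:delta2} for our specific $(\epsilon_\alpha)$ is precisely what Lemma \ref{lem:efr} evaluates, and the explicit value given there in terms of $f_{(G,T)}(\alpha)$ is the intrinsic output of this recipe.
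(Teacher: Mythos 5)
Your proposal is correct and is precisely the argument the paper has in mind: Corollary~\ref{cor:prodeps} is stated as an immediate consequence of the paragraph preceding it, where the identity $\chi'_\alpha = \chi_\alpha \cdot \epsilon_\alpha$ is recorded together with the remark that $(\epsilon_\alpha)$ satisfies the hypotheses of \cite[Corollary 2.5.B]{LS87} and defines $\epsilon_{f,r}$, so that Lemma~\ref{lem:d2c} (applied once for $G^d$ and once for $G^{d-1}$) does the conversion. Your verification of the hypotheses for $(\epsilon_\alpha)$ — triviality on $F_{\pm\alpha}^\times$ via parity of $e_\alpha\,\tx{ord}$ on the ramified quadratic base, Galois-equivariance from the $\Gamma$-invariance of $f_{(G,T)}$, and the harmless sign flip from $\epsilon_{f,r}$ being $\{\pm1\}$-valued — is exactly the bookkeeping the paper leaves to the reader.
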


Before going further, it will be useful to express the $a$-data $a_\alpha = \<H_\alpha,X_{d-1}^*\>$ in a way that does not reference the structure of the $p$-adic group $G$. In fact, since we are using tame $\chi$-data, it will be enough to specify mod-$a$-data. For this, we consider the character
\[ F_\alpha^\times \rw \C^\times,\quad x \mapsto \phi_{d-1}(N_{F_\alpha/F}(\alpha^\vee(x))), \]
where $N_{F_\alpha/F} : T(F_\alpha) \rw T(F)$ is the norm map. According to \cite[Lemma 2.2.1]{KalEpi}, restriction to $[F_\alpha^\times]_r$ provides a non-trivial character $[F_\alpha^\times]_r/[F_\alpha^\times]_{r+} \rw \C^\times$. At the same time, we have the character
\[ \Lambda \circ \tx{tr}_{F_\alpha/F} : F_\alpha \rw \C^\times,\]
which factors through $[F_\alpha]_0/[F_\alpha]_{0+}$. We have the isomorphism
\[ X \mapsto X+1 : [F_\alpha]_r/[F_\alpha]_{r+} \rw [F_\alpha^\times]_r/[F_\alpha^\times]_{r+}, \]
which is a truncated version of the exponential map. The equation
\begin{equation} \label{eq:adata} \phi_{d-1}(N_{F_\alpha/F}(\alpha^\vee(X+1))) = \Lambda(\tx{tr}_{F_\alpha/F}(\bar a_\alpha X)), \end{equation}
characterizes the image $\bar a_\alpha$ of $\<H_\alpha,X_{d-1}^*\>$ in $[F_\alpha]_{-r}/[F_\alpha]_{-r+}$.

\begin{fct} \label{fct:chiweyl} The $a$-data $a_\alpha=\<H_\alpha,X_{d-1}^*\>$, and hence the $\chi$-data $\chi'$, are invariant under the action of $\Omega(T,G^{d-1})(F)$.
\end{fct}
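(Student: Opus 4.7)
The plan is to exploit the intrinsic formula $a_\alpha=\<H_\alpha,X_{d-1}^*\>$ together with the fact that $X_{d-1}^*$ lies in $\tx{Lie}^*(Z(G^{d-1}))(F)$, and is therefore annihilated by every element of $\Omega(T,G^{d-1})$. The main observation to set up is that a rational Weyl element $w \in \Omega(T,G^{d-1})(F)$ commutes with the Galois action on $X^*(T)$ and on $R(T,G)$, so $\Gamma_{w\alpha}=\Gamma_\alpha$ and $\Gamma_{\pm w\alpha}=\Gamma_{\pm\alpha}$. Consequently $F_{w\alpha}=F_\alpha$ and $F_{\pm w\alpha}=F_{\pm\alpha}$ as subfields of $F^s$, and $w\alpha$ is symmetric/asymmetric (resp. ramified/unramified) precisely when $\alpha$ is.

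For the invariance of the $a$-data, I would first recall that $w$ acts on $\tx{Lie}(T)$ carrying $H_\alpha$ to $H_{w\alpha}$, and that the identification of $\tx{Lie}^*(Z(G^{d-1}))$ with a subspace of $\tx{Lie}^*(G^{d-1})$ in \cite[\S8]{Yu01} realizes it as the $G^{d-1}$-fixed vectors for the coadjoint action. Thus $w \cdot X_{d-1}^* = X_{d-1}^*$, and the computation
\[ a_{w\alpha}=\<H_{w\alpha},X_{d-1}^*\>=\<wH_\alpha,X_{d-1}^*\>=\<H_\alpha,w^{-1}X_{d-1}^*\>=\<H_\alpha,X_{d-1}^*\>=a_\alpha \]
closes the first half.

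For the $\chi$-data, I would check $\chi'_{w\alpha}=\chi'_\alpha$ case by case. When $\alpha$ is asymmetric both characters are trivial by definition, and when $\alpha$ is symmetric and unramified both are the unramified quadratic character of the common field $F_\alpha^\times=F_{w\alpha}^\times$. The only case requiring any input is that of symmetric ramified $\alpha$, where \eqref{eq:chi'} selects $\chi'_\alpha$ among the two tame characters extending $\kappa_\alpha$ by its value on $-2a_\alpha^{-1}$. Since the fields $F_\alpha/F_{\pm\alpha}$, the quadratic character $\kappa_\alpha$, the Langlands constant $\lambda_{F_\alpha/F_{\pm\alpha}}(\Lambda\circ\tx{tr}_{F_{\pm\alpha}/F})$, and the element $-2a_\alpha^{-1}$ are all preserved under $\alpha \mapsto w\alpha$ by the previous paragraph, the defining condition \eqref{eq:chi'} for $\chi'_{w\alpha}$ is identical to that for $\chi'_\alpha$, forcing equality.

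No step poses a genuine obstacle: once the Galois stabilizers are seen to coincide, the rest is essentially a tautology from the centrality of $X_{d-1}^*$ in $G^{d-1}$. The only place one must be mildly careful is in distinguishing the two tame extensions of $\kappa_\alpha$ in the symmetric ramified case, but the verification there is simply that every datum appearing in \eqref{eq:chi'} is manifestly $w$-invariant.
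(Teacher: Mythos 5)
Your argument is correct and is precisely what the paper's one-line proof is pointing at: $w$-invariance of the $a$-data comes from $X_{d-1}^*$ being $G^{d-1}$-coadjoint-fixed (so in particular $N(T,G^{d-1})$-fixed), and $w$-invariance of $\chi'$ then falls out because every ingredient of \eqref{eq:chi'} -- the fields $F_\alpha$, $F_{\pm\alpha}$, the Langlands constant, and $a_\alpha$ -- is unchanged by $\alpha\mapsto w\alpha$, which in turn relies on a rational Weyl element commuting with the $\Gamma$-action. You have simply spelled out the details the paper leaves implicit; one minor terminological slip is the word "annihilated" where you mean "fixed," but the displayed computation makes the intent clear.
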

\begin{proof} For the $a$-data this follows from the fact that $X_{d-1}^*$ belongs to the dual Lie algebra of the center of $G^{d-1}$. For the $\chi$-data this follows from its definition \eqref{eq:chi'}.
\end{proof}

\subsection{The characters of toral supercuspidal representations} \label{sub:toral}

In this subsection $F$ is a local field of odd residual characteristic that is not a bad prime for $G$.

The supercuspidal representation $\pi$ is called toral if it arises from a Yu-datum of the form $((S,G),1,(\phi,1))$, where $G^{d-1}=S$ is an elliptic maximal torus and $\phi=\phi_{d-1}$ is a generic character of $S(F)$ of positive depth. In this special case, the Adler-DeBacker-Spice character formula \eqref{eq:char} applies to all regular semi-simple $\gamma = \gamma_{<r}\cdot \gamma_{\geq r} \in G(F)$, because the compactness assumption is automatically satisfies. Moreover, the formula of Corollary \ref{cor:prodeps} simplifies, because we have $J^{d-1}=G^{d-1}=S$. Thus we obtain

\begin{cor} \label{cor:prodepstoral}
The product $\epsilon_{s,r}(\gamma_{<r}^g)\tilde e(\gamma_{<r}^g)$ is equal to
\[  \epsilon_{f,r}(\gamma_{<r}^g)e(G)e(J) \epsilon_L(X^*(T_{G})_\C-X^*(T_{J})_\C,\Lambda) \cdot \Delta_{II}^{\tx{abs}}[a,\chi'](\gamma_{<r}^g). \]
\end{cor}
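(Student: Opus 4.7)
The plan is to deduce Corollary \ref{cor:prodepstoral} as an immediate specialization of Corollary \ref{cor:prodeps} to the toral setting, where $G^{d-1}=S$ is the elliptic maximal torus itself and $G^{d}=G$. The key observation is that every denominator term in the formula of Corollary \ref{cor:prodeps} becomes trivial in this case, because each denominator is built from data attached to $G^{d-1}=S$, which is a torus.

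Concretely, I would verify the following six simplifications in order. First, since $R(S,G^{d-1})=R(S,S)=\emptyset$, Lemma \ref{lem:efr} expresses $\epsilon_{f,r}^{G^{d-1}}$ as an empty product, hence the trivial character evaluated at $\gamma_{<r}^g$ gives $1$. Second, for the same reason $\Delta_{II}^{\tx{abs},G^{d-1}}[a,\chi'](\gamma_{<r}^g)$ is defined by an empty product over $\Gamma$-orbits of roots and equals $1$. Third, the Kottwitz sign $e(G^{d-1})=e(S)$ of the torus $S$ is $+1$ by the definition in \cite{Kot83}. Fourth, because $\gamma_{<r}\in S(F)$ and $S$ is abelian, the connected centralizer $J^{d-1}=\tx{Cent}(\gamma_{<r},G^{d-1})^\circ$ equals $S$, so $e(J^{d-1})=+1$. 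Fifth, both quasi-split minimal Levi subgroups $T_{G^{d-1}}$ and $T_{J^{d-1}}$ are equal to $S$ itself, so the virtual Galois representation $X^*(T_{G^{d-1}})_\C-X^*(T_{J^{d-1}})_\C$ is zero and its $\epsilon$-factor is $1$ by additivity of $\epsilon_L$ in degree zero. Sixth, in the numerator, $G^d=G$ and $J^d=J$, where $J$ denotes the connected centralizer of $\gamma_{<r}$ in $G$, which matches the notation of the corollary's statement.

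Combining these six trivializations in the formula of Corollary \ref{cor:prodeps} collapses the quotient to the product
\[ \epsilon_{f,r}(\gamma_{<r}^g)\,e(G)e(J)\,\epsilon_L(X^*(T_G)_\C-X^*(T_J)_\C,\Lambda)\,\Delta_{II}^{\tx{abs}}[a,\chi'](\gamma_{<r}^g), \]
which is exactly the claimed formula. The only nontrivial input is Corollary \ref{cor:prodeps}, whose applicability is not an issue here since the toral hypothesis automatically satisfies the compactness assumption $G^{d-1}(F)/Z(G)(F)$ is compact (as $S$ is elliptic in $G$), so the underlying Adler--DeBacker--Spice character formula is valid without further restriction. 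Since every step is either a direct evaluation of an empty product or an elementary identity, no genuine obstacle arises; the content of Corollary \ref{cor:prodepstoral} is simply the cleaner shape that Corollary \ref{cor:prodeps} takes in the toral case.
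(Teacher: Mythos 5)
Your proof is correct and matches the paper's own (extremely terse) argument: the paper simply observes that $J^{d-1}=G^{d-1}=S$ in the toral case and lets the quotient in Corollary~\ref{cor:prodeps} collapse. Your six-point checklist spells out precisely the trivializations the paper leaves implicit, so there is no difference in approach.
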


Combining this with \eqref{eq:char}, and setting $\theta=\phi_{d-1} : S(F) \rw \C^\times$, we arrive at

\begin{cor} \label{cor:newchartoral} The value of the normalized character $\Phi_\pi$ at the element $\gamma=\gamma_{<r} \cdot \gamma_{\geq r}$ is given as the product
\begin{eqnarray*}
&&e(G)e(J)\epsilon_L(X^*(T_{G})_\C-X^*(T_{J})_\C,\Lambda)\\
&\cdot&\sum_{\substack{g \in J(F) \lmod G(F) / S(F)\\ \gamma_{<r}^g \in S(F)}}  \Delta_{II}^\tx{abs}[a,\chi'](\gamma_{<r}^g)\epsilon_{f,r}(\gamma_{<r}^g) \epsilon^r(\gamma_{<r}^g) \theta(\gamma_{<r}^g) \hat\iota_{\mf{j},{^gX^*}}(\log(\gamma_{\geq r}))
\end{eqnarray*}
\end{cor}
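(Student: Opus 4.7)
The plan is essentially to specialize the character formula \eqref{eq:char} of Adler--DeBacker--Spice to the toral setting and then insert Corollary \ref{cor:prodepstoral} in place of the product $\epsilon_{s,r}\cdot\tilde e$. First I would verify that \eqref{eq:char} applies here without any caveat: in the toral case the twisted Levi sequence is $S=G^0\subset G^1=G$, so $d=1$ and $G^{d-1}=S$; since $S$ is elliptic, $G^{d-1}/Z(G)=S/Z(G)$ is anisotropic, which is exactly the compactness hypothesis under which \cite{AS09,DS} establish the formula. In particular, no appeal to Subsection \ref{sub:charshallow} is needed; the formula is valid for all strongly regular semi-simple $\gamma$ admitting an $r$-approximation $\gamma=\gamma_{<r}\cdot\gamma_{\geq r}$.

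Next I would simplify each of the intrinsic terms appearing in \eqref{eq:char} in this toral situation. The character $\phi_d=\phi_1=1$ by definition of the toral datum $((S,G),1,(\phi,1))$, so the prefactor $\phi_d(\gamma)$ disappears. The inner representation $\pi_{d-1}=\pi_0$ is just the character $\theta=\phi_0$ of $S(F)$, and since $S$ is abelian the Weyl discriminant $D^S$ is trivial, so $\Phi_{\pi_{d-1}}(\gamma_{<r}^g)=\theta(\gamma_{<r}^g)$. Furthermore, $J^{d-1}=G^{d-1}=S$ while $J^d=J$, the connected centralizer of $\gamma_{<r}$ in $G$. With these identifications the orbital integral term reduces to $\hat\iota_{\mf{j},{^gX^*}}(\log(\gamma_{\geq r}))$, where $X^*=X^*_{d-1}$ realizes $\theta=\phi_{d-1}$.

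Then I would substitute the formula of Corollary \ref{cor:prodepstoral} for $\epsilon_{s,r}(\gamma_{<r}^g)\tilde e(\gamma_{<r}^g)$ into the sum. The key observation that lets me pull three of the resulting factors outside the summation is that $e(G)$, $e(J)$ and $\epsilon_L(X^*(T_G)_\C-X^*(T_J)_\C,\Lambda)$ depend only on the inner form class of $J$ and on the quasi-split inner form of $G$ and $J$. For varying $g$ the group $\tx{Cent}(\gamma_{<r}^g,G)^\circ=g^{-1}Jg$ is an inner form of $J$ (in fact $F$-isomorphic to $J$ via $\tx{Ad}(g^{-1})$), so $e(J^g)=e(J)$ and $T_{J^g}=T_J$; none of these three factors depends on $g$. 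Pulling them out of the sum yields exactly the asserted formula, and the remaining $g$-dependent terms $\Delta_{II}^\tx{abs}[a,\chi'](\gamma_{<r}^g)\,\epsilon_{f,r}(\gamma_{<r}^g)\,\epsilon^r(\gamma_{<r}^g)\,\theta(\gamma_{<r}^g)$ combine with the orbital-integral piece to produce the summand in the statement.

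There is no real obstacle here; the proof is essentially a bookkeeping specialization once the previous technical results (most substantively Corollary \ref{cor:prodeps} and the identification of the toral character $\phi_d=1$) are in hand. The only subtlety worth pointing out explicitly is the justification that the prefactors can be pulled past the sum, which rests on the stable-conjugacy invariance of the Kottwitz signs and on the fact that $T_J$ is defined with reference to the quasi-split inner form of $J$ rather than to $J$ itself.
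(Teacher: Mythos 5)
Your proposal is correct and follows essentially the same route as the paper: the paper's entire proof is the one-line remark "Combining [Corollary \ref{cor:prodepstoral}] with \eqref{eq:char}, and setting $\theta=\phi_{d-1}$, we arrive at" the stated formula, with the compactness hypothesis noted as automatic in the toral case. Your extra bookkeeping — $\phi_d=1$, $\Phi_{\pi_{d-1}}=\theta$ because $D^S=1$, $J^{d-1}=S$, and the $g$-independence of $e(J)$ and $\epsilon_L(X^*(T_G)_\C-X^*(T_J)_\C,\Lambda)$ (which the paper absorbs into the already $g$-free statement of Corollary \ref{cor:prodepstoral} via the remark after Corollary \ref{cor:prodeps}) — merely spells out what the paper leaves implicit.
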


\subsection{Character values of regular supercuspidal representations at shallow elements: depth zero} \label{sub:depthzero}
In this subsection $F$ is a local field of odd residual characteristic.

\begin{lem} \label{lem:dze} Let $S$ be a maximally unramified maximal torus of $G$ and let $T$ be the minimal Levi subgroup of the quasi-split inner form of $G$. If $\Lambda : F \to \C^\times$ is a character of depth zero, then
\[ \epsilon(X^*(S)_\C - X^*(T)_\C,\Lambda) = (-1)^{r_S-r_T}, \]
where $r_S$ and $r_T$ are the split ranks of $S$ and $T$ respectively.
\end{lem}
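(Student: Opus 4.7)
The plan is to exploit the observation that $X^*(S)_\C$ and $X^*(T)_\C$ have the same inertial restriction, reducing the computation to a sign that records the action of Frobenius on the inertia-invariants.

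First I would show that $X^*(S)_\C \cong X^*(T)_\C$ as representations of $I_F$. By Fact \ref{fct:relur}, a maximally unramified maximal torus $S$ becomes a minimal Levi subgroup $S \times F^u$ of $G \times F^u$; the torus $T \times F^u$ is likewise a minimal Levi of the quasi-split inner form $G^* \times F^u \cong G \times F^u$. Any two minimal Levi subgroups of a connected reductive group over the strictly Henselian field $F^u$ are conjugate, and this furnishes an $I_F$-equivariant identification $X^*(S)_\C \cong X^*(T)_\C$. In particular $\dim X^*(S)^{I_F}=\dim X^*(T)^{I_F}$, and the virtual $W_F$-representation $V := X^*(S)_\C - X^*(T)_\C$ has trivial restriction to $I_F$ in the Grothendieck group.

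Second, I would apply Deligne's formula for the $\epsilon$-factor of a tamely ramified representation, which expresses $\epsilon(U,\Lambda)$ in terms of $\det(-\tx{Fr}\mid U^{I_F})$, the Artin conductor $a(U)$, the depth of $\Lambda$, and a power of $q$. Because $X^*(S)_\C$ and $X^*(T)_\C$ have identical restrictions to $I_F$, the Artin conductors and the $q$-powers agree between numerator and denominator, and for $\Lambda$ of depth zero the exponent of the $\det$ factor simplifies so that
\[
\epsilon(V,\Lambda) = \frac{\det(-\tx{Fr}\mid X^*(S)^{I_F}_\C)}{\det(-\tx{Fr}\mid X^*(T)^{I_F}_\C)}.
\]
Using $\dim X^*(S)^{I_F}=\dim X^*(T)^{I_F}$ the overall sign from the $-1$ cancels and the ratio reduces to $\det(\tx{Fr}\mid X^*(S)^{I_F}_\C)/\det(\tx{Fr}\mid X^*(T)^{I_F}_\C)$.

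Third, I would compute each determinant directly. Since $S$ is split by a finite tame extension of $F$, Frobenius acts on the $\Z$-lattice $X^*(S)^{I_F}$ with finite order, so its eigenvalues on $X^*(S)^{I_F}_\C$ are roots of unity. Non-real eigenvalues come in complex-conjugate pairs, each pair contributing $|\zeta|^2=1$ to the determinant; the eigenvalue $+1$ occurs with multiplicity $r_S$ by definition of the split rank; and the remaining eigenvalue $-1$ occurs with multiplicity $\dim X^*(S)^{I_F} - r_S - m_S^\C$, where $m_S^\C$ is even. Hence
\[
\det(\tx{Fr}\mid X^*(S)^{I_F}_\C) = (-1)^{\dim X^*(S)^{I_F}-r_S},
\]
and the analogous formula holds for $T$. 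Taking the ratio and using that the dimensions of the inertia-invariants match yields $(-1)^{r_T-r_S} = (-1)^{r_S-r_T}$, as desired. The main obstacle will be step two: nailing down the precise normalization of the $\epsilon$-factor formula for tame representations and verifying that the depth-zero hypothesis on $\Lambda$ is exactly what is needed to eliminate every contribution besides the Frobenius determinant on inertia-invariants.
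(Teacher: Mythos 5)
Your strategy — trade the $\epsilon$-factor of the virtual representation for a ratio of Frobenius determinants on inertia invariants and then count eigenvalues — is appealing, and your steps 1 and 3 are correct (the paper's proof of the right-hand side, $(-1)^{r_S-r_T}$, runs through essentially the same Frobenius-eigenvalue count you carry out in step 3). However, step 2 contains a genuine gap. You appeal to ``Deligne's formula for the $\epsilon$-factor of a tamely ramified representation'' as expressing $\epsilon(U,\Lambda)$ in terms of $\det(-\tx{Fr}\mid U^{I_F})$, $a(U)$, the depth of $\Lambda$, and a power of $q$. No such formula exists for a single representation $U$: the $\epsilon$-factor of a ramified character is a Gauss sum and is not determined by those four invariants. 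What you actually need is the more delicate assertion that for a virtual representation $V=U-U'$ of degree zero, Artin conductor zero, with $U|_{I_F}\cong U'|_{I_F}$ and $\Lambda$ of level $-1$, one has $\epsilon(V,\Lambda)$ equal (up to a possible inversion — immaterial here, since everything is $\pm 1$ for rational Galois modules) to $\det(-\tx{Fr}\mid U^{I_F})/\det(-\tx{Fr}\mid U'^{I_F})$. This is plausible and very likely true, but it is not a one-line corollary of the inductive theory of local constants, and you neither prove it nor cite a precise source; you yourself flag it as ``the main obstacle'' without resolving it. As it stands the argument is incomplete at exactly the point that carries all the weight.

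The paper proceeds quite differently. It reduces to $G$ quasi-split (via Lemma \ref{lem:qstrans}) and to $S$ and $T$ conjugate under $G(F^u)_{o,0}$ at a superspecial vertex $o$ (via Lemma \ref{lem:super}), then invokes Kottwitz's explicit formula from \cite[Corollary 3.5.2]{KalEpi},
\[
\epsilon(X^*(S)_\C - X^*(T)_\C,\Lambda) = \prod_{\alpha \in R(S,G)_\tx{sym}/\Gamma} f_{(G,S)}(\alpha)\,\lambda_{F_\alpha/F_{\pm\alpha}}(\Lambda\circ\tx{tr}_{F_{\pm\alpha}/F}),
\]
which converts the $\epsilon$-factor into a product of toral invariants and Langlands constants. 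Since $S$ is maximally unramified every $F_\alpha/F_{\pm\alpha}$ is unramified, so each $\lambda$ equals $-1$, and the toral invariants are computed using Proposition \ref{pro:ordx} together with a pinning argument which shows that $f_{(G,S)}(\alpha)=-1$ exactly when $\alpha$ restricts to a divisible relative root in a type-$A_{2n}$ component. This identifies the left side with $(-1)^{\#R(S',G)_{\tx{nd,sym}}/\tx{Fr}}$, and then the Frobenius-eigenvalue count (your step 3) combined with the length-parity lemma \cite[Lemma 4.0.7]{KalECI} identifies that with $(-1)^{r_S-r_T}$. In short: your step 2 would be a genuinely more elegant route if you could supply the missing $\epsilon$-factor identity; the paper sidesteps that issue entirely by substituting Kottwitz's formula and paying for it with the structure theory of Proposition \ref{pro:ordx}.
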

\begin{proof}
By Lemma \ref{lem:qstrans} the torus $S$ transfers to the quasi-split inner form of $G$. The statement we are proving is invariant under replacing $G$ by its quasi-split inner form, so we may now assume $G$ is quasi-split. We may also assume that $G$ is simply connected. Let $o \in \mc{A}(T,F)$ be the superspecial vertex associated to a $\Gamma$-invariant pinning. By Lemma \ref{lem:super} we may further replace $S$ by a stable conjugate so that $o$ the unique point in $\mc{B}(G,F) \cap \mc{A}(S,F^u)$. In fact, from the proof of that Lemma we see that $S$ and $T$ are conjugate under $G(F^u)_{o,0}$.

We will use Kottwitz's result \cite[Corollary 3.5.2]{KalEpi} to compute the left hand side. It is the formula
\[ \epsilon(X^*(S)_\C - X^*(T)_\C,\Lambda) = \prod_{\alpha \in R(S,G)_\tx{sym}/\Gamma} f_{(G,S)}(\alpha)\lambda_{F_\alpha/F_{\pm\alpha}}(\Lambda\circ\tx{tr}_{F_{\pm\alpha}/F}). \]
Here $f_{(G,S)}$ is the toral invariant of $S \subset G$ \cite[\S3]{KalEpi}, and $\lambda_{F_\alpha/F_{\pm\alpha}}$ is the Langlands constant \cite[\S1.5]{BH05b}. Since $S$ is maximally unramified, each quadratic extension $F_\alpha/F_{\pm\alpha}$ is unramified, so by \cite[Lemma 1.5]{BH05b} the corresponding Langlands constant is $-1$. Moreover, according to Proposition \ref{pro:ordx}, $f_{(G,S)}(\alpha)=+1$ if and only if $0 \in \tx{ord}_o(\alpha)$. According to \cite[Remark 3.1.4]{DS} we may extend scalars to $F^u$ before computing $\tx{ord}_o(\alpha)$. Since $S$ and $T$ are conjugate under $G(F^u)_{o,0}$ we may thus replace $S$ by $T$ in the computation of $\tx{ord}_o(\alpha)$.

We are now interested in the question of whether there is an element of $\mf{g}(E)_\alpha$ whose valuation with respect to $o$ is zero, and which is fixed by the action of $\tx{Gal}(E/F^u_\alpha)$, where $E/F^u$ is the splitting field of $T$. Consider the simple component of the root system $R(T,G)$ to which $\alpha$ belongs. Then $\tx{Gal}(E/F^u_\alpha)$ is a cyclic group preserving this component and acting on it by a pinned automorphism that fixes the root $\alpha$. Let's call this automorphism $\theta$. It preserves the line $\mf{g}_\alpha(E)$. The fixed pinning provides a pair of elements $\{X,-X\} \subset \mf{g}_\alpha(E)$. Both of these elements have $o$-valuation equal to $0$. Now $\theta(X)=\zeta \cdot X$, where $\zeta \in F^{u,\times}$ is a root of unity of order divisible by the order of $\theta$. If $\zeta=1$, then $X \in \mf{g}_\alpha(F^u_\alpha)$ and we conclude that $0 \in \tx{ord}_x(\alpha)$. If $\zeta \neq 1$, then $\varpi X \in \mf{g}_\alpha(F^u_\alpha)$, where $\varpi \in E$ is an element of valuation $(\tx{ord}(\theta)e_\alpha)^{-1}$ and then $0 \notin \tx{ord}_x(\alpha)$. A direct examination of the simple root systems shows that $\zeta=1$ unless $\alpha$ belongs to a simple component of type $A_{2n}$ and $\theta$ is the non-trivial pinned automorphism, in which case $\zeta=-1$.

Returning to the original torus $S$, we can interpret this as follows. Let $S' \subset S$ be the maximal unramified subtorus. Let $R(S',G)$ be the corresponding relative root system. It need not be reduced. The fibers of the map $R(S,G) \to R(S',G)$ induced by the inclusion $S' \to S$ are precisely the inertial orbits in $R(S,G)$. This map then sets up a bijection between the $\Gamma$-orbits in $R(S,G)$ and the Frobenius-orbits in $R(S',G)$ and this bijection restricts to a bijection between the symmetric orbits. A root $\alpha \in R(S,G)$ restricts to a divisible root in $R(S',G)$ if and only if it belongs to a copy of $A_{2n}$ and $\tx{Gal}(E/F^u_\alpha)$ acts non-trivially on that copy. If such a root is symmetric, we have $f_{(G,S)}(\alpha)\lambda_{F_\alpha/F_{\pm\alpha}}(\Lambda\circ\tx{tr}_{F_{\pm\alpha}/F})=1$, because both factors are equal to $-1$. For any other symmetric root, we have $f_{(G,S)}(\alpha)\lambda_{F_\alpha/F_{\pm\alpha}}(\Lambda\circ\tx{tr}_{F_{\pm\alpha}/F})=-1$, because the first factor is equal to $1$ and the second is equal to $-1$. With this, Kottwitz's formula above becomes
\[ \epsilon(X^*(S)_\C - X^*(T)_\C,\Lambda) = (-1)^{\#R(S',G)_\tx{nd,sym}/\tx{Fr}}, \]
where the subscript ``nd'' denotes the set of non-divisible roots. On the other hand, we have
\[ (-1)^{r_S-r_T}=(-1)^{\tx{dim}([X^*(S)_\C]^I)^\tx{Fr}-\tx{dim}([X^*(T)_\C]^I)^\tx{Fr}}=(-1)^{\tx{dim}[X^*(S)_\C]_I^\tx{Fr}-\tx{dim}[X^*(T)_\C]_I^\tx{Fr}}. \]
Since $S$ is maximally unramified, the $I$-modules $X_*(S)_\C$ and $X_*(T)_\C$ are equal. Moreover, the action of Frobenius on $[X_*(S)_\C]_I$ is the twist of the action of Frobenius on $[X_*(T)_\C]_I$ by an unramified 1-cocycle $w_\sigma$. Letting $V=[X_*(T)_\C]_I$, $\phi$ be the automorphism of $V$ by which Frobenius acts, and $w \in \Omega(T,G)$ the value of $w_\sigma$ at the Frobenius element, we get
\[ (-1)^{r_S-r_T}=(-1)^{\tx{dim}V^\phi-\tx{dim}V^{w\phi}}. \]
Both $\phi$ and $w$ are of finite order and preserve a $\Q$-structure on $V$, so their eigenvalues are either $+1$, $-1$, or pairs of conjugate non-real roots of unity. From this we see
\[ (-1)^{r_S-r_T}=\tx{det}(\phi|V)^{-1}\tx{det}(w\phi|V)=\tx{det}(w|V). \]
Note that $V$ is the vector space in which the root system $R(S',G)_\tx{nd}$ resides, and in fact is spanned by that root system, because $X^*(T)_\C$ is spanned by $R(T,G)$. Thus $\tx{det}(w|V)=(-1)^{l(w)}$. According to the argument of \cite[Lemma 4.0.7]{KalECI}, $(-1)^{l(w)}$ is equal to $(-1)^N$, where $N$ is the number of symmetric Frobenius orbits in $R(S',G)_\tx{nd}$.
\end{proof}

For a maximally unramified maximal torus $S \subset G$ all symmetric roots in $R(S,G)$ are unramified. We can thus fix unramified $\chi$-data for $R(S,G)$ and we can fix mod-$a$-data consisting of units, i.e. non-zero elements of $[F_\alpha]_0/[F_\alpha]_{0+}$.

\begin{pro} \label{pro:charshallowdz} Let $S \subset G$ be a maximally unramified maximal torus, $\theta : S(F) \to \C^\times$ a regular depth-zero character, and $\pi_{(S,\theta)}$ the corresponding regular depth-zero supercuspidal representation as in Subsection \ref{sub:rdz}. If $r \in G(F)$ is a regular topologically semi-simple element belonging to an elliptic maximally unramified maximal torus, then the character of $\pi_{(S,\theta)}$ at $r$ is zero, unless $r$ is (conjugate to) an element of $S(F)$, in which case it is given by
\[ e(G)\epsilon(X^*(T)_\C-X^*(S)_\C,\Lambda)\sum_{w \in N(S,G)(F)/S(F)}\Delta_{II}^\tx{abs}[a,\chi](r^w) \theta(r^w), \]
where $\chi$ is any unramified $\chi$-data and $a$ is any mod-$a$-data consisting of units.
\end{pro}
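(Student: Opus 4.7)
The plan is to compute $\Theta_{\pi_{(S,\theta)}}(r)$ via the Harish-Chandra integral character formula for the compactly induced representation $\pi_{(S,\theta)}=\tx{c-Ind}_{S(F)G(F)_{x,0}}^{G(F)}\tilde\kappa_{(S,\theta)}$ constructed in Lemma \ref{lem:rdzconst}, thereby reducing the problem to the character of $\tilde\kappa_{(S,\theta)}$ on $S(F)$, which was computed in Corollary \ref{cor:tkcharss}.

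Since $r$ is topologically semi-simple and regular, it has a unique fixed point $x_r \in \mc{B}^\tx{red}(G,F)$, by \cite[\S3.6]{Tits79} as applied in Subsection \ref{sub:charshallow}. The function $g \mapsto \theta_{\tilde\kappa_{(S,\theta)}}(g^{-1}rg)$, extended by zero to $G(F)/S(F)$, is therefore compactly supported, and the integral character formula applies by the argument of Subsection \ref{sub:charshallow}, yielding
\[ \Theta_{\pi_{(S,\theta)}}(r) = \sum_g \theta_{\tilde\kappa_{(S,\theta)}}(g^{-1}rg), \]
where $g$ runs over cosets in $S(F)G(F)_{x,0} \lmod G(F) / C_G(r)(F)$ for which $g^{-1}rg \in S(F)G(F)_{x,0}$. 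Since every element of $S(F)G(F)_{x,0}$ fixes $x$, a coset contributes only if $gx = x_r$; in particular, when $x_r$ is not in the $G(F)$-orbit of $x$ the character vanishes. If $gx = x_r$, conjugating by $g$ we may assume $x_r = x$ and $r \in S(F)G(F)_{x,0}$. The Jordan decomposition in the finite group $\tilde{\ms G}(k_F)$ of Subsubsection \ref{subsub:ext}, combined with a lifting argument using Lemma \ref{lem:rut2} as in the proof of Lemma \ref{lem:rdzclass}, then shows that $r$ is $G(F)_{x,0}$-conjugate to an element of $S(F)$. This establishes the vanishing assertion.

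Assuming now $r \in S(F)$, Corollary \ref{cor:tkcharss} applies since topological semi-simplicity implies absolute semi-simplicity modulo $A_G$. After reorganizing the sum over coset representatives as a sum over $N(S,G)(F)/S(F)$ via Lemma \ref{lem:weyl}, we obtain
\[ \Theta_{\pi_{(S,\theta)}}(r) = (-1)^{r_{\ms G_x^\circ}-r_{\ms S'}}\sum_{w \in N(S,G)(F)/S(F)}\theta(r^w). \]
To match this with the formula in the statement, observe that $\Delta_{II}^\tx{abs}[a,\chi](r^w) = 1$: since $S$ is maximally unramified, every symmetric root $\alpha \in R(S,G)$ is unramified, so $\chi_\alpha$ is the unramified quadratic character of $F_\alpha^\times$; as $r^w$ is topologically semi-simple, $\alpha(r^w) \in O_{F_\alpha}^\times$ is a root of unity of order prime to $p$, and $(\alpha(r^w)-1)/a_\alpha$ is a unit whenever it is non-zero, on which $\chi_\alpha$ takes the value $+1$. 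The proposition therefore reduces to the sign identity $(-1)^{r_{\ms G_x^\circ}-r_{\ms S'}} = e(G)\epsilon(X^*(T)_\C - X^*(S)_\C, \Lambda)$, which follows from Lemma \ref{lem:dze} combined with Kottwitz's description of $e(G)$ in terms of $F$-split ranks.

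The main obstacle is the reduction carried out in the second paragraph, namely showing that every topologically semi-simple regular element of $S(F)G(F)_{x,0}$ is $G(F)_{x,0}$-conjugate into $S(F)$; this requires passage to the reductive quotient $\tilde{\ms G}(k_F)$, conjugation into its maximal torus $\ms S(k_F)$ at the finite level, and a lifting argument that parallels the proof of Lemma \ref{lem:rdzclass}. Once this step is carried out, the remainder of the proof is a direct combination of Corollary \ref{cor:tkcharss}, Lemma \ref{lem:dze}, and the vanishing of unramified quadratic characters on units.
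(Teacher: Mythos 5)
Your overall plan mirrors the paper's: use the Harish-Chandra integral character formula and the uniqueness of the fixed point $x_r$ to reduce $\Theta_{\pi_{(S,\theta)}}(r)$ to a trace of $\tilde\kappa_{(S,\theta)}$, then feed in Corollary~\ref{cor:tkcharss} and Lemma~\ref{lem:dze} and observe that $\Delta_{II}^\tx{abs}[a,\chi]=1$. Those last two steps (including your observation that $\alpha(r^w)-1$ is a unit because $\alpha(r^w)$ is a nontrivial root of unity of order prime to $p$) are correct and match the paper.

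The gap is in your vanishing argument, and you in fact flag the step that fails as "the main obstacle." You assert that a topologically semi-simple regular element of $S(F)G(F)_{x,0}$ is automatically $G(F)_{x,0}$-conjugate into $S(F)$. This is false: the reduction of $r$ in $\ms{G}_x^\circ(k_F)$ is a regular semisimple element whose centralizer is \emph{some} elliptic maximal torus, but there is no reason it should be $\ms{G}_x^\circ(k_F)$-conjugate to the specific torus $\ms{S}'$ when $\ms{G}_x^\circ$ has several classes of elliptic maximal tori. What you actually need is a \emph{vanishing} statement, not a conjugacy statement: the trace $\tx{tr}\,\tilde\kappa_{(S,\theta)}(g^{-1}rg)$ vanishes unless the semisimple reduction of $g^{-1}rg$ is conjugate to an element of $\ms{S}$, and this is exactly what the explicit character formula of Proposition~\ref{pro:tkchar} delivers (the indexing set $\{k : k^{-1}\gamma_s k \in \ms{S}(k_F)\}$ is empty otherwise). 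Combining that vanishing with Lemma~\ref{lem:rut2} shows the trace is zero for all $g$ unless $r$ is $G(F)$-conjugate into $S(F)$. You never invoke Proposition~\ref{pro:tkchar} here; that omission is the actual hole. Once it is inserted, the same reasoning (again via Proposition~\ref{pro:tkchar} together with Lemma~\ref{lem:rut2} and Lemma~\ref{lem:weyl}) also shows that when $r\in S(F)$ only the trivial coset in $G(F)_x/S(F)G(F)_{x,0}$ contributes, which you gloss over by saying "reorganizing the sum" — the reorganization does not happen; the Weyl-group sum comes already packaged inside Corollary~\ref{cor:tkcharss}.

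One further small imprecision: you write the final reduction as a Mackey-type sum over $S(F)G(F)_{x,0}\lmod G(F)/C_G(r)(F)$, but the paper is more careful: it first inducts in stages to $G(F)_x$ (so $\dot\kappa$ is still finite-dimensional), applies Harish-Chandra's formula from that subgroup, and explicitly checks that the volume and formal-degree normalizations cancel. This step is easy but not free; your version skips it, and the way the cosets are set up is slightly off (the surviving index is $G(F)_x/S(F)G(F)_{x,0}$, a finite quotient, not a double coset involving $C_G(r)$).
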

\begin{proof}
Recall from Lemma \ref{lem:rdzconst} that $\pi_{(S,\theta)}=\tx{c-Ind}_{S(F)G(F)_{x,0}}^{G(F)}\tilde\kappa_{(S,\theta)}$. We will perform this induction in stages, where we let $\dot\kappa$ be the induction of $\tilde\kappa$ to $G(F)_x$. Since $S(F)G(F)_{x,0}$ is a subgroup of $G(F)_x$ of finite index, $\dot\kappa$ is still finite dimensional. We compute the character of $\pi_{(S,\theta)}$ in terms of that of $\dot\kappa$ by means of Harish-Chandra's integral character formula \cite[\S9.1]{DR09} and we obtain
\[ \frac{\tx{deg}(\pi;dg)}{\tx{deg}(\dot\kappa)}\int_{G(F)/A(F)}\int_K \dot\chi_{\dot\kappa_{(S,G)}}({^{gk}r})dkdg/dz\]
where $K$ is any compact open subgroup of $G(F)$ with Haar measure $dk$ of normalized volume $1$ and $A$ is the maximal split torus in the center of $G$. Just as in Subsection \ref{sub:charshallow} we can argue that the function $g \mapsto \dot\chi_{\dot\kappa_{(S,G)}}({^{gk}}r)$ is compactly supported modulo center and thus remove the integral over $K$, which leads us to
\[ \frac{\tx{deg}(\pi;dg)}{\tx{deg}(\dot\kappa)}\int_{G(F)/A(F)} \dot\chi_{\kappa_{(S,G)}}({^{g}r})dg/dz\]
The integrand is zero unless $gx_r=x$, where $x_r$ is the unique fixed point of $r$ in $\mc{B}^\tx{red}(G,F)$. Thus if $r$ is not conjugate to an element of $G(F)_x$ the character is zero. Assume now $r \in G(F)_x$. Then the domain of integration reduces to $G(F)_x/A(F)$ and since the integrand is $G(F)_x$-invariant we obtain
\[ \tx{vol}(G(F)_x/A(F);dg/da)\tx{deg}(\pi;dg)\tx{deg}(\dot\kappa)\chi_{\dot\kappa_{(S,\theta)}}(r), \]
which is equal to $\chi_{\dot\kappa_{(S,\theta)}}(r)$. We compute this using the Frobenius formula and obtain
\[ \sum_{[g] \in G(F)_x/S(F)G(F)_{x,0}} \tx{tr}(\tilde\kappa_{(S,\theta)}(g^{-1}rg)). \]
Now $g^{-1}rg \in G(F)_{x,0}$ reduces to a regular semi-simple element of $\ms{G}_x^\circ(k_F)$, so we can take it as the element $s=\dot r$ in Proposition \ref{pro:tkchar}. This proposition tells us that $\tx{tr}(\tilde\kappa_{(S,\theta)}(g^{-1}rg))$ is zero unless the element $g^{-1}rg$ of $\ms{G}_x^\circ(k_F)$ is conjugate to $\ms{S}'$.

According to Lemma \ref{lem:rut2}, such a $g$ exists if and only if $\tx{Cent}(r,G)$ is conjugate to $S$, which is then equivalent to $r$ being conjugate to an element of $S(F)$. This proves the vanishing statement. Assume now that $r \in S(F)$ and consider again the above formula. We claim that element $g^{-1}rg$ of $\ms{G}_x^\circ(k_F)$ is conjugate to $\ms{S}'$, and hence $\tx{tr}(\tilde\kappa_{(S,\theta)}(g^{-1}rg))$ is non-zero, if and only if the coset $[g]$ is trivial. To see that, let $h \in G(F)_{x,0}$ be such that the image of $(gh)^{-1}r(gh)$ belongs to $\ms{S}'$. The torus $(gh)^{-1}S(gh)$ is maximally unramified with associated point $x$
and its image in $\ms{G}_x^\circ$ is equal to $\ms{S}'$. By Lemma \ref{lem:rut2} there is $l \in G(F)_{x,0+}$ such that $ghl \in N(G,S)(F)$. Lemma \ref{lem:weyl} then implies that $ghl \in S(F)G(F)_{x,0}$, hence the claim.

We conclude that the character of $\dot\kappa$ at $r$ is equal to the character of $\tilde\kappa$ at $r$. For the latter we now use Corollary \ref{cor:tkcharss} together with Lemma \ref{lem:dze}. It remains to check that $\Delta_{II}^\tx{abs}[a,\chi](r^w)=1$. The $F_\alpha/F_{\pm\alpha}$-norm of $\alpha(r^w) \in F_\alpha^\times$ is equal to $1$, so $\alpha(r^w) \in O_{F_\alpha}^\times$. Since the mod-$a$-data consists of units and the $\chi$-data is unramified, the claim follows.
\end{proof}

We close this subsection with a remark about the character of extra regular depth-zero supercuspidal representations of groups that split over $F^u$. These are the representations constructed in \cite[\S4.4]{DR09}. DeBacker and Reeder compute in \cite[\S9,10,11,12]{DR09} the character of these representations at arbitrary regular semi-simple elements: For an element $\gamma \in G_\tx{sr}(F)_0$ with topological Jordan decomposition $\gamma=\gamma_s \cdot \gamma_u$ the character of $\pi_{(S,\theta)}$ is given by
\[ (-1)^{r(G)-r(J)} \sum_{\substack{g \in J(F) \lmod G(F) / S(F) \\ \gamma_s^g \in S(F)}} \theta(\gamma_s^g) \hat\mu_{\mf{j},{^gX}}(\log(\gamma_u)), \]
where again $J$ is the connected centralizer of $\gamma_s$ in $G$ and $r(G)$ denotes the split rank of the group $G$. The final paragraph of the preceding proof shows that this formula is the same as the formula of Corollary \ref{cor:newchartoral}. We expect that the same is true for tamely ramified groups as well.

\subsection{Character values of regular supercuspidal representations at shallow elements: general depth} \label{sub:shallow}

In this subsection $F$ is a local field of odd residual characteristic that is not a bad prime for $G$.

Consider a regular supercuspidal representation $\pi_{(S,\theta)}$. Let $\tilde G \to G$ be a $z$-extension and let $\pi_{(\tilde S,\tilde\theta)}$ be the pull-back of $\pi_{(S,\theta)}$ to $\tilde G(F)$. Since the character function of $\pi_{(\tilde S,\tilde\theta)}$ is the pull-back to $\tilde G(F)$ of the character function of $\pi_{(S,\theta)}$, we may assume without loss of generality that $G=\tilde G$.

Let $G^0 \subset \dots \subset G^d$ be the corresponding twisted Levi sequence, $(\phi_{-1},\dots,\phi_d)$ a Howe factorization, and $(r_{-1},r_0,\dots,r_d)$ the sequence of depths of the characters $\phi_i$. Let $\gamma \in G(F)$ be regular semi-simple. If $S\neq G^0$ we will call $\gamma$ ``shallow'' if it is a topologically semi-simple element $\gamma=\gamma_0$. If $S=G^0$ we will call $\gamma$ ``shallow'' if it is topologically semi-simple modulo $Z(G)^\circ$, although we believe that in this case it is enough to require $\gamma=\gamma_{<r_{d-1}}$.

We will now fix mod-$a$-data and $\chi$-data for $R(S,G)$. We do this successively for $R(S,G^i) \sm R(S,G^{i-1})$ as described in Subsection \ref{sub:signs}, where $i$ runs from $d$ to $1$. We also need to handle the step $i=0$ in the case where $G^0 \neq S$. Let us first focus on $i>0$. Fix a character $\Lambda : F \to \C^\times$ of depth zero. According to \eqref{eq:adata} we obtain a set of mod-$a$-data for $R(S,G^i) \sm R(S,G^{i-1})$ by the equation
\[ \phi_{i-1}(N_{F_\alpha/F}(\alpha^\vee(X+1))) = \Lambda(\tx{tr}_{F_\alpha/F}(\bar a_\alpha X)), \]
where $\bar a_\alpha \in [F_\alpha]_{-r_{i-1}}/[F_\alpha]_{-r_{i-1}+}$ and $X \in [F_\alpha]_{r_{i-1}}/[F_\alpha]_{r_{i-1}+}$. Applying Fact \ref{fct:fri} to the Howe factorization $(\phi_{-1},\dots,\phi_d)$ of $\theta$ we see that this equation is equivalent to
\begin{equation} \label{eq:adatahowe} \theta(N_{F_\alpha/F}(\alpha^\vee(X+1))) = \Lambda(\tx{tr}_{F_\alpha/F}(\bar a_\alpha X)), \end{equation}
We take this as the defining equation for the mod-$a$-data, as it clearly demonstrates that this data depends only on $\theta$ and not on the Howe factorization. From the mod-$a$-data we obtain $\chi$-data $\chi'$ via \eqref{eq:chi'}. Now consider the case $i=0$, which is only relevant when $G^0 \neq S$. We can still take \eqref{eq:adatahowe} as the defining equation for mod-$a$-data and we obtain $\bar a_\alpha \in [F_\alpha]_0/[F_\alpha]_{0+}$. On the other hand, we take $\chi'_\alpha$ to be unramified. This is possible because the action of inertia preserves a base in $R(S,G^0)$ so all symmetric roots are unramified. With the mod-$a$-data and $\chi$-data fixed this way, we have the following formula.

\begin{cor} \label{cor:charshallow}
Let $\gamma \in G(F)$ be a shallow regular semi-simple element whose stable class meets $S(F)$. The value of the normalized character $\Phi_\pi$ at a $\gamma$ is zero, unless $\gamma$ is (conjugate to) an element of $S(F)$, in which case is given by
\begin{equation*}
e(G)\epsilon_L(X^*(T)_\C-X^*(S)_\C,\Lambda)\!\!\!\!\!\!\!\! \sum_{w \in N(S,G)(F)/S(F)}\!\!\!\!\!\!\!\!
\Delta_{II}^\tx{abs}[a,\chi'](\gamma^w) \epsilon_{f,r}(\gamma^w) \epsilon^r(\gamma^w)\theta(\gamma^w),
\end{equation*}
where $T$ is the minimal Levi subgroup in the quasi-split inner form of $G$.
\end{cor}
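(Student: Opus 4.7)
The plan is to combine the Adler-DeBacker-Spice character formula \eqref{eq:charshallow1}, which Subsection \ref{sub:charshallow} shows to be valid on shallow elements without the compactness hypothesis, with the new expression for $\epsilon_{s,r}\tilde e$ provided by Corollary \ref{cor:prodeps}, and then iterate through the twisted Levi chain $G^0 \subset \dots \subset G^d = G$ coming from a Howe factorization $(\phi_{-1},\dots,\phi_d)$ of $(S,\theta)$ (Proposition \ref{pro:howefacex}). First I would pass to a $z$-extension and assume $G_\tx{der}$ is simply connected, as noted in the reduction preceding the corollary.

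The recursive step is as follows. At level $d$, since $\gamma = \gamma_{<r_{d-1}}$ is topologically semi-simple mod $Z(G)^\circ$, the character formula gives $\Phi_{\pi_d}(\gamma) = \phi_d(\gamma)\sum_{g_1}\epsilon_{s,r_{d-1}}(\gamma^{g_1})\epsilon^{r_{d-1}}(\gamma^{g_1})\tilde e(\gamma^{g_1})\Phi_{\pi_{d-1}}(\gamma^{g_1})$, where $g_1 \in S\lmod G(F)/G^{d-1}(F)$ satisfies $\gamma^{g_1} \in G^{d-1}(F)$ (regularity yields $J^d = S$). Each such $g_1$ transports $\gamma$ into a stable conjugate of $S$ sitting inside $G^{d-1}$; conjugating $(S,\theta)$ accordingly, I would reduce to $\gamma^{g_1} \in S(F)$, and iterate this move inside $G^{d-1}$, and so on down to $\Phi_{\pi_0}$. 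Each time, Corollary \ref{cor:prodeps} rewrites $\epsilon_{s,r_{i-1}}\tilde e$ as a ratio of a $G^i$-piece over a $G^{i-1}$-piece. The $G^{i-1}$-piece in the numerator at one stage cancels the $G^{i-1}$-piece in the denominator of the next, so the Kottwitz signs $e(G^i)$, the $\epsilon_L$ factors, the characters $\epsilon_{f,r}^{G^i}$, and the $\Delta_{II}^{\tx{abs},G^i}[a,\chi']$ terms all telescope. The $J$-side contributions are constant in $i$ because $J^i = S$ for all $i \geq 0$, so they cancel outright.

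The base case is $\Phi_{\pi_0}$. When $G^0 = S$ (toral case) this is simply $\phi_0$; otherwise I would apply Proposition \ref{pro:charshallowdz} to the depth-zero regular representation $\pi_{(S,\phi_{-1})}^{G^0}$, which supplies $e(G^0)\epsilon_L(X^*(T_{G^0})_\C - X^*(S)_\C)$, an inner sum over $N(S,G^0)(F)/S(F)$, a $\Delta_{II}^{\tx{abs},G^0}[a,\chi]$-factor built from unramified $\chi$-data and unit mod-$a$-data, together with the character $\phi_{-1}(\gamma^{w_0})$. Since $S$ is maximally unramified in $G^0$ every symmetric root in $R(S,G^0)$ is unramified, so Lemma \ref{lem:efr} gives $\epsilon_{f,r}^{G^0} \equiv 1$ and the $\chi'$-data of \eqref{eq:chi'} at those roots is itself unramified; the base case therefore patches seamlessly onto the telescoped formula, merging the $\Delta_{II}^{\tx{abs},G^i}$ contributions into a single $\Delta_{II}^{\tx{abs},G}[a,\chi']$ over all of $R(S,G)$. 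The nested sums collapse, via Lemma \ref{lem:weyl}, into a single sum over $N(S,G)(F)/S(F)$, and the accumulated product $\phi_d(\gamma)\phi_{d-1}(\gamma^{g_1})\cdots\phi_{-1}(\gamma^w)$ becomes $\theta(\gamma^w)$ by \eqref{eq:thetaprod}, because each $\phi_i$ is a character of $G^i(F)$ and $\gamma^{g_1\cdots g_{d-i}}$ is $G^i(F)$-conjugate to $\gamma^w$; the $\epsilon^{r_i}$-factors, being $\Omega$-invariant characters of $S(F)$ by Fact \ref{fct:eramweyl}, multiply into the global $\epsilon^r$ of the statement.

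The vanishing assertion falls out of the same reduction: if $\gamma$ is not $G(F)$-conjugate to any element of $S(F)$ then either some intermediate sum is empty or the base case is evaluated at an element of $G^0(F)$ not $G^0(F)$-conjugate into $S(F)$, whence Proposition \ref{pro:charshallowdz} gives zero. The hard part will be the bookkeeping: verifying that the $a$-data attached at each level to $\phi_{i-1}$ agrees with the global $a$-data attached to $\theta$ (which is Fact \ref{fct:fri} combined with \eqref{eq:adatahowe}), checking that the $\chi'$-data at each level is insensitive to whether one views the torus inside $G^i$ or inside $G$ (Fact \ref{fct:chiweyl}), and verifying that the double cosets $J^i\lmod G^i(F)/G^{i-1}(F)$ compose, via the depth-zero inner Weyl-group sum, exactly into $N(S,G)(F)/S(F)$ under the collapse.
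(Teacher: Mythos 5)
Your proposal follows the same route as the paper's proof: unwind the inductive Adler--DeBacker--Spice formula, rewrite each $\epsilon_{s,r}\tilde e$ via Corollary~\ref{cor:prodeps}, feed in Proposition~\ref{pro:charshallowdz} at the bottom, and telescope using the fixed global mod-$a$-data and $\chi$-data determined by $\theta$ via \eqref{eq:adatahowe}, Fact~\ref{fct:fri}, and \eqref{eq:thetaprod}. The only cosmetic difference is that you telescope the ratios ``in-line'' at each induction step, whereas the paper first collapses the nested double-coset sums into a single sum over $G^d_\gamma(F)\lmod G^d(F)/S(F)$ and only then applies Corollary~\ref{cor:prodeps}; these are the same computation displayed in a different order. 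One small imprecision in your sketch: at the outer level $J^d$ is not literally $S$ but only a tame elliptic torus in the stable class of $S$ (it becomes identified with $S$ only after conjugating $\gamma^g$ into $S(F)$); what matters for the cancellation of the $J$-side contributions is that $J^i=\tx{Cent}(\gamma^g,G^i)^\circ$ is the \emph{same} torus for all $i$ once $\gamma^g$ lies in $G^i$, which is what you invoke correctly in the next sentence.
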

\begin{proof}

We write \eqref{eq:charshallow1} as
\begin{equation*}
\phi_d(\gamma)\sum_{\substack{g \in G_{d,\gamma}(F) \lmod G_d(F) / G_{d-1}(F)\\ \gamma^g \in G_{d-1}(F)}}\epsilon(\pi_{d-1},\gamma^g) \Phi_{\pi_{d-1}}(\gamma^g) ,
\end{equation*}
where we have combined all three roots of unity into the single term $\epsilon(\pi_{d-1},\gamma^g)$. We are now going to unwind the induction inherent in this formula. To see what is going on we substitute the formula for $\Phi_{\pi_{d-1}}$ and obtain
\begin{equation*}
\phi_d(\gamma)\!\!\!\!\!\!\!\!\!\!\!\!\!\!\!\!\!\!\!\sum_{\substack{g \in G^d_{\gamma}(F) \lmod G^d(F) / G^{d-1}(F)\\ \gamma^g \in G^{d-1}(F)}}\!\!\!\!\!\!\!\!\!\!\!\!\!\!\!\!\!\!\!\epsilon(\pi_{d-1},\gamma^g)
\phi_{d-1}(\gamma^g)\!\!\!\!\!\!\!\!\!\!\!\!\!\!\!\!\!\!\!\sum_{\substack{h \in G^{d-1}_{\gamma^g}(F) \lmod G^{d-1}(F) / G^{d-2}(F)\\ \gamma^{gh} \in G^{d-2}(F)}}\!\!\!\!\!\!\!\!\!\!\!\!\!\!\!\!\!\!\!\epsilon(\pi_{d-2},\gamma^{gh}) \Phi_{\pi_{d-2}}(\gamma^{gh}).
\end{equation*}
Recall \cite[Remark 4.3.5]{DS} that the term $\epsilon(\pi_{d-1},\gamma^g)$ remains unchanged if we conjugate both $\pi_{d-1}$ and $\gamma^g$ by an element of $G^d(F)$. If this element happens to belong to $G^{d-1}(F)$, then $\pi_{d-1}$ remains unchanged. With this we obtain
\begin{equation*}
\sum_g \sum_h \phi_{d-1}(\gamma^{gh})\epsilon(\pi_{d-1},\gamma^{gh})\epsilon(\pi_{d-2},\gamma^{gh}) \Phi_{\pi_{d-2}}(\gamma^{gh}),
\end{equation*}
where the summetion indices are as before, and after re-indexing the sum this leads to
\begin{equation*}
\sum_{\substack{g \in G^d_{\gamma}(F) \lmod G^d(F) / G^{d-2}(F)\\ \gamma^g \in G^{d-2}(F)}}
\phi_d(\gamma^g)\phi_{d-1}(\gamma^g)\epsilon(\pi_{d-1},\gamma^g)\epsilon(\pi_{d-2},\gamma^g) \Phi_{\pi_{d-2}}(\gamma^g).
\end{equation*}
We do this inductively, where at the $(-1)$-stage we apply Proposition \ref{pro:charshallowdz} if $G^0 \neq S$, and obtain the formula
\begin{equation*}
\sum_{\substack{g \in G^d_{\gamma}(F) \lmod G^d(F) / S(F)\\ \gamma^g \in S(F)}}
\prod_{i=-1}^{d} \phi_i(\gamma^g)\epsilon(\pi_i,\gamma^g).
\end{equation*}
In particular, we see that the result is zero unless $\gamma$ is $G(F)$-conjugate to an element of $S$.

We now go into the characters $\epsilon(\pi_i,\gamma^g)$. Their definition depends on the choice of a tame maximal torus $T$ containing $\gamma^g$. In the current situation we have a canonical choice for $T$, namely $T=S$. We now apply Corollary \ref{cor:prodeps} using the fixed mod-$a$-data and $\chi$-data. Recalling \eqref{eq:thetaprod} that $\theta$ is the product of all $\phi_i$ restricted to $S(F)$ and letting $\epsilon^r$ be the product of all $\epsilon^r(\pi_i,-)$, the proof is complete.
\end{proof}

It is noted in \cite{DS} that the map $\gamma \mapsto \epsilon^r(\gamma)$ is a character of $S(F)$. If we let $\theta'$ to be the character $\epsilon_{f,r} \cdot \epsilon^r \cdot \theta$, then the character formula takes the form
\begin{equation} \label{eq:charshallow}
e(G)\epsilon_L(X^*(T)_\C-X^*(S)_\C,\Lambda) \sum_{w \in N(S,G)(F)/S(F)}
\Delta_{II}^\tx{abs}[a,\chi'](\gamma^w) \theta'(\gamma^w).
\end{equation}

\subsection{Comparison with the characters of real discrete series representations} \label{sub:realchar}

In this subsection only, we let $G$ be a connected reductive group defined over $\R$ and having a discrete series of representations, or equivalently having elliptic maximal tori. All elliptic maximal tori in $G$ are conjugate under $G(\R)$. Fix one such $S \subset G$.

The discrete series representations of $G(\R)$ are parameterized by pairs $(\theta,\rho)$ (taken up to conjugation by $N(S,G)(\R)/S(\R)$) consisting of a character $\theta : S(\R) \rw \C^\times$ and a choice of positive roots $\rho$ for $R(S,G)$ for which $d\theta$ is dominant (the image of $d\theta$ in $X^*(S_\tx{ad}) \otimes \C$ belongs to $X^*(S_\tx{ad})$, so it makes sense to speak of its dominance). Given such a pair $(\theta,\rho)$, which we shall call a Harish-Chandra parameter, the corresponding representation is characterized by the fact that the value of its character on any regular $\gamma \in S(\R)$ is given by
\begin{equation} \label{eq:chards} (-1)^{q(G)}\sum_{w \in N(S,G)(\R)/S(\R)} \frac{\theta(\gamma^w)}{\prod\limits_{\alpha >0} (1-\alpha(\gamma^w)^{-1})}, \end{equation}
where $q(G)$ is half of the dimension of the symmetric space of $G(\R)$.

We now claim that this formula is the same as \eqref{eq:charshallow} specialized to the case $F=\R$. The latter formula involves a non-trivial character $\Lambda : \R \rw \C^\times$, but is independent of the choice. We choose here the standard character $\Lambda(x)=\exp(2\pi i x)$. It also involves $a$-data, which is to be computed based on $\Lambda$ and $\theta$ according to \eqref{eq:adata}, namely
\[ \theta(N_{\C/\R}(\alpha^\vee(\exp(z)))) = \Lambda(\tx{tr}_{\C/\R}(a_\alpha z)) \]
for $z \in \C$, keeping in mind that all elements of $R(S,G)$ are symmetric, with $F_\alpha=\C$ and $F_{\pm\alpha}=\R$, because complex conjugation acts by $-1$ on the root system of $S$. Evaluating this formula we find
\[ \theta(N_{\C/\R}(\alpha^\vee(\exp(z)))) = \theta(\alpha^\vee(e^{z-\bar z})) = e^{(z-\bar z)\<\alpha^\vee,d\theta\>}, \]
while at the same time $\Lambda(\tx{tr}_{\C/\R}(a_\alpha z)) = e^{2\pi i(a_\alpha z + \bar a_\alpha \bar z)}$.
This implies
\[ a_\alpha = \frac{\<\alpha^\vee,d\theta\>}{2\pi i}. \]
Finally, we need to choose $\chi$-data, and we take $\chi_\alpha(z)=\tx{sgn}_\C(z)$ for $z \in \C^\times$ and $\alpha>0$. We will discuss the significance of this choice at the end of this subsection.

Having made these preparations we now explicate \eqref{eq:charshallow} in this setting. First, we use the real case of \cite[Corollary 3.5.2]{KalEpi}, which gives us
\[ e(G)\epsilon_L(X^*(T)_\C-X^*(S)_\C,\Lambda) = \prod_{\alpha \in R(S,G)/\Gamma} f_{(G,S)}(\alpha)\lambda_{\C/\R}(\Lambda\circ\tx{tr}_{\C/\R})^{-1}. \]
Now $\lambda_{\C/\R}(\Lambda\circ\tx{tr}_{\C/\R})=i$ and $f_{(G,S)}(\alpha)$ equals $-1$ if $\alpha$ is compact and $+1$ if $\alpha$ is non-compact. It follows that
\[ e(G)\epsilon_L(X^*(T)_\C-X^*(S)_\C,\Lambda) = (-1)^{q(G)} \prod_{\alpha \in R(S,G)/\Gamma} i. \]
On the other hand we have
\[ \Delta_{II}^\tx{abs}[a,\chi](\gamma)\!\! =\!\! \prod_{\alpha<0} \tx{sgn}_\C\left(\frac{\alpha(\gamma)-1}{\<\alpha^\vee,d\theta\>(2\pi i)^{-1}}\right)^{-1}\!\!\!\!\!\!\! = \prod_{\alpha<0} (-i) \cdot \prod_{\alpha<0} \tx{sgn}_\C\left(\frac{\alpha(\gamma)-1}{\<\alpha^\vee,d\theta\>}\right)^{-1}\!\!\!\!. \]
Recall that $d\theta$ is dominant for the chosen set of positive roots, so $\<\alpha^\vee,d\theta\> < 0$ whenever $\alpha<0$, leading to
\[ \tx{sgn}_\C\left(\frac{\alpha(\gamma)-1}{\<\alpha^\vee,d\theta\>}\right)^{-1} = \tx{sgn}_\C(1-(-\alpha)(\gamma)^{-1})^{-1}.\]
At the same time
\begin{eqnarray*}
|D(\gamma)|^\frac{1}{2}&=&\prod_{\alpha \in R(S,G)}|\alpha(\gamma)-1|^\frac{1}{2}\\
&=&\left(\prod_{\alpha>0}|\alpha(\gamma)-1||\alpha(\gamma)^{-1}-1|\right)^\frac{1}{2}\\
&=&\left(\prod_{\alpha>0}|\alpha(\gamma)^\frac{1}{2}-\alpha(\gamma)^{-\frac{1}{2}}||\alpha(\gamma)^{-\frac{1}{2}}-\alpha(\gamma)^\frac{1}{2}|\right)^\frac{1}{2}\\
&=&\prod_{\alpha>0}|\alpha(\gamma)^\frac{1}{2}-\alpha(\gamma)^{-\frac{1}{2}}|\\
&=&\prod_{\alpha>0}|1-\alpha(\gamma)^{-1}|\\
\end{eqnarray*}
where the last equality follows form the fact that $\alpha(\gamma)$ is of absolute value $1$.
Combinging these calculations we see that
\[  e(G)\epsilon_L(X^*(T_0)_\C-X^*(S)_\C,\Lambda) |D_G(\gamma)|^{-\frac{1}{2}}\Delta_{II}^\tx{abs}[a,\chi'](\gamma) \]
is equal to
\[ (-1)^{q(G)}\prod_{\alpha>0} (1-\alpha(\gamma)^{-1})^{-1}  \]
and we conclude that the formula \eqref{eq:charshallow}, interpreted for the ground field $\R$, evaluates to
\[ (-1)^{q(G)}\sum_{w \in N(S,G)(\R)/S(\R)} \frac{\theta'(\gamma^w)}{\prod\limits_{\alpha >0} (1-\alpha(\gamma^w)^{-1})}, \]
which is indeed the character formula for a discrete series representation \eqref{eq:chards}.

Finally, we make a comment on the choice of $\chi$-data used here. We do not see a way to choose $\chi$-data uniformly in the real and $p$-adic cases. In the real case, the choice we have just used is well-known by the name of ``based'' $\chi$-data from the work of Shelstad, and is intimately connected with the local Langlands correspondence. In the $p$-adic case, our choice was ad-hoc. This is however of no importance. Indeed, making a different choice of $\chi$-data has the effect of multiplying the term $\Delta_{II}^\tx{abs}$ by a character of $S(F)$. This character can then by absorbed into $\theta$, and so can be the characters $\epsilon_{f,r}$ and $\epsilon^r$. In fact, the particular representation of $G(F)$ that the character $\theta$ of $S(F)$ leads to depends on the details of the construction that is used, and different constructions could lead to slightly different representations (as for example the construction that was recently announced by J.K.Yu in a conference talk, which automatically absorbs $\epsilon^r$ into $\theta$). What is important for us here is that the formula for the character of regular supercuspidal representations at shallow elements has the same structure, including the roots of unity that cannot be absorbed into a character of $S(F)$, as the character formula for real discrete series. This fact will be our guide to the construction and study of $L$-packets in what follows.

\section{Regular supercuspidal $L$-packets} \label{sec:pack}

Let $G$ be a connected reductive group defined and quasi-split over $F$ and split over a tame extension of $F$. Let $\hat G$ be a Langlands dual group for $G$ and $^LG = \hat G \rtimes W_F$ the Weil-form of the corresponding $L$-group.

In this section we are going to construct those $L$-packets of all inner forms of $G$ that consist entirely of regular supercuspidal representations and assign to each such $L$-packet a Langlands parameter. The construction will allow an explicit passage from parameters to representations and conversely. Each of the $L$-packets will contain extra regular supercuspidal representations, which is the reason for their name.

We will eventually assume that the residual characteristic $p$ of $F$ is not a bad prime for $G$ and does not divide $|\pi_0(Z(G))|$. Note that the bad primes for $G$ and $\hat G$ are the same, and that $\pi_0(Z(G))$ has the same order as the fundamental group of the derived subgroup of $\hat G$.

\subsection{Admissible embeddings} \label{sub:recg}

We recall here some basic facts about the relationship between $G$ and $\hat G$.

Let $S$ be a torus defined over $F$ of dimension equal to the rank of $G$, and let $J$ be a $\Gamma$-stable $G$-conjugacy class of embeddings $j : S \rw G$. From $J$ we obtain a $\Gamma$-stable $\hat G$-conjugacy class $\hat J$ of embeddings $\hat j : \hat S \rw \hat G$ as follows.  Fix $\Gamma$-invariant pinnings $(T,B,\{X_\alpha\})$ of $G$ and $(\hat T,\hat B,\{Y_{\hat\alpha}\})$ of $\hat G$. Choose $j \in J$ such that $j(S)=T$ and define $\hat j$ to be the inverse of the isomorphism $\hat T \rw \hat S$ of complex tori induced by $j$. Then the $\hat G$-conjugacy class $\hat J$ of $\hat j$ is $\Gamma$-stable. Indeed, $w : \sigma \mapsto j\circ\sigma(j^{-1})$ is an element of $Z^1(\Gamma,\Omega(T,G))$, which under the isomorphism $\Omega(T,G) \cong \Omega(\hat T,\hat G)$ corresponds to an element $\hat w \in Z^1(\Gamma,\Omega(\hat T,\hat G))$, and we have $\hat j \circ \sigma(\hat j^{-1}) = \hat w_\sigma$. The choice of $j \in J$ can only be altered to $v \circ j$ for some $v \in \Omega(T,G)$, but then $\hat j$ becomes $\hat v \circ \hat j$ and leads to the same $\hat J$. The choices of pinning also have no influence, because any two pinnings of $G$ are conjugate by $G_\tx{ad}(F)$ and any two pinnings of $\hat G$ are conjugate by $\hat G^\Gamma$ \cite[Corollary 1.7]{Kot84}.

The same procedure can be performed in the opposite direction and produces $J$ from $\hat J$. Since $G$ is quasi-split, there exists $\Gamma$-fixed elements $j \in J$ \cite[Corollary 2.2]{Kot82}.

From $J$ we obtain the following structure on $S$.
\begin{itemize}
\item An embedding $Z(G) \rw S$ over $F$, namely by restricting any $j \in J$ to $Z(G)$;
\item A $\Gamma$-invariant subset $R(S,G) \subset X^*(S)$, by choosing $j : S \rw T$ and pulling back $R(T,G)$ along $j$;
\item A $\Gamma$-invariant subgroup $\Omega(S,G) \subset \tx{Aut}_\tx{alg.grp}(S)$, by choosing $j : S \rw T$ and pulling back $\Omega(T,G)$.
\end{itemize}
Again it is clear that this structure depends only on $J$ and not on the choices of pinning of $G$ or $j \in J$. Moreover, if $j \in J$ is $\Gamma$-fixed, then it provides $\Gamma$-equivariant isomorphisms $R(S,G) \to R(jS,G)$ and $\Omega(S,G) \to \Omega(jS,G)$.

We follow standard terminology and call the embeddings belonging to $J$ \emph{admissible}. More generally, if $(G',\xi)$ is an inner twist of $G$, we will call an embedding $j : S \rw G'$ \emph{admissible} if $\xi^{-1}\circ j \in J$. Outside of this subsection, we will not use the symbol $J$ for a conjugacy class of embeddings, but will rather keep it reserved for connected centralizers of semi-simple elements of $G$, referring to the embeddings in the distinguished conjugacy class as \emph{admissible}.

\subsection{Construction of $L$-packets} \label{sub:packconst}

We now introduce the Langlands parameters that correspond to regular supercuspidal $L$-packets. We will give two definitions -- the first one is easier to state and describes most of the parameters we need. It also generalizes many of the parameters that have previously been studied. The second definition is slightly more general and turns out to be the one that we need.

From now on we assume that the residual characteristic of $F$ is not $2$ and is not a bad prime for $G$. We also assume that the characteristic of $F$ is zero. While this latter assumption is not needed for any of the arguments here, it is assumed in \cite{KalRI}, which we will use. We are convinced that the constructions and arguments of \cite{KalRI} are also valid in positive characteristic, so the adventurous reader is encouraged to think about the positive characteristic case as well. Alternatively, if one replaces $H^1(u \to W,-)$ by $B(-)_\tx{bas}$ of \cite{KotBG}, this assumption can be droppped.

\begin{dfn} \label{def:srsp} A strongly regular supercuspidal parameter is a discrete Langlands parameter $\varphi : W_F \rw {^LG}$ such that $\varphi(P_F)$ is contained in a torus of $\hat G$ and $\tx{Cent}(\varphi(I_F),\hat G)$ is abelian.
\end{dfn}

Special cases of such parameters are those discussed in \cite[\S4.1]{DR09} (i.e. regular depth zero supercuspidal parameters of unramified groups), those discussed in \cite{Roe11} (i.e. regular depth zero supercuspidal parameters of ramified unitary groups), those discussed in \cite[\S6.3]{Ree08} (positive depth toral parameters giving rise to an unramified torus), and those discussed in \cite[\S4.1]{KalEpi} (minimal positive depth toral parameters giving rise to a ramified torus). In fact, these examples are special cases of a much smaller class of parameters, which one might call toral. The case of positive depth toral parameters is treated in more detail in Section \ref{sec:toral}, because they are much easier to deal with and because the current state of the Adler-DeBacker-Spice character formula allows us to obtain additional results for them.

Before coming to the second definition, we collect some basic facts.

\begin{lem} \label{lem:levisystem2} Let $\varphi : W_F \to {^LG}$ be a Langlands parameter.
\begin{enumerate}
	\item If $\varphi(P_F)$ is contained in a torus of $\hat G$, then $\hat M = \tx{Cent}(\varphi(P_F),\hat G)^\circ$ is a Levi subgroup of $\hat G$. If $p$ does not divide $|\pi_0(Z(G))|$, then $\tx{Cent}(\varphi(P_F),\hat G)$ is connected.
	\item If $\varphi(P_F)$ is contained in a torus of $\hat G$ and $C=\tx{Cent}(\varphi(I_F),\hat G)^\circ$ is a torus, then $\hat T = \tx{Cent}(C,\hat M)$ is a maximal torus of $\hat G$ normalized by $\varphi(W_F)$ and contained in a Borel subgroup of $\hat M$ normalized by $\varphi(I_F)$. Furthermore, $\hat T$ is normalized by $\tx{Cent}(\phi(I_F),\hat G)$.
\end{enumerate}
\end{lem}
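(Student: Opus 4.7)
First I would handle part 1. The group $P_F$ is pro-$p$ and $\varphi|_{P_F}$ is continuous with image in a torus of $\hat G$, so $\varphi(P_F)$ is a finite group of commuting semisimple elements of $p$-power order. I would then apply \cite[Proposition A.7]{AS08} repeatedly to the elements of $\varphi(P_F)$, exactly as in the proof of Lemma \ref{lem:levisystem}, to conclude that $\hat M = \tx{Cent}(\varphi(P_F),\hat G)^\circ$ is a Levi subgroup of $\hat G$; this uses that $p$ is not bad for $\hat G$, which is equivalent to being not bad for $G$. For the connectedness statement, I would use the standard fact that for a semisimple element $s$ in a connected reductive group, the component group $\pi_0(\tx{Cent}(s,\hat G))$ embeds into a subquotient of $\pi_1(\hat G_\tx{der})$, together with the duality $|\pi_1(\hat G_\tx{der})| = |\pi_0(Z(G))|$. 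Applied inductively to a finite generating set of the $p$-group $\varphi(P_F)$, the hypothesis $p \nmid |\pi_0(Z(G))|$ forces the centralizer to already be connected.

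For part 2, I would first reduce to the study of fixed points of a single automorphism. Let $\tau \in I_F$ be a lift of a topological generator of the pro-cyclic group $I_F/P_F$. Since $\varphi(P_F) \subset Z(\hat M)$ by construction, the connected centralizer $C$ of $\varphi(I_F)$ in $\hat G$ is automatically contained in $\hat M$, and equals $\tx{Cent}(\varphi(\tau),\hat M)^\circ$. Because $\varphi(I_F)$ has relatively compact image in $^LG$, the element $\varphi(\tau)$ is semisimple in $^LG$, so $\tx{Ad}(\varphi(\tau))$ is a semisimple automorphism of $\hat M$; by Steinberg's theory, it stabilizes some maximal torus $T'$ and Borel $B'$ of $\hat M$, and satisfies $C = (T'^{\tx{Ad}(\varphi(\tau))})^\circ$.

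The central step is the claim $T' = \tx{Cent}(C, \hat M)$, which is equivalent to saying that no root of $T'$ in $\hat M$ vanishes on $C$. I would argue by contradiction: if a root $\alpha$ did vanish on $C$, then because $\tx{Ad}(\varphi(\tau))$ fixes $C$ pointwise, every root in the $\tx{Ad}(\varphi(\tau))$-orbit of $\alpha$ would also vanish on $C$. The fixed subspace of $\tx{Ad}(\varphi(\tau))$ acting on the sum of the corresponding root subgroups is positive-dimensional (one diagonal-type fixed vector per orbit), producing non-toral elements of $\hat M^{\tx{Ad}(\varphi(\tau))}$ that lie in the identity component, contradicting the hypothesis that $C$ is a torus. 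Thus $\hat T = \tx{Cent}(C, \hat M) = T'$ is a maximal torus of $\hat M$, hence of $\hat G$ since $\hat M$ is a Levi. The remaining assertions follow formally: $\varphi(W_F)$ normalizes both $\varphi(P_F)$ (characteristic in $W_F$) and $\varphi(I_F)$ (normal in $W_F$), hence normalizes both $\hat M$ and $C$, and therefore $\hat T$; the Borel $B'$ is $\varphi(P_F)$-stable for free since $\varphi(P_F) \subset Z(\hat M)$, and hence $\varphi(I_F)$-stable; and any element of $\tx{Cent}(\varphi(I_F), \hat G)$ normalizes both characteristic subgroups $\hat M$ and $C$, hence $\hat T$.

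The main obstacle I anticipate is rigorously justifying that $\tx{Ad}(\varphi(\tau))$ is semisimple (equivalently, quasi-semisimple) on $\hat M$ despite $\tau$ having infinite order in $I_F$, so that Steinberg's fixed-point theory actually applies. This is not purely formal, but should follow from the standard observation that for a discrete Langlands parameter the image $\varphi(I_F)$ lies in a maximal compact subgroup of $^LG$, so the adjoint action of $\varphi(\tau)$ on $\hat G$ (and in particular on $\hat M$) is linearized by an invariant Hermitian inner product and is therefore semisimple as a linear endomorphism.
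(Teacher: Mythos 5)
Part 1 of your proposal is essentially the paper's argument, and your high-level route through part 2 is also the paper's: reduce to a single finite-order automorphism $\theta = \tx{Ad}(\varphi(\tau))$ of $\hat M$, use Steinberg's theory to find a $\theta$-stable Borel pair $(\hat T,\hat B)$ in $\hat M$, and then identify $\hat T$ with $\tx{Cent}(C,\hat M)$. But your argument for the last step has a genuine gap. You claim that if a root $\alpha$ of $\hat T$ in $\hat M$ vanished on $C$, then the $\theta$-orbit of $\alpha$ would contribute a nonzero $\theta$-fixed vector in $\bigoplus_i \mf{g}_{\theta^i\alpha}$ (``one diagonal-type fixed vector per orbit''). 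This is false: the action of $\theta$ on that sum is a cyclic permutation twisted by scalars, and the fixed space is nonzero only when the product of those scalars equals $1$, which fails in general. Indeed, distinguishing those cases is precisely what determines the root system of $\hat M^{\theta,\circ}$ in the Kottwitz--Shelstad structure theory, and the paper even uses this distinction a few lines later (the dichotomy $N\hat\alpha(t)=1$ vs.\ $N\hat\alpha(t)\neq 1$ in the proof of Proposition \ref{pro:pardata}). So the contradiction you aim for does not materialize from a fixed vector.

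What is actually true, and is part of what the paper extracts from \cite[Theorem 1.1.A]{KS99}, is that \emph{no} root of $\hat T$ in $\hat M$ vanishes on $(\hat T^\theta)^\circ$, and this holds unconditionally once $\theta$ preserves $(\hat T,\hat B)$: since $\theta$ preserves a positive system, all summands of $N\alpha = \sum_i \theta^i\alpha$ have the same sign, so $N\alpha$ is a nonzero element of $X^*(\hat T)^\theta_\Q$, which pairs perfectly with $X_*(\hat T)^\theta_\Q = X_*((\hat T^\theta)^\circ)_\Q$; hence $\alpha$ restricts nontrivially to $(\hat T^\theta)^\circ$ and therefore $\tx{Cent}((\hat T^\theta)^\circ,\hat M)=\hat T$. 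The hypothesis that $C$ is a torus then enters only to upgrade the containment $(\hat T^\theta)^\circ \subset C$ (a maximal torus of $C$, again by \cite[Theorem 1.1.A]{KS99}) to an equality. Your proposal instead tries to use the torality of $C$ to rule out vanishing roots, which is both unnecessary and, via the fixed-vector reasoning, incorrect. Finally, the semisimplicity concern you flag as the main obstacle is the easy part: $\varphi|_{I_F}$ is a continuous homomorphism from a profinite group into a complex Lie group (no small subgroups), so $\varphi(I_F)$ is finite and $\theta$ is of finite order, as the paper notes; the relative-compactness detour is overkill.
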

\begin{proof}
By continuity, $\varphi(P_F)$ is a finite $p$-subgroup of $\hat G$, let $x_1,\dots,x_n$ be its elements. By \cite[Proposition A.7]{AS08} $\tx{Cent}(x_1,\hat G)^\circ$ is a Levi subgroup of $\hat G$. Any torus of $\hat G$ containing $x_1,\dots,x_n$ is contained in $\tx{Cent}(x_1,\hat G)^\circ$. If $p$ does not divide $|\pi_0(Z(G))|$, then it does not divide the order of the fundamental group of $\hat G_\tx{der}$ and $\tx{Cent}(x_1,\hat G)$ is connected by \cite[Corollary 4.6]{SS70}. Being a Levi subgroup of $\hat G$, the fundamental group of its derived subgroup is a subgroup of the fundamental group of $\hat G_\tx{der}$. In either case, replace $\hat G$ by $\tx{Cent}(x_1,\hat G)^\circ$ and proceed with $x_2$. This proves the first point.

For the second point, we have $C=\tx{Cent}(\varphi(I_F),\hat G)^\circ = \tx{Cent}(\varphi(I_F),\hat M)^\circ$. The action of $I_F$ on $\hat M$ by $\tx{Ad}(\varphi(-))$ restricts trivially to $P_F$. Since $I_F/P_F$ is pro-cyclic, the centralizer of $\varphi(I_F)$ in $\hat M$ is the fixed-point set of a single automorphism $\theta$ of $\hat M$, namely $\tx{Ad}(\varphi(x))$, where $x \in I_F$ projects onto a topological generator of $I_F/P_F$. The automorphism $\theta$ is semi-simple (in fact of finite order) and by \cite[Theorem 7.5]{Ste68end} it preserves a Borel pair of $\hat M$. Let $\hat T$ be the maximal torus in that Borel pair. From \cite[Theorem 1.1.A]{KS99} we know that $[\hat T \cap C]^\circ$ is a maximal torus of $C$ and hence must equal to $C$, and moreover $\hat T=\tx{Cent}(C,\hat M)$. Since $\hat M$ is a Levi subgroup of $\hat G$, $\hat T$ is also a maximal torus of $\hat G$. Finally, since both $\hat M$ and $C$ are normalized by $\varphi(W_F)$ as well as by $\tx{Cent}(\varphi(I_F),\hat G)$, so is $\hat T$.
\end{proof}

\begin{dfn} \label{def:rsp} A regular supercuspidal parameter is a discrete Langlands parameter $\varphi : W_F \to {^LG}$ satisfying the following:
\begin{enumerate}
\item $\varphi(P_F)$ is contained in a torus of $\hat G$; set $\hat M=\tx{Cent}(\varphi(P_F),\hat G)^\circ$.
\item $C:=\tx{Cent}(\varphi(I_F),\hat G)^\circ$ is a torus; let $\hat S$ be the $\Gamma$-module with underlying abelian group $\hat T := \tx{Cent}(C,\hat M)$ and $\Gamma$-action given by $\tx{Ad}(\varphi(-))$.
\item If $n \in N(\hat T,\hat M)$ projects onto a non-trivial element of $\Omega(\hat S,\hat M)^\Gamma$, then $n \notin \tx{Cent}(\varphi(I_F),\hat G)$.
\end{enumerate}
\end{dfn}

We note that if $\varphi$ is a strongly regular supercuspidal parameter then
\[ N(\hat T,\hat M)\cap \tx{Cent}(\varphi(I_F),\hat G) \subset \tx{Cent}(\varphi(I_F),\hat M) \subset \tx{Cent}(C,\hat M) = \hat T, \]
where the second containement follows from the fact that $\tx{Cent}(\varphi(I_F),\hat M)$ is abelian and contains $C$, thus $\varphi$ is regular. Conversely, almost all regular supercuspidal parameters are strongly regular.

Our task in this subsection is to construct for each such parameter and each inner form of $G$ the corresponding $L$-packet. The first step is to identify the set of equivalence classes of such parameters with a set of equivalence classes of another kind of data. We find it most convenient to organize this new data into a category, which we call the category of regular supercuspidal $L$-packet data. The objects in the category will be tuples $(S,\hat j,\chi,\theta)$, where $S$ is a torus of dimension equal to the absolute rank of $G$, defined over $F$ and split over a tame extension of $F$; $\hat j : \hat S \rw \hat G$ is an embedding of complex reductive groups whose $\hat G$-conjugacy class is $\Gamma$-stable; $\chi$ is minimally ramified $\chi$-data for $R(S,G)$ in the sense of Subsection \ref{sub:delta2}; and $\theta : S(F) \rw \C^\times$ is a character. We require of this data that the $\chi$-data be $\Omega(S,G^0)(F)$-invariant, $S/Z(G)$ be anisotropic, and $(S,\theta)$ be a tame extra regular elliptic pair in the sense of Definition \ref{dfn:tre}. Here we are using the structure on $S$ that is given to us by $\hat j$ as described in Subsection \ref{sub:recg}, and moreover $\Omega(S,G^0)$ is the subgroup of $\Omega(S,G)$ generated by the reflection along the sub-root system $R_{0+} \subset R(S,G)$ as in Definition \ref{dfn:tre}.

A morphism $(S,\hat j,\chi,\theta) \rw (S',\hat j',\chi',\theta')$ is a triple $(\iota,g,\zeta)$, where $\iota : S \rw S'$ is an isomorphism of $F$-tori, $g \in \hat G$, and $\zeta = (\zeta_{\alpha'})_{\alpha'}$ is a collection of characters $\zeta_{\alpha'} : F_{\alpha'}^\times \rw \C^\times$, one for each $\alpha' \in R(S',G)$, satisfying the conditions listed after Lemma \ref{lem:d2a}. We require that $\hat j \circ \hat \iota = \tx{Ad}(g) \circ \hat j'$, that $\chi_{\alpha'\circ \iota} = \chi'_{\alpha'} \cdot \zeta_{\alpha'}$, and that $\zeta_{S'}^{-1} \cdot \theta'\circ\iota=\theta$, where $\zeta_{S'}$ is the character of $S'(F)$ corresponding to $\zeta$ as described in Subsection \ref{sub:delta2}. Composition of morphisms is defined in the obvious way. Note that every morphism is an isomorphism and that the automorphisms of a fixed object $(S,\hat j,\chi,\theta)$ are given by $\Omega(S,G)(F)$.

\begin{pro} \label{pro:pardata} There is a natural 1-1 correspondence between the $\hat G$-conjugacy classes of regular supercuspidal parameters and the isomorphism classes of regular supercuspidal $L$-packet data.
\end{pro}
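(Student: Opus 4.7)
The proof constructs the bijection in both directions, invoking the Langlands-Shelstad machinery of $\chi$-data to convert between parameters valued in ${}^LG$ and parameters valued in ${}^LS$, and then uses the local Langlands correspondence for tori to pass between parameters for $S$ and characters of $S(F)$. The main structural input is Lemma \ref{lem:levisystem2}, which extracts the relevant torus from a regular supercuspidal parameter, together with the fact that a minimally ramified set of $\chi$-data yields an $L$-embedding ${}^Lj_\chi : {}^LS \to {}^LG$ whose $\hat G$-conjugacy class depends only on $\hat j$.

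\textbf{From parameter to datum.} Given a regular supercuspidal parameter $\varphi$, apply Lemma \ref{lem:levisystem2} to obtain the Levi $\hat M$, the torus $C$, and the maximal torus $\hat T = \tx{Cent}(C,\hat M)$, which is normalized by $\varphi(W_F)$ and sits inside a Borel of $\hat M$ normalized by $\varphi(I_F)$. Equip $\hat T$ with the $\Gamma$-action via $\tx{Ad}\circ\varphi$ modulo inner automorphisms by $\hat T$; this defines the $F$-torus $S$ with $\hat S = \hat T$, and the inclusion $\hat T \hookrightarrow \hat G$ provides the admissible embedding $\hat j$. Tameness of $S$ follows because $\varphi(P_F) \subset \hat T$ acts trivially on $\hat T$, while ellipticity of $S$ modulo $Z(G)$ follows from the discreteness of $\varphi$ together with the fact that $\hat T$ is the connected centralizer of $C$ in $\hat M$. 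For the root-system conditions, identify $R(\hat T, \hat M)$ with the subsystem $R_{0+} \subset R(S,G)$ (using the Langlands correspondence for tori applied to each coroot and Lemma \ref{lem:levisystem}): because $\varphi(I_F)$ normalizes a Borel of $\hat M$ containing $\hat T$, the $I_F$-action on $R_{0+}$ preserves a positive system, which is precisely the maximally unramified condition on $S \subset G^0$. Now choose any minimally ramified, $\Omega(S,G^0)(F)$-invariant $\chi$-data $\chi$; by \cite[\S2.6]{LS87} it yields an $L$-embedding ${}^Lj_\chi:{}^LS \to {}^LG$ whose image contains $\varphi(W_F)$, hence $\varphi = {}^Lj_\chi \circ \varphi_{S,\chi}$ for a unique parameter $\varphi_{S,\chi}:W_F \to {}^LS$ (up to $\hat S$-conjugacy). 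Define $\theta : S(F) \to \C^\times$ as the character corresponding to $\varphi_{S,\chi}$ under the LLC for tori. Condition~3 of Definition \ref{def:rsp} translates, via Lemma \ref{lem:llcres1} and the identification of $\Omega(S,G^0)(F)$ with $\Omega(\hat S,\hat M)^\Gamma$, into the extra regularity of $\theta|_{S(F)_0}$, so $(S,\theta)$ is tame extra regular elliptic.

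\textbf{From datum to parameter.} Given $(S,\hat j,\chi,\theta)$, apply LLC for tori to $\theta$ to obtain $\varphi_S : W_F \to {}^LS$, and form the $L$-embedding ${}^Lj_\chi$ attached to $\hat j$ and $\chi$; set $\varphi = {}^Lj_\chi \circ \varphi_S$. The verification that $\varphi$ is a regular supercuspidal parameter reverses the argument above: the root subsystem $R_{0+}$ corresponds to the Levi $\hat M = \tx{Cent}(\varphi(P_F),\hat G)^\circ$, the fact that $(S,\theta)$ satisfies the maximally unramified condition places $\hat T$ in a Borel of $\hat M$ normalized by $\varphi(I_F)$, ellipticity of $S/Z(G)$ forces $\varphi$ to be discrete, and extra regularity delivers condition~3 of Definition \ref{def:rsp}.

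\textbf{Equivalence of morphisms.} The delicate point, and the main obstacle, is to match the equivalence relations on both sides: two parameters are $\hat G$-conjugate iff the associated data are isomorphic in the defined category. The morphism component $g \in \hat G$ corresponds to a conjugation identifying the two images of $\hat j$ inside $\hat G$; the isomorphism $\iota : S \to S'$ arises because such a $g$ intertwines the $\Gamma$-actions on $\hat T$ and $\hat T'$. The subtle part is the $\zeta$-component: changing the $\chi$-data from $\chi$ to $\chi'$ alters the $L$-embedding ${}^Lj_\chi$ by an element of $Z^1(W_F,\hat S)$ whose corresponding character of $S(F)$, via LLC for tori, is precisely $\zeta_S$ of Subsection \ref{sub:delta2}; hence the character $\theta$ is modified by $\zeta_S$, exactly matching the relation $\zeta_{S'}^{-1}\cdot\theta'\circ\iota=\theta$ in the definition of a morphism. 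One verifies that the ambiguity in choosing a lift $\varphi_S$ in the forward direction (namely $\hat S$-conjugacy of $\varphi_S$) corresponds to the automorphism group $\Omega(S,G)(F)$ of an object on the datum side, via the identification of $\Omega(\hat S,\hat G)^\Gamma$ with $\Omega(S,G)(F)$. Putting these matchings together produces mutually inverse functors between the two groupoids, yielding the claimed bijection on isomorphism classes.
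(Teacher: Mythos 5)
Your overall architecture is sound and matches the paper's: extract $(S,\hat j)$ from $\varphi$ via Lemma~\ref{lem:levisystem2}, use $\chi$-data to factor $\varphi$ through an $L$-embedding of ${}^LS$, invoke the LLC for tori to get $\theta$, and match morphisms using \cite[(2.6.3)]{LS87} and the character $\zeta_S$. But the proof has a substantial gap in the datum-to-parameter direction, and the closing claim about morphisms is incorrect.

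The gap concerns condition~2 of Definition~\ref{def:rsp}, namely that $C := \mathrm{Cent}(\varphi(I_F),\hat G)^\circ$ is a torus. Your proposal dispatches this with ``the fact that $(S,\theta)$ satisfies the maximally unramified condition places $\hat T$ in a Borel of $\hat M$ normalized by $\varphi(I_F)$,'' but this only controls $\varphi(P_F)$ and the Weyl element coming from $I_F$; it does \emph{not} by itself kill the roots of $\hat M$ fixed by the inertial action. The roots that could survive are exactly those $\hat\alpha \in R(\hat T,\hat M)$ with $N\hat\alpha(t) = 1$ (type R1/R2) or $N\hat\alpha(t)=-1$ (type R3), where $t\rtimes x$ is the finite-order element of $\hat T\rtimes I_F$ through which the tame quotient acts after conjugating into a $\theta$-stable Borel pair via \cite[Theorem 7.5]{Ste68end}. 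Ruling these out is the technical heart of the proposition: one must first construct a tame $L$-embedding ${}^LM\to{}^LG$ (the subsidiary Lemma~\ref{lem:tamelemb}, which itself requires a cocycle computation in $Z(\hat M)^\circ$ and an appeal to Langlands' Lemma~4 of \cite{Lan79} to show the correction cochain can be chosen trivial on $P_F$), then factor $\varphi$ through ${}^Lj_1 = {}^Lj_{M,G}\circ{}^Lj_{S,M}$ with a correction cocycle $b$, show that the resulting depth-zero character $\theta_b$ is $\Omega(S,M)^\Gamma$-invariant, and finally translate $N\hat\alpha(t)\ne 1$ into the nonvanishing of $[\theta_b\cdot\theta]\circ N_{F'/F}\circ N\alpha^\vee$ restricted to $O_{F'}^\times$, which is exactly where the regularity of $\theta|_{S(F)_0}$ enters via Lemma~\ref{lem:dl2}. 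Without this chain of reductions, the assertion that $C$ is a torus is unjustified.

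Two further issues. First, in both directions you invoke ``any minimally ramified, $\Omega(S,G^0)(F)$-invariant $\chi$-data'' without establishing that such $\chi$-data exists; the paper proves this as a separate lemma by constructing $\Omega(S,M)^\Gamma$-invariant mod-$a$-data from the filtration jumps of $\hat\alpha\circ\varphi|_{I_F}$ and then passing to $\chi$-data via \eqref{eq:chi'}, and this existence is needed both to define $\theta_\chi$ and to translate condition~3 into the triviality of the stabilizer of $\theta|_{S(F)_0}$. Second, your final sentence asserting that the $\hat S$-conjugacy ambiguity in $\varphi_S$ ``corresponds to the automorphism group $\Omega(S,G)(F)$'' is false: $\hat S$-conjugacy of $\varphi_S$ has no effect whatsoever on $\theta$ or on the resulting datum, whereas $\Omega(S,G)(F)$ is the genuine automorphism group of the datum, realized on the parameter side by elements of $N(\hat T,\hat G)$ whose Weyl projections are $\Gamma$-fixed. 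These are unrelated objects, and conflating them leaves the morphism-matching argument incomplete.
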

\begin{proof}
Given a datum $(S,\hat j,\chi,\theta)$ we use the $\chi$-data to extend $\hat j$ to an $L$\-embedding $^Lj : {^LS} \rw {^LG}$ as explained in \cite[\S2.6]{LS87} and let $\varphi_S : W_F \rw {^LS}$ be the parameter for $\theta$. Define $\varphi = {^Lj} \circ \varphi_S$.

Let us first check that the $\hat G$-conjugacy class of $\varphi$ depends only on the isomorphism class of the datum $(S,\hat j,\chi,\theta)$. Keeping this datum fixed, the parameter $\varphi_S$ is determined by $\theta$ up to $\hat S$-conjugacy and the embedding $^Lj$ is determined by $\chi$ and $\hat j$ also up to $\hat S$-conjugacy, so the $\hat G$-conjugacy class of $\varphi$ does not depend on these choices. Now we vary the datum $(S,\hat j,\chi,\theta)$ within its isomorphism class. It is enough to check the three basic cases of an isomorphism: $(\iota,1,1)$, $(1,g,1)$, and $(1,1,\zeta)$. In the first case we have $^Lj\circ{^L\iota} = {^Lj'}$ and ${^L\iota}\circ\varphi_{S'}=\varphi_{S}$, hence ${^Lj}\circ\varphi_S={^Lj'}\circ\varphi_{S'}$. In the second case we have $S=S'$ and ${^Lj}=\tx{Ad}(g)\circ{^Lj'}$. In the third case we have ${^Lj'} = {^Lj} \cdot c^{-1}$, where $c$ is the 1-cocycle of \cite[Corollary 2.5.B]{LS87}, as well as $\theta'=\theta \cdot \zeta_S$. The proof of \cite[Lemma 3.5.A]{LS87} shows that $c$ is the Langlands parameter for the character $\zeta_S$, so that $\varphi_{S'}=\varphi_S \cdot c$.

We now check that $\varphi$ satisfies the conditions of Definition \ref{def:rsp}. Since $S$ is tamely ramified and the $\chi$-data is at most tamely ramified, we have $\varphi|_{P_F} = \hat j \circ \varphi_S|_{P_F}$, so $\varphi(P_F)$ is contained in the image of $\hat j$, which is a maximal torus of $\hat G$. Before we can discuss $\tx{Cent}(\varphi(I_F),\hat G)^\circ$ we need some preparation. Fix a $\Gamma$-invariant pinning $(\hat T,\hat B,\{X_{\hat\alpha}\}_{\hat\alpha \in \Delta^\vee})$ of $\hat G$ and replace $(S,\hat j,\chi,\theta)$ by an isomorphic datum so that $\hat j(\hat S)=\hat T$.

Let $\hat M=\tx{Cent}(\varphi(P_F),\hat G)^\circ$. According to Lemma \ref{lem:levisystem2} this is a Levi subgroup of $\hat G$. It is normalized by the action of $\varphi(W_F)$ and the resulting homomorphism $W_F \to \tx{Aut}(\hat M) \to \tx{Out}(\hat M)$ extends to $\Gamma_F$. We have arranged that $\hat T \subset \hat M$, so the fixed pinning of $\hat G$ induces the pinning $(\hat T,\hat B \cap M,\{X_{\hat\alpha}\}_{\hat\alpha \in \Delta_M^\vee})$ of $\hat M$. This pinning gives a splitting $\tx{Out}(\hat M) \to \tx{Aut}(\hat M)$ of the natural projection, so we obtain an action $\Gamma_F \to \tx{Aut}(\hat M)$ preserving the pinning. Note that the original action of $\Gamma_F$ on $\hat G$ need not preserve $\hat M$, so there is no potential for confusion whenever we speak about ``the'' $\Gamma_F$-action on $\hat M$. The group $\hat M$ endowed with this $\Gamma_F$-action is the dual group of a quasi-split $F$-group $M$.
We now claim that under the identification $R(\hat S,\hat G) = R^\vee(S,G)$ the root system
\[ R(\hat S,\hat M) = \{\hat\alpha \in R(\hat S,\hat G)| \hat\alpha(\varphi(P_F))=1\} \]
becomes identified with the coroot system of the subsystem $R_{0+}$ of Definition \ref{dfn:tre}. For any $\hat\alpha \in R(\hat S,\hat G)$ let $\alpha^\vee \in R^\vee(S,G)$ be the corresponding cocharacter. Letting $E/F$ be the tame Galois extension splitting $S$, the parameter of the character $\theta\circ N_{E/F}\circ \alpha^\vee$ is equal to the restriction to $W_E$ of $\hat\alpha\circ\varphi_S$. Since $P_F=P_E \subset W_E$, we see using \cite[Theorem 7.10]{Yu09} that $R(\hat S,\hat M)$ is the subset of $R^\vee(S,G)$ consisting of those $\alpha^\vee$ for which $\theta\circ N_{E/F}\circ\alpha^\vee$ restricts trivially to $E_{0+}^\times$ and the claim is proved.

Consider the embedding $\hat j : \hat S \to \hat M$. It is trivially $W_F$-equivariant if we endow both sides with the action of $W_F$ given by $\tx{Ad}(\varphi(-))$. It is no longer $W_F$-equivariant if we endow $\hat M$ with the action of $W_F$ via which $\hat M$ becomes the dual group of $M$, but this latter action differs from the previous action only by inner automorphisms of $\hat M$, so the $\hat M$-conjugacy class of $\hat j : \hat S \to \hat M$ is still $\Gamma$-stable. As discussed in Subsection \ref{sub:recg} this gives us the notion of admissible embeddings $S \to M$. Tracking through the definitions we see that the subset $R(S,M)$ of $X^*(S)$ arising from this notion is a subset of $R(S,G)$ and the identification $R(\hat S,\hat G)=R^\vee(S,G)$ identifies $R(\hat S,\hat M)$ with $R^\vee(S,M)$.

By assumption on $\theta$ the action of $I_F$ on $R(\hat S,\hat M)$ leaves a basis invariant.
This implies that all symmetric roots in $R(\hat S,\hat M)$ are unramified and hence there is a canonical (up to $\hat S$-conjugation) extension of the embedding $\hat j:\hat S \to \hat M$ to an $L$-embedding $^Lj_{S,M} : {^LS} \to {^LM}$, namely the one given by the construction \cite[\S2.6]{LS87} for unramified $\chi$-data.

\begin{lem} \label{lem:tamelemb}
The natural inclusion $\hat M \to \hat G$ can be extended (non-canonically) to a tame $L$-embedding ${^LM} \rw {^LG}$.
\end{lem}
\begin{proof}
We have an action of $\Gamma$ on $\hat G$ coming from the fact that $\hat G$ is the dual group of $G$. We also have an action of $\Gamma$ on $\hat M$ coming from viewing $\hat M$ as the dual group of $M$. The inclusion $\hat M \to \hat G$ is not equivariant for these acitons. The restriction to $\hat T$ of the $\Gamma$-action on $\hat M$ differs from the restriction to $\hat T$ of the $\Gamma$-action on $\hat G$ by an element $w_M \in Z^1(\Gamma_{K/F},\Omega(\hat T,\hat G))$, where $K/F$ is a finite Galois extension, tame because the actions of $\Gamma_F$ on $\hat G$ and $\hat M$ are tame. We will find a homomorphism $\xi : W_F/P_F \rw N(\hat T,\hat G) \rtimes W_F/P_F$ such that each $\xi(w)$ preserves the pinning of $\hat M$ and acts on $\hat T$ via $w_M(\sigma_w) \rtimes \sigma_w$, where $\sigma_w \in \Gamma$ is the image of $w$. This $\xi$ will then give us the $L$-embedding
\[ \hat M \rtimes W_F \to \hat G \rtimes W_F,\qquad m \rtimes w \mapsto m\xi(w). \]

For this, let $n_M(\sigma) \in N(\hat T,\hat G)$ be the Tits lift \cite[11.2.9]{Spr81} of $w_M(\sigma)$ relative to the fixed pinning of $\hat G$. For $\hat\alpha \in \Delta_M^\vee \subset \Delta^\vee$ we have $\tx{Ad}(n_M(\sigma))\sigma(X_{\hat\alpha})=X_{\hat\alpha}$ by \cite[11.2.11]{Spr81}. The map $\sigma \mapsto n_M(\sigma) \rtimes \sigma \in N(\hat T,\hat G) \rtimes \Gamma$ is not necessarily a homomorphism. We have by \cite[Lemma 2.1.A]{LS87} that
\[ [n_M(\sigma)\rtimes\sigma] \cdot [n_M(\tau)\rtimes \tau] = t(\sigma,\tau) \cdot n_M(\sigma\tau) \rtimes \sigma\tau,\]
where $t(\sigma,\tau)=\alpha_{\sigma,\tau}(-1)$ and $\alpha_{\sigma,\tau} \in X_*(\hat T)$ is the sum of all members of the set
\begin{equation} \label{eq:x1} \{ \beta \in R(\hat T,\hat G)^\vee| \beta>0, [w_M(\sigma)\sigma]^{-1}\beta<0, [w_M(\sigma)\sigma w_M(\tau)\tau]^{-1}\beta>0 \}. \end{equation}
We claim that $t(\sigma,\tau) \in Z(\hat M)^\circ$. Since $X_*(Z(\hat M)^\circ)$ is the annihilator in $X_*(\hat T)$ of the root lattice $Q(\hat M) \subset X^*(\hat T)$, it will be enough to show that $\alpha_{\sigma,\tau}$ annihilates $Q(\hat M)$. This is equivalent to showing that $\alpha_{\sigma,\tau}$ is fixed by $\Omega(\hat T,\hat M)$, because for any $\hat\beta \in R(\hat T,\hat M)$ we have
\[ \<\alpha_{\sigma,\tau},\beta^\vee\> = 0 \Leftrightarrow \<\alpha_{\sigma,\tau},\hat\beta\> = -\<\alpha_{\sigma,\tau},\hat\beta\> = \<s_{\hat\beta}\alpha_{\sigma,\tau},\hat\beta\> . \]
Now observe that any member $\beta$ of \eqref{eq:x1} must be outside of $R(\hat T,\hat M)^\vee$, because otherwise $[w_M(\sigma)\sigma]^{-1}$ would not make it negative. The action of $\Omega(\hat T,\hat M)$ on $R(\hat T,\hat G)$ preserves the set of positive roots in $R(\hat T,\hat G) \sm R(\hat T,\hat M)$. It follows that if $u \in \Omega(\hat T,\hat M)$, then $u\beta>0$ and for the same reason $[w_M(\sigma)\sigma]^{-1} u\beta = v[w_M(\sigma)\sigma]^{-1}\beta<0$, with $v = [w_M(\sigma)\sigma]^{-1}u[w_M(\sigma)\sigma] \in \Omega(\hat T,\hat M)$. This shows that the set \eqref{eq:x1} is $\Omega(\hat T,\hat M)$-invariant, hence its sum $\alpha_{\sigma,\tau}$ is $\Omega(\hat T,\hat M)$-fixed.

We have thus proved $t(\sigma,\tau) \in Z(\hat M)^\circ$, i.e. $t \in Z^2(\Gamma_{K/F},Z(\hat M)^\circ)$. But then \cite[Lemma 4]{Lan79} implies that there is $r \in C^1(W_{K/F},Z(\hat M)^\circ)$ whose differential is the inflation of $t$. This means that
\[ \xi : w \mapsto r(w) n_M(\sigma_w) \rtimes \sigma_w \]
is a homomorphism $W_{K/F} \rw N(\hat T,\hat G) \rtimes W_F$. Since $r(w) \in Z(\hat M)^\circ$, it acts trivially on the root spaces of $\hat M$ and thus $\xi(w)$ preserves the pinning of $\hat M$.

We claim that after inflating $\xi$ to $W_F$, its restriction to $P_F$ is trivial. Since $K/F$ is tame, the image of $P_F$ in $W_{K/F}$ is equal to $K^\times_{0+}$, so we must check that $r(w)=1$ when $w \in K^\times_{0+}$. This can be extracted from the proof of \cite[Lemma 4]{Lan79}. It proceeds by embedding $Z(\hat M)^\circ$ into an exact sequence
\[ 1 \to Z(\hat M)^\circ \to \hat S_1 \to \hat S_2 \to 1, \]
where $S_1$ and $S_2$ are tori defined over $F$ and split over $K$ and $S_1$ is induced. Then $r(w)$ is expressed as $d^{-1}(w)c(\sigma_w)a^{-1}\sigma_w(a)$, where $c \in C^1(\Gamma_{K/F},\hat S_1)$ is chosen so that its co-boundary is $t$ (it exists because $S_1$ is induced and furthermore $H^2(\Gamma,\C^\times)=H^3(\Gamma,\Z)=0$, the latter because $\Gamma$ has strict cohomological dimension $2$)
and $d \in Z^1(W_{K/F},\hat S_1)$ and $a \in \hat S_1$ are chosen so that the equation $d(w)=c(\sigma_w)a^{-1}\sigma_w(a)$ holds in $\hat S_2$ (they exist because $H^1(W_{K/F},\hat S_1) \to H^1(W_{K/F},\hat S_2)$ is surjective, which follows from the injectivity of $S_2(F) \to S_1(F)$ and the Langlands correspondence for tori).

The tameness of $K/F$ implies that both $c(\sigma_w)$ and $a^{-1}\sigma_w(a)$ are trivial when $w \in K^\times_{0+}$, because then $\sigma_w=1 \in \Gamma_{K/F}$. To show that $d$ can be chosen trivial on $K^\times_{0+}$, we use the fact that $S_2(F)/S_2(F)_{0+} \to S_1(F)/S_1(F)_{0+}$ is injective by Lemma \ref{lem:mpex1}, and hence $H^1(W_{K/F}/K^\times_{0+},\hat S_1) \to H^1(W_{K/F}/K^\times_{0+},\hat S_2)$ is surjective by \cite[Theorem 7.10]{Yu09}.
\end{proof}

Composing the unramified $L$-embedding ${^Lj_{S,M}}:{^LS} \to {^LM}$ with a tame $L$-embedding ${^Lj_{M,G}}: {^LM} \to {^LG}$ supplied by Lemma \ref{lem:tamelemb} we obtain an $L$-embedding $^Lj_1 : {^LS} \to {^LG}$, which extends $\hat j : \hat S \to \hat G$. Then ${^Lj_1} = {^Lj}\cdot b$, for some $b \in Z^1(W_F,\hat S)$. Let $\theta_b : S(F) \to \C^\times$ be the character corresponding to $b$. By \cite[Theorem 7.10]{Yu09} and the fact that $^Lj_1$ and $^Lj$ agree on $P_F$ we see that $\theta_b$ is trivial on $S(F)_{0+}$. We claim that $\theta_b$ is $\Omega(S,M)^\Gamma$-invariant. If $w \in \Omega(S,M)^\Gamma$ then ${^Lj}\circ w = \tx{Ad}(n){^Lj}$ by \cite[(2.6.2)]{LS87} and the fact that $^Lj$ is produced from $\Omega(S,M)^\Gamma$-invariant $\chi$-data, where $n \in N(\hat T,\hat M)$ is a suitable element representing $w$. At the same time ${^Lj_1}\circ w = \tx{Ad}(n')\circ{^Lj_1}$ -- we have ${^Lj_{S,M}}\circ w = \tx{Ad}(n')\circ{^Lj_{S,M}}$, for some possibly different $n' \in N(\hat T,\hat M)$ lifting $w$, for the same reason as for $^Lj$, namely the $w$-invariance of the unramified $\chi$-data, and we have ${^Lj_{M,G}}\circ\tx{Ad}(n')=\tx{Ad}(n')\circ{^Lj_{M,G}}$ tautologically, because $^Lj_{M,G}$ is a group homomorphism extending the identity $\hat M \to \hat G$. We conclude that $b$ and $b \circ w$ are $\hat S$-conjugate, hence $\theta_b=\theta_b \circ w$.

We now have $\varphi={^Lj}\circ\varphi_S = {^Lj_1}\circ b \cdot \varphi_S$ and can proceed with the computation of $\tx{Cent}(\varphi(I_F),\hat G)^\circ$. Clearly $\tx{Cent}(\varphi(I_F),\hat G)^\circ = \tx{Cent}(\varphi(I_F),\hat M)^\circ = \tx{Cent}({^Lj}_1\circ b\cdot \varphi_S(I_F),\hat M)^\circ = \tx{Cent}({^Lj}_{S,M}\circ b\cdot \varphi_S(I_F),\hat M)^\circ$. Since ${^Lj}_{S,M}\circ b\cdot \varphi_S(P_F) \subset Z(\hat M)$ the action of ${^Lj}_{S,M}\circ b\cdot \varphi_S(I_F)$ on $\hat M$ factors through the pro-cyclic quotient $I_F/P_F$. Let $x \in I_F$ be a pre-image of a generator of this quotient and let $\theta$ be the automorphism ${^Lj}_{S,M}\circ b\cdot \varphi_S(x)$ of $\hat M$. It is semi-simple (in fact of finite order)
and by \cite[Theorem 7.5]{Ste68end} it preserves a Borel pair of $\hat M$. Conjugating within $\hat M$ we may assume that $\theta$ preserves the Borel pair belonging to the fixed pinning of $\hat M$. But then it must be given by an element $t \rtimes x \in \hat T \rtimes I_F$. The connected centralizer in $\hat M$ of this automorphism is a reductive subgroup $\hat M^{t \rtimes x,\circ} \subset \hat M$ with maximal torus $\hat T^{x,\circ}$. We recall the description of its root system from \cite[\S1.3]{KS99}. One divides the roots $R(\hat T,\hat M)$ in three types, depending on their image in the relative (and possibly non-reduced) root system $R(\hat T^{x,\circ},\hat M)$. One says that $\hat\alpha \in R(\hat T,\hat M)$ is of type R1/R2/R3, if its image $\hat\alpha_\tx{res} \in R(\hat T^{x,\circ},\hat M)$ is a relative root that: is neither divisible nor multipliable/is multipliable/is divisible. For any $\hat\alpha \in R(\hat T,\hat M)$ we denote by $N\hat\alpha$ the sum of all elements of the orbit of $\hat\alpha$ under the automorphism $x$. Then $\hat\alpha_\tx{res}$ is a root of $\hat M^{t \rtimes x,\circ}$ if and only if it is either of type R1 or R2 and $N\hat\alpha(t)=1$ or if it is of type R3 and $N\hat\alpha(t)=-1$.

Our goal is to show that neither of these cases occurs. For any $\hat\alpha \in R(\hat T,\hat M)$ the homomorphism $N\hat\alpha : \hat T \to \C^\times$ is $I$-invariant and descends to a homomorphism $\hat T_I \to \C^\times$. Note here that we are using the $\Gamma$-action on $\hat T$ inherited from $\hat M$ and not from $\hat G$. In particular, ${^Lj_{S,M}}$ restricts to an isomorphism $\hat S \rtimes I \to \hat T \rtimes I$. Consider the composition
\begin{equation} \label{eq:ic} I_F \stackrel{b\varphi_S}{\lrw} \hat S \rtimes I \stackrel{{^Lj_{S,M}}}{\lrw} \hat T \rtimes I \to \hat T_I \stackrel{N\hat\alpha}{\lrw} \C^\times. \end{equation}
The restriction of this homomorphism to $P_F$ is trivial and the image of $x \in I_F$ under this homomorphism is equal to the value $N\hat\alpha(t)$. We want to show that this image is not equal to $1$ when $\hat\alpha$ is of type R1 or R2 and is not equal to $-1$ when $\hat\alpha$ is of type R3. For a moment let us assume that $\hat\alpha$ is of type R1. Then $N\hat\alpha(t) \neq 1$ is equivalent to \eqref{eq:ic} being non-trivial. We shall interpret the homomorphism \eqref{eq:ic} in terms of the character $\theta_b \cdot \theta$. Since $M$ is quasi-split there exists by Lemma \ref{lem:qstrans} an admissible embedding $S \to M$ defined over $F$. By assumption on $\theta$ the image is a maximally unramified maximal torus and using Lemma \ref{lem:super} we may choose the admissible embedding so that the associated point in $\mc{B}^\tx{red}(M,F)$ is superspecial. Let $F'/F$ be an unramified extension over which $S$ becomes a minimal Levi subgroup of $M$. The point $x$ is still special over $F'$ and the root system of $\ms{M}_x^\circ$ is the subsystem of non-divisible roots in $R(A_S,M)$. The bijection $R(S,M) \leftrightarrow R(\hat S,\hat M)$ sending $\hat\alpha$ to $\alpha=\hat\alpha^\vee$ restricts to a bijection $R(A_S,M) \leftrightarrow R(\hat S^{x,\circ},\hat M)$ that preserves types. Thus $\hat\alpha$ corresponds to a root $\alpha \in R(S,M)$ whose restriction to $A_S$ is neither divisible nor multipliable. This root is then also an element of $R(\ms{S}',\ms{M}_x^\circ)$ and the corresponding coroot is $N\alpha^\vee=N\hat\alpha$. The $L$-embedding $^Lj_{S,M}$ restricts to an isomorphism $\hat S \rtimes W_{F'} \to \hat T \rtimes W_{F'}$ and
\[ W_{F'} \stackrel{b\varphi_S}{\lrw} \hat S \rtimes W_{F'} \stackrel{{^Lj_{S,M}}}{\lrw} \hat T \rtimes W_{F'} \to \hat T_{W_F'} \stackrel{N\hat\alpha}{\lrw} \C^\times \]
is the parameter of the character $[\theta_b \cdot \theta]\circ N_{F'/F}  \circ [N\alpha^\vee]$. The character $[\theta_b \cdot \theta]_{S(F)_0}$ has trivial stabilizer on $\Omega(S,M)^\Gamma$ and thus reduces to a character of $\ms{S}'(k_F)$ in regular position. According to Lemma \ref{lem:dl2} the composition $[\theta_b \cdot \theta]\circ N_{F'/F}  \circ [N\alpha^\vee]$ is a non-trivial character of $k_{F'}^\times$, or, seen as a character of $[F']^\times$, has non-trivial restriction to $O_{F'}^\times$. This in turn is equivalent to the claim that its parameter restricts non-trivially to $I_{F'}=I_F$. But that restriction is exactly \eqref{eq:ic}.

We now turn to the cases where $\hat\alpha$ is of type R2 or R3. These cases are linked together -- if $\hat\alpha$ is a root of type R2 and $l$ is the smallest positive number such that $x^l\hat\alpha=\hat\alpha$, then $l$ is even and $\hat\beta = \hat\alpha+x^{l/2}\hat\alpha$ is a root of type R3. Conversely, every root of type R3 occurs in this way. In this situation, we have $N\hat\beta=N\hat\alpha$. The cases of $\hat\alpha$ and $\hat\beta$ will be handled simultaneously if we can show $(2N)\hat\alpha(t) \neq 1$. But the bijection $R(A_S,M) \leftrightarrow R(\hat S^{x,0},\hat M)$ sends $\hat\alpha$ to a non-divisible relative root $\alpha$ which then occurs in $R(\ms{S}',\ms{M}_x^\circ)$. Its coroot is $2N\alpha^\vee=2N\hat\alpha$. The same argument now shows that the homomorphism \eqref{eq:ic}, where we replace $N\hat\alpha$ by $(2N)\hat\alpha$, is non-trivial.

We have thus shown that $C:=\tx{Cent}(\varphi(I_F),\hat M)^\circ = \hat M^{t \rtimes x,\circ}$ is a reductive group with maximal torus $\hat T^{x,\circ}$ and an empty root system, so it equals $T^{x,\circ}$ and is thus contained in $\hat T$.

It now remains to check the third property in Definition \ref{def:rsp}. Let $n' \in N(\hat T,\hat M)$ project to $w \in \Omega(S,M)^\Gamma$ and centralize $\varphi(I_F)$. Write $n'=s^{-1}n$, where $s \in \hat S$ and $n \in N(\hat T,\hat M)$ satisfies $\tx{Ad}(n)\circ {^Lj} = {^Lj}\circ w$. The latter equality together with $\tx{Ad}(n)\circ \varphi|_{I_F} = \tx{Ad}(s)\circ\varphi|_{I_F}$ implies $w \circ \varphi_S|_{I_F}=\tx{Ad}(s)\circ\varphi_S|_{I_F}$. By Lemma \ref{lem:llcres1} this means that $w$ stabilizes $\theta|_{S(F)_0}$. The regularity of $\theta$ implies $w=1$.

We now give the inverse construction. Let $\varphi : W_F \to \hat G \rtimes W_F$ be a regular supercuspidal parameter. Let $\hat T \subset \hat M \subset \hat G$ be the maximal torus and Levi subgroup from Lemma \ref{lem:levisystem2}, normalized by $\tx{Ad}(\varphi(-))$, and let $\hat S$ be the $\Gamma$-module with underlying abelian group $\hat T$ and $\Gamma$-action given by $\tx{Ad}(\varphi(-))$. By construction $\varphi(P_F) \subset Z(\hat M) \subset \hat T$, so the $\Gamma$-module $\hat S$ is tame. Since $\varphi(I_F)$ preserves a Borel subgroup of $\hat M$ containing $\hat T$, the action of $I_F$ on $R(\hat S,\hat M)$ preserves a positive chamber. We let $\hat j : \hat S \to \hat G$ be the tautological embedding of the abelian group $\hat T$ underlying $\hat S$ into $\hat G$.

\begin{lem} There exists $\chi$-data for $R(S,G)$ that is minimal and $\Omega(S,M)^\Gamma$-invariant.
\end{lem}
\begin{proof}
We mimick some of the arguments of the Howe factorization algorithm of Subsection \ref{sub:howe} and begin by considering the filtration
\[ R_r = \{\alpha \in R(S,G)| \hat\alpha(\varphi(I^r))=1 \}. \]
This is well-defined because $\varphi(P_F) \subset Z(\hat M) \subset \hat T$. Let $r_{d-1} > \dots > r_0 > 0$ be the jumps of this filtration. Set in addition $r_d = \tx{depth}(\varphi)$ and $r_{-1}=0$. Thus $R_{0+}=R(S,M)$ and $R_{r_{d-1}+}=R(S,G)$. Fix an additive character $\Lambda : F \to \C^\times$. For notational convenience, we assume $\Lambda$ is of depth zero, i.e. trivial on $F_{0+}$ but not on $F_0$. Given $\alpha \in R(S,G) \sm R(S,M)$, let $d > i \geq 0$ such that $\alpha \in R_{r_i+} \sm R_{r_i}$.

We define a character $\zeta_\alpha : [F_\alpha^\times]_{r_i}/[F_\alpha^\times]_{r_i+} \to \C^\times$ as follows. The composition $\hat\alpha\circ\varphi$ gives a homomorphism $I^{r_i} \to \C^\times$ that is trivial on $I^{r_i+}$. We claim that this homomorphism can be extended to $W_{F_\alpha}$. Indeed, if we fix arbitrary tame $\chi$-data for $R(S,G)$ we obtain a tame $L$-embedding ${^Lj} : \hat S \rtimes W_F \to \hat G \rtimes W_F$ containing the image of $\varphi$ and hence a factorization $\varphi={^Lj}\circ\varphi_S$ with $\varphi_S : W_F \to \hat S \rtimes W_F$ having the property $\varphi_S|_{P_F}=\varphi|_{P_F}$, and then $\hat\alpha\circ\varphi_S : W_{F_\alpha} \to \hat S \rtimes W_{F_\alpha} \to \hat S_{W_{F_\alpha}} \to \C^\times$ is an extension of $\hat\alpha\circ\varphi|_{P_F}$ to $W_{F_\alpha}$. Thus $\hat\alpha\circ\varphi$ corresponds to a character $\zeta_\alpha : [F_\alpha^\times]_{r_i}/[F_\alpha^\times]_{r_i+} \to \C^\times$, which of course does not depend on the extension of $\hat\alpha\circ\varphi$ to $W_{F_\alpha}$.
The equation
\[ \zeta_\alpha(X+1) = \Lambda(\tx{tr}_{F_\alpha/F}(\bar a_\alpha X)),\qquad X \in [F_\alpha]_{r_i}, \]
specifies an element $\bar a_\alpha \in [F_\alpha]_{-r_i}/[F_\alpha]_{-r_i+}$. We claim that $\{(-r_i,\bar a_\alpha)\}$ is a set of $\Omega(S,M)^\Gamma$-invariant mod-$a$-data for $R(S,G) \sm R(S,M)$. For this we need to show $\tau(\bar a_\alpha)=\bar a_{\tau\alpha}$, $-\bar a_\alpha=\bar a_{-\alpha}$, and $\bar a_{w\alpha}=\bar a_\alpha$, for $\tau \in \Gamma$ and $w \in \Omega(S,M)^\Gamma$. This in turn translates to $\zeta_{\tau\alpha} = \zeta_\alpha\circ\tau^{-1}$, $\zeta_{-\alpha}=\zeta_\alpha^{-1}$ and $\zeta_{w\alpha}=\zeta_\alpha$. Note that $\tau : F_\alpha \to F_{\tau\alpha}$ is an isomorphism of $F$-algebras and $F_{w\alpha}=F_\alpha$, so these formulas make sense.
The claimed properties of the characters $\zeta_\alpha$ are seen as follows: Going from $\alpha$ to $-\alpha$ is trivial, going from $\alpha$ to $w\alpha$ comes from the fact that $w$ can be represented by conjugation by an element of $\hat M$, which centralizes $\varphi(P_F)$, and going from $\alpha$ to $\tau\alpha$ comes from
\[ \tau(\varphi(w))=\tau(\varphi_S(w))=\varphi_S(\tau)\varphi_S(w)\varphi_S(\tau)^{-1}=\varphi_S(\tau w \tau^{-1})=\varphi(\tau w \tau^{-1}), \]
for $w \in P_F$. From the $\Omega(S,M)^\Gamma$-invariant mod-$a$-data we obtain $\Omega(S,M)^\Gamma$-invariant tame $\chi$-data for $R(S,G) \sm R(S,M)$ by \eqref{eq:chi'}. We augment this with unramified $\chi$-data for $R(S,M)$, which suffices since $R(S,M)$ has no ramified symmetric roots, and which is automatically $\Omega(S,M)^\Gamma$-invariant.
\end{proof}

We have thus proved the existence of $\Omega(S,M)^\Gamma$-invariant minimal $\chi$-data for $R(S,G)$. We fix one such $\chi$ and obtain an $\hat G$-conjugacy class of $L$-embedding $^Lj_\chi : \hat S \rtimes W_F \to \hat G \rtimes W_F$. Fix $^Lj$ within its conjugacy class by demanding $^Lj_\chi|_{\hat S}=\hat j$. The image of $^Lj_\chi$ contains the image of $\varphi$ and we obtain the factorization
\[ \varphi = {^Lj_\chi} \circ \varphi_{S,\chi}, \]
for some Langlands parameter $\varphi_{S,\chi} : W_F \rw {^LS}$. Let $\theta=\theta_\chi : S(F) \rw \C^\times$ be the corresponding character. Since any $L$-embedding that is $\hat G$-conjugate to $^Lj_\chi$ and also restricts to $\hat j$ must be conjugate to $^Lj_\chi$ by an element of $\hat T$, the $\hat S$-conjugacy class of $\varphi_{S,\chi}$, and hence the character $\theta_\chi$, are independent of the choice of $^Lj_\chi$. They depend only on the choice of $\chi$.
\begin{lem}
The stabilizer of $\theta|_{S(F)_0}$ in $\Omega(S,M)^\Gamma$ is trivial.
\end{lem}
\begin{proof}
This is equivalent to $[\theta\circ w/\theta]|_{S(F)_0} \neq 1$ for all $w \in \Omega(S,M)^\Gamma$. By Lemma \ref{lem:llcres1} this is equivalent to $w\circ \varphi_S \neq \varphi_S$ in $H^1(I_F,\hat S)$. Explicitly we need to show that for any $w \in \Omega(S,M)^\Gamma$ there does not exist $s_w \in \hat S$ with $w \circ \varphi_S|_{I_F} = \tx{Ad}(s_w)\varphi_S|_{I_F}$. Assume this fails for some $w$ and let $s_w$ be the corresponding element. We have the equality
\[ w\circ \varphi_S|_{I_F} = \tx{Ad}(s_w)\circ\varphi_S|_{I_F}. \]
Composing both sides of the displayed equation with $^Lj_\chi$ we obtain
\[ {^Lj_\chi}\circ w\circ\varphi_S|_{I_F} = \tx{Ad}(s_w)\varphi|_{I_F}. \]
We may now use \cite[(2.6.2)]{LS87} together with the $w$-invariance of $\chi$-data to get ${^Lj}\circ w=\tx{Ad}(n)\circ{^Lj}$, where $n \in N(\hat T,\hat M)$ is a suitable lift of $w$. This leads to
\[ \tx{Ad}(n)\varphi|_{I_F} = \tx{Ad}(s_w)\varphi|_{I_F}, \]
i.e. $s_w^{-1}n \in \tx{Cent}(\varphi(I_F),\hat M)$ contradicting part 3 of Definition \ref{def:rsp}.
\end{proof}
We now form $(S,\hat j,\chi,\theta_\chi)$ and claim that its isomorphism class depends only on the $\hat G$-class of $\varphi$.
Keeping $\varphi$ fixed, recall from Subsection \ref{sub:delta2} that the $\chi$-data can only be changed to $\zeta \cdot \chi$ and then according to \cite[(2.6.3)]{LS87} we have ${^Lj}_{\zeta\chi} = {^Lj}_\chi \cdot c$, where $c$ is the element of $Z^1(W_F,\hat S)$ defined in \cite[Corollary 2.5.B]{LS87}. Thus $\varphi_{S,\zeta\chi} = \varphi_S \cdot c^{-1}$. As we already remarked, $c$ is the Langlands parameter for the character $\zeta_S$. Thus we obtain the isomorphic object $(S,\hat j,\zeta \cdot \chi,\theta_\chi \cdot \zeta_S^{-1})$.

If we replace $\varphi$ by $\varphi'=\tx{Ad}(g)\circ\varphi$ for some $g \in \hat G$ then $\hat T'=\tx{Ad}(g)\hat T$, where $\hat T'$ is the maximal torus analogous to $\hat T$ but corresponding to $\varphi'$, because $\hat T$ can be recovered from $\varphi$ as $\tx{Cent}(C,\hat M)$, with $\hat M=\tx{Cent}(\varphi(P_F),\hat G)^\circ$ and $C=\tx{Cent}(\varphi(I_F),\hat M)^\circ$. If we let $\hat S'$ be the $\Gamma$-module with abelian group $\hat T'$ and $\Gamma$-action given by $\tx{Ad}(\varphi'(-))$, then we see that $\tx{Ad}(g) : \hat S \to \hat S'$ is an isomorphism of $\Gamma$-modules. It gives rise to an isomorphism $\iota : S' \to S$ of algebraic tori. Choose minimal $\Omega(S,\hat M)^\Gamma$-invariant $\chi$-data on $R(S,G)$ and transport it via $\iota$ to $\chi$-data $\chi'$ on $R(S',G)$, which is minimal and $\Omega(S',\hat M)^\Gamma$-invariant. Use it to obtain a character $\theta_{\chi'}  : S'(F) \to \C^\times$. One then checks immediately that the isomorphism $\iota$ identifies the characters $\theta_\chi$ and $\theta_{\chi'}$.
\end{proof}

Let now $(S,\hat j,\chi,\theta)$ be a regular supercuspidal $L$-packet datum. We will define a function $\Theta : S(F)_\tx{reg} \rw \C$ as follows. Choose a non-trivial character $\Lambda : F \rw \C^\times$. For any $\alpha \in R(S,G)$ we have the character $\Lambda \circ \tx{tr}_{F_\alpha/F} : F_\alpha \rw \C^\times$; let $r_{\Lambda,\alpha}$ be its depth. On the other hand, we have the character $\theta \circ N_{F_\alpha/F} \circ \alpha^\vee : F_\alpha^\times \rw \C^\times$; let $r_{\theta,\alpha}$ be its depth. By restriction we obtain a character
\[ [F_\alpha]_{r_{\theta,\alpha}}/[F_\alpha]_{r_{\theta,\alpha}+}  \rw \C^\times, X \mapsto \theta \circ N_{F_\alpha/F} \circ \alpha^\vee(X+1). \]
Let $\bar a_\alpha \in [F_\alpha]_{(r_{\Lambda,\alpha}-r_{\theta,\alpha})}/[F_\alpha]_{(r_{\Lambda,\alpha}-r_{\theta,\alpha})+}$ be the unique element satisfying
\[ \theta \circ N_{F_\alpha/F} \circ \alpha^\vee(X+1) = \Lambda\circ\tx{tr}_{F_\alpha/F}(\bar a_\alpha X). \]
It is immediate to check that $\{(r_{\Lambda,\alpha}-r_{\theta,\alpha},\bar a_\alpha)\}$ is a set of mod-$a$-data.
Define
\begin{equation} \label{eq:charconj} \Theta(\gamma) := \epsilon_L(X^*(T)_\C - X^*(S)_\C,\Lambda)\Delta_{II}^\tx{abs}[\bar a,\chi](\gamma)\theta(\gamma).\end{equation}

\begin{lem} \label{lem:thetaindep} The function $\Theta$ depends only on the datum $(S,\hat j,\chi,\theta)$. Any isomorphism $(S,\hat j,\chi,\theta) \rw (S',\hat j',\chi',\theta')$ carries $\Theta$ over to the corresponding function $\Theta'$ on $S'(F)_\tx{reg}$.
\end{lem}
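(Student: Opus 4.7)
The lemma asserts two things: $\Theta$ does not depend on the auxiliary character $\Lambda$ used to define the mod-$a$-data $\bar a$, and every isomorphism $(\iota,g,\zeta)\colon(S,\hat j,\chi,\theta) \to (S',\hat j',\chi',\theta')$ of data transports $\Theta$ to $\Theta'$. I plan to treat these separately, and for the second to reduce to the three generating cases $(\iota,1,1)$, $(1,g,1)$, and $(1,1,\zeta)$.

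For independence from $\Lambda$, I replace $\Lambda$ by $\Lambda^c\colon x \mapsto \Lambda(cx)$ for some $c \in F^\times$ and track how each factor in \eqref{eq:charconj} transforms. The defining equation for $\bar a_\alpha$ gives immediately that the new mod-$a$-data equals $c^{-1}\bar a_\alpha$, so by Lemma~\ref{lem:d2a} the function $\Delta_{II}^\tx{abs}[\bar a,\chi](\gamma)$ is multiplied by $\prod_{\alpha \in \Gamma\lmod R(S,G)_\tx{sym}}\kappa_\alpha(c^{-1})$; the hypothesis $\alpha(\gamma)\neq 1$ in that lemma is automatic since $\gamma$ is regular. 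On the other hand, Corollary~3.5.2 of \cite{KalEpi} (invoked in the proof of Lemma~\ref{lem:dze}) expresses $\epsilon_L(X^*(T)_\C - X^*(S)_\C,\Lambda)$, up to signs independent of $\Lambda$, as a product over the same set of symmetric orbits of $\lambda_{F_\alpha/F_{\pm\alpha}}(\Lambda\circ\tx{tr}_{F_{\pm\alpha}/F})^{-1}$, and the standard transformation law $\lambda_{F_\alpha/F_{\pm\alpha}}(\Lambda^c\circ\tx{tr}) = \kappa_\alpha(c)\cdot \lambda_{F_\alpha/F_{\pm\alpha}}(\Lambda\circ\tx{tr})$ together with the $\{\pm 1\}$-valuedness of $\kappa_\alpha$ shows that $\epsilon_L$ is multiplied by $\prod_\alpha\kappa_\alpha(c)$. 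Since $\kappa_\alpha(c)\kappa_\alpha(c^{-1}) = \kappa_\alpha(c)^2 = 1$, the two changes cancel orbit by orbit.

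For the second part, the case $(\iota,1,1)$ is pure transport along the $F$-isomorphism $\iota\colon S \to S'$: this identifies $X^*(S')_\C \cong X^*(S)_\C$ as $\Gamma$-modules (so the $\epsilon_L$-factors match), identifies root systems, $\chi$-data, and mod-$a$-data via $\alpha \leftrightarrow \alpha'\circ\iota^{-1}$, and carries $\theta$ to $\theta'$, whence $\Theta'(\iota(\gamma)) = \Theta(\gamma)$. The case $(1,g,1)$ is immediate because every ingredient of $\Theta$ depends only on the $\hat G$-conjugacy class of $\hat j$ and is insensitive to inner conjugation.

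The substantive case is $(1,1,\zeta)$, in which $\chi' = \chi\zeta^{-1}$ and $\theta' = \zeta_S\theta$, with $\zeta_S$ the character of $S(F)$ associated to $(\zeta_\alpha)$ before Lemma~\ref{lem:d2c}. By that lemma, passing from $\chi$ to $\chi'$ multiplies $\Delta_{II}^\tx{abs}[\bar a,\chi](\gamma)$ by $\zeta_S(\gamma)^{-1}$, which cancels the factor $\zeta_S(\gamma)$ produced by $\theta' = \zeta_S\theta$. What remains is to show that replacing $\bar a$ by the mod-$a$-data $\bar a'$ determined by $\theta'$ leaves $\Delta_{II}^\tx{abs}[-,\chi](\gamma)$ unchanged. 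I plan to compute $\bar a'_\alpha - \bar a_\alpha$ directly from the identity
\[
\zeta_S(N_{F_\alpha/F}(\alpha^\vee(X+1))) = \Lambda(\tx{tr}_{F_\alpha/F}((\bar a'_\alpha - \bar a_\alpha)X))
\]
using the explicit recipe for $\zeta_S$: only the $\Gamma\times\{\pm 1\}$-orbit of $\alpha$ contributes non-trivially, and the contribution is expressible in terms of $\zeta_\alpha$ alone. The required condition $\zeta_\alpha|_{F_{\pm\alpha}^\times} = 1$ for symmetric $\alpha$ then forces the resulting correction to lie in the kernel of $\kappa_\alpha$, and Lemma~\ref{lem:d2a} yields triviality of the associated correction factor. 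This last verification is the main source of bookkeeping; everything else is forced by the transformation laws of the constituents of $\Theta$.
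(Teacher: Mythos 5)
Your outline tracks the paper's proof closely: the $\Lambda$-independence is handled by exactly the same cancellation (replacing $\Lambda$ by $\Lambda\cdot c$ sends $\bar a_\alpha$ to $c^{-1}\bar a_\alpha$, and the changes to $\epsilon_L$ and $\Delta_{II}^{\mathrm{abs}}$ cancel orbit by orbit via $\kappa_\alpha(c)^2=1$), and the reduction to the three generator types $(\iota,1,1)$, $(1,g,1)$, $(1,1,\zeta)$ with Lemma~\ref{lem:d2c} giving the cancellation of $\zeta_S$ between $\Delta_{II}^{\mathrm{abs}}$ and $\theta$ is the paper's argument as well. What you add, and correctly so, is the observation that the mod-$a$-data entering \eqref{eq:charconj} is \emph{computed from $\theta$}, so replacing $\theta$ by $\theta'=\zeta_S\theta$ also changes $\bar a$, and one must verify $\Delta_{II}^{\mathrm{abs}}[\bar a,\chi']=\Delta_{II}^{\mathrm{abs}}[\bar a',\chi']$. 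The paper's one-line justification glosses over this point here (though it is addressed explicitly in the parallel discussion at the end of Subsection~\ref{sub:packconst}).

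Your sketch for closing that gap, however, is not right as stated. The claim that in $\zeta_S\circ N_{F_\alpha/F}\circ\alpha^\vee(X+1)$ ``only the $\Gamma\times\{\pm 1\}$-orbit of $\alpha$ contributes'' does not follow from the construction of $\zeta_S$ as $\prod_O\zeta_O$: for $O$ the orbit of some other root $\beta$, the composition $\zeta_O\circ N_{F_\alpha/F}\circ\alpha^\vee$ is $\zeta_\beta$ composed with a nontrivial homomorphism $F_\alpha^\times\to F_\beta^\times$ (governed by Cartan integers and Galois norms) and need not vanish. Likewise, deducing $\kappa_\alpha(b_\alpha)=1$ from $\zeta_\alpha|_{F_{\pm\alpha}^\times}=1$ is not a valid step. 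The argument that actually works is the one used in Subsection~\ref{sub:packconst}: since both $\chi$ and $\chi'$ are required to be minimally ramified, $\zeta_\alpha=\chi_\alpha/\chi'_\alpha$ is trivial except possibly for symmetric ramified $\alpha$ where it is unramified quadratic; hence $\zeta_S$ is trivial on $S(F)_{0+}$. For $\alpha\notin R(S,G^0)$ the mod-$a$-data is then literally unchanged because it depends only on $\theta|_{S(F)_{0+}}$; for $\alpha\in R(S,G^0)$ both $\bar a_\alpha$ and $\bar a'_\alpha$ sit at the same (unit) level and $\chi_\alpha=\chi'_\alpha$ is unramified or trivial, so the discrepancy is killed inside $\Delta_{II}^{\mathrm{abs}}$ (equivalently, the factor $\kappa_\alpha(b_\alpha)$ in Lemma~\ref{lem:d2a} equals $1$ because $\kappa_\alpha$ is unramified and $b_\alpha$ is a unit, not because of any constraint coming from $\zeta$). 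With that substitution your proof is complete and matches the paper.
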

\begin{proof}
The character $\Lambda$ can be replaced by $\Lambda \cdot c$ for some $c \in F$, where we recall that $[\Lambda \cdot c](x)=\Lambda(cx)$. Then $\bar a_\alpha$ is replaced by $c^{-1}\bar a_\alpha$. Invoking \cite[Corollary 3.5.2]{KalEpi}, using that $\lambda_{F_\alpha/F_{\pm\alpha}}([\Lambda \cdot c] \circ \tx{tr}_{F_{\pm\alpha/F}}) = \kappa_\alpha(c)\lambda_{F_\alpha/F_{\pm\alpha}}(\Lambda \circ \tx{tr}_{F_{\pm\alpha}/F})$, and appealing to Lemma \ref{lem:d2a}, we see that \eqref{eq:charconj} is unchanged.

We now discuss the isomorphisms $(S,\hat j,\chi,\theta) \rw (S',\hat j',\chi',\theta')$. Again we can treat the three basic isomorphism types $(\iota,1,1)$, $(1,g,1)$ and $(1,1,\zeta)$ separately. For the first two the statement is trivial. For the third, we have by definition $\chi=\chi' \cdot\zeta$ and $\theta=\theta' \cdot \zeta_S^{-1}$ and the statement follows from Lemma \ref{lem:d2c}.
\end{proof}

It is clear from Proposition \ref{pro:pardata} that the isomorphism classes of regular supercuspidal $L$-packet data will correspond to $L$-packets. We now introduce another category, which we call the category of regular supercuspidal data, whose isomorphism classes of object will correspond to the individual supercuspidal representations that are to be organized into $L$-packets. The objects in this category are tuples $(S,\hat j,\chi,\theta,(G_1,\xi,z),j)$, where $(S,\hat j,\chi,\theta)$ is a regular supercuspidal $L$-packet datum, $(G',\xi,z)$ is a rigid inner twist of $G$ in the sense of \cite[\S5.1]{KalRI}, and $j : S \rw G'$ is an admissible embedding defined over $F$. A morphism $(S_1,\hat j_1,\chi_1,\theta_1,(G'_1,\xi_1,z_1),j_1) \rw (S_2,\hat j_2,\chi_2,\theta_2,(G'_2,\xi_2,z_2),j_2)$ in this category is given by $(\iota,g,\zeta,f)$, where $(\iota,g,\zeta)$ is an isomorphism of the underlying regular supercuspidal $L$-packet data, $f : (G'_1,\xi_1,z_1) \rw (G_2',\xi_2,z_2)$ is an isomorphism of rigid inner twists, and $j_2\circ \iota = f \circ j_1$. There is an obvious forgetful functor from the category of regular supercuspidal data to the category of regular supercuspidal $L$-packet data. If we fix a regular supercuspidal $L$-packet datum $(S,\hat j,\chi,\theta)$, the set of isomorphism classes of regular supercuspidal data mapping to it is a torsor under $H^1(u \rw W,Z(G) \rw S)$. This torsor is given by the relation
\[ x \cdot (G_1',\xi_1,z_1,j_1) = (G_2',\xi_2,z_2,j_2) \Leftrightarrow x = \tx{inv}(j_1,j_2), \]
see \cite[\S5.1]{KalRI}.

We will now attach to each regular supercuspidal datum $(S,\hat j,\chi,\theta,(G',\xi,z),j)$ a regular supercuspidal representation of $G'(F)$. For this we take our lead from the construction of $L$-packets of real discrete series representations \cite{Lan89}. Ideally we would like to take ``the'' regular supercuspidal representation of $G'(F)$ whose Harish-Chandra character, evaluated at shallow regular elements of $S(F)$, is given by the formula
\begin{equation} \label{eq:charwish} e(G')|D_{G'}(\gamma')|^{-\frac{1}{2}}\sum_{w \in \Omega(jS(F),G'(F))} \Theta(j^{-1}(\gamma'^w)), \end{equation}
where $\Theta$ is the function \eqref{eq:charconj}. We don't know yet quite enough about the Harish-Chandra character of regular supercuspidal representations to know whether this would specify a unique representation. However, we can achieve the same result by the following construction, which, while less elegant, has the virtue of describing explicitly the inducing datum of the representation.

From $\theta$ we construct mod-$a$-data by \eqref{eq:adatahowe} and then $\chi$-data for $R(S,G)$ by \eqref{eq:chi'}. The mod-$a$-data depends on the choice of an additive character $\Lambda : F \to \C^\times$, but the resulting $\chi$-data does not. Replace $(S,\hat j,\chi,\theta,(G',\xi,z),j)$ by an isomorphic object in which the $\chi$-data is the one just constructed. Note that if we constructed mod-$a$-data and $\chi$-data with respect to the new $\theta$, we'd obtain an equivalent result, because the difference between the new and old $\theta$ is tamely ramified, see the next paragraph. Consider the maximal torus $jS \subset G'$ and the character on it given by $j\theta' := \theta\circ j^{-1} \cdot e_{f,r} \cdot \epsilon^r$. Here $e_{f,r}$ is the character of $jS(F)$ constructed prior to the statement of Lemma \ref{lem:efr}, and $\epsilon^r$ is given by \eqref{eq:er}. The character $j\theta'$ is regular according to Facts \ref{fct:eramweyl} and \ref{fct:eframweyl}, but may fail to be extra regular due to the occurrence of $e_{f,r}$. The representation of $G'(F)$ corresponding to the regular supercuspidal datum $(S,\hat j,\chi,\theta,(G',\xi,z),j)$ is then $\pi_{(jS,j\theta')}$. It is regular, but may fail to be extra regular. However, it will be extra regular at least when the point of $\mc{B}([G']^0,F)$ associated to $jS$ is superspecial, by Lemma \ref{lem:weyl2}.

We claim that the character of this representation, evaluated at shallow regular elements of $S(F)$, is given by \eqref{eq:charwish}. This follows at once from Corollary \ref{cor:charshallow} once the following remark has been made: The mod-$a$-data and $\chi$-data occurring in that formula are computed from the character $j\theta'$, while the mod-$a$-data and $\chi$-data we used here were computed from $\theta$. For all $\alpha \in R(S,G) \sm R(S,G^0)$, the mod-$a$-data depends only on $j\theta'|_{jS(F)_{0+}}$ respectively $\theta|_{S(F)_{0+}}$. Since $e_{f,r} \cdot \epsilon^r$ has depth zero, the two restrictions are identified by $j$ and the corresponding mod-$a$-data are equal. The same is then true for the $\chi$-data, which are computed in terms of the mod-$a$-data via \eqref{eq:chi'}. For $\alpha \in R(S,G^0)$, the mod-$a$-data might be different, but they are units in both cases, and since we are taking unramified $\chi$-data in both cases, this difference is irrelevant.

From now on we assume that $p$ does not divide $|\pi_0(Z(G))|$.

We now define the compound $L$-packet $\Pi_\varphi$ to be the following set of equivalence classes of representations of rigid inner twists. Fix a regular supercuspidal $L$-packet datum $(S,\hat j,\chi,\theta)$ corresponding to $\varphi$. For each regular supercuspidal datum $(S,\hat j,\chi,\theta,(G',\xi,z),j)$ let $\pi_j$ be the representation of $G'(F)$ just constructed. Then
\begin{equation} \label{eq:lpack} \Pi_\varphi = \{(G',\xi,z,\pi_j)\}, \end{equation}
where $(S,\hat j,\chi,\theta,(G',\xi,z),j)$ runs over all regular supercuspidal data mapping to the regular supercuspidal $L$-packet datum $(S,\hat j,\chi,\theta)$. By Lemma \ref{lem:super} there exist at least one regular supercuspidal datum for which the point in $\mc{B}([G']^0,F)$ associated to $jS$ is superspecial, which shows that $\Pi_\varphi$ contains extra regular supercuspidal representations.

\subsection{Parameterization of $L$-packets} \label{sub:packpar}

As in the previous subsection, we are assuming that the residual characteristic of $F$ is odd, is not a bad prime for $G$, and does not divide $|\pi_0(Z(G))|$. We also keep the assumption that the characteristic of $F$ is zero due to our useage of \cite{KalRI}.

Let $\varphi : W_F \rw {^LG}$ be a regular supercuspidal parameter and let $(S,\hat j,\chi,\theta)$ be a corresponding regular supercuspidal $L$-packet datum.

\begin{lem} The embedding $\hat j : \hat S \to \hat G$ induces an isomorphism $\hat S^\Gamma \to S_\varphi$.
\end{lem}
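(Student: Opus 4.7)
The plan is to unwind definitions: by construction, $\hat j$ is injective with image the maximal torus $\hat T = \mathrm{Cent}(C,\hat M)$, where $\hat M = \mathrm{Cent}(\varphi(P_F),\hat G)^\circ$ and $C = \mathrm{Cent}(\varphi(I_F),\hat G)^\circ$, and the $\Gamma$-action on $\hat S$ transports across $\hat j$ to the action of $W_F$ on $\hat T$ by $\mathrm{Ad}(\varphi(-))$. So it suffices to show that, as subgroups of $\hat G$,
\[ S_\varphi = \hat T^{\mathrm{Ad}(\varphi(W_F))}. \]
The inclusion $\supset$ is immediate: any $s \in \hat T$ satisfying $\mathrm{Ad}(\varphi(w))s = s$ for all $w \in W_F$ commutes with $\varphi(w)$ inside $\hat G \rtimes W_F$ and hence lies in $S_\varphi$.

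For the harder inclusion $\subset$, let $g \in S_\varphi$. First, $g$ commutes with $\varphi(P_F)$, so $g \in \mathrm{Cent}(\varphi(P_F),\hat G)$. Since we are assuming $p \nmid |\pi_0(Z(G))|$, the second part of Lemma \ref{lem:levisystem2}(1) gives $\mathrm{Cent}(\varphi(P_F),\hat G) = \hat M$, so $g \in \hat M$. Second, $g$ centralizes $\varphi(I_F)$, hence normalizes $C = \mathrm{Cent}(\varphi(I_F),\hat M)^\circ$, and therefore normalizes $\hat T = \mathrm{Cent}(C,\hat M)$. Let $w \in \Omega(\hat T,\hat M) = \Omega(\hat S,\hat M)$ denote the image of $g$ under $N_{\hat M}(\hat T) \twoheadrightarrow \Omega(\hat T,\hat M)$.

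Now the key point is that $w$ is $\Gamma$-invariant: since $g$ commutes with every $\varphi(\sigma)$ (for $\sigma \in W_F$), its image $w$ is fixed by the action of $W_F$ on $\Omega(\hat T,\hat M)$ via $\mathrm{Ad}(\varphi(-))$, which by definition is the $\Gamma$-action on $\Omega(\hat S,\hat M)$. Thus $g$ is a lift of an element $w \in \Omega(\hat S,\hat M)^\Gamma$ sitting inside $\mathrm{Cent}(\varphi(I_F),\hat G)$. Condition 3 of Definition \ref{def:rsp} applied contrapositively forces $w = 1$, i.e.\ $g \in \hat T$. Combined with the fact that $g$ commutes with $\varphi(W_F)$, this gives $g \in \hat T^{\mathrm{Ad}(\varphi(W_F))}$, as desired.

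The only potentially delicate point is the use of condition 3: one must verify that $g$ genuinely produces an element of $N(\hat T,\hat M)$ rather than merely normalizing $\hat T$ from within $\hat G$, but this is handled by step one, where connectedness of $\mathrm{Cent}(\varphi(P_F),\hat G)$ places $g$ inside $\hat M$. Everything else is bookkeeping between the two equivalent descriptions of the $\Gamma$-action on $\hat S$.
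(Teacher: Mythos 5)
Your proof is correct and follows the same route as the paper: use that $s \in S_\varphi$ lands in $\hat M$ via connectedness of $\mathrm{Cent}(\varphi(P_F),\hat G)$ (using $p \nmid |\pi_0(Z(G))|$, i.e.\ Lemma \ref{lem:levisystem2}(1)), show $s$ normalizes $C$ and hence $\hat T$, observe its image in $\Omega(\hat S,\hat M)$ is $\Gamma$-fixed, and invoke condition 3 of Definition \ref{def:rsp} to conclude the image is trivial. The paper is a bit terser about the final bookkeeping, phrasing the reduction as $\hat j(\hat S^\Gamma)=S_\varphi\cap\hat T$ via the factorization ${}^Lj_\chi\circ\varphi_{S,\chi}=\varphi$, but the substance is identical.
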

\begin{proof}
Recall that $\hat j$ maps $\hat S$ to a maximal torus of $\hat G$ normalized by $\varphi$ and that we have the equation $^Lj_\chi\circ\varphi_{S,\chi} = \varphi$. From this it is immediate that $\hat j(\hat S^\Gamma) = S_\varphi \cap \hat T$. It is thus enough to show $S_\varphi \subset \hat T$. Let $s \in S_\varphi$. Then $s \in \tx{Cent}(\varphi(P_F),\hat G)=\hat M$. Furthermore, $s \in \tx{Cent}(\varphi(I_F),\hat G)$ and thus normalizes $C=\tx{Cent}(\varphi(I_F),\hat G)^\circ$ and then also $\hat T=\tx{Cent}(C,\hat M)$. It follows that $s \in N(\hat T,\hat M)$. Since the projection of $s$ to $\Omega(\hat S,\hat M)$ is $\Gamma$-fixed we conclude that it must be trivial, i.e. $s \in \hat T$.
\end{proof}

For any finite subgroup $Z \subset Z(G)$ this isomorphism extends to an isomorphism $[\hat{\bar S}]^+ \rw S_\varphi^+$, and via \cite[Proposition 5.3]{KalRIBG} to an isomorphism $H^1(u \rw W,Z \rw S) \rw \pi_0(S_\varphi^+)^D$.

The constituents of $\Pi_\varphi$ are in canonical bijection with the set of isomorphism classes of regular supercuspidal data that map to the isomorphism class of $(S,\hat j,\chi,\theta)$ under the forgetful functor. We have already argued that this set is a torsor under $H^1(u \rw W,Z \rw S)$. In this way, we obtain a canonical simply transitive action of $\pi_0(S_\varphi^+)^D$ on $\Pi_\varphi$.

In order to obtain a bijection $\Pi_\varphi \to \pi_0(S_\varphi^+)^D$ from this simply transitive action, we need to fix a base point. Fix a Whittaker datum $\mf{w}$ for $G$. According to the strong tempered $L$-packet conjecture there should exist a unique constituent of $\Pi_\varphi$ that is $\mf{w}$-generic. At the moment we can prove this conjecture only in the case of toral representations, see Lemma \ref{lem:generic}. The same argument should go through without much modification once the character formula for regular supercuspidal representations, which is currently being developed in the work of Spice and others, is known. Granted this result, let $j_\mf{w}$ be the admissible embedding $S \rw G$ so that $\pi_{j_\mf{w}}$ is the generic constituent. Then we obtain the perfect pairing
\[ \<-,-\>_\mf{w} : \Pi_\varphi \times \pi_0(S_\varphi^+) \rw \C \]
by
\[ \<(G',\xi,z,\pi_j),s\>_\mf{w} = \<\tx{inv}(j_\mf{w},j),s\>, \]
where on the right the pairing comes from the isomorphism $H^1(u \rw W,Z \rw S) \rw \pi_0(S_\varphi^+)^D$. Then the map $s \mapsto \<(G',\xi,z,\pi_j),s\>_\mf{w}$ is a character of $\pi_0(S_\varphi^+)$, while the map $(G',\xi,z,\pi_j) \mapsto  \<(G',\xi,z,\pi_j),-\>_\mf{w}$ is a bijection identifying $\Pi_\varphi$ with $\pi_0(S_\varphi^+)^D$.

If $\mf{w'}$ is another Whittaker datum, then we have
\begin{equation} \label{eq:wc} \<(G',\xi,z,\pi_j),s\>_\mf{w'} = \<(G',\xi,z,\pi_j),s\>_\mf{w} \cdot \<\tx{inv}(j_\mf{w'},j_\mf{w}),s\>. \end{equation}

\subsection{Comparison with the case of real groups} \label{sub:realllc}

Continuing the theme of Subsection \ref{sub:realchar} we will now show that the construction of the regular supercuspidal part of the local Langlands correspondence given in Subsections \ref{sub:packconst} and \ref{sub:packpar} is a direct generalization of Langlands' construction \cite{Lan83} of real discrete series $L$-packets and Shelstad's \cite{She82}, \cite{SheTE2}, \cite{SheTE3} parameterization of these.

In this subsection only, let $G$ be a connected reductive group defined and quasi-split over $\R$ and let $\varphi : W_\R \rw {^LG}$ be a discrete Langlands parameter. We briefly recall the construction of the correspondence, following the exposition in \cite[\S5.6]{KalRI}. One chooses a Borel pair $(\hat T,\hat B)$ in $\hat G$ and modifies $\varphi$ in its conjugacy class so that $\varphi(\C^\times) \subset \hat T$. Write $\varphi(z)=z^\mu \bar z^\nu$, with $\mu,\nu \in X_*(\hat T)_\C$, $\mu-\nu \in X_*(\hat T)$. One shows that the image of $\mu$ in $X_*(\hat T_\tx{ad})_\C$ is integral, i.e. belongs to $X_*(\hat T_\tx{ad})$, and moreover regular \cite[Proof of Lemma 3.3]{Lan83}. One then modifies $\varphi$ again within its conjugacy class so that this image is $\hat B$-dominant. The parameter $\varphi$ is now pinned down within its $\hat G$-conjugacy class up to conjugation by $\hat T$. The action of $W_\R$ on $\hat T$ via $\tx{Ad}(\varphi(w))$ factors through $\Gamma_\R$ and gives a twist $\hat S$ of the $\Gamma$-structure on $\hat T$. The real torus $S$ dual to $\hat S$ comes equipped with a stable class of embeddings $S \rw G'$ (note that the images of any two such embeddings are conjugate under $G(\R)$, but the embeddings themselves need not be) into any inner form $G'$ of $G$ (this follows from \cite[Corollary 2.2]{Kot82} in the case of $G$ and from \cite[Lemma 2.8]{She79C} in general). By construction there is a distinguished Weyl-chamber in $X^*(S)$. Using based $\chi$-data \cite[\S9]{SheTE1} for $R(S,G)$ with respect to that chamber, we obtain an $L$-embedding ${^Lj} : {^LS} \rw {^LG}$ whose image contains the image of $\varphi$. We write $\varphi = {^Lj} \circ \varphi_S$, for $\varphi_S : W_\R \rw {^LS}$. The local Langlands correspondence for $S$ produces from $\varphi_S$ a character $\theta : S(\R) \rw \C^\times$. For any embedding $j : S \rw G$ we let $\pi_j$ be the unique discrete series representation of $G(\R)$ whose character evaluates at a strongly regular element $\gamma \in jS(\R)$ to the function \eqref{eq:chards}, where we are to replace $S$ and $\theta$ in this formula with $jS$ and $\theta\circ j^{-1}$. The $L$-packet on any inner form $G'$ of $G$ is defined to be the set $\{\pi_j\}$ where $j$ runs over the rational classes of embeddings $j :S \rw G'$ in the given stable class.

Fixing a Whittaker datum $\mf{w}$, there is a unique embedding $j_\mf{w} : S \rw G$ such that the corresponding representation $\pi_{j_\mf{w}}$ is $\mf{w}$-generic \cite{Kos78}, \cite{Vog78}. For a canonical internal parameterization of the $L$-packets we use rigid inner twists. Fix a finite subgroup $Z \subset G$, a rigid inner twist $(G',\xi,z)$  realized by $Z$, and an admissible rational embedding $j  : S \rw G'$. Given $s \in S_\varphi^+=[\hat{\bar S}]^+$, we define
\[ \<(G',\xi,z,\pi_j,s\>_\mf{w} = \<\tx{inv}(j_\mf{w},j),s\>, \]
where the pairing on the right is the one from \cite[Corollary 5.4]{KalRI}.

This exposition makes the direct analogy with the constructions of Subsections \ref{sub:packconst} and \ref{sub:packpar} almost obvious. In fact, the exposition here is already slightly different from the one presented in \cite[\S5.6]{KalRI} in that it uses $L$-embeddings and factorization of parameters, where in \cite[\S5.6]{KalRI} we kept more closely to the original construction in \cite{Lan83}. That the two presentations are equivalent is explained in \cite[\S7b]{SheTE2}. With Subsection \ref{sub:realchar} in mind, the only point where the construction of regular supercuspidal $L$-packets may seem to differ from that of real discrete series $L$-packets is  that in the real case one chooses a specific parameter within its $\hat G$-conjugacy class based on a pinning of $\hat G$ and the notion of dominance. This choice is important because the $L$-embedding $^Lj$ is constructed from based $\chi$-data with respect to the same Weyl chamber. But if we use the argument of Subsection \ref{sub:realchar} to rewrite the real discrete series character formula \ref{eq:chards} as \eqref{eq:charshallow}, then Lemma \ref{lem:thetaindep} tells us that we can use arbitrary $\chi$-data, at which point the $\hat B$-dominance of $\mu$ becomes irrelevant.

\section{Toral $L$-packets} \label{sec:toral}

In this section we will consider the special case of those regular supercuspidal $L$-packets whose constituents are toral supercuspidal representations. These are the representations arising from Yu-data of the form $(S \subset G,1,(\phi_0,1))$, where $\phi_0 : S(F) \to \C^\times$ is a $G$-generic character of positive depth. These representations were constructed by Adler in the paper \cite{Ad98}, which, as far as we know, was the first construction of supercuspidal representations for general reductive $p$-adic groups, and whose approach formed the basis of Yu's more general construction.

The class of toral supercuspidal representations is general enough to include the epipelagic representations \cite{RY14} when $p$ does not divide the order of the inertial action, and the representations considered by Reeder \cite{Ree08}. It is at the same time special enough so that the construction of $L$-packets simplifies considerably. The biggest advantage of this class of representations is that, from the current standpoint, they are the only ones of the regular supercuspidal representations for which the full character formula is known for all members of the $L$-packet. This will allow us to sharpen and extend our results -- we will prove the existence and uniqueness of a generic constituent in each toral $L$-packet, as well as the stability and endoscopic transfer of these packets.

\subsection{Construction and exhaustion} \label{sub:toralpackconst}
In this subsection we assume that the residual characteristic of $F$ is odd, not a bad prime for $G$, and not a divisor of $|\pi_0(Z(G))|$. We further assume that the characteristic of $F$ is zero due to our use of \cite{KalRI}, but as we already mentioned this assumption is likely unnecessary.

\begin{dfn} \label{def:tp} A toral supercuspidal parameter of generic depth $r>0$ is a discrete Langlands parameter $\varphi : W_F \rw {^LG}$ satisfying the following conditions.
\begin{enumerate}
	\item $\tx{Cent}(\varphi(I^r),\hat G)$ is a maximal torus and contains $\varphi(P_F)$;
	\item $\varphi(I^{r+})$ is trivial.
\end{enumerate}
\end{dfn}
Since $\tx{Cent}(\varphi(I),\hat G) \subset \tx{Cent}(\varphi(I^r),\hat G)$, the toral supercuspidal parameters are a special case of the strongly regular supercuspidal parameters of Definition \ref{def:srsp} and hence their $L$-packets have already been constructed in Subsection \ref{sub:packconst}. However, since the construction in this special case is considerably simpler, we shall examine it in detail, with the hope that it will be more useful to the readers who are only interested in this special case, and will also serve as an introduction to the more general construction.

The first step is to give the corresponding subcategory of the category of regular supercuspidal $L$-packet data. We will call it the category of toral $L$-packet data of generic depth $r$. A regular supercuspidal $L$-packet datum $(S,\hat j,\chi,\theta)$ will belong to this subcategory precisely when $\theta$ is a $G$-generic character of depth $r$.

\begin{pro} \label{pro:torpardata} The construction of Proposition \ref{pro:pardata} restricts to a bijection between the $\hat G$-conjugacy classes of toral supercuspidal parameters of generic depth $r$ and the isomorphism classes of toral $L$-packet data of generic depth $r$.
\end{pro}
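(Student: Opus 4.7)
The plan is to verify that under the bijection of Proposition \ref{pro:pardata}, the subcategory of toral $L$-packet data of generic depth $r$ corresponds precisely to the $\hat G$-conjugacy classes of toral supercuspidal parameters of generic depth $r$. Since Proposition \ref{pro:pardata} is already established, it suffices to check that the defining conditions match on both sides of the correspondence.

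First, I would unwind what ``toral'' means on the datum side. A regular supercuspidal $L$-packet datum $(S,\hat j,\chi,\theta)$ is toral of generic depth $r$ when $\theta$ is $G$-generic of depth $r$, i.e. the twisted Levi sequence of the Howe factorization is simply $S=G^0 \subset G^1=G$. By Lemma \ref{lem:genchar} and the construction of Subsection \ref{sub:howe}, this is equivalent to the following two conditions: $\theta$ has depth exactly $r$ (so $\theta|_{S(F)_{r+}}=1$ but $\theta|_{S(F)_r}\neq 1$), and for every $\alpha \in R(S,G)$ we have $\theta\bigl(N_{E/F}(\alpha^\vee(E_r^\times))\bigr) \neq 1$. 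Note that the latter condition automatically forces $R_{0+}=\emptyset$, i.e. $G^0=S$, since $E_r^\times \subset E_{0+}^\times$.

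Second, I would translate these two conditions across the bijection using the Langlands correspondence for tori. The parameter of the character $\theta\circ N_{E/F}\circ \alpha^\vee : E^\times \to \C^\times$ is $\hat\alpha\circ \varphi|_{W_E}$, so by \cite[Theorem 7.10]{Yu09} the non-triviality on $E_r^\times$ corresponds to the non-triviality of $\hat\alpha$ on $\varphi(I_E^r)=\varphi(I^r)$. Thus the genericity condition is equivalent to $\hat\alpha(\varphi(I^r))\neq 1$ for every $\hat\alpha \in R(\hat T,\hat G)$, where $\hat T = \hat j(\hat S)$. By the usual Levi-subgroup argument of Lemma \ref{lem:levisystem2} (iterating \cite[Proposition A.7]{AS08}, and using our standing hypothesis that $p\nmid|\pi_0(Z(G))|$ to guarantee connectedness), this is in turn equivalent to $\tx{Cent}(\varphi(I^r),\hat G) = \hat T$ being a maximal torus. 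Similarly, by another application of \cite[Theorem 7.10]{Yu09}, $\theta|_{S(F)_{r+}}=1$ is equivalent to $\varphi|_{I^{r+}}=1$, while $\theta|_{S(F)_r}\neq 1$ then follows from (indeed, is implied by) the genericity condition since the latter forces $\varphi(I^r)\neq 1$.

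Third, I would verify that the auxiliary compatibility conditions line up. For the ``toral $\Rightarrow$ regular supercuspidal parameter'' direction, note that $I^r \subset P_F$ (upper numbering) gives $\varphi(I^r)\subset \varphi(P_F)$, and since $\varphi(P_F)\subset \hat T$ by hypothesis of Definition \ref{def:tp}, $\varphi(P_F)$ lies in a torus. The connected centralizer $\tx{Cent}(\varphi(I_F),\hat G)^\circ$ is contained in $\tx{Cent}(\varphi(I^r),\hat G)=\hat T$, hence is a torus, and $\hat M=\tx{Cent}(\varphi(P_F),\hat G)^\circ$ is squeezed between $\hat T$ (since $\varphi(P_F)\subset \hat T$) and $\tx{Cent}(\varphi(I^r),\hat G)=\hat T$, so $\hat M=\hat T$. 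Consequently $N(\hat T,\hat M)=\hat T$, making the third condition of Definition \ref{def:rsp} vacuous, so the parameter is regular supercuspidal. Conversely, in the ``regular with $\theta$ toral generic $\Rightarrow$ toral parameter'' direction, the proof of Proposition \ref{pro:pardata} shows $R(\hat S,\hat M)$ is identified with $R_{0+}^\vee$, which is empty by $G$-genericity, so $\hat M=\hat T$; combined with the equivalences above, $\tx{Cent}(\varphi(I^r),\hat G)=\hat T$ is a maximal torus containing $\varphi(P_F)$, and $\varphi(I^{r+})=1$.

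The main obstacle is the careful bookkeeping of how the finite set of conditions on each side matches up through the chain of equivalences, particularly ensuring that the depth condition $\varphi(I^{r+})=1$ together with the centralizer condition $\tx{Cent}(\varphi(I^r),\hat G)=\hat T$ captures precisely the notion of $G$-genericity of depth $r$ (not just depth $\leq r$ plus some weaker condition); this is precisely what the above translation via \cite[Theorem 7.10]{Yu09} accomplishes.
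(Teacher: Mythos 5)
Your proof is correct and takes essentially the same approach as the paper: both translate the depth and genericity conditions across the Langlands correspondence for tori via \cite[Theorem 7.10]{Yu09}. One small imprecision worth fixing: the parameter of $\theta\circ N_{E/F}\circ\alpha^\vee$ is $\hat\alpha\circ\varphi_S|_{W_E}$ (with $\varphi_S$ the parameter of $\theta$), not $\hat\alpha\circ\varphi|_{W_E}$; to replace $\varphi_S$ by $\varphi$ in the restriction to $I^r$ one needs the tameness of the $\chi$-data, which makes ${^Lj_\chi}$ trivial on $P_F$, and the paper invokes this explicitly. Also note the paper extracts the maximal-torus condition $\tx{Cent}(\varphi(I^r),\hat G)=\hat T$ from the trivial Weyl-stabilizer condition on $\theta|_{S(E)_r}$ (part of the genericity characterization in [KalEpi, Lemma 2.2.1]), whereas you derive it from the connectedness of centralizers of $p$-elements supplied by the standing assumption $p\nmid|\pi_0(Z(G))|$; both routes are valid.
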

\begin{proof}
We fix a $\Gamma$-stable pinning $(\hat T,\hat B,\{X_{\hat\alpha}\})$ of $\hat G$. Let $\varphi : {W_F} \rw {^LG}$ be a toral supercuspidal Langlands parameter of generic depth $r$. We conjugate $\varphi$ so that $\tx{Cent}(\varphi(I^r),\hat G)=\hat T$. The composition
\[ W_F \stackrel{\varphi}{\lrw} N(\hat T,\hat G) \rtimes W_F \rw \Omega(\hat T,\hat G) \rtimes W_F \rw \tx{Aut}_\tx{alg}(\hat T) \]
factors through a finite quotient of $W_F$ and endows $\hat T$ with a new $\Gamma$-module structure, which we will record by using the name $\hat S$. The assumption that $\varphi(P_F) \subset \hat T$ ensures that $P_F$ acts trivially on $\hat S$. The $\Gamma$-module $\hat S$ is the complex dual torus to a torus $S$ defined over $F$. Let $\hat j : \hat S \rw \hat G$ be the embedding coming from the equality $\hat S = \hat T$ of complex tori. The $\hat G$-conjugacy class of the embedding $\hat j$ is $\Gamma$-stable and we obtain a $\Gamma$-stable $G$-conjugacy class of embeddings $S \rw G$ as in Subsection \ref{sub:recg}, which we will call admissible. Choose minimal tame $\chi$-data for $R(S,G)$. Via the construction of \cite[\S2.6]{LS87} it gives a $\hat G$-conjugacy class of $L$-embeddings ${^LS} \rw {^LG}$ extending the $\hat G$-conjugacy class of $\hat j$. We choose one particular $L$-embedding $^Lj_\chi$ within this conjugacy class whose restriction to $\hat S$ is equal to $\hat j$. By definition, the projections of $^Lj_\chi(1 \rtimes w)$ and $\varphi(w)$ in $\Omega(\hat T,\hat G) \rtimes W_F$ are equal for any $w \in W_F$. This implies that the image of $\varphi$ is contained in the image of $^Lj_\chi$, which in turn leads to a factorization
\[ \varphi = {^Lj_\chi} \circ \varphi_{S,\chi}, \]
for some Langlands parameter $\varphi_{S,\chi} : W_F \rw {^LS}$. Let $\theta_\chi : S(F) \rw \C^\times$ be the corresponding character. Since any $L$-embedding that is $\hat G$-conjugate to $^Lj_\chi$ and also restricts to $\hat j$ must be conjugate to $^Lj_\chi$ by an element of $\hat T$, the $\hat S$-conjugacy class of $\varphi_{S,\chi}$, and hence the character $\theta_\chi$, are independent of the choice of $^Lj_\chi$. They depend only on the choice of $\chi$.

We claim that $\theta_\chi$ is generic of depth $r$. Let $E/F$ be the splitting field of $S$. By \cite[Lemma 2.2.1]{KalEpi} we need to check that for each root $\alpha \in R(S,G)$ the character $E_r^\times/E_{r+}^\times \to \C^\times$ given by $\theta_\chi\circ N_{E/F}\circ\alpha^\vee$ is non-trivial and that the stabilizer of $\theta_\chi\circ N_{E/F}|_{S(E)_r}$ in $\Omega(S,G)$ is trivial. For the first point, the parameter of $\theta_\chi\circ N_{E/F} \circ \alpha^\vee$ is the homomorphism $\hat\alpha\circ\varphi_S|_{W_E}$. By \cite[Theorem 7.10]{Yu09} the character restricts non-trivially to $E_r^\times$ if and only if its parameter restricts non-trivially to $I_E^r=I_F^r$. But the restriction of $\hat\alpha\circ\varphi_S$ to $I_F^r$ is equal to the restriction of $\hat\alpha\circ\varphi$, by the tameness of $\chi$-data, and the latter is non-trivial, due to $\tx{Cent}(\varphi(I^r),\hat G)=\hat T$. The second point follows from the same reasoning -- the stabilizer in $\Omega(S,G)$ of $\theta_\chi\circ N_{E/F}|_{S(E)_r}$ is equal to the stabilizer of $\varphi|_{I^r}$, which is trivial by assumption.

The object $(S,\hat j,\chi,\theta_\chi)$ we thus obtain belongs to the category of toral $L$-packet data of generic depth $r$. The proof that its isomorphism class depends only on the $\hat G$-conjugacy class of $\varphi$ is exactly the same as in Proposition \ref{pro:pardata}.

We now give the converse construction. Given a toral $L$-packet datum $(S,\hat j,\chi,\theta)$ of generic depth $r$ we use the $\chi$-data to extend $\hat j$ to an $L$-embedding $^Lj : {^LS} \rw {^LG}$ and let $\varphi_S : W_F \rw {^LS}$ be the parameter for $\theta$. Define $\varphi = {^Lj} \circ \varphi_S$. We claim that $\varphi$ satisfies the conditions of Definition \ref{def:tp}. Since $P_F$ acts trivially on $\hat S$, we can regard $\varphi_S|_{P_F}$ as a homomorphism $P_F \rw \hat S$. We use again \cite[Lemma 2.2.1]{KalEpi} and see that the genericity of $\theta$ implies that the restriction $\varphi_S|_{I^{r+}}$ is trivial; the centralizer of $\varphi_S|_{I^r}$ in $\Omega(\hat S,\hat G)$ is trivial; and for each $\hat\alpha \in R(\hat S,\hat G)$ the composition $\hat\alpha \circ \varphi_S|_{I^r}$ is non-trivial. The tameness of the $\chi$-data and of $G$ implies that we can replace $\varphi_S$ with $\varphi$ in these statements, from which we obtain that the homomorphism $\hat j \circ \varphi|_{P_F}$ is trivial on $I^{r+}$ and $\tx{Cent}(\hat j \circ \varphi(I^r),\hat G)=\hat j(\hat S)$.

Thus $\varphi$ is a toral supercuspidal parameter of generic depth $r$. The proof that its $\hat G$-conjugacy class depends only on the isomorphism class of $(S,\hat j,\chi,\theta)$ is again the same as for Proposition \ref{pro:pardata}.
\end{proof}

We define the category of toral supercuspidal data of generic depth $r$ as a subcategory of the category of regular supercuspidal data in the same way: A regular supercuspidal datum $(S,\hat j,\chi,\theta,(G_1,\xi,z),j)$ belongs to the subcategory precisely when $\theta$ is $G$-generic of depth $r$. To any such datum the representation associated in Subsection \ref{sub:packconst} is a toral representation of generic depth $r$. Indeed, it is by construction the regular supercuspidal representation $\pi_{(jS,j\theta')}$ of $G'(F)$, where we recall that $j\theta' := \theta\circ j^{-1} \cdot e_{f,r} \cdot \epsilon^r$. Since both $e_{f,r}$ and $\epsilon^r$ are of depth zero, $j\theta'$ is still $G$-generic of depth $r$. In the Howe factorization algorithm of Subsection \ref{sub:howe} this is the second ``trivial'' case, i.e. in the notation of that subsection we have $d=1$, $r_1=r_0>r_{-1}=0$, $S=G^0 \subset G^1 = G$, so we obtain the twisted Levi sequence $S \subset G$ and the Howe factorization $(1,\theta,1)$. The resulting Yu-datum then reduces to an Adler datum.

We conclude that the compound $L$-packet $\Pi_\varphi$ consists of toral supercuspidal representations of depth $r$ and moreover every such representation is contained in one of these $L$-packets.

The internal parameterization of $\Pi_\varphi$ is as described in Subsection \ref{sub:packpar}, but with the added precision that we are now in the position to prove the existence and uniqueness of a generic constituent. This will be done in the next subsection.

\subsection{Characters and genericity} \label{sub:chargen}
We keep the assumptions on $F$ from the previous subsection.

In this and the following subsections we will use the character formula for toral supercuspidal representations of Subsection \ref{sub:toral}. For this, fix a character $\Lambda : F \to \C^\times$ of depth zero. We will use the following short-hand notation: $\epsilon(T_G-T_J)=\epsilon_L(X^*(T_G)_\C-X^*(T_J)_\C,\Lambda)$, $\gamma^j=j^{-1}(\gamma)$, and $^jX^*=dj(X^*)$.

\begin{lem} \label{lem:char}
Let $(S,\hat j,\chi,\theta,(G',\xi,z),j)$ be a toral supercuspidal datum of generic depth $r$ and let $\pi$ be the corresponding representation of $G'(F)$. The character of $\pi$ at a regular semi-simple element $\gamma' = \gamma_{<r}' \cdot \gamma_{\geq r}' \in G'(F)$ is given by
\[
\frac{e(G')}{e(J')}\frac{\epsilon_L(T_G-T_J)}{|D_{G'}(\gamma')|^\frac{1}{2}}\sum_{\substack{g \in J'(F) \lmod G'(F) / jS(F)\\ \gamma_{<r}'^g \in jS(F)}}\Delta_{II}^\tx{abs}[a,\chi'](\gamma_{<r}'^{gj}) \theta(\gamma_{<r}'^{gj}) \hat\iota_{\mf{j'},^{jg}X^*}(\log(\gamma_{\geq r}')),
\]
where $J'=\tx{Cent}(\gamma_{<r}',G')^\circ$, and $T_G$ and $T_J$ are the minimal Levi subgroups in the quasi-split inner forms of $G'$ and $J'$.
\end{lem}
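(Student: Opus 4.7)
The plan is to reduce this to Corollary \ref{cor:newchartoral}, by unpacking the construction of $\pi$ in Subsection \ref{sub:toralpackconst} and exploiting the fact that the two auxiliary sign characters appearing in the general formula cancel in the toral case.

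First I would recall that by the construction, $\pi$ is the toral supercuspidal representation $\pi_{(jS,j\theta')}$ of $G'(F)$, where $j\theta' = \theta\circ j^{-1}\cdot\epsilon_{f,r}\cdot\epsilon^r$, arising via Yu's construction from the Adler-datum $(jS \subset G',1,(j\theta',1))$. The twisted Levi sequence collapses to $jS \subset G'$, so $(G')^{d-1} = jS$ is an elliptic maximal torus, and hence $(G')^{d-1}/Z(G')$ is anisotropic. Therefore the Adler-DeBacker-Spice formula of Corollary \ref{cor:newchartoral} applies to $\pi$ without any further hypotheses on $\gamma'$, yielding an expression for $\Phi_\pi(\gamma')$ involving the $\chi$-data $\chi''$ and $a$-data $a''$ that are constructed from the inducing character $j\theta'$ via \eqref{eq:adatahowe} and \eqref{eq:chi'}.

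Next I would verify that these $a$- and $\chi$-data match the ones in the statement of the lemma. Because both $\epsilon_{f,r}$ and $\epsilon^r$ are characters of $jS(F)$ of depth zero (Lemma \ref{lem:efr} for the first, direct inspection of \eqref{eq:er} for the second) and the defining equation \eqref{eq:adatahowe} only depends on the restriction of the character to $jS(F)_r$ with $r>0$, the mod-$a$-data computed from $j\theta'$ coincides with the one computed from $\theta\circ j^{-1}$, and this latter is precisely the $a$-data associated to $(S,\theta)$ transported by $j$. The same is then true for $\chi'$ via \eqref{eq:chi'}. With this identification in place, Corollary \ref{cor:newchartoral} gives
\[
\Phi_\pi(\gamma') = e(G')e(J')\epsilon_L(T_G-T_J)\!\!\!\!\sum_{\substack{g\in J'(F)\backslash G'(F)/jS(F)\\ \gamma_{<r}'^g\in jS(F)}}\!\!\!\!\Delta_{II}^{\tx{abs}}[a,\chi'](\gamma_{<r}'^g)\epsilon_{f,r}(\gamma_{<r}'^g)\epsilon^r(\gamma_{<r}'^g) j\theta'(\gamma_{<r}'^g)\hat\iota_{\mf{j}',{^{jg}X^*}}(\log(\gamma_{\geq r}')).
\]

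Then I would substitute $j\theta'(\gamma_{<r}'^g)=\theta(\gamma_{<r}'^{gj})\cdot\epsilon_{f,r}(\gamma_{<r}'^g)\cdot\epsilon^r(\gamma_{<r}'^g)$ into this expression. Since both $\epsilon_{f,r}$ and $\epsilon^r$ take values in $\{\pm 1\}$, their squares are identically $1$, and the two pairs of copies in the summand cancel out, leaving only $\theta(\gamma_{<r}'^{gj})$. Finally, rewriting $e(G')e(J')$ as $e(G')/e(J')$ (since $e(J')^2=1$) and dividing by $|D_{G'}(\gamma')|^{1/2}$ to pass from the normalized character $\Phi_\pi$ to the un-normalized character $\Theta_\pi$ yields the stated formula.

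The main obstacle, such as it is, is the bookkeeping verification in the second paragraph that the $a$- and $\chi$-data arising naturally from Corollary \ref{cor:newchartoral} (built from the inducing character $j\theta'$) coincide with those in the lemma statement (built from $\theta$); this relies on the depth-zero nature of the correcting signs $\epsilon_{f,r}$ and $\epsilon^r$ versus the strict positivity of $r$. Everything else is a direct substitution.
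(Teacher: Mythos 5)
Your proposal is correct and takes the same route as the paper: the paper's own proof is literally the one-liner ``This follows directly from Corollary~\ref{cor:newchartoral}'', and what you have written is exactly the unwinding that one-liner elides — identifying $\pi$ with $\pi_{(jS,j\theta')}$, checking that the $a$- and $\chi$-data built from $j\theta'$ agree with those built from $\theta$ because the correcting factors $\epsilon_{f,r}$ and $\epsilon^r$ are of depth zero while \eqref{eq:adatahowe} only sees $S(F)_r$ with $r>0$, and then observing that substituting $j\theta'=\theta\circ j^{-1}\cdot\epsilon_{f,r}\cdot\epsilon^r$ into the summand of Corollary~\ref{cor:newchartoral} squares the two sign characters to $1$.
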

\begin{proof} This follows directly from Corollary \ref{cor:newchartoral}.
\end{proof}

Let $(T,B,\{X_\alpha\})$ be an $F$-pinning of $G$. Together with the character $\Lambda$, it determines a Whittaker datum $\mf{w}$ for $G$.

\begin{lem} \label{lem:generic} Let $(S,\hat j,\chi,\theta)$ be a toral $L$-packet datum of generic depth $r$. There exists a unique (up to $G(F)$-conjugacy) admissible rational embedding $j_\mf{w} : S \rw G$ such that the representation corresponding to $(S,\hat j,\chi,\theta,(G,\tx{id},1),j_\mf{w})$ is $\mf{w}$-generic. Moreover, the splitting invariant \cite[\S2.3]{LS87} for the torus $j_\mf{w}S \subset G$ relative to $(T,B,\{X_\alpha\})$ and the mod-$a$-data constructed in \eqref{eq:adata} is trivial.
\end{lem}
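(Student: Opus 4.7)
The strategy follows the approach of \cite[\S6.1--6.3]{KalEpi}, which handled the epipelagic case, and extends it to arbitrary positive generic depth $r$ using the full toral character formula of Lemma~\ref{lem:char}. The first ingredient is the Langlands--Shelstad parameterization: for the fixed pinning $(T,B,\{X_\alpha\})$ and the mod-$a$-data of \eqref{eq:adata}, the splitting invariant of \cite[\S2.3]{LS87} assigns to each admissible rational embedding $j:S\to G$ a class $\lambda(j)\in H^1(\Gamma, S_\tx{sc})$, and two admissible embeddings are $G(F)$-conjugate iff they have the same splitting invariant. It is moreover standard (compare \cite[Lemma 6.2]{KalEpi}) that every class in the image of $H^1(\Gamma,S_\tx{sc})\to H^1(\Gamma,S)$ parametrizing the stable class is hit, so in particular the trivial class is achieved by a unique conjugacy class of embeddings, which we will call $j_\mf{w}$.

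Next I would reduce genericity of $\pi_j$ to a character computation. By the Harish-Chandra--Howe local expansion and Rodier's germ criterion (in the form used in \cite[\S6]{KalEpi}), $\pi_j$ is $\mf{w}$-generic precisely when the Shalika germ coefficient of $\Theta_{\pi_j}$ corresponding to the regular nilpotent orbit $\mc{O}_\mf{w}$ associated to $\mf{w}$ is nonzero. Applying Lemma~\ref{lem:char} to regular topologically unipotent $\gamma'=\gamma'_{\geq r}$ close to the identity, the sum collapses to a single term: $\gamma'_{<r}=1$, so $J'=G$, and the character is
\[
e(G)\,\epsilon_L(T_G-T_S)\,|D_G(\gamma')|^{-1/2}\sum_{w\in \Omega(jS,G)(F)}\Delta_{II}^\tx{abs}[a,\chi'](1)\,\theta(1)\,\hat\iota_{\mf{g},\,^jX^*}(\log\gamma'),
\]
which, up to a nonzero constant depending only on $(S,\theta)$, is a scalar multiple of $\hat\iota_{\mf{g},\,^jX^*}$. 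Thus the leading Shalika germ at $\mc{O}_\mf{w}$ is proportional to the value of the Harish-Chandra germ of $\hat\iota_{\mf{g},\,^jX^*}$ along $\mc{O}_\mf{w}$, with the proportionality constant being $e(G)\epsilon_L(T_G-T_S)$.

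Third, I would evaluate this germ using Waldspurger's homogeneity and the DeBacker--Murnaghan/Kottwitz analysis of Fourier transforms of semisimple orbital integrals. As in \cite[\S6.2]{KalEpi}, the coefficient of $\mc{O}_\mf{w}$ in $\hat\iota_{\mf{g},\,^jX^*}$ is nonzero if and only if the coadjoint orbit of $^jX^*$ meets the dual-Whittaker locus determined by $\mf{w}$; the precise value is (up to a universal constant) the pairing of $\lambda(j)$ with the canonical character on $H^1(\Gamma,S_\tx{sc})$ attached to the pinning. Here the mod-$a$-data of \eqref{eq:adata}, built from $\theta$ itself, is exactly the choice that makes this pairing evaluate to $\lambda(j)$ (with no auxiliary correction from the $\chi$-data, since the dependence on $\chi$ cancels between $\Delta_{II}^\tx{abs}$ and $\theta_\chi$ as explained in the introduction). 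Consequently $\pi_j$ is $\mf{w}$-generic iff $\lambda(j)=1$, which by the first step gives a unique $G(F)$-conjugacy class $j_\mf{w}$.

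The main technical obstacle is the identification in the third paragraph of the Shalika-germ coefficient with the splitting invariant under the specific mod-$a$-data coming from \eqref{eq:adata}; this is the place where the $r>0$ case genuinely differs from the depth-zero case and where one must track the interplay between Yu's generic element $X^*$ representing $\theta$, the Kottwitz-style description of $\epsilon_L(T_G-T_S)$ via toral invariants, and the Langlands-Shelstad splitting invariant. This can be handled by observing that the entire computation is insensitive to the truncation $\theta\mapsto \theta|_{S(F)_r}$, so it reduces to the epipelagic computation already carried out in \cite[\S6.2--6.3]{KalEpi}, applied to the depth-$r$ truncation of the datum.
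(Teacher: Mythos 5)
Your proposal is in the same spirit as the paper's proof (local character expansion, Moeglin--Waldspurger/Rodier criterion, Lemma~\ref{lem:char} near the identity, Kostant-section/Whittaker-locus condition), but two steps that carry the real weight are not actually carried out, only gestured at.

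First, the central claim in your third paragraph --- that the coefficient of $\mc{O}_\mf{w}$ in $\hat\iota_{\mf{g},\,^jX^*}$ is ``(up to a universal constant) the pairing of $\lambda(j)$ with the canonical character'' and that the mod-$a$-data of \eqref{eq:adata} ``is exactly the choice that makes this pairing evaluate to $\lambda(j)$'' --- is stated but not proved, and as phrased is not even well-typed (a pairing should produce a scalar, not a cohomology class). You flag this as ``the main technical obstacle'' and propose to resolve it by ``reducing to the epipelagic computation,'' but no such reduction is given, and it is not clear why the problem should be insensitive to truncating $\theta$: the genericity of $\pi_j$ depends on the full inducing datum, not just on $\theta|_{S(F)_r}$, and the depth-$r$ truncation of a toral datum of generic depth $r$ is essentially the datum itself, so this proposed reduction is vacuous. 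The paper instead invokes two concrete results to close this gap: Kottwitz's theorem that the splitting invariant vanishes (with respect to a specific pinning and the $a$-data $a_\gamma=d\gamma(j_\mf{w}X^*)$) if and only if the orbit of $jX^*$ meets the Kostant section, and DeBacker--Reeder's reformulation of Shelstad's result identifying the regular-nilpotent germ coefficient with the Kostant-section condition. Your outline omits both references and thus does not establish the equivalence you need.

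Second, even granting the Kottwitz-type input, one still has to verify that the specific mod-$a$-data of \eqref{eq:adata} (equivalently $\bar a_\alpha \equiv \<H_\alpha, X^*\>$) is the one for which the splitting invariant of $j_\mf{w}S$ vanishes relative to the fixed pinning $(T,B,\{X_\alpha\})$. Kottwitz's theorem gives vanishing relative to a modified pinning $(T,B,\{\beta(X_\alpha,X_{-\alpha})X_\alpha\})$ and the $a$-data $d\gamma(j_\mf{w}X^*)$; translating back to the original pinning requires the equivariance argument of \cite[Lemma 5.1]{KalGen}. Your proposal asserts the compatibility (``with no auxiliary correction from the $\chi$-data, since the dependence on $\chi$ cancels'') but the $\chi$-data does not even enter the splitting invariant --- the relevant dependence is on the pinning and the $a$-data, and that change-of-pinning computation has to actually be done.

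Also, a minor point in your first paragraph: the claim that admissible embeddings are parametrized exactly by their splitting invariants, with ``every class hit,'' is not used in the paper and is not needed. The paper proves existence and uniqueness of $j_\mf{w}$ directly from the Kostant-section criterion and then, separately, computes the splitting invariant for that $j_\mf{w}$; the parametrization by $H^1(\Gamma, S_\tx{sc})$ is a red herring and would require its own justification.
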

\begin{proof}
The statement about genericity is a result of DeBacker and Reeder, \cite[Proposition 4.10]{DR08}. In that reference the statement is formulated only for the case that $S$ is unramified, but the same argument goes through in general. We limit ourselves to a sketch:

Let $j : S \rw G$ be a rational admissible embedding, let $\pi_j$ be the representation of $G(F)$ corresponding to the supercuspidal toral datum $(S,\hat j,\chi,\theta,(G,\tx{id},1),j)$, and let $\Theta_j$ be its character. According to the Harish-Chandra local character expansion, for strongly regular semi-simple elements $\gamma \in G(F)$ that are sufficiently close to the identity we have
\[ \Theta_j(\gamma) = \sum_{\mc{O}} c(\mc{O})\hat\mu_{\mc{O}}(\log(\gamma)), \]
where the sum runs over the set of nilpotent orbits of the adjoint action of $G(F)$ on $\mf{g}(F)$, $c(\mc{O})$ are complex constants, and $\hat\mu_{\mc{O}}$ are the Fourier transforms of the invariant integrals along these orbits.

Fix a $G(F)$-invariant non-degenerate symmetric bilinear form $\beta$ on $\mf{g}(F)$. Define an element $F_\mf{w} \in \mf{u}^-(F)$, where $U^-$ is the unipotent radical of the Borel subgroup of $G$ that is $T$-opposite to $B$, and $\mf{u}^-$ is its Lie-algebra, by $F_\mf{w} = \sum_\alpha \beta(X_\alpha,X_{-\alpha})^{-1} \cdot X_{-\alpha}$, where the sum runs over the $B$-simple roots of $T$. This element has the property that the character of $\mf{u}(F)$ given by $X \mapsto \Lambda(\beta(F_\mf{w},X))$, when composed with $\exp$, is equal to the generic character of $U(F)$ determined by the splitting $(T,B,\{X_\alpha\})$ and the character $\Lambda$. The main result of \cite{MW87} then states that the representation $\pi_j$ is $\mf{w}$-generic if and only if the constant $c(\tx{Ad}(G(F))F_\mf{w})$ is non-zero.

According to Lemma \ref{lem:char}, if $\gamma=\gamma_{\geq r} \in G(F)$ is strongly regular semi-simple element, then
\[ \Theta_j(\gamma)=|D_{G}(\gamma)|^{-\frac{1}{2}}\hat\iota_{{\mf{g}},{^jX^*}}(\log(\gamma)) = |D_{G}(^jX^*)|^\frac{1}{2}\hat\mu_{\mf{g},{^jX^*}}(\log(\gamma)). \]
Equating the last two displayed formulas and using a result of Shelstad \cite{She89}, reinterpreted as \cite[Proposition 4.2]{DR08}, we see that $c(\tx{Ad}(G(F))F_\mf{w})$ is non-zero precisely when the $G(F)$-orbit of $jX^*$ meets the Kostant section $F_\mf{w} + \tx{Cent}(E_\mf{w},\mf{g})$, where $E_\mf{w}=\sum_\alpha \beta(X_\alpha,X_{-\alpha})X_\alpha$, and where we are interpreting $jX^* \in \mf{g}^*(F)$ as an element of $\mf{g}(F)$ via $\beta$. From this, the uniqueness of $j_\mf{w}$ follows.

We turn to the triviality of the splitting invariant. Let $X'_\alpha=\beta(X_\alpha,X_{-\alpha})X_\alpha$. The main result of \cite{Kot99} asserts that the splitting invariant of $j_\mf{w}S$ vanishes, if it is computed with respect to the pinning $(T,B,\{X_\alpha'\})$ and the $a$-data $a_\gamma=d\gamma(j_\mf{w}X^*)$, for $\gamma \in R(S,G)$. Now $d\gamma(j_\mf{w}X^*)=\beta(H_\gamma,j_\mf{w}X^*) \cdot \beta(X_\gamma,X_{-\gamma})^{-1}$. The function $\alpha \mapsto \beta(X_\alpha,X_{-\alpha})$ extends to a $\Omega(T,G) \rtimes \Gamma$-equivariant function and then \cite[Lemma 5.1]{KalGen} implies that the splitting invariant of $j_\mf{w}S$ vanishes, if it is computed with respect to the splitting $(T,B,\{X_\alpha\})$ and the $a$-data $\beta(H_\gamma,j_\mf{w}X^*)$. But the $a$-data $\beta(H_\gamma,j_\mf{w}X^*)=\<H_\gamma,X^*\>$ projects to the mod-$a$-data of \eqref{eq:adata}.
\end{proof}

\subsection{Stability and transfer} \label{sub:toraltrans}

In this subsection we assume that $F$ has characteristic zero and sufficiently large residual characteristic, so that the logarithm map is defined on $G(F)_{0+}$.

We continue with a toral supercuspidal parameter $\varphi$ of generic depth $r$ with associated $L$-packet $\Pi_\varphi$. For any rigid inner twist $(G',\xi,x)$ and any $s \in S_\varphi^+$ define the function
\[ \Theta_{\varphi,\mf{w},x}^s = e(G')\sum_{(G',\xi,x,\pi) \in \Pi_\varphi} \<(G',\xi,x,\pi),s\>_\mf{w} \cdot \Theta_\pi \]
of $G'(F)$. According to \eqref{eq:wc}, when $s=1$ this function does not depend on the choice of $\mf{w}$ and we can denote it by $S\Theta_{\varphi,x} = \Theta_{\varphi,\mf{w},x}^1$.

\begin{lem} \label{lem:schar}
The value of $\Theta_{\varphi,\mf{w},x}^s$ at a regular semi-simple element $\gamma'=\gamma_{<r}' \cdot \gamma_{\geq r}' \in G'(F)$ is given by
\[ e(J')\frac{\epsilon(T_G-T_J)}{|D_{G'}(\gamma')|}\sum_j\Delta_{II}^\tx{abs}[a,\chi'](\gamma_{<r}'^j) \theta(\gamma_{<r}'^j) \sum_{k}\<\tx{inv}(j_\mf{w},k),s\>\hat\iota_{\mf{j'},{^kX^*}}(\log(\gamma_{\geq r}')), \]
where $J'$, $T_G$, and $T_J$ are as in Lemma \ref{lem:char}, $j$ runs over the set of $J'$-stable classes of embeddings $S \rw J'$, whose composition with $J' \subset G'$ is admissible, $k$ runs over the set of $J'(F)$-rational classes inside the stable class $j$, and $j_\mf{w} : S \rw G$ is the admissible embedding given by Lemma \ref{lem:generic}.
\end{lem}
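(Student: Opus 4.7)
The plan is to start from the character formula of Lemma \ref{lem:char} applied to each constituent $\pi_j$ of $\Pi_\varphi$ on the fixed rigid inner twist $(G',\xi,x)$, indexed by $G'(F)$-rational classes $j$ of admissible embeddings $S \to G'$. Weighting $\Theta_{\pi_j}(\gamma')$ by the factor $e(G')\<(G',\xi,x,\pi_j),s\>_\mf{w} = e(G')\<\tx{inv}(j_\mf{w},j),s\>$ and summing over $j$, the scalar prefactors combine into $e(J')\epsilon(T_G-T_J)|D_{G'}(\gamma')|^{-1/2}$ (using $e(G')^2=1$), leaving
\begin{equation*}
\sum_{j}\<\tx{inv}(j_\mf{w},j),s\>\!\!\!\!\!\!\sum_{\substack{g \in J'(F)\lmod G'(F)/jS(F)\\ \gamma_{<r}'^g \in jS(F)}}\!\!\!\!\!\!\Delta_{II}^\tx{abs}[a,\chi'](\gamma_{<r}'^{gj})\theta(\gamma_{<r}'^{gj})\hat\iota_{\mf{j}',{}^{jg}X^*}(\log\gamma_{\geq r}').
\end{equation*}

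The key step is the change of variables $k := \tx{Ad}(g)\circ j : S \to G'$. The condition $\gamma_{<r}'^g \in jS(F)$ forces $\gamma_{<r}' \in kS$, and since $kS$ is a connected torus centralizing $\gamma_{<r}'$ we must have $kS \subset J'$. Thus $k$ is naturally an embedding $S \to J'$ whose composition with $J' \hookrightarrow G'$ lies in the rational class $j$. Using that $jS$ is abelian so that $\tx{Ad}(h)\circ j = j$ for $h \in jS(F)$, one verifies that $(j,[g]) \mapsto [k]_{J'(F)}$ is a bijection onto the set of $J'(F)$-rational classes of embeddings $k : S \to J'$ whose composition to $G'$ is admissible. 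Under this substitution $\gamma_{<r}'^{gj} = k^{-1}(\gamma_{<r}')$, ${}^{jg}X^* = dk(X^*)$, and the pairing $\<\tx{inv}(j_\mf{w},j),s\>$ becomes $\<\tx{inv}(j_\mf{w},k),s\>$ since the $G'(F)$-rational class of $k$, regarded as an embedding into $G'$, is exactly $j$.

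Next I would group the $k$-sum by $J'$-stable classes. If $k' = \tx{Ad}(h)\circ k$ for some $h \in J'(\bar F)$, then $k'^{-1}(\gamma_{<r}') = k^{-1}(h^{-1}\gamma_{<r}' h) = k^{-1}(\gamma_{<r}')$ because $\gamma_{<r}'$ lies in the center of $J'$. Consequently both $\Delta_{II}^\tx{abs}[a,\chi'](\gamma_{<r}'^k)$ and $\theta(\gamma_{<r}'^k)$ depend only on the $J'$-stable class of $k$, and may be pulled out of the inner sum over $J'(F)$-rational classes within each stable class, producing the announced formula.

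The main obstacle will be the careful bookkeeping of the reindexing in the second paragraph — verifying both the well-definedness of $[k]_{J'(F)}$ from the double coset of $g$ (which relies on $jS$ being its own centralizer in $G'$, so that for $u \in J'(F)$ and $h \in jS(F)$ one has $\tx{Ad}(ugh)\circ j = \tx{Ad}(u)\circ k$) and the surjectivity, i.e.\ that every $J'(F)$-rational class of an admissible $k : S \to J'$ arises this way from a unique pair $(j,[g])$ in the original sum. Once this combinatorial bijection is in place, the remaining ingredients — the stable invariance of $\Delta_{II}^\tx{abs}$ and $\theta$ and the identity $\<\tx{inv}(j_\mf{w},j),s\> = \<\tx{inv}(j_\mf{w},k),s\>$ — are essentially immediate from the definitions.
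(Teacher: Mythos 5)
Your proposal is correct and follows essentially the same route as the paper: apply Lemma \ref{lem:char} to each $\pi_j$, weight by $e(G')\<\tx{inv}(j_\mf{w},j),s\>$, re-index the double-coset sum as a sum over $J'(F)$-rational classes of embeddings $S \to J'$ via $k=\tx{Ad}(g)\circ j$, and then regroup by $J'$-stable classes using that $\gamma_{<r}'$ is central in $J'$ so that $\Delta_{II}^\tx{abs}$ and $\theta$ factor through the outer sum. The paper performs this re-indexing almost silently (``putting both sums together and re-indexing''), whereas you spell out the bijection $(j,[g])\leftrightarrow [k]_{J'(F)}$ and the well-definedness checks; this is the right content, and your sketch of the injectivity/surjectivity argument (using that $\tx{Cent}(jS,G')=jS$ and $g_1^{-1}ug_2 \in G'(F)\cap jS(\ol F)=jS(F)$) closes the one genuine gap one might worry about. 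One small inconsistency worth flagging: carrying $|D_{G'}(\gamma')|^{1/2}$ faithfully through from Lemma \ref{lem:char} gives $|D_{G'}(\gamma')|^{-1/2}$, as you write, not the $|D_{G'}(\gamma')|^{-1}$ appearing in the statement of the lemma, so the exponent in the paper's statement is a typo and your computation is the correct one.
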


\begin{proof}
Let $(S,\hat j,\chi,\theta)$ be a toral $L$-packet datum of generic depth $r$ in the isomorphism class associated to $\varphi$ by Proposition \ref{pro:torpardata}. According to Lemma \ref{lem:char}, for any admissible embedding $j : S \rw G'$ the value at $\gamma'$ of the character $\Theta_j$ of the corresponding representation at is given by
\[ \frac{e(G')}{e(J)}\frac{\epsilon(T_G-T_J)}{|D_{G'}(\gamma')|}\sum_{k}\Delta_{II}^\tx{abs}[a,\chi'](\gamma_{<r}'^k) \theta(\gamma_{<r}'^k) \hat\iota_{\mf{j'},{^kX^*}}(\log(\gamma_{\geq r})), \]
where $k$ runs over the set of $J'(F)$-conjugacy classes of embeddings $S \rw J'$ that are $G'(F)$-conjugate to $j$. Then we have $\Theta_{\varphi,\mf{w},x}^s = \sum_j \<\tx{inv}(j_\mf{w},j),s\> \Theta_j$, where the sum runs over the $G'(F)$-conjugacy classes of admissible embeddings $j : S \rw G'$ defined over $F$. Putting both sums together and re-indexing we see that $\Theta_{\varphi,\mf{w},x}^s(\gamma')$ is equal to
\[ e(J)\frac{\epsilon(T_G-T_J)}{|D_{G'}(\gamma')|}
\sum_j\sum_{k}\<\tx{inv}(j_\mf{w},k),s\>\Delta_{II}^\tx{abs}[a,\chi'](\gamma_{<r}'^k) \theta(\gamma_{<r}'^k) \hat\iota_{\mf{j'},{^kX^*}}(\log(\gamma_{\geq r}')), \]
where now $j$ runs over the set of $J$-stable conjugacy classes of $G'$-admissible embeddings $S \rw J'$ defined over $F$ and $k$ runs over the set of $J'(F)$-conjugacy classes of embeddings $S \rw J'$ in the $J'$-stable class of $j$. Since $\gamma_{<r}'$ is central in $J'$, this expression is equal to the one in the statement of the lemma.
\end{proof}

Before we begin the study of stability and endoscopic transfer, we make the following convention. Let $T$ be a maximal torus of $G$ and $\gamma \in T(F)$ a strongly regular semi-simple element with a normal $r$-approximation $\gamma = \gamma_{<r} \cdot \gamma_{\geq r}$. If $T'$ is a maximal torus in some inner form of $G$ or in an endoscopic group of $G$ and $f : T \rw T'$ is an admissible isomorphism, then $f(\gamma)=f(\gamma_{<r}) \cdot f(\gamma_{\geq r})$ is a normal $r$-approximation. This is proved in \cite[Lemma 5.2]{DS} for the case of stable conjugacy, but the argument works without change for the case of transfer to an endoscopic group. This fixes the approximations of all stable conjugates and transfers of $\gamma$. It is well-defined, because the only admissible automorphism of $T$ carrying $\gamma$ to itself is the identity.

\begin{thm} The function $S\Theta_{\varphi,\cdot}$ is stable across inner forms. That is, for any two rigid inner twist $(G'_1,\xi_1,x_1)$ and $(G'_2,\xi_2,x_2)$ and stably conjugate strongly regular semi-simple elements $\gamma_1' \in G'_1(F)$ and $\gamma_2' \in G'_2(F)$ we have
\[ S\Theta_{\varphi,x_1}(\gamma_1') = S\Theta_{\varphi,x_2}(\gamma_2'). \]
\end{thm}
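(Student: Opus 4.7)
The plan is to apply Lemma \ref{lem:schar} at $s=1$ to both sides and reduce the equality to a statement of Lie algebra stability for Fourier transforms of semi-simple orbital integrals. Since $\gamma_1'$ and $\gamma_2'$ are stably conjugate there is an admissible isomorphism $f : T_1 \to T_2$ between the centralizer tori identifying $\gamma_1'$ with $\gamma_2'$. By the convention recalled just before Lemma \ref{lem:schar}, this $f$ sends the normal $r$-approximation of $\gamma_1'$ to that of $\gamma_2'$, and it extends to an inner twist $J_1' \to J_2'$ of the connected centralizers carrying $\gamma_{1,<r}'$ to $\gamma_{2,<r}'$. Consequently $|D_{G_1'}(\gamma_1')| = |D_{G_2'}(\gamma_2')|$, the quasi-split inner forms of $J_1'$ and $J_2'$ share the same minimal Levi so $\epsilon_L(T_G - T_{J_1})=\epsilon_L(T_G - T_{J_2})$, and $f$ sets up a bijection between $J_1'$-stable classes of embeddings $S \to J_1'$ and $J_2'$-stable classes of embeddings $S \to J_2'$ that are admissible in the ambient group. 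Under this bijection the terms $\Delta_{II}^\tx{abs}[a,\chi'](\gamma_{<r}'^j)\theta(\gamma_{<r}'^j)$, depending only on the image of $\gamma_{<r}'$ under the admissible embedding $j$ into $jS$, agree.

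The equality then reduces to showing, for each fixed stable class $j$ and with $Y_i := \log(\gamma_{i,\geq r}') \in \mf{j}_i'(F)$ topologically nilpotent elements matched under $df$, the identity
\[ e(J_1')\sum_{k_1} \hat\iota_{\mf{j_1'},\,{}^{k_1}X^*}(Y_1) \;=\; e(J_2')\sum_{k_2} \hat\iota_{\mf{j_2'},\,{}^{k_2}X^*}(Y_2), \]
where $k_i$ runs over the rational classes inside the stable class $j$ of embeddings into $J_i'$. First I would note that, modulo the Weyl discriminant prefactors absorbed into $\hat\iota$, this is the statement that the distribution $Y \mapsto e(J') \sum_k \hat\mu_{{}^kX^*}(Y)$ on $\mf{j}'(F)$ is stable and compatible across inner twists of $J'$. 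The latter is exactly the Lie algebra version of endoscopic stability for the trivial endoscopic datum, which is a consequence of Waldspurger's work on the transfer conjecture for Lie algebras (homogeneity of the Fourier transforms of semi-simple orbital integrals, combined with the standard weighting by Kottwitz signs). Because $Y_i$ lies in the image of the log (equivalently, in the topologically nilpotent set where the transfer factor $\Delta_{IV}$ for the identity map is trivial and where the exponential map matches orbital integrals on the group to those on the Lie algebra), the Lie algebra stability directly implies the required equality.

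The hard step will be verifying that the outer sums over $j$ and $k$ in the two expressions really do correspond as claimed, and in particular tracking the dependence of all auxiliary choices (the $a$-data $\bar a_\alpha$ determined by $\theta$ via \eqref{eq:adatahowe}, the $\chi$-data $\chi'$ via \eqref{eq:chi'}, and the representative $X^*$) purely through the stable datum $(S,\hat j,\chi,\theta)$, so that they are insensitive to the choice of inner form. This is ensured by the fact that $\Delta_{II}^\tx{abs}[a,\chi']$ depends only on $T$ and not on the group (as noted after \eqref{eq:chi'}) and by the stable invariance of $X^* \in \mf{s}^*(F) \subset \mf{j}'^{*}(F)$ transported via admissible embeddings. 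Once this bookkeeping is done, the remaining analytic content is exactly Waldspurger's theorem, applied to the inner forms $J_1'$ and $J_2'$ of a common quasi-split reductive $F$-group. This completes the argument for stability; endoscopic transfer of $S\Theta_{\varphi,\cdot}$ would then proceed by the analogous manipulation of the $s \neq 1$ case, replacing Waldspurger's stability by his full matching of Fourier transforms of $\kappa$-orbital integrals.
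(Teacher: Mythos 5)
Your proposal is correct and follows essentially the same route as the paper: apply Lemma~\ref{lem:schar} at $s=1$, use the admissible isomorphism to produce an inner twist $J \to J'$ of the connected centralizers of the heads, match the discriminants and the $\epsilon_L$ and $\Delta_{II}^{\tx{abs}}\cdot\theta$ terms across the resulting bijection of stable classes of embeddings, and then reduce to the stability of $e(J)\sum_k \hat\iota_{\mf{j},{^kX^*}}$ across inner forms, which is Waldspurger's Lie-algebra transfer result combined with Kottwitz's $\epsilon$-factor computation. The paper's proof is just a more compressed version of exactly this argument.
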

\begin{proof}
It is enough to consider the case where one of the two rigid inner twists is trivial. Thus let $(G',\xi,x)$ be a rigid inner twist of $G$, $\gamma=\gamma_{<r} \cdot \gamma_{\geq r}$ a strongly regular semi-simple element of $G(F)$ and $\gamma'=\gamma_{<r}' \cdot \gamma_{\geq r}' \in G'(F)$ stably conjugate to $\gamma$. Let $J=\tx{Cent}(\gamma_{<r},G)$ and $J' = \tx{Cent}(\gamma_{<r}',G')$. The admissible isomorphism $f_{\gamma,\gamma'}$ provides an inner twist $J \rw J'$ which carries $\gamma_{>r}$ to $\gamma_{>r}'$. Moreover, for every $J$-stable class of $G$-admissible rational embeddings $j : S \rw J$, $j' = f_{\gamma,\gamma'} \circ j$ is an $J'$-stable class of $G'$-admissible rational embeddings $S \rw J'$, and $j \leftrightarrow j'$ is a 1-1 correspondence, under which we have $j'\circ j^{-1}(\gamma_{<r})=\gamma_{<r}'$. For each pair $j \leftrightarrow j'$ of corresponding stable classes of embeddings, the result of Waldspurger \cite[Theoreme 1.5]{Wal06ECC} and Kottwitz's computation of $\epsilon$-factors \cite[Theorem 3.5.1]{KalEpi} imply
\[ e(J)\sum_{k}\hat\iota_{\mf{j},{^kX^*}}(\log(\gamma_{\geq r})) = e(J')\sum_{k'}\hat\iota_{\mf{j'},{^{k'}X^*}}(\log(\gamma_{\geq r}')).\]
\end{proof}

Let now $\mf{e}=(H,s,{^L\eta})$ be a tame extended endoscopic triple for $G$. We will prove the endoscopic character identities for toral $L$-packets. The argument for the slightly more general case where the $L$-group of $H$ does not embed into the $L$-group of $G$ is the same, but the notation is more cumbersome, so we leave it to the reader.

For any rigid inner twist $(G',\xi,x)$ we have the normalized transfer factor $\Delta=\Delta_{\mf{w},x}$ defined in \cite[(5.10)]{KalRIBG}. We will drop the prime notation here, but we do alert the reader the this transfer factor is a normalization of the factor $\Delta'$ of \cite[\S5.1]{KS12}, which is slightly different from the factor $\Delta$ of \cite{LS87}. As in \cite[\S5.4]{KalEpi}, we denote by $\mr{\Delta}$ the transfer factor $\Delta$ with its part $\Delta_{IV}$ removed.

\begin{lem} \label{lem:ugly} Let $\gamma^H \in H(F)$ and $\gamma' \in G'(F)$ be strongly regular semi-simple elements. For any sufficiently large natural number $k$ we have
\[ \mathring\Delta(\gamma_{<r}^H \cdot [\gamma_{\geq r}^H]^{p^{2k}},\gamma_{<r}' \cdot [\gamma'_{\geq r}]^{p^{2k}}) = \mathring\Delta(\gamma^H,\gamma'). \]
\end{lem}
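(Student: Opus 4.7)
The strategy is to decompose $\mr\Delta = \epsilon_L \cdot \Delta_I \cdot \Delta_{II} \cdot \Delta_{III_1} \cdot \Delta_{III_2}$ (the transfer factor stripped of $\Delta_{IV}$) and verify that each factor is invariant under the replacement $\gamma_{\geq r} \mapsto \gamma_{\geq r}^{p^{2k}}$ applied consistently on the $G'$- and $H$-sides via the admissible isomorphism, for $k$ sufficiently large. The factors $\epsilon_L$, $\Delta_I$, and $\Delta_{III_1}$ are built only from the tori, from fixed cohomological and splitting data, and from the chosen $\chi$- and $a$-data; they do not see the particular element inside the torus and so are manifestly unchanged.

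The element-dependent factors are $\Delta_{II}$ and $\Delta_{III_2}$. For $\Delta_{II}$, the Langlands--Shelstad factor is the ratio $\Delta_{II}^{\tx{abs},G'}/\Delta_{II}^{\tx{abs},H}$ with respect to the minimally ramified $\chi$-data of Proposition \ref{pro:pardata}. Since this $\chi$-data is tame, Lemma \ref{lem:d2d} gives the factorization $\Delta_{II}^{\tx{abs},G'}[a,\chi](\gamma') = \Delta_{II}^{\tx{abs},G'}[a,\chi](\gamma'_{<r}) \cdot \Delta_{II}^{\tx{abs},J'}[a,\chi](\gamma'_{\geq r})$ and analogously on the $H$-side; the $\gamma_{<r}$-pieces contribute identically on both sides of the claimed equality, so one must only control the $\gamma_{\geq r}$-pieces. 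For each relevant root $\alpha$, setting $y = \alpha(\gamma_{\geq r})-1$, the binomial expansion gives
\[
\frac{\alpha(\gamma_{\geq r})^{p^{2k}}-1}{\alpha(\gamma_{\geq r})-1} = \sum_{j=1}^{p^{2k}} \binom{p^{2k}}{j} y^{j-1} = p^{2k}(1+z).
\]
The standing large-residual-characteristic assumption forces $(p-1)\,\tx{val}_{F_\alpha}(y) > e(F_\alpha/\Q_p)$, so every term with $j>1$ has strictly larger $F_\alpha$-valuation than the $j=1$ term, whence $z \in \mf{p}_{F_\alpha}$. Tameness of $\chi_\alpha$ then kills $1+z$, and the ratio of the factors at $\gamma_{\geq r}^{p^{2k}}$ and $\gamma_{\geq r}$ reduces to $\chi_\alpha(p)^{2k}$. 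Since $p \in F^\times \subset F_{\pm\alpha}^\times$ and $\chi_\alpha|_{F_{\pm\alpha}^\times} = \kappa_{F_\alpha/F_{\pm\alpha}}$ for symmetric $\alpha$ (with $\chi_\alpha$ trivial for asymmetric $\alpha$ by minimal ramification), we have $\chi_\alpha(p) \in \{\pm 1\}$, so $\chi_\alpha(p)^{2k}=1$.

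For $\Delta_{III_2}$, which is the value at $\gamma^H$ of a character $\xi$ of $T_H(F)$ constructed out of the $\chi$-data, tameness of $\chi$ forces $\xi$ to be trivial on $T_H(F)_{0+}$; consequently $\xi((\gamma_{\geq r}^H)^{p^{2k}}) = \xi(\gamma_{\geq r}^H) = 1$, and $\Delta_{III_2}$ depends only on $\gamma_{<r}^H$. The main obstacle will be the term-by-term valuation bookkeeping in the binomial expansion above, where one must check the inequality $(p^i-1)\,\tx{val}_{F_\alpha}(y) > i\,e(F_\alpha/\Q_p)$ for every $j = p^i$, $i \geq 1$; this is precisely the place at which the assumption that $F$ has sufficiently large residual characteristic, combined with the fact that $\gamma_{\geq r}$ has positive depth $r$, enters the argument.
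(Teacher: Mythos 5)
Your proof is correct and takes essentially the same route as the paper: both decompose $\mr\Delta$, note that $\epsilon_L$, $\Delta_I$, $\Delta_{III_1}$ depend only on the tori, handle $\Delta_{III_2}$ via tameness, and handle $\Delta_{II}$ by applying Lemma \ref{lem:d2d} and then showing that $\chi_\alpha\bigl(a_\alpha^{-1}(y^{p^{2k}}-1)\bigr)=\chi_\alpha\bigl(a_\alpha^{-1}p^{2k}(y-1)\bigr)$, with the residual power $\chi_\alpha(p)^{2k}=\kappa_\alpha(p)^{2k}=1$. The only real difference is that the paper appeals to Lemma 3.1 of Hales for this last step, whereas you verify it directly by the binomial expansion together with the valuation estimate $(p^i-1)\tx{val}_{F_\alpha}(y)>i\,e(F_\alpha/\Q_p)$; your version makes explicit that the large-residual-characteristic hypothesis is being used precisely to make $p-1$ exceed the absolute ramification index of the root fields $F_\alpha$.
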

\begin{proof}
Since we have arranged that an admissible isomorphism carrying $\gamma^H$ to $\gamma'$ carries $\gamma^H_{<r}$ to $\gamma_{<r}'$ and $\gamma^H_{\geq r}$ to $\gamma_{\geq r}'$, the notion of relatedness is unchanged.

We must compare the terms $\Delta_I$, $\Delta_{II}$, $\Delta_{III_1}$ and $\Delta_{III_2}$ of both sides. For each root $\alpha$ of $T'=\tx{Cent}(\gamma',G')$ we have $\tx{ord}(\alpha(\gamma_{<r}')-1)<r$ and $\tx{ord}(\alpha(\gamma_{\geq r}')-1) \geq r$.
It follows that $\gamma_{<r}' \cdot [\gamma_{\geq r}']^{p^{2k}}$ is still a regular element of $T'$. Thus $\Delta_I$ and $\Delta_{III_1}$ don't change. To treat the other two, we choose tamely ramified $\chi$-data. Then $\Delta_{III_2}$ is a tamely ramified character of $T'(F)$ and thus any power of $\gamma_{\geq r}'$ belongs to its kernel. For $\Delta_{II}$, we apply Lemma \ref{lem:d2d} and see that the contributions of those roots $\alpha$ with $\alpha(\gamma_{<r}') \neq 1$ to both sides are the same. If $\alpha$ is a root with $\alpha(\gamma_{<r}')=1$, let $y=\alpha(\gamma_{\geq r}') \in [F_\alpha^\times]_{r}$. Then the contribution of $\alpha$ to the left-hand side is $\chi_\alpha(a_\alpha^{-1}(y^{p^{2k}}-1))$. According to \cite[Lemma 3.1]{Hal93} and the tameness of $\chi_\alpha$, this is equal to $\chi_\alpha(a_\alpha^{-1}p^{2k}(y-1))$, which is equal to the product of the contribution of $\alpha$ to the right-hand side with $\kappa_\alpha(p)^{2k}=1$.
\end{proof}

\begin{thm} Let $\gamma' \in G'(F)$ be a strongly regular semi-simple element with a normal $r$-approximation $\gamma=\gamma_{<r}' \cdot \gamma_{\geq r}'$. Assume that $\varphi={^L\eta}\circ\varphi^H$ for $\varphi^H : W_F \rw {^LH}$. Then
\[ \Theta_{\varphi,\mf{w},x}^s(\gamma') = \sum_{\gamma^H \in H(F)/\tx{st}} \mr{\Delta}_{\mf{w},x}(\gamma^H,\gamma')\frac{D^H(\gamma^H)}{D^{G'}(\gamma')}S\Theta_{\varphi^H,1}(\gamma^H). \]
\end{thm}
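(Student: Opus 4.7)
The plan is to expand both sides using Lemma \ref{lem:schar} and then reduce the required identity to Waldspurger's endoscopic transfer theorem for Fourier transforms of orbital integrals on Lie algebras, combined with Kottwitz's computation of the relevant $\epsilon$-factors.

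First I would apply Lemma \ref{lem:schar} to the left-hand side, writing $\Theta^s_{\varphi,\mf{w},x}(\gamma')$ as a double sum over $J'$-stable classes $j \colon S \to J'$ of $G'$-admissible embeddings and their rational refinements $k$, with weight $\Delta_{II}^{\tx{abs},G'}[a,\chi'](\gamma_{<r}'^{\,j})\theta(\gamma_{<r}'^{\,j})$ multiplying the twisted sum $e(J')\sum_k \langle\tx{inv}(j_\mf{w},k),s\rangle \,\hat\iota_{\mf{j}',{}^{k}X^*}(\log \gamma'_{\geq r})$. For the right-hand side, each stable class $\gamma^H \in H(F)/\tx{st}$ that is related to $\gamma'$ corresponds (via the admissible isomorphism fixed by the convention on normal approximations) to exactly one stable class $j \colon S \to J'$, and $\gamma^H_{<r}$ matches $\gamma_{<r}'^{\,j}$ under that isomorphism. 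I would apply Lemma \ref{lem:schar} again, this time to $H$ with $s=1$, to expand $S\Theta_{\varphi^H,1}(\gamma^H)$ as $e(J^H)\,\epsilon_L(T_H - T_{J^H})|D_H(\gamma^H)|^{-1}\Delta_{II}^{\tx{abs},H}[a,\chi'](\gamma^H_{<r})\theta(\gamma^H_{<r})\cdot\sum_{k^H}\hat\iota_{\mf{j}^H,{}^{k^H}X^*}(\log\gamma^H_{\geq r})$, where $J^H = \tx{Cent}(\gamma^H_{<r},H)^\circ$.

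Second, I would match the non-orbital parts. The ratio $\Delta_{II}^{\tx{abs},G'}/\Delta_{II}^{\tx{abs},H}$ evaluated on the relevant elements is by design the Langlands--Shelstad $\Delta_{II}(\gamma^H,\gamma')$, and the ratio of $\theta$-values on the two sides contributes the factor $\Delta_{III_2}(\gamma^H,\gamma')$ (this is precisely the content of the compatibility of $\chi$-data and the Langlands correspondence for tori underlying \cite[\S3.4, \S4]{LS87}). The ratio of Kottwitz signs and of $\epsilon_L$-factors $\epsilon_L(T_{G'}-T_{J'})/\epsilon_L(T_H-T_{J^H})$ equals $e(G')e(H)^{-1}\epsilon_L(T_{G'}-T_H)$ together with the inner $\epsilon$-factor for the pair $(J^H,J')$, by additivity of $\epsilon_L$ in degree zero; combining with Kottwitz's formula \cite[Theorem 3.5.1, Corollary 3.5.2]{KalEpi} this gives the $\epsilon_L$ factor appearing in the normalized transfer factor $\mathring\Delta_{\mf{w},x}$, together with an inner copy needed for the endoscopic identity between $J'$ and $J^H$. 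Finally, the Whittaker normalization of $\Delta_I$ is handled by Lemma \ref{lem:generic}: the splitting invariant of $j_\mf{w}S$ vanishes, so $\Delta_I$ reduces to the pairing $\langle \tx{inv}(j_\mf{w},j),s\rangle$ that matches the $s$-dependence from the left-hand side.

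Third, after these identifications, what remains is to show, for each matching pair of stable classes $j\leftrightarrow \gamma^H$, the inner endoscopic identity
\[
e(J')\sum_k \langle\tx{inv}(j_\mf{w},k),s\rangle\,\hat\iota_{\mf{j}',{}^kX^*}(\log\gamma'_{\geq r}) = \epsilon_L(T_{J^H}-T_{J'})\frac{D^{J^H}(\gamma^H_{\geq r})}{D^{J'}(\gamma'_{\geq r})} e(J^H)\mathring\Delta^{J^H,J'}(\gamma^H_{\geq r},\gamma'_{\geq r})\sum_{k^H}\hat\iota_{\mf{j}^H,{}^{k^H}X^*}(\log\gamma^H_{\geq r})
\]
(and summation over the stable classes $\gamma^H$ that refine $j$ on the right). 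This is the statement that $J^H$ is an endoscopic group for the rigid inner twist $J'$ with parameter derived from the restriction of $\varphi^H$, and the identity is precisely Waldspurger's theorem \cite[Théorème 1.5]{Wal06ECC} (transported to the group via the logarithm), together with the Kottwitz reinterpretation \cite[Theorem 3.5.1]{KalEpi} of Waldspurger's sign by the $\epsilon$-factor $\epsilon_L(T_{J^H}-T_{J'})$. Before applying Waldspurger, I would invoke Lemma \ref{lem:ugly} to replace $\gamma_{\geq r}$ and $\gamma^H_{\geq r}$ by sufficiently high $p^{2k}$-th powers without disturbing the transfer factor; this makes $\log(\gamma_{\geq r}^{p^{2k}})$ lie deeply in a Moy--Prasad filtration lattice and legitimizes the Lie-algebra endoscopy statement.

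The main obstacle is the bookkeeping of the three layers of endoscopic/absolute terms: the outer pair $(H,G)$, the inner pair $(J^H,J')$, and the ambient rigid inner twist $(G',\xi,x)$. Verifying that the Whittaker-normalized $\Delta_I$ from \cite[(5.10)]{KalRIBG} matches the pairing $\langle\tx{inv}(j_\mf{w},k),s\rangle$ coming from Lemma \ref{lem:schar}—in particular that the base point $j_\mf{w}$ produced by the $\mf{w}$-genericity computation and the vanishing of the splitting invariant in Lemma \ref{lem:generic} precisely cancels the Langlands--Shelstad splitting invariant—is the most delicate point, since it depends on the cocycle identifications embedded in \cite[\S5]{KalRI} and the comparison of the cohomology groups $H^1(u\to W,Z\to S)$ with the Kottwitz pairing. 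Once these identifications are made, the proof reduces to the algebraic manipulations above and the invocation of Waldspurger's theorem on a sufficiently small neighbourhood of the identity.
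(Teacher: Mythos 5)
Your plan has the right raw ingredients (Lemma \ref{lem:schar} on both sides, Lemma \ref{lem:ugly}, Waldspurger's Lie-algebra theorem, Kottwitz's $\epsilon$-factor formula, Lemma \ref{lem:generic} for $\Delta_I$), and it does land on the same Waldspurger/Ngô identity that the paper uses. But the passage from the group-level transfer factor $\mr{\Delta}_{\mf{w},x}(\gamma^H,\gamma')$ (attached to the torus $T'=\tx{Cent}(\gamma',G')$) to the Lie-algebra factor $\mr{\Delta}^{\mf{j}'}_{\mf{w},x}$ for the descended pair $(H_y,J')$ is the real content, and your plan treats it as bookkeeping. In the paper this is done in two nontrivial moves: first, Langlands--Shelstad descent \cite[Theorem 1.6]{LS90} together with the set $\Xi(H_y,J')$ of inequivalent descent realizations and \cite[Lemma 3.5.A]{LS90} to peel off the central character $\lambda_{J',\xi}(\gamma_{<r}')$; second, the bijection $(y,\xi,j_H)\leftrightarrow j$ of \cite[Lemma 5.3.3]{KalEpi}, which is what makes your phrase ``corresponds to exactly one stable class $j$'' literally true. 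Without naming these, the step where you match ``each stable class $\gamma^H$'' against a stable class $j:S\to J'$ does not close.

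Two more concrete issues. First, your claim that the ratio of the two $\Delta_{II}^{\tx{abs}}$'s ``is by design'' the Langlands--Shelstad $\Delta_{II}(\gamma^H,\gamma')$ is not right as stated: the $\Delta_{II}^{\tx{abs}}$ in the character formula lives on the torus $jS$ and is evaluated at $\gamma'_{<r}$, while $\Delta_{II}(\gamma^H,\gamma')$ lives on $T'=\tx{Cent}(\gamma',G')$ and involves $\gamma'$ itself. The paper connects these via Lemma \ref{lem:d2d}, which splits $\Delta_{II}$ into a $\gamma_{<r}'$-part (roots with $\alpha(\gamma'_{<r})\neq 1$, giving the $\Delta_{II}^{\tx{abs}}$ ratio) and a $\gamma_{\geq r}'$-part (roots with $\alpha(\gamma'_{<r})=1$), and then uses a small-$z$ limit with $\gamma'_{\geq r}=\exp(z^2 j_\mf{w}X^*)$ to evaluate the second part. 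Second, that small-$z$ evaluation produces the correction factors $\kappa_\alpha(B_\alpha)$, which are precisely what is needed to cancel the factors in Kottwitz's formula for $\gamma_\Lambda(\mf{j}',B)\gamma_\Lambda(\mf{h}_y,B)^{-1}$ relative to the $a$-data $a_\alpha=\langle H_\alpha,X^*\rangle$. Your plan does not mention $\kappa_\alpha(B_\alpha)$ at all, and the $\epsilon_L$-factor arithmetic in your second paragraph would not balance without them. So the skeleton is the same as the paper's, but the descent step and the $\kappa_\alpha(B_\alpha)$ accounting are genuine missing ideas rather than bookkeeping.
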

\begin{proof}
Let $T'=\tx{Cent}(\gamma',G')$. We follow the beginning of the proof of \cite[Theorem 5.4.1]{KalEpi}. In doing so, we will make active use of the descent lemmas established in \cite[\S5.3]{KalEpi}. Rather than recalling their fairly technical statements, we refer the reader to the cited exposition, which is self-contained.

Let $Y$ be a set of representatives for the stable classes of preimages in $H(F)$ of $\gamma_{<r}$ chosen so that the connected centralizer $H_y$ is quasi-split for each $y \in Y$. According to \cite[Lemma 5.3.2]{KalEpi} we can write the right hand side as
\[ \sum_{y \in Y} |\pi_0(H^y)(F)|^{-1} \sum_{z \in H_y(F)_1/\tx{st}} \mr{\Delta}_{\mf{w},x}(yz,\gamma')\frac{D^H(yz)}{D^{G'}(\gamma')}S\Theta_{\varphi^H,1}(yz), \]
where $H^y$ denotes the (possibly disconnected) centralizer of $y$ in $H$, and $H_y(F)_1$ is the subset of $H_y(F)$ of those elements $z$ for which $yz$ is strongly regular semi-simple and has normal $r$-approximation with head $y$ and tail $z$. Applying Lemma \ref{lem:ugly}, we can rewrite this as
\begin{equation} \label{eq:ec1} \sum_{y \in Y} |\pi_0(H^y)(F)|^{-1} \ssum{z \in H_y(F)_1/\tx{st}}{\mr{\Delta}_{\mf{w},x}(yz^{p^{2k}},\gamma_{<r}'(\gamma_{\geq r}')^{p^{2k}})\frac{D^H(yz)}{D^{G'}(\gamma')}S\Theta_{\varphi^H,1}(yz)}. \end{equation}
As before let $J'=\tx{Cent}(\gamma_{<r}',G')$. Recall the set $\Xi(H_y,J')$ from \cite[\S5.3]{KalEpi}. It encodes the different inequivalent ways in which $H_y$ can be realized as an endoscopic group of $J'$ via descent. There exists a unique $\xi \in \Xi(H_y,J')$ for which the element $yz^{p^{2k}} \in H_y(F)$ is related to the element $\gamma_{<r}'(\gamma_{\geq r}')^{p^{2k}}$ (for any value of $k$). We apologize here for the double use of $\xi$, but the inner twist $\xi : G \rw G'$ will not be used in this proof. Taking $k$ large enough, we can apply the Langlands-Shelstad descend theorem \cite[Theorem 1.6]{LS90} and conclude that there is a unique normalization $\mathring\Delta_{\mf{w},x}^{\tx{desc},\xi}$ of the transfer factor for the group $J'$ and its endoscopic group $H_y$, realized by descent according to $\xi$, with the property
\[ \mathring\Delta_{\mf{w},x}^{\tx{desc},\xi}(yz^{p^{2k}},\gamma_{<r}'(\gamma_{\geq r}')^{p^{2k}}) = \mr{\Delta}_{\mf{w},x}(yz^{p^{2k}},\gamma_{<r}'(\gamma_{\geq r}')^{p^{2k}}). \]
For any other $\xi$ we take $\mathring\Delta_{\mf{w},x}^{\tx{desc},\xi}$ to be an arbitrary normalization of the transfer factor for $J'$ and $H_y$ and have
\[ \mathring\Delta_{\mf{w},x}^{\tx{desc},\xi}(yz^{p^{2k}},\gamma_{<r}'(\gamma_{\geq r}')^{p^{2k}}) = 0. \]
This discussion allows us to rewrite \eqref{eq:ec1} as
\begin{equation} \label{eq:ec2} \sum_{y \in Y} |\pi_0(H^y)(F)|^{-1} \sum_\xi \sum_{z \in H_y(F)_1/\tx{st}}\!\!\!\! \mr{\Delta}_{\mf{w},x}^{\tx{desc},\xi}(yz^{p^{2k}},\gamma_{<r}'(\gamma_{\geq r}')^{p^{2k}})\frac{D^H(yz)}{D^{G'}(\gamma')}S\Theta_{\varphi^H,1}(yz), \end{equation}
where $\xi$ runs over the set $\Xi(H_y,J')$. The sum over $z$ can be extended to $H_y(F)_\tx{sr}$, since for elements outside of $H_y(F)_1$ the transfer factor will be zero. Furthermore, since $y$ is central in $H_y$ and $\gamma_{<r}'$ is central in $J'$, we may apply \cite[Lemma 3.5.A]{LS90} and obtain
\[ \mr{\Delta}_{\mf{w},x}^{\tx{desc},\xi}(yz^{p^{2k}},\gamma_{<r}'(\gamma_{\geq r}')^{p^{2k}}) = \lambda_{J',\xi}(\gamma_{<r}')\mr{\Delta}_{\mf{w},x}^{\tx{desc},\xi}(z^{p^{2k}},(\gamma_{\geq r}')^{p^{2k}}),\]
where $\lambda_{J',\xi}$ is the character of $Z(J')(F)$ denoted by $\lambda_G$ in \cite{LS90}. Increasing $k$ if necessary we have
\[ \mr{\Delta}_{\mf{w},x}^{\tx{desc},\xi}(z^{p^{2k}},(\gamma_{\geq r}')^{p^{2k}}) = \mr{\Delta}_{\mf{w},x}^{\mf{j'}}(\log(z^{p^{2k}}),\log((\gamma_{\geq r}')^{p^{2k}})), \]
where on the right we have the transfer factor for the Lie-algebra of $J'$ that is compatibly normalized with the one on the left. Since the Lie-algebra transfer factor is invariant under multiplication by $F^{\times,2}$, we can remove the $p^{2k}$-power. Plugging this into \eqref{eq:ec2}, replacing $S\Theta_{\varphi^H,1}$ with the formula from Lemma \ref{lem:schar}, and rearranging terms, we arrive at
\begin{eqnarray} \label{eq:ec3}
\frac{\lambda_{J',\xi}(\gamma_{<r}')}{D^{G'}(\gamma')}\sum_{y \in Y} |\pi_0(H^y)(F)|^{-1} \sum_\xi \sum_{j_H}\Delta_{II}^{\tx{abs},H}[a^H,\chi^H](y^{j_H}) \theta^H(y^{j_H}) \\
\epsilon(T_H-T_{H_y})\sum_{Z \in \mf{h}_y(F)_\tx{rs}/\tx{st}}\!\!\!\! \mr{\Delta}_{\mf{w},x}^{\mf{j'}}(Z,\log((\gamma_{\geq r}')))
  \sum_{k_H}\hat\iota_{{\mf{h}_y},{^{k_H}X^*}}(Z). \nonumber
\end{eqnarray}
Here $j_H$ and $k_H$ run as in Lemma \ref{lem:schar} but with target $H$ instead of $G'$, and we have fixed a toral $L$-packet datum $(S^H,\hat j^H,\chi^H,\theta^H)$ for $\varphi^H$.

Fix a triple $(y,\xi,j_H)$ contributing to the upper line. Via \cite[Lemma 5.3.3]{KalEpi} this triple corresponds to a $J'$-stable class of rational $G'$-admissible embeddings $j : S \rw J'$.

Fix a $J'(F)$-invariant non-degenerate symmetric bilinear form $B$ on the Lie-algebra $\mf{j}'(F)$ and use it to identify this Lie-algebra with its dual. The results of Waldspurger \cite{Wal97}, \cite{Wal06ECC}, and Ng\^o \cite{Ngo10}, imply that then
\[ \sum_{Z \in \mf{h}_y(F)_\tx{rs}/\tx{st}}\!\!\!\! \mr{\Delta}_{\mf{w},x}^{\mf{j}'}(Z,\log((\gamma_{\geq r}')))
  \sum_{k_H}\hat\iota_{{\mf{h}_y},{^{k_H}X^*}}(Z) \]
is equal to
\[ \gamma_\Lambda(\mf{j}',B)\gamma_\Lambda(\mf{h}_y,B)^{-1}\sum_k
\mr{\Delta}_{\mf{w},x}^{\mf{j}'}(j_H X^*,kX^*)\hat\iota_{\mf{j}',^kX^*}(\log(\gamma_{\geq r}')),
\]
where now $k$ runs over the set of $J'(F)$-conjugacy classes in the $J'$-stable class of $j$. According to \cite[Theorem 3.5.1, Lemma 3.4.1]{KalEpi} we have
\[ \gamma_\Lambda(\mf{j}',B)\gamma_\Lambda(\mf{h}_y,B)^{-1} = e(J')\epsilon(T_{H_y}-T_J,\Lambda)\prod_{\alpha \in R(jS,J'-H_y)_\tx{sym}/\Gamma} \kappa_\alpha(B_\alpha), \]
where $\alpha$ runs over the $\Gamma$-orbits of symmetric roots of $jS$ in $J'$ that are outside of $H_y$. Appealing to the bijection $(y,\xi,j_H) \leftrightarrow j$ of \cite[Lemma 5.3.3]{KalEpi} we can thus rewrite \eqref{eq:ec3} as
\begin{eqnarray} \label{eq:ec4}
\frac{\lambda_{J',\xi}(\gamma_{<r}')}{D^{G'}(\gamma')}\sum_j\Delta_{II}^{\tx{abs},H}[a^H,\chi^H](y^{j_H}) \theta^H(y^{j_H})e(J')\epsilon(T_H-T_J)\\
\sprod{\alpha \in R(jS,J'-H_y)_\tx{sym}/\Gamma}{\kappa_\alpha(B_\alpha)}\sum_k
\mr{\Delta}_{\mf{w},x}^{\mf{j}'}(j_H X^*,kX^*)\hat\iota_{\mf{j}',{^kX^*}}(\log(\gamma_{\geq r}')). \nonumber
\end{eqnarray}
Selecting a small $z \in F^\times$ we undo the descent of the transfer factor by
\begin{eqnarray*}
&&\lambda_{J',\xi}(\gamma_{<r}') \mr{\Delta}_{\mf{w},x}^{\mf{j}'}(j_H X^*,kX^*)\\
&=&\mr{\Delta}_{\mf{w},x}(y\exp(z^2j_HX^*),\gamma_{<r}'\exp(z^2kX^*))\\
&=&\mr{\Delta}_{\mf{w},1}(y\exp(z^2j_HX^*),\gamma_{<r}\exp(z^2j_\mf{w}X^*))\<\tx{inv}(j_\mf{w},k),s\>,
\end{eqnarray*}
where $\gamma_{<r}=j_\mf{w}k^{-1}(\gamma_{<r}')$. Then \eqref{eq:ec4} becomes
\begin{eqnarray} \label{eq:ec5}
&&\frac{e(J')}{D^{G'}(\gamma')}\sum_j\Delta_{II}^{\tx{abs},H}[a^H,\chi^H](y^{j_H}) \theta^H(y^{j_H})\epsilon(T_H-T_J)\sprod{\alpha \in R(jS,J'-H_y)_\tx{sym}/\Gamma}{\kappa_\alpha(B_\alpha)}\\
&&\mr{\Delta}_{\mf{w},1}(y\exp(z^2j_HX^*),\gamma_{<r}\exp(z^2j_\mf{w}X^*))\nonumber
\sum_k \<\tx{inv}(j_\mf{w},k),s\>\hat\iota_{\mf{j}',{^kX^*}}(\log(\gamma_{\geq r}')).\nonumber
\end{eqnarray}
Here $j_H$ is still determined by $j$ via the bijection $(y,\xi,j_H) \leftrightarrow j$. Comparing this with Lemma \ref{lem:schar} we see that the theorem will be proved once we prove that $\mr{\Delta}_{\mf{w},1}(y\exp(z^2j_HX^*),\gamma_{<r}\exp(z^2j_\mf{w}X^*))$ is equal to
\[ \epsilon(T_G-T_H)\frac{\Delta_{II}^\tx{abs}(\gamma_{<r}'^j)\theta(\gamma_{<r}'^j)}{\Delta_{II}^{\tx{abs},H}(y^{j_H})\theta^H(y^{j_H})}\prod_{\alpha \in R(jS,J'-H_y)_\tx{sym}/\Gamma}{\kappa_\alpha(B_\alpha)},\]
where $(S,\hat j,\chi,\theta)$ is some toral $L$-packet datum corresponding to $\varphi$.

For this we examine the structure of $\mr{\Delta}_{\mf{w},1}$. Its first argument belongs to the maximal torus $j_HS^H \subset H$ and its second argument belongs to the maximal torus $j_\mf{w}S \subset G$. Modifying $(S^H,\hat j^H,\chi^H,\theta^H)$ within its isomorphism class, we may assume that the isomorphism $\hat j^{-1}\circ \hat \eta \circ \hat j^H : \hat S^H \rw \hat S$ is $\Gamma$-equivariant. Using the dual of this isomorphism we identify $S^H$ and $S$ and also obtain an admissible isomorphism $j_HS \rw j_\mf{w}S$ that we use in the discussion of the transfer factor. We select as $\chi$-data for $j_\mf{w}S$ the transport via $j_\mf{w}$ of the $\chi$-data from the toral $L$-packet datum $(S,\hat j,\chi,\theta)$, and as $a$-data we select the one used in the character formula of Lemma \ref{lem:char}, namely the one from \eqref{eq:adata}. The admissible isomorphism $j_\mf{w}\circ j_H^{-1}$ transports this to $a$-data and $\chi$-data for $S^H$. By modifying the toral $L$-packet datum $(S^H,\hat j^H,\chi^H,\theta^H)$ within its isomorphism class we may assume that the resulting $\chi$-data is the transport via $j_H$ of the $\chi$-data $\chi^H$.

Recall that $\mr{\Delta}=\epsilon(T_0-T_0^H)\Delta_I\Delta_{II}\Delta_{III_2}$, the term $\Delta_{III_1}$ being trivial by our choice of admissible isomorphism. According to Lemma \ref{lem:generic} we have $\Delta_I=1$. By definition, $\Delta_{III_2}$ is the value at $\gamma_{<r}$ of the character of $j_\mf{w}S$ given by $\theta\circ j_\mf{w}^{-1} / \theta^H\circ j_\mf{w}^{-1}$. Taking $z$ small enough and using $j^{-1}(\gamma_{<r}')=j_\mf{w}^{-1}(\gamma_{<r})=j_H^{-1}(y)$ we get $\Delta_{III_2}=\theta(j^{-1}(\gamma_{<r}))/\theta^H(j_H^{-1}(y))$.

To handle the term $\Delta_{II}$ we apply Lemma \ref{lem:d2d}, which reduces the proof to the claim that for small $z$ we have
\[ \prod_{\alpha \in R(jS,J'-H_y)_\tx{sym}/\Gamma}{\kappa_\alpha(B_\alpha)} = \frac{\Delta_{II}^{\tx{abs},J}(\exp(z^2j_\mf{w}X^*))}{\Delta_{II}^{\tx{abs},H_y}(z^2j_HX^*)}. \]
Indeed, we have
\[ \lim_{z \rw 0}\frac{\alpha(\exp(z^2j_\mf{w}X^*))-1}{z^2} = d\alpha(X^*) = \<H_\alpha,X^*\>B_\alpha^{-1} \]
and recalling that $a_\alpha=\<H_\alpha,X^*\>$ we see
\[ \chi_\alpha\left(\frac{\alpha(\exp(z^2j_\mf{w}X^*))-1}{a_\alpha}\right) = \kappa_\alpha(B_\alpha). \]
\end{proof}

\bibliographystyle{amsalpha}
\bibliography{bibliography.bib}

\end{document}